\def\append@label@year@{%
    \safe@set\@tempcnta\bib@year
    \edef\bib@citeyear{\the\@tempcnta}%
    \ifnum\bib@citeyear>9
      \append@to@stem{%
          \ifx\bib@year\@empty
          \else
            \@xp\year@short \bib@citeyear \@nil
          \fi
      }%
    \fi
}
\let\oldtocsection=\tocsection
\renewcommand{\tocsection}[2]{\hspace{0em}\oldtocsection{#1}{#2}}
\def\upddots{\mathinner{\mkern 1mu\raise 1pt \hbox{.}\mkern 2mu
\mkern 2mu \raise 4pt\hbox{.}\mkern 1mu \raise 7pt\vbox {\kern 7
pt\hbox{.}}} }
\numberwithin{equation}{section}
\begin{document}
\setlength{\unitlength}{2.5cm}

\newtheorem{thm}{Theorem}[section]
\newtheorem{lm}[thm]{Lemma}
\newtheorem{prop}[thm]{Proposition}
\newtheorem{cor}[thm]{Corollary}
\newtheorem{conj}[thm]{Conjecture}

\theoremstyle{definition}
\newtheorem{dfn}[thm]{Definition}
\newtheorem{eg}[thm]{Example}
\newtheorem{rmk}[thm]{Remark}

\newcommand{\F}{\mathbf{F}}
\newcommand{\N}{\mathbf{N}}
\newcommand{\R}{\mathbf{R}}
\newcommand{\C}{\mathbf{C}}
\newcommand{\Z}{\mathbf{Z}}
\newcommand{\Q}{\mathbf{Q}}

\newcommand{\Mp}{{\rm Mp}}
\newcommand{\Sp}{{\rm Sp}}
\newcommand{\GSp}{{\rm GSp}}
\newcommand{\GL}{{\rm GL}}
\newcommand{\PGL}{{\rm PGL}}
\newcommand{\SL}{{\rm SL}}
\newcommand{\SO}{{\rm SO}}
\newcommand{\Spin}{\text{Spin}}
\newcommand{\Ind}{\text{Ind}}
\newcommand{\Res}{\text{Res}}
\newcommand{\Hom}{\text{Hom}}
\newcommand{\msc}[1]{\mathscr{#1}}
\newcommand{\mfr}[1]{\mathfrak{#1}}
\newcommand{\mca}[1]{\mathcal{#1}}
\newcommand{\mbf}[1]{{\bf #1}}

\newcommand{\mbm}[1]{\mathbbm{#1}}

\newcommand{\into}{\hookrightarrow}
\newcommand{\onto}{\twoheadrightarrow}

\newcommand{\s}{\mathbf{s}}
\newcommand{\cc}{\mathbf{c}}
\newcommand{\bfa}{\mathbf{a}}
\newcommand{\id}{{\rm id}}
\newcommand{\g}{\mathbf{g}_{\psi^{-1}}}
\newcommand{\w}{\mathbbm{w}}
\newcommand{\Ftn}{{\sf Ftn}}
\newcommand{\p}{\mathbf{p}}
\newcommand{\bq}{\mathbf{q}}
\newcommand{\WD}{\text{WD}}
\newcommand{\W}{\text{W}}
\newcommand{\Wh}{{\rm Wh}}
\newcommand{\ggma}{\omega}
\newcommand{\sct}{\text{\rm sc}}
\newcommand{\Of}{\mca{O}^\digamma}
\newcommand{\gk}{c_{\sf gk}}
\newcommand{\Irr}{ {\rm Irr}_{\rm gen}  }
\newcommand{\diag}{{\rm diag}}
\newcommand{\uchi}{ \underline{\chi} }
\newcommand{\Tr}{ {\rm Tr} }

\newcommand{\cu}[1]{\textsc{\underline{#1}}}
\newcommand{\set}[1]{\left\{#1\right\}}
\newcommand{\ul}[1]{\underline{#1}}
\newcommand{\wt}[1]{\overline{#1}}
\newcommand{\wtsf}[1]{\wt{\sf #1}}
\newcommand{\angb}[2]{\left\langle #1, #2 \right\rangle}
\newcommand{\wm}[1]{\wt{\mbf{#1}}}
\newcommand{\elt}[1]{\pmb{\big[} #1\pmb{\big]} }
\newcommand{\ceil}[1]{\left\lceil #1 \right\rceil}
\newcommand{\val}[1]{\left| #1 \right|}

\newcommand{\exc}{ {\rm exc} }

\newcommand{\nequiv}{\not \equiv}
\newcommand{\half}{\frac{1}{2}}
\newcommand{\psii}{\widetilde{\psi}}
\newcommand{\ab} {|\!|}
\newcommand{\mb}{{\widetilde{B(\F)}}}

\title[Local coefficients and Gamma factors]{Local coefficients and Gamma factors for principal series of covering groups}

\author{Fan Gao, Freydoon Shahidi, and Dani Szpruch}
\address{Fan Gao: School of Mathematical Sciences, Yuquan Campus, Zhejiang University, 38 Zheda Road, Hangzhou, China 310027}
\email{gaofan.math@gmail.com}
\address{Freydoon Shahidi: Department of Mathematics, Purdue University, 150 N. University Street, West Lafayette, IN 47907}
\email{shahidi@math.purdue.edu}
\address{Dani Szpruch: Department of Mathematics and Computer Science, Open University of Israel, Raanana 43107, Israel}
\email{dszpruch@openu.ac.il}

\subjclass[2010]{Primary 11F70; Secondary 22E50}
\keywords{covering groups, Whittaker functionals, local coefficients matrix, scattering matrix, gamma factor, Plancherel measure}
\maketitle

\begin{abstract} We consider an $n$-fold Brylinski-Deligne cover of a reductive group over a $p$-adic field. Since the space of Whittaker functionals of an irreducible genuine representation of such a cover is not one-dimensional, one can consider a local coefficients matrix arising from an intertwining operator, which is the natural analogue of the local coefficients in the linear case. In this paper, we concentrate on genuine principal series and establish some fundamental properties of such a local coefficients matrix, including the investigation of its arithmetic invariants. As a consequence, we prove a form of the Casselman-Shalika formula which could be viewed as a natural analogue for linear algebraic groups. We also investigate in some depth the behaviour of the local coefficients matrix with respect to the restriction of genuine principal series from covers of $\GL_2$ to $\SL_2$. In particular, some further relations are unveiled between local coefficients matrices and gamma factors or metaplectic-gamma factors.
\end{abstract}
\tableofcontents

\section{Introduction} \label{S:intro}

Let $G$ be the $F$-rational points of a split connected reductive group $\mbf{G}$ over a non-archimedean local field $F$ of characteristic 0. Fix a Borel subgroup $B=TU$ of  $G$.
Let
$$\psi: U \to \C^\times$$ be a nondegenerate character. For any irreducible admissible representation $\pi$ of $G$, the space $\text{Wh}_\psi (\pi)$ of $\psi$-Whittaker functionals of $\pi$, which is the dual of the twisted Jacquet module, plays a crucial role in studying the representation. Indeed, motivated by some observations by Langlands, the second-named author has developed the (Langlands-Shahidi) theory of local coefficients and thus $\gamma$-factors and also $L$-functions, relying on an essential usage of the space $\Wh_\psi(\pi)$, whenever $\pi$ is generic, i.e., $\Wh_\psi(\pi)\ne 0$. See \cite{Sha1, Sha2, Sha85, Sha88, Sha3}. By results of Gelfand-Kazhdan \cite{GK}, Shalika \cite{Shal} and Rodier \cite{Rod1}, one has a uniform bound that
\begin{equation} \label{E:M1}
\dim \Wh_\psi(\pi)  \le 1 \text{ for every }  \pi \in \text{Irr}(G).
\end{equation}
What is equally important is the Casselman-Shalika formula \cite{CS} for local unramified representation, which relates value of the unique Whittaker function  to $L$-functions. This uniqueness (or multiplicity-one) property  has been applied extensively in various methods studying local representations of $G$, and global automorphic representations concerning $L$-functions for an adelic group. This is especially the case for the Langlands-Shahidi method mentioned above, which has many important applications in establishing various cases of the Langlands functoriality (see \cite{KSh2, KSh3, Kim4, ASh1, ASh2, CKPSS1, CKPSS2, KiKr}).

In this paper, we consider an $n$-fold Brylinski-Deligne cover $\wt{G}$ of $G$ arising from \cite{BD}. In this case, the space $\Wh_\psi(\pi)$ of Whittaker functionals of an irreducible genuine representation $\pi$ of $\wt{G}$ is in general not one-dimensional, i.e., \eqref{E:M1} fails. However, one can consider a \emph{local coefficients matrix} arising from an intertwining operator, which is the natural analogue of the local coefficients in the linear algebraic case mentioned above. We concentrate on genuine principal series of $\wt{G}$, study the arithmetic invariants associated to the local coefficients matrix, and investigate some consequences including proving a natural analogue of the Casselman-Shalika formula in the covering setting. This paper could be viewed as a continuation of our previous work \cite{GSS1}. In loc. cit. we studied the local coefficients matrix associated to unramified principal series of  the degree $n$-fold cover of $\SL_2$. In this paper, we carry out an extensive study for unramified principal series of covers of a general split connected reductive group. We also deal with ramified principal series for covers of $\GL_2$ and $\SL_2$. Such an investigation is directed towards a potential theory of $\gamma$-factors and $L$-functions for covering groups in the framework of the Langlands-Shahidi method (see \cite{Szp4, Ga1, GSS1} and references therein).
\vskip 5pt

Now we explain more precisely the setting and problems we study in this paper. Assume that $F^\times$ contains the full group $\bbmu_n$ of $n$-th roots of unity. Arising from certain $\mbf{K}_2$-extensions classified by Brylinski and Deligne \cite{BD}, there are natural $n$-fold central covers:
$$\begin{tikzcd}
\bbmu_n \ar[r, hook] &  \wt{G}  \ar[r, two heads] & G.
\end{tikzcd}$$
Denote by $\Irr(\wt{G})$ the set of isomorphism classes of irreducible admissible representations of $\wt{G}$ such that $\bbmu_n$ acts by a fixed embedding $\bbmu_n \into \C^\times$. Such a representation is called genuine. Since the cover $\wt{G}$ splits uniquely over the unipotent radical $U$ of the Borel subgroup, we identify $U$ as a subgroup of $\wt{G}$. For every $\pi\in \Irr(\wt{G})$, we denote again by $\dim \Wh_\psi(\pi)$ the space of $\psi$-Whittaker functionals of $\pi$, where $\psi: U \to \C^\times$ is the nondegenerate character as above.

For a fixed cover $\wt{G}$, it is shown in \cite{Szp1, GSS1} that $\dim \Wh_\psi(\pi) \le 1$ holds for every $\pi \in \Irr(\wt{G})$ if and only if the covering torus $\wt{T}$ is abelian. The latter condition rarely holds for a general $\wt{G}$, and thus the multiplicity-one property \eqref{E:M1} fails in general. For a fixed $\wt{G}$, we expect an upper bound (depending on $\wt{G}$) for the set $\set{\dim \Wh_\psi(\pi): \pi\in \Irr(\wt{G})}$, see Conjecture \ref{C:uniUB}. However, such upper bounds for varying $\wt{G}$ are not uniformly bounded above, unlike the linear algebraic case where \eqref{E:M1} holds for all $G$. This clearly suggests the necessity of new methods to attack problems from studying $\Irr(\wt{G})$. Indeed, the difficulty that arises is already observed by Kubota \cite{Kub} in his work on theta series, and also elucidated by Shimura \cite{Shim} in his correspondence on half-integral weight modular forms: the Fourier coefficients of a modular form on $\wt{G}$ involve interesting arithmetic information, yet only part of which is accessible by the method of Hecke operators. The work of Waldspurger \cite{Wal1} further illustrates on the subtleties on the parametrization of $\Irr (\wt{\SL}_2^{(2)})$, even though $\wt{\SL}_2^{(2)}$ actually has the multiplicity-one property. Instead of giving an elaborate discussion of the high multiplicity of $\Wh_\psi(\pi)$ for $\pi \in \Irr(\wt{G})$, we refer the reader to the work in \cite{Del1, KP, Ga2}.

Nonetheless, we have $\dim \Wh_\psi(\pi) < \infty$ for every $\pi \in \Irr(\wt{G})$ (see \cite{Pate}), and there is the natural  map
$$T(w, \sigma)^*:  \Wh_\psi(I({}^w \sigma) )  \to \Wh_\psi( I(\sigma)  )$$
arising from dualizing the standard intertwining operator
$$T(w, \sigma): I(\sigma) \to I({}^w \sigma)$$
 between two parabolically induced representations. There is also a natural isomorphism
 $$\Wh_\psi(I(\sigma))\simeq \Wh_\psi(I({}^w \sigma)) $$
 of vector spaces which, by pre-composing with $T(w, \sigma)^*$,  gives an endomorphism (see \S \ref{SS:lcm})
$$ \mca{T}(w, \sigma)^*: \Wh_\psi( I(\sigma)  ) \to \Wh_\psi( I(\sigma)  ).$$
It is then a natural question to ask:
\begin{enumerate}
\item[$\bullet$]  what arithmetic information regarding $\sigma$ is encoded in the characteristic polynomial
$$\bigwedge^{\rm top} (X\cdot \text{id}- \mca{T}(w, \sigma)^*)=\det(X \cdot \text{id}- \mca{T}(w, \sigma)^*)?$$
\end{enumerate}
Our investigation in this paper is motivated by this question.

If $n=1$, i.e., $\wt{G}=G$, then $\mca{T}(w, \sigma)^*$ is scalar valued and is in fact the \emph{reciprocal} of the local coefficients studied by the second-named author for linear algebraic groups, which shows that it encodes essential information of the representation $\sigma$. Indeed, the theories of $\gamma$-factors and thus $L$-functions are both developed from $\mca{T}(w, \sigma)^*$, and they are fundamental ingredients in the Langlands-Shahidi method of studying representations of $G$ and its global counterpart.

For general $\wt{G}$, we believe that, as an indispensable step towards a theory of $\gamma$-factors or $L$-functions for covering groups, it is important to understand the above characteristic polynomial $\det(X \cdot \text{id}- \mca{T}(w, \sigma)^*)$.  In particular, it is natural to consider the two invariants (i.e., as coefficients of the polynomial)
$$\Tr(\mca{T}(w, \sigma)^*) \text{ and } \det (\mca{T}(w, \sigma)^*).$$
However, there are fundamental obstacles to understanding  these invariants for general $\sigma$ and $w$. Indeed, it seems to us that it is a difficult task at the moment to determine completely $\dim \Wh_\psi(\sigma)$ for a general $\sigma\in \Irr(\wt{M})$, where $\wt{M} \subset \wt{G}$ is a covering Levi subgroup. This very first fundamental obstacle on determining $\dim \Wh_\psi(\sigma)$ and thus the size of any matrix representing $\mca{T}(w, \sigma)^*$ already poses much challenge in understanding the two invariants. It is even harder to parametrize the space $\Wh_\psi(\sigma)$ itself. One exceptional case is when $\wt{M}=\wt{T}$, in which case
$\Wh_\psi(\sigma) \simeq \sigma^\vee$ as vector spaces. For general $\wt{M}$, a widely studied family  contains the so-called theta representations $\Theta(\wt{M}, \chi)$ (see \cite{KP, Suz1, Suz2, Suz3, Kap002, Kap003, Kap004, Kap005, Ga2}), which are just characters of $M$ if $n=1$. There are also other work when $\sigma$ is a depth-zero supercuspidal representation (\cite{Blo, GW}).

For the above reasons, we specialize eventually to the case where $\sigma \in \Irr(\wt{T})$ is a genuine representation of the covering torus, and thus $I(\sigma)$ is a genuine principal series representation. In particular, we carry out an extensive study of $\det (\mca{T}(w, \sigma)^*)$ when $\sigma$ is an unramified representation of $\wt{T}$ (and thus $I(\sigma)$ is an unramified principal series). In particular, it will recover our previous result on covers of $\wt{\SL}_2$ (cf. \cite{GSS1}) and the work by Budden on $\wt{\GL}_2$  \cite{Bud}. We also study  $\Tr(\mca{T}(w, \sigma)^*)$ and $\det(\mca{T}(w, \sigma)^*)$ for $\wt{\SL}_2$ and $\wt{\GL}_2$ extensively for general $\sigma$, not necessarily the unramified ones.

\subsection{Outline and main results}
In \S \ref{S:cov}, we provide the basic set-up on covering groups and give a brief review of some results to be used in later sections.

In \S \ref{S:2mat}--\S \ref{S:eg}, we concentrate mostly on unramified principal series of a general covering group $\wt{G}$. More precisely, specializing to the case where $\sigma=i(\chi)$ is an unramified representation of $\wt{T}$, the Whittaker space $\Wh_\psi(I(i(\chi)))$ is parametrized by a certain ``moduli space" $\msc{X}_{Q,n}$, which is a finite abelian quotient of the cocharacter lattice $Y$ of $\mbf{G}$ by a sublattice $Y_{Q,n}$:
$$f_\msc{X}: Y\onto \msc{X}_{Q,n}:=Y/Y_{Q,n}.$$
In particular, one has
$$\dim \Wh_\psi(I(i(\chi))) = \val{\msc{X}_{Q,n}}.$$
Here $\msc{X}_{Q,n}$ is naturally endowed with a \emph{twisted} Weyl group action which we denote by $\w[-]$.
The results in \S \ref{S:2mat}--\S \ref{S:eg}, which we will explain below, rely crucially on certain properties of the twisted action $\w[-]$.

In \S \ref{S:RES}--\S \ref{S:LCM-G}, on the other hand, we treat general (not necessarily unramified) genuine  principal series of $\wt{\GL}_2$ and $\wt{\SL}_2$. As we deal with ramified principal series as well, we take the viewpoint of \cite{Szp6} involving partial zeta-integrals. The relation between entries of the local coefficients matrix and gamma factor or its metaplectic analogue will be exploited to establish our results.

\vskip 10pt

The results in this paper are of multi-fold in a logical order, as we elaborate now.

\subsubsection{{}} \label{SS:(i)}
First, for unramified $i(\chi)$ and a simple reflection $w_\alpha$, we determine explicitly  the invariant $\det(\mca{T}(w_\alpha,i(\chi))^*)$ in terms of the Plancherel measure $\mu(w_\alpha, i(\chi))$ and certain gamma factor $\gamma(w_\alpha, i(\chi))$ or metaplectic gamma factor $\tilde{\gamma}(w_\alpha, i(\chi))$ (see \S \ref{SS:Pg} for the definition of these factors):
\begin{thm}[{Theorem \ref{T:M1}}] \label{T:01}
Loosely speaking, we have either
$$\bigwedge^{ \val{ \msc{X}_{Q,n} } } \mca{T}(w_\alpha,i(\chi))^*  \approx  \mu(w_\alpha, i(\chi))^{-a_\alpha}  \cdot \gamma(w_\alpha, i(\chi))^{-b_\alpha}$$
or
$$\bigwedge^{ \val{ \msc{X}_{Q,n} } } \mca{T}(w_\alpha,i(\chi))^*   \approx  \mu(w_\alpha, i(\chi))^{-a_\alpha}  \cdot \tilde{\gamma}(w_\alpha, i(\chi))^{-b_\alpha}.$$
\end{thm}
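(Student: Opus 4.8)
\emph{Proof proposal.} The plan is to reduce the computation to rank one and then take a product of block-determinants over the orbits of the twisted Weyl action on $\msc{X}_{Q,n}$. Fix the simple reflection $w_\alpha$ and write $\w[c]$ for the corresponding twisted $w_\alpha$-action on $c\in\msc{X}_{Q,n}$, so that $\w[\w[c]]=c$. Since $w_\alpha$ is simple, the standard intertwining operator $T(w_\alpha,i(\chi))$, and hence the induced endomorphism $\mca{T}(w_\alpha,i(\chi))^*$ of $\Wh_\psi(I(i(\chi)))$, is computed entirely inside the rank-one Levi $\wt{M}_\alpha\supset\wt{T}$ via the rank-one intertwining integral along $U_\alpha$. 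First I would record, in the basis $\set{\lambda_c:c\in\msc{X}_{Q,n}}$ of $\Wh_\psi(I(i(\chi)))$ furnished by the parametrization $f_\msc{X}\colon Y\onto\msc{X}_{Q,n}$ of \S\ref{S:2mat}, the precise shape of $\mca{T}(w_\alpha,i(\chi))^*$: the image of $\lambda_c$ involves only $\lambda_c$ and $\lambda_{\w[c]}$. Thus the matrix is block diagonal with respect to the partition of $\msc{X}_{Q,n}$ into $\w[-]$-orbits, each block of size $1$ or $2$, and $\det\mca{T}(w_\alpha,i(\chi))^*=\prod_O\det\big(\mca{T}(w_\alpha,i(\chi))^*|_O\big)$.

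Next I would evaluate the two kinds of blocks by rank-one computations, comparing the rank-one intertwining integral on Whittaker functionals with the defining formulas in \S\ref{SS:Pg}. On a fixed point $c=\w[c]$ the operator acts by a scalar, which I expect to equal $\gamma(w_\alpha,i(\chi))^{-1}$ when the local invariant $n_\alpha$ attached to $\alpha$ is odd and $\tilde{\gamma}(w_\alpha,i(\chi))^{-1}$ when $n_\alpha$ is even -- this is exactly the dichotomy in the statement, reflecting whether $\wt{M}_\alpha$ behaves, along the relevant component of $\chi$, like a linear $\GL_1$-type factor or a genuinely metaplectic one. On a size-$2$ orbit $O=\set{c,\w[c]}$ the $2\times 2$ block has off-diagonal entries built from rank-one Gauss sums; its determinant equals $\mu(w_\alpha,i(\chi))^{-1}$ up to an explicit constant, which is cleanest to extract from the Plancherel relation $\mca{T}(w_\alpha,{}^{w_\alpha}i(\chi))^*\circ\mca{T}(w_\alpha,i(\chi))^*=\mu(w_\alpha,i(\chi))^{-1}\cdot\id$ restricted to $O$. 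Assembling, and letting $a_\alpha$ be the number of size-$2$ orbits and $b_\alpha$ the number of fixed points (so $2a_\alpha+b_\alpha=\val{\msc{X}_{Q,n}}$), the product of block determinants gives $\bigwedge^{\val{\msc{X}_{Q,n}}}\mca{T}(w_\alpha,i(\chi))^*\approx\mu(w_\alpha,i(\chi))^{-a_\alpha}\cdot\gamma(w_\alpha,i(\chi))^{-b_\alpha}$ in the $n_\alpha$-odd case, and the same with $\tilde{\gamma}$ in place of $\gamma$ in the $n_\alpha$-even case, the symbol $\approx$ absorbing an explicit root of unity times a power of $q$.

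The hard part will be the two rank-one identifications: matching the Gindikin--Karpelevich/Gauss-sum form of the rank-one intertwining integral acting on the finite-dimensional Whittaker space against the chosen normalizations of $\gamma$, $\tilde{\gamma}$, and $\mu$, and carefully tracking every cocycle-induced root of unity and $q$-power hidden inside $\approx$; establishing the exact block-diagonal shape of $\mca{T}(w_\alpha,i(\chi))^*$ in the $\set{\lambda_c}$-basis is the second technical point, though it should fall out of the rank-one reduction combined with the explicit parametrization of $\Wh_\psi$ recalled in \S\ref{S:2mat}. A subsidiary check is that the counts $a_\alpha$ and $b_\alpha$ depend only on $\alpha$ -- through $n_\alpha$ and the structure of $\msc{X}_{Q,n}$ -- and not on $\chi$, so that the formula is uniform over the unramified family.
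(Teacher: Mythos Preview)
Your overall architecture---reduce to rank one, decompose by $W_\alpha$-orbits on $\msc{X}_{Q,n}$, identify size-one blocks with (meta-)gamma factors and size-two blocks with the inverse Plancherel measure---matches the paper's. But the crucial structural claim that drives your argument is false: the endomorphism $\mca{T}(w_\alpha,i(\chi))^*$ is \emph{not} block-diagonal in the basis $\{\lambda_c\}$ with respect to twisted $\w[\,\cdot\,]$-orbits. Look at Example~\ref{SL2-3} ($\wt{\SL}_2^{(3)}$, basis indexed by $\{0,\alpha^\vee,-\alpha^\vee\}$): the twisted fixed point is $-\alpha^\vee$ and $\{0,\alpha^\vee\}$ is the free orbit, yet the local coefficients matrix $\mca{M}_{\mfr{B}_\chi}(w_\alpha,i(\chi))$ has nonzero $(0,-\alpha^\vee)$ and $(\alpha^\vee,-\alpha^\vee)$ entries and a zero $(\alpha^\vee,\alpha^\vee)$ entry. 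In general, Theorem~\ref{T:M2} shows that $\mca{M}(\s_{y_i},\s_{y_j})\ne 0$ forces $y_i\equiv \w_\alpha(y_j)$ or $y_i\equiv y_j+\alpha^\vee \pmod{Y_{Q,n}}$; neither condition is ``same twisted orbit.''

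What \emph{is} block-diagonal with respect to twisted orbits is the scattering matrix $\mca{S}_\mfr{R}(w_\alpha,i(\chi);r_{w_\alpha}^{\rm un})$ representing $T(w_\alpha,\chi;r_{w_\alpha}^{\rm un})^*$ (Corollary~\ref{C:diag}). The paper's key maneuver is the factorization $\mca{M}_{\mfr{B}_\chi}=\mca{S}_\mfr{R}\cdot\mca{C}(\mfr{B}_{{}^{\w_\alpha}\chi},\mfr{B}_\chi;r_{w_\alpha}^{\rm un})$ of Lemma~\ref{L:comp}: compute $\det\mca{S}_\mfr{R}$ block by block (Propositions~\ref{P:abs}, \ref{P:nor}, \ref{P:spe}) and $\det\mca{C}$ separately (Proposition~\ref{P:det-C}), then multiply. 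Your proposal is missing exactly this intermediary; without it there is no orbit-by-orbit product to take.

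A secondary issue: your dichotomy ``$n_\alpha$ odd gives $\gamma$, $n_\alpha$ even gives $\tilde\gamma$'' is not the correct criterion. The split is between $\alpha$-normal and $\alpha$-special fixed points (Definition~\ref{D:3cases}); by Proposition~\ref{P:dich} special points exist only in type $C_r$ with $\alpha^\vee$ the short simple coroot and $n_\alpha\equiv 2\pmod 4$. In all other situations the fixed-point scalars are ordinary $\gamma$-factors regardless of the parity of $n_\alpha$.
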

Here $\approx$ indicates an equality modulo some other factors, which actually embody part of the information of the group structure of $\wt{G}$. For the exact formula, see Theorem \ref{T:M1}.

To facilitate the computation, we define a local coefficients matrix to be the matrix
$$\mca{M}_\mfr{B}(w_\alpha, i(\chi))$$
 representing $\mca{T}(w_\alpha,i(\chi))^*$ with respect to an ordered basis $\mfr{B}$ of $\Wh_\psi(I(i(\chi)))$, see Definition \ref{D:LCM}; thus,
$$\bigwedge^{ \val{ \msc{X}_{Q,n} } } \mca{T}(w_\alpha,i(\chi))^* = \det( \mca{M}_\mfr{B}(w_\alpha, i(\chi))  ). $$
The main difficulty with computing $\det(\mca{M}_\mfr{B}(w_\alpha, i(\chi)))$ is that there is no preferred natural choice of $\mfr{B}$ such that the matrix $\mca{M}_\mfr{B}(w_\alpha, i(\chi))$ takes a simple form. (See however Theorem \ref{T:SL-c3} where $\det(\mca{M}_\mfr{B}(w_\alpha, i(\chi)))$ is computed for ramified $i(\chi)$ for the group $\wt{\SL}_2$ in a certain case.)

To overcome this, we relate the local coefficients matrix to another matrix (see Definition \ref{D:Sca})
$$\mca{S}_\mfr{R}(w_\alpha, i(\chi); r_w)$$
 which appears more frequently in literature: the so-called scattering matrix. Here $\mfr{R} \subset Y$ is a set of representatives for the space $\msc{X}_{Q,n}$.
The matrix $\mca{S}_\mfr{R}(w_\alpha, i(\chi); r_w)$ was first studied in \cite{KP} and then further investigated extensively in \cite{KP, Pat, Suz1, Suz2, Suz3, Suz4, Mc2}. Here $r_w$ is a choice of isomorphism between the two isomorphic representations ${}^w i(\chi)$ and $i({}^\w \chi)$ of $\wt{T}$. For general $\chi$ (not necessarily unramified), there is no natural choice of $r_w$. However, in the unramified setting, one has a canonical choice $r_w^{\rm un}$ as given in \eqref{r-w}. In any case, the disadvantage of $\mca{S}_\mfr{R}(w_\alpha, i(\chi); r_w^{\rm un})$ is that it is not a good substitute for the local coefficients matrix in pursuit of arithmetic invariants. In particular, the characteristic polynomial
$$\det(X \cdot I_{ \val{ \msc{X}_{Q,n} } }-\mca{S}_\mfr{R}(w_\alpha, i(\chi); r_w^{\rm un}) )$$
 depends sensitively on the choice $\mfr{R}$ for general $\chi$.

 Nevertheless, the advantageous side of $\mca{S}_\mfr{R}(w_\alpha, i(\chi); r_w^{\rm un})$ is that it is ``essentially" a diagonal-block matrix with size either two or one. The two possible sizes here correspond to the free orbits or trivial orbits of the group
 $$W_\alpha=\set{\text{id}, \w_\alpha}$$
 acting on the space $\msc{X}_{Q,n}$ with respect to the twisted Weyl-action alluded to above. In fact, the number of free $W_\alpha$-orbits is the exponent $a_\alpha$ of the Plancherel measure which appears in 
 $$\det( \mca{S}_\mfr{R}(w_\alpha, i(\chi); r_w^{\rm un})  ),$$
 while the number of trivial $W_\alpha$-orbits equals the exponent $b_\alpha$ of the factor $\gamma(w_\alpha, i(\chi))$ or $\tilde{\gamma}(w_\alpha, i(\chi))$ (but not both). Moreover, for any representative set $\mfr{R} \subset Y$, we could have a natural choice of basis $\mfr{B}$ and thus the local coefficients matrix $\mca{M}_\mfr{B}(w_\alpha, i(\chi))$, which is then closely related to $\mca{S}_\mfr{R}(w_\alpha, i(\chi); r_w^{\rm un})$: their difference is given by an invertible matrix $\mca{C}(\mfr{B}_{{}^\w\chi}, \mfr{B}_\chi; r_w^{\rm un})$ arising from a certain change of basis (see Lemma \ref{L:comp}):
$$ \mca{M}_{\mfr{B}}(w_\alpha, i(\chi))=  \mca{S}_\mfr{R}(w_\alpha, i(\chi); r_{w_\alpha}^{\rm un}) \circ \mca{C}( \mfr{B}_{ {}^{\w_\alpha} \chi}, \mfr{B}_\chi; r_{w_\alpha}^{\rm un}).$$
 This is the key property that we exploit in \S \ref{S:2mat}--\S \ref{S:PG} to prove the above formula for $\det(\mca{M}_\mfr{B}(w_\alpha, i(\chi)))$.

In fact, at the end of \S \ref{S:PG}, we obtain an explicit description of entries of the local coefficients matrix $\mca{M}_{\mfr{B}}(w_\alpha, i(\chi))$, see Theorem \ref{T:M2}. As a consequence, we analyze for $\wt{\SL}_2^{(n)}$ the invariant $\text{Tr}(\mca{T}(w_\alpha, i(\chi))$, which is shown to be an average of the reciprocals of unramified gamma-factors. See Proposition \ref{P:trace}. This result agrees with \cite{Szp6} which treats the cases for general $\chi$ (not necessarily unramified) when $4\nmid n$; it also includes the case $4|n$ which is already implicit in \cite{GoSz, Szp6}. In fact, in Theorem \ref{trcodd24} and Corollary \ref{exptrun}, we will give another proof of Proposition \ref{P:trace} from the perspective of partial zeta-integral and show that one has a natural expression of $\text{Tr}(\mca{T}(w_\alpha, i(\chi))$ in terms of averaged sum of gamma or metaplectic factors over both ramified and unramified characters; however, the partial sum over ramified characters vanishes, and thus we recover Proposition \ref{P:trace}.

 \subsubsection{} \label{SS:(ii)}
 Second, motivated by the discussion in \S \ref{S:2mat}--\S \ref{S:PG}, it is natural to consider the set
$$(\msc{X}_{Q,n})^W.$$
We show in \S \ref{S:CS} that there exists an exceptional subset $Y_n^{\rm exc} \subset f^{-1}_\msc{X}( \msc{X}_{Q,n}^W )$ (possibly empty) giving rise to certain exceptional subspaces of $\Wh_\psi (I(\chi))$, where a coarse theory of gamma-factors and $L$-functions for covering groups inhabits. There are several important consequences of this we highlight below.

The usage of elements in $Y_n^{\rm exc}$ enables us to obtain the following analogue of the Casselman-Shalika formula for covering groups.
\begin{thm}[{Theorem \ref{T:WV} and Theorem \ref{T:CS}}]
Let $z\in Y_n^{\rm exc}$. One has a dominant element $\s_{-z} \in \wt{T}$ and
\begin{equation} \label{E:CS0}
\mca{W}_{\s_z}(\s_{-z}) = \delta_B^{1/2}(\s_{-z}) \cdot \prod_{\alpha>0} (1-q^{-1} \chi_\alpha);
\end{equation}
or in another formulation,
\begin{equation} \label{E:CS00}
\mca{W}(\s_{y+(-z)}) =  \delta(\s_{y}) \cdot {\rm Tr}( {}^L\bbrho(\varpi); \pi_y) \cdot \mca{W}(\s_{-z}),
\end{equation}
where $y\in Y_{Q,n}^+$ is a dominant element and $\pi_y$ is the highest-weight-$y$ representation of the dual group $\wt{G}_{Q,n}^\vee$ of $\wt{G}$.
\end{thm}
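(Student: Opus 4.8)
The plan is to obtain both formulas as specializations and reformulations of the general structure theory developed in the unramified sections, applied now at the exceptional points. First I would recall from \S\ref{S:2mat}--\S\ref{S:PG} that for unramified $i(\chi)$ the Whittaker space $\Wh_\psi(I(i(\chi)))$ is parametrized by $\msc{X}_{Q,n} = Y/Y_{Q,n}$, with a basis $\mfr{B}$ attached to a set of representatives $\mfr{R}\subset Y$; the distinguished Whittaker functionals are indexed by the classes $f_\msc{X}(y)$, and the associated Whittaker function $\mca{W}_{\s_z}$ is the one normalized (in the sense of the exceptional theory of \S\ref{S:CS}) by a choice $z\in Y_n^{\rm exc}\subset f_\msc{X}^{-1}(\msc{X}_{Q,n}^W)$. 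The key input is that for $z$ in the exceptional set, the corresponding functional is $W$-fixed (up to the twisted action $\w[-]$ and the scalars coming from $r_w^{\rm un}$ in \eqref{r-w}), so that the recursive/intertwining relations governing $\mca{W}_{\s_z}$ collapse to a single orbit behaviour rather than the full matrix behaviour of $\mca{M}_\mfr{B}(w_\alpha,i(\chi))$.

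For \eqref{E:CS0}, I would run the standard Casselman--Shalika recursion (as in \cite{CS}, adapted to covers as in \cite{Mc2} and the references in the excerpt): express $\mca{W}_{\s_z}(\s_{-z})$ via the action of the intertwining operators $T(w,\cdot)$, expand over the Weyl group, and use the explicit entries of the local coefficients matrix from Theorem \ref{T:M2} together with the $W$-invariance at the exceptional point to see that all the ``off-diagonal'' contributions that obstruct a clean formula in the general case vanish. What remains is precisely the Weyl-denominator-type sum $\sum_{w\in W}(\cdots)$, which telescopes to $\delta_B^{1/2}(\s_{-z})\prod_{\alpha>0}(1-q^{-1}\chi_\alpha)$ by the same combinatorial identity (Weyl character formula / Macdonald-type summation) used in the linear case; the point is that the exceptional normalization makes $\chi_\alpha$ here the genuinely well-defined ``unramified parameter'' attached to $\wt{G}_{Q,n}^\vee$ rather than an ill-defined quantity on $\wt{T}$.

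For \eqref{E:CS00}, I would reformulate \eqref{E:CS0} representation-theoretically: since $\s_{-z}$ is dominant and $Y_{Q,n}^+$ is the cone of dominant cocharacters of the dual group $\wt{G}_{Q,n}^\vee$, the element $\s_{y+(-z)}$ corresponds to shifting the dominant weight by $y\in Y_{Q,n}^+$. The product $\prod_{\alpha>0}(1-q^{-1}\chi_\alpha)$ is (up to the $\delta$-factor) the inverse of $\prod_{\alpha>0}(1-q^{-1}\chi_\alpha)^{-1} = \sum_{\lambda} \dim(\cdots)q^{-?}\cdots$, so applying the Weyl character formula to the highest-weight-$y$ representation $\pi_y$ of $\wt{G}_{Q,n}^\vee$ turns the ratio $\mca{W}(\s_{y+(-z)})/\mca{W}(\s_{-z})$ into $\delta(\s_y)\cdot\Tr({}^L\bbrho(\varpi);\pi_y)$, exactly as in the Casselman--Shalika description of unramified Whittaker functions. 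This is a purely formal manipulation once \eqref{E:CS0} is in hand, using the Satake-type identification of $\chi$ with a semisimple conjugacy class $\bbrho(\varpi)$ in the dual group.

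The main obstacle I expect is not the recursion itself but establishing that the exceptional set $Y_n^{\rm exc}$ is populated by elements whose Whittaker functionals behave ``linearly'' enough --- i.e.\ verifying that the change-of-basis matrix $\mca{C}(\mfr{B}_{{}^\w\chi},\mfr{B}_\chi;r_w^{\rm un})$ (from Lemma \ref{L:comp}) acts trivially, or by an explicit harmless scalar, on the component of $\Wh_\psi(I(i(\chi)))$ indexed by $z\in Y_n^{\rm exc}$, so that the single relevant diagonal entry of $\mca{M}_\mfr{B}(w_\alpha,i(\chi))$ is literally the reciprocal unramified gamma-factor with no correction. Getting the cocycle/scalar bookkeeping right here (which involves the Gauss sums $\g$ implicit in the scattering matrix of \cite{KP}) is the delicate step; once it is pinned down, the rest follows the classical pattern.
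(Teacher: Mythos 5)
Your overall strategy is the same as the paper's: start from a formula expressing the unramified Whittaker value $\mca{W}_\gamma(\wt{t})$ as a sum over the Weyl group, use the exceptional normalization to kill off-diagonal terms, and then recognize the surviving sum as a Weyl-denominator expansion. But two points in your sketch drift away from how the argument is actually carried through, and fixing them is important.

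First, the paper does not go through the local coefficients matrix $\mca{M}_\mfr{B}(w_\alpha, i(\chi))$ at all for this theorem, and hence never has to worry about the change-of-basis factor $\mca{C}(\mfr{B}_{{}^\w\chi},\mfr{B}_\chi;r_w^{\rm un})$ that you flag as the ``delicate step.'' It uses instead the formula \eqref{P:Pat} (due to Kazhdan--Patterson and McNamara), which is already stated directly in terms of the scattering matrix entries $\tau(\w,{}^{\w^{-1}}\chi,\gamma,\cdot)$ together with the Gindikin--Karpelevich coefficients $\gk(\w_G\w^{-1},i(\chi))$. Working with $\tau$ rather than $\mca{M}$ dissolves your obstacle entirely: the support property $\tau(\w,\chi,\s_y,\s_z)=0$ unless $\hat y$ lies in the twisted $W(\Delta_\w)$-orbit of $\hat z$ (Corollary \ref{C:diag}) means only the diagonal entry survives when $z\in Y_n^{\rm exc}$. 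So it is not that $\mca{C}$ acts by a ``harmless scalar''; it is simply not in the picture. Second, the mechanism that makes the diagonal entry equal to $\gamma(w_\alpha,i(\chi))^{-1}$ with no residual scalar is not additional cocycle/Gauss-sum bookkeeping; it is built into Definition \ref{D:exc}. Proposition \ref{P:nor} gives $\tau(w_\alpha,\chi,y,y)=\chi_\alpha^{\angb{y_\rho}{\alpha}/n_\alpha+1}\gamma(w_\alpha,i(\chi))^{-1}$, and the exceptional condition $\angb{z_\rho}{\alpha}=-n_\alpha$ is tuned precisely so that the exponent vanishes. That is why the formula comes out cleanly; there is no separate normalization issue to resolve. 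From there Lemma \ref{L:CS} collapses the $W$-sum in \eqref{P:Pat}, and Theorem \ref{T:WV} follows by the Weyl denominator identity, which matches your outline.

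On \eqref{E:CS00}, your description is a bit too casual: it is not a purely formal consequence of \eqref{E:CS0}, because you need the value $\mca{W}(\s_{y+(-z)})$ for arbitrary dominant $y\in Y_{Q,n}^+$, not only at $y=0$. The paper first proves Proposition \ref{P:valW}, running the same collapse of \eqref{P:Pat} but at the torus element $\s_{y+(-z)}$ and tracking the extra character values $\chi(\s_{\w(y)})$ that appear, and only then combines it with the Weyl character formula interpretation in Proposition \ref{P:WL} (which also makes precise the Satake-parameter identification you allude to, and accounts for the $\chi_\psi(\s_y)$ normalization in \eqref{E:Tr}). Your instinct about the dual-group character formula is right, but the step from \eqref{E:CS0} to \eqref{E:CS00} is a genuine second run of the argument, not a cosmetic rewrite.
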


We remark that if $\wt{G}$ is a cover of a semisimple $G$ not of metaplectic type (see Definition \ref{D:metaG}), then $$Y_n^{\rm exc} = \set{\rho - \rho_{Q,n}} \cap Y.$$
In particular, in the linear algebraic case when $n=1$, we have $\rho_{Q,n} =\rho$ and thus
$$Y_n^{\rm exc}= \set{0}.$$ The formula \eqref{E:CS0}  above then recovers the Casselman-Shalika formula for split linear algebraic groups \cite{CS}.  Note that formula \eqref{E:CS00} only explains the Whittaker value for $\s_{y+(-z)} \in \wt{T}$ with $\s_y\in Z(\wt{T})$. This is not surprising, as the value $\mca{W}(\s_y)$ for general $y\in Y$ is a delicate issue and involves the deep theory of Weyl group multiple Dirichlet series (see \cite{BBCFH, BBF1, BBFH1}).
We also note that \eqref{E:CS0} is implicit in the work \cite{CO, Mc2, PP1}, see \S \ref{SS:other-wk}. For Kazhdan-Patterson coverings of $\GL_r$, such a formula is explicated in the work of Suzuki on Bump-Hoffstein conjecture (see \cite[Proposition 3.4]{Suz2}). The works of Suzuki \cite{Suz1, Suz2, Suz3, Suz4}  in fact provide many insights on the unramified Whittaker function for covering groups. In any case, the formula \eqref{E:CS0} shows clearly the obstacle of studying global $L$-functions from computing Whittaker-Fourier coefficients, as $z \ne 0$ for general covering groups.

Another application of considering exceptional points $Y_n^{\rm exc}$ (or just $f_\msc{X}^{-1}(\msc{X}_{Q,n}^W)$) is to give a crude form of the theory of $\gamma$-factors for general parabolic induction where the inducing representation on the Levi subgroup is a full unramified principal series representation. See Theorem \ref{T:naiveLS} for details. Again, this gives another illustration on the remarkable property of the set $Y_n^{\rm exc}$ of exceptional points and also $\msc{X}_{Q,n}^W$.

\subsubsection{} \label{SS:(iii)}
Third, in \S \ref{S:Poinc} we study the set $(\msc{X}_{Q,n})^W$ and also its size
$$b_{W,n}:= \val{(\msc{X}_{Q,n})^W}$$
qualitatively.

\begin{thm}[{Theorem \ref{T:pi1} and Theorem \ref{T:bP}}]  \label{T:1-3}
Let $\mbf{G}$  be a semisimple group.
\begin{enumerate}
\item[(i)] One has $b_{W,n} \le \val{\pi_1(\wt{G}_{Q,n}^\vee)}$, where $\pi_1(\wt{G}_{Q,n}^\vee)$ is the fundamental group  of the dual group $\wt{G}_{Q,n}^\vee$ of $\wt{G}$.
\item[(ii)] If $\mbf{G}$ is simply-connected, then $b_{W,n} \le 1$ for every $n$.
\item[(iii)] If $\mbf{G}= \mbf{G}_{ad}$ is of adjoint type, then $b_{W,n}= \val{\pi_1(\wt{G}_{Q,n}^\vee) }$ for all $n$.
\item[(iv)] For fixed $(\mbf{G}, Q)$ with $\mbf{G}$ almost-simple, $b_{W,n}$ is periodic as a function of $n\in \N$. Such periodicity is compatible with that of the dual group $\wt{G}_{Q,n}^\vee$ for varying $n$, as proved in Proposition \ref{P:DG}.
\end{enumerate}
\end{thm}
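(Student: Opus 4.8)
The plan is to make the set $(\msc{X}_{Q,n})^W$ completely explicit from the structure of the twisted action $\w[-]$, rephrase everything as a comparison of indices of lattices, and then deduce (i)--(iv). \emph{Shape of the fixed locus.} The twisted $W$-action on $\msc{X}_{Q,n}=Y/Y_{Q,n}$ is affine: its linear part is the ordinary $W$-action on $Y$ modulo $Y_{Q,n}$, and its translation part is the $1$-cocycle $w\mapsto (w-\id)\eta$ with $\eta:=\rho-\rho_{Q,n}\in Y\otimes\Q$ (note $(w_\alpha-\id)\eta=(n_\alpha-1)\alpha^\vee\in Y$, where $n_\alpha:=n/\gcd(n,Q(\alpha^\vee))$). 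Hence $(\msc{X}_{Q,n})^W$ is either empty or a torsor under the group of \emph{linear} invariants $L_0/Y_{Q,n}$, where
$$L_0:=\{y\in Y:\ \langle y,\alpha\rangle\,\alpha^\vee\in Y_{Q,n}\ \text{for every simple root}\ \alpha\}\ \supseteq\ Y_{Q,n};$$
it is nonempty exactly when the above cocycle is a coboundary, i.e. when $\rho-\rho_{Q,n}$ is congruent modulo $Y$ to a $W$-fixed vector of $(Y\otimes\Q)/Y_{Q,n}$ — a solvability condition depending on $n$ only through congruences involving the fixed integers $Q(\alpha^\vee)$. In all cases $b_{W,n}\in\{0,\,[L_0:Y_{Q,n}]\}$; and when $\mbf{G}=\mbf{G}_{ad}$, so that $\rho\in Y$, it is never empty.

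\emph{(i) and (iii).} Put $X_{Q,n}:=\Hom(Y_{Q,n},\Z)$, the cocharacter lattice of $\wt{G}_{Q,n}^\vee$, and let $\Lambda:=\sum_\alpha\Z\cdot\tfrac1{n_\alpha}\alpha\subseteq X_{Q,n}$ be the coroot lattice of $\wt{G}_{Q,n}^\vee$, so that $\val{\pi_1(\wt{G}_{Q,n}^\vee)}=[X_{Q,n}:\Lambda]$. Duality of finite-index sublattices gives $[L_0:Y_{Q,n}]=[X_{Q,n}:\Hom(L_0,\Z)]$, so (i) amounts to the inclusion $\Lambda\subseteq\Hom(L_0,\Z)$, i.e. to $n_\alpha\mid\langle y,\alpha\rangle$ for all $y\in L_0$ and all simple $\alpha$. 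This is immediate at every simple $\alpha$ for which $\langle Y,\alpha\rangle=\Z$; it can fail only at a simple $\alpha$ with $\langle Y,\alpha\rangle=2\Z$ and $v_2(n)$ large compared with $v_2(Q(\alpha^\vee))$, but one checks that in exactly those situations the solvability condition above fails on parity grounds, so $(\msc{X}_{Q,n})^W=\emptyset$ and $b_{W,n}=0$. Either way $b_{W,n}\le\val{\pi_1(\wt{G}_{Q,n}^\vee)}$. For (iii), when $\mbf{G}=\mbf{G}_{ad}$ we have $Y=P^\vee$ with fundamental coweights $\varpi_\alpha^\vee$ a $\Z$-basis dual to the simple roots; no anomaly can occur, $L_0=\bigoplus_\alpha n_\alpha\Z\varpi_\alpha^\vee$, and the fixed locus is nonempty (witnessed by $\rho=\sum_\alpha\varpi_\alpha^\vee$). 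Since here $\Hom(L_0,\Z)=\Lambda$ exactly, the inequality in (i) becomes an equality: $b_{W,n}=[L_0:Y_{Q,n}]=\val{\pi_1(\wt{G}_{Q,n}^\vee)}$.

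\emph{(ii).} The whole construction is multiplicative over the almost-simple factors of $\mbf{G}$ (block-diagonality of $Q$ forces $\msc{X}_{Q,n}=\prod_i\msc{X}_{Q_i,n}$ as $W$-sets and $b_{W,n}=\prod_i b_{W_i,n}$), so one reduces to $\mbf{G}$ almost-simple and simply connected, where $Y=\sum_\alpha\Z\alpha^\vee$ and $Y_{Q,n}=\{y\in Y:n_\alpha\mid\langle y,\alpha\rangle\ \forall\alpha\}$. The plan is to show that $(\msc{X}_{Q,n})^W\ne\emptyset$ forces $L_0=Y_{Q,n}$, hence $b_{W,n}=1$, while otherwise $b_{W,n}=0$. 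Concretely, $L_0$ can strictly contain $Y_{Q,n}$ only in the types $A_1$ and $C_r$ ($r\ge2$), where some simple root $\alpha$ satisfies $\langle Y,\alpha\rangle=2\Z$, and in those types a direct $2$-adic computation — parallel to the one carried out for $\wt{\SL}_2^{(n)}$ — shows that whenever $L_0\ne Y_{Q,n}$ the shift needed to trivialize the translation cocycle is obstructed, so $(\msc{X}_{Q,n})^W=\emptyset$. The hard part is precisely this last step: ruling out nonzero \emph{twisted} invariants exactly when the \emph{linear} invariants jump; I expect it to require the explicit analysis of those two families (equivalently, of the single simple factor carrying the root with $\langle Y,\alpha\rangle=2\Z$).

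\emph{(iv).} For fixed almost-simple $(\mbf{G},Q)$ there are only finitely many roots, and all the data feeding into $b_{W,n}$ — the numbers $n_\alpha$, the congruences cutting out $Y_{Q,n}$ inside $Y$, the index $[L_0:Y_{Q,n}]$, and the solvability condition for $(\msc{X}_{Q,n})^W\ne\emptyset$ — depend on $n$ only through $\gcd(n,\cdot)$ and $v_2(n)$ relative to the fixed integers $Q(\alpha^\vee)$. Hence $b_{W,n}$ is purely periodic in $n$; and since $b_{W,n}\le\val{\pi_1(\wt{G}_{Q,n}^\vee)}$ by (i) while the root datum of $\wt{G}_{Q,n}^\vee$ is periodic in $n$ by Proposition \ref{P:DG}, the period one gets for $b_{W,n}$ refines that of $\wt{G}_{Q,n}^\vee$, which is the asserted compatibility.
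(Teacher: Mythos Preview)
Your approach is genuinely different from the paper's, and for (i)--(iii) it is essentially sound. The paper does not work directly with the linear invariants $L_0/Y_{Q,n}$ at all; instead it passes to $\msc{X}_{Q,n}^{sc}=Y/Y_{Q,n}^{sc}$, identifies $(\msc{X}_{Q,n}^{sc})^W$ with $(Y_\rho\cap P(Y_{Q,n}^{sc}))/Y_{Q,n}^{sc}$ via the special points of the affine Weyl group $W\ltimes Y_{Q,n}^{sc}$, and then uses the surjection $\phi_{W,n}\colon(\msc{X}_{Q,n}^{sc})^W\to(\msc{X}_{Q,n})^W$ with fibres $Y_{Q,n}/Y_{Q,n}^{sc}$ to bound $b_{W,n}$ by $\val{P(Y_{Q,n}^{sc})/Y_{Q,n}}=\val{\pi_1(\wt{G}_{Q,n}^\vee)}$. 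The exceptional case the paper must treat separately is when $\phi_{W,n}$ has empty domain but nonempty target (the metaplectic case $n_\alpha\equiv 2\bmod 4$), where it shows directly that $b_{W,n}=1$. Your exceptional case is the complementary one (the ``anomaly'' $4\mid n_\alpha$ with $\langle Y,\alpha\rangle=2\Z$), where instead $b_{W,n}=0$ by the parity obstruction. Both routes work; yours avoids the detour through $Y_{Q,n}^{sc}$ at the cost of the duality bookkeeping, while the paper's route makes (ii) a three-line computation: for simply-connected $\mbf{G}$ one checks that any two elements of $Y_\rho\cap P(Y_{Q,n}^{sc})$ differ by something in $Y_{Q,n}$, using only $B(y,\alpha^\vee)=Q(\alpha^\vee)\langle y,\alpha\rangle$. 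In particular, your (ii) does not require a type-by-type analysis as you fear: once you observe that $L_0\ne Y_{Q,n}$ forces $4\mid n_\alpha$ at some $\alpha$ with $\langle Y,\alpha\rangle=2\Z$, the parity argument you already invoked for (i) gives $(\msc{X}_{Q,n})^{W_\alpha}=\emptyset$ uniformly.

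The real gap is (iv). Your claim that ``all the data depend on $n$ only through $\gcd(n,\cdot)$ and $v_2(n)$'' is false as stated: the integers $n_\alpha$ themselves grow linearly with $n$, so the lattices $Y_{Q,n}$, $L_0$ are not periodic in $n$. What \emph{is} periodic is the quotient data after a suitable rescaling. The paper makes this precise via an auxiliary lemma: for commensurable lattices $L_1,L_2$ and a fixed $v$, the set $(n_E^{-1}(L_1-v))\cap L_2$ is periodic in $n$; applying this with $L_1=Y$, $L_2=P(Y^{sc})$ (or $P(Y_\flat^{sc})$ in the non-simply-laced regime), $v=\rho$, and $E=Q(\alpha^\vee)$ for a long coroot $\alpha^\vee$ gives periodicity of $b_{W,n}$ after splitting $\N$ into the two regimes $n_{\alpha_i}=n_{\alpha_j}$ for all $i,j$ versus not. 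The $\Sp_{2r}$ case again needs separate handling of the subset of $n$ with $4\mid n_{\alpha_r}$. Your heuristic points in the right direction, but you need this rescaling step (or something equivalent) to turn it into a proof.
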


Such relation between $b_{W,n}$ and $\pi_1(\wt{G}_{Q,n}^\vee)$ appears to be a new phenomenon which only manifests in the context of general covering groups. Indeed, for linear algebraic groups (i.e., $n=1$) we always have $b_{W,1}=1$. However, for covering groups, it is possible to have $b_{W,n}=0$ or  $b_{W,n}>1$. In any case, for an almost-simple $G$, it follows from Theorem \ref{T:1-3} (iv) that the Poincar\'e series
$$\mca{P}_W(T):= \sum_{n\ge 1}^\infty b_{W,n} T^n$$
associated to $\set{b_{W,n}}_{n\in \N}$ is a rational function, and similarly for the Poincar\'e series $\mca{P}_{\rm exc}(T)$ associated with $\val{ f_\msc{X}(Y_n^{\rm exc})}$, see Corollary \ref{C:rat}.   

In  \S \ref{S:eg}, we consider  covers of almost-simple simply-connected groups, the orthogonal adjoint group and certain Kazhdan-Patterson cover of $\GL_2$ and Savin covers of $\GL_r$. We work out explicitly the two Poincar\'e series $\mca{P}_W(T)$ and $\mca{P}_{\rm exc}(T)$. We conjecture the rationality of $\mca{P}_W(T)$ and $\mca{P}_{\rm exc}(T)$ for covers of linear reductive groups in general. Such rationality of the Poincare series $\mca{P}_W(T)$ corresponds to a certain recurrence relation on the $b_{W,n}$'s, see \S \ref{SS:rat}.

\subsubsection{} \label{SS:(iv)}
 Fourth, in \S \ref{S:RES}, we consider the restriction of principal series representation of $\wt{\GL}_2$ to $\wt{\SL}_2$. For a general covering group $\wt{G}$ with derived group $\wt{G}_\text{der}$ and a genuine representation $\pi$ of $\wt{G}$, it follows from the definition that the space $\Wh_\psi(\pi)$ is a priori determined by the Whittaker space
 $$\Wh_\psi(\pi|_{\wt{G}_\text{der}}),$$
 as $\wt{G}$ and $\wt{G}_\text{der}$ have the same unipotent subgroup $U$. Hence the local coefficients matrix for $\pi$ is naturally determined by the local coefficients matrix arising from considering $\pi|_{\wt{G}_\text{der}}$. In the linear algebraic case the situation is relatively simple, since we have uniqueness of Whittaker models and thus there exists exactly one generic summand in this restriction. In the covering case, we expect to have more than one generic summand and the restriction problem becomes more subtle.

This motivates the study of local coefficients matrices of $\wt{\GL}_2$ and $\wt{\SL}_2$ and their relations in detail. We carry this out in \S \ref{S:RES}--\S \ref{S:LCM-G}.  The main result in \S \ref{S:RES} is the following, which describes the decomposition of the restriction of a genuine principal series $I(\sigma)$ of $\wt{\GL}_2$ to $\wt{\SL}_2$.

\begin{thm}[{Theorem \ref{decopropcor}}]
Denote by $\wt{T}$ (resp. $\wt{T}_o$) the covering torus of $\wt{\GL}_2$ (resp. $\wt{\SL}_2$). Let $\sigma \in \Irr(\wt{T})$.
\begin{enumerate}
  \item[$\bullet$] Suppose that $n$ is odd. Let $\sigma_o \in \Irr(\wt{T}_o)$ be the genuine smooth irreducible  representation  of $\wt{T}_o$ determined by the relation $\chi_{\sigma_o}=\chi_\sigma|_{Z(\wt{T}_o)}.$ Then
  $$I(\sigma)\mid_{\wt{\SL}_2} \simeq  n_c \val{n_c}^{-1/2} \cdot I(\sigma_o).$$
  \item[$\bullet$] Suppose that $n$ is even. Fix a $ \sigma_o \in  \Irr(\wt{T}_o)$ such that $\chi_{\sigma_o}$ agrees with  $\chi_{\sigma}$ on $Z(\wt{T})\cap \wt{T}_o$. Then we have
      $$I(\sigma)|_{\wt{\SL}_2} \simeq \bigoplus_{x\in F^\times/ F^{\times 2}}  d_c \val{d_c}^{-1/2}\cdot  I_o(\eta_{x,(n)} \otimes\sigma_o),$$
      where $\eta_{x,(n)}$ is a certain non-genuine character of $\wt{T}_o$. Furthermore, if $n \equiv 2 \ (\text{mod } 4)$, then
      $$I(\sigma)|_{\wt{\SL}_2} \simeq  \bigoplus_{x\in F^\times/F^{\times 2}} d_c \val{d_c}^{-1/2} \cdot I_o(\eta_{x,(2)} \otimes\sigma_o).$$
\end{enumerate}
\end{thm}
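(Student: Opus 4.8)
The plan is to understand the restriction $I(\sigma)|_{\wt{\SL}_2}$ by exploiting the structure of the covering torus and Mackey-type reasoning. First I would set up the relevant tori: write $\wt{T}$ for the covering torus of $\wt{\GL}_2$ and $\wt{T}_o = \wt{T}\cap \wt{\SL}_2$ for that of $\wt{\SL}_2$, and identify the center $Z(\wt{T})$ together with the subgroup $Z(\wt{T})\cap \wt{T}_o$. Since $\sigma\in\Irr(\wt{T})$ is obtained by (Stone--von Neumann) induction from a genuine character $\chi_\sigma$ of $Z(\wt{T})$, the key point is to compare $\chi_\sigma|_{Z(\wt{T})\cap\wt{T}_o}$ with the central characters of the $\sigma_o\in\Irr(\wt{T}_o)$. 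When $n$ is odd the cover splits over the relevant $2$-torsion issues and $\wt{T}_o$ is close to abelian, so there is essentially a unique $\sigma_o$ with $\chi_{\sigma_o} = \chi_\sigma|_{Z(\wt{T}_o)}$; when $n$ is even the obstruction group $F^\times/F^{\times 2}$ enters, and the possible extensions of $\chi_\sigma|_{Z(\wt{T})\cap\wt{T}_o}$ to $Z(\wt{T}_o)$ are parametrized by the characters $\eta_{x,(n)}$ of $\wt{T}_o$ pulled back from $F^\times/F^{\times 2}$.

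Second, I would transport this torus-level analysis to the induced representations. Using transitivity/compatibility of parabolic induction with restriction along $\wt{\SL}_2\into\wt{\GL}_2$ (the two groups share the same $U$ and the Borel of $\wt{\SL}_2$ is the intersection), one gets $I(\sigma)|_{\wt{\SL}_2} \cong \mathrm{Ind}_{\wt{B}_o}^{\wt{\SL}_2}\bigl(\sigma|_{\wt{T}_o}\bigr)$ up to the usual normalization. Then I decompose $\sigma|_{\wt{T}_o}$ into irreducibles of $\wt{T}_o$: for $n$ odd this is $n_c|n_c|^{-1/2}$ copies of the single $\sigma_o$ (the multiplicity $n_c|n_c|^{-1/2}$ being forced by dimension count — comparing $\dim\sigma$, which is a power of $|n_c|$-type quantity, against $\dim\sigma_o$), and for $n$ even it is $\bigoplus_{x\in F^\times/F^{\times2}} d_c|d_c|^{-1/2}\cdot (\eta_{x,(n)}\otimes\sigma_o)$, where again the multiplicity is pinned down by a dimension count over the finite Heisenberg quotient. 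Since parabolic induction is exact and additive, this decomposition passes directly to $I(\sigma)|_{\wt{\SL}_2}$, giving the stated formula with $I_o(-)$ in place of $I(-)$.

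Third, for the refinement when $n\equiv 2\pmod 4$, I would observe that in this case $4\nmid n$ forces a further collapse of the relevant Heisenberg group: the commutator pairing on $\wt{T}_o$ factors through $2$-torsion, so the characters $\eta_{x,(n)}$ all restrict from $F^\times/F^{\times 2}$ and in fact only the ``degree $2$'' metaplectic structure is seen, which lets us replace $\eta_{x,(n)}$ by $\eta_{x,(2)}$. Concretely one checks that $\sigma_o$ and $\eta_{x,(n)}\otimes\sigma_o$ have the same genuine central character data as their $(2)$-analogues, hence are isomorphic as $\wt{T}_o$-representations, and then applies $I_o(-)$.

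The main obstacle I anticipate is the precise bookkeeping of the central characters and multiplicities: making rigorous the claim that restriction of the Stone--von Neumann representation $\sigma$ of the Heisenberg group $\wt{T}/\ker$ to the subgroup $\wt{T}_o/\ker$ decomposes with the exact multiplicity $n_c|n_c|^{-1/2}$ (resp.\ $d_c|d_c|^{-1/2}$) and with exactly the set of central characters indexed by $F^\times/F^{\times 2}$ — this requires carefully tracking the lattices $Y$, $Y_{Q,n}$ and the index $[Z(\wt{T})\cap\wt{T}_o : (\text{image of }Z(\wt{T}_o))]$, and distinguishing the parity of $n$ and of $n/\gcd(n,2)$ throughout. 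The passage from the torus statement to the induced-representation statement is then formal, but getting the normalizing factors $|n_c|^{-1/2}$, $|d_c|^{-1/2}$ correct — rather than some other power — is where the real care is needed.
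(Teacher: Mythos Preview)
Your approach is correct but differs from the paper's. You invoke a Mackey-type identity $I(\sigma)|_{\wt{G}_o}\cong I_o(\sigma|_{\wt{T}_o})$ (valid since $\wt{G}=\wt{B}\cdot\wt{G}_o$ with $\wt{B}\cap\wt{G}_o=\wt{B}_o$ and compatible modular characters) and then decompose $\sigma|_{\wt{T}_o}$ on the torus side via Clifford theory and dimension count; the paper instead works directly inside the function model of $I(\chi)$, decomposing it into eigenspaces for a maximal abelian subgroup $\wt{M}\subset\wt{Z(G)}$, identifying each eigenspace with the subspace of functions supported on a single coset $t\wt{A}\wt{G}_o$, and exhibiting an explicit isomorphism of that subspace with $I_o((\chi_o)^t)$. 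The paper in fact acknowledges your route in a remark, noting that the Mackey argument appears in \cite{Kar} under the restrictions $\gcd(p,n)=1$ and $c=0$. What the paper's hands-on approach buys is exactly the explicit $\wt{G}_o$-submodules $I(\chi)_t\subset I(\chi)$ together with concrete intertwining maps $R_{\chi,t},Y_{\chi,t}$; these are not merely bookkeeping but are the essential input to the subsequent Theorem~\ref{Smain}, where one must show that the intertwining operator $T(w,\chi'_{\sigma_s})$ preserves each $I(\chi)_t$ and track how Whittaker functionals transform under $R_{\chi,t}$, so that a local coefficients matrix for $\wt{G}$ becomes block-diagonal with blocks coming from $\wt{G}_o$. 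Your abstract isomorphism would establish the decomposition theorem itself but would need to be unwound to recover those explicit maps for the later application.
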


We highlight that if $n$ is even and $I(\sigma)$ is an unramified genuine principal series representation, there are always ramified genuine principal series representations appearing as direct summands in $I(\sigma)|_{\wt{\SL}_2}$. Hence, one inevitably has to study local coefficients matrix for certain \emph{ramified} genuine principal series of $\wt{\SL}_2$ as well, in order to unveil the behaviour of such a matrix under the restriction. We refer the reader to the beginning of \S \ref{S:RES} for a more detailed discussion.

\subsubsection{} \label{SS:(v)}
Fifth, for $\wt{\SL}_2$, we study in the general case (without assuming $\gcd(n, p)=1$ whenever possible) a local coefficients matrix $\mca{M}(w, \sigma_o, s, \psi)$ and also the two invariants
$$\Tr(\mca{M}(w, \sigma_o, s, \psi)) \text{ and }  \det(\mca{M}(w, \sigma_o, s, \psi)),$$
for the latter of which we concentrate on $\sigma_o$ arising from the restriction of an unramified genuine principal series of $\wt{\GL}_2$.  The computation of such a matrix (especially in the ramified case) seems to be a delicate issue, as we actually express the entries of such a matrix in terms of certain $\gamma$-factors or $\tilde{\gamma}$-factors, see Proposition \ref{matnot4} and Proposition \ref{P:M-3C}. The above two invariants are expressed in terms of (metaplectic-)gamma factors  and Plancherel measures, and this constitutes the main results in \S \ref{S:LCM-Go}.

 \begin{thm}[Theorems \ref{trcodd24}, \ref{detunramnot4} and \ref{T:SL-c3}]
Regarding the trace, we have
$$
\begin{aligned}
&  \Tr(\mca{M}(w, \sigma_o, s,\psi)) \\
= &  (\dim \sigma_o)^{-1} \cdot
 \begin{cases}
 \sum_{\eta \in \widehat{F^\times/ F^{\times d}}} \gamma(1-s,\chi^{-1}\eta,\psi)  & \text{ if }  n \not \equiv 2 \, (\text{mod }4) ; \\
 \sum_{\eta \in \widehat{F^\times/ F^{\times d}}} \tilde{\gamma}(1-s,\chi^{-1}\eta,\psi) & \text{ if }   n \equiv 2 \, (\text{mod }4). \end{cases}
 \end{aligned}
$$
 On the other hand, for determinant:
 \begin{enumerate}
\item[$\bullet$] Assume $n \not \equiv 0 \ (\text{mod }4)$,  $\gcd(n,p)=1$, the character $\psi$ is normalized and that $\sigma_o$ occurs in an unramified element of $\Irr(\wt{T})$. Then,
$$ \det(\mca{M}(w, \sigma_o,s, \psi))=\mu(\sigma_o,s)^{\frac{1-d}{2}} \cdot
\begin{cases}
\gamma(1-ds,\chi^{-d},\psi)  &  \text{ if $n$ is odd}; \\
\tilde{\gamma}(1-ds,\chi^{-d},\psi) &   \text{ if } n \equiv 2 \ (\text{mod }4) .
\end{cases}$$
 \item[$\bullet$] Assume that $n \equiv 0 \, (\text{mod }4)$,  $\gcd(n,p)=1$,  $\mfr{f}(\psi)=0$ and $\sigma_o$ is unramified. Then,
$$
\det(\mca{M}(w, \sigma_o,s, \psi))=\bigl(\eta_{u,(n)}\chi^{-1}(\varpi)q^s \bigr)^d  \cdot \mu^{-\frac d 2}(\sigma_o,s).
$$
 \end{enumerate}
 \end{thm}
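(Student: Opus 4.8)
\emph{Strategy.} The plan is to read off both invariants from an explicit description of the \emph{entries} of the local coefficients matrix $\mca{M}(w,\sigma_o,s,\psi)$ as normalized gamma factors. Working in the partial zeta-integral model of the $\psi$-Whittaker functionals of $I(\sigma_o)$ from \cite{Szp6}, one expresses each entry of $\mca{M}(w,\sigma_o,s,\psi)$ through the Tate gamma factor $\gamma(1-s,\chi^{-1}\eta,\psi)$ when $n\not\equiv 2\pmod 4$, and through the metaplectic gamma factor $\tilde\gamma(1-s,\chi^{-1}\eta,\psi)$ when $n\equiv 2\pmod 4$, where $\eta$ ranges over $\widehat{F^\times/F^{\times d}}$ with $d=n/\gcd(n,2)$; this is the content of Proposition \ref{matnot4} and Proposition \ref{P:M-3C}. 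The dichotomy $n\not\equiv 2$ versus $n\equiv 2\pmod 4$ records whether the relevant rank-one metaplectic integral descends to the linear $\SL_2$ computation, giving $\gamma$, or must be evaluated on a genuine two-fold cover, giving $\tilde\gamma$.

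\emph{Trace.} As $\Tr(\mca{M}(w,\sigma_o,s,\psi))=\Tr(\mca{T}(w,\sigma_o,s)^*)$ is basis-independent, I would evaluate it as the sum of the diagonal entries supplied by Proposition \ref{matnot4}/\ref{P:M-3C}. Organizing that diagonal according to the decomposition of $\Wh_\psi(I(\sigma_o))\simeq\sigma_o^\vee$ — which is where the normalization $(\dim\sigma_o)^{-1}$ comes from — and summing the resulting gamma factors over $\eta\in\widehat{F^\times/F^{\times d}}$ gives
\[
\Tr(\mca{M}(w,\sigma_o,s,\psi))=(\dim\sigma_o)^{-1}\sum_{\eta\in\widehat{F^\times/F^{\times d}}}\gamma(1-s,\chi^{-1}\eta,\psi)
\]
when $n\not\equiv 2\pmod 4$, and the same with $\tilde\gamma$ when $n\equiv 2\pmod 4$. (If one wants the sum restricted to unramified $\eta$, one invokes the vanishing of the ramified contributions in Corollary \ref{exptrun}.) In particular this re-proves Proposition \ref{P:trace} for unramified $\sigma_o$.

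\emph{Determinant.} For $\det(\mca{M}(w,\sigma_o,s,\psi))$ I would route through the scattering matrix: by the relevant analogue of Lemma \ref{L:comp}, $\mca{M}(w,\sigma_o,s,\psi)$ equals a scattering matrix post-composed with an explicit change-of-basis matrix $\mca{C}$, so $\det\mca{M}=\det(\text{scattering})\cdot\det\mca{C}$, and under $\gcd(n,p)=1$ with $\psi$ normalized (resp. $\mfr{f}(\psi)=0$) and $\sigma_o$ occurring in an unramified member of $\Irr(\wt{T})$ (resp. $\sigma_o$ unramified), the local root numbers comprising $\det\mca{C}$ are computed and shown to be trivial. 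After reordering, the scattering matrix is block-diagonal with blocks indexed by the orbits of the twisted $W$-action on $\msc{X}_{Q,n}\simeq\Z/d\Z$. When $n\not\equiv 0\pmod 4$, so $d$ is odd, there is a unique fixed point, whose $1\times 1$ block is a normalization of $\gamma(1-ds,\chi^{-d},\psi)$ (resp. $\tilde\gamma(1-ds,\chi^{-d},\psi)$ if $n\equiv 2\pmod 4$), while each of the remaining $(d-1)/2$ size-two blocks has determinant $\mu(\sigma_o,s)^{-1}$ up to a root of unity, and these roots of unity cancel over the $W$-stable family of blocks; multiplying yields $\mu(\sigma_o,s)^{(1-d)/2}\cdot\gamma(1-ds,\chi^{-d},\psi)$, resp. with $\tilde\gamma$. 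When $n\equiv 0\pmod 4$, so $d$ is even, there is no fixed point, all $d/2$ blocks have size two, and the metaplectic structure (vanishing of the pertinent Gauss sum) kills the diagonal of each block; each block is then anti-diagonal, with determinant the negative of the product of its two off-diagonal entries, and evaluating these entries with $\mfr{f}(\psi)=0$ and $\sigma_o$ unramified gives $\eta_{u,(n)}\chi^{-1}(\varpi)q^{s}\cdot\mu(\sigma_o,s)^{-1/2}$ each up to sign. Taking the product over the $d/2$ blocks and checking that the total sign is trivial produces $\bigl(\eta_{u,(n)}\chi^{-1}(\varpi)q^{s}\bigr)^{d}\cdot\mu(\sigma_o,s)^{-d/2}$.

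\emph{Main obstacle.} The decisive input is the entry description of Propositions \ref{matnot4} and \ref{P:M-3C}, above all for ramified $\sigma_o$ (which occur even inside unramified genuine principal series of $\wt{\GL}_2$): establishing it requires a delicate analysis of the partial zeta-integrals, of the chosen isomorphism between ${}^w\sigma_o$ and the relevant twist of $\sigma_o$, and of the ensuing change of basis, and it is exactly here that the $\gamma$-versus-$\tilde\gamma$ alternative gets pinned down. A second, more technical difficulty — peculiar to $n\equiv 0\pmod 4$ — is to verify the anti-diagonal shape of the size-two scattering blocks, to evaluate their off-diagonal entries precisely, and to prove that the accumulated signs and roots of unity are trivial under the stated normalizations; controlling $\det\mca{C}$ for the ramified determinant of Theorem \ref{T:SL-c3} is of the same nature.
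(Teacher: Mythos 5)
Your starting point is sound: you correctly identify Propositions \ref{matnot4} and \ref{P:M-3C} as the key entry descriptions, and for the trace the paper does exactly what you propose (sum the diagonal). But you understate the technical content there: for $n\equiv 0\ (\mathrm{mod}\ 4)$, summing the diagonal produces a double sum in which the terms carry a factor $\xi^{dj}$, and the paper must average the sum against the $\chi\mapsto\chi\eta^d_{\varpi,(n)}$ twist (which leaves $\chi_{\sigma_o}$ unchanged) to kill the odd-$j$ contributions before the answer assembles into the clean $\sum_{\eta\in\widehat{F^\times/F^{\times d}}}\gamma(1-s,\chi^{-1}\eta,\psi)$ form. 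Also, the appearance of $(\dim\sigma_o)^{-1}$ is not from a conceptual decomposition of $\Wh_\psi(I(\sigma_o))\simeq\sigma_o^\vee$; the diagonal entries are already the averaged partial gamma factors $\gamma_{K_n}$, and the prefactor emerges from bookkeeping of the sizes $d$, $d^2\val{d}^{-1}$, etc.

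For the determinant, however, your proposal diverges from the paper's method in a way that does not go through. You propose to compute via the scattering-matrix factorization $\mca{M}=\mca{S}_\mfr{R}\circ\mca{C}$ of Lemma \ref{L:comp}, claiming that $\det\mca{C}$ reduces to ``local root numbers ... shown to be trivial.'' That is the Section \ref{S:PG} approach (Theorem \ref{T:M1}), and the paper is explicit in Remark \ref{2str} and the preamble to \S\ref{S:RES} that \S\ref{S:LCM-Go}--\S\ref{S:LCM-G} deliberately avoid it, because there is no canonical $r_w$ for ramified data; yet Theorem \ref{detunramnot4} must handle the ramified $\sigma_o=\eta_{\varpi,(2)}\otimes\sigma_{oo}$ appearing inside unramified $\sigma\in\Irr(\wt{T})$ when $n\equiv 2\ (\mathrm{mod}\ 4)$. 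The paper's actual argument works directly with the explicit entries from \cite{Szp6} (resp. Proposition \ref{explicit4}), permutes rows to expose a cross-shaped block structure, and reduces to products of $2\times2$ determinants $A_iA'_i-B_iB'_i$ (resp. $F_iF_{d-i}-E_iE_{d-i}$) evaluated via Lemma \ref{gen epsilon} and \eqref{techeq}. Your claim that $\det\mca{C}$ is trivial is also incorrect as stated: Proposition \ref{P:det-C} shows $\det\mca{C}$ carries the sign $\mathrm{sgn}(\w_\alpha^{\msc{P}})$ and a nontrivial $\chi$-value, which are not both $1$ for $\wt{\SL}_2^{(n)}$ (indeed they supply the monomial prefactor $\chi(\wt{h}_\alpha(\varpi^{-d}))$ in Example \ref{eg-SL}). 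Finally, your description of the $n\equiv 0\ (\mathrm{mod}\ 4)$ blocks is wrong: the paper's Proposition \ref{explicit4} shows the rearranged matrix \eqref{nastycase} has $2\times2$ blocks whose diagonal entries $E_i=(\chi(\varpi)q^{-s})^{-2i}(1-q^{-1})L(ns,\chi^n)$ are nonzero for unramified $\chi$; the ``Gauss sum vanishing kills the diagonal'' picture does not hold, and the block determinants require the full $E,F$ computation.
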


Now for a genuine principal series $I(\sigma)$ of $\wt{\GL}_2$, by using the explicit decomposition $I(\sigma)_{\wt{\SL}_2}$ described in Theorem \ref{decopropcor}, we compute the two invariants
$$\Tr(\mca{M}(w, \sigma, s, \psi)) \text{ and }  \det(\mca{M}(w, \sigma, s, \psi))$$
for $\wt{\GL}_2$ to obtain the following.

\begin{thm}[Theorems \ref{trgl} and \ref{detformgl}]
Fix $\sigma \in \Irr(\wt{T})$. Let $\chi$ be a linear character of $F^\times$ associated with $\sigma$.
\begin{enumerate}
\item[$\bullet$] To consider the trace, if $n  \equiv 0 \, (\text{mod }4)$, then we assume that $\gcd(p,n)=1$. One has
$$\Tr( \mca{M}(w, \sigma,s,\psi))=\frac{\val{\gcd(n,4c+1)}^{\half}}{\gcd(n,4c+1)} \cdot \sum_{\eta \in \widehat{F^\times/F^{\times n}} } \gamma(1-s,\chi^{-1}\eta,\psi). $$
\item[$\bullet$]
To consider the determinant, we assume that $\gcd(p,n)=1$ and $\sigma$ is unramified. Also assume $\mfr{f}(\psi)=0$. Then
$$\det(\mca{M}(w, \sigma, s, \psi))=\tau(n) \cdot \mu(\sigma, s)^{\frac{(1-n)n_c}{2}}  \cdot \gamma(1-ns, \chi^{-n}, \psi)^{n_c},$$
where $$
\tau(n)=
\begin{cases}
(-1,\varpi)_2 &  \text{ if } n \equiv 2 \ (\text{mod }4); \\
1  &   \text{otherwise}.
\end{cases}$$
\end{enumerate}
\end{thm}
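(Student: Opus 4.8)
The plan is to derive both formulas from the corresponding results for $\wt{\SL}_2$ (Theorems \ref{trcodd24}, \ref{detunramnot4} and \ref{T:SL-c3}) by means of the restriction decomposition of Theorem \ref{decopropcor}. Since $\wt{\GL}_2$ and $\wt{\SL}_2$ share the same unipotent subgroup $U$, there is a canonical identification $\Wh_\psi(I(\sigma)) \simeq \Wh_\psi\bigl(I(\sigma)|_{\wt{\SL}_2}\bigr)$, and the endomorphism $\mca{T}(w,\sigma)^*$ commutes with this identification and preserves the decomposition of $I(\sigma)|_{\wt{\SL}_2}$ recorded in Theorem \ref{decopropcor}. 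Choosing an ordered basis of $\Wh_\psi(I(\sigma))$ adapted to that decomposition, the matrix $\mca{M}(w,\sigma,s,\psi)$ becomes block-diagonal, the block indexed by $x \in F^\times/F^{\times 2}$ (or the single block, when $n$ is odd) being the $\wt{\SL}_2$ local coefficients matrix $\mca{M}(w,\eta_{x,(n)}\otimes\sigma_o,s,\psi)$ up to the scalars $n_c\val{n_c}^{-1/2}$, $d_c\val{d_c}^{-1/2}$ of Theorem \ref{decopropcor}. The first task is to make this block structure precise, i.e. to check that the standard intertwining operator for $\wt{\GL}_2$ restricts, summand by summand, to the standard intertwining operator of each $\wt{\SL}_2$-constituent, keeping track of the scalar normalizations; this is bookkeeping once Theorem \ref{decopropcor} and the uniqueness (up to scalar) of the intertwining operator are in hand.

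For the trace, additivity over the blocks reduces the claim to summing the $\wt{\SL}_2$ formula of Theorem \ref{trcodd24} over the blocks. Twisting $\sigma_o$ by $\eta_{x,(n)}$ replaces $\chi$ by its product with the linear character attached to $\eta_{x,(n)}$, so block $x$ contributes $(\dim\sigma_o)^{-1}\sum_{\eta}\gamma\bigl(1-s,(\chi\eta_x)^{-1}\eta,\psi\bigr)$ over $\eta\in\widehat{F^\times/F^{\times d}}$ (respectively the $\tilde\gamma$-variant when $n\equiv 2\pmod 4$). Summing over $x\in F^\times/F^{\times 2}$ then amounts to verifying that the products $\eta_x\eta$ sweep out $\widehat{F^\times/F^{\times n}}$ with uniform multiplicity and, when $n\equiv 2\pmod 4$, that the metaplectic factors $\tilde\gamma$ reassemble into ordinary $\gamma$-factors; collecting the constants $\dim\sigma_o$, the number of summands, and the index of $\widehat{F^\times/F^{\times d}}$ in $\widehat{F^\times/F^{\times n}}$ produces the stated prefactor $\val{\gcd(n,4c+1)}^{\half}/\gcd(n,4c+1)$. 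The subcase $n\equiv 0\pmod 4$ is handled identically using the relevant variant of Theorem \ref{trcodd24}, which is exactly why one imposes $\gcd(p,n)=1$ there.

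For the determinant one multiplies the block determinants furnished by Theorems \ref{detunramnot4} and \ref{T:SL-c3}, and there are three quantities to control. First, the product over the blocks of the Plancherel factors $\mu(\eta_{x,(n)}\otimes\sigma_o,s)^{(1-d)/2}$ (resp. $\mu^{-d/2}$ when $4\mid n$), which I would rewrite via the compatibility of Plancherel measures with the restriction $I(\sigma)|_{\wt{\SL}_2}$ so as to obtain $\mu(\sigma,s)^{(1-n)n_c/2}$. Second, the product of the $F^\times/F^{\times d}$-twisted Tate gamma factors $\gamma(1-ds,(\chi\eta_x)^{-d},\psi)$ coming from all blocks, which must collapse to $\gamma(1-ns,\chi^{-n},\psi)^{n_c}$; this rests on the standard inflation identity for Tate gamma factors along $F^\times/F^{\times d}\hookrightarrow F^\times/F^{\times n}$. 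Third, the sign $\tau(n)$, which is trivial unless $n\equiv 2\pmod 4$, where it equals $(-1,\varpi)_2$: this discrepancy is precisely the Weil index picked up when the metaplectic $\tilde\gamma$-factor of Theorem \ref{T:SL-c3} is rewritten as an ordinary $\gamma$-factor, together with the contribution of the $\eta_{u,(n)}$-term in the $4\mid n$ subcase. The hypotheses $\gcd(p,n)=1$, $\sigma$ unramified and $\mfr{f}(\psi)=0$ are exactly those needed so that Theorems \ref{detunramnot4} and \ref{T:SL-c3} apply to every block.

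I expect the genuine difficulty to lie in the second and third points of the determinant computation: reconciling the normalizations of $\gamma$, $\tilde\gamma$ and $\mu$ between $\wt{\SL}_2$ and $\wt{\GL}_2$, matching the whole family of $d$-twisted gamma factors from the $\wt{\SL}_2$-blocks against a single $n$-twisted gamma factor raised to the power $n_c$, and extracting the exact sign $(-1,\varpi)_2$ out of the Weil indices that surface when $n\equiv 2\pmod 4$ (and the $\eta_{u,(n)}$ contributions when $4\mid n$). This is routine in principle but delicate in execution. By contrast the trace identity is comparatively soft, since additivity over blocks immediately reduces it to the character-exhaustion statement and the collapse of $\tilde\gamma$ into $\gamma$ upon summation.
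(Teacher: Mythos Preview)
Your approach is essentially the paper's: the block-diagonal reduction you describe is exactly Theorem \ref{Smain}, and both invariants are computed block-by-block from Theorems \ref{trcodd24}, \ref{detunramnot4}, \ref{T:SL-c3} as you outline, with the Plancherel compatibility supplied by Corollary \ref{plancor}. Two points of precision. First, the $\tilde\gamma$-factors in the determinant appear for $n\equiv 2\pmod 4$ via Theorem \ref{detunramnot4}, not Theorem \ref{T:SL-c3}; the latter handles $4\mid n$ and already involves ordinary $\gamma$-factors together with the auxiliary quantity $\pmb\beta$, and there the final step is the identity $\pmb\beta(\sigma_o,s,\psi)\cdot\pmb\beta(\eta_{u,(n)}\chi,s,\psi)=\mu(\sigma,s)^{-1}$. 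Second, the step you label a ``standard inflation identity for Tate gamma factors'' is not a formal lattice-inclusion fact: for $n\equiv 2\pmod 4$ the paper uses Corollary \ref{verybicelem}, which evaluates $\prod_{a\in F^\times/F^{\times 2}}\tilde\gamma(1-ds,\chi^{-d}\eta_{a,(2)},\psi)$ via Sweet's explicit formula for $\tilde\gamma$ (Theorem \ref{sweetthm}), and it is precisely this computation that produces simultaneously the factor $\gamma(1-ns,\chi^{-n},\psi)^2\mu(\sigma,s)^{-1}$ and the sign $(-1,\varpi)_2$. Your instinct that this is the delicate point is correct, but the tool is specific rather than standard.
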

With reference to the previous notation, for $\wt{\SL}_2$ one has $\sigma_o = i(\chi_o)$ and thus
$$\mca{M}(w, \sigma_o, 0, \psi) = \mca{M}_\mfr{B}(w_\alpha, i(\chi_o));$$
similarly $$\mca{M}(w, \sigma, 0, \psi) = \mca{M}_\mfr{B}(w_\alpha, i(\chi))$$
  for $\wt{\GL}_2$. Also, the computation of $\det(\mca{M}(w, \sigma_o, s, \psi))$ and $\det(\mca{M}(w, \sigma, s, \psi))$ above is compatible with Theorem \ref{T:01} in the unramified case.

One application of our investigation is to understand the contrast between $\det(\mca{M}(w_\alpha, i(\chi)))$ and $\det(\mca{M}(w_\alpha, i(\chi_o)))$ for $\wt{\GL}_2$ and $\wt{\SL}_2$ respectively. More precisely, from our previous work \cite{GSS1}, it is evident that one has a trichotomy for the ``pattern" of $\det(\mca{M}(w_\alpha, i(\chi_o)))$ in terms of the (meta-)gamma factor and Plancherel measure, depending on whether $n$ is odd, $n \equiv 2, 0 \mod 4$. On the other hand, there is a uniform description of $\det(\mca{M}(w_\alpha, i(\chi)))$, which was first observed in \cite{Bud} for a special family of unramified principal series and also follows from our Theorem \ref{T:M1} discussed in \S \ref{SS:(i)} above. Now in the proof of Theorem \ref{detformgl}, it is visibly clear how the ``trichotomy" phenomenon for the expression of $\det(\mca{M}(w_\alpha, i(\chi_o)))$ is dissolved in the computation of $\det(\mca{M}(w_\alpha, i(\chi)))$: for example, if $n\equiv 2 \ (\text{mod } 4)$, then it accounts to a certain identity relating $\tilde{\gamma}$-factors and $\gamma$-factors as shown in Corollary \ref{verybicelem}, which appears to be a mystery to us at the moment despite its proven truth.

%
%
%
%

\subsection{Several remarks}
Since we only treat \emph{unramified} principal series here for a general reductive group $\wt{G}$, for general principal series representations, more work remains to be done. Indeed, for a general $I(i(\chi))$, the space $\Wh_\psi( I(i(\chi)) )$ is parametrized by a discrete space $\Ftn(i(\chi))$ of size greater than $\val{ \msc{X}_{Q,n}  }$ in general. There is also a natural action of the Weyl group on $\Ftn(i(\chi))$. Presumably the analysis could be carried out in a similar way as in the unramified case, and we believe that an analogue of Theorem \ref{T:M1} also holds; that is, $\det( \mca{T}(w_\alpha, i(\chi))^* )$ is essentially a product of Plancherel measures and gamma (or metaplectic gamma) factors. However, understanding the exponents of such factors requires more delicate analysis, and one would have yet to find more evidence for supporting any conjectural formula. We would like to leave this to a future work.

As mentioned in \cite{GSS1}, it is a crucial issue that our local results on Whittaker functions could not be globalised to study global $L$-functions, as discussed in \S \ref{SS:(ii)} above (see also Remark \ref{R:glob}). It seems to us that one needs some new ideas to overcome such difficulties. On the other hand, for classical groups the extension of the doubling method by Cai, Friedberg, Ginzburg and Kaplan \cite{CFGK1, CFK1} represents an important advance in handling global $L$-functions, see especially the recent work \cite{Kap01} on coverings of the symplectic groups.

Nevertheless, we hope that this paper could help clarifying the relation between the local coefficients matrix $\mca{M}_{\mfr{B}_\chi}(w, i(\chi))$ and the scattering matrix $\mca{S}_\mfr{R}(w, i(\chi); r_w)$ which appear in literature. Indeed, since these two matrices differ by an invertible matrix $\mca{C}(\mfr{B}_{{}^\w\chi}, \mfr{B}_\chi; r_w)$, thus their ranks are equal. That is, it is immaterial to consider either $\mca{M}_{\mfr{B}_\chi}(w, i(\chi))$ or $\mca{S}_\mfr{R}(w, i(\chi); r_w)$ for the purpose of determining the rank; in fact in literature the scattering matrix $\mca{S}_\mfr{R}(w, i(\chi); r_w^{\rm un})$ in the unramified setting is a preferred choice since its entries can be explained by incorporating the twisted Weyl group action mentioned above and discussed in \S \ref{SS:(i)}. This is also the practice in the existing work on theta representations, see \cite{KP, Ga2}.

Despite their subtle difference, we hope to emphasize via the investigation in this paper that \emph{both} $\mca{M}_{\mfr{B}_\chi}(w, i(\chi))$ and $\mca{S}_\mfr{R}(w, i(\chi); r_w)$ are important objects for studying representation of $\wt{G}$, especially when there is a natural choice of $r_w$.  The fact that each of these two matrices has its own advantage suggests the necessity of studying them simultaneously, which is actually the main novelty in the starting point of our paper. We believe that a tale of these two matrices together (rather than any of them alone) will encode a fascinating story of the representation theory for covering groups, especially that pertinent to a Langlands-Shahidi theory of covering groups.

Lastly, we remark that in the context of  covering groups, we strive to work with the minimal and necessary assumption that $\bbmu_n \subset F^\times$, though for computational simplicity we may assume the stronger inclusion $\bbmu_{2n} \subset F^\times$ at various places. However, the latter compromising assumption will be explicated whenever a theorem is proved based on it, as there is non-negligible arithmetic arising from the difference between the two assumptions, for example, see Remark \ref{R:n-2n}.

\subsection{Acknowledgement} The project was initiated when F. Gao held a Golomb postdoctorate fellowship at Purdue university. He would like to thank Professors Shahidi, C.-P. Mok and the faculty in the mathematics department in general for hospitality and encouragement during his stay. The paper was largely completed when the authors were attending the Singapore conference ``On the Langlands Program: Endoscopy and Beyond" in 2019 January. F. Shahidi and D. Szpruch would like to thank the conference organizers, the NUS mathematics department and IMS for support and hospitality. The authors would also like to thank the referees for very insightful and helpful comments on an earlier version of the paper. Improvements have been made at various places in the paper following the referees' suggestions.

F. Shahidi is partially supported by the NSF grant DMS-1801273.

\section{Covering groups} \label{S:cov}
\subsection{Notation}
Let $F$ be a finite extension of $\Q_p$. Denote by $O_F$ the ring of integers and $\mfr{p} \subset O_F$ its maximal ideal. Let 
$$q:=\val{O_F/\mfr{p}}$$
be the size of the residual field. We fix once and for all a uniformizer $\varpi$ of $F^\times$. We normalize the absolute value on $F$ such that $\val{\varpi} =q^{-1}$.

Let $\psi$ be a non-trivial character of $F$. For $a \in F^\times$, let $\psi_a$ be the character of $F$ given by
$$\psi_a(x)=\psi(ax).$$
We define $\mfr{f}(\psi)$, the conductor of $\psi$, to be the smallest positive integer $k$ such that $\psi$ is trivial on $\mfr{p}^k$. We say that $\psi$ is normalized if
$$\mfr{f}(\psi)=0.$$
For a ramified character $\chi$ of $F^\times$ we define $\mfr{f}(\chi)$, the conductor of $\chi$, to be the smallest integer $k$ such that $\chi$ is trivial on $1+\mfr{p}^k$. For an unramified character $\chi$ of $F^\times$ (i.e., trivial on $O_F^\times$), we set $\mfr{f}(\chi)=0$.

Let $G$ be a group; we denote its center by $Z(G)$.

\subsection{$\mbf{K}_2$-extension} \label{S:K2-extn}
Let $\mbf{G}$ be a split connected linear reductive group over $F$ with root datum:
$$(X, \Phi, \Delta; Y, \Phi^\vee, \Delta^\vee).$$
Here $X$ and $Y$ denote the character and cocharacter lattices of a fixed maximal split torus $\mbf{T} \subset \mbf{G}$. The sets $\Phi$ and $\Phi^\vee$ denote the roots and coroots respectively. We fix a set of simple roots $\Delta \subset \Phi$, and an associated Borel subgroup $\mbf{B}=\mbf{T} \mbf{U}$. Let
$$Y^{\rm sc} \subset Y$$
be the sublattice generated by all coroots. Denote by 
$$W=N(\mbf{T})/\mbf{T}$$
the Weyl group for $(\mbf{G}, \mbf{T})$. We identify $W$ with the group of reflections of $Y\otimes \Q$ generated by $\w_{\alpha}$ for all $\alpha^\vee \in \Phi^\vee$. Fix a Chevalley-Steinberg system of pinnings:
$$\set{e_\alpha: \mbf{G}_a \to \mbf{U}_\alpha:  \alpha \in \Phi},$$
where $\mbf{U}_\alpha$ is the root subgroup associated to $\alpha$. Let $\mbf{G}_{\rm der}:=[\mbf{G}, \mbf{G}]$ be the derived subgroup of $\mbf{G}$ and let
$$
\mbf{G}^{\rm sc} \onto \mbf{G}_{\rm der}
$$
be the simply-connected cover. We have the groups of rational points
$$G, B=TU, G^{\rm sc}, G_{\rm der}$$
of $\mbf{G}, \mbf{B}, \mbf{G}^{\rm sc}$ and $\mbf{G}_{\rm der}$ respectively. By abuse of language, we call $G$ reductive, semisimple or almost-simple whenever $\mbf{G}$ is reductive, semisimple or almost-simple respectively.


Consider the following data:
\begin{enumerate}
\item[$\bullet$] an integer-valued bilinear form  $D: Y\times Y \to \Z$ such that
$$Q(y):=D(y,y)$$
is an integer-valued Weyl-invariant quadratic form on $Y$;
\item[$\bullet$] a homomorphism
$$\eta: Y^{\rm sc} \to F^\times$$
of $Y^{\rm sc}$, which is completely determined by the finite set
$$\set{\eta(\alpha^\vee): \alpha \in \Delta}.$$
\end{enumerate}
For any integral Weyl-invariant quadratic form $Q$, there exists $D$ satisfying the above equality; in this case, we call $D$ a bisector associated to $Q$. Then
$$B(x, y):=D(x, y) + D(y, x)$$
is an integer-valued Weyl-invariant bilinear form on $Y$.

It is shown by Brylinski and Deligne (under an equivalent formulation given in \cite{We3, GG}) that there is an equivalence of Picard categories:
$${\rm CExt}(\mbf{G}, \mbf{K}_2) \to \set{(D, \eta)},$$
where ${\rm CExt}(\mbf{G}, \mbf{K}_2)$ denotes the category of $\mbf{K}_2$-extensions $\wm{G}$ of $\mbf{G}$. There are natural homomorphisms on the two categories, see \cite{GG} for details. Assume that $F^\times$ contains the full group $\bbmu_n$ of $n$-th roots of unity. An $n$-fold cover, in the sense of \cite{We6}, is just $(\wm{G}, n)$. We note that if $\mbf{G}_{\rm der}$ is simply-connected, then one may assume $\eta=\mbm{1}$ without loss of generality on the isomorphism class of $\wm{G}$.

\subsection{Hilbert symbol and Lagrangian decomposition}
We identify $\bbmu_n$ as a subgroup of $\C^\times$ via a fixed embedding $\bbmu_n \into \C^\times$. Denote by
$$(-, -)_n: F^\times \times F^\times \to \bbmu_n$$
the $n$-th power Hilbert symbol.  Observe that if $n$ is odd then $\bbmu_{2n}  \subseteq F^\times$ as $\bbmu_n$ and $-1$ generate $\bbmu_{2n}$.

In the rest of this subsection, we will recall some technical results on the Hilbert symbol and the Lagrangian decomposition, which will be mainly used later in \S \ref{S:RES} to construct genuine principal series for covers of $\SL_2$ and $\GL_2$. The reader may skip to \S \ref{S:top-c} to avoid this digression.

In general, for every $m|n$ one has the $m$-th Hilbert symbol $(-,-)_m$, which gives a perfect pairing between $F^\times/F^{\times m}$ and $F^\times/F^{\times m}$, see \cite[page 260]{WeilB}. In particular, it identifies $F^\times / F^{\times m}$ with its dual, $\widehat{F^\times / F^{\times m}}$, which may also  be identified with the group of  characters of $F^\times$ whose order divides $m$. One has (see \cite[page 48]{Lang})
\begin{equation} \label{index}
[F^\times: F^{\times m}]=m^2 \cdot \val{m}^{-1}.
\end{equation}
Moreover, for an integer $c' \in \Z$, if we write $m=m_1m_2$ with $m_1=\text{gcd}(m,c')$, then (see \cite[Lemma 1]{CO})
$$F^{\times m_2}=\{x \in F^\times \mid x^{c'} \in F^{\times m}\}.$$

For every   $x \in F^\times$,
\begin{equation} \label{x with x}
(x,x)_{m}=(-1,x)_{m}.
\end{equation}
In particular, if $\bbmu_{2m} \subset F^\times$,
then
\begin{equation} \label{x with x is 1}
(x,x)_{m}=(-1,x)_{m}=1.
\end{equation}
Also, if $mm'$ divides $n$ then
\begin{equation} \label{FV fact}
(x,y)_{mm'}^{m'}=(x,y)_m.
\end{equation}
In particular, $(x, y)_m= (x,y)_n^{n/m}$.

For every $x\in F^\times$, we have the character
$$\eta_{x,(m)}: F^\times \to \C^\times \text{ given by } \eta_{x, (m)}(a):= (x,a)_m.$$
Note that  $\eta_{x,(m)}$ is determined by the image of $x$ in $F^\times / F^{\times m}$. In particular, it is trivial if and only if $x \in F^{\times m}$. We view $\eta_{x,(m)}$ as a character of $F^\times/ {F^\times}^m$. Conversely, every character of $F^\times/ F^{\times m}$ is of the form $\eta_{x,(m)}$. It follows from the orthogonality of characters that for $x \in F^\times$, one has
\begin{equation} \label{dualhilbert}
\sum_{a \in F^\times/F^{\times m}} \eta_{a,(m)}(x)=
\begin{cases}
[F^\times:F^{\times m}]  &  \text{ if } x \in F^{\times m}; \\
0  &   \text{ otherwise}.
\end{cases}
\end{equation}

\begin{lm} \label{oldlem}
Suppose that  $m |n$. For an integer $c$, if we set $m=m_1m_2$ where $m_1=\gcd(m,c)$, then
$${\rm Ker} (\eta_{x,(m)}^{c} ) ={\rm Ker }(\eta_{x,(m_2)}).$$
\end{lm}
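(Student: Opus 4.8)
The plan is to unwind both kernels into conditions on Hilbert symbols and then reduce everything to an elementary statement about roots of unity. Fix $x \in F^\times$. By the very definition of $\eta_{x,(m)}$, membership $a \in {\rm Ker}(\eta_{x,(m)}^{c})$ means $(x,a)_m^c = 1$, while $a \in {\rm Ker}(\eta_{x,(m_2)})$ means $(x,a)_{m_2} = 1$; so the assertion is precisely the equivalence of these two conditions for every $a \in F^\times$.

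First I would isolate the following elementary fact: for any $\zeta \in \bbmu_m$, one has $\zeta^c = 1$ if and only if $\zeta^{m_1} = 1$. Indeed, since $m_1 = \gcd(m,c)$ divides $c$, the relation $\zeta^{m_1}=1$ forces $\zeta^c = 1$; conversely, writing $m_1 = um + vc$ by B\'ezout and using $\zeta^m = 1$, we get $\zeta^{m_1} = (\zeta^m)^u (\zeta^c)^v = (\zeta^c)^v$, which is trivial as soon as $\zeta^c = 1$. Applying this with $\zeta = (x,a)_m \in \bbmu_m$ yields $(x,a)_m^c = 1 \iff (x,a)_m^{m_1} = 1$.

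Next I would invoke the power-compatibility of Hilbert symbols recorded in \eqref{FV fact}, namely $(x,y)_{mm'}^{m'}=(x,y)_m$ whenever $mm'$ divides $n$. Since $m \mid n$ and we have factored $m = m_2 \cdot m_1$, this identity applied with the roles ``$m \mapsto m_2$'' and ``$m' \mapsto m_1$'' gives exactly $(x,a)_m^{m_1} = (x,a)_{m_2}$. Combining this with the previous step, $(x,a)_m^c = 1 \iff (x,a)_{m_2} = 1$ for every $a$, which is the desired equality ${\rm Ker}(\eta_{x,(m)}^{c}) = {\rm Ker}(\eta_{x,(m_2)})$.

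I do not expect a genuine obstacle here; the only points requiring a little care are checking that the divisibility hypothesis $m \mid n$ is what licenses the use of \eqref{FV fact}, and keeping track of the factorization $m = m_1 m_2$ so that the exponent $m_1$ is applied on the correct side of the symbol.
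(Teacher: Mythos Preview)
Your proof is correct and follows essentially the same approach as the paper. Both arguments combine the power-compatibility identity \eqref{FV fact} with an elementary observation about roots of unity; the paper factors $c = m_1 c_2$ and notes that raising to the $c_2$-th power is a bijection on $\bbmu_{m_2}$ since $\gcd(m_2,c_2)=1$, whereas you establish the equivalent fact $\zeta^c=1 \iff \zeta^{m_1}=1$ for $\zeta \in \bbmu_m$ directly via B\'ezout --- a purely cosmetic difference.
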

\begin{proof}
Write $c=m_1c_2$. By \eqref {FV fact} we have
$${\rm Ker}(\eta_{x,(m)}^{c})={\rm Ker}(\eta_{x,(m_1m_2)}^{m_1c_2}) = {\rm Ker}(\eta_{x,(m_2)}^{c_2}).$$
Since ${\rm Im}(\eta_{x,(m_2)} ) \subseteq  \bbmu_{m_2}$ and $\gcd(m_2,c_2)=1$, we are done.
\end{proof}

\begin{lm} \label{dualcenter}
Let $m,l$ be two integers such that $m$ and $ml$ both divide $n$.
\begin{enumerate}
\item[(i)] The map
$x \mapsto \eta_{x,(ml)}\mid_{{F^\times}^{m}}$ gives an isomorphism from $F^\times/F^{\times l}$ to $\widehat{ F^{\times m}/F^{\times ml} }$.
\item[(ii)] The map $x \mapsto \eta_{x,(ml)}$ gives an isomorphism from $F^{\times m}/F^{\times ml}$ to   $\widehat{ F^\times/ F^{\times l} }$.
\item[(iii)] Let $L$ be a set of representatives of $F^\times/F^{\times l}$. The map $(a,b) \mapsto \eta_{a,(ml)} \cdot \eta_{b,(m)}$ gives a bijection from the set $L \times F^\times/{F^\times}^m$ to  $\widehat{{F^\times}/{F^\times}^{ml}}$.
\end{enumerate}
\end{lm}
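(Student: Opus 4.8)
\textbf{Proof proposal for Lemma \ref{dualcenter}.}

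The plan is to deduce all three parts from the nondegeneracy of the Hilbert symbol together with the index formula \eqref{index} and the compatibility relation \eqref{FV fact}. For part (i), I would first observe that the assignment $x\mapsto \eta_{x,(ml)}|_{F^{\times m}}$ descends to a homomorphism from $F^\times$ to $\widehat{F^{\times m}/F^{\times ml}}$: indeed, if $x\in F^{\times l}$, say $x=t^l$, then for any $y=z^m\in F^{\times m}$ one has $\eta_{x,(ml)}(y)=(t^l,z^m)_{ml}=(t,z)_{ml}^{lm}=(t,z)_1=1$ by \eqref{FV fact}, so the map factors through $F^\times/F^{\times l}$. Injectivity: suppose $\eta_{x,(ml)}$ is trivial on all of $F^{\times m}$; since $(x,z^m)_{ml}=(x,z)_{ml}^{m}=(x,z)_l$ by \eqref{FV fact}, triviality on $F^{\times m}$ says precisely that $(x,-)_l$ is trivial on $F^\times$, hence by nondegeneracy of the $l$-th Hilbert symbol $x\in F^{\times l}$. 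Finally I would match cardinalities: by \eqref{index}, $[F^\times:F^{\times l}] = l^2\val{l}^{-1}$, while $[F^{\times m}:F^{\times ml}]$ can be computed from $[F^\times:F^{\times ml}]/[F^\times:F^{\times m}] = (ml)^2\val{ml}^{-1}/(m^2\val{m}^{-1}) = l^2\val{l}^{-1}$ (using the multiplicativity of $\val{\cdot}$ on the relevant powers, i.e. $\val{ml}=\val{m}\cdot\val{l}$ when $m,l\mid n$ and $\bbmu_n\subset F^\times$). So the two finite groups have equal order and an injective homomorphism between them, hence the map is an isomorphism.

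Part (ii) is the ``dual'' statement and I would prove it by the same mechanism with the roles of the two lattices swapped: the map $x\mapsto \eta_{x,(ml)}$ on $F^{\times m}$ kills $F^{\times ml}$ (since $\eta_{x,(ml)}$ depends only on $x$ mod $F^{\times ml}$), and for $x=z^m\in F^{\times m}$ one has $\eta_{z^m,(ml)}(a)=(z^m,a)_{ml}=(z,a)_l$ by \eqref{FV fact}, so the character $\eta_{x,(ml)}|_{F^\times}$ is exactly $\eta_{z,(l)}$, which is trivial iff $z\in F^{\times l}$ iff $x=z^m\in F^{\times ml}$; this gives injectivity of the induced map $F^{\times m}/F^{\times ml}\to\widehat{F^\times/F^{\times l}}$. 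Equality of cardinalities again follows from \eqref{index} exactly as above, so we get the claimed isomorphism. (Alternatively, (ii) is formally the Pontryagin dual of (i), which I could invoke instead.)

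For part (iii), I would argue that $L\times F^\times/F^{\times m}$ and $\widehat{F^\times/F^{\times ml}}$ have the same cardinality and that the map $(a,b)\mapsto \eta_{a,(ml)}\cdot\eta_{b,(m)}$ is injective; surjectivity then follows. The cardinality count is $\val{L}\cdot[F^\times:F^{\times m}] = [F^\times:F^{\times l}]\cdot[F^\times:F^{\times m}] = (l^2\val{l}^{-1})(m^2\val{m}^{-1}) = (ml)^2\val{ml}^{-1} = [F^\times:F^{\times ml}] = \val{\widehat{F^\times/F^{\times ml}}}$. For injectivity, suppose $\eta_{a,(ml)}\cdot\eta_{b,(m)} = \eta_{a',(ml)}\cdot\eta_{b',(m)}$ as characters of $F^\times$. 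Restricting to $F^{\times m}$, the factors $\eta_{b,(m)}$ and $\eta_{b',(m)}$ die (as $b,b'\in F^\times$ are raised against $m$-th powers: $(b,z^m)_m=(b,z)_m^m=1$), so $\eta_{a,(ml)}|_{F^{\times m}} = \eta_{a',(ml)}|_{F^{\times m}}$; by part (i) this forces $a\equiv a'$ in $F^\times/F^{\times l}$, hence $a=a'$ since $a,a'\in L$. Cancelling $\eta_{a,(ml)}$ leaves $\eta_{b,(m)}=\eta_{b',(m)}$ on $F^\times$, whence $b\equiv b'$ in $F^\times/F^{\times m}$ by nondegeneracy of the $m$-th Hilbert symbol. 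This gives injectivity and completes the proof.

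\textbf{Main obstacle.} The only genuinely delicate point is the bookkeeping with the index formula \eqref{index}: the factor $\val{m}^{-1}$ is the source of all potential errors, and one must be careful that $\val{ml}=\val{m}\cdot\val{l}$ — equivalently that $\val{\bbmu_l}$ and $\val{\bbmu_m}$ multiply correctly — which is exactly where the hypothesis $m\mid n$, $ml\mid n$ together with $\bbmu_n\subset F^\times$ is used. Everything else is a routine application of the nondegeneracy of the Hilbert pairing and the functoriality \eqref{FV fact}; no new idea is needed beyond organizing these cardinality identities cleanly.
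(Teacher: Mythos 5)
Your proof is correct and follows essentially the same route as the paper's: both rely on the nondegeneracy of the Hilbert pairing together with the index formula \eqref{index} and the compatibility \eqref{FV fact}, with (iii) reduced to injectivity via restriction to $F^{\times m}$. The only cosmetic difference is that for (i) and (ii) you argue injectivity-plus-cardinality where the paper asserts surjectivity directly from nondegeneracy and then computes the kernel; these are logically dual and equally routine. One small point in your closing remark: $\val{ml}=\val{m}\cdot\val{l}$ is unconditional (multiplicativity of the absolute value); the hypothesis $m\mid n$, $ml\mid n$, $\bbmu_n\subset F^\times$ is instead what guarantees $\bbmu_m,\bbmu_{ml}\subset F^\times$ and hence the specific form $[F^\times:F^{\times m}]=m^2\val{m}^{-1}$ of the index formula itself.
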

\begin{proof}
Every character of $F^{\times m}$ whose kernel contains $F^{\times ml}$ is of the form
$$\eta_{x,(ml)}|_{ F^{\times m}}.$$
For $x,y \in {F^\times}$, we have
$$(x,y^m)_{ml}=(x,y)_l.$$
Thus, $\eta_{x,(ml)}|_{ F^{\times m}}$ is the trivial character if and only if $x \in F^{\times l}$. This proves (i).

It follows from \eqref{FV fact} that every character of $F^\times$ which is trivial on $F^{\times l}$ is of the form $\eta_{x, (ml)}$, where $x \in F^{\times m}$. Moreover,  $\eta_{x, (ml)}$ is the trivial character if and only if  $x \in F^{\times ml}$. This gives us (ii).

To prove (iii), it is sufficient to show that the map $(a,b) \mapsto \eta_{a,(ml)} \cdot \eta_{b,(m)}$ is an injection. Thus, we assume
$$\eta_{a,(ml)} \cdot \eta_{b,(m)}=\eta_{c,(ml)} \cdot \eta_{d,(m)}.$$
Then for all $x \in F^\times$, we have
$$(ac^{-1},x)_{ml}  \cdot (bd^{-1},x)_{m}=1.$$
In particular, for all $x \in F^{\times m}$, one has
$$(ac^{-1},x)_{ml}=1;$$
equivalently, for all $z \in F^\times$,
$$(ac^{-1},z^m)_{ml}=1.$$
This implies that $\eta_{ac^{-1},(l)}$ is the trivial character. Thus $ac^{-1} \in F^{\times l}$. Since $a,c \in L$, we deduce that $a=c$. This shows that  $\eta_{b,(m)}=\eta_{d,(m)}$. The proof is now completed.
\end{proof}

For a subgroup $J \subset F^\times$ (or a subgroup of $F^\times/F^{\times m}$), we define
$$J^{\perp}_{(m)}=\bigcap_{x \in J} {\rm Ker}(\eta_{x,(m)}).$$
If $J\subset F^\times$, then we view $J_{(m)}^\perp$ as a subgroup of $F^\times$. On the other hand, if $J\subset F^\times/F^{\times m}$, then we consider $J_{(m)}^\perp \subset F^\times/F^{\times m}$.

\begin{lm} \label{nicesat1}
Let $J \subset F^\times$ be a subgroup containing $F^{\times m}$. Then map
$$\phi: {F^\times}/J^{\perp}_{(m)}\rightarrow \widehat{J/{F^\times}^m}$$
given by
$$\phi \bigl(x\cdot J^{\perp}_{(m)} \bigr)=\eta_{x,(m)}|_J$$
is a well-defined isomorphism. In particular,
$$[ {F^\times}:J^{\perp}_{(m)}]=[J:F^{\times m}].$$
\end{lm}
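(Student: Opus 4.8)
The plan is to exhibit $\phi$ as a composition of maps already understood from the perfect pairing given by the $m$-th Hilbert symbol, and then to deduce well-definedness and bijectivity from that structure. First I would recall that $(-,-)_m$ induces a perfect pairing between $F^\times/F^{\times m}$ and itself, identifying $F^\times/F^{\times m}$ with $\widehat{F^\times/F^{\times m}}$ via $x \mapsto \eta_{x,(m)}$. Under this identification, the subgroup $J/F^{\times m} \subset F^\times/F^{\times m}$ has an orthogonal complement, and by the standard theory of finite abelian groups with a perfect pairing, that orthogonal complement is precisely the image of $J^{\perp}_{(m)}$ in $F^\times/F^{\times m}$ (note $J^{\perp}_{(m)} \supseteq F^{\times m}$ automatically since each $\eta_{x,(m)}$ kills $F^{\times m}$). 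So restriction of characters gives a surjection $F^\times/F^{\times m} \onto \widehat{J/F^{\times m}}$ whose kernel is $J^{\perp}_{(m)}/F^{\times m}$; passing to the quotient yields exactly the claimed isomorphism $\phi$.

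Concretely, the key steps in order are: (1) Well-definedness: if $x J^{\perp}_{(m)} = x' J^{\perp}_{(m)}$, then $x/x' \in J^{\perp}_{(m)}$, so $\eta_{x/x',(m)}$ vanishes on $J$, whence $\eta_{x,(m)}|_J = \eta_{x',(m)}|_J$; this also shows $\phi$ is a group homomorphism since $\eta_{xy,(m)} = \eta_{x,(m)}\eta_{y,(m)}$. (2) Injectivity: if $\eta_{x,(m)}|_J$ is trivial, then by definition $x \in J^{\perp}_{(m)}$, so $x J^{\perp}_{(m)}$ is the identity coset. (3) Surjectivity: by the perfect pairing, every character of $J/F^{\times m}$ extends to a character of $F^\times/F^{\times m}$, and every such character is of the form $\eta_{x,(m)}$ for some $x \in F^\times$ (this is the converse statement recalled just before Lemma \ref{oldlem}); restricting back to $J$ gives the desired preimage. (4) The index count: from the isomorphism, $[F^\times : J^{\perp}_{(m)}] = |\widehat{J/F^{\times m}}| = |J/F^{\times m}| = [J : F^{\times m}]$, using that a finite abelian group and its Pontryagin dual have the same order.

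I do not expect a serious obstacle here; the only point requiring a little care is surjectivity, which hinges on the non-degeneracy of the Hilbert symbol pairing on $F^\times/F^{\times m}$ — i.e., that characters of a subgroup extend and that all characters of $F^\times$ trivial on $F^{\times m}$ are realized as $\eta_{x,(m)}$. Both facts are recorded in the excerpt (the perfect-pairing statement from \cite{WeilB} together with the discussion preceding Lemma \ref{oldlem}), so the argument is essentially a formal consequence of duality for finite abelian groups. One should also note for cleanliness that $J$ is required to contain $F^{\times m}$ precisely so that $J/F^{\times m}$ makes sense and $J^{\perp}_{(m)} \supseteq F^{\times m}$, which is what makes all the quotients well-formed.
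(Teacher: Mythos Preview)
Your proposal is correct and follows essentially the same approach as the paper: the paper's proof simply records that every character of $J/F^{\times m}$ has the form $\eta_{x,(m)}|_J$ (surjectivity) and that $\eta_{x,(m)}|_J$ is trivial iff $x \in J^{\perp}_{(m)}$ (well-definedness and injectivity), which is exactly what your steps (1)--(3) spell out in more detail.
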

\begin{proof} Every character of $J/F^{\times m}$ is of the form $\eta_{x,(m)}|_J$ for some $x \in F^\times$. By definition, $\eta_{x,(m)}|_J$ is trivial if and only if $x\in J^{\perp}_{(m)}$.
\end{proof}

 \begin{dfn} A subgroup $J$ of $F^\times/ F^{\times m}$ (resp. of $F^\times$) is called an $m$-Lagrangian subgroup if ${J} =J^{\perp}_{(m)} \subset  F^\times/ F^{\times m}$ (resp. as subgroups in $F^\times$).
\end{dfn}

If $J \subseteq F^\times$ is an $m$-Lagrangian subgroup, then it follows from \eqref{index} that $\val{m}^{-\half} \in \N$ and moreover
$$[F^\times: J]=[J: F^{\times m}]=\sqrt{[F^\times: F^{\times m}]}= m \cdot \val{m}^{-\half}.$$
Note that $J \subseteq  F^\times$ is an $m$-Lagrangian subgroup if an only if its image in $F^\times/F^{\times m}$ is an $m$-Lagrangian subgroup of $F^\times/F^{\times m}$.

\begin{dfn} Let $J$ and $K$ be two Lagrangian subgroups of $F^\times/F^{\times m}$. The pair $(J, K)$ is called a Lagrangian decomposition  of $F^\times/F^{\times m}$ if $F^\times/ F^{\times m}= J \times K$ and that the map $b \mapsto {\eta_{b,(m)}}|_J$ gives an isomorphism from $K$ to $\widehat{J}$.
\end{dfn}
Note that if $(J, K)$  is a Lagrangian decomposition of $F^\times/F^{\times m}$, then $J \simeq K$ and both are of size $m\cdot \val{m}^{-\half}$.

\begin{lm}[{\cite[Lemma 2.3]{Szp6}}] \label{lang decomp}
If $\bbmu_{2m} \subseteq F^\times$, then a Lagrangian decomposition of $F^\times/F^{\times m}$ exists. Furthermore, if $(J, K)$ and $(J', K')$ are two Lagrangian decompositions  of  $F^\times/F^{\times m}$, then there exists a group automorphism $\theta$ of $F^\times/F^{\times m}$ preserving $(-,-)_m$ such that $\theta(J)=J'$ and $\theta(K)=K'$.
\end{lm}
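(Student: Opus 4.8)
The plan is to translate the statement into the language of symplectic forms on finite abelian groups and then apply the standard structure theory. Write $V := F^\times/F^{\times m}$, a finite abelian group with $mV = 0$. Since $\bbmu_{2m}\subseteq F^\times$, equation \eqref{x with x is 1} gives $(x,x)_m = 1$ for all $x\in F^\times$; combined with the fact that $(-,-)_m$ is a perfect pairing on $V$ (equivalently, Lemma \ref{nicesat1} applied with $J = F^\times$ shows $(F^\times)^{\perp}_{(m)} = F^{\times m}$), this says that $(-,-)_m$ induces a non-degenerate \emph{alternating} $\bbmu_m$-valued bilinear form on $V$, i.e. a symplectic form. In this language an $m$-Lagrangian subgroup is exactly a subgroup equal to its own orthogonal complement, and $(J,K)$ is a Lagrangian decomposition exactly when $V = \bar{J}\oplus\bar{K}$ with both summands Lagrangian --- the additional requirement that $b\mapsto\eta_{b,(m)}|_J$ be an isomorphism $K\to\widehat{J}$ is then automatic by non-degeneracy and a count of orders. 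Thus the two assertions to prove become: (a) every finite symplectic abelian group has a Lagrangian decomposition; (b) the group of form-preserving automorphisms acts transitively on the set of Lagrangian decompositions.

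For (a) I would first prove the orthogonal decomposition of $(V,(-,-)_m)$ into \emph{symplectic planes}: $V\cong\bigoplus_i P_i$ with $P_i\cong\Z/p_i^{e_i}\oplus\Z/p_i^{e_i}$ and the form on $P_i$ the standard one of order $p_i^{e_i}$. This is the usual induction on $\val{V}$: pick $v$ of maximal order $N$; non-degeneracy forces the character $(v,-)_m$ to have order exactly $N$, so some $w$ has $(v,w)_m$ a primitive $N$-th root of unity; then $w$ has order $N$ as well, the pair spans a subgroup $H\cong(\Z/N)^2$ on which the form is non-degenerate (using that the form is alternating, so $(v,v)_m = (w,w)_m = 1$), and $V = H\oplus H^{\perp}$ orthogonally with $H^{\perp}$ again finite symplectic; now induct. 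In each plane $P_i$ the two coordinate axes are complementary Lagrangian subgroups of $P_i$ in perfect duality under the form; summing them over $i$ yields a Lagrangian decomposition of $V$, which pulls back along $F^\times\onto V$ to the asserted Lagrangian decomposition $(J,K)$ of $F^\times/F^{\times m}$. (As a by-product this re-derives $\val{m}^{-1/2}\in\N$, since $\val{V}$ is now visibly a square.)

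For (b) I would use the ``standard model''. Given a Lagrangian decomposition $(J,K)$, the map $u_{J,K}\colon V\to J\oplus\widehat{J}$, $jk\mapsto\bigl(j,\ \eta_{k,(m)}|_J\bigr)$, is a group isomorphism carrying $(-,-)_m$ to the hyperbolic pairing $\bigl((j_1,\xi_1),(j_2,\xi_2)\bigr)\mapsto\xi_1(j_2)\,\xi_2(j_1)^{-1}$ on $J\oplus\widehat{J}$ --- a direct check using that $J,K$ are isotropic and the standard bilinearity and skew-symmetry of $(-,-)_m$ --- and it sends $J\mapsto J\oplus 0$, $K\mapsto 0\oplus\widehat{J}$. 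Now take two Lagrangian decompositions $(J,K)$ and $(J',K')$. First, $J$ and $J'$ are abstractly isomorphic: $J^2\cong J\oplus\widehat{J}\cong V\cong J'\oplus\widehat{J'}\cong(J')^2$, and finite abelian groups enjoy the cancellation property $A^2\cong B^2\Rightarrow A\cong B$. Fix an isomorphism $\beta\colon J\to J'$; then $(j,\xi)\mapsto(\beta j,\ \xi\circ\beta^{-1})$ is an isometry $J\oplus\widehat{J}\to J'\oplus\widehat{J'}$ for the hyperbolic forms, respecting both axis-decompositions. Conjugating this isometry through $u_{J,K}$ and $u_{J',K'}$ produces an automorphism $\theta$ of $V$ preserving $(-,-)_m$ with $\theta(J) = J'$ and $\theta(K) = K'$, which is exactly what is wanted.

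The only non-formal ingredient is the decomposition into symplectic planes in step (a); the rest is bookkeeping. This structure theorem is classical --- it underlies the construction of Heisenberg and Weil representations over finite and local fields, and is a special case of Wall's classification of bilinear forms on finite abelian groups --- so one may simply cite it, but I have indicated the short self-contained proof above. One subtlety worth recording is that an arbitrary $m$-Lagrangian subgroup need not occur as a member of any Lagrangian decomposition (it can have too small an exponent, e.g. when $V$ has elements of order $>2$ but the Lagrangian is elementary $2$-abelian), so step (a) has to produce the full decomposition directly rather than first producing a single Lagrangian subgroup and then hunting for a Lagrangian complement.
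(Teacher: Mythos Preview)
The paper does not prove this lemma; it merely cites \cite[Lemma 2.3]{Szp6}. Your proposal supplies a correct and self-contained argument via the structure theory of alternating forms on finite abelian groups, which is precisely the natural approach. The reduction to symplectic planes in (a) and the hyperbolic-model argument in (b) are both sound, including the cancellation step $J^2\cong (J')^2\Rightarrow J\cong J'$ for finite abelian groups. Your closing remark that not every Lagrangian admits a Lagrangian complement (e.g.\ the subgroup $2\Z/4\times 2\Z/4$ inside $(\Z/4)^2$ with the standard form) is correct and explains why (a) must build the decomposition directly rather than first producing a lone Lagrangian.
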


\begin{lm} \label{Jaut}
Let $J \subset F^\times$ be an $m$-Lagrangian subgroup and $l\in \N$. If $\gcd(l,m)=1$, then the map $x \mapsto x^l$ gives rise to an automorphism of $F^\times/J$ and also of $J/F^{\times m}$.
\end{lm}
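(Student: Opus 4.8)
The plan is to verify directly that raising to the $l$-th power is well-defined on the two quotients in question and then invert it. First I would recall that, by Lemma \ref{nicesat1}, an $m$-Lagrangian subgroup $J \subset F^\times$ satisfies $J = J^\perp_{(m)}$, $F^{\times m} \subseteq J$, and $[F^\times : J] = [J : F^{\times m}] = m \cdot \val{m}^{-1/2}$; in particular both $F^\times/J$ and $J/F^{\times m}$ are finite abelian groups. I would next observe that each of these groups is annihilated by $m$: for $F^\times/J$ this is because $F^{\times m} \subseteq J$, so $x^m \in J$ for every $x \in F^\times$; for $J/F^{\times m}$ it is immediate that $y^m \in F^{\times m}$ for every $y \in J$. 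Thus both quotients are finite abelian groups of exponent dividing $m$.

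The heart of the matter is then a general fact: if $A$ is a finite abelian group of exponent dividing $m$ and $\gcd(l, m) = 1$, then $a \mapsto a^l$ is an automorphism of $A$. Indeed, since $\gcd(l, m) = 1$ we may pick $l' \in \Z$ with $l l' \equiv 1 \pmod{m}$, and then $(a^l)^{l'} = a^{ll'} = a$ for all $a \in A$ because the exponent of $A$ divides $m$; hence $a \mapsto a^l$ and $a \mapsto a^{l'}$ are mutually inverse endomorphisms, so each is an automorphism. Applying this with $A = F^\times/J$ and then with $A = J/F^{\times m}$ gives the claim. The only point requiring a word of care is that the power map is genuinely induced on the quotient, i.e.\ that it sends $J$ into $J$ (so that it descends to $F^\times/J$) and sends $F^{\times m}$ into $F^{\times m}$ (so that it descends to $J/F^{\times m}$); both are obvious since $J$ and $F^{\times m}$ are subgroups of the abelian group $F^\times$, closed under taking $l$-th powers.

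I do not anticipate a genuine obstacle here; the lemma is essentially the elementary observation above combined with the structural facts about Lagrangian subgroups recorded in \S\ref{S:K2-extn}. The only thing to be careful about is to invoke $\gcd(l,m)=1$ rather than any coprimality with $\val{m}$ or with $n$, and to note that the argument uses only the exponent bound, not the full structure of $J$. If one wished to be even more explicit about $J/F^{\times m}$, one could alternatively transport the power map through the isomorphism $J/F^{\times m} \simeq \widehat{F^\times/J}$ from Lemma \ref{nicesat1} (via the Hilbert pairing) and use that the dual of an automorphism is an automorphism, but the direct exponent argument is shorter and suffices.
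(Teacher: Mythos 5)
Your proof is correct. The paper states Lemma \ref{Jaut} without proof (evidently regarding it as routine), and your argument supplies exactly the expected reasoning: both $F^\times/J$ and $J/F^{\times m}$ have exponent dividing $m$ (the former because $F^{\times m}\subseteq J$ for any $m$-Lagrangian $J$, the latter trivially), and raising to the $l$-th power is inverted by raising to the $l'$-th power where $ll'\equiv 1\pmod m$. The appeal to Lemma \ref{nicesat1} for finiteness is not strictly necessary for the automorphism claim — the exponent argument already yields a two-sided inverse without finiteness — but it does no harm and correctly grounds the structural facts used.
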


\begin{lm} \label{extlag} Suppose that $m'|m$ and $m|n$. If $\bbmu_{2m'} \subseteq F^\times$, then every  $m$-Lagrangian subgroup of $F^\times$ is contained in an $m'$-Lagrangian subgroup.
\end{lm}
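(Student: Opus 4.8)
The plan is to translate the problem into a statement about the finite abelian group $A:=F^\times/F^{\times m'}$ equipped with the $m'$-th Hilbert symbol pairing $(-,-)_{m'}\colon A\times A\to\bbmu_{m'}$, and then to extend the image of the given $m$-Lagrangian subgroup to a Lagrangian subgroup of $A$ via the standard ``enlarge an isotropic subgroup inside its own annihilator'' procedure. Throughout one uses that, by the remark following the definition of $m'$-Lagrangian subgroups, a subgroup of $F^\times$ is $m'$-Lagrangian precisely when its image in $A$ is, so it suffices to work inside $A$ and then pull back along $q\colon F^\times\onto A$.

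First, let $J\subseteq F^\times$ be $m$-Lagrangian, i.e. $J=J^\perp_{(m)}$. Since each $\eta_{x,(m)}$ is trivial on $F^{\times m}$ one automatically has $F^{\times m}\subseteq J$, and as $m'\mid m$ gives $F^{\times m}\subseteq F^{\times m'}=\ker q$, the image $\bar J:=q(J)$ is well defined. The two points I would check about $\bar J$ are: (a) it is isotropic for $(-,-)_{m'}$; and (b) the pairing $(-,-)_{m'}$ is \emph{alternating}. For (a), relation \eqref{FV fact} (valid since $m\mid n$) gives $(x,y)_{m'}=(x,y)_m^{m/m'}$, so $(x,y)_m=1$ for all $x,y\in J$ forces $(x,y)_{m'}=1$, i.e. $\bar J\subseteq \bar J^{\perp}_{(m')}$. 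For (b), \eqref{x with x} and \eqref{x with x is 1} give $(z,z)_{m'}=(-1,z)_{m'}=1$ for every $z$, using the hypothesis $\bbmu_{2m'}\subseteq F^\times$. Recall finally that $(-,-)_{m'}$ is a perfect pairing on $A$, so for any subgroup $B\subseteq A$ one has $\val{B^\perp_{(m')}}=\val{A}/\val{B}$ and $(B^\perp_{(m')})^\perp_{(m')}=B$.

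Now I would run the extension argument: starting from $B:=\bar J$, if $B\ne B^\perp_{(m')}$ choose $z\in B^\perp_{(m')}\setminus B$ and replace $B$ by $\langle B,z\rangle$, which is strictly larger and still isotropic — the pairing of any two of its generators equals $1$ because $B$ is isotropic, because $(y,z)_{m'}=1=(z,y)_{m'}^{-1}$ for $y\in B$ as $z\in B^\perp_{(m')}$, and because $(z,z)_{m'}=1$ by (b). Since $A$ is finite this terminates at an isotropic $B$ with $B=B^\perp_{(m')}$, i.e. an $m'$-Lagrangian subgroup of $A$, still containing $\bar J$. Setting $J':=q^{-1}(B)$ then finishes the proof: $J'$ is $m'$-Lagrangian in $F^\times$ (its image under $q$ being $B$), and $J\subseteq q^{-1}(\bar J)\subseteq q^{-1}(B)=J'$.

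The single essential use of the hypothesis $\bbmu_{2m'}\subseteq F^\times$ is in (b): without it the pairing $(-,-)_{m'}$ is merely symmetric, the element $z$ produced in the extension step can satisfy $(z,z)_{m'}\ne 1$, and there may be no $m'$-Lagrangian subgroup at all. This is the point I expect to be the crux; by contrast the compatibility $(x,y)_{m'}=(x,y)_m^{m/m'}$ between Hilbert symbols at the levels $m$ and $m'$, the identity $(B^\perp_{(m')})^\perp_{(m')}=B$, and the bookkeeping along $F^\times\onto F^\times/F^{\times m'}$ are routine. (One could alternatively extract the existence of $m'$-Lagrangian subgroups of $A$ from Lemma \ref{lang decomp}, but the extension argument produces it directly.)
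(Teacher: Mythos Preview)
Your proof is correct and follows essentially the same approach as the paper: both use the relation $(x,y)_{m'}=(x,y)_m^{m/m'}$ to see that $J$ is isotropic for $(-,-)_{m'}$, invoke $\bbmu_{2m'}\subseteq F^\times$ to get $(z,z)_{m'}=1$, and then iteratively enlarge by elements of the annihilator until reaching a Lagrangian, terminating by finiteness of $F^\times/F^{\times m'}$. The only difference is cosmetic—you pass explicitly to the quotient $A=F^\times/F^{\times m'}$ and pull back at the end, while the paper runs the same induction directly inside $F^\times$.
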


\begin{proof}
Let $J \subseteq F^\times$ be an $m$-Lagrangian subgroup. Write $m=m't$. We note that for any $x,y \in J$,
$$1=(x,y)^t_m=(x,y)_{m'}.$$
Thus, $J \subseteq J^{\perp}_{(m')}$. We now describe a finite inductive process, which will give us the desired result.

Set $J_0=J$. If $J_0$ is an $m'$-Lagrangian subgroup, then we are done. Otherwise, there exists $x\in F^\times$ which lies outside $J_0$ such that $(x,j)_{m'}=1$ for all $j \in J$. Since $\bbmu_{2m'} \subseteq F^\times$,  $(x,x)_{m'}=1.$ Define $J_1$ to be the group generated by $x$ and $J$. Clearly ,
$$J \varsubsetneq J_1  \subseteq {J_1}^{\perp}_{(m')}.$$
If $J_1={J_1}^{\perp}_{(m')}$, we stop. Otherwise we continue the argument in a similar way as above. This process must end at one point since $F^\times/{F^\times}^{m'}$ is finite. This completes the proof.
\end{proof}

\begin{lm} \label{nicesat2}
Let $m,l$ be two integers such that $(ml)|n$. Let $J \subseteq F^\times$ be an $m$-Lagrangian subgroup.
Then
$$J^{\perp}_{(ml)}=J^l, \text{ where } J^l:=\{x^l \mid x\in J \}.$$
In particular, $\widehat{J/F^{\times ml}} \simeq F^\times/J^l$ and
$$[F^\times: J^l]= m \val{m}^{-1/2} \cdot l^2 \val{l}^{-1}.$$
\end{lm}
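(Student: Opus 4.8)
The plan is to establish the set equality $J^\perp_{(ml)}=J^l$ by proving the two inclusions directly, using only bimultiplicativity of the Hilbert symbol, the relation \eqref{FV fact}, the defining property $J=J^\perp_{(m)}$ of an $m$-Lagrangian subgroup, and nondegeneracy of $(-,-)_l$ on $F^\times/F^{\times l}$; the index formula and the duality statement will then follow from Lemma \ref{nicesat1}.

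For the inclusion $J^l\subseteq J^\perp_{(ml)}$: given $x\in J$ and any $j\in J$, bimultiplicativity of $(-,-)_{ml}$ gives $(j,x^l)_{ml}=(j,x)_{ml}^l$, which equals $(j,x)_m$ by \eqref{FV fact}, and $(j,x)_m=\eta_{j,(m)}(x)=1$ because $x\in J=J^\perp_{(m)}$; hence $x^l\in\bigcap_{j\in J}{\rm Ker}(\eta_{j,(ml)})=J^\perp_{(ml)}$. For the reverse inclusion, let $y\in J^\perp_{(ml)}$. First I would show $y\in F^{\times l}$: since $J$ is $m$-Lagrangian we have $F^{\times m}\subseteq J$, so for every $z\in F^\times$ the element $z^m$ lies in $J$, whence $1=(z^m,y)_{ml}=(z,y)_{ml}^m=(z,y)_l$ by \eqref{FV fact}; as $(-,-)_l$ is a perfect pairing on $F^\times/F^{\times l}$ this forces $y\in F^{\times l}$, say $y=x^l$ with $x\in F^\times$. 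Next, for any $j\in J$ we get $1=(j,y)_{ml}=(j,x)_{ml}^l=(j,x)_m$, so $x\in\bigcap_{j\in J}{\rm Ker}(\eta_{j,(m)})=J^\perp_{(m)}=J$; thus $y=x^l\in J^l$, and $J^\perp_{(ml)}=J^l$ follows.

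For the remaining assertions, observe $F^{\times ml}\subseteq F^{\times m}\subseteq J$, so Lemma \ref{nicesat1} applied with $ml$ in place of $m$ gives both the isomorphism $\widehat{J/F^{\times ml}}\simeq F^\times/J^\perp_{(ml)}=F^\times/J^l$ and the equality $[F^\times:J^l]=[J:F^{\times ml}]$. Factoring $[J:F^{\times ml}]=[J:F^{\times m}]\cdot[F^{\times m}:F^{\times ml}]$, the first factor equals $[F^\times:J]=m\val{m}^{-1/2}$ by the Lagrangian property and \eqref{index}, while the second equals $[F^\times:F^{\times ml}]/[F^\times:F^{\times m}]=l^2\val{m}\val{ml}^{-1}=l^2\val{l}^{-1}$ by \eqref{index} and multiplicativity of $\val{\cdot}$; multiplying yields $[F^\times:J^l]=m\val{m}^{-1/2}\cdot l^2\val{l}^{-1}$. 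I do not anticipate a serious obstacle: the argument is a short Hilbert-symbol computation together with index bookkeeping, the one point needing care being the step $y\in F^{\times l}$, where it is precisely the inclusion $F^{\times m}\subseteq J$ (a genuine consequence of $J$ being $m$-Lagrangian) that allows one to collapse the subscript $ml$ down to $l$ before invoking nondegeneracy of $(-,-)_l$.
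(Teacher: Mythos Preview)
Your proof is correct, and your route to the reverse inclusion $J^\perp_{(ml)}\subseteq J^l$ is genuinely different from the paper's. The paper first establishes the chain $J^l\subseteq J^\perp_{(ml)}\subseteq J$ (the second inclusion by showing that any $y\notin J$ fails $(x,y)_{ml}=1$ for some $x\in J$, via $(x,y)_{ml}^l=(x,y)_m$), and then, to exclude $y\in J\setminus J^l$, it picks a nontrivial character of $J/J^l$, extends it to $\eta_{x,(ml)}|_J$, and argues that the witnessing $x$ lies in $J$. You instead first prove $J^\perp_{(ml)}\subseteq F^{\times l}$ by testing against $F^{\times m}\subseteq J$ and collapsing $(z^m,y)_{ml}$ to $(z,y)_l$, and then, writing $y=x^l$, you read off $x\in J^\perp_{(m)}=J$ directly. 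Your argument is more elementary: it sidesteps the character-lifting step entirely and never needs the intermediate inclusion $J^\perp_{(ml)}\subseteq J$. The paper's argument, on the other hand, makes the filtration $J^l\subseteq J^\perp_{(ml)}\subseteq J$ explicit, which is conceptually pleasant even if not strictly needed. The index computation and the application of Lemma \ref{nicesat1} are identical in both.
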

\begin{proof} For all $x,y \in J$ we have $$(x^l,y)_{ml}=(x,y)^{l}_{ml}=(x,y)_m=1.$$
This shows  that $J^l  \subseteq J^{\perp}_{(ml)}  $. On the other hand, if  $y \not \in J$, then there exists  $x \in J$ such that $(x,y)_{ml} \neq 1$. Indeed, this follows from the equality $(x,y)_{ml}^l=(x,y)_m$ along with the fact that $J$ is an $m$-Lagrangian subgroup. Thus we have shown
$$J^l \subseteq J^{\perp}_{(ml)} \subseteq J.$$

We now show that if  $y\in J$ lies outside $J^l$, then there exists $x \in J$ such that $(x,y)_{ml} \neq 1$. From the assumption on $y$, it follows that there exists a character $\chi$ of $J/J^l$ such that $\chi(y) \neq 1$. Since $J \supseteq {F^\times}^m$, we deduce that  $J^l \supseteq {F^\times}^{ml}$. Thus, the pullback of $\chi$ to $J$ must be of the form $\eta_{x,(ml)}|_J$ for some $x\in F^\times$. Since $\eta_{x,(ml)}|_{J^l}$ is trivial, we have for all $z \in J$
$$1=(x,z^l)_{ml}=(x,z)_m.$$
Thus, since $J$ is an $m$-Lagrangian subgroup, we deduce that $x \in J$. We have proven  $J^{\perp}_{(ml)}=J^l$. Note that
\begin{equation}
\begin{aligned}\label{Mindex}
[J:{F^\times}^{ml}]  & =[J:{F^\times}^{m}] \cdot [{F^\times}^{m}:{F^\times}^{ml}] =[J:{F^\times}^{m}] \cdot  \frac{ [{F^\times}:{F^\times}^{ml}]}{[{F^\times}:{F^\times}^{m}]} \\
 & = m \val{m}^{-1/2} \frac {(ml)^2 \val{ml}^{-1}}{m^2 \val{m}^{-1}}=  m \val{m}^{-1/2} \cdot l^2 \val{l}^{-1}
\end{aligned}
\end{equation}
The rest follows from Lemma \ref{nicesat1}.
\end{proof}

\begin{lm} \label{pn1} Assume that  $\gcd(p,n)=1$.
\begin{enumerate}
\item[(i)] Let $K_m=O_F^\times F^{\times m} $. Let
$$K=O_F^\times F^{\times m} /F^{\times m},\  J=\varpi^{\Z} F^{\times m}/ F^{\times m}.$$
Then $K_m$ is an $m$-Lagrangian subgroup of $F^\times$. Both $J$ and $K$ are isomorphic to the cyclic group with $m$ elements. If $\bbmu_{2m} \subseteq F^\times$, then  $(J, K)$ is a Lagrangian decomposition of  $F^\times / F^{\times m}$.
\item[(ii)] If $n$ is even, then a set of representatives for $F^\times/F^{\times 2}$ is given by
$\{1, u, \varpi^{-1}, u\varpi^{-1} \}$ where $\eta_{u, (n)}$ is unramified with $\eta_{u,(n)}^{n/2}(\varpi)=-1$ and  $\mfr{f}(\eta_{\varpi,(n)})=1$.
\end{enumerate}
\end{lm}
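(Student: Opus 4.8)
The plan is to treat parts (i) and (ii) separately, since (i) concerns $m$-Lagrangian subgroups under the coprimality hypothesis $\gcd(p,n)=1$, while (ii) is an explicit description of $F^\times/F^{\times 2}$ when $n$ is even (which forces $\bbmu_2 \subseteq F^\times$, but we need more care about where $2$ lies relative to $p$). For part (i), first I would use the structure of $F^\times$: under $\gcd(p,n)=1$ (so in particular $\gcd(p,m)=1$ since $m \mid n$), we have $\val{m}=1$, and the filtration $O_F^\times \cong \bbmu_{q-1} \times (1+\mfr{p})$ with $1+\mfr{p}$ pro-$p$. Raising to the $m$-th power is an automorphism of the pro-$p$ part, so $F^{\times m} \cap O_F^\times = (O_F^\times)^m$ has index $\gcd(m,q-1)$ in $O_F^\times$; combined with $\eqref{index}$, which here reads $[F^\times:F^{\times m}]=m^2$, and $\val{m}^{-1/2}=1$, I would deduce $\gcd(m,q-1)=m$, i.e.\ $\bbmu_m \subseteq F^\times$, so that $O_F^\times/(O_F^\times)^m \cong \Z/m$ and $F^\times/F^{\times m} \cong \Z/m \times \Z/m$ generated by the images of a root of unity and of $\varpi$. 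Then I would check directly that $K_m = O_F^\times F^{\times m}$ satisfies $(K_m)^{\perp}_{(m)} = K_m$: since the Hilbert symbol $(-,-)_m$ is tamely ramified (as $\gcd(p,m)=1$), the pairing of two units $(u,v)_m$ is trivial (tame symbol of two units in the unramified-over-tame situation), so $K_m \subseteq (K_m)^{\perp}_{(m)}$, and equality follows from counting using Lemma \ref{nicesat1}: $[F^\times : (K_m)^\perp_{(m)}] = [K_m : F^{\times m}] = m$, which matches $[F^\times:K_m]=m$. The statements about $J$ and $K$ being cyclic of order $m$ are then immediate from $F^\times/F^{\times m}\cong(\Z/m)^2$, and that $(J,K)$ is a Lagrangian decomposition when $\bbmu_{2m}\subseteq F^\times$ follows because $F^\times/F^{\times m}=J\times K$ with $J\cap K$ trivial (a unit that is a power of $\varpi$ mod $F^{\times m}$ must be trivial mod $F^{\times m}$), and the perfectness of $(-,-)_m$ forces $b\mapsto \eta_{b,(m)}|_J$ to identify $K$ with $\widehat J$ — here one uses $(x,x)_m=1$ from $\eqref{x with x is 1}$ so that the symbol restricted to $J$ is trivial and the whole nondegeneracy is "off-diagonal".

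For part (ii), with $n$ even I would first note $[F^\times:F^{\times 2}]=4\cdot\val{2}^{-1}$; but since $\gcd(p,n)=1$ we have $p$ odd, hence $\val 2 =1$ and $[F^\times:F^{\times 2}]=4$, so a set of four representatives suffices and I need to show $\{1,u,\varpi^{-1},u\varpi^{-1}\}$ is such a set with the claimed ramification properties of $\eta_{u,(n)}$ and $\eta_{\varpi,(n)}$. By part (i) applied with $m=2$, the group $F^\times/F^{\times 2}$ is $J'\times K'$ with $J'=\varpi^\Z F^{\times 2}/F^{\times 2}\cong\Z/2$ and $K'=O_F^\times F^{\times 2}/F^{\times 2}\cong\Z/2$; a generator of $K'$ is any non-square unit $u$, and $\eta_{u,(n)}$ is then unramified (trivial on $O_F^\times$... wait — I need $\eta_{u,(n)}$ unramified, i.e.\ $\eta_{u,(n)}|_{O_F^\times}=1$: since $\gcd(p,n)=1$ the symbol $(u,-)_n$ on units is trivial, so indeed $\eta_{u,(n)}$ is unramified). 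Its value $\eta_{u,(n)}(\varpi)=(u,\varpi)_n$ is a primitive element whose $(n/2)$-th power is $(u,\varpi)_2 = -1$ precisely because $u$ is a non-square and the tame symbol $(-,-)_2$ pairs the residue-field-square class; choosing $u$ to be a lift of a non-square in $\bbmu_{q-1}$ makes this explicit. Finally $\eta_{\varpi,(n)}=\eta_{\varpi,(n)}$ has conductor $1$: $(\varpi,1+\mfr p)_n$ — for $x\in 1+\mfr p$, $(\varpi,x)_n$ is a nontrivial tame-symbol-type contribution, and it is trivial on $1+\mfr p^k$ for... actually for $\gcd(p,n)=1$, $(\varpi,x)_n=1$ for all $x\in O_F^\times$ would contradict perfectness; the correct statement is that $a\mapsto(\varpi,a)_n$ is ramified with conductor exactly $1$, which one sees since $(\varpi,-)_n$ restricted to $O_F^\times$ factors through $O_F^\times/(O_F^\times)^{\gcd(n,q-1)} = \bbmu_{q-1}/\bbmu_{q-1}^{\gcd(n,q-1)}$ — a quotient of the tame part — hence trivial on $1+\mfr p$ but nontrivial on $O_F^\times$, giving $\mfr f(\eta_{\varpi,(n)})=1$.

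The main obstacle I anticipate is the careful bookkeeping of tame Hilbert symbols: verifying that $(u,v)_m=1$ for units $u,v$ when $\gcd(p,m)=1$, and pinning down exactly the value $(u,\varpi)_n$ and its $(n/2)$-th power and conductor, all hinge on the explicit tame symbol formula (the Hilbert symbol at a place of residue characteristic prime to $m$ is the tame symbol composed with the norm-residue map on the residue field), and one must be consistent about the embedding $\bbmu_n\hookrightarrow\C^\times$. None of this is deep, but it is the step where sign and index errors creep in; everything else is index-counting via $\eqref{index}$, Lemma \ref{nicesat1}, and the elementary structure of $F^\times$. I would organize the writeup so that the tame-symbol facts are isolated up front and then parts (i) and (ii) follow by essentially formal manipulation.
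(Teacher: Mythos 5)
Your proposal is correct and rests on the same core facts the paper cites: $[F^\times:F^{\times m}]=m^2$ when $p\nmid m$, triviality of the tame Hilbert symbol on $O_F^\times\times O_F^\times$, non-degeneracy together with $(x,x)_m=(-1,x)_m$, and (for part (ii)) the inclusion $1+\mfr{p}\subset F^{\times n}$; the paper's own proof is simply a terse citation of these, whereas you spell them out. The only blemish is the mid-paragraph false start on the conductor of $\eta_{\varpi,(n)}$ (first asserting $(\varpi,x)_n$ is nontrivial for $x\in 1+\mfr{p}$ before correcting), but the self-correction does land on the right conclusion, so the argument stands.
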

\begin{proof}
Under the condition $p\nmid n$, we have $[F^\times : F^{\times m}]=m^2$. Also $(-,-)_m$ is trivial on $O_F^\times \times O_F^\times$. The assertion (i) now follows from the non-degeneracy of the Hilbert symbol along with \eqref{x with x is 1}. For the proof of (ii),  we simply note that $1+\mfr{p} \subset F^{\times n}$ (see \cite[page 43]{Lang}).
\end{proof}

\subsection{Topological covers} \label{S:top-c}
In the setting of \S \ref{S:K2-extn}, every $\mbf{K}_2$-extension $\wm{G}$ gives rise to an $n$-fold central covering
$$\begin{tikzcd}
\bbmu_n \ar[r, hook, "\iota"] & \wt{G} \ar[r, two heads, "\p"] & G
\end{tikzcd}$$
by pushing out the short exact sequence
$$\begin{tikzcd}
\mbf{K}_2(F) \ar[r, hook] & \wm{G}(F) \ar[r, two heads, "\p"] & \mbf{G}(F)
\end{tikzcd}$$
via the $n$-th Hilbert symbol 
$$(-, -)_n: \mbf{K}_2(F) \to \bbmu_n.$$
The above extension $\wt{G}$ of $G$ by $\bbmu_n$ is a central extension of locally compact topological groups with $\bbmu_n$ a finite and discrete subgroup. The embedding $\iota$ and the projection $\p$ are continuous, and moreover $\p$ induces an isomorphism 
$$\wt{G}/ \iota(\bbmu_n) \simeq G$$
 of topological groups.

A representation of $\wt{G}$ is called genuine if $\bbmu_n$ acts via the fixed embedding $\bbmu_n \into \C^\times$. In this paper, we consider genuine representations of $\wt{G}$ (and its subgroups containing $\bbmu_n$), and denote the isomorphism class of such genuine representations by $\Irr(\wt{G})$. Occasionally, we also consider anti-genuine representations, such that the action of $\bbmu_n$ is given by the inverse of the embedding $\bbmu_n \subset \C^\times$.

The structure of $\wt{G}$ is described elegantly in terms of generators and relations in the early work of Steinberg \cite{Ste1}, Matsumoto \cite{Mat1} and Moore \cite{Mo1} etc, mostly for covers of semisimple groups. The work of Brylinski and Deligne \cite{BD} allows an extension of such description to covers of general reductive groups. See also the work \cite{We2, We3, We6, GG}  following this description.

The covering group $\wt{G}$ splits over unipotent subgroups canonically and $G$-equivariantly, where the action of $G$ on $\wt{G}$ is by conjugation. Namely, if $U \subset G$ is a unipotent subgroup, then there is a unique splitting $s_U: U \to \wt{G}$ satisfying
$$s_U(g u g^{-1}) = \wt{g} s_U(u) \wt{g}^{-1},$$
where the right hand side is independent of the choice of lifting $\wt{g} \in \wt{G}$ of $g \in G$.

 Denote by $\wt{e}_\alpha(F)$ the splitting of $e_\alpha(F)$ in $\wt{G}$. For any $\alpha \in \Phi$ and $x\in F^\times$, define
\begin{equation} \label{E:w}
\wt{w}_\alpha(x):=\wt{e}_\alpha(x) \wt{e}_{-\alpha}(-x^{-1}) \wt{e}_\alpha(x)
\end{equation}
and
\begin{equation} \label{E:h}
\wt{h}_\alpha(x):=\wt{w}_\alpha (x) \wt{w}_{\alpha}(-1).
\end{equation}
For any $\alpha\in \Phi$, for simplicity we write
$$\wt{w}_\alpha:=\wt{w}_\alpha(1).$$
When we consider the case $n=1$ (i.e. $\wt{G}=G$), we will use the notations $e_\alpha, w_\alpha, h_\alpha$ for $\wt{e}_\alpha, \wt{w}_\alpha, \wt{h}_\alpha$ respectively.

Let $W' \subset \wt{G}$ be the subgroup generated by $\wt{w}_\alpha$ for all $\alpha \in \Phi$. Then the map $\wt{w}_\alpha \mapsto \w_\alpha$ gives a surjective homomorphism
$$W' \onto W$$
with kernel being a finite group (see \cite{Sav1}). For any $\w=\w_{\alpha_k} ... \w_{\alpha_2} \w_{\alpha_1} \in W$ in a minimal decomposition, we let
$$\wt{w}:=\wt{w}_{\alpha_k} ... \wt{w}_{\alpha_2} \wt{w}_{\alpha_1} \in W'$$
be its representative, which is independent of the minimal decomposition (see \cite[Lemma 83 (b)]{Ste16} and \cite[Page 141]{BLS}). In particular, we denote by $\wt{w}_G \in \wt{G}$ the above representative of the longest Weyl element $\w_G$ of $W$. Note that we also have the natural representative
$$w:=w_{\alpha_k} ... w_{\alpha_2} w_{\alpha_1} \in G$$
 of the above $\w\in W$. In particular, one has the representative $w_G \in G$ for $\w_G$, which is the image of $\wt{w}_G$ in $G$.

The group $\wt{G}$, which is associated to the data $(D, \eta)$ of $\wm{G}$, is generated by $\set{\wt{e}_\alpha(F): \alpha \in \Phi}$ and $\set{y(a): y\in Y, a\in F^\times}$, where $Y$ is the cocharacter lattice of $G$. Relations among the generators include the following:
\begin{enumerate}
\item[(A)] $\wt{e}_\alpha(x) $ is additive in $x$.
\item[(B)] If $\alpha$ and $\beta$ are roots with $\alpha + \beta \ne 0$, then the commutator
$$[\wt{e}_\alpha(x), \wt{e}_\beta(y)]=\prod \wt{e}_{i\alpha + j\beta}(c_{i,j} x^i y^j),$$
where $i$ and $j$ are positive integers and $c_{i,j}$'s are certain integers.
\item[(B)'] For any $\alpha\in \Phi$ and $x\in F^\times$:
$$\wt{w}_\alpha(x) \wt{e}_\alpha(u) \wt{w}_\alpha(x)^{-1} = \wt{e}_{-\alpha}(-x^2 u).$$
\item[(C)] There exists a section $\s$ of $\wt{T}$ over $T$ such that
$$\s(y_1(a)) \cdot \s(y_2(b)) = \s(y_1(a) \cdot y_2(b)) \cdot (a, b)_n^{D(y_1, y_2)}$$
for any $y_1, y_2\in Y$ and $a, b\in F^\times$. For any $\alpha \in \Delta$ and $x\in F^\times$:
$$\wt{h}_\alpha(x)= \s(\alpha^\vee(x)) \cdot (\eta(\alpha^\vee), x)_n^{Q(\alpha^\vee)}.$$
\item[(D)] For $\wt{t} \in \wt{T}$ whose image in $T$ is denoted by $t$, one has
$$\wt{w}_\alpha \cdot \wt{t}  \cdot \wt{w}_\alpha^{-1} = \wt{t} \cdot \wt{h}_\alpha(\alpha(t)^{-1}).$$
\end{enumerate}
Here the relations (A), (B), (B)' follow from \cite[Page 249, Theorem 2]{We2}, the first equality relation in (C) follows from \cite[\S 3.9]{BD} (see also \cite[Page 100]{We3}) and the second relation in (C) follows from \cite[Proposition 2.5]{GG}. Lastly, the relation (D) is given in \cite[Proposition 11.9]{BD}.

By (C) above, the commutator $T \times T \to \bbmu_n$ is given by
$$[y_1(a), y_2(b)]=(a, b)_n^{B(y_1, y_2)},$$
where $y_i \in Y$ and $a, b\in F^\times$.   Fix a uniformizer $\varpi \in F$. For any $y\in Y$, we write
$$\varepsilon:=(-1, \varpi)_n \in \bbmu_n, \quad \s_y:=\s(y(\varpi)) \in \wt{T}.$$
Moreover, in this paper, we assume that the composition
$$\eta_n: Y^{sc} \to F^\times \to F^\times/(F^\times)^n$$
 of $\eta$ with the obvious quotient is trivial. Thus, it follows  immediately from this assumption and (C) above that 
 $$\wt{h}_\alpha(\varpi^k)=\s_{k\alpha^\vee}$$
  for every $k\in \Z$. Moreover, this assumption on $\eta_n$ entails many other consequences  including on the existence of an isomorphism
 $${}^L \wt{G} \simeq \wt{G}^\vee \times W_F$$
  for the $L$-group of $\wt{G}$ (to be discussed in \S \ref{SS:L-g}), and on the existence of splitting of $\wt{G}$ over $K$ (as we will discuss in \S \ref{SS:unrep}). For more detailed discussion on this, we refer the reader to \cite{GG}. For convenience, we may also write
  $$w(\wt{t}):=w \wt{t} w^{-1}$$ for $\wt{t} \in \wt{T}$.

\begin{lm} \label{L:W-act}
For $y\in Y$ and $\alpha\in \Delta$, one has
$$w_\alpha (\s_y)=  \varepsilon^{\angb{y}{\alpha}D(y,\alpha^\vee)} \cdot \s_{\w_\alpha(y)}.$$
If $\bbmu_{2n} \subset F^\times$, then the map $\s: Y \to \wt{T}$ given by $y\mapsto \s_y$ is a homomorphism; moreover, in this case, $w(\s_y)= \s_{\w(y)}$ for every $\w \in \W$ and $y\in Y$.
\end{lm}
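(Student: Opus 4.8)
The plan is to reduce everything to the defining relations (C) and (D) of \S\ref{S:K2-extn} together with the bilinearity of the Hilbert symbol. First I would compute $w_\alpha(\s_y)$ directly. Writing $w_\alpha = \wt{w}_\alpha \bmod \bbmu_n$ and using relation (D) with $\wt{t} = \s_y$, whose image in $T$ is $y(\varpi)$, I get $\wt{w}_\alpha \s_y \wt{w}_\alpha^{-1} = \s_y \cdot \wt{h}_\alpha\big(\alpha(y(\varpi))^{-1}\big) = \s_y \cdot \wt{h}_\alpha(\varpi^{-\langle y,\alpha\rangle})$. By the hypothesis that $\eta_n$ is trivial (so $\wt{h}_\alpha(\varpi^k) = \s_{k\alpha^\vee}$), this is $\s_y \cdot \s_{-\langle y,\alpha\rangle \alpha^\vee}$. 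Now I apply the first (cocycle) relation in (C) to combine $\s_y$ and $\s_{-\langle y,\alpha\rangle\alpha^\vee}$: the product $\s(y(\varpi)) \cdot \s((-\langle y,\alpha\rangle\alpha^\vee)(\varpi))$ equals $\s\big((y - \langle y,\alpha\rangle\alpha^\vee)(\varpi)\big)\cdot(\varpi,\varpi)_n^{D(y,-\langle y,\alpha\rangle\alpha^\vee)}$. Since $y - \langle y,\alpha\rangle\alpha^\vee = \w_\alpha(y)$ and $D(y,-\langle y,\alpha\rangle\alpha^\vee) = -\langle y,\alpha\rangle D(y,\alpha^\vee)$, while $(\varpi,\varpi)_n = (-1,\varpi)_n = \varepsilon$ by \eqref{x with x}, I obtain $w_\alpha(\s_y) = \varepsilon^{-\langle y,\alpha\rangle D(y,\alpha^\vee)}\cdot \s_{\w_\alpha(y)}$. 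Since $\varepsilon^2 = (-1,\varpi)_n^2$ and $\varepsilon$ has order dividing $2$ (as $\varepsilon = (-1,\varpi)_n$ and $(-1,-1)$-type symbols square trivially — more precisely $\varepsilon^{-1} = \varepsilon$ because $(-1,\varpi)_n^2 = (1,\varpi)_n = 1$ when... actually one should check this), I can replace $\varepsilon^{-\langle y,\alpha\rangle D(y,\alpha^\vee)}$ by $\varepsilon^{\langle y,\alpha\rangle D(y,\alpha^\vee)}$, giving the stated formula. I should double-check the sign convention: in fact $(-1,\varpi)_n$ need not have order $2$ in general, so the cleaner route is to observe that $B(y,\alpha^\vee) = D(y,\alpha^\vee)+D(\alpha^\vee,y)$ and use $(-1,\varpi)_n^{D(\alpha^\vee,y)}$ is absorbed elsewhere; alternatively just present the exponent as written and note the equality of $\varepsilon^{\pm k}$ follows from $(-1,\varpi)_n^2 = (-1,(-1)\cdot(\text{square}))$... — the safest is to carry the exponent as $-\langle y,\alpha\rangle D(y,\alpha^\vee)$ and remark it agrees with $+\langle y,\alpha\rangle D(y,\alpha^\vee)$ modulo the order of $\varepsilon$, or to quote the known statement from \cite{GG} verbatim.

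For the second assertion, assume $\bbmu_{2n}\subset F^\times$. Then by \eqref{x with x is 1} we have $(a,a)_n = 1$ for all $a\in F^\times$, and more relevantly $(\varpi,\varpi)_n = 1$, so $\varepsilon = 1$. To show $y \mapsto \s_y$ is a homomorphism, I compute $\s_{y_1}\s_{y_2}$ via relation (C): it equals $\s\big((y_1+y_2)(\varpi)\big)\cdot(\varpi,\varpi)_n^{D(y_1,y_2)} = \s_{y_1+y_2}$, since $(\varpi,\varpi)_n = 1$. Hence $\s$ is additive, i.e.\ a homomorphism from $Y$ to $\wt{T}$. Combining this with the first part: since $\varepsilon = 1$, the formula $w_\alpha(\s_y) = \s_{\w_\alpha(y)}$ holds for every simple reflection. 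For a general $\w = w_{\alpha_k}\cdots w_{\alpha_1}$, I would proceed by induction on the length: conjugation by $w$ is the composite of conjugations by the $w_{\alpha_i}$, and at each stage $w_{\alpha_i}(\s_{y'}) = \s_{\w_{\alpha_i}(y')}$, so $w(\s_y) = \s_{\w_{\alpha_k}\cdots\w_{\alpha_1}(y)} = \s_{\w(y)}$. One subtlety is that $w$ as an element of $G$ is the specific representative $w_{\alpha_k}\cdots w_{\alpha_1}$ defined in \S\ref{S:top-c}, and the conjugation action $w(\wt{t}) = w\wt{t}w^{-1}$ on $\wt{T}$ is well-defined (independent of lift of $w$, since $\wt{T}$ is acted on through $T$ up to the central $\bbmu_n$, but conjugation kills the center); moreover the induction is legitimate because the representative $w$ does not depend on the chosen reduced word, as recalled in the excerpt.

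The main obstacle I anticipate is bookkeeping the exact power of $\varepsilon$ and confirming that $\varepsilon^{-k} = \varepsilon^{k}$ in the first displayed formula — this hinges on whether $(-1,\varpi)_n$ has order at most $2$, which is true because $(-1,\varpi)_n^2 = (1,\varpi)_n = 1$ only if $(-1,\varpi)_n = (-1,\varpi)_n^{-1}$, i.e.\ one needs $(-1,\varpi)_n^2=1$; in general $(a,b)_n^2 = (a^2,b)_n$ need not vanish, but $(-1)^2 = 1$ so indeed $(-1,\varpi)_n^2 = (1,\varpi)_n = 1$ by bilinearity, hence $\varepsilon$ is its own inverse and the sign in the exponent is immaterial. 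Once that point is pinned down, the rest is a direct application of relations (C) and (D) and the triviality of $\eta_n$, with no further difficulty. I would also take care to state at the outset that everything takes place in $\wt{T}$ where the central $\bbmu_n$ plays the role of the Hilbert-symbol cocycle values, so that the equalities above are genuine identities in $\wt{T}$, not merely modulo $\bbmu_n$.
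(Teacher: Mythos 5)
Your proposal is correct and follows the same route the paper gestures at (the paper's proof is a one-liner invoking relations (C) and (D) and the vanishing of $\varepsilon$ when $\bbmu_{2n}\subset F^\times$); you simply fill in the computation. The one point on which you waver mid-proof --- whether $\varepsilon^{-\langle y,\alpha\rangle D(y,\alpha^\vee)}$ can be replaced by $\varepsilon^{+\langle y,\alpha\rangle D(y,\alpha^\vee)}$ --- you do resolve correctly at the end: $\varepsilon^2=(-1,\varpi)_n^2=((-1)^2,\varpi)_n=(1,\varpi)_n=1$ by bimultiplicativity of the Hilbert symbol, so $\varepsilon$ has order at most $2$ unconditionally and the sign of the exponent is immaterial. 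It would be cleaner to state that observation once up front rather than circle back to it, but the mathematics is sound; no gap.
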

\begin{proof}
The first equality follows from (C) and (D) above. The rest of the statement is clear, since $\varepsilon=1$ if $\bbmu_{2n} \subset F^\times$.
\end{proof}

\subsection{Dual group}

For a cover $(\wm{G}, n)$ associated to $(D, \eta)$, with $Q$ and $B_Q$ arising from $D$, we define
\begin{equation} \label{YQn}
Y_{Q,n}:= Y\cap nY^*,
\end{equation}
where $Y^* \subset Y\otimes \Q$ is the dual lattice of Y with respect to $B_Q$; more explicitly,
$$Y_{Q,n}= \set{y\in Y: B_Q(y, y')\in n\Z \text{ for all } y'\in Y} \subset Y.$$
For every $\alpha^\vee\in \Phi^\vee$, denote
$$n_\alpha:= \frac{n}{\text{gcd}(n, Q(\alpha^\vee))}$$
and
$$ \alpha_{Q,n}^\vee=n_\alpha \alpha^\vee, \quad \alpha_{Q,n}=\frac{\alpha}{n_\alpha} .$$
Let
$$Y_{Q,n}^{sc} \subset Y_{Q,n}$$
be the sublattice generated by $\Phi_{Q,n}^\vee=\{\alpha_{Q,n}^\vee: \alpha^\vee \in \Phi^\vee \}$.  Denote $X_{Q,n}=\text{Hom}_\Z(Y_{Q,n}, \Z)$ and $\Phi_{Q,n}=\set{\alpha_{Q,n}: \alpha \in \Phi }$. We also write
$$\Delta_{Q,n}^\vee=\{ \alpha_{Q,n}^\vee: \alpha^\vee \in \Delta^\vee \} \text{ and } \Delta_{Q,n}=\set{\alpha_{Q,n}: \alpha\in \Delta}.$$
Then
$$\big( Y_{Q,n}, \ \Phi_{Q,n}^\vee, \ \Delta_{Q,n}^\vee;\  X_{Q,n},\  \Phi_{Q,n}^\vee, \Delta_{Q,n} \big)$$
forms a root datum with a choice of simple roots $\Delta_{Q,n}$. It gives a unique (up to unique isomorphism) pinned reductive group $\wm{G}_{Q,n}^\vee$ over $\Z$, called the dual group of $(\wm{G}, n)$. In particular, $Y_{Q,n}$ is the character lattice for $\wt{G}_{Q,n}^\vee$ and $\Delta_{Q,n}^\vee$ the set of simple roots. Let
$$\wt{G}_{Q,n}^\vee:=\wm{G}_{Q,n}^\vee(\C)$$
be the associated complex dual group. We may write $\wt{G}^\vee$ for $\wt{G}_{Q,n}^\vee$ whenever no confusion arises. The center of $\wt{G}_{Q,n}^\vee$ is
$$Z(\wt{G}_{Q,n}^\vee) = \Hom(Y_{Q,n}/Y_{Q,n}^{sc}, \C^\times).$$
Let $\mbf{H}$ be the pinned split reductive group over $F$ such that
$$\mbf{H}^\vee \simeq \wm{G}_{Q,n}^\vee,$$
where $\mbf{H}^\vee$ is the Langlands dual group of $\mbf{H}$. Then $\mbf{H}$ is the principal endoscopic group of $(\wm{G},n)$, and clearly $\mbf{H} = \mbf{G}$ if $n=1$.

For fixed $(\mbf{G}, Q)$ with $\mbf{G}$ a semisimple group, we want to show the periodicity of $\wt{G}_{Q,n}^\vee$, as $n$ changes.
On one hand, this is not very surprising as there are only finitely many semisimple groups associated with a fixed type of root system. Thus, the set $\set{\wt{G}_{Q,n}^\vee: n\in \N}$ can be partitioned into finitely classes, each containing only isomorphic groups. On the other hand, the periodicity we will prove shows  that there exists $c\in \N$ and a partition such that each class can be represented by $\set{\wt{G}_{Q,n_0 + c k}^\vee: k\in \N}$ for some $n_o$.

To proceed, we define for $a \in \N$ and $E\in \Z$ the number
$$a_E:= \frac{a}{\text{gcd}(a, E)} \in \N,$$
and prove a general lemma which will be used in a later section in the paper as well.

\begin{lm} \label{L:key}
Let $V$ be a finite-dimensional $\Q$-vector space and let $v \in V$ be a fixed element. Let $L_1, L_2 \subset V$ be lattices which are commensurable, i.e., there exists a lattice $L \subset V$ such that $[L_i: L] < \infty$ for $i=1, 2$. Let $E \in \Z \backslash \set{0}$ be a fixed nonzero integer. Consider
$$S(n):= (n_E^{-1} (L_1-v) ) \cap L_2 \subset V.$$
Then, there exists $c:=c(L_1, L_2) \in \N$ depending only on $L_1$ and $L_2$ such that
$$S(n + c) = S(n)$$
for all $n$. Similarly, if we define $S'(n):= (n_E^{-1} (L_1-v) ) \cap (nn_E^{-1} \cdot L_2) \subset V$, then there exists $c'=c'(L_1, L_2)$ such that $S'(n+c')=S'(n)$ for all $n$.
\end{lm}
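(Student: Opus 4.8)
The plan is to reduce everything to an elementary statement about lattices and then use finiteness of a quotient. Fix a common lattice $L$ with $[L_i : L] < \infty$; since $E$ is a fixed nonzero integer, the value $n_E = n/\gcd(n,E)$ depends only on $n \bmod E'$ for a suitable modulus, and more importantly $n_E^{-1}(L_1 - v)$ is a translate of the lattice $n_E^{-1} L_1$. The key observation is that both the lattice $n_E^{-1} L_1$ and the coset representative $n_E^{-1} v$ only ``see'' $L_2$ through the finite quotient $L_2 / N$ for a fixed sublattice $N$ common to all the relevant lattices; concretely, if $M$ is a lattice with $M \subseteq L_1 \cap L_2$ and $[L_i : M] < \infty$, then for the intersection $S(n) = \bigl(n_E^{-1}(L_1 - v)\bigr) \cap L_2$ to be nonempty and to be determined, only the class of $n_E^{-1} v$ modulo $L_1 + E' L_2$ (for appropriate $E'$) matters, and there are finitely many such classes.

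First I would make this precise: choose $N \subseteq L_1$ a sublattice of finite index in both $L_1$ and $L_2$, say $E' N \subseteq L_1 \cap L_2$ with $E' \in \N$. Then $n_E^{-1} L_1 \supseteq n_E^{-1} N$, and $S(n)$ is a union of cosets of $n_E^{-1} N \cap L_2$. I claim $S(n)$ depends only on the residue of $n_E^{-1}$ (interpreted suitably) in a finite set. The cleanest route: write $S(n) = \bigl(n_E^{-1} L_1 - n_E^{-1} v\bigr) \cap L_2$, multiply through by $n_E$ — which is a bijection of $V$ — to get $n_E \cdot S(n) = (L_1 - v) \cap (n_E L_2)$. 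So it suffices to show $(L_1 - v) \cap (n_E L_2)$ is eventually periodic in $n$, up to the scaling by $n_E$. Now $(L_1 - v) \cap (n_E L_2)$ is nonempty iff $v$ lies in $L_1 + n_E L_2$, and when nonempty it is a coset of $L_1' \cap n_E L_2$ where $L_1'$ is the lattice $L_1$ regarded additively; since $L_2 \supseteq N$ we have $n_E L_2 \supseteq n_E N$, and modulo the fixed lattice $E' N$ there are only finitely many possibilities for the pair $\bigl(\text{class of } v,\ n_E L_2 \bmod E'N\bigr)$ as $n$ ranges over $\N$ — indeed $n_E L_2 \bmod E' N$ takes finitely many values because $n_E L_2 / (n_E N)$ has bounded order and $n_E$ modulo $E'$ is periodic in $n$ (as $\gcd(n,E)$ is). Hence $S(n)$, recovered by dividing by $n_E$, is periodic with some period $c = c(L_1, L_2)$ depending only on the index data; the dependence on $v$ and $E$ is absorbed since $E$ is fixed and $v$ contributes only via finitely many residue classes.

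I would then handle $S'(n) = \bigl(n_E^{-1}(L_1 - v)\bigr) \cap \bigl(n n_E^{-1} L_2\bigr)$ identically: multiplying by $n_E$ gives $n_E \cdot S'(n) = (L_1 - v) \cap (n L_2)$, and the same argument — nonemptiness controlled by $v \in L_1 + n L_2$, and the lattice $L_1 \cap n L_2$ determined modulo $E' N$ by $n \bmod E'$ — yields periodicity of $(L_1 - v) \cap (n L_2)$ and hence of $S'(n)$ with a period $c'(L_1, L_2)$. Note that one must be slightly careful that the scaling factors ($n_E$ for $S$, and $n_E$ again for $S'$) are themselves periodic in $n$ with a period dividing a multiple of $E$, so that when we divide back the periodicity of the scaled sets transfers to the unscaled sets with a possibly enlarged but still uniform period; taking a common multiple of all periods involved gives the desired single $c$ (resp.\ $c'$).

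The main obstacle I anticipate is bookkeeping the interaction between the \emph{multiplicative} scaling $n_E^{-1}$ and the \emph{additive} lattice operations: one cannot simply say ``$S(n)$ depends only on $n \bmod c$'' because the ambient scaling changes with $n$. The trick of clearing the denominator by applying the linear automorphism $x \mapsto n_E x$ is what makes the problem tractable, turning it into periodicity of $(L_1 - v) \cap (nL_2)$-type sets, which is a standard consequence of $L_2/N$ being finite. Verifying that the coset structure (not just nonemptiness) is periodic — i.e.\ that the actual subset $L_1 \cap nL_2$ of $V$ repeats, not merely its isomorphism type — requires choosing $N$ inside $L_1 \cap L_2$ and observing $nL_2 \supseteq nN \supseteq EN$ once $E \mid n$... and more care when $E \nmid n$, where one argues via the $\gcd$; this is the one place where I would slow down and write out the congruence conditions explicitly rather than wave hands.
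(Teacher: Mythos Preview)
Your approach has a genuine gap at the ``divide back'' step. After multiplying by $n_E$ you correctly obtain $n_E \cdot S(n) = (L_1 - v) \cap n_E L_2$, but these sets are \emph{not} periodic in $n$: they shrink without bound as $n_E$ grows (take $E=1$, so $n_E = n$, and watch $(L_1-v)\cap nL_2$). Your claim that ``the scaling factors $n_E$ are themselves periodic in $n$'' is simply false --- $n_E = n/\gcd(n,E)$ is unbounded --- so you cannot undo the scaling to recover periodicity of $S(n)$ from anything you have proved about the scaled sets. The earlier, more careful remark that $n_E \bmod E'$ is periodic is correct, but knowing the image of $n_E L_2$ in some finite quotient $L_2/E'N$ does not determine the set $(L_1-v)\cap n_E L_2$ itself, only its image in that quotient; dividing that image by $n_E$ is not a well-defined operation on $S(n)$.

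The paper's argument sidesteps the scaling entirely. It works with the unscaled description $S(n) = \{\, y \in L_2 : n_E\, y + v \in L_1\,\}$ and uses commensurability to choose $k\in\N$ with $k L_2 \subseteq L_1$ (concretely: pick a basis $e_1,\dots,e_r$ of $L_2$, integers $k_i$ with $k_i e_i \in L_1$, and set $k=\mathrm{lcm}\{k_i\}$). Then for $y\in L_2$ the condition $n_E\,y + v \in L_1$ depends only on $n_E \bmod k$, because adding any multiple of $k y \in L_1$ does nothing. Finally, with $c = |E|\cdot k$ one computes $(n+c)_E = n_E + k\cdot |E|/\gcd(n,E)$, so $(n+c)_E \equiv n_E \pmod{k}$ for every $n$, giving $S(n+c)=S(n)$. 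The same $c$ works for $S'(n)$ after the additional observation that $(n+c)\cdot(n+c)_E^{-1} = \gcd(n+c,E) = \gcd(n,E) = n\cdot n_E^{-1}$. The moral: rather than clear denominators and then try to recover from an unbounded rescaling, keep $y\in L_2$ fixed and let $n_E$ vary modulo $k$.
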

\begin{proof}
Let $\set{e_1, e_2, ..., e_r}$ be a basis for $L_2$. Let ${k_1, k_2, ..., k_r} \subset \N$ be such that $k_i e_i$ lies in $L_1$ for all $i$. Denote $k=\text{lcm}\set{k_i: 1\le i\le r}$.

Let $c:=\val{E} \cdot k$. Note that
$$(n+c)_E= \frac{n+c}{ \text{gcd}(n+c, E) } = \frac{n+ c}{ \text{gcd}(n, E)  }= n_E + k \cdot \val{E}_n.$$
One has
$$\begin{aligned}
& y \in (n_E^{-1} \cdot (L_1 -v)) \cap L_2 \\
\iff &  y=\sum_i a_i e_i \text{ with } a_i \in \Z \text{ such that } n_E \cdot (\sum_i a_i e_i) + v\in L_1 \\
\iff &  y=\sum_i a_i e_i \text{ with }  a_i \in \Z \text{ such that } (n + c)_E \cdot (\sum_i a_i e_i) + v\in L_1 \\
\iff & y \in ((n+c)_E^{-1} \cdot (L_1 -v)) \cap L_2.
\end{aligned}$$
This completes the proof for $S(n)$. The above consideration applies to $S'(n)$ with an easy modification, by noting that for the $c$ above one has $(n+c)\cdot (n+c)_E^{-1} =n \cdot n_E^{-1}$.
\end{proof}

\begin{prop} \label{P:DG}
Let $\mbf{G}$ be a semisimple group and $Q$ a fixed Weyl-invariant quadratic form on $Y$. Then there exists a number $c:=c(\mbf{G}, Q)\in \N$ such that
$$\wt{G}^\vee_{Q,n} \simeq \wt{G}^\vee_{Q, n+c}$$
for all $n$.
\end{prop}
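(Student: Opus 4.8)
The plan is to reduce the statement to an application of Lemma~\ref{L:key} by unwinding the definition of the dual group $\wt{G}^\vee_{Q,n}$. Recall that $\wt{G}^\vee_{Q,n}$ is the pinned reductive group over $\C$ attached to the root datum $(Y_{Q,n}, \Phi^\vee_{Q,n}, \Delta^\vee_{Q,n}; X_{Q,n}, \Phi_{Q,n}, \Delta_{Q,n})$. Since $\mbf{G}$ is semisimple and $Q$ is fixed, the roots $\Phi$, coroots $\Phi^\vee$, the Weyl group $W$, and the values $Q(\alpha^\vee)$ for $\alpha^\vee \in \Phi^\vee$ are all fixed. Hence for each $\alpha^\vee \in \Phi^\vee$ the integers $n_\alpha = n/\gcd(n, Q(\alpha^\vee))$, and therefore the vectors $\alpha^\vee_{Q,n} = n_\alpha \alpha^\vee$ and $\alpha_{Q,n} = \alpha/n_\alpha$, are controlled by the function $n \mapsto n_{Q(\alpha^\vee)}$ in the notation $a_E := a/\gcd(a,E)$. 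The first observation is that for any fixed nonzero integer $E$, the function $n \mapsto n_E$ is ``eventually periodic up to the obvious scaling'': concretely, if $c$ is a multiple of $\val{E}$ then $(n+c)_E = n_E + (c/\gcd(n,E))$, and more importantly, taking $c$ divisible by $E \cdot \mathrm{lcm}$ of the relevant denominators makes the relevant lattices stabilize — this is exactly the mechanism inside Lemma~\ref{L:key}.

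The key steps, in order: (1) First I would fix $E_0 := \mathrm{lcm}\{Q(\alpha^\vee) : \alpha^\vee \in \Phi^\vee\}$ (a nonzero integer depending only on $(\mbf{G}, Q)$) and observe that for any $\alpha^\vee$, $\gcd(n, Q(\alpha^\vee))$ depends only on $n \bmod Q(\alpha^\vee)$, hence only on $n \bmod E_0$; consequently the map $n \mapsto (n_\alpha)_{\alpha^\vee \in \Phi^\vee}$ is periodic modulo $E_0$ \emph{in its ``shape''}, but the entries themselves grow linearly. (2) Next I would pin down the character lattice $Y_{Q,n} = Y \cap n Y^*$ where $Y^*$ is the $B_Q$-dual of $Y$; here $Y^*$ is a fixed lattice commensurable with $Y$ (since $\mbf{G}$ is semisimple, $B_Q$ is nondegenerate on $Y \otimes \Q$), so Lemma~\ref{L:key} applied with $L_1 = Y$, $L_2 = $ a common refinement, $v = 0$, and the appropriate $E$ shows $Y_{Q,n}$ is periodic in $n$ with some period; in fact, applying the ``$S'(n)$'' variant of Lemma~\ref{L:key} handles $n Y^* = n \cdot n_E^{-1} \cdot (\text{something})$ directly, giving a period $c_1$. (3) Then I would treat $Y_{Q,n}^{sc}$, the sublattice generated by $\{n_\alpha \alpha^\vee\}$: by step (1) I can choose $c_2$ a multiple of $E_0$ large enough that $n_\alpha$ and $(n+c_2)_\alpha$ differ by an integer multiple of a common quantity so that the generated lattice stabilizes; more cleanly, $Y_{Q,n}^{sc}$ is determined by the span of the $\alpha^\vee_{Q,n}$, and $\alpha^\vee_{Q,n} = n_\alpha \alpha^\vee$ with $n_\alpha$ eventually periodic-up-to-scaling, so another application of the lattice-stabilization idea gives a period $c_3$ for the pair $(Y_{Q,n}, Y_{Q,n}^{sc})$ and likewise for $\Delta^\vee_{Q,n} \subset Y_{Q,n}$. (4) Finally, taking $c := \mathrm{lcm}(c_1, c_2, c_3, \ldots)$ — a finite list of periods each depending only on $(\mbf{G}, Q)$ — I conclude that the entire root datum $(Y_{Q,n}, \Phi^\vee_{Q,n}, \Delta^\vee_{Q,n}; X_{Q,n}, \Phi_{Q,n}, \Delta_{Q,n})$ is literally unchanged when $n$ is replaced by $n+c$ (after possibly matching up via the canonical identification), and hence $\wt{G}^\vee_{Q,n} \simeq \wt{G}^\vee_{Q,n+c}$, since the pinned reductive group is determined up to unique isomorphism by its root datum.

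The main obstacle I anticipate is the bookkeeping in step (3): the lattice $Y_{Q,n}^{sc}$ is generated by vectors $n_\alpha \alpha^\vee$ whose coefficients $n_\alpha$ are \emph{not} periodic in $n$ (they grow), only ``periodic up to a scaling factor'', so one must be careful that the $\Z$-span genuinely stabilizes — this is precisely the subtlety that Lemma~\ref{L:key} is designed to absorb, and the right move is to phrase $Y_{Q,n}^{sc}$ (or rather its relation to $Y_{Q,n}$, since what matters for the dual group up to isomorphism is the abstract root datum) so that it fits the form $(n_E^{-1}(L_1 - v)) \cap L_2$ or its primed variant. A secondary subtlety is verifying that the simple roots $\Delta^\vee_{Q,n}$ (not just the full root system) transport correctly; but since $\Delta^\vee$ is fixed inside $\Phi^\vee$ and $\alpha^\vee_{Q,n}$ depends on $\alpha^\vee$ only through $n_\alpha$, this follows once the periodicity of the $n_\alpha$'s (up to scaling) and of the ambient lattices is established. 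I would also remark that once periodicity is known, one could extract an explicit $c$ in terms of $E_0$ and the index $[Y : Y^{sc}]$, though the statement only asks for existence.
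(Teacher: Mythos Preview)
Your proposal has the right instinct---Lemma~\ref{L:key} is indeed the workhorse---but there is a genuine gap in how you apply it. The lattices $Y_{Q,n}$ and $Y_{Q,n}^{sc}$ are \emph{not} periodic as sublattices of $Y$: for instance, in $\SL_2$ with $Q(\alpha^\vee)=1$ one has $Y_{Q,n}=n\Z\alpha^\vee$, which never repeats. Your step~(2) tries to fit $Y_{Q,n}=Y\cap nY^*$ into the $S'(n)$ template of Lemma~\ref{L:key}, but $nY^*$ is not of the form $n n_E^{-1} L_2$ for any fixed $E$ and $L_2$, so the lemma does not apply as written. Your step~(3) acknowledges the coefficients $n_\alpha$ grow and are only ``periodic up to scaling,'' but does not say what the scaling is or how to extract an isomorphism of root data from it. Saying the root datum is ``literally unchanged after the canonical identification'' begs the question: what identification?

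The paper's proof supplies exactly the missing step. Rather than comparing the lattices directly, it uses that a semisimple group is determined by its root-system type together with its center $\Hom(Y_{Q,n}/Y_{Q,n}^{sc},\C^\times)$. The root-system type of $\wt{G}_{Q,n}^\vee$ is visibly periodic in $n$ (governed by whether $n$ lies in $\N_{\mbf{G},Q}$ in the non-simply-laced case), so it suffices to show $Y_{Q,n}/Y_{Q,n}^{sc}$ is periodic. The key move is to \emph{rescale} by $n_\alpha^{-1}$ for a long coroot $\alpha^\vee$: then $Y_{Q,n}/Y_{Q,n}^{sc}\simeq (n_\alpha^{-1}Y_{Q,n})/Y^{sc}$ (or $/Y_\flat^{sc}$), and now $n_\alpha^{-1}Y_{Q,n}=(n_\alpha^{-1}Y)\cap(nn_\alpha^{-1}M)$ genuinely has the $S'(n)$ form with $E=Q(\alpha^\vee)$, so Lemma~\ref{L:key} applies. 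You also need to treat $Q=0$ separately, since then $B_Q$ is degenerate and $Y^*$ is not a lattice; and in the non-simply-laced case you must split according to $\N_{\mbf{G},Q}$, because the rescaling factors $n_\alpha$ for long and short coroots need not agree.
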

\begin{proof}
If $Q=0$, then $\wt{G}_{Q,n}^\vee= G^\vee$ (the Langlands dual group of $\mbf{G}$) for all $n$; in this case we simply take $c=1$. Now in the proof below we assume that $Q\ne 0$.

There are two cases we discuss, depending on whether there are two lengths of the co-roots in $\Phi^\vee$ of $\mbf{G}$.

First, assume that the coroot system $\Phi^\vee$ is simply-laced.  Then $\wt{G}_{Q,n}^\vee$ is a semisimple group with coroot system the same type as $\mbf{G}$. Therefore, it is determined uniquely by the center
$Z(\wt{G}_{Q,n}^\vee)=\text{Hom}(Y_{Q,n}/Y_{Q,n}^{sc}, \C^\times)$ of $\wt{G}^\vee_{Q,n}$; or equivalently, by
$Y_{Q,n}/Y_{Q,n}^{sc}$. Here, $Y_{Q,n}^{sc} = n_\alpha Y^{sc}$, where $n_\alpha$ is independent of the coroot $\alpha^\vee$. Thus,
$$Y_{Q,n}/Y_{Q,n}^{sc} \simeq (n_\alpha^{-1} Y_{Q,n})/ Y^{sc}.$$
Consider $$M= \set{y\in Y\otimes \Q: B(y, z) \in Z \text{ for all } z\in Y}.$$
Now it follows that
$$\begin{aligned}
n_\alpha^{-1} Y_{Q,n} & = \set{y\in Y\otimes \Q: n_\alpha y\in Y \text{ and } n|B(n_\alpha y, z) \text{ for all } z\in Y} \\
&= (n_\alpha^{-1} Y) \cap (n n_\alpha^{-1} M).
\end{aligned}$$
Applying Lemma \ref{L:key} with $(L_1, L_2, E, v)= (Y, M, Q(\alpha^\vee), 0)$, we see that there exists $c:=c(\mbf{G}, Q)$ such that  $Z(\wt{G}_{Q,n}^\vee) = Z(\wt{G}_{Q, n+ c}^\vee)$.

Second, we consider the case where $\mbf{G}$ is not simply-laced. Let $\alpha^\vee, \beta^\vee \in \Delta^\vee$ be two adjacent nodes in the Dynkin diagram such that $\alpha^\vee$ is long and $\beta^\vee$ is short.

Let $l:=\val{ \angb{\beta}{\alpha^\vee} }$ be the ratio of lengths of $\alpha^\vee$ over $\beta^\vee$. We have $Q(\alpha^\vee)= l\cdot Q(\beta^\vee)$. Define
$$Y_\flat^{sc}= \langle g_i \alpha_i^\vee: 1\le i\le r \rangle,$$
where
$$g_i =\begin{cases}
1 & \text{ if } \alpha_i^\vee \text{ is long , and } \\
l & \text{ if } \alpha_i^\vee \text{ is short.}
\end{cases}$$
Define
$$\N_{\mbf{G}, Q}:=\set{n: n_{\alpha_i} = n_{\alpha_j} \text{ for all } i, j }=\set{n: l \nmid n_{\beta} }.$$
We have
$$Y_{Q,n}^{sc} =\begin{cases}
n_\alpha \cdot Y^{sc} & \text{ if } n \in  \N_{\mbf{G}, Q} \\
n_\alpha \cdot Y_\flat^{sc} & \text{ if }  n \in \N- \N_{\mbf{G}, Q}.
\end{cases}$$

For any $n\in \N_{\mbf{G}, Q}$ we have $Y_{Q,n}/Y_{Q,n}^{sc} \simeq (n_\alpha^{-1} Y_{Q,n}) / Y^{sc}$, where
$$n_\alpha^{-1} Y_{Q,n}= (n_\alpha^{-1} Y) \cap (n n_\alpha^{-1} M)$$
for any long coroot $\alpha$. Again, by Lemma \ref{L:key}, there exists $c_1:=c_1(\mbf{G}, Q)$ such that
$n_\alpha^{-1} Y_{Q,n} = (n+c_1)_\alpha^{-1} Y_{Q,n + c_1}$. By replacing $c_1$ by a multiplier of it, we may assume that $c_1$ is divisible by $Q(\alpha^\vee)$. We get
$$(n+c_1)_\beta= \frac{n+c_1}{\text{gcd}(n+ c_1, Q(\beta^\vee))}= \frac{ n+c_1 }{ \text{gcd}(n, Q(\beta^\vee)) }= n_\beta + l \cdot (c_1)_\beta.$$
Thus $n+c_1 \in \N_{\mbf{G}, Q}$ for any $n\in \N_{\mbf{G}, Q}$.

On the other hand, for $n \in \N - \N_{\mbf{G}, Q}$, one has
$$\frac{ Y_{Q,n} }{ Y_{Q,n}^{sc} } = \frac{ (n_\alpha^{-1} Y) \cap (n n_\alpha^{-1} M)  }{ Y_\flat^{sc} }.$$
A similar argument shows that there exists $c_2:=c_2(\mbf{G}, Q)$ such that for all $n \in \N - \N_{\mbf{G}, Q}$ we have $n + c_2 \in \N - \N_{\mbf{G}, Q}$ and $Y_{Q,n}/Y_{Q,n}^{sc} \simeq Y_{Q, n+c_2} / Y_{Q, n+ c_2}^{sc}$.

Taking $c=\text{lcm}\set{c_1, c_2}$ completes the proof.
\end{proof}

\subsection{Weyl orbits} \label{SS:Worb}
Let $$\rho:= \frac{1}{2} \sum_{\alpha^\vee >0} \alpha^\vee$$
be the half sum of all positive coroots of $\mbf{G}$. Denote by $\w(y)$ the natural Weyl group action on $Y$ and $Y\otimes \Q$ generated by the reflections $\w_\alpha$. We consider the twisted Weyl-action
$$\w[y]:=\w(y-\rho)+ \rho.$$
Clearly $Y$ is stable under this twisted action. Throughout the paper, we denote
$$y_\rho:=y-\rho \in Y\otimes \Q$$
for any $y\in Y$, and thus 
$$\w[y]-y=\w(y_\rho) - y_\rho.$$
One important reason motivating the consideration of $\w[\cdot]$ is that it is involved in describing the entries of the scattering matrices and local coefficients matrices, the main objects of interest in our paper. Such observation was initially made by Kazhdan-Patterson \cite{KP} for covers of $\GL_r$.

Define
$$\msc{X}_{Q,n}^{sc}= Y/Y_{Q,n}^{sc}, \quad \msc{X}_{Q,n}= Y/Y_{Q,n}.$$
Since $Y_{Q,n}$ and $Y_{Q,n}^{sc}$ are both Weyl-invariant with respect to the action $\w(-)$, both the usual action $\w(-)$ and $\w[-]$ descend to give well-defined actions on $\msc{X}_{Q,n}^{sc}$ and $\msc{X}_{Q,n}$. 
From now on, by Weyl orbits in $Y$ or $Y\otimes \Q$  we always refer to the ones with respect to the action $\w[y]$, unless specified otherwise. 
For any subgroup $W' \subset W$, denote by 
$$\mca{O}_{W'}(\msc{X}_{Q,n})$$ the set of twisted $W'$-orbits in $\msc{X}_{Q,n}$. if $W' =W$, we simply write $\mca{O}(\msc{X}_{Q,n})$ instead. We also denote by
$$f_\msc{X}: Y \onto \msc{X}_{Q,n},$$
the natural quotient map, which is $W$-equivariant with respect to the twisted Weyl action. Define analogously
$$\rho_{Q,n}:=\frac{1}{2} \sum_{\alpha^\vee >0} \alpha_{Q,n}^\vee,$$
the half sum of all positive coroots of the principal endoscopic linear group $\mbf{H}$ of $(\wm{G}, n)$.

\begin{eg}
We consider the case of $\SL_2$ where $Y = Y^{sc} = \Z\alpha^\vee$. For every $n\in \N$, we define
$$d= \frac{n}{{\rm gcd}(2, n)}.$$
Let $Q: Y \to \Z$ be the quadratic for such that $Q(\alpha^\vee) = -1$. One has
\begin{enumerate}
\item[$\bullet$] if $n$ is odd, then $Y_{Q,n} = Y_{Q,n}^{sc} = \Z(n\alpha^\vee)$ and $\msc{X}_{Q,n} \simeq \Z/n\Z$;
\item[$\bullet$] if $n=2d$ is even, then $Y_{Q,n} = \Z(d\alpha^\vee)$ and $Y_{Q,n}^{sc}= \Z(n\alpha^\vee)$, and in this case $\msc{X}_{Q,n} \simeq \Z/d\Z$.
\end{enumerate}
In any case, the twisted action of $\w_\alpha$ on $Y$ is given by
$$\w_\alpha[k\alpha^\vee] = (1- k)\alpha^\vee.$$
The same formula describes the twisted action on $\msc{X}_{Q,n}$ and $\msc{X}_{Q,n}^{sc}$.
\end{eg}

\subsection{L-group} \label{SS:L-g}
In \cite{We3, We6}, Weissman constructed the global $L$-group as well as the  local $L$-group extension
 $$\begin{tikzcd}
\wt{G}^\vee_{Q,n} \ar[r, hook] & {}^L\wt{G} \ar[r, two heads] & W_{F},
\end{tikzcd}$$
which is compatible with the global $L$-group. Here $W_F$ is the Weil group of $F$. His construction of $L$-group is functorial, and in particular it behaves well with respect to the restriction of $\wm{G}$ to parabolic subgroups. More precisely, let $\mbf{M} \subset \mbf{G}$ be a Levi subgroup. By restriction, one has the $n$-cover $\wt{M}$ of $M$. Then the $L$-groups ${}^L\wt{M}$ and ${}^L\wt{G}$ are compatible, i.e., there are natural homomorphisms of extensions:
$$\begin{tikzcd}
\wt{G}^\vee_{Q,n} \ar[r, hook] & {}^L\wt{G} \ar[r, two heads] & W_{F} \\
\wt{M}^\vee_{Q,n} \ar[r, hook] \ar[u, hook] & {}^L\wt{M} \ar[u, hook] \ar[r, two heads] & W_{F}  \ar[u, equal] .
\end{tikzcd}$$
For details on the construction and some properties regarding the $L$-group, we refer the reader to \cite{We3, We6, GG}.

For convenience of reference, we recall the following result shown in \cite{GG}. Under the assumption that $\eta_n=\mbm{1}$, there exists a so-called distinguished genuine character 
$$\chi_\psi: Z(\wt{T}) \to \C^\times,$$
depending on a nontrivial additive character $\psi$ of $F$, such that the following properties hold:
\begin{enumerate}
\item[$\bullet$] the character $\chi_\psi$ takes values in $\bbmu_4\subseteq \C^\times$ and is Weyl-invariant, i.e., $\chi_\psi(w^{-1} \cdot \wt{t} \cdot w) = \chi_\psi(\wt{t})$ for all $\wt{t} \in Z(\wt{T})$ and $w$;
\item[$\bullet$] $\chi_\psi$ gives rise to a splitting of ${}^L\wt{G}$ over $W_F$, with respect to which one has an isomorphism ${}^L\wt{G} \simeq_{\chi_\psi} \wt{G}^\vee \times W_F$.
\end{enumerate}

\section{Local coefficients matrix and scattering matrix} \label{S:2mat}

\subsection{Whittaker functionals}
Let $\psi: F \to \C^\times$ be a nontrivial character. By abuse of notation, denote by
$$\psi: U \to \C^\times$$
the unique character such that
$$\psi(e_\alpha(x)) = \psi(x)$$
for every $\alpha\in \Delta $ and $x\in F$.  We identify $U$ as a subgroup of $\wt{G}$ via the canonical splitting given by $e_\alpha(x) \mapsto \wt{e}_\alpha(x)$.

Let $(\pi, V_\pi) \in \Irr(\wt{G})$ be a genuine irreducible representation of $\wt{G}$. A functional $\lambda: V_\pi \to \C$ is called a $\psi$-Whittaker functional if
$$\lambda( \pi(u) v) = \psi(u) \cdot \lambda(v)$$
for all $u\in U$ and $v \in V_\pi$. Denote by $\text{Wh}_\psi(\pi)$ the space of $\psi$-Whittaker functionals for $\pi$.
It follows from the work of Patel \cite{Pate}, which generalizes \cite{MW1} by Moeglin-Waldspurger, that
$$\dim \Wh_\psi(\pi) < \infty  \text{ for every } \pi \in \Irr(\wt{G}).$$

\subsection{Local coefficients matrix} \label{SS:lcm}
Let
$$\mbf{P}=\mbf{M} \mbf{N} \subset \mbf{G}$$ be a parabolic subgroup of $\mbf{G}$ associated with $\theta \subset \Delta$. Let $(\sigma, V_\sigma) \in \Irr(\wt{M})$, where $V_\sigma$ is a space of realization of $\sigma$.

Consider the  normalized induced representation
$$I_{\wt{P}}^{\wt{G}} (\sigma)=\Ind_{\wt{P}}^{\wt{G}}(\sigma)$$
of $\wt{G}$. Let $\mbf{P}'= \mbf{M}' \mbf{N}'$ be the parabolic subgroup corresponding to $\theta'\subset \Delta$.
Define
$$W^{\theta, \theta}=\set{ \w \in W:  \w(\theta) =\theta' }.$$
Call $\mbf{P}$ and $\mbf{P}'$ associated if $W^{\theta, \theta} \ne \emptyset$. Let $\w \in W^{\theta, \theta'}$. Then $\w \mbf{M} \w^{-1} = \mbf{M}'$. Let $w \in G$ the natural representative given in \S \ref{S:top-c}.  We obtain a representation ${}^w\sigma$ of $\wt{M}'$  given by
$${}^w\sigma(m')(v):= \sigma(w^{-1} m w) (v)$$
for any $m'\in \wt{M}'$ and $v\in V_\sigma$. In particular, for the  underlying vector space, we have $V_{{}^w\sigma}=V_\sigma$.


Consider the intertwining operator
$$ T(w,\sigma): \ I_{\wt{P}}^{\wt{G}}(\sigma) \to  I_{\wt{P}'}^{\wt{G}}( {}^w \sigma) $$
given by the meromorphic continuation of the integral
\begin{equation} \label{T(w)}
T(w, \sigma)(f)(g)=\int_{N_w} f(\wt{w}^{-1} n g) dn
\end{equation}
where $N_w=U\cap (w N^- w^{-1})$ with $N^-$ the unipotent opposite $N$. Let
$$\mfr{a}_{M, \C}^* = X(M) \otimes \C.$$
There is a natural action of $\chi \in \mfr{a}_{M, \C}^*$ on $\sigma$ by twisting $\chi\otimes \sigma$. We write $\mca{O}_{\sigma, \C}$ for the orbit $\mfr{a}_{M, \C}^* \cdot \sigma$, which has a complex analytic structure. It is known that $T(w, -)$ defines a meromorphic function on  $\mca{O}_{\sigma,\C}$, see \cite[Page 323, Theorem 2.2.2]{Sha2} and \cite[Thoereme IV.1.1]{Wal2}. In most part of this paper, we only consider the generic case where $T(w, \sigma)$ is holomorphic at $\sigma$. (However, in Corollary \ref{irrres} we also consider the case of singular $T(w, \sigma)$ pertaining to reducibility of representation.)

By dualizing, the intertwining operator $T(w,\sigma)$, whenever holomorphic at $\sigma$, gives a linear map between finite dimensional vector spaces:
$$\begin{tikzcd}
T(w,\sigma)^*: \  \Wh_\psi(I_{\wt{P}'}^{\wt{G}}({}^w\sigma)) \ar[r]  & \Wh_\psi( I_{\wt{P}}^{\wt{G}}(\sigma)).
\end{tikzcd}$$

Let $\psi_M$ be the restriction of $\psi$ to the unipotent radical $U_M$ of the Borel subgroup $TU_M$ of $M$. Here $\psi_M$ and $w$ are compatible (in the sense of \cite[Page 51]{Sha4}). We have a natural isomorphism (see \cite{CS, Rod1}, \cite[\S 3]{Sha2} or \cite[\S 3.3]{Sha4})
$$J(\sigma): \Wh_{\psi_M}(\sigma) \to \Wh_\psi(I_{\wt{P}}^{\wt{G}}(\sigma))$$
given by
$$J(\sigma)(\lambda)(f) = \int_{N'} \lambda( f(\wt{w}_0^{-1} n')  ) \cdot \psi(n') dn',$$
where $N'=w_0^{-1} N^- w_0^{-1}$ with $w_0^{-1} = w_l \cdot w_{l,M}$. Here $w_l$ (resp. $w_{l,M}$) is the longest
Weyl element in the Weyl group $W(T, G)$ (resp. $W(T, M)$).  Note that the injectivity of $J(\sigma)$ follows from \cite[Proposition 3.1]{Sha2}. Rodier's heredity asserting  $\dim \Wh_{\psi_M}(\sigma) = \dim \Wh_\psi(I_{\wt{P}}^{\wt{G}}(\sigma))$, which was proved in \cite{Ban1} for covers of $\GL_r$ but extends naturally to general $\wt{G}$, enforces $J(\sigma)$ to be an isomorphism of vector spaces.

Similarly, one has
$$J({}^w \sigma): \Wh_{\psi_{M'}}({}^w\sigma) \to \Wh_\psi(I_{\wt{P}'}^{\wt{G}}(\sigma)).$$
At the same time, since $V_{{}^w\sigma} =V_\sigma$, we also have a canonical identification
$$\iota: \Wh_{\psi_{M}} (\sigma)=\Wh_{\psi_{M'}} ({}^w\sigma) .$$
To verify this identification, we note that $l \in \Wh_{\psi_{M}} (\sigma)$ if and only if $l(\sigma(u)(v)) = \psi_M(u) \cdot v \text{ for all } u\in U_M, v\in V_\sigma$. Now for every $u' \in U_{M'}$ one has $w^{-1} u' w \in U_M$ and
$$\psi_{M'}(u') = \psi(u') = \psi(w^{-1} u' w) = \psi_M(w^{-1} u' w),$$
where the second equality follows from the fact that $\psi(e_\alpha(x))= \psi(x)$ for all simple root $\alpha$, and also $w^{-1} e_\alpha(x) w = e_{w^{-1}(\alpha)}(x)$ if $w^{-1}(\alpha) \in \Delta$ (see \cite[Proposition 9.3.5]{SprB}). Thus we get
$$l({}^w\sigma (u') (v)) = l(\sigma(w^{-1} u' w)(v)) = \psi_M(w^{-1} u' w)\cdot v = \psi_{M'}(u')\cdot v$$
for all $u'\in U_{M'}$ and $v\in V_{{}^w\sigma} = V_\sigma$. This shows that $l \in \Wh_{\psi_{M'}}( {}^w \sigma)$ and gives the canonical $\iota$.

One thus has a natural isomorphism of vector spaces
$$\begin{tikzcd}
\mca{C}=J({}^w\sigma) \circ \iota \circ J(\sigma)^{-1} : \  \Wh_\psi( I_{\wt{P}}^{\wt{G}}(\sigma))  \ar[r]  & \Wh_\psi(I_{\wt{P}'}^{\wt{G}}({}^w\sigma)).
\end{tikzcd}$$
We therefore obtain an endomorphism
\begin{equation} \label{End-T}
\begin{tikzcd}
\mca{T}(w,\sigma)^*= T(w,\sigma)^* \circ \mca{C} : \  \Wh_{\psi}( I_{\wt{P}}^{\wt{G}} (\sigma) ) \ar[r]  & \Wh_{\psi}( I_{\wt{P}}^{\wt{G}}(\sigma) )
\end{tikzcd}
\end{equation}
of the finite dimensional vector space $\Wh_{\psi}( I_{\wt{P}}^{\wt{G}} (\sigma) )  \simeq \Wh_{\psi_M}( \sigma )$.

\begin{dfn}[{\cite[\S 4]{Szp6}}] \label{D:LCM}
A local coefficients matrix associated to $(\wt{P}, w, \sigma)$ is the matrix $\mca{M}_\mfr{B}(w, \sigma)$ representing $\mca{T}(w, \sigma)^*$ with respect to an ordered basis $\mfr{B} \subset \Wh_{\psi}( I_{\wt{P}}^{\wt{G}} (\sigma) )$.
\end{dfn}

\begin{rmk} \label{R:LCM}
An equivalent way of obtaining $\mca{M}_\mfr{B}(w, \sigma)$ is as follows. The ordered basis $\mfr{B}$ gives rise to an ordered basis $\mca{C}(\mfr{B}) \subset \Wh_\psi (I_{\wt{P}'}^{\wt{G}}({}^w\sigma))$. Then $\mca{M}_\mfr{B}(w, \sigma)$ is just the matrix representing $T(w,\sigma)^*$ with respect to the two ordered bases $\mca{C}(\mfr{B})$ and $\mfr{B}$.
\end{rmk}

If $n=1$, i.e. $\wt{G}=G$, then $\mca{T}(w, \sigma)^*$ is scalar valued and is the \emph{reciprocal} of the local coefficients studied by the second author for linear algebraic groups. It is one of the starting point of the Langlands-Shahidi theory of $L$-functions for linear algebraic groups. For general covering $\wt{G}$, the characteristic polynomial
$$\bigwedge^{\rm top} (X\cdot \text{id}- \mca{T}(w, \sigma)^*)=\det(X \cdot \text{id}- \mca{M}_\mfr{B}(w, \sigma))$$
 of $\mca{T}(w, \sigma)^*$ can be computed by considering any local coefficient matrix $\mca{M}_\mfr{B}(w, \sigma)$, and it is independent of chosen basis $\mfr{B}$. As mentioned in the Introduction, our goal in this paper is to investigate the two invariants (i.e. as coefficients of the above characteristic polynomial) $\Tr(\mca{M}_\mfr{B}(w, \sigma))$ and $\det (\mca{M}_\mfr{B}(w, \sigma))$.

\subsection{Genuine principal series representation} \label{SS:gps}
Considering the Borel subgroup $\mbf{B}=\mbf{T} \mbf{U}$, since $U$ splits canonically in $\wt{G}$, we have $\wt{B}=\wt{T}\ltimes U$. The covering torus $\wt{T}$ is a Heisenberg group with center $Z(\wt{T})$. Moreover,
$$Z(\wt{T})=\phi^{-1}(\text{Im}(i_{Q,n})),$$
where
$$i_{Q,n}: Y_{Q,n}\otimes F^\times \to T$$
is the isogeny induced from the embedding $Y_{Q,n} \subset Y$.

Let $\chi \in \Irr (Z(\wt{T}))$ be a genuine character of $Z(\wt{T})$. Let $\wt{A} \subset \wt{T}$ be a maximal abelian subgroup. By the Stone-von Neumann theorem (see \cite[Theorem 3.1]{We1}), the construction
$$\chi \mapsto i(\chi):=\text{Ind}_{\wt{A}}^{\wt{T}}(\chi')$$
gives a bijection between the isomorphism classes of genuine representations of $Z(\wt{T})$ and $\wt{T}$, where $\chi'$ is any extension of $\chi$ to $\wt{A}$. In fact, the isomorphism class is independent of the choice of $\wt{A}$ as well. As a consequence of the non-degeneracy of the commutator map on $\wt{T}/Z(\wt{T}) \times \wt{T}/Z(\wt{T})$, there is a Lagrangian decomposition of $\wt{T}/Z(\wt{T})$, see \cite[\S 2]{We5}. In particular, $[\wt{T}: Z(\wt{T})] \in \N$ is a square and one has
$$\dim i(\chi) =[\wt{T}: \wt{A}] = \sqrt{[\wt{T}: Z(\wt{T})]}.$$
We note that the group $W$ does not act on $i(\chi)$, but only on its isomorphism class, see \cite[Page 310]{Mc1}. On the other hand, we have a well-defined action of $W$ on $\chi$ given by
$$({}^\w \chi)(\wt{t}):= \chi(w^{-1} \wt{t} w).$$
The two representations ${}^wi(\chi)$ and $i({}^\w \chi)$ are isomorphic since they have the same central character. If we fix an isomorphism
\begin{equation} \label{r-w}
r_w: {}^w i(\chi) \to i({}^\w \chi),
\end{equation}
then it induces an isomorphism also denoted by
$$r_w: I({}^w i(\chi)) \to I(i({}^\w \chi)).$$
For simplicity of notation, we may denote
$$I(\chi): =I(i(\chi))$$
for any $\chi \in \Irr (Z(\wt{T}))$. Every element in $I(\chi)$ is viewed as an $i(\chi)$-valued function.

\subsection{Parametrization of $\Wh_\psi(I(\chi))$} \label{SS:para}
We would like to have a concrete parametrization of the space of $\Wh_\psi (I(\chi))$, which will enable a natural basis $\mfr{B}$. The parametrization arises essentially from composing the Jacquet integral with functionals of $i(\chi)$.

Let $\Ftn(i(\chi))$ be the vector space of  functions $\cc$ on $\wt{T}$  satisfying
$$\cc(\wt{t} \cdot \wt{z}) =  \cc(\wt{t}) \cdot \chi(\wt{z}), \quad \wt{t} \in \wt{T} \text{ and } \wt{z} \in \wt{A}.$$
The support of $\cc \in \Ftn(i(\chi))$ is a disjoint union of cosets in $\wt{T}/\wt{A}$.
For every $\gamma \in \wt{T}$, let $\cc_\gamma \in \Ftn(i(\chi))$ be the unique element satisfying
$$\text{supp}(\cc_{\gamma})=\gamma \cdot \wt{A} \text{ and } \cc_{\gamma}(\gamma)=1.$$
Clearly, $\cc_{\gamma \cdot a} = \chi(a)^{-1} \cdot \cc_\gamma$ for every $a\in \wt{A}$. If $\set{\gamma_i}\subset \wt{T}$ is a chosen set of representatives of $\wt{T}/\wt{A}$, then $\set{\cc_{\gamma_i}}$ forms a basis for $\Ftn(i(\chi))$. Let $i(\chi)^\vee$ be the vector space of functionals of $i(\chi)$, which affords the contragredient representation of $i(\chi)$.
 The set $\set{\gamma_i}$ gives rise to linear functionals $l_{\gamma_i} \in i(\chi)^\vee$ such that $l_{\gamma_i}(\phi_{\gamma_j})=\delta_{ij}$, where $\phi_{\gamma_j}\in i(\chi)$ is the unique element such that
$$\text{supp}(\phi_{\gamma_j})=\wt{A}\cdot \gamma_j^{-1} \text{ and  } \phi_{\gamma_j}(\gamma_j^{-1})=1.$$
See also \cite{CO}. It is easy to see that for every $\gamma\in \wt{T}$ and $a\in \wt{A}$, one has
$$\phi_{\gamma a}= \chi(a)\cdot \phi_\gamma, \quad  l_{\gamma a} = \chi(a)^{-1} \cdot l_\gamma.$$
Moreover, there is a natural isomorphism of vector spaces
$$\Ftn(i(\chi)) \simeq i(\chi)^\vee$$
 given by
$$ \cc  \mapsto   l_\cc:= \sum_{\gamma_i \in \wt{T}/\wt{A}} \cc(\gamma_i) \cdot l_{\gamma_i}.
$$
It can be checked easily that this isomorphism does not depend on the choice of representatives for $\wt{T}/\wt{A}$.

Furthermore, there is an isomorphism between $i(\chi)^\vee$ and the space $\Wh_\psi (I(\chi))$ of $\psi$-Whittaker functionals  on $I(\chi)$  given by
$$l \mapsto \lambda_l$$
with
$$\lambda_l:  I(\chi) \to \C, \quad f \mapsto l \circ J(f) \text{ where } J(f)=\left( \int_{U} f(\wt{w}_G^{-1}u) \psi(u)^{-1} du \right) \in i(\chi).$$
Here $f\in I(\chi)$ is an $i(\chi)$-valued function on $\wt{G}$. For any $\cc\in \Ftn(i(\chi))$, write $\lambda_\cc \in \Wh_\psi(I(\chi))$ for the $\psi$-Whittaker functional of $I(\chi)$ associated to $l_\cc$. Therefore, $\cc \mapsto \lambda_\cc$ gives an isomorphism between $\Ftn(i(\chi))$ and $\text{Wh}_\psi(I(\chi))$. For $\gamma \in \wt{T}$, we will write
$$\lambda_\gamma:=\lambda_{\cc_\gamma}.$$
To avoid confusion, we may write $\lambda^{\chi}$ instead of $\lambda \in \text{Wh}_\psi(I(\chi))$ to emphasize the underlying representation $I(\chi)$ involved.  It is clear that
\begin{equation} \label{psWd}
\dim \Wh_\psi(I(\chi)) = \dim i(\chi)^\vee =  \sqrt{[\wt{T}: Z(\wt{T})]}.
\end{equation}

In fact, we believe that the following holds.

\begin{conj} \label{C:uniUB}
Let $\wt{G}$ be an $n$-fold Brylinski-Deligne cover of a connected reductive group $G$. Then
$$\dim \Wh_\psi( \pi ) \le \sqrt{[\wt{T}: Z(\wt{T})]}$$
for every $\pi \in \Irr (\wt{G})$.
\end{conj}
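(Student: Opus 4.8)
The plan is to reduce the inequality to the case of supercuspidal $\pi$ by parabolic induction and Rodier's heredity, and then to treat the supercuspidal case geometrically. The point that makes the reduction possible is that the bound $\sqrt{[\wt T : Z(\wt T)]}$ is intrinsic to the covering torus: by relation (C) of \S\ref{S:top-c}, the commutator on $\wt T$ is $[y_1(a), y_2(b)] = (a,b)_n^{B(y_1, y_2)}$, so $Z(\wt T)$ --- and hence the index $[\wt T : Z(\wt T)]$ --- depends only on the pair $(Y, B)$, not on the ambient group. In particular this index is unchanged when $\wt T$ is regarded inside any Levi cover $\wt M \subset \wt G$ obtained by restricting $\wm G$.

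I would argue by induction on the semisimple rank of $\mbf G$. In the base case $\mbf G = \mbf T$ one has $\Wh_\psi(i(\chi)) \simeq i(\chi)^\vee$ together with $\dim i(\chi) = \sqrt{[\wt T : Z(\wt T)]}$ by \eqref{psWd}, so equality holds. For the inductive step, choose (by the subrepresentation theorem, valid for covering groups) a parabolic $\wt P = \wt M \wt N$ and a supercuspidal $\sigma \in \Irr(\wt M)$ with $\pi$ a subquotient of $I_{\wt P}^{\wt G}(\sigma)$. Since the twisted Jacquet functor $V \mapsto V_{U,\psi}$ is exact, $\dim \Wh_\psi$ is additive along short exact sequences, so $\dim \Wh_\psi(\pi) \le \dim \Wh_\psi(I_{\wt P}^{\wt G}(\sigma))$; and Rodier's heredity (recalled for covers in \S\ref{SS:lcm}) identifies the right-hand side with $\dim \Wh_{\psi_M}(\sigma)$. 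If $\mbf M \subsetneq \mbf G$, then $\wt M$ is again a Brylinski--Deligne cover, of a reductive group of strictly smaller semisimple rank, so the inductive hypothesis bounds $\dim \Wh_{\psi_M}(\sigma)$ by $\sqrt{[\wt T : Z(\wt T)]}$, and this case is done. Hence the conjecture is equivalent to the assertion that $\dim \Wh_\psi(\pi) \le \sqrt{[\wt T : Z(\wt T)]}$ for every supercuspidal $\pi \in \Irr(\wt G)$ with $\mbf G$ not a torus; for such $\pi$ the heredity step contributes nothing (one has $\mbf M = \mbf G$ and $\sigma \simeq \pi$), and a direct argument is required.

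The supercuspidal case is where the real difficulty lies, and I would attack it through a Bruhat-filtration analysis of the space of distributions $D$ on $\wt G$ satisfying $D(u_1 \cdot g \cdot u_2) = \psi(u_1)\psi(u_2)^{-1} D(g)$ for $u_1, u_2 \in U$, in the spirit of the proof of the $\dim \le 1$ criterion when $\wt T$ is abelian \cite{Szp1, GSS1}: the cells other than the open one should support no such distribution for generic $\psi$, while the open cell $\cong U \times \wt T \times U$ reduces the problem to the analogous Whittaker question on the Heisenberg group $\wt T$, which is precisely where $[\wt T : Z(\wt T)]$ enters. The upshot one hopes for is a bound on the bilinear Whittaker multiplicity, $\dim \Wh_\psi(\pi) \cdot \dim \Wh_{\psi^{-1}}(\pi^\vee) \le [\wt T : Z(\wt T)]$; combined with the identity $\dim \Wh_\psi(\pi) = \dim \Wh_{\psi^{-1}}(\pi^\vee)$ (valid for admissible $\pi$ by the duality for twisted Jacquet modules), this yields the square-root bound, and the exponent $2$ on the right is exactly what the full principal series predicts since $\dim \Wh_\psi(I(\chi)) \cdot \dim \Wh_{\psi^{-1}}(I(\chi)^\vee) = [\wt T : Z(\wt T)]$. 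A more explicit route, which should at least dispose of the depth-zero supercuspidals (cf. \cite{Blo, GW}), is to write $\pi = \mathrm{ind}_{\wt K}^{\wt G}(\rho)$ with $\wt K$ a maximal parahoric, to compute $\Wh_\psi(\pi)$ via the geometric lemma over $\wt K \backslash \wt G / U$, and to bound the finitely many contributing double cosets by a Whittaker computation in the finite reductive quotient of $\wt K$; positive depth would then need the corresponding refinement via Yu-type constructions. I expect this supercuspidal estimate --- not the reduction, which is routine --- to be the genuine obstacle; it already underlies the subtleties in the parametrization of $\Irr(\wt{\SL}_2^{(2)})$ observed by Waldspurger, and the known cases ($\wt{\SL}_2$, $\wt{\GL}_2$, and the classical situation where $\wt T$ is abelian) are all consistent with the conjecture.
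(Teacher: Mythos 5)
The statement you are attempting to prove is a \emph{conjecture} in the paper, not a theorem: the authors explicitly offer no proof, only the heuristic that (as in \cite{Han18} for linear groups) generic representations should have minimal Bernstein support, so that the full principal series — whose Whittaker dimension is exactly $\sqrt{[\wt{T}:Z(\wt{T})]}$ by \eqref{psWd} — should saturate the bound. Your proposal is therefore not being measured against a proof in the paper, because there is none.

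Your reduction to the supercuspidal case is sound and is the natural first move. The observation that $[\wt{T}:Z(\wt{T})]$ is intrinsic to the pair $(Y,B)$ (via relation~(C) of \S\ref{S:top-c}), and hence invariant under passage to a Levi cover $\wt{M}\subset\wt{G}$, is exactly what makes the bound inductive; combined with the subquotient theorem, exactness of the twisted Jacquet functor, and the covering-group form of Rodier's heredity recalled in \S\ref{SS:lcm}, this correctly reduces the conjecture to supercuspidal $\pi$ on covers of semisimple groups of each rank.

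The gap is the supercuspidal case itself, which you flag as ``the genuine obstacle'' — but that obstacle \emph{is} the content of the conjecture. Your sketch is not a proof, and two of its ingredients are themselves not established in the generality needed. First, the bilinear estimate $\dim\Wh_\psi(\pi)\cdot\dim\Wh_{\psi^{-1}}(\pi^\vee)\le[\wt{T}:Z(\wt{T})]$ does not follow from the naive Bruhat filtration of $(\psi,\psi^{-1})$-equivariant distributions on $\wt{G}$: on the open cell such distributions correspond to unconstrained distributions on $\wt{T}$, and in the linear Gelfand--Kazhdan argument the finiteness comes from an anti-involution and supercuspidal decay of matrix coefficients, neither of which transfers automatically to an arbitrary Brylinski--Deligne cover with nonabelian $\wt{T}$. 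Second, the duality $\dim\Wh_\psi(\pi)=\dim\Wh_{\psi^{-1}}(\pi^\vee)$ needed to extract the square root from the bilinear bound, while plausible from the Jacquet-module/contragredient formalism, is not a citation you can make in the covering setting without checking it. The depth-zero route via \cite{Blo,GW} is a reasonable place to test the conjecture, but proving special cases is not proving the statement. In short: your reduction is correct and would be a useful lemma, but the proposal leaves open precisely what the authors left open, so it is not a proof.
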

The above conjecture is motivated from the fact that for linear algebraic group, the generic element in an $L$-packet has the minimal Bernstein support (see \cite{Han18}). This we expect to be true for covering groups as well, and that the full principal series has the highest dimension for the Whittaker space. We also note that \cite[Corollary 2.8]{GSS1} follows from the above conjecture.

\subsection{Unramified $I(\chi)$} \label{SS:unrep}
If $|n|_F=1$, let $K \subset G$ be the hyperspecial maximal compact subgroup generated by $\mbf{T}(O)$ and $e_\alpha(O)$ for all root $\alpha$. With our assumption that $\eta_n$ is trivial, the group $\wt{G}$ splits over $K$ (see \cite{GG}) and we fix such a splitting
$$s_K: K \into \wt{G}.$$
We call such $\wt{G}$ (together with the fixed splitting over $K$)  an unramified group. If no confusion arises, we will simply write 
$$K\subset \wt{G}$$
and omit $s_K$. Moreover, a representation $\pi \in \Irr(\wt{G})$ for an unramified $\wt{G}$ is called unramified if $\pi^K \ne 0$, in which case we have $\dim \pi^K =1$. In particular, $I(\chi)$ is unramified if and only if $\chi$ is unramified, i.e., $\chi$ is trivial on $Z(\wt{T}) \cap K$.

If $\wt{G}$ is an unramified group, then
$$\wt{A}:=Z(\wt{T})\cdot (K\cap T)$$
is a maximal abelian subgroup of $\wt{T}$ with
$$\msc{X}_{Q,n} \simeq \wt{T}/\wt{A}.$$
In this case,
$$\dim \Wh_\psi(I(\chi)) = \dim i(\chi) = \val{ \msc{X}_{Q,n} }.$$
Since
$$\wt{A}/(T\cap K) \simeq Z(\wt{T})/Z(\wt{T}) \cap K,$$
which we denote by $\wt{Y}_{Q,n}$, there is a natural abelian extension
\begin{equation} \label{Ext1}
\begin{tikzcd}
\bbmu_n \ar[r, hook] & \wt{Y}_{Q,n} \ar[r, two heads, "\varphi"] & Y_{Q,n}.
\end{tikzcd}
\end{equation}
Unramified genuine characters of both $Z(\wt{T})$ and $\wt{A}$ correspond to a genuine character of $\wt{Y}_{Q,n}$. In particular, any unramified $\chi$ of $Z(\wt{T})$ has a canonical unramified extension to $\wt{A}$, which we also denote by $\chi$. 
Let $f_0 \in I(\chi)$ be the normalized unramified vector such that $f_0(1_{\wt{G}}) \in i(\chi)$ is the unramified vector with $f_0(1_{\wt{G}})(1_{\wt{T}})=1$.

\begin{dfn}
The $i(\chi)$-valued unramified Whittaker function $\mca{W}$ is given by
$$\mca{W}(g)=J\left( I(\chi)(g)(f_0) \right) \text{ for every } g\in \wt{G},$$
while  $\mca{W}_\cc(g):=l_\cc \left( \mca{W}(g) \right)$ is a scalar-valued unramified Whittaker function for every $\cc \in \Ftn(i(\chi))$.
\end{dfn}
For convenience, for $\gamma\in \wt{T}$, we may also denote
$$\mca{W}_\gamma:= \mca{W}_{\cc_\gamma}$$
for the scalar valued Whittaker function associated to $\cc_\gamma$.

\subsection{Scattering matrix} \label{SS:Sca}
Let $\chi$ be a genuine character of $Z(\wt{T})$. Let 
$$\mfr{R}=\set{\gamma_i: i\in I} \subset \wt{T}$$
 be an ordered set of representatives of the quotient $\wt{T}/\wt{A}$. We denote by
$$\mfr{B}_\chi=\set{\lambda_{\gamma_i}^\chi: \gamma_i\in \mfr{R}  }  \subset \Wh_\psi (I(\chi))$$
the ordered basis for the Whittaker functionals arising from $\set{l_{\gamma_i}  } \subset i(\chi)^\vee$. Note that the restriction of the dual representation $i(\chi)^\vee$ of $\wt{T}$ to $\wt{A}$ is given by
$$i(\chi)^\vee = \bigoplus_{\gamma_ i \in \mfr{R}}  \C \cdot l_{\gamma_i},$$
an isotypic sum with the action of $\wt{A}$ on $\C \cdot l_{\gamma_i}$ given by $\chi^{-1}$. Thus, the basis $\mfr{B}_\chi$ is a natural choice.

Let
$$r_w: {}^wi(\chi) \to i({}^\w \chi)$$
be an isomorphism. We use $T(w, \chi; r_w)$ to denote the composite
$$T(w, \chi; r_w):=r_w \circ T(w, i(\chi)): I(i(\chi)) \to I({}^w i(\chi) ) \to I(i({}^\w \chi))$$
of the intertwining map $T(w, i(\chi))$ (see \eqref{T(w)}) with the isomorphism between $I({}^wi(\chi))$ and $I(i({}^\w\chi))$ induced from $r_w$ (see \eqref{r-w}). It induces a homomorphism
$$T(w, \chi; r_w)^* = T(w, i(\chi))^* \circ r_w^* : \Wh_\psi(I({}^\w \chi)) \to \Wh_\psi (I(\chi)),$$
where
$$r_w^*:  \Wh_\psi(I({}^\w \chi)) \to \Wh_\psi (I{}^w(\chi))$$
is the isomorphism induced from $r_w$. The set $\mfr{R}$ also gives rise to an ordered basis
$$\mfr{B}_{ {}^\w\chi }=\set{\lambda_{\gamma_i}^{{}^\w\chi}: \gamma_i\in \mfr{R}  }  \subset \Wh_\psi (I({}^\w\chi)).$$
That both $\mfr{B}_\chi$ and $\mfr{B}_{ {}^\w \chi}$ are defined using the same $\mfr{R}$ motivates the consideration of $T(w, \chi; r_w)$ instead of $T(w, \chi)$. In fact, we have the following

\begin{dfn} \label{D:Sca}
A scattering matrix $\mca{S}_\mfr{R}(w, i(\chi); r_w)$ is the matrix representing the map $T(w, \chi; r_w)^*$ with respect to the ordered bases $\mfr{B}_{ {}^\w\chi }$ and $\mfr{B}_\chi$, both of which depend on $\mfr{R}$.
\end{dfn}
The matrix $\mca{S}_\mfr{R}(w, i(\chi); r_w)$ was first investigated in \cite{KP} and then studied extensively in \cite{KP, Pat, Suz1, Suz2, Suz3, Suz4, Mc2} along the same line. In another direction, we refer to \cite{BBB, BBBF} and reference therein for the recent work investigating such matrix or its analogue (coined as R-matrices there) in the representation theory of quantum groups.

The natural isomorphism
$$ \mca{C}:  \Wh_\psi(I(\chi)) \to \Wh_\psi( I({}^wi(\chi))$$
from \S \ref{SS:lcm} gives rise to an isomorphism
\begin{equation} \label{E:C}
\mca{C}_{r_w}:=(r_w^{-1})^* \circ \mca{C}:  \Wh_\psi(I(\chi)) \to \Wh_\psi( I(i({}^\w\chi)).
\end{equation}
We denote by
$$\mca{C}( \mfr{B}_{ {}^\w \chi}, \mfr{B}_\chi; r_w)$$
the matrix representing $\mca{C}_{r_w}$ with respect to the two ordered bases  $\mfr{B}_\chi$ and $\mfr{B}_{ {}^\w\chi }$.
We have the following commutative diagram
\begin{equation} \label{Strat}
\begin{tikzcd}
\Wh_\psi(I( i({}^\w \chi) ))  \ar[r, "{r_w^*}"]  \ar[rr, bend left=25, "{T(w, \chi; r_w)^* }"]     &  \Wh_\psi(I( {}^wi(\chi) ))   \ar[r, "{ T(w, i(\chi))^* }"]  &   \Wh_\psi(I( i(\chi) ))   \\
&    \Wh_\psi( I(i(\chi)))  \ar[lu, "{\mca{C}_{r_w}}"]   \ar[u, "{\mca{C}}"]  \ar[ru, "{ \mca{T}(w, i(\chi))^* }"'] ,
\end{tikzcd}
\end{equation}
where $\mca{T}(w, i(\chi))^*$ is given in \eqref{End-T}. It immediately gives:

\begin{lm} \label{L:comp}
Retain the above notations, we have
$$\mca{M}_{\mfr{B}_\chi}(w, i(\chi))=  \mca{S}_\mfr{R}(w, i(\chi); r_w) \circ \mca{C}( \mfr{B}_{ {}^\w \chi}, \mfr{B}_\chi; r_w).$$
\end{lm}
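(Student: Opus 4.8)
The plan is to derive the identity by a direct chase in the diagram \eqref{Strat}, reducing it to the functoriality of the passage from a linear map to its matrix in fixed ordered bases.

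First I would establish the operator-level identity $\mca{T}(w, i(\chi))^* = T(w, \chi; r_w)^* \circ \mca{C}_{r_w}$, i.e. the commutativity of the outer triangle in \eqref{Strat}. This is pure unwinding of definitions: by \eqref{End-T} one has $\mca{T}(w, i(\chi))^* = T(w, i(\chi))^* \circ \mca{C}$; by the definition of the composite $T(w, \chi; r_w)$ one has $T(w, \chi; r_w)^* = T(w, i(\chi))^* \circ r_w^*$; and by \eqref{E:C} one has $\mca{C}_{r_w} = (r_w^{-1})^* \circ \mca{C}$. Composing the last two gives
$$T(w, \chi; r_w)^* \circ \mca{C}_{r_w} = T(w, i(\chi))^* \circ r_w^* \circ (r_w^{-1})^* \circ \mca{C},$$
and since pullback is contravariant the two middle factors satisfy $r_w^* \circ (r_w^{-1})^* = (r_w^{-1} \circ r_w)^* = \id$; this collapses the right-hand side to $T(w, i(\chi))^* \circ \mca{C} = \mca{T}(w, i(\chi))^*$, as desired.

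Second I would translate this operator identity into the asserted matrix identity. By construction $\mca{C}(\mfr{B}_{ {}^\w \chi}, \mfr{B}_\chi; r_w)$ is the matrix of $\mca{C}_{r_w}\colon \Wh_\psi(I(\chi)) \to \Wh_\psi(I(i({}^\w\chi)))$ with respect to the ordered bases $\mfr{B}_\chi$ in the source and $\mfr{B}_{ {}^\w\chi }$ in the target, while $\mca{S}_\mfr{R}(w, i(\chi); r_w)$ is the matrix of $T(w, \chi; r_w)^*\colon \Wh_\psi(I({}^\w\chi)) \to \Wh_\psi(I(\chi))$ with respect to $\mfr{B}_{ {}^\w\chi }$ in the source and $\mfr{B}_\chi$ in the target. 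The key structural point is that both $\mfr{B}_\chi$ and $\mfr{B}_{ {}^\w\chi }$ are built from the same set of representatives $\mfr{R}$, so the target basis of $\mca{C}_{r_w}$ coincides on the nose with the source basis of $T(w, \chi; r_w)^*$; hence the matrix of the composite $T(w, \chi; r_w)^* \circ \mca{C}_{r_w}$, taken relative to $\mfr{B}_\chi$ in source and target, is precisely the product $\mca{S}_\mfr{R}(w, i(\chi); r_w) \circ \mca{C}(\mfr{B}_{ {}^\w \chi}, \mfr{B}_\chi; r_w)$. By the first step this composite equals $\mca{T}(w, i(\chi))^*$, whose matrix relative to $\mfr{B}_\chi$ is by Definition \ref{D:LCM} exactly $\mca{M}_{\mfr{B}_\chi}(w, i(\chi))$, which finishes the proof.

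There is no substantive obstacle here: the lemma is a bookkeeping statement whose content is the commutativity of \eqref{Strat}. The only points that require genuine care are tracking the directions of the various pullback maps — in particular the contravariance that makes $r_w^* \circ (r_w^{-1})^* = \id$ rather than the reverse — and verifying that the ordered bases on the intermediate Whittaker space $\Wh_\psi(I(i({}^\w\chi)))$ match, which is guaranteed by using a single $\mfr{R}$ throughout. Accordingly I would keep the written proof to essentially the two displays above.
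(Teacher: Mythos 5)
Your proof is correct and it is exactly the argument the paper has in mind: the paper writes out the diagram \eqref{Strat} and then states that the lemma follows "immediately," which is the two-step bookkeeping you spell out (commutativity of the outer triangle by unwinding \eqref{End-T}, \eqref{E:C} and the definition of $T(w,\chi;r_w)^*$, then passage to matrices using that $\mfr{B}_\chi$ and $\mfr{B}_{{}^\w\chi}$ come from the same $\mfr{R}$). There is no gap; you have simply made the paper's implicit one-word proof explicit.
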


We would like to discuss the matrix $\mca{S}_\mfr{R}(w, i(\chi); r_w)$ in more detail. Following the notation in \cite{KP}, we write
$$\mca{S}_\mfr{R}(w, i(\chi); r_w)=[\tau(w, \chi, \gamma, \gamma')]_{\gamma, \gamma' \in \mfr{R}}$$
such that
$$T(w, \chi; r_w)^*(\lambda_{\gamma}^{^\w \chi}) = \sum_{\gamma'\in \mfr{R}} \tau(w, \chi, \gamma, \gamma') \cdot \lambda_{\gamma'}^{\chi}.$$
In particular, $\tau(w, \chi, -, \gamma')$ denotes the ``rows" of the matrix $\mca{S}_\mfr{R}(w, i(\chi); r_w)$ and $\tau(w, \chi, \gamma, -)$ the ``columns". In fact, we will consider slightly a more general pseudo-matrix
$$[\tau(w, \chi, \gamma, \gamma')]_{\gamma \in \mfr{R}, \gamma' \in \mfr{R}'},$$
where $\mfr{R}, \mfr{R}' \subset \wt{T}$ are both ordered sets of representatives of $\wt{T}/\wt{A}$, such that  for every $\gamma \in \mfr{R}$,
$$T(w, \chi; r_w)^*(\lambda_{\gamma}^{^\w \chi}) = \sum_{\gamma'\in \mfr{R}'} \tau(w, \chi, \gamma, \gamma') \cdot \lambda_{\gamma'}^{\chi}.$$

In general, there is no canonical choice of $r_w$, and thus the scattering matrix depends sensitively on such a choice. In particular, different choices of $r_w$ may give non-conjugate scattering matrices. Nevertheless, in the unramified setting, we have a preferred choice given as follows. For this, we resume the setting of \S \ref{SS:unrep}. In particular, $\chi$ is unramified and that $\psi$ is of conductor $O_F$. With the choice of $\wt{A}$ and canonical unramified extension of $\chi$ to $\wt{A}$ (and thus the realization $\Ind_{\wt{A}}^{\wt{T}} \chi$ for $i(\chi)$) as in \S \ref{SS:unrep}, we have a natural isomorphism
\begin{equation} \label{r-w}
r_w^{\rm un}: {}^w i(\chi) \to i({}^\w \chi) \text{ given by } f \mapsto r_w^{\rm un}(f)(t):= f(w^{-1} t w),
\end{equation}
where $w\in K \subset \wt{G}$. Here $r_w^{\rm un}$ sends the normalized unramified vector of ${}^w i(\chi)$ to that of $i({}^\w \chi)$.

We will describe the matrix $[\tau(w, \chi, \gamma, \gamma')]$ representing $T(w, \chi; r_w^{\rm un})^*$. A remark is in order on the dependence of the matrix $[\tau( w, \chi,  \gamma, \gamma')]$ on the measures. Note that there are a priori three choices of measures: a measure $du$ of $N_w$ in the definition of $T(w, i(\chi))$ in \eqref{T(w)}, a measure $du'$ of $U$ in the definition of $\lambda_{\gamma_i}^\chi$, and a measure $du''$ of $U$ in the definition of $\lambda_{\gamma_i}^{{}^\w \chi }$.
We conveniently assume that $du' = du''$ and thus the local coefficients matrix $[\tau( w, \chi,  \gamma, \gamma')]$ depends only on the measure $du$ of $N_w$.

We first show that $T(w, \chi; r_w^{\rm un})^*$ satisfies the cocycle relation. For this purpose, we denote $r_{w, \chi}^{\rm un}:  {}^w i(\chi) \to  i({}^\w \chi)$ for the isomorphism $r_w^{\rm un}$ given above.

\begin{lm} \label{L:Tcr}
Let $\w, \w' \in W$ be such that $l(\w' \w) = l(\w') + l(\w)$. Then one has
$$T(w' w, \chi; r_{w' w, \chi}^{\rm un}) =T(w', {}^\w \chi; r_{w', {}^\w \chi}^{\rm un})  \circ T(w, \chi; r_{w, \chi}^{\rm un}).$$
\end{lm}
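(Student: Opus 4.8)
The plan is to prove the cocycle relation by reducing it to the known cocycle relation for the unnormalized intertwining operators $T(w,i(\chi))$ and checking that the normalization isomorphisms $r^{\rm un}_{w}$ are themselves compatible with composition. First I would recall that for the standard intertwining operators one has the classical cocycle identity
$$T(w'w, i(\chi)) = T(w', {}^wi(\chi)) \circ T(w, i(\chi))$$
whenever $l(w'w) = l(w') + l(w)$, since in that case $N_{w'w}$ decomposes compatibly as a product of $N_w$ and a $w$-conjugate of $N_{w'}$; this is the standard argument going back to the linear case (see the references to \cite{Sha2, Wal2} cited in \S\ref{SS:lcm}), and it carries over to covers because the splitting $s_U$ of the unipotent radical is canonical and $G$-equivariant, so $\wt{w'w} = \wt{w'}\,\wt{w}$ under the chosen representatives from \S\ref{S:top-c}. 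The measure on $N_{w'w}$ must be taken to be the product of the measures used on $N_w$ and on $w\,N_{w'}\,w^{-1}$, which is the conventional normalization; I would state this compatibility of measures explicitly.

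Next I would verify the purely torus-level identity
$$r^{\rm un}_{w'w,\chi} = r^{\rm un}_{w', {}^w\chi} \circ \Bigl({}^{w'}\!\bigl(r^{\rm un}_{w,\chi}\bigr)\Bigr),$$
viewing both sides as isomorphisms ${}^{w'w}i(\chi) \to i({}^{w'w}\chi)$. This is immediate from the defining formula \eqref{r-w}: applying $r^{\rm un}_{w,\chi}$ then $r^{\rm un}_{w',{}^w\chi}$ sends $f$ to $t \mapsto f\bigl(w^{-1}(w')^{-1}\,t\,w'\,w\bigr) = f\bigl((w'w)^{-1} t\,(w'w)\bigr)$, using that $w'w \in K$ is the chosen representative and $\wt{w'w}$ is the product of the lifts, so there is no cocycle discrepancy in $\bbmu_n$. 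Here one uses that $w$, $w'$ and $w'w$ all lie in $K\subset\wt G$ and that $r^{\rm un}$ carries normalized unramified vectors to normalized unramified vectors, so everything is genuinely canonical in the unramified setting.

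Combining the two displays, $T(w'w,\chi; r^{\rm un}_{w'w,\chi}) = r^{\rm un}_{w'w,\chi}\circ T(w'w,i(\chi))$ factors as $r^{\rm un}_{w',{}^w\chi}\circ {}^{w'}(r^{\rm un}_{w,\chi}) \circ T(w',{}^wi(\chi))\circ T(w,i(\chi))$; then I would commute ${}^{w'}(r^{\rm un}_{w,\chi})$ past $T(w',{}^wi(\chi))$ — this is exactly the functoriality of the intertwining operator in the inducing data, i.e. $T(w',-)$ applied to the isomorphism $r^{\rm un}_{w,\chi}\colon {}^wi(\chi)\to i({}^w\chi)$ intertwines $T(w',{}^wi(\chi))$ with $T(w',i({}^w\chi))$ — and regroup to get $r^{\rm un}_{w',{}^w\chi}\circ T(w',i({}^w\chi))\circ r^{\rm un}_{w,\chi}\circ T(w,i(\chi)) = T(w',{}^w\chi; r^{\rm un}_{w',{}^w\chi})\circ T(w,\chi; r^{\rm un}_{w,\chi})$, which is the claim. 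The main obstacle, and the only place demanding genuine care, is the bookkeeping of the $\bbmu_n$-cocycle: one must confirm that the chosen Weyl representatives satisfy $\wt{w'w}=\wt{w'}\,\wt{w}$ when the lengths add (this is \cite[Lemma 83]{Ste16} and \cite[p.\ 141]{BLS}, already invoked in \S\ref{S:top-c}) and that conjugation by elements of $K$ introduces no Hilbert-symbol factors, so that both the decomposition of $N_{w'w}$ and the formula \eqref{r-w} are strictly multiplicative. Everything else is a formal diagram chase.
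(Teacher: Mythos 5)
Your proof is correct and follows essentially the same route as the paper: the paper's argument is exactly your three ingredients (cocycle relation for the unnormalized $T(w,i(\chi))$, the torus-level compatibility $r^{\rm un}_{w'w,\chi}=r^{\rm un}_{w',{}^\w\chi}\circ{}^{w'}(r^{\rm un}_{w,\chi})$, and the functoriality of $T(w',-)$ in the inducing data) packaged as a single commutative diagram with an upper triangle, a square, and a lower triangle. The only stylistic difference is that you present it as a sequence of reductions while the paper draws the diagram; the content is identical, and your remarks on measures and on $\wt{w'w}=\wt{w'}\,\wt{w}$ are sound.
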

\begin{proof}
The claimed result follows from the following commutative diagram:
$$\begin{tikzcd}
I(i(\chi))  \ar[r, "{T(w, i(\chi))}"]   \ar[rd, "{T(w'w, i(\chi))}"']   &  I({}^w i(\chi))     \ar[r, "{r_{w, \chi}^{\rm un}}"]   \ar[d, "{T(w', {}^w i(\chi))}"]   & I(i({}^\w \chi))   \ar[d, "{T(w',i({}^\w \chi))}"]    \\
                          &  I({}^{w' w} i(\chi))    \ar[r, "{r_{w, \chi}^{\rm un}}"]  \ar[rd, "{r_{w' w, i(\chi)}^{\rm un}}"']                 & I({}^{w'}i({}^\w \chi))   \ar[d, "{r_{w', {}^\w \chi}^{\rm un}}"]   \\
                          & &   I(i({}^{\w' \w} \chi)).
\end{tikzcd} $$
The commutativity of the upper triangle follows as in the linear case, i.e., the cocycle relation for $T(w, \sigma)$. The lower triangle commutes by the defining formula of $r_{w, \chi}^{\rm un}$ in \eqref{r-w}. It is also easy to check that the square commutes by considering \eqref{T(w)} and \eqref{r-w}. This completes the proof since $T(w, \chi; r_{w, \chi}^{\rm un}) = r_{w, \chi}^{\rm un} \circ T(w, i(\chi))$ by definition.
\end{proof}

The matrix $[\tau(w, \chi, \gamma, \gamma')]_{\gamma \in \mfr{R}, \gamma' \in \mfr{R}'}$ representing $T(w, \chi; r_w^{\rm un})^*$ satisfies some immediate properties:
\begin{enumerate}
\item[$\bullet$] For $\w\in W$ and $\wt{z}, \wt{z}'\in \wt{A}$, the identity
\begin{equation} \label{SLCM1}
\tau(w, \chi, \gamma \cdot \wt{z}, \gamma' \cdot \wt{z}')=({}^{w}\chi)^{-1}(\wt{z}) \cdot \tau(w, \chi, \gamma, \gamma') \cdot \chi(\wt{z}')
\end{equation}
holds.
\item[$\bullet$] For $\w_1, \w_2 \in W$ such that $l(\w_2\w_1)=l(\w_2) + l(\w_1)$, one has
\begin{equation} \label{SLCM2}
\tau( w_2 w_1, \chi,  \gamma, \gamma')=\sum_{\gamma''\in \wt{T}/\wt{A}} \tau(w_2, {}^{w_1}\chi, \gamma, \gamma'') \cdot \tau(w_1, \chi, \gamma'', \gamma'),
\end{equation}
where the sum is well-defined, independent of any chosen representative $\gamma''$. The above equality follows from Lemma \ref{L:Tcr}, and is also referred to as the cocycle relation.
\end{enumerate}
In view of the cocycle relation in (\ref{SLCM2}), the understanding  of $\tau(w, \chi, \gamma, \gamma')$ in principle is reduced to the case where $w=w_\alpha$ for some $\alpha\in \Delta$.

To explicate the entries of $[\tau(w_\alpha, \chi, \gamma, \gamma')]$, we will take the Haar measure $du$ on $F$ such that $du(O_F)=1$, and thus $du(O_F^\times)=1-1/q$. The measure of each unipotent subgroup $U_\alpha$ involved in the intertwining operator $T(w, \chi; r_w^{\rm un})$ and in the definition of Jacquet integral $J(i(\chi))$ (see \S \ref{SS:lcm}) is the push-out of the measure $du$ via $e_\alpha: F \to U_\alpha$.

The Gauss sum is given by
$$G_\psi(a, b)=\int_{O^\times_F} (u, \varpi)_n^a \cdot \psi(\varpi^b u) du, \quad a, b\in \Z.$$
It is known that
\begin{equation*}
G_\psi(a, b)=
\begin{cases}
0 & \text{ if } b<-1, \\
1-1/q & \text{ if } n| a, b\ge 0,\\
0 &\text{ if } n\nmid a, b\ge 0, \\
-1/q &\text{ if } n|a, b=-1,\\
G_\psi(a, -1) \text{ with } |G_\psi(a,-1)|=q^{-1/2} &\text{ if } n\nmid a, b=-1.
\end{cases}
\end{equation*}
Recalling $\varepsilon=(-1,\varpi)_n \in \bbmu_n$, one has
$$\overline{G_\psi(a, b)}=\varepsilon^a \cdot G_\psi(-a, b).$$
For any $k\in \Z$, we write
$$\mathbf{g}_{\psi}(k):=G_{\psi}(k, -1).$$
The following was first proved by Kazhdan-Patterson \cite{KP} for covers of $\wt{\GL}_r$ and later generalized to general $\wt{G}$ by McNamara \cite{Mc2} (with some refinement from \cite{Ga2}).

\begin{thm} \label{T:Sca}
Let $\gamma=\s_{y_1}$ and $\gamma'=\s_y$ with $y_1, y\in Y$. Then we can write $\tau(w_\alpha, \chi, \gamma, \gamma')=\tau^1(w_\alpha, \chi, \gamma, \gamma') + \tau^2(w_\alpha, \chi, \gamma, \gamma')$ with the following properties:
\begin{enumerate}
\item[$\bullet$] $\tau^i(w_\alpha, \chi,\gamma \cdot \wt{z}, \gamma' \cdot \wt{z}')=({}^{\w_\alpha} \chi)^{-1}(\wt{z}) \cdot \tau^i(w_\alpha, \chi, \gamma, \gamma') \cdot \chi(\wt{z}'), \quad \wt{z}, \wt{z}'\in \wt{A}$;
\item[$\bullet$] $\tau^1(w_\alpha, \chi, \gamma, \gamma')=0$  unless  $y_1 \equiv y \mod Y_{Q,n}$;
\item[$\bullet$] $\tau^2(w_\alpha, \chi, \gamma, \gamma')=0$   unless $y_1 \equiv \w_\alpha[y] \mod Y_{Q,n}$.
\end{enumerate}
Moreover,
\begin{enumerate}
\item[$\bullet$] if $y_1= y$, then
$$\tau^1(w_\alpha, \chi, \gamma, \gamma')=(1-q^{-1}) \frac{\chi (\wt{h}_\alpha(\varpi^{n_\alpha}))^{k_{y,\alpha}}}{1-\chi (\wt{h}_\alpha(\varpi^{n_\alpha}))}, \text{ where } k_{y,\alpha}=\ceil{\frac{\angb{y}{\alpha}}{n_\alpha}};$$
\item[$\bullet$] if $y_1=\w_\alpha[y]$, then
$$\tau^2(w_\alpha, \chi, \gamma, \gamma') = \varepsilon^{ \angb{y_\rho}{\alpha} \cdot D(y, \alpha^\vee) } \cdot \g(\angb{y_\rho}{\alpha}Q(\alpha^\vee)).$$
\end{enumerate}
\end{thm}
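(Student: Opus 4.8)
The plan is to follow the computation of Kazhdan--Patterson \cite{KP} for covers of $\wt{\GL}_r$, extended to general $\wt{G}$ by McNamara \cite{Mc2} (and refined in \cite{Ga2}). The first reduction I would make is to rank one: since $w=w_\alpha$ is a simple reflection, $N_{w_\alpha}=U_\alpha$, so the intertwining integral \eqref{T(w)} involves only the root subgroup $U_\alpha$, and together with the Bruhat/Iwasawa decomposition of the rank-one cover generated by $\wt{e}_{\pm\alpha}$ (sitting over an $\SL_2$ or $\PGL_2$) the whole matrix-coefficient computation localizes inside this subgroup. Concretely, one evaluates $T(w_\alpha,\chi;r_{w_\alpha}^{\rm un})^*(\lambda_\gamma^{{}^{\w_\alpha}\chi})$ on test functions $f\in I(\chi)$ supported on the big Bruhat cell which transform by a fixed eigencharacter of $\wt{A}$ under right translation and are localized near a coset $\gamma'\wt{A}$, reading off the coefficient $\tau(w_\alpha,\chi,\gamma,\gamma')$ via the pairing $\Ftn(i(\chi))\simeq i(\chi)^\vee$ of \S\ref{SS:para}. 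Because $T(w_\alpha,i(\chi))$ is a priori only meromorphic, I would first run the computation after twisting $\chi$ by a sufficiently dominant unramified character so that all integrals converge absolutely, and obtain the general case by meromorphic continuation once the closed-form answers below (which are manifestly rational in the parameters) are established.

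Parametrizing $U_\alpha$ by $x\in F$ through $\wt{e}_\alpha$, I would split the $x$-integral according to whether $\wt{w}_\alpha^{-1}\wt{e}_\alpha(x)$ lies in $\wt{U}\cdot\wt{A}\cdot K$ (essentially $x\in O_F$) or not. The first regime contributes $\tau^1$: here $\wt{w}_\alpha^{-1}\wt{e}_\alpha(x)$ produces only a torus element lying in the coset of $\gamma$, which immediately forces $\tau^1(w_\alpha,\chi,\gamma,\gamma')=0$ unless $y_1\equiv y \bmod Y_{Q,n}$. In the complementary regime one uses the rank-one Bruhat decomposition of $\wt{w}_\alpha^{-1}\wt{e}_\alpha(x)$ for $x\neq 0$, which pushes a torus element of the form $\wt{h}_\alpha(x^{-1})$ together with the Weyl representative past the Jacquet integral; commuting $\wt{w}_\alpha$ past $\wt{T}$ via relations (C), (D) and Lemma \ref{L:W-act} turns this into a shift by $\w_\alpha[y]=\w_\alpha(y)+\alpha^\vee$, giving the vanishing of $\tau^2$ unless $y_1\equiv \w_\alpha[y]\bmod Y_{Q,n}$. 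The two $\wt{A}$-equivariance properties of $\tau^i$ are then formal consequences of $\lambda_{\gamma a}^{\chi}=\chi(a)^{-1}\lambda_\gamma^{\chi}$ and of the analogous transformation law for $\lambda^{{}^{\w_\alpha}\chi}$ recorded in \S\ref{SS:para}.

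For the explicit values I would take $y_1=y$ for $\tau^1$ and $y_1=\w_\alpha[y]$ for $\tau^2$. For $\tau^1$: organizing the integral over $x\in O_F$ by the valuation of $x$ and tracking the resulting torus elements, one sees that $\chi$ only detects powers of the central element $\wt{h}_\alpha(\varpi^{n_\alpha})\in Z(\wt{T})$ — with $n_\alpha$ being exactly the least exponent for which $\alpha^\vee(\varpi^{n_\alpha})$ lifts into $Z(\wt{T})$, by the definition of $Y_{Q,n}$ — and $k_{y,\alpha}=\ceil{\angb{y}{\alpha}/n_\alpha}$ is the first index contributing nontrivially; summing the resulting geometric series yields $(1-q^{-1})\,\chi(\wt{h}_\alpha(\varpi^{n_\alpha}))^{k_{y,\alpha}}/(1-\chi(\wt{h}_\alpha(\varpi^{n_\alpha})))$. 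For $\tau^2$: at the critical valuation the $x$-integral, once the character $\psi$ coming from the Jacquet functional and the Hilbert-symbol cocycle produced by commuting the $\wt{h}_\alpha$'s are inserted, becomes an integral of $\psi$ against a power of $(\cdot,\varpi)_n$ over $O_F^\times$, i.e.\ a Gauss sum; a careful exponent count using the two relations in (C) (involving the bisector $D$ and the quadratic form $Q$) together with Lemma \ref{L:W-act} identifies this with $\varepsilon^{\angb{y_\rho}{\alpha}D(y,\alpha^\vee)}\cdot\g(\angb{y_\rho}{\alpha}Q(\alpha^\vee))$.

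The hard part will be the bookkeeping of the metaplectic cocycle through the rank-one Bruhat decomposition: every time $\wt{w}_\alpha$ or $\wt{h}_\alpha$ is moved past a torus element one accrues Hilbert symbols governed by $D$, $Q$ and $\eta$ via relations (A)--(D), and reconciling all of these so as to land on precisely the exponent $\angb{y_\rho}{\alpha}Q(\alpha^\vee)$ in the Gauss sum and precisely the sign $\varepsilon^{\angb{y_\rho}{\alpha}D(y,\alpha^\vee)}$, while also checking independence of the auxiliary choices (the coset representative implicit in the cocycle relation \eqref{SLCM2} and the splitting $s_K$), is where essentially all the effort goes. The convergence and meromorphic continuation step is comparatively routine once these closed formulas are in hand.
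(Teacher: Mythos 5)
The paper does not actually prove Theorem \ref{T:Sca}: immediately above the statement it says this ``was first proved by Kazhdan--Patterson \cite{KP} for covers of $\wt{\GL}_r$ and later generalized to general $\wt{G}$ by McNamara \cite{Mc2} (with some refinement from \cite{Ga2}),'' and nothing more. So there is no in-paper argument to compare against; your proposal is essentially a reconstruction of what those references do. Read in that light it is a reasonable high-level outline: reduce to the rank-one subgroup attached to $\alpha$, pick test functions in $I(\chi)$ adapted to a single $\wt{A}$-coset, run the iterated (intertwining)$\circ$(Jacquet) integral after an unramified twist ensuring absolute convergence, and split the output into the Gindikin--Karpelevich type term $\tau^1$ (a geometric series over valuations, with measure factor $1-q^{-1}$ and threshold $k_{y,\alpha}=\ceil{\angb{y}{\alpha}/n_\alpha}$) and the Gauss sum term $\tau^2$; the Hilbert-symbol cocycle bookkeeping via relations (A)--(D) and Lemma \ref{L:W-act} is, as you say, where the real work is, and it is exactly what \cite{Ga2} refines over \cite{Mc2}.

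One caution about the middle paragraph: the dichotomy ``$x\in O_F$ gives $\tau^1$, $x\notin O_F$ gives $\tau^2$'' is too coarse. For $x\in O_F$ the element $w_\alpha^{-1}e_\alpha(x)$ lies in $K_\alpha$ and produces no torus shift at all; the split $\tau^1+\tau^2$ in \cite{KP,Mc2} is really indexed by the two elements of $W_\alpha=\set{\mathrm{id},\w_\alpha}$ acting by $\w_\alpha[\cdot]$ on the coset, and the geometric-series contribution $\tau^1$ accumulates over a whole range of valuations of $x$ in the big cell (after Iwasawa decomposition $w_\alpha^{-1}e_\alpha(x)=e_\alpha(-x)h_\alpha(x^{-1})e_{-\alpha}(x^{-1})$ for $|x|>1$), while $\tau^2$ comes from the single boundary valuation where the $O_F^\times$-integral against $\psi$ and the residual Hilbert-symbol character no longer trivializes and produces $\g(\cdot)$. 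The vanishing conditions ($y_1\equiv y$ for $\tau^1$, $y_1\equiv\w_\alpha[y]$ for $\tau^2$) then come out of which coset of $\wt{A}$ the accumulated torus element $\wt{h}_\alpha(x^{-1})$ lands in, exactly as you indicate, but the matching of each vanishing condition to a ``regime of $x$'' should be replaced by a matching to ``Weyl summand.'' With that adjustment the plan is the standard one; to turn it into a proof one would simply follow the explicit cocycle computations of \cite[\S 4]{Ga2}, which are carried out precisely in the normalization used here (the bisector $D$, the quadratic form $Q$, the element $\wt{h}_\alpha(\varpi^{n_\alpha})\in Z(\wt{T})$, and $\varepsilon=(-1,\varpi)_n$).
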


Now, for $\w=\w_{\alpha_l} ... \w_{\alpha_1} \in W$ written in a minimal decomposition with $\alpha_i\in \Delta$, let
$$\Delta_\w=\set{\alpha_i: 1\le i\le l}.$$
It is known that $\Delta_\w \subset \Delta$ is independent of the minimal decomposition chosen (see \cite[Page 12]{Bou}). Denote by $W(\Delta_\w) \subset W$ the parabolic subgroup generated by $\Delta_\w$. In particular, $W=W(\Delta_{\w_G})$.

Take an order set of representatives of $\wt{T}/\wt{A}$ as
$$\mfr{R}=\set{\s_{y_i}:  1\le i \le \val{\msc{X}_{Q,n}}},$$
where $\set{y_i} \subset Y$ is an ordered set of representatives of $\msc{X}_{Q,n}$.
Let $W'$ be a subgroup such that
$$W(\Delta_\w) \subset W' \subset W.$$
Define an equivalence relation on $\mfr{R}$ by requiring that $y_i$ is equivalent to $y_j$ if their images $\hat{y_i}$ and $\hat{y_j}$ in $\msc{X}_{Q,n}$ lies in the same $W'$-orbit; that is, if
$$y_i \equiv \w[y_j] \mod Y_{Q,n} \text{ for some } \w\in W'.$$
Then we could reorder elements in  $\mfr{R}$ such that $\mfr{R}$ is decomposed into an ordered equivalent classes:
\begin{equation} \label{R:dec}
\mfr{R}= \bigsqcup_k \mfr{R}_k,
\end{equation}
and there are $\val{\mca{O}_{W'}(\msc{X}_{Q,n})}$-many equivalence classes. In particular, if $W'$ acts transitively on $\msc{X}_{Q,n}$, then $k=1$ with $\mfr{R}_1 =\mfr{R}$.

In general, by abuse of notation, we denote
$$\mca{S}_{\mfr{R}_k}(w, i(\chi); r_w^{\rm un}):=[\tau(w, \chi, \gamma, \gamma')]_{\gamma, \gamma'\in \mfr{R}_k}.$$

\begin{cor} \label{C:diag}
With notations as above, we have
$$\mca{S}_{\mfr{R}}(w, i(\chi); r_w^{\rm un}) = \bigoplus_k  \mca{S}_{\mfr{R}_k}(w, i(\chi); r_w^{\rm un}),$$
where the right hand side denotes the diagonal block matrix with each block arising from $\mfr{R}_k$.
\end{cor}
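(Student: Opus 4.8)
## Proof plan for Corollary \ref{C:diag}

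The plan is to show that the matrix $\mca{S}_{\mfr{R}}(w, i(\chi); r_w^{\rm un}) = [\tau(w, \chi, \gamma, \gamma')]_{\gamma, \gamma' \in \mfr{R}}$ has a block-diagonal shape with respect to the partition $\mfr{R} = \bigsqcup_k \mfr{R}_k$, i.e.\ that $\tau(w, \chi, \s_{y_i}, \s_{y_j}) = 0$ whenever $y_i$ and $y_j$ lie in different equivalence classes. Since the blocks $\mca{S}_{\mfr{R}_k}$ are by definition the square submatrices indexed by $\mfr{R}_k \times \mfr{R}_k$, this vanishing is exactly the statement of the corollary. So everything reduces to a support statement for the scattering matrix entries, and the natural tool is Theorem \ref{T:Sca} together with the cocycle relation \eqref{SLCM2}.

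First I would reduce to the case of a simple reflection. Write $w = w_{\alpha_l} \cdots w_{\alpha_1}$ in a minimal decomposition with all $\alpha_i \in \Delta_\w$. By iterating the cocycle relation \eqref{SLCM2}, $\tau(w, \chi, \gamma, \gamma')$ is a sum of products $\prod_{t} \tau(w_{\alpha_t}, {}^{w_{\alpha_{t-1}}\cdots w_{\alpha_1}}\chi, \gamma_t, \gamma_{t-1})$ over intermediate representatives $\gamma_0 = \gamma', \gamma_1, \dots, \gamma_l = \gamma$ running through $\wt{T}/\wt{A}$, equivalently over $y_0, \dots, y_l$ running through $\msc{X}_{Q,n}$. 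Now apply Theorem \ref{T:Sca} to each factor: the entry $\tau(w_{\alpha_t}, -, \s_{z}, \s_{z'})$ vanishes unless $z \equiv z'$ or $z \equiv \w_{\alpha_t}[z'] \bmod Y_{Q,n}$, i.e.\ unless the images of $z$ and $z'$ in $\msc{X}_{Q,n}$ lie in the same $W_{\alpha_t}$-orbit (in fact are related by $\mathrm{id}$ or $\w_{\alpha_t}$). Hence a nonzero summand forces $\hat y_{t} \in \{\hat y_{t-1}, \w_{\alpha_t}[\hat y_{t-1}]\}$ for every $t$, and chaining these, $\hat\gamma = \hat y_l$ lies in the same $W(\Delta_\w)$-orbit as $\hat\gamma' = \hat y_0$. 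Since $W(\Delta_\w) \subset W'$, the classes of $\gamma$ and $\gamma'$ under the $W'$-equivalence coincide. Contrapositively, if $\gamma \in \mfr{R}_k$ and $\gamma' \in \mfr{R}_{k'}$ with $k \ne k'$, then $\tau(w, \chi, \gamma, \gamma') = 0$.

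The only subtlety to address carefully is that the intermediate sums in \eqref{SLCM2} are over \emph{all} of $\wt{T}/\wt{A}$, not over a fixed representative set, so I should note that the chain of orbit-relations is independent of which representatives $\gamma_t$ one picks (this is built into \eqref{SLCM2}, where the sum is stated to be well-defined), and that the $W'$-equivalence relation descends to $\msc{X}_{Q,n}$ precisely because $Y_{Q,n}$ is Weyl-invariant (as recorded in \S\ref{SS:Worb}). With the vanishing $\tau(w,\chi,\gamma,\gamma')=0$ for $\gamma,\gamma'$ in distinct blocks established, after the reordering \eqref{R:dec} the matrix $[\tau(w,\chi,\gamma,\gamma')]_{\gamma,\gamma'\in\mfr{R}}$ is literally the direct sum $\bigoplus_k [\tau(w,\chi,\gamma,\gamma')]_{\gamma,\gamma'\in\mfr{R}_k} = \bigoplus_k \mca{S}_{\mfr{R}_k}(w, i(\chi); r_w^{\rm un})$, which is the assertion.

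I do not anticipate a genuine obstacle here; the content is entirely the support bookkeeping from Theorem \ref{T:Sca}. The one place to be mildly careful is the inductive unwinding of the cocycle relation for a general $w$ (as opposed to a simple reflection): I would make explicit that a minimal decomposition of $w$ uses only simple roots from $\Delta_\w$, so that each $\w_{\alpha_t}$ lies in $W(\Delta_\w) \subset W'$, ensuring the resulting orbit-relation stays within a single $W'$-orbit rather than spilling into a larger group.
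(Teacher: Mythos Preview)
Your proposal is correct and follows essentially the same approach as the paper: the paper's proof is simply a terse version of your argument, invoking the cocycle relation \eqref{SLCM2} together with Theorem \ref{T:Sca} to conclude that $\tau(w,\chi,\s_y,\s_z)=0$ whenever $\hat y$ and $\hat z$ lie in different $W(\Delta_\w)$-orbits, and then using $W(\Delta_\w)\subset W'$. Your write-up spells out the inductive unwinding of the cocycle relation and the chain-of-orbits reasoning that the paper leaves implicit, but the underlying argument is identical.
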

\begin{proof}
The decomposition was already observed in \cite[\S 3.2]{Suz2}. More precisely, the cocycle relation \eqref{SLCM2} coupled with Theorem \ref{T:Sca} shows that  $\tau(w, \chi, \s_y, \s_z)=0$, if $\hat{y}$ and $\hat{z}$ lie in different $W(\Delta_\w)$-orbits in $\msc{X}_{Q,n}$. Since $W' \supset W(\Delta_\w)$, the statement follows.
\end{proof}

Clearly, one can consider the case $W'=W$, where the decomposition \eqref{R:dec} is always independent of $\w$.

\section{Plancherel measure and gamma factor} \label{S:PG}

The goal of this section is to determine  completely
$$\det(\mca{M}_{\mfr{B}_\chi}(w_\alpha, i(\chi))  ) \text{ for } \alpha\in \Delta$$
 in the unramified setting, in terms of the Plancherel measure and gamma or metaplectic gamma factor, where $\mfr{B}_\chi$ depends on a chosen $\mfr{R}$ as in the preceding section. It is not visibly clear how to directly compute  $\det(\mca{M}_{\mfr{B}_\chi}(w_\alpha, i(\chi))  ) $ with respect to a chosen ordered set $\mfr{R}$.

However, by Lemma \ref{L:comp}, the local coefficients matrix  $\mca{M}_{\mfr{B}_\chi}(w_\alpha, i(\chi))$ is closely related to the scattering matrix $S_\mfr{R}(w, i(\chi); r_w^{\rm un})$ which, by Corollary \ref{C:diag} (applied to $W'=W(\Delta_{\w_\alpha})$), is essentially a diagonal block matrix. Moreover, since
$$W(\Delta_{\w_\alpha})= \set{\text{id}, \w_\alpha}$$ is an order two group, each equivalence class $\mfr{R}_i$ in \eqref{R:dec} has size 2 or 1, corresponding to the two sizes of $W(\Delta_{\w_\alpha})$-orbits in $\msc{X}_{Q,n}$. That is, $\mca{S}_{\mfr{R}_i} (w_\alpha, i(\chi); r_w^{\rm un})$ is either a two by two matrix or a scalar. This is the key observation in our strategy of computing $\det(\mca{M}_{\mfr{B}_\chi}(w_\alpha, i(\chi))  )$. Therefore, we will first analyze and compute $$\det(\mca{S}_\mfr{R}(w, i(\chi); r_w^{\rm un}))$$ and then
$$\det( \mca{C}(\mfr{B}_{{}^\w\chi}, \mfr{B}_\chi; r_w^{\rm un}) ),$$
 and combine the results. In fact, as a byproduct of analysing $\mca{C}(\mfr{B}_{{}^\w\chi}, \mfr{B}_\chi; r_w^{\rm un})$, we also give an explicit description of $\mca{M}_\mfr{B}(w_\alpha, i(\chi))$.

We remark that in our first step of computing $\det(S_\mfr{R}(w, i(\chi); r_w^{\rm un}))$, we assume only the necessary condition that $\bbmu_n \subset F^\times$. However, in a later stage we impose the stronger assumption that $\bbmu_{2n} \subset F^\times$ while dealing with $\det( \mca{C}(\mfr{B}_{{}^\w\chi}, \mfr{B}_\chi; r_w^{\rm un}) )$. Thus, our formula for $\det(\mca{M}_{\mfr{B}_\chi}(w_\alpha, i(\chi))  )$ is proven under this stronger condition. However, our description of the matrix $\mca{M}_\mfr{B}(w_\alpha, i(\chi))$ per se relies only on $\bbmu_n \subset F^\times$.

\subsection{Weil index} \label{wp}
We denote by $d_\psi x$ the Haar measure on $F$ which is self dual with respect to $\psi$ and we set
$$d^\times_\psi x=\frac{ d_\psi x}{\val{x}},$$
which is a Haar measure on $F^\times$. Let
\begin{equation} \label{weilint}
\omega(\psi)= \lim_{r \rightarrow \infty} \int_{\mfr{p}^{-r}} \psi(x^2) d_{\psi_2} x \in \bbmu_8
\end{equation}
be the unnormalized  Weil index defined in \cite{Weil}, where $\psi_a(x) = \psi(ax)$. Since the sequence in the above definition of $\omega(\psi)$ stabilizes, one has
\begin{equation} \label{weilinverse}
\omega^{-1}(\psi)= \omega(\psi_{-1}).
\end{equation}
Also, \eqref{weilint} implies that  $\omega(\psi_{a})=\omega(\psi)$ for all $a\in F^{\times 2}$. Equivalently, the map $a \mapsto \omega(\psi_a)$ is a well defined function on $F^\times/F^{\times 2}$. By \cite[\S 14]{Weil}, for all $a,b \in F^\times$
\begin{equation} \label{weil} (a,b)_2=\omega(\psi)\omega(\psi_{-a})\omega(\psi_{-b})\omega(\psi_{ab}).\end{equation}

Now, the normalized Weil index $\omega_\psi$ is defined as
\begin{equation} \label{norweildef}
\omega_\psi(a)=\frac{\omega(\psi_a)}{\omega(\psi)}.
\end{equation}
In particular,
$$\omega_\psi (F^{\times 2})=1.$$
The equality \eqref{weil} implies that the map $a \mapsto \omega_\psi(a)$ splits the Hilbert symbol, i.e.,
\begin{equation} \label{weilweil} 
\omega_\psi(ab)=\omega_\psi(a)\omega_\psi(b)(a,b)_2. 
\end{equation}
In particular, $\omega_\psi(a)^2=(a,a)_2=(a,-1)_2$. This implies that  $\omega_\psi(F^\times) \subseteq \mu_4$ and that if $-1 \in F^{\times 2}$, then  $\omega_\psi(F^\times) \subseteq \bbmu_2$. It follows from \eqref{weil} that
\begin{equation} \label{weilchangepsi}
\omega_{\psi_a}(x)=\eta_{a,(2)}(x) \cdot \omega_\psi(x).
\end{equation}
In particular, it follows from \eqref{weilinverse} that
\begin{equation} \label{weilchangepsiinv}
\omega_{\psi}(x)^{-1}=(-1,x)_2 \cdot \omega_\psi(x).
\end{equation}

\begin{lm} \label{for0mod4}
Assume that $p$ is odd and $\mfr{f}(\psi)$ is even. Fix $u\in O_F^\times$. Then $\omega_{\psi_u}(O_F^\times)=\omega_\psi(O_F^\times)=1$. If in addition $-1 \in F^{\times 2}$ and $u$ is not a square, then $\{\omega_\psi(\varpi), \omega_{\psi_u}(\varpi)\} =\bbmu_2$.
\end{lm}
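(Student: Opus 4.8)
The plan is to prove the (easier) second assertion first, and then to reduce the first assertion to a direct evaluation of the Weil integral \eqref{weilint} for a normalized additive character.

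\textsl{Second assertion.} By \eqref{weilchangepsi}, $\omega_{\psi_u}(\varpi)=\eta_{u,(2)}(\varpi)\cdot\omega_\psi(\varpi)=(u,\varpi)_2\cdot\omega_\psi(\varpi)$. Since $p$ is odd, $(-,-)_2$ is trivial on $O_F^\times\times O_F^\times$, so $(u,u)_2=(-1,u)_2=1$ by \eqref{x with x}; thus the nontrivial character $(u,-)_2$ of $F^\times/F^{\times 2}$ kills the class of $u$, and as $F^\times=O_F^\times\cdot\varpi^{\Z}$ it cannot kill $\varpi$, forcing $(u,\varpi)_2=-1$. Hence $\omega_{\psi_u}(\varpi)=-\omega_\psi(\varpi)$. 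On the other hand $\omega_\psi(\varpi)^2=(\varpi,\varpi)_2=(\varpi,-1)_2=1$ by \eqref{weilweil}, \eqref{x with x} and the hypothesis $-1\in F^{\times 2}$, so $\omega_\psi(\varpi)\in\bbmu_2$. Therefore $\{\omega_\psi(\varpi),\omega_{\psi_u}(\varpi)\}=\{\omega_\psi(\varpi),-\omega_\psi(\varpi)\}=\bbmu_2$ (this step uses neither $\mfr{f}(\psi)$ even nor the first assertion).

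\textsl{First assertion: reductions.} Since $p$ is odd, \eqref{weilchangepsi} gives $\omega_{\psi_u}(x)=(u,x)_2\omega_\psi(x)=\omega_\psi(x)$ for $x\in O_F^\times$, so it suffices to show $\omega_\psi(O_F^\times)=\{1\}$. Writing $\mfr{f}(\psi)=2k$ and setting $\psi':=\psi_{\varpi^{2k}}$, which is normalized, equation \eqref{weilchangepsi} yields $\omega_{\psi'}=\omega_\psi$ identically because $\varpi^{2k}\in F^{\times 2}$; so we may assume $\mfr{f}(\psi)=0$. Since $\psi_a$ is again normalized for a unit $a$, it is now enough to prove $\omega(\psi)=1$ for every normalized $\psi$ (then $\omega_\psi(a)=\omega(\psi_a)/\omega(\psi)=1$ for all $a\in O_F^\times$).

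\textsl{First assertion: the integral.} To evaluate \eqref{weilint} for $\mfr{f}(\psi)=0$, note that $2\in O_F^\times$, so $\psi_2$ is normalized and the self-dual measure $d_{\psi_2}x$ gives $O_F$ mass $1$. Decompose $\mfr{p}^{-r}=O_F\sqcup\bigsqcup_{j=1}^r\varpi^{-j}O_F^\times$. On $O_F$ one has $\psi(x^2)=1$, contributing $1$. On $\varpi^{-j}O_F^\times$ the substitution $x=\varpi^{-j}u$ reduces the contribution to $q^j\int_{O_F^\times}\psi(\varpi^{-2j}u^2)\,du$, and I claim this vanishes for $j\ge 1$: integrating $u$ over a coset $u_0(1+\mfr{p}^j)$ and writing $u=u_0(1+s)$ with $s\in\mfr{p}^j$, one gets $\varpi^{-2j}u^2=\varpi^{-2j}u_0^2+2\varpi^{-2j}u_0^2 s+\varpi^{-2j}u_0^2 s^2$ with $\varpi^{-2j}u_0^2 s^2\in O_F$, hence $\psi(\varpi^{-2j}u^2)=\psi(\varpi^{-2j}u_0^2)\cdot\psi(2\varpi^{-2j}u_0^2 s)$; since $2\varpi^{-2j}u_0^2\cdot\mfr{p}^j=\mfr{p}^{-j}\not\subseteq\ker\psi$, the map $s\mapsto\psi(2\varpi^{-2j}u_0^2 s)$ is a nontrivial character of $(\mfr{p}^j,+)$, so its integral over $1+\mfr{p}^j$ is $0$, and thus $\int_{O_F^\times}\psi(\varpi^{-2j}u^2)\,du=0$. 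Therefore $\omega(\psi)=\lim_{r}\bigl(1+\sum_{j=1}^r 0\bigr)=1$, which completes the proof.

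The hard part is the last paragraph — the honest computation showing that the negative-valuation contributions to \eqref{weilint} vanish; this is essentially the classical evaluation of the Weil index over a non-archimedean field of odd residue characteristic (cf.\ \cite{Weil}), and one could alternatively just quote that formula. Everything else is formal manipulation of the identities \eqref{weilchangepsi}, \eqref{x with x} and \eqref{weilweil} together with the standard facts that for $p$ odd the Hilbert symbol is trivial on units and $[F^\times:F^{\times2}]=4$.
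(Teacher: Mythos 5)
Your proof of the second assertion coincides with the paper's: both invoke \eqref{weilchangepsi} to reduce to showing $(u,\varpi)_2=-1$, and both get this from the triviality of $(-,-)_2$ on $O_F^\times\times O_F^\times$ for $p$ odd together with nondegeneracy of the Hilbert symbol; you also spell out that $\omega_\psi(\varpi)\in\bbmu_2$, which the paper leaves implicit (it was recorded in the remarks following \eqref{weilweil}). For the first assertion, you take a genuinely different route: the paper simply cites \cite[Lemma 3.4]{SzpT}, whereas you compute the Weil integral \eqref{weilint} from scratch. Your reductions are sound — twisting by the square $\varpi^{2k}$ leaves $\omega_\psi$ unchanged by \eqref{weilchangepsi}, so one may assume $\mfr{f}(\psi)=0$, and since $\psi_a$ is again normalized for a unit $a$ it suffices to prove $\omega(\psi)=1$ for all normalized $\psi$ — and the evaluation itself (partitioning $\mfr{p}^{-r}$ into $O_F$ and the annuli $\varpi^{-j}O_F^\times$, and killing the annular contributions by averaging over cosets of $1+\mfr{p}^j$, where the key point is that $s\mapsto\psi(2\varpi^{-2j}u_0^2 s)$ is a nontrivial character of $(\mfr{p}^j,+)$ when $j\geq1$ and $2\in O_F^\times$) is the classical computation of the Weil index of the form $x^2$ over a $p$-adic field of odd residue characteristic. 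This is correct, and it buys self-containedness at the cost of length; the paper's citation is terser. One cosmetic slip: you say the character "integrates to $0$ over $1+\mfr{p}^j$" when you mean the integral of the additive character over $(\mfr{p}^j,+)$ vanishes — the intended conclusion, that each coset $u_0(1+\mfr{p}^j)$ contributes $0$, is nevertheless right.
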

\begin{proof}
The first assertion is well known, for a proof see \cite[Lemma 3.4]{SzpT}. To prove the second assertion, note that  $\omega_{\psi_u}(\varpi)=\omega_{\psi}(\varpi)(u,\varpi)_2$; since $(O_F^\times,O_F^\times)_2=1$, it follows from the non-degeneracy of the Hilbert symbol that $(u,\varpi)_2=-1$.
\end{proof}

\subsection{Tate $\gamma$-factor}
Let 
$$\uchi: F^\times \to \C^\times$$
be a character. Depending on a nontrivial character $\psi$, Tate \cite{Tat}
defined a gamma factor $\gamma(s, \uchi, \psi), s\in \C$, which is essentially the ratio of the two integrals involving a test function and its Fourier transform. We have
$$\gamma(s, \uchi, \psi)= \varepsilon(s, \uchi, \psi) \cdot \frac{L(1-s, \uchi^{-1})}{L(s, \uchi)},$$
where $L(s, \uchi)$ is the $L$-function of $\uchi$.

In fact,  ${\gamma}(1-s,\uchi^{-1}, \psi)$ is given by the  meromorphic continuation of
\begin{equation} \label{tateintegral}
\lim_{r \rightarrow \infty}\int_{\mfr{p}^{-r}}\uchi_{s}(x)\psi(x) \, d_\psi^\times x,
\end{equation}
where $\uchi_s = \uchi  \val{\cdot}^s$, see \cite[Page 278]{Bum}. If $\uchi$ is unitary, then this limit exists for ${\rm Re}(s)>0$. Since $\uchi=\uchi_u \val{\cdot}^{s_o}$ for some unitary character $\uchi_u$ and $s_o \in \C$, it follows that the limit above exists for ${\rm Re}(s) \gg 0$. In fact, the sequence in \eqref{tateintegral} stabilizes as $r$ increases.

It is well-known that $\varepsilon(s,\uchi,\psi)$ is a monomial function in $q^{-s}$ satisfying (see \cite{Tat1} or \cite[\S 1]{Sch1}):
\begin{eqnarray}
\label{Tate gamma} \varepsilon(1-s,\uchi^{-1},\psi) &=& \uchi(-1) \cdot \varepsilon(s,\uchi,\psi)^{-1}, \\
\label{changepsi}\varepsilon(s,\uchi,\psi_a) &=& \uchi(a)\val{a}^{s-\half} \cdot \varepsilon(s,\uchi,\psi), \\
\label{epsilon old twist} \varepsilon(s+t,\uchi,\psi) &=& q^{\mfr{f}(\psi)-\mfr{f}(\chi)t} \cdot \varepsilon(s,\uchi,\psi),\\
\label{epsilon twist} \varepsilon(s,\uchi \xi,\psi) &=& \xi(\varpi)^{\mfr{f}(\uchi)-\mfr{f}(\psi)}\varepsilon(s,\uchi,\psi),
\end{eqnarray}
where $\xi$ is any unramified character of $F^\times$. The following is proven in \cite{Kah1} and \cite{Kah2}:
\begin{eqnarray}
\label{epsilon weil}  \omega_\psi(a) &=& \varepsilon(1/2,\eta_{a,(2)},\psi_{-1}) \\
\label{epsilon weil exp} \omega(\psi) &=& [F^\times: F^{\times 2}]^{-\half} \sum_{c \in F^\times /F^{\times 2}}\varepsilon(1/2,\eta_{c,(2)},\psi).
\end{eqnarray}
(See also \cite{Szp5-2} for proofs using notations which are closer to the ones used here.) Now \eqref{epsilon weil} combined with \eqref{weilweil} imply that
\begin{equation} \label{ep prod}
\varepsilon(1/2,\eta_{a,(2)},\psi)  \cdot  \varepsilon(1/2,\eta_{b,(2)},\psi) =(a,b)_2 \cdot \varepsilon(1/2,\eta_{ab,(2)},\psi).
\end{equation}

Assume $n$ is even, $\gcd(n,p)=1$ and that $\psi$ is normalized and $\uchi$ unramified. Let $\xi$ be any character of $F^\times$. If the restriction of $\xi$ to $F^{\times n/2}$ is trivial, then (see \cite[Lemma 3.13]{GSS1})
\begin{equation}
\label{epsilonprod} \varepsilon(s,\uchi\xi,\psi) \cdot \varepsilon(s,\uchi\xi^{-1},\psi)=q^{1-2s}\uchi^2(\varpi).
\end{equation}

We note here that \eqref{epsilonprod} holds also under the slightly weaker assumption that the restriction of $\xi$ to $F^{\times n}$ is trivial. Indeed, the proof of \cite[Lemma 3.13]{GSS1} is still valid under this assumption. In fact, the following minor generalization of \eqref{epsilonprod} will be used in the proof of Theorem \ref{T:SL-c3} later.

\begin{lm}  \label{gen epsilon}
Assume that $n$ is even, $\gcd(n,p)=1$ and $\mfr{f}(\psi)=0$. Let $\uchi$ and $\xi$ be two characters of $F^\times$. Assume that $\uchi^2$ is unramified and that the restriction of $\xi$ to $F^{\times n}$ is trivial. Then,  equality \eqref{epsilonprod} holds.
\end{lm}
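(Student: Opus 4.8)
The plan is to reduce \eqref{epsilonprod} for our characters $\underline{\chi},\xi$ to the already-established case (where $\underline{\chi}$ itself is unramified), by factoring out the ramified part of $\underline{\chi}$ and showing it contributes trivially. First I would write $\underline{\chi} = \underline{\chi}_u \cdot \underline{\chi}_r$, where $\underline{\chi}_u$ is unramified and $\underline{\chi}_r$ is ramified; the hypothesis that $\underline{\chi}^2$ is unramified means $\underline{\chi}_r^2 = \mathbbm{1}$, i.e.\ $\underline{\chi}_r$ is a ramified quadratic character. Since $\gcd(n,p)=1$ and $\mathfrak{f}(\psi)=0$, the restriction of $\xi$ to $F^{\times n}$ being trivial forces $\xi$ to be unramified (indeed $1+\mathfrak{p}\subseteq F^{\times n}$ by Lemma \ref{pn1}, so $\xi$ is trivial on $1+\mathfrak{p}$, hence unramified). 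Thus $\xi = \eta_{c,(n)}$ for some unit $c\in O_F^\times$, and both $\underline{\chi}\xi$ and $\underline{\chi}\xi^{-1}$ have the same ramified part $\underline{\chi}_r$.

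The main computation is then to relate $\varepsilon(s,\underline{\chi}\xi,\psi)\cdot\varepsilon(s,\underline{\chi}\xi^{-1},\psi)$ to $\varepsilon(s,\underline{\chi}_u\xi,\psi)\cdot\varepsilon(s,\underline{\chi}_u\xi^{-1},\psi)$. Using \eqref{epsilon twist} with the \emph{unramified} character $\xi^{\pm 1}$ applied to the base character $\underline{\chi}_r\underline{\chi}_u$ of conductor $\mathfrak{f}(\underline{\chi}_r)$ (note $\mathfrak{f}(\psi)=0$), one gets
\begin{equation*}
\varepsilon(s,\underline{\chi}\xi,\psi)\cdot\varepsilon(s,\underline{\chi}\xi^{-1},\psi) = \xi(\varpi)^{\mathfrak{f}(\underline{\chi}_r)}\cdot\xi(\varpi)^{-\mathfrak{f}(\underline{\chi}_r)}\cdot\varepsilon(s,\underline{\chi},\psi)^2 = \varepsilon(s,\underline{\chi},\psi)^2,
\end{equation*}
but this already assumes $\underline{\chi}\xi$ and $\underline{\chi}$ have the same conductor, which is fine here. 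So it suffices to prove the identity for $\xi=\mathbbm{1}$: namely $\varepsilon(s,\underline{\chi},\psi)^2 = q^{1-2s}\underline{\chi}^2(\varpi)$. Writing $\underline{\chi}=\underline{\chi}_u\underline{\chi}_r$, we have $\varepsilon(s,\underline{\chi},\psi)^2 = \varepsilon(s,\underline{\chi}_u\underline{\chi}_r,\psi)^2$, and by \eqref{Tate gamma} applied to the quadratic character $\underline{\chi}_r$ twisted by the unramified $\underline{\chi}_u$ (or more directly, by the explicit Gauss-sum formula for $\varepsilon$-factors of quadratic characters together with \eqref{epsilon twist}), $\varepsilon(s,\underline{\chi}_r,\psi)^2 = \underline{\chi}_r(-1)\cdot q^{1-2s}$ when $\mathfrak{f}(\psi)=0$; and since $\underline{\chi}_r$ is quadratic and ramified with $p$ odd (forced by $\gcd(n,p)=1$ and $n$ even), $\underline{\chi}_r(-1) = 1$ precisely when $\mathfrak{f}(\underline{\chi}_r)$ is even, which it is since $p$ is odd. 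Combining with \eqref{epsilon old twist}/\eqref{epsilon twist} to pull out $\underline{\chi}_u^2(\varpi) = \underline{\chi}^2(\varpi)$ (as $\underline{\chi}_r^2=\mathbbm{1}$) gives the claimed equality.

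The step I expect to be the main obstacle is pinning down exactly the constant $\varepsilon(s,\underline{\chi}_r,\psi)^2$ for a ramified quadratic character $\underline{\chi}_r$ with $\mathfrak{f}(\psi)=0$ — i.e.\ verifying that it equals $q^{1-2s}$ (with no stray sign or root of unity) under the hypotheses $p$ odd, $\mathfrak{f}(\underline{\chi}_r)$ even. This is essentially the statement that the normalized Weil-type Gauss sum attached to a quadratic character of even conductor over a field with $p$ odd is $q^{1/2}$ up to the sign $\underline{\chi}_r(-1)$, and $\underline{\chi}_r(-1)=1$ in this case; this can be extracted from Lemma \ref{for0mod4} together with \eqref{epsilon weil}, or alternatively by a direct Gauss-sum computation. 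Once that constant is identified, the rest is bookkeeping with \eqref{Tate gamma}–\eqref{epsilon twist}. I would also double-check that the reduction ``$\xi$ unramified'' genuinely follows from $\xi|_{F^{\times n}}$ being trivial under $\gcd(n,p)=1$, since the whole argument hinges on it; this is immediate from $1+\mathfrak{p}\subseteq F^{\times n}$ as recorded in the proof of Lemma \ref{pn1}.
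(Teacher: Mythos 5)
The key step of your proposal — that $\xi\vert_{F^{\times n}}=\mathbbm{1}$ forces $\xi$ to be unramified — is false. Being trivial on $1+\mathfrak{p}$ only gives $\mathfrak{f}(\xi)\le 1$, not $\mathfrak{f}(\xi)=0$; "unramified" means trivial on all of $O_F^\times$, which is strictly stronger. Lemma \ref{pn1} itself supplies a counterexample: $\eta_{\varpi,(n)}$ is trivial on $F^{\times n}$ yet has $\mathfrak{f}(\eta_{\varpi,(n)})=1$, i.e.\ is ramified. And these ramified $\xi$ are exactly the ones the lemma is later applied to: in the proof of Theorem \ref{T:SL-c3} one takes $\xi^{\pm 1}=\eta_{\varpi,(n)}^{\pm(2i+1)}$, which is ramified of conductor $1$. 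Once $\xi$ is allowed to be ramified, your entire reduction — pulling $\xi^{\pm 1}(\varpi)^{\mathfrak{f}(\underline\chi_r)}$ out of the two $\varepsilon$-factors via \eqref{epsilon twist} and cancelling — breaks down, because \eqref{epsilon twist} applies only to unramified twists (and for ramified $\xi$ the conductors of $\underline\chi\xi$ and $\underline\chi$ need not even agree). So the Gauss-sum computation you flag as the "main obstacle" is never reached by a correct argument; the obstacle is earlier.

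The paper's proof reduces in the opposite direction: write $\underline\chi=\underline\chi_o\nu$ with $\underline\chi_o$ unramified and $\nu$ quadratic (your decomposition, but without assuming $\nu$ ramified). Since $\nu^{-1}=\nu$, one has $\underline\chi\xi=\underline\chi_o(\nu\xi)$ and $\underline\chi\xi^{-1}=\underline\chi_o(\nu\xi)^{-1}$, and $\nu\xi$ is still trivial on $F^{\times n}$ (because $\nu$ kills $F^{\times 2}\supseteq F^{\times n}$). The quadratic part of $\underline\chi$ is thus absorbed into $\xi$, reducing to the case of unramified $\underline\chi_o$, which is the content of the remark preceding the lemma (the extension of \eqref{epsilonprod} from $\xi\vert_{F^{\times n/2}}$ trivial to $\xi\vert_{F^{\times n}}$ trivial, citing \cite[Lemma 3.13]{GSS1}). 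This sidesteps the ramification of $\xi$ entirely rather than trying to make $\xi$ unramified.
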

\begin{proof} By assumption $\uchi=\uchi_o\nu$ where $\uchi_o$ is an unramified character of $F^\times$ and $\nu$ is a quadratic character of $F^\times$. We have
$$\varepsilon(s,\uchi \xi,\psi) \cdot \varepsilon(s,\uchi\xi^{-1},\psi)=\varepsilon(s,\uchi_o (\nu \xi),\psi) \cdot \varepsilon(s,\uchi_o (\nu \xi)^{-1},\psi).$$
Since $ \nu \xi$ is trivial on $F^{\times n}$, the lemma now follows from the remark preceding it.
\end{proof}

If $\uchi$ is unramified and $\psi$ is normalized, then $\varepsilon(s, \uchi, \psi)=1$ and 
$$L(s, \uchi)= (1-q^{-s} \uchi(\varpi))^{-1}.$$
In this case we write
$$\gamma(s, \uchi):= \gamma(s, \uchi, \psi)= \frac{1-q^{-s} \chi(\varpi)}{ 1- q^{-1+ s} \chi(\varpi)^{-1}   }$$
with the $\psi$ omitted.

\subsection{Metaplectic $\tilde{\gamma}$-factor} \label{SS:meta-ga}
Let $\tilde{\gamma}(s,\uchi,\psi)$ be the factor defined and studied in  \cite{Szp3}. Recall that $\tilde{\gamma}(1-s,\uchi^{-1},\psi)$ is the meromorphic continuation of
\begin{equation} \label{sweetintegral}
\lim_{r \rightarrow \infty}\int_{\mfr{p}^{-r}}\uchi_s(x)  \omega_\psi(x)^{-1} \psi(x) \, d_\psi^\times x.
\end{equation}
This limit exists for ${\rm Re}(s) \gg 0$. We first state a result on $\tilde{\gamma}(s,\uchi,\psi)$ which will be used later in \S \ref{4remark} and \S \ref{glinv}.

\begin{lm} \label{sumlemma}
The following equality holds:
$$\sum_{a \in F^\times /F^{\times 2} } \tilde{\gamma}(s, \uchi \eta_{a,(2)},\psi)=\sum_{a \in F^\times /F^{\times 2}} \gamma(s,\uchi\eta_{a,(2)},\psi).$$
\end{lm}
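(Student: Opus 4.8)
The plan is to descend to the defining integral representations of the two gamma factors and exploit two facts: character orthogonality on $F^\times/F^{\times 2}$ (equation \eqref{dualhilbert} with $m=2$) and the triviality of the normalized Weil index on squares (equation \eqref{norweildef}). First I would use \eqref{tateintegral} and \eqref{sweetintegral}: for any character $\rho$ of $F^\times$ and $\mathrm{Re}(s)$ large,
$$\gamma(1-s,\rho^{-1},\psi)=\lim_{r\to\infty}\int_{\mfr{p}^{-r}}\rho(x)\val{x}^s\,\psi(x)\,d_\psi^\times x,\qquad \tilde\gamma(1-s,\rho^{-1},\psi)=\lim_{r\to\infty}\int_{\mfr{p}^{-r}}\rho(x)\val{x}^s\,\omega_\psi(x)^{-1}\,\psi(x)\,d_\psi^\times x,$$
with both sequences stabilizing for large $r$. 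Since $\eta_{a,(2)}$ has order $2$, the substitution $\uchi\mapsto\uchi^{-1}$, $s\mapsto 1-s$ is a bijective relabeling of each of the two sums in the lemma, so by uniqueness of meromorphic continuation it is enough to prove, for $\mathrm{Re}(s)$ large,
$$\sum_{a\in F^\times/F^{\times 2}}\lim_{r\to\infty}\int_{\mfr{p}^{-r}}\uchi(x)\eta_{a,(2)}(x)\val{x}^s\,\omega_\psi(x)^{-1}\,\psi(x)\,d_\psi^\times x=\sum_{a\in F^\times/F^{\times 2}}\lim_{r\to\infty}\int_{\mfr{p}^{-r}}\uchi(x)\eta_{a,(2)}(x)\val{x}^s\,\psi(x)\,d_\psi^\times x.$$

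Next I would move the (finite) sum over $a$ inside the limit and the integral, which is legitimate since for each fixed $r$ the integral over $\mfr{p}^{-r}$ is a finite sum. By \eqref{dualhilbert} with $m=2$, for fixed $x\in F^\times$ one has $\sum_{a\in F^\times/F^{\times 2}}\eta_{a,(2)}(x)=[F^\times:F^{\times 2}]$ if $x\in F^{\times 2}$ and $0$ otherwise. Hence, after summing over $a$, the integrands on both sides are supported on $F^{\times 2}$, and both sides become $[F^\times:F^{\times 2}]$ times the limit of $\int_{\mfr{p}^{-r}\cap F^{\times 2}}\uchi(x)\val{x}^s\psi(x)$ against, respectively, $\omega_\psi(x)^{-1}\,d_\psi^\times x$ and $d_\psi^\times x$.

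To finish, I would invoke \eqref{norweildef}, which gives $\omega_\psi(F^{\times 2})=1$ and hence $\omega_\psi(x)^{-1}=1$ for every $x\in F^{\times 2}$; the two remaining integrals therefore agree, proving the integral identity and with it the lemma. I do not expect a genuine obstacle here: the only points needing a word of justification are the trivial interchange of a finite sum with a stabilizing limit and the standard passage from the integral identity back to the identity of the meromorphic functions $\gamma$ and $\tilde\gamma$ in $s$. The entire content of the statement is that the Weil index contributes nothing once the quadratic twists are averaged, since averaging forces the argument into $F^{\times 2}$, where $\omega_\psi$ is trivial.
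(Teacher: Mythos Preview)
Your proposal is correct and follows essentially the same approach as the paper's proof: both pass to the integral representations \eqref{tateintegral} and \eqref{sweetintegral}, pull the finite sum over $a$ inside, use orthogonality \eqref{dualhilbert} to restrict the integration to $F^{\times 2}$, and conclude via $\omega_\psi(F^{\times 2})=1$. The only cosmetic difference is that you make the harmless relabeling $(\uchi,s)\mapsto(\uchi^{-1},1-s)$ explicit, whereas the paper simply computes both sums in the form $\sum_a\tilde\gamma(1-s,\uchi^{-1}\eta_{a,(2)},\psi)$ and $\sum_a\gamma(1-s,\uchi^{-1}\eta_{a,(2)},\psi)$ without comment.
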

\begin{proof}
It is sufficient to assume ${\rm Re}(s) \gg 0$, in which case all the limits below exist. We have
$$\begin{aligned}
& \sum_{a \in F^\times /F^{\times 2} } \tilde{\gamma}(1-s,\uchi^{-1}\eta_{a,(2)},\psi) \\
= & \sum_{a \in F^\times /F^{\times 2} } \lim_{r \rightarrow \infty}\int_{\mfr{p}^{-r}} \eta_{a,(2)}(x)\uchi_{s}(x)  \omega_\psi(x)^{-1} \psi(x) \,  d_\psi^\times x \\
=&  \lim_{r \rightarrow \infty}  \int_{\mfr{p}^{-r}} \Big(\sum_{a \in F^\times/  F^{\times 2}} \eta_{a,(2)}(x) \Big) \uchi_{s}(x)  \omega_\psi(x)^{-1} \psi(x) \, d_\psi^\times x .
\end{aligned}$$
It follows form \eqref{dualhilbert} that
$$\sum_{a \in F^\times /F^{\times 2} } \tilde{\gamma}(1-s,\uchi^{-1} \eta_{a,(2)},\psi)=[F^\times: F^{\times 2}] \cdot \lim_{r \rightarrow \infty}  \int_{\mfr{p}^{-r} \cap F^{\times 2}}  \uchi_{s}(x)  \omega_\psi(x)^{-1} \psi(x) \, d_\psi^\times x.$$
Since $\omega_\psi(x)^{-1}=1$ for all $x \in F^{\times 2}$,  we have shown that
$$\sum_{a \in F^\times /F^{\times 2} } \tilde{\gamma}(1-s,\uchi^{-1} \eta_{a,(2)},\psi)=[F^\times: F^{\times 2}] \cdot \lim_{r \rightarrow \infty}  \int_{\mfr{p}^{-r} \cap F^{\times 2}}  \uchi_{s}(x)  \psi(x) \, d_\psi^\times x.$$
In view of \eqref{tateintegral}, a similar argument also gives that
$$\sum_{a \in F^\times /F^{\times 2}}  \gamma(1-s,\uchi^{-1}\eta_{a,(2)},\psi)=[F^\times: F^{\times 2}] \cdot \lim_{r \rightarrow \infty}  \int_{\mfr{p}^{-r} \cap F^{\times 2}}  \uchi_{s}(x)  \psi(x) \, d_\psi^\times x.$$
This completes the proof.
\end{proof}

The computation of the principal value integral in \eqref{sweetintegral} is contained in the unpublished notes \cite{Swe} by W. Jay Sweet,  and was reproduced in the appendix of \cite{GoSz}.

\begin{thm}[{\cite[Theorem A.1]{GoSz}}] \label{sweetthm}
One has the following equality
\begin{equation} \label{meta gama formula}
\tilde{\gamma}(1-s,\uchi^{-1},\psi)=\omega(\psi) \uchi(-1) \cdot \frac{ \gamma(s+\half,\uchi,\psi)}{ \gamma(2s,\uchi^{2},{\psi_2})}. \end{equation}
\end{thm}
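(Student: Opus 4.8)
\medskip

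\noindent\textbf{Proof proposal.} The plan is to rewrite the Weil index $\omega_\psi(x)^{-1}$ in the principal-value integral \eqref{sweetintegral} as a Gaussian integral, which will express $\tilde{\gamma}(1-s,\uchi^{-1},\psi)$ as a Tate $\gamma$-factor times a ``local beta integral''; the latter is then evaluated by Fourier analysis on $F$, after which \eqref{meta gama formula} drops out by collecting $\gamma$-factors. To begin, one has the pointwise identity $\omega_\psi(x)^{-1}=\omega(\psi)\,\omega(\psi_{-x})$ for $x\in F^\times$, immediate from \eqref{norweildef} and \eqref{weilinverse}; unwinding the definition \eqref{weilint} of the Weil index, and using that the self-dual Haar measure attached to $\psi_a$ is $\val{a}^{1/2}d_\psi x$, this becomes
\[
\omega_\psi(x)^{-1}=\omega(\psi)\,\val{2}^{1/2}\,\val{x}^{1/2}\,\lim_{r\to\infty}\int_{\mfr{p}^{-r}}\psi(-xt^2)\,d_\psi t .
\]
Substituting into \eqref{sweetintegral}, collapsing $\uchi_s(x)\val{x}^{1/2}=\uchi_{s+\half}(x)$ and $\psi(x)\psi(-xt^2)=\psi(x(1-t^2))$, interchanging the two integrations, and making the substitution $x\mapsto x(1-t^2)^{-1}$ in the inner integral --- which by \eqref{tateintegral} turns it into $\uchi_{s+\half}(1-t^2)^{-1}$ times the Tate integral $\gamma(\tfrac12-s,\uchi^{-1},\psi)$ --- I expect to reach
\[
\tilde{\gamma}(1-s,\uchi^{-1},\psi)=\omega(\psi)\,\val{2}^{1/2}\,\gamma(\tfrac12-s,\uchi^{-1},\psi)\cdot I(s),\qquad I(s):=\int_F\uchi_{s+\half}(1-t^2)^{-1}\,d_\psi t .
\]

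Next I would evaluate $I(s)$. Factoring $1-t^2=(1-t)(1+t)$, applying the M\"obius substitution $t=(1-v)(1+v)^{-1}$ (so that $1-t^2=4v(1+v)^{-2}$ and $d_\psi t=\val{2}\,\val{1+v}^{-2}\,d_\psi v$), then $v\mapsto -v$, and using $\uchi(-1)^2=1$, I expect
\[
I(s)=\val{2}^{-2s}\,\uchi^2(2)^{-1}\,\uchi(-1)\cdot B(s),\qquad B(s):=\int_F\mu_1(v)\,\mu_2(1-v)\,d_\psi v ,
\]
with $\mu_1=\uchi^{-1}\val{\cdot}^{-s-\half}$ and $\mu_2=\uchi^{2}\val{\cdot}^{2s-1}$. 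Now $B(s)$ is the value at $1$ of the additive convolution of $\mu_1$ and $\mu_2$; computing it by Fourier inversion on $F$ and identifying the resulting one-variable integrals with Tate $\gamma$-factors through \eqref{tateintegral}, I expect
\[
B(s)=\uchi(-1)\cdot\gamma(s+\tfrac12,\uchi,\psi)^{2}\cdot\gamma(1-2s,\uchi^{-2},\psi) .
\]

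Finally I would assemble the three displays above. Using the Tate functional equation $\gamma(s',\eta,\psi)\,\gamma(1-s',\eta^{-1},\psi)=\eta(-1)$ --- a consequence of \eqref{Tate gamma} --- at $(s',\eta)=(\tfrac12+s,\uchi)$ and at $(1-2s,\uchi^{-2})$, the twist relation $\gamma(2s,\uchi^2,\psi_2)=\uchi^2(2)\,\val{2}^{2s-\half}\,\gamma(2s,\uchi^2,\psi)$ (from \eqref{changepsi}), and $\uchi(-1)^2=1$, every power of $\val{2}$ and every factor $\uchi(2)$ should cancel, leaving exactly $\tilde{\gamma}(1-s,\uchi^{-1},\psi)=\omega(\psi)\,\uchi(-1)\,\gamma(s+\half,\uchi,\psi)/\gamma(2s,\uchi^2,\psi_2)$, as in \eqref{meta gama formula}. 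The main obstacle is not the algebra but making the interchange of integration in the first step rigorous: the $t$-integral there is the conditionally convergent Gaussian defining the Weil index, so one must work in a suitable range of ${\rm Re}(s)$, control the truncations $\mfr{p}^{-r}$ uniformly, and then continue the resulting identity meromorphically; this is transparent when $\uchi$ is ramified (all integrals in sight become finite sums, since neighbourhoods of $t=\pm1$ contribute nothing after integrating over cosets of $O_F^\times$), and the unramified case can alternatively be verified directly from the explicit shapes of $\gamma$ and $\tilde{\gamma}$ together with \eqref{changepsi}. An alternative route I would mention but not carry out is to compute both sides explicitly, whereupon the identity reduces to a local Hasse--Davenport product relation for Gauss sums.
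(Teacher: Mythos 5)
The paper does not prove this theorem: it quotes it from \cite{GoSz} (Theorem A.1 there) and remarks just after the statement that the computation of the principal value integral in \eqref{sweetintegral} is contained in Sweet's unpublished notes \cite{Swe} and reproduced in the appendix of \cite{GoSz}. Your proposal therefore supplies an argument where the paper offers only a pointer, and it is by design a different route from the direct case-by-case evaluation in those references.

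I verified the algebra in your outline and it does lead to \eqref{meta gama formula}. The identity $\omega_\psi(x)^{-1}=\omega(\psi)\omega(\psi_{-x})$ is correct from \eqref{norweildef} and \eqref{weilinverse}; the self-dual measure scaling $d_{\psi_{-2x}}t=\val{2x}^{1/2}d_\psi t$ is right; after $x\mapsto x(1-t^2)^{-1}$ the inner Tate integral is $\gamma(\half-s,\uchi^{-1},\psi)$ by \eqref{tateintegral}; the M\"obius substitution has Jacobian $\val{2}\val{1+v}^{-2}$ and, after $v\mapsto -v$, gives $I(s)=\val{2}^{-2s}\uchi^{-2}(2)\uchi(-1)B(s)$ exactly as you wrote; and your claimed value of $B(s)$ agrees with the $p$-adic beta identity $\int_F\alpha(v)\beta(1-v)\,d_\psi v=\Gamma(\alpha)\Gamma(\beta)/\Gamma(\alpha\beta\val{\cdot})$ after one application of Tate's functional equation to the denominator --- be careful to keep the shift by $\val{\cdot}$ in the denominator, since omitting it would give $\gamma(s+\half,\uchi,\psi)\gamma(s-\half,\uchi,\psi)$ rather than the square you need. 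The final reassembly via \eqref{Tate gamma} at $(\half+s,\uchi)$ and at $(1-2s,\uchi^{-2})$, together with \eqref{changepsi}, cancels the powers of $\val{2}$ and $\uchi(2)$ exactly. The analytic caveats you flag are real: after substitution the truncation of the inner integral becomes $\mfr{p}^{-r}(1-t^2)$, which degenerates near $t=\pm1$, and $B(s)$ converges absolutely only in a thin vertical strip, so the interchange and the joint limit must be argued. Your two-pronged remedy --- for ramified $\uchi$ the oscillation over $O_F^\times$-cosets makes neighbourhoods of $t=\pm1$ contribute zero, while for unramified $\uchi$ one computes both sides directly and reduces to normalized $\psi$ via \eqref{changepsi} and \eqref{weilchangepsi} (non-circular, since $\tilde\gamma$ for unramified data is computable straight from \eqref{sweetintegral}) --- is the right way to close these gaps, and should be made explicit in a final write-up. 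Compared with Sweet's approach of evaluating the principal value integral case by case and matching both sides, your route (unfolding the Weil index into a Gaussian, trading it for a local beta function, reassembling $\gamma$-factors) is conceptually cleaner and explains structurally where the double-argument factor $\gamma(2s,\uchi^2,\psi_2)$ comes from, at the cost of a somewhat more delicate regularization near $t=\pm1$.
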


\begin{cor}  \label{verybicelem}
Suppose that $p$ is odd and that $\psi$ is normalized. Let $\uchi$ be an unramified character of $F^\times$. Then,
\begin{equation} \label{verifygao}
\prod_{a \in F^\times /F^{\times 2} } \tilde{\gamma}(1-s,\uchi^{-1}\eta_{a,(2)},\psi)=(\varpi,-1)_2 \cdot \gamma(1-2s,\uchi^{-2},\psi)^2 \frac{L (2s,\uchi^2 )L (-2s,\uchi^{-2}\bigr)}{L (1-2s,\uchi^{-2} )L (1+2s,\uchi^2)}. \end{equation}
\end{cor}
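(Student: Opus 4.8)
The plan is to reduce everything to Sweet's formula (Theorem~\ref{sweetthm}) applied term by term. Since $p$ is odd, $[F^\times:F^{\times 2}]=4$ by \eqref{index}, so I would fix representatives $1,u,\varpi,u\varpi$ for $F^\times/F^{\times 2}$ with $u\in O_F^\times$ a non-square; then $\eta_{u,(2)}$ is the unramified quadratic character (with $\eta_{u,(2)}(\varpi)=-1$), while $\eta_{\varpi,(2)}$ and $\eta_{u\varpi,(2)}=\eta_{u,(2)}\eta_{\varpi,(2)}$ are ramified of conductor $1$. As $\eta_{a,(2)}^2=\mbm{1}$ and $(\uchi\eta_{a,(2)})^{-1}=\uchi^{-1}\eta_{a,(2)}$, applying Theorem~\ref{sweetthm} with $\uchi\eta_{a,(2)}$ in place of $\uchi$ and multiplying over the four classes $a$ gives
$$
\prod_{a\in F^\times/F^{\times 2}}\tilde{\gamma}(1-s,\uchi^{-1}\eta_{a,(2)},\psi)
=\omega(\psi)^4\cdot\Bigl(\prod_{a}(\uchi\eta_{a,(2)})(-1)\Bigr)\cdot\frac{\prod_{a}\gamma(s+\half,\uchi\eta_{a,(2)},\psi)}{\gamma(2s,\uchi^2,\psi_2)^4},
$$
where crucially the denominator $\gamma(2s,(\uchi\eta_{a,(2)})^2,\psi_2)=\gamma(2s,\uchi^2,\psi_2)$ does not depend on $a$.

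Next I would dispose of the scalar factors. One has $\prod_a(\uchi\eta_{a,(2)})(-1)=\uchi(-1)^4\prod_a(a,-1)_2=1$, since $\uchi(-1)=1$ ($\uchi$ unramified) and $a\mapsto(a,-1)_2$ is a character of $F^\times/F^{\times 2}$, hence takes the value $-1$ on an even number of the four classes. Also $\omega(\psi)^4=1$; in fact $\omega(\psi)=1$: by \eqref{epsilon weil exp} it is $\tfrac12$ times a sum of four $\varepsilon$-factors, of which $\varepsilon(\half,\mbm{1},\psi)=\varepsilon(\half,\eta_{u,(2)},\psi)=1$ (these characters are unramified and $\psi$ is normalized), while by \eqref{epsilon twist} the two remaining terms are $\varepsilon(\half,\eta_{\varpi,(2)},\psi)$ and $\eta_{u,(2)}(\varpi)\,\varepsilon(\half,\eta_{\varpi,(2)},\psi)=-\varepsilon(\half,\eta_{\varpi,(2)},\psi)$, so they cancel. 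Finally, since $2\in O_F^\times$ and $\uchi$ is unramified, \eqref{changepsi} gives $\gamma(2s,\uchi^2,\psi_2)=\gamma(2s,\uchi^2,\psi)=L(1-2s,\uchi^{-2})/L(2s,\uchi^2)$.

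The substance is the product $\prod_a\gamma(s+\half,\uchi\eta_{a,(2)},\psi)$, which I would split into an unramified part ($a=1,u$) and a ramified part ($a=\varpi,u\varpi$). For the unramified part both characters have trivial $\varepsilon$-factor, and a direct manipulation of Euler factors yields $\gamma(s+\half,\uchi,\psi)\gamma(s+\half,\uchi\eta_{u,(2)},\psi)=L(1-2s,\uchi^{-2})/L(1+2s,\uchi^2)$. For the ramified part the $L$-factors are $1$, so it equals $\varepsilon(s+\half,\uchi\eta_{\varpi,(2)},\psi)\varepsilon(s+\half,\uchi\eta_{u\varpi,(2)},\psi)$; writing $\uchi\eta_{u\varpi,(2)}=\eta_{u,(2)}\cdot(\uchi\eta_{\varpi,(2)})$ and invoking \eqref{epsilon twist} (twist of the conductor-$1$ character $\uchi\eta_{\varpi,(2)}$ by the unramified $\eta_{u,(2)}$, with $\eta_{u,(2)}(\varpi)=-1$) turns this into $-\varepsilon(s+\half,\uchi\eta_{\varpi,(2)},\psi)^2$. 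To evaluate the square I would chain three identities: the functional equation \eqref{Tate gamma} for $\uchi\eta_{\varpi,(2)}$, whose value at $-1$ is $\eta_{\varpi,(2)}(-1)=(\varpi,-1)_2$ (again $\uchi(-1)=1$), gives $\varepsilon(s+\half,\uchi\eta_{\varpi,(2)},\psi)\varepsilon(\half-s,\uchi^{-1}\eta_{\varpi,(2)},\psi)=(\varpi,-1)_2$; then \eqref{epsilon twist} (twisting $\uchi\eta_{\varpi,(2)}$ by the unramified $\uchi^{-2}$) and the $q$-homogeneity \eqref{epsilon old twist} (comparing the arguments $s+\half$ and $\half-s$, using $\mfr{f}(\psi)=0$ and $\mfr{f}(\uchi\eta_{\varpi,(2)})=1$) yield $\varepsilon(s+\half,\uchi\eta_{\varpi,(2)},\psi)^2=q^{-2s}\uchi^2(\varpi)(\varpi,-1)_2$. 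Hence the ramified part equals $-(\varpi,-1)_2\,q^{-2s}\uchi^2(\varpi)=(\varpi,-1)_2\cdot L(-2s,\uchi^{-2})/L(2s,\uchi^2)$.

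Putting the pieces together, the left-hand side of \eqref{verifygao} becomes
$$
\Bigl(\frac{L(2s,\uchi^2)}{L(1-2s,\uchi^{-2})}\Bigr)^{4}\cdot\frac{L(1-2s,\uchi^{-2})}{L(1+2s,\uchi^2)}\cdot(\varpi,-1)_2\,\frac{L(-2s,\uchi^{-2})}{L(2s,\uchi^2)}
=(\varpi,-1)_2\,\frac{L(2s,\uchi^2)^3\,L(-2s,\uchi^{-2})}{L(1-2s,\uchi^{-2})^3\,L(1+2s,\uchi^2)},
$$
and this coincides with the right-hand side after using $\gamma(1-2s,\uchi^{-2},\psi)^2=\bigl(L(2s,\uchi^2)/L(1-2s,\uchi^{-2})\bigr)^2$. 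The one genuinely delicate step is the evaluation of the ramified $\varepsilon$-factor square: correctly tracking the sign $(\varpi,-1)_2$ (which enters precisely as the value at $-1$ of the ramified quadratic character $\uchi\eta_{\varpi,(2)}$) and the power of $q$ through the combined use of \eqref{Tate gamma}, \eqref{epsilon twist} and \eqref{epsilon old twist}; everything else is formal bookkeeping with Euler factors.
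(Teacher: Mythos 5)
Your proof is correct and follows the same high-level strategy as the paper's: apply Theorem~\ref{sweetthm} to each of the four factors, cancel the scalar term $\omega(\psi)^4\prod_a(\uchi\eta_{a,(2)})(-1)$, identify the common denominator $\gamma(2s,\uchi^2,\psi_2)$ with an Euler-factor quotient, and then compute $\prod_a\gamma(s+\tfrac12,\uchi\eta_{a,(2)},\psi)$ by splitting into the unramified pair $\{1,u\}$ and the ramified pair $\{\varpi,u\varpi\}$. The one point of genuine divergence is the evaluation of the ramified $\varepsilon$-factor product. The paper first peels off $q^{-2s}\uchi^2(\varpi)$ via \eqref{epsilon old twist} and \eqref{epsilon twist}, reducing to $\varepsilon(\tfrac12,\eta_{\varpi,(2)},\psi)\,\varepsilon(\tfrac12,\eta_{u\varpi,(2)},\psi)$, and then invokes the Weil-index multiplicativity \eqref{ep prod} (which rests on the identities $\omega_\psi(ab)=\omega_\psi(a)\omega_\psi(b)(a,b)_2$ and $\omega_\psi(a)=\varepsilon(\tfrac12,\eta_{a,(2)},\psi_{-1})$) to get $-(\varpi,-1)_2$. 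You instead reduce to $-\varepsilon(s+\tfrac12,\uchi\eta_{\varpi,(2)},\psi)^2$ and evaluate the square by chaining the Tate functional equation \eqref{Tate gamma} with the twist formulas \eqref{epsilon twist} and \eqref{epsilon old twist}, which bypasses \eqref{ep prod} and the Weil-index theory entirely, using only standard local $\varepsilon$-factor identities. Your route is marginally more self-contained (and you explicitly justify $\omega(\psi)=1$ via \eqref{epsilon weil exp}, a point the paper passes over silently), while the paper's use of \eqref{ep prod} makes the appearance of the Hilbert symbol $(\varpi,-1)_2$ perhaps slightly more conceptual. Either way the two arrive at the same Euler-factor expression and the final bookkeeping is identical.
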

\begin{proof} Since $p$ is odd, we may apply Lemma \ref{pn1} to the case $n=2$. A set of representatives for $F^\times / F^{\times 2}$ is given by
$\{1, u, \varpi, u\varpi \}$. We have that $\eta_{u, (2)}$ is unramified, $\eta_{u,(2)}(\varpi)=-1$ and  $\mfr{f}(\eta_{\varpi, (2)})=1$. In particular,
$$\prod_{a \in F^\times /F^{\times 2}} \eta_{a,(2)}(-1)=1.$$
Since $p$ is odd, $2 \in O_F^\times.$ It follows from \eqref{Tate gamma} and \eqref{changepsi} that
$$\gamma(2s,\uchi^{2},{\psi_2})^{-1}=\gamma(1-2s,\uchi^{-2},\psi).$$
Therefore, by \eqref{meta gama formula} we have
$$\prod_{a \in F^\times /F^{\times 2} } \tilde{\gamma}(1-s,\uchi\eta_{a,(2)},\psi)=\gamma(1-2s,\uchi^{-2},{\psi})^4 \cdot \prod_{a \in F^\times /F^{\times 2} } \gamma(s+\half,\uchi\eta_{a,(2)},\psi)$$
Note that the right hand side of \eqref{verifygao} equals
$$(\varpi,-1)_2 \cdot \gamma(1+2s,\uchi^{2},\psi)  \cdot \gamma(1-2s,\uchi^{-2},\psi)^3.$$
Thus, the equality \eqref{verifygao} is equivalent to
\begin{equation} \label{verifygao2}
\prod_{a \in F^\times /F^{\times 2} } \gamma(s+\half,\uchi\eta_{a,(2)},\psi)= (\varpi,-1)_2 \cdot \gamma(2s,\uchi^{2},\psi) \cdot \gamma(1+2s,\uchi^{2},\psi).
 \end{equation}
A direct computation gives
$$\begin{aligned}
& \prod_{a \in F^\times /F^{\times 2} } \gamma(s+\half,\uchi \eta_{a,(2)},\psi) \\
= & \gamma(s+\half,\uchi,\psi) \cdot \gamma(s+\half,\uchi\eta_{u,(2)},\psi) \cdot \gamma(s+\half,\uchi\eta_{\varpi,(2)},\psi)\cdot \gamma(s+\half,\uchi\eta_{\varpi u,(2)},\psi) \\
= & \frac{\bigl(1-q^{-(s+\half)}\uchi(\varpi)\bigr)}{\bigl(1-q^{s-\half}\uchi^{-1}(\varpi)\bigr)}\frac{\bigl(1+q^{-(s+\half)}\uchi(\varpi)\bigr)}{\bigl(1+q^{s-\half}\uchi^{-1}(\varpi)\bigr)} \cdot
\varepsilon(s+\half,\uchi\eta_{\varpi, (2)},\psi) \cdot \epsilon(s+\half,\uchi\eta_{\varpi u, (2)},\psi).
\end{aligned} $$
By \eqref{epsilon old twist} and \eqref{epsilon twist} we have
$$\prod_{a \in F^\times /F^{\times 2} } \gamma(s+\half,\uchi\eta_a,\psi)= \frac{1-q^{-2s-1}\uchi^2(\varpi)}{1-q^{2s-1}\uchi^{-2}(\varpi)}q^{-2s}\uchi^2(\varpi) \cdot \varepsilon(\half,\eta_{\varpi, (2)},\psi) \varepsilon(\half,\eta_{\varpi u, (2)},\psi),$$
where by \eqref{ep prod},
$$\varepsilon(\half,\eta_{\varpi,(2)},\psi) \cdot \varepsilon(\half,\eta_{\varpi u,(2)},\psi)=(\varpi,u\varpi)_2 \cdot \varepsilon(\half,\eta_{\varpi^2 u,(2)},\psi)=-(\varpi,\varpi)\cdot \varepsilon(\half,\eta_{ u},\psi).$$
Since $(\varpi,\varpi)_2=(-1,\varpi)_2$ and $\varepsilon(\half,\eta_{ u},\psi)=1$,  we deduce that
$$\prod_{a \in F^\times /F^{\times 2}} \gamma(s+\half,\uchi\eta_a,\psi)=-(\varpi,-1)_2 \cdot \frac{L(1-2s,\uchi^{-2})}{L(1+2s,\uchi^2)} q^{-2s} \uchi^2(\varpi).$$
Equation \eqref{verifygao2} is thus finally reduced to
$$-q^{-2s}\uchi^2(\varpi)=\frac{L(-2s,\uchi^{-2})}{L(2s,\uchi^2)},$$
which clearly holds. This completes the proof.
\end{proof}


\subsection{Plancherel measure, $\gamma$- and $\tilde{\gamma}$- factor} \label{SS:Pg}
Let $\chi: Z(\wt{T}) \to \C^\times$ be an unramified central character. Let $T(w,i(\chi)): I(i(\chi)) \to I({}^w i(\chi))$ be the intertwining operator given in \eqref{T(w)} with measure
$$\prod_{\alpha>0, U_\alpha \subset U_w} du_\alpha$$
on $U_w$ normalized such that $du_\alpha$ is the measure on $U_\alpha = e_\alpha(F)$ satisfying $du_\alpha(e_\alpha(O)) =1$. Let $f_0\in I(i(\chi))$ and $f_0' \in I({}^wi( \chi))$ be the normalized unramified vectors. Denote
$$\Phi_\w= \set{\alpha>0: \w(\alpha) <0}.$$
We have
$$T(w,i(\chi))(f_0)= \gk(w, i(\chi)) \cdot f_0',$$
where
$$\gk(w, i(\chi))= \prod_{\alpha\in \Phi_\w}  \frac{ 1-q^{-1} \chi(\wt{h}_\alpha (\varpi^{n_\alpha})) }{ 1-\chi(\wt{h}_\alpha (\varpi^{n_\alpha}))   }$$
is the Gindikin-Karpelevich  coefficient  (see \cite{Cas1, Mc0, Mc2, Ga1}). The Plancherel measure $\mu(w, i(\chi))$ associated to $T(w, i(\chi))$ is the meromorphic function on the variety $\mca{O}_{i(\chi), \C}$ (see \S \ref{SS:lcm}, and also \cite[\S 3.1]{CO} for an explicit parametrization of $\mca{O}_{i(\chi), \C}$ for Kazhdan-Patterson covers of $\GL_r$) such that
$$T(w^{-1}, {}^w i(\chi)) \circ T(w, i(\chi))= \mu(w, i(\chi))^{-1} \cdot {\rm id}.$$
More explicitly,  if $i(\chi)$ is unramified, then
$$\mu(w, i(\chi))^{-1}=  \gk(w^{-1}, {}^wi(\chi)) \cdot \gk(w, i(\chi)).$$
In \S \ref{plansec}, we will also discuss $\mu(w, i(\chi))^{-1}$ for ramified $i(\chi)$.

For $w=w_\alpha$ with $\alpha\in \Delta$, we define the gamma factor $\gamma(w_\alpha, i(\chi))$ to be such that
\begin{equation} \label{g-inv}
\gamma(w_\alpha, i(\chi))^{-1}= \frac{ 1-q^{-1} \chi(\wt{h}_\alpha (\varpi^{n_\alpha}))^{-1} }{ 1-\chi(\wt{h}_\alpha (\varpi^{n_\alpha}))   }.
\end{equation}
Then clearly,
$$\mu(w_\alpha, i(\chi))= \gamma(w_\alpha,  i(\chi)) \cdot \gamma(w_\alpha^{-1}, {}^{w_\alpha}  i(\chi)).$$

We note that for $\alpha \in \Phi$ and unramified $\chi$, the linear character
$$\uchi_\alpha: F^\times \to \C^\times, \text{ given by } \uchi_\alpha(a):= \chi(\wt{h}_\alpha(a^{n_\alpha}) )$$
is an unramified character of $F^\times$. Moreover,
$$\gamma(w_\alpha,  i(\chi))^{-1} = \gamma(0, \uchi_\alpha )^{-1}= \frac{1-q^{-1} \uchi_\alpha(\varpi)^{-1}  }{ 1- \uchi_\alpha(\varpi) },$$
where $\gamma(s, \uchi_\alpha)$ is the Tate gamma factor.

To consider the metaplectic gamma factor $\tilde{\gamma}(w_\alpha,  i(\chi))$, we assume that $n_\alpha=2m_\alpha$ with $m_\alpha$ odd and $m_\alpha \alpha^\vee \in Y_{Q,n}$. Define
$$\uchi_\alpha^\natural: F^\times \to \C^\times$$
given by
$$\uchi_\alpha^\natural(a):= \chi(\wt{h}_\alpha(a^{m_\alpha}) ) \cdot \omega_\psi(a)^{-1},$$
where $\omega_\psi(-): F^\times \to \bbmu_4$ is the Weil index. We have:

\begin{lm}
With notations as above, $\uchi_\alpha^\natural$ is a character of $F^\times$, and moreover
$$\uchi_\alpha= \big( \uchi_\alpha^\natural \big)^2.$$
\end{lm}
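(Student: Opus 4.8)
The plan is to reduce both assertions to a common mechanism: the element $\wt{h}_\alpha(a^{m_\alpha})$ is \emph{almost} multiplicative in $a$, the defect being a Hilbert symbol, and so is the Weil index $\omega_\psi$, with the \emph{same} defect; the two defects therefore cancel in the product $\chi(\wt{h}_\alpha(a^{m_\alpha}))\cdot\omega_\psi(a)^{-1}$.

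First I would record the multiplicativity of $\wt{h}_\alpha$. By relation (C) of \S\ref{S:top-c} one has $\wt{h}_\alpha(x)=\s(\alpha^\vee(x))\cdot(\eta(\alpha^\vee),x)_n^{Q(\alpha^\vee)}$; since the Hilbert-symbol factor is central and bimultiplicative, and $\s(\alpha^\vee(x))\s(\alpha^\vee(y))=\s(\alpha^\vee(xy))\cdot(x,y)_n^{D(\alpha^\vee,\alpha^\vee)}$ with $D(\alpha^\vee,\alpha^\vee)=Q(\alpha^\vee)$, one gets
$$\wt{h}_\alpha(x)\,\wt{h}_\alpha(y)=\wt{h}_\alpha(xy)\cdot(x,y)_n^{Q(\alpha^\vee)}.$$
The hypothesis $m_\alpha\alpha^\vee\in Y_{Q,n}$ guarantees that $\wt{h}_\alpha(a^{m_\alpha})$ projects to $(m_\alpha\alpha^\vee)(a)\in\text{Im}(i_{Q,n})$, hence lies in $Z(\wt{T})=\phi^{-1}(\text{Im}(i_{Q,n}))$, so that applying the genuine character $\chi$ makes sense; taking $x=a^{m_\alpha},\ y=b^{m_\alpha}$ in the displayed identity gives $\chi(\wt{h}_\alpha(a^{m_\alpha}))\,\chi(\wt{h}_\alpha(b^{m_\alpha}))=\chi(\wt{h}_\alpha((ab)^{m_\alpha}))\cdot(a,b)_n^{m_\alpha^2 Q(\alpha^\vee)}$.

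The arithmetic heart of the argument — the step I expect to demand the most care — is the identity $(a,b)_n^{m_\alpha^2 Q(\alpha^\vee)}=(a,b)_2$. Put $g=\gcd(n,Q(\alpha^\vee))$, so $n/g=n_\alpha=2m_\alpha$; since $m_\alpha$ is odd, $v_2(n)=v_2(g)+1$, which rules out $v_2(Q(\alpha^\vee))\ge v_2(n)$, hence $v_2(g)=v_2(Q(\alpha^\vee))$ and $Q(\alpha^\vee)/g$ is odd. Thus $m_\alpha Q(\alpha^\vee)=(n/2)\cdot(Q(\alpha^\vee)/g)$ and $m_\alpha^2 Q(\alpha^\vee)=(n/2)\cdot m_\alpha(Q(\alpha^\vee)/g)$ are each $n/2$ times an odd integer; from $(x,y)_n^n=1$ and $(x,y)_n^{n/2}=(x,y)_2$ (a case of \eqref{FV fact}, using that $n$ is even) one gets $(a,b)_n^{m_\alpha^2 Q(\alpha^\vee)}=(a,b)_2$ and, similarly, $(-1,a)_n^{m_\alpha Q(\alpha^\vee)}=(-1,a)_2$. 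Feeding the first of these into \eqref{weilweil}, rewritten as $\omega_\psi(a)^{-1}\omega_\psi(b)^{-1}=\omega_\psi(ab)^{-1}(a,b)_2$, yields $\uchi_\alpha^\natural(a)\,\uchi_\alpha^\natural(b)=\uchi_\alpha^\natural(ab)\cdot(a,b)_2^2=\uchi_\alpha^\natural(ab)$, proving that $\uchi_\alpha^\natural$ is a character of $F^\times$.

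Finally, for $(\uchi_\alpha^\natural)^2=\uchi_\alpha$ I would square directly. By \eqref{x with x} one has $(a^{m_\alpha},a^{m_\alpha})_n=(-1,a)_n^{m_\alpha}$, so the multiplicativity identity gives $\wt{h}_\alpha(a^{m_\alpha})^2=\wt{h}_\alpha(a^{n_\alpha})\cdot(-1,a)_n^{m_\alpha Q(\alpha^\vee)}=\wt{h}_\alpha(a^{n_\alpha})\cdot(-1,a)_2$, the final factor being regarded inside $\bbmu_n\subset\wt{T}$. Applying $\chi$ — genuine, hence the identity on $\bbmu_n\subset\C^\times$, and $\chi(\wt{h}_\alpha(a^{n_\alpha}))=\uchi_\alpha(a)$ by definition — gives $\chi(\wt{h}_\alpha(a^{m_\alpha}))^2=\uchi_\alpha(a)\cdot(-1,a)_2$. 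Since also $\omega_\psi(a)^{-2}=(a,-1)_2=(-1,a)_2$, we conclude $(\uchi_\alpha^\natural)^2(a)=\uchi_\alpha(a)\cdot(-1,a)_2\cdot(-1,a)_2=\uchi_\alpha(a)$. Apart from the $2$-adic valuation bookkeeping, every step is a routine manipulation of the Hilbert symbol and of relation (C), so no further obstacle is expected.
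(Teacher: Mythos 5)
Your proof is correct and follows essentially the same approach as the paper: expand $\uchi_\alpha^\natural(a)\,\uchi_\alpha^\natural(b)$ using relation (C) for $\wt{h}_\alpha$ and the splitting property \eqref{weilweil} of $\omega_\psi$, and observe that the two Hilbert-symbol defects cancel, the crucial arithmetic being that $m_\alpha$ and $Q(\alpha^\vee)/\gcd(n,Q(\alpha^\vee))$ are both odd. The only cosmetic differences are that you identify each defect as $(a,b)_2$ and cancel $(a,b)_2^2=1$ (and carry out a separate direct squaring for $(\uchi_\alpha^\natural)^2=\uchi_\alpha$), whereas the paper collects the exponents into $(a,b)_n^{m_\alpha c(m_\alpha Q_\alpha+1)}$, notes $n$ divides the exponent, and remarks the square identity follows by taking $a=b$.
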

\begin{proof}
We write $c:=\text{gcd}(n, Q(\alpha^\vee))$ and $Q_\alpha= Q(\alpha^\vee)/c$. Then,
$$\begin{aligned}
& \uchi_\alpha^\natural(a) \cdot  \uchi_\alpha^\natural(b) \\
 = &   \chi(\wt{h}_\alpha(a^{m_\alpha}) ) \cdot \chi(\wt{h}_\alpha(b^{m_\alpha}) ) \cdot \omega_\psi(b)^{-1}  \cdot \omega_\psi(a)^{-1}  \\
 = &  \chi(\wt{h}_\alpha((ab)^{m_\alpha}) ) \cdot (a, b)_n^{m_\alpha^2 Q(\alpha^\vee)} \cdot \omega_\psi(a)^{-1}  \cdot \omega_\psi(b)^{-1} \text{ by relation (C) in \S \ref{S:top-c}} \\
 =&  \chi(\wt{h}_\alpha((ab)^{m_\alpha}) ) \cdot (a, b)_n^{m_\alpha^2 Q(\alpha^\vee)} \cdot \omega_\psi(ab)^{-1}  \cdot (a, b)_n^{m_\alpha c} \\
 = &(a, b)_n^{m_\alpha^2 Q_\alpha c+ m_\alpha c} \cdot  \uchi_\alpha^\natural(ab).
 \end{aligned}
$$
Now since $m_\alpha$ and $Q_\alpha$ are both odd, we see $n|(m_\alpha c(m_\alpha Q_\alpha + 1))$. The first statement follows, and the proof also shows that $\big( \uchi_\alpha^\natural \big)^2= \uchi_\alpha$.
\end{proof}

With the above assumption on $n_\alpha$, we define the metaplectic gamma-factor $\tilde{\gamma}(w_\alpha, i(\chi))$ to be
\begin{equation} \label{meta-g}
\tilde{\gamma}(w_\alpha,  i(\chi)):=\tilde{\gamma}(1, \uchi_\alpha^\natural)^{-1},
\end{equation}
the reciprocal of the $\tilde{\gamma}$-factor associated to $\uchi_\alpha^\natural$ in \S \ref{SS:meta-ga}, which by  Lemma \ref{for0mod4} and Theorem \ref{sweetthm} is equal to
$$\frac{ \gamma(0, \uchi_\alpha)   }{ \gamma(1/2, \uchi_\alpha^\natural)  }.$$

\subsection{The matrix $\mca{S}_\mfr{R}(w_\alpha, i(\chi); r_w^{\rm un})$}
To proceed, we first introduce some notations. For $\alpha^\vee \in \Delta^\vee$, let $W_\alpha \subset W$ be the order two subgroup generated by $\w_\alpha$.
The surjection 
$$\msc{X}_{Q,n}^{sc} \onto \msc{X}_{Q,n}$$
 induces a natural map of sets
$$\phi_\alpha: (\msc{X}_{Q,n}^{sc})^{W_\alpha} \to (\msc{X}_{Q,n})^{W_\alpha},$$
which is not surjective in general. The property that $\phi_\alpha$ is surjective is equivalent to the following: if $y+Y_{Q,n}$ lies in $(\msc{X}_{Q,n})^{W_\alpha}$, then $y+Y_{Q,n}^{sc} \in (\msc{X}_{Q,n}^{sc})^{W_\alpha}$.
Recall the natural surjection 
$$f_\msc{X}: Y \onto \msc{X}_{Q,n}.$$
For $y \in Y$, denote $\hat{y}:=f_\msc{X}(y) \in \msc{X}_{Q,n}$.

\begin{dfn} \label{D:3cases}
A point $y\in Y$ is called:
\begin{enumerate}
\item[$\bullet$] $\alpha$-free if $\hat{y} \notin  (\msc{X}_{Q,n})^{W_\alpha}$;
\item[$\bullet$] $\alpha$-normal if $\hat{y}$ lies in the image of $\phi_\alpha$;
\item[$\bullet$] $\alpha$-special if $\hat{y} \in (\msc{X}_{Q,n})^{W_\alpha} - \text{Im}(\phi_\alpha)$, i.e., if it does not lie in the image of $\phi_\alpha$.
\end{enumerate}
\end{dfn}

Denote by 
$$Y_\alpha^{\rm fre}, Y_\alpha^{\rm nor} \text{ and } Y_\alpha^{\rm spe} \subset Y$$
  the set of $\alpha$-free, $\alpha$-normal and $\alpha$-special points respectively. We have a decomposition
$$Y=Y_\alpha^{\rm fre} \sqcup Y_\alpha^{\rm nor} \sqcup Y_\alpha^{\rm spe}.$$
Clearly, $y$ is $\alpha$-free if and only if the $W_\alpha$-orbit of $\hat{y}$ in $\msc{X}_{Q,n}$ is free; also,
$$Y_\alpha^{\rm nor} \sqcup Y_\alpha^{\rm spe} = f_\msc{X}^{-1}( (\msc{X}_{Q,n})^{W_\alpha}  ).$$
In general, it is quite rare to have an $\alpha$-special point, as shown by the following.

\begin{prop} \label{P:dich}
For a covering group $\wt{G}$ and fixed $\alpha$, if $Y_\alpha^{\rm spe} \neq \emptyset$, then necessarily the following hold:
\begin{enumerate}
\item[(i)] the root system of $\mbf{G}$ is of type $C_r, r\ge 1$;
\item[(ii)] $\alpha^\vee$ is the unique short simple coroot, and $n_\alpha \equiv 2 \mod 4$;
\item[(iii)] $(\msc{X}_{Q,n}^{sc})^{W_\alpha}=\emptyset$, i.e., in this case $Y_\alpha^{\rm nor}= \emptyset$.\end{enumerate}
In particular, if $\mbf{G}$ is almost-simple and $Y_\alpha^{\rm spe} \ne \emptyset$, then necessarily $\wt{G}=\wt{\Sp}_{2r}^{(n)}$ with $n_{\alpha} \equiv 2 \mod 4$ where $\alpha^\vee$ is the unique short simple coroot.
\end{prop}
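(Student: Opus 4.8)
The plan is to analyze when an $\alpha$-special point can exist by comparing the twisted $W_\alpha$-action on $\msc{X}_{Q,n}$ with the twisted action on $\msc{X}_{Q,n}^{sc}$, and tracking exactly the obstruction to lifting a $W_\alpha$-fixed point from the former to the latter. Recall that $\w_\alpha[y] - y = \w_\alpha(y_\rho) - y_\rho = -\angb{y_\rho}{\alpha}\,\alpha^\vee$. Hence for $y \in Y$ the condition $\hat y \in (\msc{X}_{Q,n})^{W_\alpha}$ means $\angb{y_\rho}{\alpha}\,\alpha^\vee \in Y_{Q,n}$, while $y + Y_{Q,n}^{sc} \in (\msc{X}_{Q,n}^{sc})^{W_\alpha}$ means $\angb{y_\rho}{\alpha}\,\alpha^\vee \in Y_{Q,n}^{sc} = n_\alpha Y^{sc} \cap (\text{contributions from other coroots})$. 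First I would reduce the problem: since $\angb{y_\rho}{\alpha} \in \tfrac12 + \Z$ or $\Z$ depending on the parity of $\angb{2\rho}{\alpha}$, and more importantly $\angb{y_\rho}{\alpha}\,\alpha^\vee$ is an integer multiple of $\alpha^\vee$ (as $\angb{y}{\alpha}\in\Z$ and $\angb{\rho}{\alpha}=1$), write $m := \angb{y_\rho}{\alpha} = \angb{y}{\alpha} - 1 \in \Z$. So $y$ is $\alpha$-free iff $n_\alpha \nmid$ (the ``$\alpha^\vee$-component'' needed to put $m\alpha^\vee$ in $Y_{Q,n}$), and $y$ is not $\alpha$-normal precisely when $m\alpha^\vee \in Y_{Q,n}$ but $m\alpha^\vee \notin Y_{Q,n}^{sc}$ — equivalently $m\alpha^\vee$ is fixed in $\msc{X}_{Q,n}$ but its class in $\msc{X}_{Q,n}^{sc}$ is not $W_\alpha$-fixed, which forces $\phi_\alpha$ to miss $\hat y$.

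Next I would make the key arithmetic comparison local to the coroot $\alpha^\vee$ and its neighbors in the Dynkin diagram. The point is: $Y_{Q,n} \cap \Q\alpha^\vee$ and $Y_{Q,n}^{sc} \cap \Q\alpha^\vee$ can differ only because $Y_{Q,n}$ ``sees'' $B_Q$-pairings against \emph{all} of $Y$ (including the coweight lattice directions), whereas $Y_{Q,n}^{sc}$ is cut out inside $Y^{sc}$. Concretely, $m\alpha^\vee \in Y_{Q,n}$ iff $n \mid m\,B_Q(\alpha^\vee, z)$ for all $z\in Y$; the worst case $z$ is a fundamental coweight, giving the condition on $m \cdot Q(\alpha^\vee)$ (and possibly $m\cdot\tfrac12 B(\alpha^\vee,\beta^\vee)$ for neighbors $\beta$). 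Meanwhile $m\alpha^\vee\in Y_{Q,n}^{sc}$ forces $n_\alpha \mid m$ (when $n_\alpha$ is the relevant index) \emph{or} a modified divisibility in the non-simply-laced/short-coroot case via $Y_\flat^{sc}$. The discrepancy can only occur when the minimal $m$ with $m\alpha^\vee\in Y_{Q,n}$ is strictly smaller than the minimal $m$ with $m\alpha^\vee\in Y_{Q,n}^{sc}$, and a case check shows this happens only when $\alpha^\vee$ is a short coroot, there is a long neighbor $\beta^\vee$ with $\angb{\beta}{\alpha^\vee} = -2$ (i.e. the bond is a double bond oriented so that $\alpha^\vee$ is short), and $n_\alpha \equiv 2 \pmod 4$ so that the ``half'' coming from $\tfrac12 B(\alpha^\vee,\beta^\vee) = -Q(\alpha^\vee)\angb{\beta}{\alpha^\vee}/\,\cdots$ lets $m = n_\alpha/2$ already land in $Y_{Q,n}$ without landing in $n_\alpha Y^{sc}$. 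Ruling out $G_2$-type bonds (ratio $3$) and ensuring the short coroot is \emph{the} short simple coroot then pins us to type $C_r$, giving (i) and (ii); and (iii) follows since in that situation every $m$ with $m\alpha^\vee\in Y_{Q,n}$ fails $m\alpha^\vee\in Y_{Q,n}^{sc}$, so $(\msc{X}_{Q,n}^{sc})^{W_\alpha}=\emptyset$ and hence $\mathrm{Im}(\phi_\alpha)=\emptyset$, forcing $Y_\alpha^{\rm nor}=\emptyset$.

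For the final ``in particular'' clause, once $\mbf{G}$ is almost-simple: the root system is $C_r$ by (i), $r\ge 1$; and the cover must be such that $n_\alpha\equiv 2\pmod 4$ for the unique short simple coroot $\alpha^\vee$ by (ii). Since $\mbf{G}$ is almost-simple of type $C_r$, either $\mbf{G}=\Sp_{2r}$ or $\mbf{G}=\mathrm{PGSp}_{2r}=\mathrm{SO}_{2r+1}^{ad}$; I would observe that the argument producing the discrepancy used that the ``half-integral'' pairing $B_Q(\alpha^\vee, z)/2$ is genuinely available for some $z\in Y$, which happens precisely when $Y$ contains the relevant fundamental coweight — this is the $\Sp_{2r}$ case, not the adjoint case (where the analogous computation forces $n_\alpha\mid m$ outright and $Y_\alpha^{\rm spe}=\emptyset$). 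Hence $\wt G = \wt{\Sp}_{2r}^{(n)}$. I would record this by a short direct check on the two root data.

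\textbf{Main obstacle.} The delicate point is the comparison of the two divisibility conditions defining $Y_{Q,n}\cap\Q\alpha^\vee$ versus $Y_{Q,n}^{sc}\cap\Q\alpha^\vee$ \emph{in the presence of a neighboring long coroot}: one must carefully compute $B_Q(\alpha^\vee,\varpi_\beta^\vee)$ for the fundamental coweights $\varpi_\beta^\vee$ and see exactly how the factor of $2$ (from the double bond) interacts with $\gcd(n,Q(\alpha^\vee))$ and with the parity of $n_\alpha$. Getting the congruence bookkeeping right — and verifying it genuinely \emph{cannot} happen outside type $C_r$ with the short coroot, nor in the adjoint form — is where all the real work is; everything else is assembling these local computations.
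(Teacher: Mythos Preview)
Your starting point is correct and matches the paper: an $\alpha$-special $y$ gives $m\alpha^\vee \in Y_{Q,n} \setminus Y_{Q,n}^{sc}$ with $m = \angb{y_\rho}{\alpha}$. But your plan is then unnecessarily roundabout. The paper uses the identity $B_Q(\alpha^\vee, z) = Q(\alpha^\vee)\angb{z}{\alpha}$, so the condition $m\alpha^\vee \in Y_{Q,n}$ becomes simply $n_\alpha \mid m\,\angb{z}{\alpha}$ for every $z \in Y$. Taking $z = \alpha^\vee$ gives $n_\alpha \mid 2m$; combined with $n_\alpha \nmid m$, one gets immediately that $\angb{z}{\alpha}$ is even for \emph{every} $z \in Y$ (else some $z_0$ with $\angb{z_0}{\alpha}$ odd would force $n_\alpha \mid m$). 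This single parity constraint forces the $\alpha$-column of the Cartan matrix to be all even, which pins down type $C_r$ with $\alpha$ the unique long simple root; there is no need for fundamental coweights or a bond-by-bond case check. For $n_\alpha \equiv 2 \pmod 4$: writing $n_\alpha = 2m_\alpha$ and $m = k m_\alpha$ with $k$ odd, the paper takes $z = y$ itself to obtain $n_\alpha \mid m(m+1)$, forcing $m_\alpha$ odd. For (iii): since every $y' \in Y$ has $\angb{y'}{\alpha}$ even, $\angb{y'_\rho}{\alpha} = \angb{y'}{\alpha} - 1$ is odd and hence not divisible by the even $n_\alpha$; thus $(\msc{X}_{Q,n}^{sc})^{W_\alpha} = \emptyset$.

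Your treatment of the ``in particular'' clause contains two genuine errors. First, $\mathrm{PGSp}_{2r}$ is not $\mathrm{SO}_{2r+1}^{\rm ad}$: the former is the adjoint group of type $C_r$, the latter of type $B_r$; these are different root systems. Second, your identification of which isogeny form allows special points is backwards. The simply-connected $\Sp_{2r}$ has $Y = Y^{sc}$ (the coroot lattice), which does \emph{not} contain the fundamental coweight $\omega_{\alpha_r}$; it is the adjoint $\mathrm{PSp}_{2r}$ whose cocharacter lattice is the coweight lattice. The condition ``$\angb{z}{\alpha_r}$ even for all $z \in Y$'' derived above holds for $\Sp_{2r}$ (the $\alpha_r$-column of the Cartan matrix is even) but \emph{fails} for $\mathrm{PSp}_{2r}$ precisely because $\angb{\omega_{\alpha_r}}{\alpha_r} = 1$. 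So the presence of the fundamental coweight in $Y$ is what \emph{excludes} special points, the opposite of what you wrote; this is how $\Sp_{2r}$ is singled out.
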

\begin{proof}
By assumption, let $y\in Y$ be an $\alpha$-special point. Then $y-\w_\alpha[y] \in Y_{Q,n} - Y_{Q,n}^{sc}$; that is,
$$\angb{y_\rho}{\alpha} \alpha^\vee \in Y_{Q,n} - Y_{Q,n}^{sc}.$$
This gives that $n_\alpha\nmid \angb{y_\rho}{\alpha}$, but
\begin{equation} \label{KF}
B(\angb{y_\rho}{\alpha} \alpha^\vee, z) =\angb{y_\rho}{\alpha}\cdot \angb{z}{\alpha} Q(\alpha^\vee) \in n\Z \text{ for every } z\in Y.
\end{equation}
In particular, $n_\alpha|(2\angb{y_\rho}{\alpha})$, by applying $z\in \alpha^\vee$.

Moreover, we have necessarily that $2|\angb{z}{\alpha}$ for all $z\in Y$; otherwise there exists $z_0$ such that $\angb{z_0}{\alpha}=1$, which enforces $n_\alpha|\angb{y_\rho}{\alpha}$. In particular,
$$2|\angb{\beta^\vee}{\alpha} \text{ for all } \beta^\vee \in \Delta^\vee.$$
This shows that the root system of $\mbf{G}$ is of type $C_r$ and $\alpha$ is the unique long simple root.

We also see from the above that $n_\alpha=2m_\alpha$ is an even number for some $m_\alpha$. We claim that $m_\alpha$ has to be odd. From above, we have $\angb{y_\rho}{\alpha}=k\cdot m_\alpha$ with $k$ an odd number.
By \eqref{KF} we have
$$n_\alpha| \angb{y_\rho}{\alpha} \cdot \angb{y}{\alpha},$$
that is, $n_\alpha$ divides $(km_\alpha)(km_\alpha + 1)$, which implies that $m_\alpha$ is odd. This proves the second assertion.

Lastly, we show that in this case, there is no $\alpha$-normal point. If $y'$ is $\alpha$-normal, then $y'-\w_\alpha[y] \in Y_{Q,n}^{sc}$; that is,
$$n_\alpha| \angb{(y')_\rho}{\alpha}.$$
However, since both $n_\alpha$ and $\angb{y'}{\alpha}$ are even, this is not possible. This completes the proof.
\end{proof}

\begin{dfn} \label{D:metaG}
Let $(\wm{G}, n)$ be a cover of a connected reductive group $\mbf{G}$ associated with $(D, \eta)$. It is called of metaplectic type if $Y_\alpha^{\rm spe} \ne \emptyset$ for some $\alpha \in \Delta$.
\end{dfn}
We say that $\wt{G}$ is of metaplectic type, if it arises from a metaplectic $(\wm{G}, n)$. By Proposition \ref{P:dich}, if  $\wt{G}$ is a cover of an almost-simple $G$, then $\wt{G}$ is of metaplectic type if and only if $\wt{G}= \wt{\Sp}_{2r}^{(n)}$, which arises from $(\wm{Sp}_{2r}, n)$ such that $n_{\alpha} \equiv 2 \mod 4$ for the unique simple short-coroot $\alpha^\vee$ of $\mbf{Sp}_{2r}$. In particular, the classical degree-two metaplectic cover of $\Sp_{2r}$ is one such example.

\begin{cor} \label{C:dich}
For each $\alpha$, we have either $Y_\alpha^{\rm nor} = \emptyset$ or  $Y_\alpha^{\rm spe} = \emptyset$. Also, there are exactly three mutually exclusive cases for the map $\phi_\alpha$:
\begin{enumerate}
\item[(i)]  $ \msc{X}_{Q,n}^{W_\alpha}= \emptyset$ (and thus necessarily   $(\msc{X}_{Q,n}^{sc})^{W_\alpha} = \emptyset$);
\item[(ii)]  $(\msc{X}_{Q,n}^{sc})^{W_\alpha}= \emptyset$ but $\msc{X}_{Q,n}^{W_\alpha} \neq \emptyset$;
\item[(iii)]  if $(\msc{X}_{Q,n}^{sc})^{W_\alpha} \neq \emptyset$ and $\msc{X}_{Q,n}^{W_\alpha} \neq \emptyset$, then $\phi_\alpha$ is surjective with each fiber being a torsor over $Z(\wt{G}^\vee_{Q,n})$.
\end{enumerate}
\end{cor}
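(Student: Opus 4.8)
The plan is to read off almost everything from Proposition \ref{P:dich}, the remainder being formal manipulation with the two canonical, twisted-$W$-equivariant surjections
$$\pi: \msc{X}_{Q,n}^{sc} = Y/Y_{Q,n}^{sc} \onto \msc{X}_{Q,n} = Y/Y_{Q,n}, \qquad f_\msc{X}: Y \onto \msc{X}_{Q,n},$$
so that $\pi$ restricts to $\phi_\alpha$ on the $W_\alpha$-fixed loci. First I would settle the dichotomy: if $Y_\alpha^{\rm spe} \ne \emptyset$, then Proposition \ref{P:dich}(iii) gives $(\msc{X}_{Q,n}^{sc})^{W_\alpha} = \emptyset$, hence $\text{Im}(\phi_\alpha) = \emptyset$, and therefore $Y_\alpha^{\rm nor} = f_\msc{X}^{-1}(\text{Im}(\phi_\alpha)) = \emptyset$. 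Next, because any $W_\alpha$-fixed $x + Y_{Q,n}^{sc} \in \msc{X}_{Q,n}^{sc}$, i.e.\ $\w_\alpha[x] - x \in Y_{Q,n}^{sc} \subseteq Y_{Q,n}$, maps to the $W_\alpha$-fixed point $x + Y_{Q,n} \in \msc{X}_{Q,n}$, the pattern ``$(\msc{X}_{Q,n}^{sc})^{W_\alpha} \ne \emptyset$ and $(\msc{X}_{Q,n})^{W_\alpha} = \emptyset$'' cannot occur; this is precisely the parenthetical clause in (i), and what remains are exactly the three mutually exclusive and exhaustive possibilities (i), (ii), (iii).

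It then remains to treat case (iii). Since $(\msc{X}_{Q,n}^{sc})^{W_\alpha} \ne \emptyset$ we have $\text{Im}(\phi_\alpha) \ne \emptyset$, hence $Y_\alpha^{\rm nor} \ne \emptyset$, and the dichotomy forces $Y_\alpha^{\rm spe} = \emptyset$. From $Y = Y_\alpha^{\rm fre} \sqcup Y_\alpha^{\rm nor} \sqcup Y_\alpha^{\rm spe}$ together with $Y_\alpha^{\rm nor} \sqcup Y_\alpha^{\rm spe} = f_\msc{X}^{-1}\bigl((\msc{X}_{Q,n})^{W_\alpha}\bigr)$ one gets $f_\msc{X}^{-1}(\text{Im}\,\phi_\alpha) = Y_\alpha^{\rm nor} = f_\msc{X}^{-1}\bigl((\msc{X}_{Q,n})^{W_\alpha}\bigr)$, and surjectivity of $f_\msc{X}$ yields $\text{Im}\,\phi_\alpha = (\msc{X}_{Q,n})^{W_\alpha}$, i.e.\ $\phi_\alpha$ is surjective. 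For the fibres I would use the equivalence recorded just before Definition \ref{D:3cases}: surjectivity of $\phi_\alpha$ forces that $\hat y \in (\msc{X}_{Q,n})^{W_\alpha}$ implies $y + Y_{Q,n}^{sc} \in (\msc{X}_{Q,n}^{sc})^{W_\alpha}$, so the entire $\pi$-fibre over $\hat y$ lies in $(\msc{X}_{Q,n}^{sc})^{W_\alpha}$ --- concretely this rests on the identity $B(z - y, \alpha^\vee) = \angb{z - y}{\alpha}\, Q(\alpha^\vee) \in n\Z$ (cf.\ \eqref{KF}) for $z - y \in Y_{Q,n}$, which gives $n_\alpha \mid \angb{z - y}{\alpha}$ and hence propagates $n_\alpha \mid \angb{y_\rho}{\alpha}$ across the fibre. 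Therefore $\phi_\alpha^{-1}(\hat y) = \pi^{-1}(\hat y)$, a coset of $\ker \pi = Y_{Q,n}/Y_{Q,n}^{sc}$ and so a torsor over $Y_{Q,n}/Y_{Q,n}^{sc}$; since $Z(\wt{G}^\vee_{Q,n}) = \Hom(Y_{Q,n}/Y_{Q,n}^{sc}, \C^\times)$, this is, dually, a torsor over $Z(\wt{G}^\vee_{Q,n})$ (abstractly so after identifying the finite abelian group $Y_{Q,n}/Y_{Q,n}^{sc}$ with its character group).

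I do not anticipate a genuine obstacle: the only substantive input is that $W_\alpha$-fixedness in $\msc{X}_{Q,n}^{sc}$ is constant along a $\pi$-fibre sitting over $(\msc{X}_{Q,n})^{W_\alpha}$, and that is exactly the $B_Q$-divisibility already isolated inside the proof of Proposition \ref{P:dich}. The rest is diagram-chasing with the cocharacter lattices, so the corollary is in effect a repackaging of that proposition.
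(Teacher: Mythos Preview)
Your proof is correct and is precisely the natural unpacking of Proposition \ref{P:dich} that the paper has in mind; indeed the paper gives no separate proof of this corollary, and your fibre argument (showing $n_\alpha \mid \angb{z-y}{\alpha}$ for $z-y \in Y_{Q,n}$, so that the $\phi_\alpha$-fibre coincides with the full $\pi$-fibre) is exactly the computation the paper later reuses, in the form $\w[y+z]=\w[y]+z$ for $z\in Y_{Q,n}/Y_{Q,n}^{sc}$, when proving the analogous statement for the full Weyl group in Lemma \ref{L:fob-t}.
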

%

Now, recall the decomposition of $\mfr{R}$ in \eqref{R:dec} when $W'=W_\alpha$, then for every equivalence class $\mfr{R}_k \subset \mfr{R}$, we have either
$$\mfr{R}_k=\set{y_i, y_j} \text{ or } \mfr{R}_k=\set{y_i}.$$
In the first case, $y_i$ is an $\alpha$-free point and $y_i- \w_\alpha[y_j] \in Y_{Q,n}$ (thus, $y_j$ is also $\alpha$-free). In the second case, $y_i - \w_\alpha[y_i] \in Y_{Q,n}$, and $y_i$ is either $\alpha$-normal or $\alpha$-special.
For convenience, we will write
$$\tau(w, \chi, y_i, y_j):=\tau(w, \chi, \s_{y_i}, \s_{y_j}).$$
\subsubsection{For $\alpha$-free points}
Let $\mfr{R}=\set{y_i, y_j}$ with $y_i$ an $\alpha$-free point. We compute $\det( \mca{S}_\mfr{R}(w_\alpha, i(\chi); r_w^{\rm un})  )$, where
$$  \mca{S}_\mfr{R}(w_\alpha, i(\chi); r_w^{\rm un})=
\left[ \begin{array}{cccc}
 \tau(w_\alpha, \chi, y_i, y_i) &  \tau(w_\alpha, \chi, y_j, y_i) \\
\tau(w_\alpha, \chi, y_i, y_j) &  \tau(w_\alpha, \chi, y_j, y_j)
\end{array} \right].
$$

\begin{prop} \label{P:abs}
One has
$$\det \big( \mca{S}_\mfr{R}(w_\alpha, i(\chi); r_w^{\rm un}) \big) =(-1) \cdot \chi (\wt{h}_\alpha(\varpi^{n_\alpha}))^{ \frac{ \angb{(y_i)_\rho + (y_j)_\rho}{\alpha} }{n_\alpha} } \cdot \mu(w_\alpha,  i(\chi))^{-1},$$
where $\mu(w_\alpha,  i(\chi))$ is the Plancherel measure associated to $w_\alpha$ and $i(\chi)$.
\end{prop}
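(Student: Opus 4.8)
The plan is to compute the $2\times 2$ determinant directly using the explicit entries from Theorem~\ref{T:Sca}, exploiting the fact that $y_i$ is $\alpha$-free. Since $y_i$ is $\alpha$-free, $\hat{y_i} \notin (\msc{X}_{Q,n})^{W_\alpha}$, so $y_i \not\equiv \w_\alpha[y_i] \bmod Y_{Q,n}$; hence by the support conditions in Theorem~\ref{T:Sca} the term $\tau^1$ vanishes on the diagonal entry $\tau(w_\alpha,\chi,y_i,y_i)$ while $\tau^2$ vanishes on $\tau(w_\alpha,\chi,y_j,y_i)$, and symmetrically for $y_j$. (Recall $\mfr{R}=\{y_i,y_j\}$ is a free $W_\alpha$-orbit class, so $y_i\equiv\w_\alpha[y_j]$ and $y_j\equiv\w_\alpha[y_i]\bmod Y_{Q,n}$.) Concretely: the diagonal entries $\tau(w_\alpha,\chi,y_i,y_i)$ and $\tau(w_\alpha,\chi,y_j,y_j)$ are each purely of type $\tau^2$ (the Gauss-sum/$\g$-term), while the off-diagonal entries $\tau(w_\alpha,\chi,y_j,y_i)$ and $\tau(w_\alpha,\chi,y_i,y_j)$ are each purely of type $\tau^1$ (the $(1-q^{-1})$-over-$(1-\chi(\wt h_\alpha(\varpi^{n_\alpha})))$ term). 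Wait — I need to be careful about which off-diagonal carries which: $\tau(w_\alpha,\chi,y_j,y_i)$ has $\gamma=\s_{y_j}$, $\gamma'=\s_{y_i}$ in the notation of Theorem~\ref{T:Sca}, so $\tau^1$ survives iff $y_j\equiv y_i$ (false here) and $\tau^2$ survives iff $y_j\equiv\w_\alpha[y_i]$ (true). So in fact $\tau(w_\alpha,\chi,y_j,y_i)=\tau^2$ and $\tau(w_\alpha,\chi,y_i,y_i)=\tau^1$; let me re-examine. With $\gamma=\s_{y_1}$, $\gamma'=\s_y$: $\tau^1\ne 0$ needs $y_1\equiv y$, $\tau^2\ne 0$ needs $y_1\equiv\w_\alpha[y]$. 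For the $(y_i,y_i)$ entry we have $y_1=y=y_i$, so $\tau^1$ survives (and $\tau^2$ vanishes since $y_i$ is $\alpha$-free). For the $(y_j,y_i)$ entry, $y_1=y_j$, $y=y_i$, and $y_j\equiv\w_\alpha[y_i]$, so $\tau^2$ survives and $\tau^1$ vanishes. Thus the diagonal entries are of type $\tau^1$ and the off-diagonal of type $\tau^2$.

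Therefore
\[
\det\big(\mca{S}_\mfr{R}(w_\alpha, i(\chi); r_w^{\rm un})\big)
= \tau^1(w_\alpha,\chi,y_i,y_i)\cdot\tau^1(w_\alpha,\chi,y_j,y_j)
- \tau^2(w_\alpha,\chi,y_j,y_i)\cdot\tau^2(w_\alpha,\chi,y_i,y_j).
\]
Next I would substitute the closed forms from Theorem~\ref{T:Sca}. Writing $c_\alpha:=\chi(\wt h_\alpha(\varpi^{n_\alpha}))$, the first product becomes
\[
(1-q^{-1})^2\,\frac{c_\alpha^{k_{y_i,\alpha}+k_{y_j,\alpha}}}{(1-c_\alpha)^2},
\]
with $k_{y,\alpha}=\lceil \angb{y}{\alpha}/n_\alpha\rceil$. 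For the second product, first note that $\tau^2(w_\alpha,\chi,y_j,y_i)$ must be evaluated using its own transformation law: strictly Theorem~\ref{T:Sca} gives $\tau^2$ only when $y_1=\w_\alpha[y]$ on the nose, so for $y_j$ (a representative congruent to $\w_\alpha[y_i]$) one has to move by an element of $\wt A$ via \eqref{SLCM1}, picking up a factor $({}^{\w_\alpha}\chi)^{-1}(\wt z)=\chi(w^{-1}\wt z w)^{-1}$. Multiplying $\tau^2(w_\alpha,\chi,y_j,y_i)\cdot\tau^2(w_\alpha,\chi,y_i,y_j)$, these adjustment factors combine with the $\g$-factors $\g(\angb{(y_i)_\rho}{\alpha}Q(\alpha^\vee))\cdot\g(\angb{(y_j)_\rho}{\alpha}Q(\alpha^\vee))$ and the signs $\varepsilon^{\angb{(y_i)_\rho}{\alpha}D(y_i,\alpha^\vee)+\angb{(y_j)_\rho}{\alpha}D(y_j,\alpha^\vee)}$. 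Using $\overline{\g(k)}=\varepsilon^k\g(-k)$, and noting $\angb{(y_j)_\rho}{\alpha}\equiv -\angb{(y_i)_\rho}{\alpha}\bmod n_\alpha$ (since $\w_\alpha[y_j]\equiv y_i$ forces $\angb{(y_i)_\rho+(y_j)_\rho}{\alpha}\equiv 0\bmod n_\alpha$), I can pair the two $\g$-factors: their product is, up to the sign factors and powers of $c_\alpha$, of the form $q^{-1}$ (when $n_\alpha\mid\angb{(y_i)_\rho}{\alpha}$, using $\g(k)=-q^{-1}$) or $\g(k)\overline{\g(k)}=q^{-1}$ (when $n_\alpha\nmid\angb{(y_i)_\rho}{\alpha}$, using $|\g(k)|^2=q^{-1}$ and the standard identity $\g(k)\g(-k)=\varepsilon^{-k}q^{-1}$ up to sign). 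Either way the "interesting" part collapses to $q^{-1}$ times a monomial in $c_\alpha$ and the known $\varepsilon$-powers, and the exponent of $c_\alpha$ coming out is exactly $\angb{(y_i)_\rho+(y_j)_\rho}{\alpha}/n_\alpha$ as required.

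Finally I would assemble the two pieces and identify the result with $(-1)\cdot c_\alpha^{\angb{(y_i)_\rho+(y_j)_\rho}{\alpha}/n_\alpha}\cdot\mu(w_\alpha,i(\chi))^{-1}$. Here I use that, by \S\ref{SS:Pg}, $\mu(w_\alpha,i(\chi))^{-1}=\gk(w_\alpha^{-1},{}^{w_\alpha}i(\chi))\cdot\gk(w_\alpha,i(\chi))$ and that for a single reflection $\gk(w_\alpha,i(\chi))=(1-q^{-1}c_\alpha)/(1-c_\alpha)$, so $\mu(w_\alpha,i(\chi))^{-1}=\dfrac{(1-q^{-1}c_\alpha)(1-q^{-1}c_\alpha^{-1})}{(1-c_\alpha)(1-c_\alpha^{-1})}$; expanding the numerator gives $1-q^{-1}(c_\alpha+c_\alpha^{-1})+q^{-2}$ over $(1-c_\alpha)(1-c_\alpha^{-1}) = 2 - c_\alpha - c_\alpha^{-1}$ — hmm, rather I should expect the combination $(1-q^{-1})^2/(1-c_\alpha)^2$ minus the $q^{-1}$-term to rearrange, after pulling out $c_\alpha^{\angb{(y_i)_\rho+(y_j)_\rho}{\alpha}/n_\alpha}$, precisely into $-\mu(w_\alpha,i(\chi))^{-1}$; this is the algebraic identity
\[
\frac{(1-q^{-1})^2}{(1-c_\alpha)^2}-\frac{q^{-1}(c_\alpha-1)^2/\big((1-c_\alpha)^2 c_\alpha^{?}\big)}{\cdots}=-\frac{(1-q^{-1}c_\alpha)(1-q^{-1}c_\alpha^{-1})}{(1-c_\alpha)(1-c_\alpha^{-1})},
\]
which I would verify by clearing denominators. \textbf{The main obstacle} I anticipate is precisely this last bookkeeping: tracking the $\varepsilon$-signs and the $\wt A$-adjustment factors from \eqref{SLCM1} through the product of the two $\tau^2$ terms, and confirming that the surviving power of $c_\alpha$ is exactly $\angb{(y_i)_\rho+(y_j)_\rho}{\alpha}/n_\alpha$ with no residual sign — in particular verifying $\varepsilon^{\angb{(y_i)_\rho}{\alpha}D(y_i,\alpha^\vee)+\angb{(y_j)_\rho}{\alpha}D(y_j,\alpha^\vee)}$ together with the $\overline{\g}$-sign $\varepsilon^{\angb{(y_i)_\rho}{\alpha}Q(\alpha^\vee)}$-type contributions cancels. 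A convenient shortcut, which I would fall back on if the direct sign-chase gets unwieldy, is to use the cocycle relation \eqref{SLCM2} to write $\mca{S}_{\mfr R}(w_\alpha w_\alpha^{-1},\ldots)$ — i.e. the fact that $T(w_\alpha^{-1},{}^{w_\alpha}i(\chi))^*\circ T(w_\alpha,i(\chi))^*=\mu(w_\alpha,i(\chi))^{-1}\cdot\mathrm{id}$ on the two-dimensional space spanned by $\lambda^\chi_{y_i},\lambda^\chi_{y_j}$ — which forces $\det\mca{S}_{\mfr R}(w_\alpha,i(\chi);r_w^{\rm un})\cdot\det\mca{S}_{\mfr R}(w_\alpha^{-1},{}^{w_\alpha}i(\chi);r_w^{\rm un})=\mu(w_\alpha,i(\chi))^{-2}$ up to the $r_w^{\rm un}$-cocycle; combining with the evident symmetry $y_i\leftrightarrow\w_\alpha[y_j]$ then pins down the determinant up to a sign and a power of $c_\alpha$ that is forced by the transformation law \eqref{SLCM1}, and the sign is then checked in a single easy special case.
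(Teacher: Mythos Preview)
Your approach is the same as the paper's: compute the $2\times 2$ determinant directly, with diagonal entries $\tau^1$ and off-diagonal entries $\tau^2$. You have correctly identified the main obstacle, and indeed the paper spends most of its proof on exactly the $\varepsilon$-bookkeeping you flag: it writes $y_i=\w_\alpha[y_j]+z$ with $z\in Y_{Q,n}$, expands both $\tau^2$ factors via \eqref{SLCM1}, and then carries out a lengthy mod-$2$ computation to show that the total $\varepsilon$-exponent $P(i,j)$ vanishes, so that $\tau^2(w_\alpha,\chi,y_j,y_i)\cdot\tau^2(w_\alpha,\chi,y_i,y_j)=\chi(\varpi^{\langle z,\alpha\rangle\alpha^\vee})\cdot q^{-1}$ exactly. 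Your pairing idea $\g(k)\g(-k)=\varepsilon^{-k}q^{-1}$ is morally what happens, but the paper does not appeal to a clean Gauss-sum identity; it grinds out the sign directly. (Incidentally, your case ``$n_\alpha\mid\angb{(y_i)_\rho}{\alpha}$'' never occurs for $\alpha$-free $y_i$, so only the $|\g(k)|^2=q^{-1}$ branch is relevant.)

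There is one genuine ingredient your plan omits. After the $\tau^2$ product is simplified to $\chi(\wt h_\alpha(\varpi^{n_\alpha}))^{\langle z,\alpha\rangle/n_\alpha}\cdot q^{-1}$, you still need to know the exponent $k_{y_i,\alpha}+k_{y_j,\alpha}$ appearing in the $\tau^1$ product. The paper invokes \cite[Lemma 3.9]{Ga2} to show
\[
\Bigl\lceil\tfrac{\angb{y_i}{\alpha}}{n_\alpha}\Bigr\rceil+\Bigl\lceil\tfrac{\angb{y_j}{\alpha}}{n_\alpha}\Bigr\rceil
=\Bigl\lceil\tfrac{2-\angb{y_j}{\alpha}}{n_\alpha}\Bigr\rceil+\Bigl\lceil\tfrac{\angb{y_j}{\alpha}}{n_\alpha}\Bigr\rceil+\tfrac{\langle z,\alpha\rangle}{n_\alpha}
=1+\tfrac{\langle z,\alpha\rangle}{n_\alpha},
\]
which is exactly what makes the two terms combine into $(-1)c_\alpha^{\langle z,\alpha\rangle/n_\alpha}\mu(w_\alpha,i(\chi))^{-1}$. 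This ceiling identity is not automatic and is the other half of the proof; without it you cannot match the power of $c_\alpha$. Your Plancherel-cocycle fallback would give only the product of two determinants, not each one, so it does not circumvent this step.
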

\begin{proof}
Recall in this case, $y_i - y_j \notin Y_{Q,n}$ and $y_i \equiv \w_\alpha[y_j]$. Suppose
$$y_i = \w_\alpha[y_j] + z, \text{ with } z \in Y_{Q,n}. $$
Since $y_i - y_j \notin Y_{Q,n}$, one has
$$  \mca{S}_\mfr{R}(w_\alpha, i(\chi); r_w^{\rm un})=
\left[ \begin{array}{cccc}
 \tau^1(w_\alpha, \chi, y_i, y_i) &  \tau^2(w_\alpha, \chi, y_j, y_i) \\
\tau^2(w_\alpha, \chi, y_i, y_j) &  \tau^1(w_\alpha, \chi, y_j, y_j)
\end{array} \right].
$$
We compute each entry explicitly. First,
$$\begin{aligned}
& \tau^2(w_\alpha, \chi, y_i, y_j) \\
=& \tau^2(w_\alpha, \chi, \w_\alpha[y_j] + z, y_j) \\
=& \chi(\varpi^z) \cdot \varepsilon^{D(\w_\alpha[y_j], z)} \cdot \tau^2(w_\alpha, \chi, \w_\alpha[y_j], y_j) \\
=& \chi(\varpi^z) \cdot  \varepsilon^{D(\w_\alpha[y_j], z)} \cdot \varepsilon^{ \angb{(y_j)_\rho}{\alpha} \cdot D(y_j, \alpha^\vee) } \cdot \g(\angb{(y_j)_\rho}{\alpha}Q(\alpha^\vee)).
\end{aligned} $$
Similarly, since $y_j=\w_\alpha[y_i] - \w_\alpha(z)$, it gives that
$$\tau^2(w_\alpha, \chi, y_j, y_i)=\chi(\varpi^{-\w_\alpha(z)}) \cdot \varepsilon^{D(\w_\alpha[y_i], -\w_\alpha(z))}  \cdot \varepsilon^{ \angb{(y_i)_\rho}{\alpha} \cdot D(y_i, \alpha^\vee) } \cdot \g(\angb{(y_i)_\rho}{\alpha}Q(\alpha^\vee)).$$
It follows that
$$\tau^2(w_\alpha, \chi, y_j, y_i) \cdot \tau^2(w_\alpha, \chi, y_i, y_j) = \chi(\varpi^{ \angb{z}{\alpha} \alpha^\vee })  \cdot q^{-1} \cdot \varepsilon^{P(i,j)},$$
where
$$\begin{aligned}
P(i,j)= & D(\w_\alpha[y_j], z) + \angb{(y_j)_\rho}{\alpha} \cdot D(y_j, \alpha^\vee) \\
 & + D(\w_\alpha[y_i], -\w_\alpha(z)) + \angb{(y_i)_\rho}{\alpha} \cdot D(y_i, \alpha^\vee) \\
 & + D(z, -\w_\alpha(z)) + \angb{(y_i)_\rho}{\alpha}Q(\alpha^\vee).
\end{aligned}$$
By noting that $\angb{y_\rho}{\alpha} \alpha^\vee = y- \w_\alpha[y]$ for every $y\in Y$, and computing the residue of $P(i,j)$  modulo 2, we see that
$$\begin{aligned}
P(i,j)= & D(y_i, z)  +  Q(z) + D(y_j, y_j-y_i) + D(y_j, z) \\
 & + D(y_j, \w_\alpha(z)) + Q(\w_\alpha(z)) +  D(y_i, y_i -y_j) + D(y_i, \w_\alpha(z)) \\
 & + D(z, \w_\alpha(z)) + D(\alpha^\vee, y_j-y_i) + D(\alpha^\vee, z) \\
 =& D(y_i+ y_j, z) + D(y_i + y_j, \w_\alpha(z)) + Q(y_i - y_j) \\
 & + D(z, \w_\alpha(z)) + D(\alpha^\vee, y_j - y_i) + D(\alpha^\vee, z) \\
=&  \angb{z}{\alpha} \cdot D(\angb{(y_j)_\rho}{\alpha}\alpha^\vee + z, \alpha^\vee) + Q(\angb{(y_j)_\rho}{\alpha}\alpha^\vee + z) \\
& + D(z, \w_\alpha(z)) + D(\alpha^\vee, \angb{(y_j)_\rho}{\alpha}\alpha^\vee) \\
=& \angb{(y_j)_\rho}{\alpha} B(\alpha^\vee, z) + \angb{z}{\alpha} D(z, \alpha^\vee) \\
& + \angb{(y_j)_\rho}{\alpha}^2 Q(\alpha^\vee) + \angb{(y_j)_\rho}{\alpha} B(\alpha^\vee, \alpha) + Q(z) \\
& + D(z, \w_\alpha(z)) + \angb{(y_j)_\rho}{\alpha} Q(\alpha^\vee) \\
=& \angb{z}{\alpha} D(z, \alpha^\vee) + Q(z) + D(z, \w_\alpha(z)) \\
=& 0 \in \Z/2\Z.
\end{aligned}$$
That is, we have $\tau^2(w_\alpha, \chi, y_j, y_i) \cdot \tau^2(w_\alpha, \chi, y_i, y_j) = \chi(\varpi^{ \angb{z}{\alpha} \alpha^\vee })  \cdot q^{-1}$. Hence,
$$\begin{aligned}
\det(\mca{S}_\mfr{R}(w_\alpha, i(\chi); r_w^{\rm un})) & = \left( \frac{1-q^{-1}}{ 1-\chi (\wt{h}_\alpha(\varpi^{n_\alpha})) } \right)^2 \chi (\wt{h}_\alpha(\varpi^{n_\alpha}))^{k_{y_i,\alpha} + k_{y_j,\alpha}} - \chi(\varpi^{ \angb{z}{\alpha} \alpha^\vee })  \cdot q^{-1},
\end{aligned}$$
where
$$k_{y_i, \alpha} + k_{y_j, \alpha} =\ceil{\frac{\angb{y_i}{\alpha}}{n_\alpha}} + \ceil{\frac{\angb{y_j}{\alpha}}{n_\alpha}}= \ceil{\frac{2- \angb{y_i}{\alpha}}{n_\alpha}} + \ceil{\frac{\angb{y_j}{\alpha}}{n_\alpha}} + \frac{ \angb{z}{\alpha} }{n_\alpha}.$$
By the proof of \cite[Lemma 3.9]{Ga2}, we get $k_{y_i, \alpha} + k_{y_j, \alpha}= 1+  (\angb{z}{\alpha}/n_\alpha)$. Therefore
$$\begin{aligned}
& \det(\mca{S}_\mfr{R}(w_\alpha, i(\chi); r_w^{\rm un})) \\
 = & \left( \frac{1-q^{-1}}{ 1-\chi (\wt{h}_\alpha(\varpi^{n_\alpha})) } \right)^2 \chi (\wt{h}_\alpha(\varpi^{n_\alpha}))^{1+ \frac{ \angb{z}{\alpha} }{n_\alpha} } - \chi (\wt{h}_\alpha(\varpi^{n_\alpha}))^{ \frac{ \angb{z}{\alpha} }{n_\alpha} }  \cdot q^{-1} \\
 = & (-1) \cdot \chi (\wt{h}_\alpha(\varpi^{n_\alpha}))^{ \frac{ \angb{z}{\alpha} }{n_\alpha} } \cdot \frac{\left(1-q^{-1} \chi (\wt{h}_\alpha(\varpi^{n_\alpha}))\right) \cdot \left(1-q^{-1} ({}^{w_\alpha}\chi) (\wt{h}_\alpha(\varpi^{n_\alpha}))\right)}{\left( 1- \chi (\wt{h}_\alpha(\varpi^{n_\alpha})) \right) \cdot \left( 1- ({}^{w_\alpha}\chi) (\wt{h}_\alpha(\varpi^{n_\alpha})) \right)} \\
 =& (-1) \cdot \chi (\wt{h}_\alpha(\varpi^{n_\alpha}))^{ \frac{ \angb{z}{\alpha} }{n_\alpha} } \cdot \mu(w_\alpha,  i(\chi))^{-1}.
\end{aligned}$$
Lastly, we check easily that $\angb{z}{\alpha}= \angb{(y_i)_\rho + (y_j)_\rho}{\alpha}$. This completes the proof.
\end{proof}

\subsubsection{For $\alpha$-normal points}
Let $\mfr{R}=\set{y_i}$ be a singleton with $y_i \in Y_\alpha^{\rm nor}$.  We would like to compute the scalar
$$\det(\mca{S}_\mfr{R}(w_\alpha, i(\chi); r_w^{\rm un}))= \tau(w_\alpha, \chi, y_i, y_i).$$
By assumption,
$$y_i=\w_\alpha[y_i] + z_i$$
for some $z_i \in Y_{Q,n}^{sc}$; that is $n_\alpha| \angb{(y_i)_\rho}{\alpha}$ in this case.


\begin{prop} \label{P:nor}
If $y_i$ is an $\alpha$-normal point, then
$$\tau(w_\alpha, \chi, y_i, y_i)=\chi (\wt{h}_\alpha(\varpi^{n_\alpha}))^{ \frac{\angb{(y_i)_\rho}{\alpha}}{n_\alpha}  + 1} \cdot \gamma(w_\alpha,  i(\chi))^{-1}.$$
\end{prop}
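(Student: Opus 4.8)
The plan is to read off both summands in the decomposition $\tau(w_\alpha,\chi,y_i,y_i)=\tau^1(w_\alpha,\chi,y_i,y_i)+\tau^2(w_\alpha,\chi,y_i,y_i)$ of Theorem \ref{T:Sca} and add them; unlike the $\alpha$-free case, both summands contribute here, because $\mfr{R}=\set{y_i}$ is a singleton $W_\alpha$-class, so $y_i\equiv y_i$ \emph{and} $y_i\equiv\w_\alpha[y_i]\bmod Y_{Q,n}$. Set $t:=\chi(\wt{h}_\alpha(\varpi^{n_\alpha}))$ and $m:=\angb{(y_i)_\rho}{\alpha}/n_\alpha$; this $m$ is an integer precisely because $y_i$ is $\alpha$-normal, i.e.\ $y_i-\w_\alpha[y_i]=\angb{(y_i)_\rho}{\alpha}\alpha^\vee\in Y_{Q,n}^{sc}$. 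Since $\angb{\rho}{\alpha}=1$ we get $\angb{y_i}{\alpha}=n_\alpha m+1$, hence $k_{y_i,\alpha}=\ceil{\angb{y_i}{\alpha}/n_\alpha}=m+1$, and the $\tau^1$-formula of Theorem \ref{T:Sca} gives directly
$$\tau^1(w_\alpha,\chi,y_i,y_i)=(1-q^{-1})\,\frac{t^{m+1}}{1-t}.$$

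For $\tau^2$ the formula of Theorem \ref{T:Sca} is stated at a base point of the form $(\s_{\w_\alpha[y]},\s_y)$, so I would first move $\s_{y_i}$ in the second slot to $\s_{\w_\alpha[y_i]}$. Writing $z_i:=y_i-\w_\alpha[y_i]=n_\alpha m\,\alpha^\vee\in Y_{Q,n}^{sc}$ and using relation (C) of \S\ref{S:top-c} together with $\wt{h}_\alpha(\varpi^k)=\s_{k\alpha^\vee}$, one obtains $\s_{y_i}=\s_{\w_\alpha[y_i]}\cdot\wt z$ with $\wt z=\s_{z_i}\cdot\varepsilon^{-D(\w_\alpha[y_i],z_i)}\in Z(\wt T)\subset\wt A$ and $\chi(\wt z)=t^{m}\cdot\varepsilon^{-n_\alpha m\,D(\w_\alpha[y_i],\alpha^\vee)}$ (here $\chi(\s_{z_i})=t^m$ since $n\mid n_\alpha Q(\alpha^\vee)$ collapses all the $\varepsilon$-powers produced by iterating (C)). The transformation property of $\tau^2$ in the second variable then gives $\tau^2(w_\alpha,\chi,\s_{y_i},\s_{y_i})=\chi(\wt z)\cdot\tau^2(w_\alpha,\chi,\s_{y_i},\s_{\w_\alpha[y_i]})$. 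Now $\w_\alpha[\w_\alpha[y_i]]=y_i$, so the pair $(\s_{y_i},\s_{\w_\alpha[y_i]})$ is exactly of the shape for which Theorem \ref{T:Sca} supplies the value of $\tau^2$; since $(\w_\alpha[y_i])_\rho=\w_\alpha((y_i)_\rho)$ and $\angb{\w_\alpha((y_i)_\rho)}{\alpha}=-\angb{(y_i)_\rho}{\alpha}=-n_\alpha m$, this value is $\varepsilon^{-n_\alpha m\,D(\w_\alpha[y_i],\alpha^\vee)}\cdot\g(-n_\alpha m\,Q(\alpha^\vee))$. The two occurrences of $\varepsilon^{-n_\alpha m\,D(\w_\alpha[y_i],\alpha^\vee)}$ multiply to $\varepsilon^{-2n_\alpha m\,D(\w_\alpha[y_i],\alpha^\vee)}=1$ (as $\varepsilon^2=1$), and $\g(-n_\alpha m\,Q(\alpha^\vee))=-q^{-1}$ because $n\mid n_\alpha Q(\alpha^\vee)$; hence $\tau^2(w_\alpha,\chi,y_i,y_i)=-q^{-1}t^{m}$.

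Combining the two pieces,
$$\tau(w_\alpha,\chi,y_i,y_i)=\frac{(1-q^{-1})t^{m+1}}{1-t}-q^{-1}t^{m}=\frac{t^{m+1}-q^{-1}t^{m}}{1-t}=t^{m+1}\cdot\frac{1-q^{-1}t^{-1}}{1-t},$$
which is $\chi(\wt{h}_\alpha(\varpi^{n_\alpha}))^{\,m+1}\cdot\gamma(w_\alpha,i(\chi))^{-1}$ by the definition \eqref{g-inv} of $\gamma(w_\alpha,i(\chi))$, and $m+1=\frac{\angb{(y_i)_\rho}{\alpha}}{n_\alpha}+1$, giving the claim. I expect the only genuinely delicate point to be the bookkeeping of the Hilbert-symbol cocycle factor $\varepsilon=(-1,\varpi)_n$ in the middle paragraph: one must check that every $\varepsilon$-power arising from relation (C) and from the two $\tau^2$-evaluations is either a multiple of $n$ (hence trivial) or cancels against a partner, so that the identity holds under the minimal hypothesis $\bbmu_n\subset F^\times$ rather than $\bbmu_{2n}\subset F^\times$. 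Everything else is a direct substitution into Theorem \ref{T:Sca}.
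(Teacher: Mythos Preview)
Your proof is correct and follows essentially the same approach as the paper: split $\tau=\tau^1+\tau^2$ via Theorem~\ref{T:Sca}, evaluate each summand, verify the $\varepsilon$-powers vanish, and simplify to $t^{m+1}(1-q^{-1}t^{-1})/(1-t)$. The only cosmetic difference is that the paper reaches the base-point form $(\s_{\w_\alpha[y_i]},\s_{y_i})$ by shifting the \emph{first} argument of $\tau^2$ (using $({}^{w_\alpha}\chi)^{-1}$), whereas you shift the \emph{second} argument (using $\chi$); both routes use the same transformation law from Theorem~\ref{T:Sca} and yield the same $\varepsilon$-cancellation, so the arguments are interchangeable.
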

\begin{proof}
We have
$$\begin{aligned}
& \tau(w_\alpha, \chi, y_i, y_i) \\
=& \tau^1(w_\alpha, \chi, y_i, y_i) + \tau^2(w_\alpha, \chi, y_i, y_i) \\
=& \frac{1-q^{-1} }{1-\chi (\wt{h}_\alpha(\varpi^{n_\alpha}))}  \chi (\wt{h}_\alpha(\varpi^{n_\alpha}))^{\ceil{\frac{\angb{y}{\alpha}}{n_\alpha}}} +  \tau^2(w_\alpha, \chi, \w_\alpha[y_i] + z_i, y_i) \\
=&  \frac{1-q^{-1} }{1-\chi (\wt{h}_\alpha(\varpi^{n_\alpha}))}  \chi (\wt{h}_\alpha(\varpi^{n_\alpha}))^{\ceil{\frac{\angb{y}{\alpha}}{n_\alpha}}} \\
& + \chi(\s_z) \cdot  \varepsilon^{D(\w_\alpha[y_i], z_i)} \cdot \varepsilon^{ \angb{(y_i)_\rho}{\alpha} \cdot D(y_i, \alpha^\vee) } \cdot \g(\angb{(y_i)_\rho}{\alpha}Q(\alpha^\vee)) \\
\end{aligned} $$
Note that by assumption $z= \angb{(y_i)_\rho}{\alpha} \alpha^\vee \in Y_{Q,n}^{sc}$, and it follows from a straightforward checking that the exponent of $\varepsilon$ is even and $\g(\angb{(y_i)_\rho}{\alpha}Q(\alpha^\vee))=-q^{-1}$. Thus,
$$\begin{aligned}
& \tau(w_\alpha, \chi, y_i, y_i)  \\
=& \frac{1-q^{-1} }{1-\chi (\wt{h}_\alpha(\varpi^{n_\alpha}))}  \chi (\wt{h}_\alpha(\varpi^{n_\alpha}))^{ \frac{\angb{(y_i)_\rho}{\alpha}}{n_\alpha}  + 1} - q^{-1} \cdot \chi (\wt{h}_\alpha(\varpi^{n_\alpha}))^{ \frac{\angb{(y_i)_\rho}{\alpha}}{n_\alpha} } \\
=& \chi (\wt{h}_\alpha(\varpi^{n_\alpha}))^{ \frac{\angb{(y_i)_\rho}{\alpha}}{n_\alpha}  + 1} \cdot \frac{1-q^{-1} ({}^{w_\alpha}\chi) (\wt{h}_\alpha(\varpi^{n_\alpha}))}{1-\chi (\wt{h}_\alpha(\varpi^{n_\alpha}))} \\
=& \chi (\wt{h}_\alpha(\varpi^{n_\alpha}))^{ \frac{\angb{(y_i)_\rho}{\alpha}}{n_\alpha}  + 1} \cdot \gamma(w_\alpha,  i(\chi))^{-1}.
\end{aligned}$$
This completes the proof.
\end{proof}

\subsubsection{When $y_i$ is an $\alpha$-special point} Let $\mfr{R}=\set{y_i}$ be a singleton with $y_i \in Y_\alpha^{\rm spe}$.  Note that this occurs only when $n_\alpha=2m_\alpha$ is even. Again, let $z_i$ be such that
$$y_i=\w_\alpha[y_i] + z_i .$$
In this case,
$$z_i= \angb{(y_i)_\rho}{\alpha} \alpha^\vee = k m_\alpha \alpha^\vee \in Y_{Q,n} - Y_{Q,n}^{sc},$$
where $k$ is an odd number; thus, $\angb{(y_i)_\rho}{\alpha} + m_\alpha = (k+1)m_\alpha$ is divisible by $n_\alpha$.

We would like to compute the scalar
$$\det(\mca{S}_\mfr{R}(w_\alpha, i(\chi); r_w^{\rm un}))= \tau(w_\alpha, \chi, y_i, y_i).$$

\begin{prop} \label{P:spe}
If $y_i$ is an  $\alpha$-special point, then
$$\tau(w_\alpha, \chi, y_i, y_i)=\chi (\wt{h}_\alpha(\varpi^{n_\alpha}))^{\frac{\angb{(y_i)_\rho}{\alpha}  + m_\alpha }{n_\alpha}} \cdot \tilde{\gamma}(w_\alpha,  i(\chi))^{-1}.$$
\end{prop}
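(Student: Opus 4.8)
The argument parallels the proof of Proposition \ref{P:nor}, the new feature being that the Gauss sum occurring in the $\tau^2$-term no longer degenerates to the scalar $-q^{-1}$ but produces a genuine metaplectic contribution. Since $z_i := y_i - \w_\alpha[y_i] = \angb{(y_i)_\rho}{\alpha}\alpha^\vee$ lies in $Y_{Q,n}$, Theorem \ref{T:Sca} gives
$$\tau(w_\alpha, \chi, y_i, y_i) = \tau^1(w_\alpha, \chi, y_i, y_i) + \tau^2(w_\alpha, \chi, y_i, y_i).$$
Writing $X := \chi(\wt{h}_\alpha(\varpi^{n_\alpha}))$, the first term is $\tau^1(w_\alpha,\chi,y_i,y_i) = (1-q^{-1}) X^{k_{y_i,\alpha}}/(1-X)$ with $k_{y_i,\alpha} = \ceil{\angb{y_i}{\alpha}/n_\alpha}$; using $\angb{\rho}{\alpha}=1$, the relation $\angb{(y_i)_\rho}{\alpha} = k m_\alpha$ with $k$ odd, and $n_\alpha = 2m_\alpha$, an elementary evaluation of the ceiling yields $k_{y_i,\alpha} = (k+1)/2 = (\angb{(y_i)_\rho}{\alpha}+m_\alpha)/n_\alpha =: e$.

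For the second term I would proceed exactly as in Proposition \ref{P:nor}: rewrite $\s_{y_i} = \s_{\w_\alpha[y_i]+z_i}$, apply the transformation rule \eqref{SLCM1} together with Lemma \ref{L:W-act} to reduce to $\tau^2(w_\alpha,\chi,\s_{\w_\alpha[y_i]},\s_{y_i})$, and then substitute the explicit value from Theorem \ref{T:Sca}, so that
$$\tau^2(w_\alpha,\chi,y_i,y_i) = \chi(\s_{z_i}) \cdot \varepsilon^{N} \cdot \g\!\left(\angb{(y_i)_\rho}{\alpha}Q(\alpha^\vee)\right)$$
for an explicit integer $N$ which, as in Proposition \ref{P:abs}, one checks is even (so that $\varepsilon^N = 1$, since $\varepsilon^2 = 1$). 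The key arithmetic point is that, by Proposition \ref{P:dich}, both $m_\alpha$ and $Q_\alpha := Q(\alpha^\vee)/\gcd(n,Q(\alpha^\vee))$ are odd, and $k$ is odd; hence $\angb{(y_i)_\rho}{\alpha}Q(\alpha^\vee) \equiv n/2 \pmod{n}$, so $n \nmid \angb{(y_i)_\rho}{\alpha}Q(\alpha^\vee)$ while $(n/2) \mid \angb{(y_i)_\rho}{\alpha}Q(\alpha^\vee)$. Using $(u,\varpi)_n^{n/2} = (u,\varpi)_2$, the Gauss sum $\g(\angb{(y_i)_\rho}{\alpha}Q(\alpha^\vee))$ therefore collapses to the quadratic Gauss sum $\int_{O_F^\times}(u,\varpi)_2\,\psi(\varpi^{-1}u)\,du$, which has absolute value $q^{-1/2}$ — this is precisely the Weil-index contribution that distinguishes the metaplectic case from the $\alpha$-normal one.

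It then remains to collect the pieces. Rewriting $\chi(\s_{z_i})$ and the remaining $\varepsilon$-powers in terms of $\uchi_\alpha^\natural(\varpi) = \chi(\wt{h}_\alpha(\varpi^{m_\alpha}))\,\omega_\psi(\varpi)^{-1}$ (recall $(\uchi_\alpha^\natural)^2 = \uchi_\alpha$ and $\uchi_\alpha(\varpi) = X$), and recognizing the quadratic Gauss sum above as the ramified part of $\omega_\psi(\varpi)$ — equivalently of $\varepsilon(\tfrac12,\eta_{\varpi,(2)},\psi)$, via \eqref{epsilon weil} and the standard Gauss-sum formula for Tate $\varepsilon$-factors — one sees that $\tau^1 + \tau^2$ reorganizes, in the same way that $\frac{1-q^{-1}}{1-X}X^e - q^{-1}X^{e-1}$ reorganized into $X^e\gamma(w_\alpha,i(\chi))^{-1}$ in Proposition \ref{P:nor}, into $X^e$ times $\gamma(\tfrac12,\uchi_\alpha^\natural)/\gamma(0,\uchi_\alpha)$. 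By Theorem \ref{sweetthm} (and Lemma \ref{for0mod4}) this last ratio equals $\tilde{\gamma}(1,\uchi_\alpha^\natural) = \tilde{\gamma}(w_\alpha,i(\chi))^{-1}$, cf. \eqref{meta-g}, which is the claimed identity; alternatively one may match $\tau^1+\tau^2$ directly against the defining principal-value integral \eqref{sweetintegral} of $\tilde{\gamma}(1,\uchi_\alpha^\natural)$, thereby avoiding any auxiliary assumption on $p$ or $\mfr{f}(\psi)$.

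The main obstacle is exactly this last identification: keeping the Weil-index and Gauss-sum bookkeeping coherent — the $\varepsilon$-powers, the relation of the quadratic Gauss sum to $\omega_\psi(\varpi)$, and the measure normalizations — while working only under the minimal hypothesis $\bbmu_n \subset F^\times$, so that $\varepsilon$ and $(-1,\varpi)_2$ need not be trivial and $\omega_\psi$ may be ramified. This genuinely metaplectic phenomenon, forced by the constraint $n_\alpha \equiv 2 \pmod 4$ of Proposition \ref{P:dich}, has no analogue in the $\alpha$-normal case, where the Gauss sum degenerates to $-q^{-1}$ and no Weil index enters.
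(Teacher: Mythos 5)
Your proposal is correct and follows essentially the same route as the paper's own proof: decompose $\tau=\tau^1+\tau^2$, evaluate the ceiling to get the exponent $e=(k+1)/2$, reduce $\tau^2$ to a $\chi(\s_{z_i})$-factor times the Gauss sum $\g(\angb{(y_i)_\rho}{\alpha}Q(\alpha^\vee))$ (which, since the argument is $\equiv n/2 \bmod n$, becomes a quadratic Gauss sum equal to $q^{-1/2}\omega_\psi(\varpi)^{-1}$ by \cite[Lemma 4.3]{Ga2}), and then factor $\tau^1+\tau^2$ into $X^e\cdot\tilde{\gamma}(w_\alpha,i(\chi))^{-1}$. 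The only stylistic difference is that the paper, instead of asserting the $\varepsilon$-power $N$ is even outright, absorbs the raw power $\varepsilon^{Q(z_i)}$ into the conversion $({}^{\w_\alpha}\chi)(\s_{z_i})^{-1}\varepsilon^{Q(z_i)}=\chi(\s_{z_i})$, and it carries out the final identification by an explicit factorization of the ratio of linear factors rather than invoking Theorem~\ref{sweetthm} directly (which is already built into the definition \eqref{meta-g}); both routes are equivalent.
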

\begin{proof}
We have
$$\begin{aligned}
& \tau(w_\alpha, \chi, y_i, y_i) \\
=& \tau^1(w_\alpha, \chi, y_i, y_i) + \tau^2(w_\alpha, \chi, y_i, y_i) \\
=&  \frac{1-q^{-1} }{1-\chi (\wt{h}_\alpha(\varpi^{n_\alpha}))}  \chi (\wt{h}_\alpha(\varpi^{n_\alpha}))^{\ceil{\frac{\angb{y}{\alpha}}{n_\alpha}}} \\
& + ({}^{w_\alpha}\chi)(\s_z)^{-1} \cdot  \varepsilon^{D(\w_\alpha[y_i], z_i)} \cdot \varepsilon^{ \angb{(y_i)_\rho}{\alpha} \cdot D(y_i, \alpha^\vee) } \cdot \g(\angb{(y_i)_\rho}{\alpha}Q(\alpha^\vee)) \\
=&  \frac{1-q^{-1} }{1-\chi (\wt{h}_\alpha(\varpi^{n_\alpha}))}  \chi (\wt{h}_\alpha(\varpi^{n_\alpha}))^{\ceil{\frac{\angb{y_i}{\alpha}}{n_\alpha}}} + ({}^{w_\alpha}\chi)(\s_z)^{-1} \cdot  \varepsilon^{Q(z_i)} \cdot \g(m_\alpha \cdot Q(\alpha^\vee)) \\
=&  \frac{1-q^{-1} }{1-\chi (\wt{h}_\alpha(\varpi^{n_\alpha}))}  \chi (\wt{h}_\alpha(\varpi^{n_\alpha}))^{\ceil{\frac{\angb{y_i}{\alpha}}{n_\alpha}}} + \chi(\s_z) \cdot \g(m_\alpha \cdot Q(\alpha^\vee)).
\end{aligned} $$
Note that by assumption $z= \angb{(y_i)_\rho}{\alpha} \alpha^\vee =km_\alpha \alpha^\vee $ with $k$ odd. It follows that
$$\ceil{\frac{\angb{y_i}{\alpha}}{n_\alpha}}=\ceil{ \frac{km_\alpha + 1}{2m_\alpha} }= \frac{k+1}{2} = \ceil{\frac{\angb{(y_i)_\rho}{\alpha}}{n_\alpha}}= \frac{\angb{(y_i)_\rho}{\alpha}  + m_\alpha }{n_\alpha}$$
and
$$\chi(\s_z)=\chi (\wt{h}_\alpha(\varpi^{n_\alpha}))^{\frac{k+1}{2}} \cdot \chi( \wt{h}_\alpha(\varpi^{-m_\alpha}) ).$$
By \cite[Lemma 4.3]{Ga2}, one has $\g(m_\alpha \cdot Q(\alpha^\vee))=q^{-1/2} \cdot \omega_\psi(\varpi)^{-1}$ (see also \cite{Szp5-2}),  and therefore it gives that
$$\begin{aligned}
& \tau(w_\alpha, \chi, y_i, y_i) \\
=&\chi (\wt{h}_\alpha(\varpi^{n_\alpha}))^{\frac{k+1}{2}} \cdot
\left(
\frac{1-q^{-1} }{1-\chi (\wt{h}_\alpha(\varpi^{n_\alpha}))}   + \chi( \wt{h}_\alpha(\varpi^{-m_\alpha}) ) \cdot \g(m_\alpha \cdot Q(\alpha^\vee))
\right) \\
=&\chi (\wt{h}_\alpha(\varpi^{n_\alpha}))^{\frac{k+1}{2} } \cdot
\left(
\frac{1-q^{-1} }{1-\chi (\wt{h}_\alpha(\varpi^{n_\alpha}))}   + \chi( \wt{h}_\alpha(\varpi^{-m_\alpha}) ) \cdot   q^{-1/2} \omega_\psi(\varpi)^{-1}
\right) \\
=& \chi (\wt{h}_\alpha(\varpi^{n_\alpha}))^{\frac{k+1}{2} } \cdot
\frac{( 1+ q^{-1/2} \chi( \wt{h}_\alpha(\varpi^{-m_\alpha}) ) \omega_\psi(\varpi)^{-1}) \cdot ( 1- q^{-1/2} \chi( \wt{h}_\alpha(\varpi^{m_\alpha}) ) \omega_\psi(\varpi)^{-1} )}{1-\chi (\wt{h}_\alpha(\varpi^{n_\alpha}))}     \\
=& \chi (\wt{h}_\alpha(\varpi^{n_\alpha}))^{\frac{k+1}{2} } \cdot
\frac{( 1- q^{-1} \chi( \wt{h}_\alpha(\varpi^{-n_\alpha}) ) ) \cdot ( 1- q^{-1/2} \chi( \wt{h}_\alpha(\varpi^{m_\alpha}) ) \omega_\psi(\varpi)^{-1} )}{\big( 1-\chi (\wt{h}_\alpha(\varpi^{n_\alpha})) \big) \cdot \big(1- q^{-1/2} \chi( \wt{h}_\alpha(\varpi^{-m_\alpha}) ) \omega_\psi(\varpi)^{-1} \big)}     \\
=& \chi (\wt{h}_\alpha(\varpi^{n_\alpha}))^{\frac{k+1}{2} } \cdot
\tilde{\gamma}(w_\alpha,  i(\chi))^{-1}.
\end{aligned} $$
This completes the proof.
\end{proof}

\subsection{The matrix $\mca{C}(\mfr{B}_{{}^\w\chi}, \mfr{B}_\chi; r_w^{\rm un})$}
Recall the matrix $\mca{C}( \mfr{B}_{ {}^\w \chi}, \mfr{B}_\chi; r_w^{\rm un})$ representing the homomorphism
$$\mca{C}_{r_w^{\rm un}}: \Wh_\psi (I(\chi))  \to \Wh_\psi (I({}^\w \chi))$$
as in \eqref{E:C} with chosen bases $\mfr{B}_\chi$ and $\mfr{B}_{{}^\w\chi}$ on the two sides respectively. Here $\mfr{B}_\chi$ and $\mfr{B}_{{}^\w\chi}$ depend on the ordered set
$$\mfr{R}=\set{\s_{y_i}:  1\le i \le \val{\msc{X}_{Q,n}}},$$
where
$$\set{y_i: 1\le i\le \val{\msc{X}_{Q,n}}} \subset Y$$
 is an ordered set of representatives of $\msc{X}_{Q,n}$.

In order to compute entries of $\mca{C}( \mfr{B}_{ {}^\w \chi}, \mfr{B}_\chi; r_w^{\rm un})$, we do a further analysis of it as follows. First, note that $\mca{C}_{r_w^{\rm un}}$ is induced (see \S \ref{SS:lcm}) from the natural isomorphism of vector spaces
$$\mca{C}_0: i(\chi)^\vee \to ({}^w i(\chi))^\vee  \to (i({}^\w \chi))^\vee.$$
Here for the underlying vector spaces, we have $i(\chi)= {}^w i(\chi)$, which gives the first identity. The second map arises from dualizing the isomorphism $r_w^{\rm un}: {}^w i(\chi) \simeq i({}^\w \chi)$ given in \eqref{r-w}.
For clarity of notation, for any $\gamma\in  \wt{T}$ we write
$$l(\gamma; \chi):=l_\gamma^\chi \in i(\chi)^\vee$$ for the element introduced in \S \ref{SS:para}; similarly, we have $$l(\gamma; {}^\w \chi) \in i({}^\w \chi)^\vee.$$
The set $\mfr{R}$ gives an ordered basis
$$\mfr{B}_\chi^0=\set{l(\s_y; \chi):  \s_y \in \mfr{R}} \subset i(\chi)^\vee$$
for $i(\chi)^\vee$; similarly, we have an ordered basis
$$\mfr{B}_{{}^\w\chi}^0= \set{l(\s_y; {}^\w \chi):  \s_y \in \mfr{R}} \subset i({}^\w \chi)^\vee.$$
Let $$\mca{C}_0(\mfr{B}_{{}^\w\chi}^0, \mfr{B}_\chi^0; r_w^{\rm un})$$ be the matrix representing $\mca{C}_0$ with respect to these two bases.
Transporting $\mfr{B}_\chi^0$ to the right hand side of the isomorphism
$$i(\chi)^\vee \simeq \Wh_\psi(I(\chi))$$
in \S \ref{SS:para} gives the ordered basis $\mfr{B}_\chi$ of the right hand side. Similarly, one obtains the ordered basis $\mfr{B}_{{}^\w\chi}$ of $\Wh_\psi (I({}^\w \chi))$. In view of this, we have
$$\mca{C}_0(\mfr{B}_{{}^\w\chi}^0, \mfr{B}_\chi^0; r_w^{\rm un} )= \mca{C}(\mfr{B}_{{}^\w\chi}, \mfr{B}_\chi; r_w^{\rm un}).$$
Recall the action $w(\gamma):=w \gamma w^{-1}$ of the representative $w$ on any $\gamma \in \wt{T}$.

\begin{prop} \label{P:C0}
For every $\gamma\in \wt{T}$, one has
$$\mca{C}_0\left( l(\gamma; \chi) \right)= l(w(\gamma); {}^\w \chi).$$
\end{prop}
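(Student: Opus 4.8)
## Proof proposal for Proposition \ref{P:C0}

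The plan is to unwind all definitions and track what each linear functional does to a basis vector of the appropriate induced representation. Recall from \S\ref{SS:para} that for $\gamma\in\wt T$ the functional $l(\gamma;\chi)\in i(\chi)^\vee$ is characterized by $l(\gamma;\chi)(\phi_{\gamma'})=\delta_{\gamma\wt A,\gamma'\wt A}$ on the basis $\{\phi_{\gamma'}\}$ of $i(\chi)=\Ind_{\wt A}^{\wt T}\chi$, where $\phi_{\gamma'}$ is supported on $\wt A\cdot (\gamma')^{-1}$ with $\phi_{\gamma'}((\gamma')^{-1})=1$; more precisely $l(\gamma;\chi)(f)$ extracts (a $\chi$-twisted version of) the value of $f$ at $\gamma^{-1}$. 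The first thing I would do is make this extraction formula completely explicit: for a general $f\in i(\chi)$, one has $l(\gamma;\chi)(f)=f(\gamma^{-1})$ once one fixes the normalization built into the $\phi$'s (any discrepancy is a $\chi$-scalar that I will need to keep honest). Then $\mca C_0$ is the dual of the map $r_w^{\rm un}\colon {}^w i(\chi)\to i({}^\w\chi)$, which by \eqref{r-w} sends $f\mapsto \big(t\mapsto f(w^{-1}tw)\big)$; so for $h\in i({}^\w\chi)$ we have $(\mca C_0 \ell)(h) = \ell\big((r_w^{\rm un})^{-1}(h)\big)$ where we view $\ell\in i(\chi)^\vee=({}^wi(\chi))^\vee$ (the underlying vector spaces being equal).

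Next I would compute $\mca C_0(l(\gamma;\chi))$ by evaluating it on an arbitrary $h\in i({}^\w\chi)$. Unwinding, $\mca C_0(l(\gamma;\chi))(h) = l(\gamma;\chi)\big((r_w^{\rm un})^{-1}(h)\big)$, and $(r_w^{\rm un})^{-1}(h)$ is the function $f$ with $f(w^{-1}tw)=h(t)$, i.e.\ $f(\gamma^{-1}) = h(w\gamma^{-1}w^{-1})=h\big((w(\gamma))^{-1}\big)$. Using the explicit extraction formula above, $l(\gamma;\chi)(f)=f(\gamma^{-1})=h\big((w(\gamma))^{-1}\big)=l(w(\gamma);{}^\w\chi)(h)$. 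Since this holds for all $h$, we conclude $\mca C_0(l(\gamma;\chi))=l(w(\gamma);{}^\w\chi)$, which is exactly the claim. The one genuinely delicate point is bookkeeping: the normalizations $l_{\gamma a}=\chi(a)^{-1}l_\gamma$ and $\phi_{\gamma a}=\chi(a)\phi_\gamma$ for $a\in\wt A$, together with the fact that $w(\wt A)$ need not equal $\wt A$, mean one must check that $l(w(\gamma);{}^\w\chi)$ is well-defined independent of the $\wt A$-coset representative and that the $\chi$-twists on both sides match up correctly under the substitution $t\mapsto w^{-1}tw$ — here one uses $({}^\w\chi)(\wt z)=\chi(w^{-1}\wt z w)$ and the fact that $r_w^{\rm un}$ is indeed a $\wt T$-equivariant isomorphism ${}^wi(\chi)\to i({}^\w\chi)$.

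The main obstacle is therefore not any deep input but rather pinning down the precise normalization convention in the isomorphism $i(\chi)^\vee\simeq\Ftn(i(\chi))$ of \S\ref{SS:para} and verifying that it is compatible with the substitution $t\mapsto w^{-1}tw$ defining $r_w^{\rm un}$; once that is in place the identity is a one-line computation. I would therefore structure the written proof as: (1) recall the explicit form of $l(\gamma;\chi)$ as ``evaluate at $\gamma^{-1}$'', (2) recall $(r_w^{\rm un})^{-1}$ explicitly, (3) compose and read off the result, and (4) remark that well-definedness is checked by the $\wt A$-equivariance identities. Since $\mca C_0(\mfr B_{{}^\w\chi}^0,\mfr B_\chi^0;r_w^{\rm un})=\mca C(\mfr B_{{}^\w\chi},\mfr B_\chi;r_w^{\rm un})$, this proposition immediately computes the change-of-basis matrix entrywise in terms of the combinatorics of the map $\s_y\mapsto w(\s_y)$ on $\wt T/\wt A\simeq\msc X_{Q,n}$, which is what is needed downstream.
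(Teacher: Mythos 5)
Your proposal is correct and takes essentially the same approach as the paper: both reduce to the observation that $l(\gamma;\chi)(f)=f(\gamma^{-1})$ and then unwind $r_w^{\rm un}$ via the substitution $t\mapsto w^{-1}tw$. The only cosmetic difference is that the paper proves the equivalent inverse statement $\mca{C}_0^{-1}\left(l(\gamma;{}^\w\chi)\right)=l(w^{-1}(\gamma);\chi)$ by evaluating on the basis vectors $\phi_{\gamma'}$, whereas you compute $\mca{C}_0$ forwards on an arbitrary $h\in i({}^\w\chi)$; this is the same computation run in the opposite direction.
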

\begin{proof}
We will show $l(w^{-1}(\gamma); \chi) = \mca{C}_0^{-1} \left(l(\gamma; {}^\w \chi)\right)$ instead, which is clearly equivalent to the assertion. For $\gamma \in \wt{T}$ let $\phi_\gamma \in i(\chi)$ be the unique element such that
$$\text{supp}(\phi_\gamma)= \wt{A} \cdot \gamma^{-1}, \text{ and } \phi_\gamma( \gamma^{-1}  )=1.$$
We have
$$(r_w^{\rm un}(\phi_\gamma))( (w(\gamma))^{-1})= (r_w^{\rm un}(\phi_\gamma))(w \gamma^{-1} w^{-1})= \phi_\gamma(\gamma^{-1})=1.$$
It is also easy to check that the support of  $(r_w(\phi_\gamma))$ is $\wt{A}\cdot w(\gamma)^{-1}$.
This shows that
$$ \mca{C}_0^{-1} \left(l(\gamma; {}^\w \chi)\right)(\phi_{\gamma'})  = l(\gamma; {}^\w \chi) \circ r_w(\phi_{\gamma'}) =
\begin{cases}
1 & \text{ if } \gamma=w(\gamma') ;\\
0 & \text{ if } \gamma \notin w(\gamma') \cdot \wt{A}.
\end{cases}
$$
This implies that $\mca{C}_0^{-1} \left(l(\gamma; {}^\w \chi)\right)=l(w^{-1}(\gamma); \chi) $ and the proof is completed.
\end{proof}

\begin{cor} \label{C:C0}
For $\w_\alpha$ with $\alpha \in \Delta$, one has
$$ \mca{C}_0\left( l(\s_y; \chi) \right)=\varepsilon^{\angb{y}{\alpha} D(y,\alpha^\vee)} \cdot l(\s_{\w_\alpha(y)}; {}^{\w_\alpha} \chi)$$
If $\bbmu_{2n} \subset F^\times$, then for every $\w \in W$ one has
$$\mca{C}_0\left( l(\s_y; \chi) \right)= l(\s_{\w(y)}; {}^\w \chi).$$
\end{cor}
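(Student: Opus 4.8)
The plan is to obtain Corollary \ref{C:C0} as an immediate consequence of Proposition \ref{P:C0} together with Lemma \ref{L:W-act}; the only non-formal ingredient is keeping track of the $\bbmu_n$-valued cocycle factor produced when one conjugates $\s_y$ by the representative $w_\alpha$.

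\textbf{First formula.} I would specialize Proposition \ref{P:C0} to $\gamma=\s_y$ and $\w=\w_\alpha$, obtaining $\mca{C}_0(l(\s_y;\chi))=l(w_\alpha(\s_y);{}^{\w_\alpha}\chi)$. By Lemma \ref{L:W-act}, $w_\alpha(\s_y)=\varepsilon^{m}\cdot\s_{\w_\alpha(y)}$ with $m:=\angb{y}{\alpha}D(y,\alpha^\vee)$, and $\varepsilon^m$ is central, hence lies in $\wt{A}$. Applying the transformation rule $l_{\gamma a}=\chi(a)^{-1}l_\gamma$ of \S\ref{SS:para} to the genuine character ${}^{\w_\alpha}\chi$ — and using that $\varepsilon$ is central, so $({}^{\w_\alpha}\chi)(\varepsilon^m)=\chi(\varepsilon^m)$, and that $\chi$ restricts to the fixed embedding on $\bbmu_n$, so $\chi(\varepsilon^m)=\varepsilon^m$ — I get $l(\varepsilon^m\s_{\w_\alpha(y)};{}^{\w_\alpha}\chi)=\varepsilon^{-m}\,l(\s_{\w_\alpha(y)};{}^{\w_\alpha}\chi)$. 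Since $\varepsilon^2=(-1,\varpi)_n^2=1$, we have $\varepsilon^{-m}=\varepsilon^m$, which gives exactly the asserted identity.

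\textbf{Second formula.} Under the stronger hypothesis $\bbmu_{2n}\subset F^\times$, Lemma \ref{L:W-act} gives $\varepsilon=1$ and $w(\s_y)=\s_{\w(y)}$ for every $\w\in W$ and $y\in Y$; substituting this into Proposition \ref{P:C0} applied to the natural representative $w$ of $\w$ yields $\mca{C}_0(l(\s_y;\chi))=l(\s_{\w(y)};{}^{\w}\chi)$ at once.

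The argument is essentially a substitution, so I do not expect a genuine obstacle; the only place that needs a moment's care is the bookkeeping of the $\bbmu_n$-factor, in particular the observation that $\varepsilon$ is $2$-torsion, so that the sign of its exponent is immaterial and the displayed formula is consistent.
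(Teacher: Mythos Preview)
Your proof is correct and follows exactly the approach of the paper, which simply says the result follows from combining Lemma \ref{L:W-act} and Proposition \ref{P:C0}. You have supplied the details the paper omits, including the careful observation that $\varepsilon^2=((-1)^2,\varpi)_n=1$, so the sign of the exponent on $\varepsilon$ is irrelevant.
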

\begin{proof}
The result follows from combining Lemma \ref{L:W-act} and Proposition \ref{P:C0}.
\end{proof}

Denote by
$$\w_\alpha^\msc{P}: \msc{X}_{Q,n} \to \msc{X}_{Q,n}$$
the map given by
$$\w_\alpha^\msc{P}(y+Y_{Q,n}) = \w_\alpha(y) + Y_{Q,n}.$$
Let $s_\alpha \in \text{Perm}(\msc{X}_{Q,n})$ be the element the permutation group of the set $[1, \val{\msc{X}_{Q,n}}] \cap \N$ such that
$$\w_\alpha^\msc{P}(y_i+Y_{Q,n}) = y_{s_\alpha(i)} + Y_{Q,n}$$
for all $\s_{y_i} \in \mfr{R}$. Denote by
$$\text{sgn}(\w_\alpha^\msc{P}):=\text{sgn}(s_\alpha)$$ its sign, which is independent of the labelling of elements in $\msc{X}_{Q,n}$.

\begin{prop} \label{P:det-C}
Assume $\bbmu_{2n} \subset F^\times$. Then,
$$\det ( \mca{C}(\mfr{B}_{{}^{\w_\alpha} \chi}, \mfr{B}_\chi; r_w^{\rm un}) ) = {\rm sgn}(\w_\alpha^\msc{P})  \cdot \chi(\wt{h}_\alpha(\varpi^{-\angb{\sum_{i=1}^d y_i}{\alpha}} ).$$
\end{prop}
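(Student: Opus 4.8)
The plan is to reduce the assertion to a determinant computation for a monomial matrix, and then to evaluate the resulting product of characters by a reflection-lattice calculation. By the identity $\mca{C}_0(\mfr{B}_{{}^{\w_\alpha}\chi}^0, \mfr{B}_\chi^0; r_w^{\rm un}) = \mca{C}(\mfr{B}_{{}^{\w_\alpha}\chi}, \mfr{B}_\chi; r_w^{\rm un})$ recorded just above, it suffices to compute the determinant of the matrix of $\mca{C}_0\colon i(\chi)^\vee \to i({}^{\w_\alpha}\chi)^\vee$ with respect to the ordered bases $\mfr{B}_\chi^0 = \set{l(\s_{y_i};\chi) : 1\le i\le d}$ and $\mfr{B}_{{}^{\w_\alpha}\chi}^0 = \set{l(\s_{y_i};{}^{\w_\alpha}\chi) : 1\le i\le d}$, where $d = \val{\msc{X}_{Q,n}}$. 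Since $\bbmu_{2n}\subset F^\times$, Corollary \ref{C:C0} gives $\mca{C}_0(l(\s_{y_i};\chi)) = l(\s_{\w_\alpha(y_i)};{}^{\w_\alpha}\chi)$, so the only work is to re-expand the right-hand side in the basis $\mfr{B}_{{}^{\w_\alpha}\chi}^0$.

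First I would write $\w_\alpha(y_i) = y_{s_\alpha(i)} + z_i$ with $z_i\in Y_{Q,n}$; this is legitimate because $\w_\alpha(y_i)$ represents the class $\w_\alpha^\msc{P}(\hat{y}_i) = \hat{y}_{s_\alpha(i)}$ in $\msc{X}_{Q,n}$. Under the hypothesis $\bbmu_{2n}\subset F^\times$ the map $y\mapsto \s_y$ is a homomorphism (Lemma \ref{L:W-act}) and $\s_{z_i}\in\wt{A}$ since $z_i\in Y_{Q,n}$; combining these with the transformation rule $l_{\gamma a}^{{}^{\w_\alpha}\chi} = ({}^{\w_\alpha}\chi)(a)^{-1} l_\gamma^{{}^{\w_\alpha}\chi}$ from \S\ref{SS:para} yields $l(\s_{\w_\alpha(y_i)};{}^{\w_\alpha}\chi) = ({}^{\w_\alpha}\chi)(\s_{z_i})^{-1}\cdot l(\s_{y_{s_\alpha(i)}};{}^{\w_\alpha}\chi)$. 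Hence the matrix of $\mca{C}_0$ is a generalized permutation matrix that factors as (permutation matrix of $s_\alpha$) $\cdot\ \diag\bigl(({}^{\w_\alpha}\chi)(\s_{z_1})^{-1},\dots,({}^{\w_\alpha}\chi)(\s_{z_d})^{-1}\bigr)$, and its determinant equals $\text{sgn}(\w_\alpha^\msc{P})\cdot\prod_{i=1}^d ({}^{\w_\alpha}\chi)(\s_{z_i})^{-1}$.

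It then remains to identify the product of characters with $\chi(\wt{h}_\alpha(\varpi^{-\angb{\sum_{i=1}^d y_i}{\alpha}}))$. Using $({}^{\w_\alpha}\chi)(\s_{z_i}) = \chi(w_\alpha^{-1}\s_{z_i}w_\alpha) = \chi(\s_{\w_\alpha(z_i)})$ (again Lemma \ref{L:W-act}, as $\bbmu_{2n}\subset F^\times$) together with the homomorphism property on $\wt{A}$, the product becomes $\chi(\s_{\w_\alpha(\sum_i z_i)})^{-1}$. Since $s_\alpha$ merely permutes the indices, $\sum_i z_i = \w_\alpha(\sum_i y_i) - \sum_i y_i = -\angb{\sum_i y_i}{\alpha}\,\alpha^\vee$; applying $\w_\alpha$ negates $\alpha^\vee$, so $\w_\alpha(\sum_i z_i) = \angb{\sum_i y_i}{\alpha}\,\alpha^\vee$, and the standing identity $\s_{k\alpha^\vee} = \wt{h}_\alpha(\varpi^k)$ (valid because $\eta_n = \mbm{1}$) converts $\chi(\s_{\angb{\sum_i y_i}{\alpha}\alpha^\vee})^{-1}$ into the asserted expression.

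The one point requiring genuine care is the compatibility $({}^{\w_\alpha}\chi)(\s_{z_i}) = \chi(\s_{\w_\alpha(z_i)})$ at the level of the fixed unramified \emph{extensions} to $\wt{A}$, rather than merely on $Z(\wt{T})$: one must observe that $w_\alpha\in K$ normalizes both $Z(\wt{T})$ and $T\cap K$, and carries unramified characters to unramified characters, so conjugation by $w_\alpha$ intertwines the canonical extensions of $\chi$ and ${}^{\w_\alpha}\chi$. Once this compatibility is secured, the remaining steps are routine bookkeeping with reflections and the lattices $Y$, $Y_{Q,n}$.
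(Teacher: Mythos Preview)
Your proof is correct and follows essentially the same approach as the paper: both reduce to the monomial matrix via Corollary \ref{C:C0}, extract $\text{sgn}(\w_\alpha^\msc{P})\prod_i({}^{\w_\alpha}\chi)(\s_{z_i})^{-1}$, convert to $\chi$ via conjugation, and collapse the sum $\sum_i \w_\alpha(z_i)$ using that $s_\alpha$ permutes the representatives. Your added remark on the compatibility of the unramified extensions to $\wt{A}$ under conjugation by $w_\alpha\in K$ is a nice point of rigor that the paper leaves implicit.
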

\begin{proof}
As noted, it suffices to compute the determinant of $\mca{C}_0(\mfr{B}_{{}^\w\chi}^0, \mfr{B}_\chi^0; r_w^{\rm un} )$.  For any $y_i$ with $\s_{y_i} \in \mfr{R}$, let $z_i \in Y_{Q,n}$ be such that
$$\w_\alpha(y_i) = y_{s_\alpha(i)} + z_i.$$
It follows from Corollary \ref{C:C0} that for every $i$,
$$\mca{C}_0(l(\s_{y_i}; \chi))=  l(\s_{\w_\alpha(y_i)}; {}^{\w_\alpha} \chi) = ({}^{\w_\alpha}\chi )(\s_{z_i})^{-1} \cdot  l(\s_{y_{s_\alpha(i)}}; {}^{\w_\alpha} \chi).$$
That is, the $(y_{s_\alpha(i)}, y_i)$-entry of the matrix $\mca{C}(\mfr{B}_{{}^{\w_\alpha} \chi}, \mfr{B}_\chi; r_w^{\rm un})$ is $({}^{\w_\alpha}\chi )(\s_{z_i})^{-1}$. Write $d:=\val{\msc{X}_{Q,n}}$. This gives that
$$\begin{aligned}
\det ( \mca{C}(\mfr{B}_{{}^{\w_\alpha} \chi}, \mfr{B}_\chi; r_w^{\rm un}) )  = & \text{sgn}(\w_\alpha^\msc{P}) \cdot \prod_{i=1}^d  ({}^{\w_\alpha} \chi)(\s_{z_i})^{-1}  \\
= & \text{sgn}(\w_\alpha^\msc{P})  \cdot \chi\left( \varpi^{\sum_{i=1}^d \w_\alpha(z_i)        } \right)^{-1} \\
=&  \text{sgn}(\w_\alpha^\msc{P})  \cdot \chi\left( \varpi^{\sum_{i=1}^d (y_i -\w_\alpha(y_i)        } \right)^{-1} \\
=& \text{sgn}(\w_\alpha^\msc{P})  \cdot \chi(\varpi^{-\angb{\sum_{i=1}^d y_i}{\alpha} \cdot \alpha^\vee} ) \\
= & {\rm sgn}(\w_\alpha^\msc{P})  \cdot \chi(\wt{h}_\alpha(\varpi^{-\angb{\sum_{i=1}^d y_i}{\alpha}} ).
\end{aligned}$$
This completes the proof.
\end{proof}

\subsection{Summary on $\det(\mca{M}_\mfr{B}(w_\alpha, i(\chi)))$}
For a simple root $\alpha \in \Delta$, denote
\begin{equation} \label{dba}
d=\val{\msc{X}_{Q,n}}, \quad b_\alpha= \val{(\msc{X}_{Q,n})^{W_\alpha}}, \quad a_\alpha= \frac{ d - b_\alpha }{2},
\end{equation}
where $W_\alpha=\set{\text{id}, \w_\alpha}$.

\begin{thm} \label{T:M1}
Assume $\bbmu_{2n} \subset F^\times$. Let $\mca{M}_\mfr{B}(w_\alpha, i(\chi))$ be any local coefficients matrix representing the endomorphism $\mca{T}(w_\alpha, i(\chi)) \in {\rm End}(\Wh(I(\chi)))$. Then,
$$\begin{aligned}
& \det(\mca{M}_\mfr{B}(w_\alpha, i(\chi))) \\
= & {\rm sgn}(\w_\alpha^\msc{P}) \cdot (-1)^{a_\alpha} \cdot \mu(w_\alpha,  i(\chi))^{-a_\alpha}  \cdot
\begin{cases}
\chi(\wt{h}_\alpha(\varpi^{b_\alpha n_\alpha - d}))  \cdot \gamma(w_\alpha,  i(\chi))^{-b_\alpha} & \text{ if } Y_\alpha^{\rm spe} =\emptyset , \\
 \chi(\wt{h}_\alpha(\varpi^{b_\alpha m_\alpha - d}))  \cdot \tilde{\gamma}(w_\alpha,  i(\chi))^{-b_\alpha} & \text{ if } Y_\alpha^{\rm nor} =\emptyset,
 \end{cases}
\end{aligned} $$
where $\mu(w_\alpha,  i(\chi))$ is the Plancherel measure associated with $w_\alpha$ and $i(\chi)$, and $\gamma(w_\alpha,  i(\chi))$ (resp. $\tilde{\gamma}(w_\alpha,  i(\chi))$) the gamma-factor (resp. metaplectic-gamma factor) as in \eqref{g-inv}  and \eqref{meta-g}. In particular, if $Y_\alpha^{\rm spe} = Y_\alpha^{\rm nor}=\emptyset$, then
$$\det(\mca{M}_\mfr{B}(w_\alpha, i(\chi)))= {\rm sgn}(\w_\alpha^\msc{P}) \cdot (-1)^{d/2} \cdot \chi(\wt{h}_\alpha(\varpi^{- d}))  \cdot \mu(w_\alpha,  i(\chi))^{-d/2}.$$
\end{thm}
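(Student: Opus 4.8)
The plan is to combine the two structural results already established: the factorization of Lemma~\ref{L:comp} expressing $\mca{M}_{\mfr{B}_\chi}(w_\alpha, i(\chi))$ as a product $\mca{S}_\mfr{R}(w_\alpha, i(\chi); r_{w_\alpha}^{\rm un}) \circ \mca{C}(\mfr{B}_{{}^{\w_\alpha}\chi}, \mfr{B}_\chi; r_{w_\alpha}^{\rm un})$, and the block decomposition of Corollary~\ref{C:diag} applied with $W' = W_\alpha = \set{\id, \w_\alpha}$. Taking determinants converts the statement into
\[
\det(\mca{M}_\mfr{B}(w_\alpha, i(\chi))) = \det(\mca{S}_\mfr{R}(w_\alpha, i(\chi); r_{w_\alpha}^{\rm un})) \cdot \det(\mca{C}(\mfr{B}_{{}^{\w_\alpha}\chi}, \mfr{B}_\chi; r_{w_\alpha}^{\rm un})),
\]
and the second factor is already computed in Proposition~\ref{P:det-C}. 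So the work is concentrated in evaluating the first factor. Since the local coefficients matrix $\mca{M}_\mfr{B}(w_\alpha, i(\chi))$ has the same determinant for any choice of ordered basis $\mfr{B}$ (it represents the fixed endomorphism $\mca{T}(w_\alpha, i(\chi))^*$), it suffices to prove the formula for $\mfr{B} = \mfr{B}_\chi$ attached to a convenient choice of representative set $\mfr{R} = \set{\s_{y_i}}$ for $\msc{X}_{Q,n}$.

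Next I would decompose $\mfr{R}$ according to the $W_\alpha$-orbit structure on $\msc{X}_{Q,n}$: each free orbit contributes a $2\times2$ block, and each fixed point contributes a $1\times1$ block. By Corollary~\ref{C:diag}, $\mca{S}_\mfr{R}(w_\alpha, i(\chi); r_{w_\alpha}^{\rm un})$ is block-diagonal with respect to this decomposition (up to the permutation implicit in ordering $\mfr{R}$ into equivalence classes, whose sign is absorbed into ${\rm sgn}(\w_\alpha^\msc{P})$ together with the permutation sign from Proposition~\ref{P:det-C} — I would need to track this carefully). The determinant of the scattering matrix is then the product of the block determinants. For the $a_\alpha$ free $2\times2$ blocks, Proposition~\ref{P:abs} gives each determinant as $(-1) \cdot \chi(\wt{h}_\alpha(\varpi^{n_\alpha}))^{\angb{(y_i)_\rho + (y_j)_\rho}{\alpha}/n_\alpha} \cdot \mu(w_\alpha, i(\chi))^{-1}$, accounting for the $(-1)^{a_\alpha}$ and $\mu(w_\alpha, i(\chi))^{-a_\alpha}$. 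For the $b_\alpha$ trivial blocks, by Corollary~\ref{C:dich} we are in exactly one of two mutually exclusive situations, $Y_\alpha^{\rm spe} = \emptyset$ (all fixed points $\alpha$-normal) or $Y_\alpha^{\rm nor} = \emptyset$ (all $\alpha$-special). In the former case Proposition~\ref{P:nor} gives $\chi(\wt{h}_\alpha(\varpi^{n_\alpha}))^{\angb{(y_i)_\rho}{\alpha}/n_\alpha + 1} \cdot \gamma(w_\alpha, i(\chi))^{-1}$ per block; in the latter Proposition~\ref{P:spe} gives $\chi(\wt{h}_\alpha(\varpi^{n_\alpha}))^{(\angb{(y_i)_\rho}{\alpha}+m_\alpha)/n_\alpha} \cdot \tilde{\gamma}(w_\alpha, i(\chi))^{-1}$ per block, producing $\gamma(w_\alpha, i(\chi))^{-b_\alpha}$ or $\tilde{\gamma}(w_\alpha, i(\chi))^{-b_\alpha}$.

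The remaining task is a bookkeeping computation of the total power of $\chi(\wt{h}_\alpha(\varpi))$ (equivalently of the unramified character $\uchi_\alpha$). Collecting the exponents: the free blocks contribute $\sum$ over free orbits of $\angb{(y_i)_\rho + (y_j)_\rho}{\alpha}$, the trivial blocks contribute $\sum$ over fixed points of $\angb{(y_i)_\rho}{\alpha} + n_\alpha$ (normal case) or $\angb{(y_i)_\rho}{\alpha} + m_\alpha$ (special case), and Proposition~\ref{P:det-C} contributes $-\angb{\sum_{i=1}^d y_i}{\alpha}$. Since $\angb{y_\rho}{\alpha} = \angb{y}{\alpha} - \angb{\rho}{\alpha} = \angb{y}{\alpha} - 1$, summing $\angb{(y_i)_\rho}{\alpha}$ over all $d$ representatives gives $\angb{\sum y_i}{\alpha} - d$, which cancels against the contribution from Proposition~\ref{P:det-C} up to the additive constant; the leftover is $-d$, plus $b_\alpha n_\alpha$ (normal case) or $b_\alpha m_\alpha$ (special case) from the ``$+n_\alpha$''/``$+m_\alpha$'' terms on each of the $b_\alpha$ trivial blocks. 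This yields $\chi(\wt{h}_\alpha(\varpi^{b_\alpha n_\alpha - d}))$ or $\chi(\wt{h}_\alpha(\varpi^{b_\alpha m_\alpha - d}))$ respectively. I expect the main obstacle to be precisely this exponent arithmetic: one must verify that the pairing of the free orbit $\set{y_i, y_j}$ satisfies $\angb{(y_i)_\rho + (y_j)_\rho}{\alpha} \equiv 0 \pmod{n_\alpha}$ (so the per-block exponent $\angb{(y_i)_\rho+(y_j)_\rho}{\alpha}/n_\alpha$ used in Proposition~\ref{P:abs} is integral — this follows from $y_i \equiv \w_\alpha[y_j] \bmod Y_{Q,n}$, i.e.\ $y_i - y_j + \angb{(y_j)_\rho}{\alpha}\alpha^\vee \in Y_{Q,n}$, pairing with $\alpha$), and that after summing over a full set of representatives the dependence on the individual $y_i$ washes out leaving only $d$, $b_\alpha$, $n_\alpha$, $m_\alpha$. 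The special case $Y_\alpha^{\rm spe} = Y_\alpha^{\rm nor} = \emptyset$ is then immediate by setting $b_\alpha = 0$, $a_\alpha = d/2$.
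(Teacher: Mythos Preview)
Your proposal is correct and follows essentially the same route as the paper: factor via Lemma~\ref{L:comp}, compute $\det(\mca{S}_\mfr{R})$ block-by-block using Propositions~\ref{P:abs}, \ref{P:nor}, \ref{P:spe} (with Corollary~\ref{C:dich} giving the dichotomy), multiply by $\det(\mca{C})$ from Proposition~\ref{P:det-C}, and do the exponent bookkeeping exactly as you describe. One small clarification: there is no extra permutation sign from ordering $\mfr{R}$ into $W_\alpha$-equivalence classes --- you simply choose $\mfr{R}$ so that $\mca{S}_\mfr{R}$ is already block-diagonal, and ${\rm sgn}(\w_\alpha^{\msc{P}})$ arises solely from Proposition~\ref{P:det-C}.
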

\begin{proof}
Note that, a priori, $\det(\mca{M}_\mfr{B}(w, i(\chi)))$ is independent of $\mfr{B}$ chosen. Now the result follows from combining Propositions \ref{P:abs}, \ref{P:nor}, \ref{P:spe} and \ref{P:det-C} as follows.

We choose an ordered $\mfr{R}=\set{\s_{y_i}: 1\le i\le d}$ such that $\set{y_i: 1\le i \le d}$ is a set of representatives of $\msc{X}_{Q,n}$.

From Propositions \ref{P:abs} and \ref{P:nor}, we get that if $Y_\alpha^{\rm spe} =\emptyset$, then
$$\begin{aligned}
& \det( \mca{S}_\mfr{R}(w_\alpha, i(\chi); r_w^{\rm un})  ) \\
= & (-1)^{a_\alpha} \cdot \chi(\wt{h}_\alpha(\varpi^{n_\alpha}))^{ \frac{\angb{\sum_i y_i}{\alpha}- d}{n_\alpha} + b_\alpha  } \cdot \mu(w_\alpha,  i(\chi))^{- a_\alpha} \cdot \gamma(w_\alpha,  i(\chi))^{- b_\alpha} \\
= & (-1)^{a_\alpha} \cdot \chi(\wt{h}_\alpha(\varpi^{b_\alpha n_\alpha - d + \angb{\sum_i y_i}{\alpha}})) \cdot \mu(w_\alpha,  i(\chi))^{- a_\alpha} \cdot \gamma(w_\alpha,  i(\chi))^{- b_\alpha} .
\end{aligned}$$
On the other hand, if $Y_\alpha^{\rm nor} =\emptyset$ (with necessarily $n_\alpha=2m_\alpha$, $m_\alpha$ odd), then Proposition \ref{P:abs} and \ref{P:spe} give that
$$\begin{aligned}
& \det( \mca{S}_\mfr{R}(w_\alpha, i(\chi); r_w^{\rm un})  ) \\
= & (-1)^{a_\alpha} \cdot \chi(\wt{h}_\alpha(\varpi^{n_\alpha}))^{ \frac{\angb{\sum_i y_i}{\alpha}- d + b_\alpha m_\alpha}{n_\alpha} } \cdot \mu(w_\alpha,  i(\chi))^{- a_\alpha} \cdot \tilde{\gamma}(w_\alpha,  i(\chi))^{- b_\alpha} \\
=& (-1)^{a_\alpha} \cdot \chi(\wt{h}_\alpha(\varpi^{b_\alpha m_\alpha - d + \angb{\sum_i y_i}{\alpha}})) \cdot \mu(w_\alpha,  i(\chi))^{- a_\alpha} \cdot \tilde{\gamma}(w_\alpha,  i(\chi))^{ - b_\alpha} .
\end{aligned} $$
In either case, using Proposition \ref{P:det-C}, the result follows from a simplification, since $\det( \mca{M}_\mfr{B}(w_\alpha, i(\chi)) )= \det(\mca{S}_\mfr{R}(w_\alpha, i(\chi); r_w^{\rm un}) ) \cdot \det(  \mca{C}( \mfr{B}_{ {}^{\w} \chi }, \mfr{B}_\chi ; r_w^{\rm un}) )$ in view of Lemma \ref{L:comp}.
\end{proof}

\begin{eg} \label{eg-SL}
Let $\wt{\SL}_2^{(n)}$ be the degree $n$ covering arising from $Q(\alpha^\vee)=1$. This is the group studied in our previous paper \cite{GSS1}. Note $n_\alpha=n$ in this case. Let
$$d=\frac{n}{\text{gcd}(n, 2)}.$$
We have $\msc{X}_{Q,n}\simeq \Z/d\Z$. The action $\w_\alpha[-]$ is given by
$$\w_\alpha[y]= 1-y \in \Z/d\Z.$$
It is easy to compute:
$$
b_\alpha=
\begin{cases}
1 & \text{ if $d$ is odd}; \\
0 & \text{ otherwise}
\end{cases}.
$$
Also, $Y_\alpha^{\rm spe} \neq \emptyset$ if and only if $n=2d$ with $d$ odd. Theorem \ref{T:M1}  then gives that (under the assumption $\bbmu_{2n} \subset F^\times$)
$$
 \det(\mca{M}_\mfr{B}(w, i(\chi))) =
 \begin{cases}
 \mu(w_\alpha, \chi)^{-(n-1)/2} \cdot \gamma(w_\alpha,  i(\chi))^{-1}  & \text{ if  $n$ is odd}; \\
  \mu(w_\alpha, \chi)^{-(d-1)/2} \cdot \tilde{\gamma}(w_\alpha,  i(\chi))^{-1}  & \text{ if  $n=2d$ with $d$ odd}; \\
(-1)\cdot  \chi(\wt{h}_\alpha(\varpi^{- d})) \cdot \mu(w_\alpha,  i(\chi))^{-d/2} & \text{ if $n=2d$ with $d$ even}.
 \end{cases}
$$
This is the content of \cite[Theorem 3.14 and \S 3.6]{GSS1}.
\end{eg}

\vskip 5pt

The case of covers of $\wt{\GL}_2$, which is considered in \cite{Bud} for some special principal series representations, is already in contrast with $\wt{\SL}_2$, as we show below.

\begin{eg}  \label{eg-GL}
We follow the notation in \cite[\S 2.1]{Ga3}, see also \cite[Example 3.16]{Ga2}. In particular, let $B_Q$ be the unique Weyl-invariant bilinear form of $Y$, which is generated by $e_1$ and $e_2$, given by
$$B_Q(e_i, e_i)=2\mbf{p},  \quad B_Q(e_1, e_2)=\mbf{q} \in \Z.$$
For simplicity, we consider the case $\mbf{p}=0$ and $\mbf{q}=1$. Such $B_Q$ gives rise to a degree $n$ cover $\wt{\GL}_2^{(n)}$, which is the untwisted cover in the sense of Kazhdan-Patterson (i.e. $c=0$ in their notation). It is also the one studied in \cite{Bud}.

In this case, we have $Y_{Q,n}=nY$ and thus $\msc{X}_{Q,n}= \Z/n\Z \oplus \Z/n\Z$ with the action given by
$$\w_\alpha[(s, t)]= (t+1, s-1) \in \msc{X}_{Q,n}$$
for every $(s, t) \in \msc{X}_{Q,n}$. It is easy to see that
$$d=n^2, b_\alpha=n, a_\alpha=\frac{n^2-n}{2} \text{ and } {\rm sgn}(\w_\alpha)^\msc{P}=(-1)^{a_\alpha}.$$
Also, $Y_\alpha^{\rm spe}=\emptyset$. It follows from Theorem \ref{T:M1} that
$$ \det(\mca{M}_\mfr{B}(w, i(\chi))) = \mu(w_\alpha,  i(\chi))^{-(n^2-n)/2} \cdot \gamma(w_\alpha,  i(\chi))^{-n},$$
which recovers and refines the results obtained by Budden \cite[Theorem 4.1]{Bud} for $\wt{\GL}_2$. (See the work of Budden-Goehle \cite{BG}  for $\wt{\GL}_r$.) As noted, the uniformity in $\det(\mca{M}_\mfr{B}(w, i(\chi)))$ for $\wt{\GL}_2^{(n)}$ is in contrast with the trichotomy for $\wt{\SL}_2^{(n)}$ which depends on the $2$-exponent factor in $n$. In \S \ref{S:RES}--\S \ref{S:LCM-G},  We will investigate this contrast from the perspective of restriction of principal series on $\wt{\GL}_2$ to $\wt{\SL}_2$.
\end{eg}

Theorem \ref{T:M1} entails a ``pattern" in the description of $\det(\mca{M}_\mfr{B}(w_\alpha, i(\chi)) )$ in terms of Plancherel measure, gamma (or metaplectic) gamma factors, and some other factors. As a simple consequence of Theorem \ref{T:M1}, we have
\begin{cor}
If $\alpha^\vee, \beta^\vee \in \Delta^\vee$ are of the same length, then $b_\alpha=b_\beta$ and ${\rm sgn}(\w_\alpha^\msc{P})= {\rm sgn}(\w_\beta^\msc{P})$. Also, in this case, there are natural bijections $Y_\alpha^{\rm spe} \to Y_\beta^{\rm spe} $ and $Y_\alpha^{\rm nor} \to Y_\beta^{\rm nor} $.  Therefore, for simply-laced groups, there is a uniform description of $\det( \mca{M}_\mfr{B}(w_\alpha, i(\chi))  )$, i.e., $b_\alpha = b_\beta$ and  ${\rm sgn}(\w_\alpha^\msc{P})= {\rm sgn}(\w_\beta^\msc{P})$.
\end{cor}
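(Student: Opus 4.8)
The plan is to deduce everything from Theorem~\ref{T:M1} together with the defining data of $\msc{X}_{Q,n}$ and the twisted action $\w[-]$. The key observation is that if $\alpha^\vee, \beta^\vee \in \Delta^\vee$ have the same length, then $n_\alpha = n_\beta$ (since $Q$ is Weyl-invariant, so $Q(\alpha^\vee) = Q(\beta^\vee)$), and moreover there exists $\w \in W$ carrying $\alpha^\vee$ to $\beta^\vee$. First I would fix such a $\w$ and study the induced automorphism of $Y$. Since $\w(\alpha^\vee) = \beta^\vee$, conjugation $\w_\alpha \mapsto \w \w_\alpha \w^{-1} = \w_\beta$ matches the two order-two subgroups $W_\alpha$ and $W_\beta$. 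Because $Y_{Q,n}$ and $Y_{Q,n}^{sc}$ are $W$-stable, $\w$ descends to an automorphism $\bar\w$ of $\msc{X}_{Q,n}$ (and of $\msc{X}_{Q,n}^{sc}$). The twisted action satisfies $\w\w_\alpha[\cdot] = \w_\beta\w[\cdot]$ on $Y\otimes\Q$ (both sides equal the conjugate of $\w_\alpha$ acting twistedly, using $\w[\rho] = \rho$ — indeed $\rho$ is $W$-invariant under the \emph{untwisted} action only up to the half-sum relation, so one must check $\w(\w_\alpha(y_\rho)) = \w_\beta(\w(y)_\rho)$ carefully). Granting this compatibility, $\bar\w$ restricts to a bijection $(\msc{X}_{Q,n})^{W_\alpha} \xrightarrow{\sim} (\msc{X}_{Q,n})^{W_\beta}$, whence $b_\alpha = b_\beta$ immediately.

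Next I would handle the sign. The permutation $s_\alpha \in \mathrm{Perm}(\msc{X}_{Q,n})$ representing $\w_\alpha^\msc{P}$ and the analogous $s_\beta$ are conjugate permutations of the same finite set via the bijection $\bar\w$: concretely $s_\beta = \bar\w \circ s_\alpha \circ \bar\w^{-1}$ as self-maps of $\msc{X}_{Q,n}$ (this is just the identity $\w_\beta(\w(y)) + Y_{Q,n} = \w(\w_\alpha(y)) + Y_{Q,n}$). Conjugate permutations have the same sign, so $\mathrm{sgn}(\w_\alpha^\msc{P}) = \mathrm{sgn}(\w_\beta^\msc{P})$. For the claimed bijections $Y_\alpha^{\rm spe} \to Y_\beta^{\rm spe}$ and $Y_\alpha^{\rm nor} \to Y_\beta^{\rm nor}$, I would use $\w$ itself on $Y$: since $\w$ maps $Y_{Q,n} - Y_{Q,n}^{sc}$ to itself (both lattices being $W$-stable) and intertwines $\w_\alpha[-]$ with $\w_\beta[-]$, it carries the condition ``$y - \w_\alpha[y] \in Y_{Q,n} - Y_{Q,n}^{sc}$'' to ``$\w(y) - \w_\beta[\w(y)] \in Y_{Q,n} - Y_{Q,n}^{sc}$'', and similarly for the ``$\in Y_{Q,n}^{sc}$'' condition, giving bijections on $Y_\alpha^{\rm spe}$ and $Y_\alpha^{\rm nor}$ respectively; in particular $Y_\alpha^{\rm spe} = \emptyset \iff Y_\beta^{\rm spe} = \emptyset$ and likewise for ``nor''.

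Finally, the last sentence of the corollary. If the group is simply-laced, all simple coroots have the same length, so by the above $b_\alpha$ and $\mathrm{sgn}(\w_\alpha^\msc{P})$ are independent of $\alpha \in \Delta$, and $a_\alpha = (d - b_\alpha)/2$ is likewise independent of $\alpha$; moreover $Y_\alpha^{\rm spe} = \emptyset$ for every $\alpha$ (a simply-laced group is never of type $C_r$ with $r \ge 1$ in the relevant sense; $C_1 = A_1$ is the degenerate case, but there the unique short coroot coincides with the long one, so Proposition~\ref{P:dich}(i) still forces the conclusion — one should remark that in the simply-laced case the first branch of Theorem~\ref{T:M1} always applies). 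Plugging into the formula of Theorem~\ref{T:M1} shows that the shape of $\det(\mca{M}_\mfr{B}(w_\alpha, i(\chi)))$ — namely $\mathrm{sgn}(\w_\alpha^\msc{P}) \cdot (-1)^{a_\alpha} \cdot \chi(\wt{h}_\alpha(\varpi^{b_\alpha n_\alpha - d})) \cdot \mu(w_\alpha, i(\chi))^{-a_\alpha} \cdot \gamma(w_\alpha, i(\chi))^{-b_\alpha}$ — has exponents and sign uniform in $\alpha$, which is the asserted uniformity.

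The main obstacle I anticipate is the bookkeeping in the compatibility $\w \circ \w_\alpha[-] = \w_\beta[-] \circ \w$ of the \emph{twisted} actions, since $\rho$ is not fixed by individual reflections and one must verify that the $\rho$-shifts match up under an arbitrary $\w \in W$ with $\w(\alpha^\vee) = \beta^\vee$. This is where a careless argument could fail: one needs $\w(\w_\alpha(y - \rho) + \rho) = \w_\beta(\w(y) - \rho) + \rho$, i.e.\ $\w\w_\alpha(\rho) - \w(\rho) = \w_\beta(\rho) - \rho$ after using $\w\w_\alpha\w^{-1} = \w_\beta$; this reduces to the standard fact that $\w_\gamma(\rho) = \rho - \gamma^\vee$ for a simple coroot (and more generally the relation between $\w(\rho)$ and the set of positive coroots sent negative by $\w^{-1}$). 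Once this linear-algebra identity is pinned down, the rest is formal transport of structure along $\w$ and the conjugacy-invariance of the sign of a permutation.
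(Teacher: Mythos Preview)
Your proposal is correct and follows essentially the same approach as the paper: pick $\w \in W$ with $\w(\alpha^\vee) = \beta^\vee$, use that $\w_\beta = \w \w_\alpha \w^{-1}$, and transport structure along the induced automorphism of $Y$, $\msc{X}_{Q,n}^{sc}$, and $\msc{X}_{Q,n}$, checking that this map intertwines the twisted $\w_\alpha[\cdot]$-action with the twisted $\w_\beta[\cdot]$-action. The paper states this equivariance without proof; your final paragraph supplies exactly the missing verification, reducing it to $\w(\rho - \w_\alpha(\rho)) = \rho - \w_\beta(\rho)$, i.e.\ $\w(\alpha^\vee) = \beta^\vee$, which is the hypothesis. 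One minor remark: your aside that ``in the simply-laced case the first branch of Theorem~\ref{T:M1} always applies'' is slightly off in rank one (for $\wt{\SL}_2^{(n)}$ with $n \equiv 2 \pmod 4$ one has $Y_\alpha^{\rm spe} \neq \emptyset$), but this is irrelevant to the corollary itself, which in rank one is vacuous, and whose content is only the equalities $b_\alpha = b_\beta$ and $\mathrm{sgn}(\w_\alpha^\msc{P}) = \mathrm{sgn}(\w_\beta^\msc{P})$.
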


\begin{proof}
By assumption, there exists $\w\in W$ such that $\w(\alpha^\vee)= \beta^\vee$. We have
$$\w_\beta= \w \cdot \w_\alpha \cdot \w^{-1}.$$
We have a commutative diagram
$$\begin{tikzcd}
Y \ar[d, "{f_{\alpha, \beta} }"]  \ar[r, two heads] & \msc{X}_{Q,n}^{sc}  \ar[d, "{\hat{f}_{\alpha, \beta} }"]  \ar[r, two heads] & \msc{X}_{Q,n} \ar[d, "{\hat{f}_{\alpha, \beta} }"]     \\
Y   \ar[r, two heads] & \msc{X}_{Q,n}^{sc}   \ar[r, two heads] & \msc{X}_{Q,n}
\end{tikzcd} $$
given by
$$f_{\alpha, \beta}(y)= \w(y),$$
and $\hat{f}_{\alpha, \beta}$ is the naturally induced map. Then it is easy to see that both $f_{\alpha, \beta}$ and $\hat{f}_{\alpha,\beta}$ are equivariant with respect to the $\w_\alpha[\cdot]$-action on their domains and $\w_\beta[\cdot]$ on their codomains.

From this, it is easy to see that there are natural bijections between $Y_\alpha^{\rm spe}$ and $Y_\beta^{\rm spe}$,  $Y_\alpha^{\rm nor}$ and $Y_\beta^{\rm nor}$. One also has a bijection between $(\msc{X}_{Q,n})^{W_\alpha}$ and $(\msc{X}_{Q,n})^{W_\beta}$; hence, $b_\alpha = b_\beta$. Lastly, we also have ${\rm sgn}(\w_\alpha^\msc{P})= {\rm sgn}(\w_\beta^\msc{P})$. This concludes the proof.
\end{proof}

The above corollary answers affirmatively the pertinent question raised in \cite[\S 4.2]{GSS1}.

\begin{rmk} In the rank-one quasi-split case where $\wt{G}=\wt{\rm SU}_3^{(n)}$, entries of the scattering matrix $\mca{S}_\mfr{R}(w_\alpha, i(\chi); r_w^{\rm un})$ for unramified $i(\chi)$ are already computed in \cite[Theorem 14.1]{Mc2}. It would be desirable to compute the determinant of the local coefficients matrix $\mca{M}_\mfr{B}(w_\alpha, i(\chi))$. For this purpose, it might be necessary to refine the formula in \cite{Mc2} (as we did for the split case in \cite{Ga2}) and to express the entries naturally in terms of the structural data of a Brylinski-Deligne covering group. Moreover, it would be interesting to consider the case when $i(\chi)$ is ramified. 

The formula for $\det(\mca{M}_\mfr{B}(w_\alpha, i(\chi)))$ does not generalize easily to the ramified case. Indeed, consider  a general genuine character $\chi$.  Let $(L_1, L_2)$ be a Langragian decomposition of $\wt{T}/Z(\wt{T})$ (a finite group), then we have from \eqref{psWd}
$$\dim \Wh_\psi (I(\chi)) = \sqrt{[\wt{T}: Z(\wt{T})]  } = \val{L_1} =\val{L_2}.$$
However, in general $\val{L_1}\ge \val{\msc{X}_{Q,n}}$ and its size depends on the arithmetic of $F$. The case of ramified $\chi$ for $\wt{\SL}_2^{(n)}$ has been investigated in \cite{Szp6}, some results of which are recalled and also extended in \S \ref{S:LCM-Go}.
\end{rmk}

\subsection{An explicit description of $\mca{M}_\mfr{B}(w_\alpha, i(\chi))$}
Suppose we have chosen an ordered set of representatives $\set{y_i}$ of $\msc{X}_{Q,n}$ and thus $\mfr{R}$ and $\mfr{B}_\chi$. In this subsection, we give an explicit description of entries of the matrix
$$\mca{M}_\mfr{B}(w_\alpha, i(\chi))=\left[ \mca{M}(\s_{y_i}, \s_{y_j})  \right],$$
where $\mca{M}(y_i, y_j)$ denotes a generic entry. We also denote matrix
$$\mca{C}(\mfr{B}_{{}^{\w_\alpha}\chi}, \mfr{B}_\chi; r_w^{\rm un})= \left[ \mca{C}(\s_{y_i}, \s_{y_j})  \right],$$
Again, for simplicity, we also write  $\mca{S}_{\mfr{R}}(w_\alpha, i(\chi); r_w^{\rm un})= [\tau(\s_{y_i}, \s_{y_j})]$, where
$$\tau(\s_{y_i}, \s_{y_j}):=\tau(w_\alpha, \chi, \s_{y_i}, \s_{y_j}).$$
As noted in \S \ref{SS:Sca}, the labelling in $\tau(\s_{y_i}, -)$ denotes the ``columns" while $\tau(-, \s_{y_j})$ denotes the ``rows". Thus one has
$$\mca{M}(\s_{y_i}, \s_{y_j})= \sum_{\s_{y_k} \in \mfr{R} } \tau(\s_{y_k}, \s_{y_i}) \cdot \mca{C}(\s_{y_k}, \s_{y_j}).$$
Since $\tau(\s_{y_i}, \s_{y_k})= \tau^1(\s_{y_i}, \s_{y_k}) + \tau^2(\s_{y_i}, \s_{y_k})$, it gives a decomposition of
$$\mca{M}(\s_{y_i}, \s_{y_j})= \mca{M}^1(\s_{y_i}, \s_{y_j})  +  \mca{M}^2(\s_{y_i}, \s_{y_j})$$
where for $\nu=1, 2$,
$$\mca{M}^\nu(\s_{y_i}, \s_{y_j})= \sum_{y_k} \tau^\nu(\s_{y_k}, \s_{y_i}) \cdot \mca{C}(\s_{y_k}, \s_{y_j}).$$

\begin{thm} \label{T:M2}
The matrix $\mca{M}(w_\alpha, i(\chi))=[ \mca{M}(\s_{y_i}, \s_{y_j})  ]$ satisfies the following properties:
\begin{enumerate}
\item[(i)] $\mca{M}(\s_{y_i}, \s_{y_j})= \mca{M}^1(\s_{y_i}, \s_{y_j})  +  \mca{M}^2(\s_{y_i}, \s_{y_j})$ and  for every $z, z' \in Y_{Q,n}$,
$$\mca{M}^\nu(\s_{y_i} \cdot  \s_z, \s_{y_j} \cdot \s_{z'})= \chi(\s_z) \cdot  \mca{M}^\nu(\s_{y_i} , \s_{y_j}) \cdot  \chi^{-1}(\s_{z'});$$
\item[(ii)]  $\mca{M}^1(\s_{y_i}, \s_{y_j})=0$ unless $y_i \equiv \w_\alpha(y_j) \mod Y_{Q,n}$, and $\mca{M}^2(\s_{y_i}, \s_{y_j})=0$ unless $y_i \equiv y_j + \alpha^\vee \mod Y_{Q,n}$;
\item[(iii)]  one has
$$\mca{M}^1(\s_{y}, \s_{\w_\alpha(y)})=\varepsilon^{ \angb{y}{\alpha} D(\alpha^\vee, y)  } \cdot (1-q^{-1}) \frac{\chi (\wt{h}_\alpha(\varpi^{n_\alpha}))^{k_{y,\alpha}}}{1-\chi (\wt{h}_\alpha(\varpi^{n_\alpha}))} $$
where $k_{y,\alpha}=\ceil{\frac{\angb{y}{\alpha}}{n_\alpha}}$,  and
$$\mca{M}^2(\s_{y}, \s_{y-\alpha^\vee})= \varepsilon^{D(\alpha^\vee, y)} \cdot \g(\angb{y_\rho}{\alpha}Q(\alpha^\vee)).$$
\end{enumerate}
\end{thm}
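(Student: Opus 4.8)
\textbf{Proof plan for Theorem \ref{T:M2}.} The strategy is to compute $\mca{M}(\s_{y_i}, \s_{y_j})$ through the factorization $\mca{M}_\mfr{B}(w_\alpha, i(\chi)) = \mca{S}_\mfr{R}(w_\alpha, i(\chi); r_w^{\rm un}) \circ \mca{C}(\mfr{B}_{{}^{\w_\alpha}\chi}, \mfr{B}_\chi; r_w^{\rm un})$ from Lemma \ref{L:comp}, combined with the explicit entrywise information for each of the two factors that is already available: Theorem \ref{T:Sca} for the scattering matrix and Corollary \ref{C:C0} (together with Proposition \ref{P:C0}) for the change-of-basis matrix. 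Concretely, writing $\mca{C}(\s_{y_k}, \s_{y_j})$ for the $(y_k, y_j)$-entry of $\mca{C}(\mfr{B}_{{}^{\w_\alpha}\chi}, \mfr{B}_\chi; r_w^{\rm un})$, Corollary \ref{C:C0} tells us that $\mca{C}_0(l(\s_{y_j}; \chi)) = \varepsilon^{\angb{y_j}{\alpha} D(y_j, \alpha^\vee)} \cdot l(\s_{\w_\alpha(y_j)}; {}^{\w_\alpha}\chi)$, so after re-expressing $\s_{\w_\alpha(y_j)}$ in terms of the representative in $\mfr{R}$ that is congruent to it modulo $Y_{Q,n}$ (using the transformation property \eqref{SLCM1}), the column indexed by $y_j$ has a single nonzero entry, located in the row labelled by the representative of $\w_\alpha(y_j)$. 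Thus the composition $\mca{M}(\s_{y_i}, \s_{y_j}) = \sum_{y_k} \tau(\s_{y_k}, \s_{y_i}) \cdot \mca{C}(\s_{y_k}, \s_{y_j})$ collapses to a single term: $\mca{M}(\s_{y_i}, \s_{y_j})$ equals $\tau(\s_{y_{j'}}, \s_{y_i})$ scaled by the relevant power of $\varepsilon$ and values of $\chi$, where $y_{j'} \equiv \w_\alpha(y_j) \bmod Y_{Q,n}$.

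Once this collapse is established, the decomposition $\tau = \tau^1 + \tau^2$ of Theorem \ref{T:Sca} immediately yields the decomposition $\mca{M} = \mca{M}^1 + \mca{M}^2$ claimed in (i), and the transformation law in (i) follows from combining \eqref{SLCM1} for $\tau^i$ with the analogous equivariance of the $l$-functionals under right translation by $\wt{A}$ and the behaviour of $\mca{C}_0$; the powers of $\chi$ on the left and right come out as stated because $\s_z$ and $\s_{z'}$ for $z, z' \in Y_{Q,n} \subset Y_{Q,n}$ commute appropriately through the $w_\alpha$-twist (here one uses that $Y_{Q,n}$ is Weyl-invariant, so $\w_\alpha(z) \equiv z$ in the relevant quotient is not needed, rather one tracks $({}^{\w_\alpha}\chi)(\s_z) = \chi(w_\alpha^{-1} \s_z w_\alpha)$ carefully). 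For the support statement (ii): $\mca{M}^1(\s_{y_i}, \s_{y_j}) \ne 0$ forces $\tau^1(\s_{y_{j'}}, \s_{y_i}) \ne 0$ with $y_{j'} \equiv \w_\alpha(y_j)$, which by Theorem \ref{T:Sca} requires $y_i \equiv y_{j'} \equiv \w_\alpha(y_j) \bmod Y_{Q,n}$; likewise $\mca{M}^2 \ne 0$ forces $y_i \equiv \w_\alpha[y_{j'}] = \w_\alpha[\w_\alpha(y_j) + (\text{element of } Y_{Q,n})] \bmod Y_{Q,n}$. Unwinding $\w_\alpha[\w_\alpha(y)]$: since $\w_\alpha[y] = \w_\alpha(y_\rho) + \rho$ and $\w_\alpha^2 = \id$, a short computation gives $\w_\alpha[\w_\alpha(y)] = y + \angb{y}{\alpha}\alpha^\vee - \alpha^\vee$ up to the usual care with $\rho$; more cleanly, $\w_\alpha[y] - y = \w_\alpha(y_\rho) - y_\rho = -\angb{y_\rho}{\alpha}\alpha^\vee$, and applying the twisted reflection once more to the point $\w_\alpha(y)$ one lands on $y + \alpha^\vee \bmod Y_{Q,n}$, which is the asserted condition. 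I would record this as a clean lemma about the interaction of $\w_\alpha(-)$ and $\w_\alpha[-]$ before plugging it in.

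Finally, for (iii) I would specialize the collapsed formula to the two diagonal-type cases. When $y_i = \w_\alpha(y)$ exactly (so $y_j = y$, $y_{j'} = \w_\alpha(y)$ with no $Y_{Q,n}$-shift), we get $\mca{M}^1(\s_y, \s_{\w_\alpha(y)}) = \tau^1(\s_{\w_\alpha(y)}, \s_{\w_\alpha(y)}) \cdot \mca{C}(\s_{\w_\alpha(y)}, \s_{\w_\alpha(y)})$; but we want the entry indexed so that the ``row" is $\w_\alpha(y)$ and the ``column" is $y$, so one must be careful about which of $\tau^1(\s_{y}, \s_{\w_\alpha(y)})$ vs.\ $\tau^1(\s_{\w_\alpha(y)}, \s_y)$ appears — by the support condition $y_1 \equiv y \bmod Y_{Q,n}$ for $\tau^1$, and by Theorem \ref{T:Sca} the relevant nonzero value is $\tau^1(w_\alpha, \chi, \s_{\w_\alpha(y)}, \s_{\w_\alpha(y)})$ giving the ceiling $k_{\w_\alpha(y),\alpha}$; reconciling this with the stated $k_{y,\alpha}$ uses the identity $\ceil{\angb{\w_\alpha(y)}{\alpha}/n_\alpha} = \ceil{(-\angb{y}{\alpha})/n_\alpha}$ and the relation between $\chi(\wt{h}_\alpha(\varpi^{n_\alpha}))$ powers — I would check that the $\varepsilon^{\angb{y}{\alpha}D(\alpha^\vee, y)}$ prefactor from $\mca{C}_0$ is exactly what converts one expression into the other, possibly after using \cite[Lemma 3.9]{Ga2} as in the proof of Proposition \ref{P:abs}. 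For $\mca{M}^2(\s_y, \s_{y-\alpha^\vee})$: here the column is $y - \alpha^\vee$, so $\w_\alpha(y - \alpha^\vee) = \w_\alpha(y) + \alpha^\vee$, and one checks $\w_\alpha[\w_\alpha(y) + \alpha^\vee] \equiv y$, i.e.\ the $\tau^2$ support condition is met with $\tau^2$ evaluated at $(\s_y, \s_{\w_\alpha(y)+\alpha^\vee})$ or, after the basis bookkeeping, reducing to $\tau^2(w_\alpha, \chi, \s_{\w_\alpha[y]}, \s_y) = \varepsilon^{\angb{y_\rho}{\alpha}D(y,\alpha^\vee)} \g(\angb{y_\rho}{\alpha}Q(\alpha^\vee))$; combining with the $\varepsilon$-factor from $\mca{C}_0$ and simplifying the exponent of $\varepsilon$ modulo $2$ (a computation parallel to the exponent manipulation $P(i,j)$ in the proof of Proposition \ref{P:abs}) should collapse everything to $\varepsilon^{D(\alpha^\vee, y)} \cdot \g(\angb{y_\rho}{\alpha}Q(\alpha^\vee))$. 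The main obstacle I anticipate is purely bookkeeping: tracking precisely which representative-shift powers of $\chi$ and which sign exponents $\varepsilon^{(\cdots)}$ accumulate when passing $\w_\alpha(y_j)$ back into $\mfr{R}$ and through the transpose-like convention that rows are $\tau(-, \s_{y_j})$ while the composition sums over the middle index; getting the $\varepsilon$-exponents to reduce correctly modulo $2$ to the clean forms $\angb{y}{\alpha}D(\alpha^\vee,y)$ and $D(\alpha^\vee, y)$ will require the same kind of careful residue computation as in Proposition \ref{P:abs}, and that is where errors are most likely to creep in.
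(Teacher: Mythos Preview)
Your plan is the paper's approach: factor via Lemma \ref{L:comp}, feed in Theorem \ref{T:Sca} and Corollary \ref{C:C0}, and use that each column of $\mca{C}$ has a single nonzero entry so the sum over $y_k$ collapses. Parts (i) and (ii) go through exactly as you outline.

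In (iii) you have an index slip that makes the $\mca{M}^1$ computation look harder than it is. For $\mca{M}^1(\s_y, \s_{\w_\alpha(y)})$ the column is $y_j = \w_\alpha(y)$, so the surviving middle index is the representative of $\w_\alpha(\w_\alpha(y)) = y$, namely $y_k = y$ itself. Thus
\[
\mca{M}^1(\s_y, \s_{\w_\alpha(y)}) = \tau^1(\s_y, \s_y)\cdot \mca{C}(\s_y, \s_{\w_\alpha(y)}),
\]
and the $\tau^1$ factor already carries $k_{y,\alpha}$ directly from Theorem \ref{T:Sca}; there is no $k_{\w_\alpha(y),\alpha}$ to reconcile, and no appeal to \cite[Lemma 3.9]{Ga2} is needed here. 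The only remaining work is simplifying the $\varepsilon$-exponent from $\mca{C}(\s_y, \s_{\w_\alpha(y)})$, which is $\angb{\w_\alpha(y)}{\alpha}D(\w_\alpha(y),\alpha^\vee) \equiv \angb{y}{\alpha}D(\alpha^\vee,y) \bmod 2$, a one-line check. For $\mca{M}^2$ your endpoint $\tau^2(\s_{\w_\alpha[y]}, \s_y)\cdot \mca{C}(\s_{\w_\alpha[y]}, \s_{y-\alpha^\vee})$ is correct (the clean way to see the middle index is the identity $\w_\alpha(y-\alpha^\vee) = \w_\alpha[y]$), and the $\varepsilon$-exponent collapses in two lines rather than requiring a $P(i,j)$-style argument.
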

\begin{proof}
First, (i) and (ii) follow from Theorem \ref{T:Sca} and Corollary \ref{C:C0}. For (iii), we see from Corollary \ref{C:C0} that  $\mca{C}(\s_{\w_\alpha(y)}, \s_y)= \varepsilon^{ \angb{y}{\alpha} D(y,\alpha^\vee)  }$, which gives that
$$\mca{M}^1(\s_y, \s_{\w_\alpha(y)})= \tau^1( \s_y, \s_y) \cdot \mca{C}(\s_y, \s_{\w_\alpha(y)}).$$
Then the first equality in (iii) follows from an easy simplification of the exponent of $\varepsilon$ in $\mca{C}(\s_y, \s_{\w_\alpha(y)})$.

On the other hand, noting that $\w_\alpha[y]= \w_\alpha(y-\alpha^\vee)$, we have
$$\begin{aligned}
& \mca{M}^2(\s_{y}, \s_{y-\alpha^\vee}) \\
= &\ \tau^2( \s_{\w_\alpha[y]}, \s_y) \cdot \mca{C}(\s_{\w_\alpha[y]}, \s_{y-\alpha^\vee}) \\
 = & \ \varepsilon^{ \angb{y_\rho}{\alpha} \cdot D(y, \alpha^\vee) } \cdot \g(\angb{y_\rho}{\alpha}Q(\alpha^\vee)) \cdot \varepsilon^{ \angb{y-\alpha^\vee}{ \alpha } D(y-\alpha^\vee, \alpha^\vee)  }  \\
 = & \ \varepsilon^{D(\alpha^\vee, y)} \cdot \g(\angb{y_\rho}{\alpha}Q(\alpha^\vee)).
\end{aligned}$$
This completes the proof.
\end{proof}

\begin{eg} \label{SL2-3}
To illustrate, we compute for $\wt{\SL}_2^{(3)}$ the three matrices explicitly. Keep notation as from the previous Example \ref{eg-SL}. Let $\set{0, \alpha^\vee, -\alpha^\vee} \subset Y$ be a set of ordered representatives of $\msc{X}_{Q,3} \simeq \Z/3\Z$. We have
$$\mfr{R} = \set{1, \s_{\alpha^\vee}, \s_{-\alpha^\vee}}$$
which decomposes into $\mfr{R}=\mfr{R}' \sqcup \mfr{R}''$ with
$$\mfr{R}'=\set{1,  \s_{\alpha^\vee}} \text{ and } \mfr{R}''=\set{\s_{-\alpha^\vee}}.$$
For simplicity of notation, we write $\chi_\alpha:=\chi( \wt{h}_\alpha(\varpi^3)  )$, since $n_\alpha=3$ in this case.  We have automatically $\bbmu_{2n} \subset F^\times$ and thus $\varepsilon=1$. Hence Theorem \ref{T:Sca} (applied to $\mfr{R}$) gives that
$$\mca{S}_\mfr{R}(w_\alpha, i(\chi); r_w^{\rm un})=
\begin{bmatrix}
\frac{1-q^{-1}}{1-\chi_\alpha  } &  \g(-1) &  0  \\
 \g(1) &  \frac{1-q^{-1}}{ 1-\chi_\alpha  } \chi_\alpha  & 0 \\
0 & 0  & \frac{ 1-q^{-1} \chi_\alpha^{-1}  }{ 1- \chi_\alpha }
\end{bmatrix}.
$$
On the other hand, by Corollary \ref{C:C0}, we have
$$\mca{C}(\mfr{B}_{{}^{\w_\alpha} \chi}, \mfr{B}_\chi; r_w^{\rm un})=
\begin{bmatrix}
1 &  0  &  0  \\
0 &  0  & 1 \\
0 &  1 & 0
\end{bmatrix}.
$$
We get that the local coefficients matrix associated with $\mfr{R}$ and therefore $\mfr{B}_\chi$ is
$$\mca{M}_{\mfr{B}_\chi}(w_\alpha, i(\chi))=
\begin{bmatrix}
\frac{1-q^{-1}}{1-\chi_\alpha  } &  0 & \g(-1)  \\
 \g(1) &  0 & \frac{1-q^{-1}}{ 1-\chi_\alpha  } \chi_\alpha   \\
0 & \frac{ 1-q^{-1} \chi_\alpha^{-1}  }{ 1- \chi_\alpha } & 0
\end{bmatrix}.
$$
\end{eg}

\begin{rmk} \label{R:n-2n}
Retain the notations from Example \ref{eg-GL}. Let $\p, \bq\in \Z$ be such that  $2\p- \bq=-1$ and $Q(e_i)=\p$ and $B(e_i, e_j)= \bq$. Let $\wt{\GL}_2^{(2)}$ be the associated double cover. Then it follows from Theorem \ref{T:M2} that
$$ \det(\mca{M}_\mfr{B}(w, i(\chi))) = \varepsilon\cdot \mu(w_\alpha,  i(\chi))^{-1} \cdot \gamma(w_\alpha,  i(\chi))^{-2}.$$
This example already shows that if we assume only $\bbmu_n \subset F^\times$ instead of $\bbmu_{2n} \subset F^\times$, then there might be factors involving $\varepsilon=(-1, \varpi)_n$ in the formula for $\det(\mca{M}_\mfr{B}(w, i(\chi)))$.
\end{rmk}

\subsection{The trace $\text{Tr}( \mca{T}(w_\alpha, i(\chi)) )$  for $\wt{\SL}_2^{(n)}$}
Besides the determinant of $\mca{T}(w_\alpha, i(\chi))$, another important invariant associated to it is the trace $\text{Tr}( \mca{T}(w_\alpha, i(\chi)) )$. Presumably, $\text{Tr}( \mca{T}(w_\alpha, i(\chi)) )$ should also encode interesting arithmetic information for $i(\chi)$.

To illustrate, we consider in this subsection the covering group $\wt{\SL}_2^{(n)}$ and compute explicitly the trace of the endomorphism map $\mca{T}(w_\alpha, i(\chi))$. We include the case $n = 0 \mod 4$, which thus supplements \cite[Corollary 4.2]{Szp6}. In \S \ref{trace}, we will investigate $\Tr( \mca{T}(w_\alpha, i(\chi)) )$ from the different perspective of partial zeta integrals and recover the same results, see Corollary \ref{exptrun}.

To give an interpretation, we use the following result whose proof is straightforward.

\begin{lm} \label{L:aux}
Let $n\ge 2 \in \N$ and $\zeta$ be a primitive $n$-th roots of unity. Then
$$\frac{1-q^{-1}}{1-T^n}=\frac{1}{n} \cdot \sum_{i=0}^{n-1} \frac{1-q^{-1} (T \zeta^i)^{-1}  }{ 1- T \zeta^i } \in \Q(\zeta)(T)$$
\end{lm}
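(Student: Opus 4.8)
The claim is a rational function identity in the single variable $T$, so my plan is to prove it by a standard partial-fraction argument. Set $f(T) = \dfrac{1-q^{-1}}{1-T^n}$ and $g(T) = \dfrac{1}{n}\sum_{i=0}^{n-1} \dfrac{1-q^{-1}(T\zeta^i)^{-1}}{1-T\zeta^i}$, both viewed as elements of $\Q(\zeta)(T)$. The denominator $1-T^n$ factors as $\prod_{i=0}^{n-1}(1-T\zeta^i)$ since $\zeta$ is primitive, so $g(T)$ has at worst simple poles exactly at $T = \zeta^{-i}$ for $i = 0, \dots, n-1$ (and one must also check the apparent pole at $T=0$ coming from the $(T\zeta^i)^{-1}$ terms actually cancels). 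Both sides are therefore rational functions whose only possible poles lie in the finite set $\{\zeta^{-i}\}$, and I will show they have the same principal part at each such pole and the same value at $\infty$, whence their difference is a polynomial that vanishes and is thus zero.

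Concretely, first I would clear the $1/T$: rewrite each summand as $\dfrac{1-q^{-1}(T\zeta^i)^{-1}}{1-T\zeta^i} = \dfrac{T\zeta^i - q^{-1}}{T\zeta^i(1-T\zeta^i)}$, so that $n\,T\,g(T) = \sum_{i=0}^{n-1} \dfrac{\zeta^{-i}(T\zeta^i - q^{-1})}{1-T\zeta^i}$; one checks the numerator, evaluated at $T=0$, sums to $-q^{-1}\sum_i \zeta^{-i} = 0$ when $n \ge 2$, so $g$ is regular at $0$. Next, for each fixed $j \in \{0,\dots,n-1\}$ I compute the residue of $g$ at $T = \zeta^{-j}$: only the $i=j$ summand contributes, giving residue $\dfrac{1}{n}\cdot\dfrac{1-q^{-1}\zeta^{-j}\cdot\zeta^{j}}{-\zeta^j}\cdot(\text{Jacobian factor})$; I would line this up with the residue of $f(T) = (1-q^{-1})\prod_{i}(1-T\zeta^i)^{-1}$ at the same point, which is $(1-q^{-1})\big/\big(-\zeta^j\prod_{i\ne j}(1-\zeta^{j-i}\cdot\zeta^{-j}\cdot\zeta^j)\big)$ — wait, more carefully: $\mathrm{Res}_{T=\zeta^{-j}} f = (1-q^{-1})\cdot\dfrac{1}{\frac{d}{dT}(1-T^n)\big|_{T=\zeta^{-j}}} = (1-q^{-1})\cdot\dfrac{1}{-n\zeta^{-j(n-1)}} = \dfrac{(1-q^{-1})\zeta^{j(n-1)}}{-n} = \dfrac{(1-q^{-1})\zeta^{-j}}{-n}$. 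So I need $\mathrm{Res}_{T=\zeta^{-j}}g = \dfrac{(1-q^{-1})\zeta^{-j}}{-n}$ as well, which follows since in the $i=j$ term the numerator $1-q^{-1}(T\zeta^j)^{-1}$ evaluates to $1-q^{-1}$ at $T=\zeta^{-j}$ and $\frac{d}{dT}(1-T\zeta^j)\big|_{T=\zeta^{-j}} = -\zeta^j$, giving residue $\dfrac{1}{n}\cdot\dfrac{1-q^{-1}}{-\zeta^j} = \dfrac{(1-q^{-1})\zeta^{-j}}{-n}$. The residues match at every pole.

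Finally, I compare behaviour at $T = \infty$: as $T\to\infty$, $f(T)\to 0$, while each summand $\dfrac{1-q^{-1}(T\zeta^i)^{-1}}{1-T\zeta^i}\to 0$, so $g(T)\to 0$ as well. A rational function with no finite poles (the poles of $f-g$ all cancel by the residue computation, and since all poles are simple matching residues suffices) and which vanishes at infinity is identically $0$; hence $f = g$.

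The routine calculations here — the residue bookkeeping and the $T=0$ cancellation — are genuinely elementary, so there is no serious obstacle; the only point requiring a little care is confirming that $g$ is regular at $T=0$ (so that the poles really are confined to the $n$-th roots of unity) and that all poles are simple, so that equality of residues plus vanishing at $\infty$ is enough to conclude without tracking constant terms. Alternatively, and perhaps more cleanly, one can avoid residues entirely: multiply both sides by $1-T^n = \prod_i(1-T\zeta^i)$ and verify the resulting polynomial identity $\;n(1-q^{-1}) = \sum_{i=0}^{n-1}\big(1-q^{-1}(T\zeta^i)^{-1}\big)\prod_{k\ne i}(1-T\zeta^k)\;$ by evaluating both sides at the $n$ distinct points $T=\zeta^{-j}$ (each makes all but one term on the right vanish) together with a degree count — both sides are polynomials of degree $\le n-1$ in $T$ agreeing at $n$ points, hence equal. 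I would present whichever of these two routes is shorter in the final writeup.
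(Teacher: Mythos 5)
Your proposal is correct, and the computations check out. The paper itself gives no proof of Lemma \ref{L:aux} (it simply says ``whose proof is straightforward''), so there is no argument to compare against; yours fills the gap with a standard partial-fractions/residues argument that works. The residues match at every $T=\zeta^{-j}$, all poles are simple, both sides vanish at $\infty$, and you correctly handle the one subtlety, namely that the apparent pole of $g$ at $T=0$ cancels because $\sum_{i=0}^{n-1}\zeta^{-i}=0$ for $n\ge 2$.

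One remark: the identity is perhaps most cleanly seen by splitting each summand as
$$\frac{1-q^{-1}(T\zeta^i)^{-1}}{1-T\zeta^i} = \frac{1-q^{-1}}{1-T\zeta^i} - \frac{q^{-1}}{T\zeta^i},$$
which isolates the roots-of-unity cancellation at the outset; summing over $i$, the second terms vanish, and the first terms reduce the lemma to the textbook partial fraction expansion $\frac{1}{1-T^n}=\frac{1}{n}\sum_{i=0}^{n-1}\frac{1}{1-T\zeta^i}$ (which follows from exactly the residue computation you performed, since $1-T^n=\prod_{i=0}^{n-1}(1-T\zeta^i)$). This is essentially a repackaging of your argument, not a genuinely different route. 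One small caveat on the alternative you sketch at the end: after multiplying through by $1-T^n$, the summands on the right still carry $T^{-1}$ factors, so the result is a Laurent polynomial rather than a polynomial; the degree-plus-interpolation argument then also needs the observation that the $T^{-1}$ coefficients cancel, which is again the relation $\sum_i\zeta^{-i}=0$. This is harmless but should be stated if you take that route.
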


Now for any $n\in \N$ and $\uchi: F^\times \to \C^\times$ a linear unramified character, we define
$${\rm RT}(\uchi) = \set{ \uchi' \in \text{Hom}(F^\times, \C^\times): \ \uchi' \text{ is unramified and }   (\uchi')^n= \uchi   }.$$
Clearly, $\text{RT}(\uchi)$ corresponds to the $n$-th roots of $\uchi(\varpi)$. Recall the unramified linear character $\uchi_\alpha$ defined in \S \ref{SS:Pg}. We have

\begin{prop} \label{P:trace}
Let $n\ge 1\in \N$. Then for $\Tr(\mca{T}(w_\alpha, i(\chi)))$ we have the following cases:
\begin{enumerate}
\item[$\bullet$] If $n=1$, then $\Tr(\mca{T}(w_\alpha, i(\chi)))= \gamma(w_\alpha, \chi)^{-1}$; if $n=2$, then $\Tr(\mca{T}(w_\alpha, i(\chi)))= \tilde{\gamma}(w_\alpha, \chi)^{-1}$.
\item[$\bullet$] If $n\ge 3$ and $4\nmid n$, then
$$\Tr(\mca{T}(w_\alpha, i(\chi))) = \frac{1}{n} \cdot \sum_{ \uchi  \in { \rm RT}(\uchi_\alpha)}  \gamma(0, \uchi)^{-1}. $$
\item[$\bullet$] If $n=2m$ with $m$ even, then
$$\Tr(\mca{T}(w_\alpha, i(\chi))) = \frac{1}{m} \cdot \sum_{ \uchi  \in { \rm RT}(\uchi_\alpha^\dag)}  \gamma(0, \uchi)^{-1},$$
where $\uchi_\alpha^\dag(a):=\chi(\wt{h}_\alpha(a^{m}))$.
\end{enumerate}
\end{prop}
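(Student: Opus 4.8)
The plan is to exploit the fact that $\Tr(\mca{T}(w_\alpha,i(\chi)))$ is basis-independent and to compute it directly from the explicit matrix of Theorem~\ref{T:M2}, using the ordered set $\mfr{R}=\set{\s_{k\alpha^\vee}:0\le k<d}$ of representatives of $\msc{X}_{Q,n}\simeq \Z/d\Z$ (notation as in Example~\ref{eg-SL}), so that $Y_{Q,n}=d\Z\alpha^\vee$ and $n_\alpha=n$. Writing the $(i,j)$-entry as $\mca{M}(\s_{y_i},\s_{y_j})=\mca{M}^1(\s_{y_i},\s_{y_j})+\mca{M}^2(\s_{y_i},\s_{y_j})$, the first step is to identify which diagonal entries survive. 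By Theorem~\ref{T:M2}(ii) the diagonal of $\mca{M}^2$ is identically zero unless $\alpha^\vee\in Y_{Q,n}$, i.e.\ unless $d=1$, i.e.\ $n\in\set{1,2}$; and the $(k,k)$-entry of $\mca{M}^1$ vanishes unless $k\alpha^\vee\equiv \w_\alpha(k\alpha^\vee)=-k\alpha^\vee\bmod Y_{Q,n}$, i.e.\ unless $2k\equiv 0\bmod d$, whose solutions are $k=0$ and, when $d$ is even, $k=d/2$.

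For $n\ge 3$ only $\mca{M}^1$ contributes, and for each surviving $k$ the quasi-invariance in Theorem~\ref{T:M2}(i) gives $\mca{M}^1(\s_{k\alpha^\vee},\s_{k\alpha^\vee})=\chi(\s_{2k\alpha^\vee})^{-1}\cdot \mca{M}^1(\s_{k\alpha^\vee},\s_{\w_\alpha(k\alpha^\vee)})$, the latter evaluated by the first formula of Theorem~\ref{T:M2}(iii); a short parity check with $Q(\alpha^\vee)=1$ shows that all powers of $\varepsilon$ occurring are trivial. Setting $\chi_\alpha:=\chi(\wt{h}_\alpha(\varpi^{n_\alpha}))=\uchi_\alpha(\varpi)$, the $k=0$ term is $(1-q^{-1})/(1-\chi_\alpha)$, and when $d$ is odd ($4\nmid n$) this is the whole trace. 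When $d$ is even ($n=2d$, $4\mid n$) one also gets the $k=d/2$ term, equal to $\chi(\s_{d\alpha^\vee})^{-1}(1-q^{-1})\chi_\alpha/(1-\chi_\alpha)$; using that $\chi(\s_{d\alpha^\vee})=\uchi_\alpha^\dag(\varpi)$ and, via the Hilbert-symbol identity $(\varpi,\varpi)_n^{d^2}=1$ valid for $d$ even, that $\uchi_\alpha^\dag(\varpi)^2=\chi_\alpha$, the two terms collapse to $(1-q^{-1})/(1-\uchi_\alpha^\dag(\varpi))$. Finally Lemma~\ref{L:aux}, applied with the group order $n$ (resp.\ $m=d$) and $T$ any $n$-th (resp.\ $m$-th) root of $\uchi_\alpha(\varpi)$ (resp.\ $\uchi_\alpha^\dag(\varpi)$), rewrites $(1-q^{-1})/(1-\uchi_\alpha(\varpi))$ as $\tfrac1n\sum_{\uchi\in {\rm RT}(\uchi_\alpha)}\gamma(0,\uchi)^{-1}$ and $(1-q^{-1})/(1-\uchi_\alpha^\dag(\varpi))$ as $\tfrac1m\sum_{\uchi\in {\rm RT}(\uchi_\alpha^\dag)}\gamma(0,\uchi)^{-1}$, since $\gamma(0,\uchi)^{-1}=(1-q^{-1}\uchi(\varpi)^{-1})/(1-\uchi(\varpi))$ for unramified $\uchi$ by the definition in \S\ref{SS:Pg}.

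For $n=1$ the matrix is the scalar $\mca{M}^1(\s_0,\s_0)+\mca{M}^2(\s_0,\s_0)$; evaluating the two pieces by Theorem~\ref{T:M2} (with $\g(-1)=-q^{-1}$ when $n=1$, from the Gauss-sum table) and accounting for the $\wt{A}$-twist of Theorem~\ref{T:M2}(i) yields $(1-q^{-1})/(1-\uchi_\alpha(\varpi))-q^{-1}\uchi_\alpha(\varpi)^{-1}=\gamma(w_\alpha,i(\chi))^{-1}$ by \eqref{g-inv}. For $n=2$ the matrix is again a scalar, $\mca{M}^1(\s_0,\s_0)+\mca{M}^2(\s_0,\s_0)$ with $\mca{M}^2$ now a Gauss sum of absolute value $q^{-1/2}$, and I would identify it with $\tilde{\gamma}(w_\alpha,i(\chi))^{-1}=\gamma(1/2,\uchi_\alpha^\natural)/\gamma(0,\uchi_\alpha)$ using Theorem~\ref{sweetthm} and Lemma~\ref{for0mod4}. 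The genuinely delicate point is the bookkeeping of the $\wt{A}$- and $\varepsilon$-twists — getting the power of $\chi_\alpha$ and the factor $\chi(\s_{d\alpha^\vee})^{\pm1}$ exactly right in the $4\mid n$ case so that the two diagonal contributions combine, under Lemma~\ref{L:aux}, into the stated average; the $n=2$ metaplectic identification is the other point needing care, though it reduces to the computation already carried out for the $\alpha$-special entry of $\mca{S}_\mfr{R}$ in the proof of Theorem~\ref{T:M1}.
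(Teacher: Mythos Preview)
Your approach is essentially identical to the paper's: both compute the trace directly from the diagonal of the local coefficients matrix of Theorem~\ref{T:M2}, observe that for $n\ge 3$ only $\mca{M}^1$ contributes and survives exactly at $k$ with $2k\equiv 0\bmod d$, evaluate those entries via Theorem~\ref{T:M2}(i),(iii), and finish with Lemma~\ref{L:aux}. Your bookkeeping of the $\varepsilon$- and $\chi$-twists in the $4\mid n$ case, and your treatment of $n=1,2$, match the paper's computations (the paper is equally brief for $n=2$, deferring to \cite{Szp6} or a direct check with Theorem~\ref{T:M2}, which is precisely the $\alpha$-special scalar of Proposition~\ref{P:spe}).
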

\begin{proof}
The case $n=1$ is the known case for linear algebraic groups. For $n=2$, it either follows from \cite{Szp6}, or a direct computation with the matrix in Theorem \ref{T:M2} also gives the desired result.

Now we assume $n\ge 3$ and $4\nmid n$. Then $Y_{Q,n}$ is generated by $(n/\text{gcd}(n,2))\cdot \alpha^\vee$. In this case, since $\alpha^\vee \notin Y_{Q,n}$, we see that $\mca{M}(\s_y, \s_y) \ne 0$ unless $y\equiv \w_\alpha(y) \mod Y_{Q,n}$; that is, $y\equiv 0 \mod Y_{Q,n}$. Thus,
$$\Tr(\mca{T}(w_\alpha, i(\chi))) =  \mca{M}^1(\s_0, \s_0) = \frac{ 1-q^{-1}  }{1-\chi (\wt{h}_\alpha(\varpi^n))}.$$
The result then follows from Lemma \ref{L:aux}.

We consider the last case $n=2m$ with $m$ even. Similar discussion as above shows that $\mca{M}(\s_y, \s_y) \ne 0$ unless $y=0 \text{ or } m\alpha^\vee/2 \mod Y_{Q,n}$. This shows that
$$\begin{aligned}
& \Tr(\mca{T}(w_\alpha, i(\chi))) \\
= &   \mca{M}^1(\s_0, \s_0) +  \mca{M}^1(\s_{m\alpha^\vee/2}, \s_{m\alpha^\vee/2})  \\
=& \frac{ 1-q^{-1}  }{1-\chi (\wt{h}_\alpha(\varpi^n))} + \mca{M}^1(\s_{m\alpha^\vee/2}, \s_{- m\alpha^\vee/2})  \cdot \chi(\s_{m\alpha^\vee})^{-1} \cdot \varepsilon^{D(m\alpha^\vee/2, m\alpha^\vee)} \\
=& \frac{ 1-q^{-1}  }{1-\chi (\wt{h}_\alpha(\varpi^n))} + \varepsilon^{\angb{m\alpha^\vee/2}{\alpha} D(\alpha^\vee, m\alpha^\vee/2)} \cdot  \frac{ (1-q^{-1})  \chi (\wt{h}_\alpha(\varpi^n))    }{1-\chi (\wt{h}_\alpha(\varpi^n))}     \cdot \chi(\s_{m\alpha^\vee})^{-1} \\
= & \frac{ 1-q^{-1}     }{  1-\chi (\wt{h}_\alpha(\varpi^n))  } (1+ \chi (\wt{h}_\alpha(\varpi^m)) ) \\
=& \frac{ 1-q^{-1}     }{  1-\chi (\wt{h}_\alpha(\varpi^m))  }.
\end{aligned} $$
Again, the result follows from Lemma \ref{L:aux}.
\end{proof}




\section{Exceptional point and the Casselman-Shalika formula} \label{S:CS}

From the discussion in the previous section, we see that points $y$ such that
$$\hat{y} \in (\msc{X}_{Q,n})^{W_\alpha}$$ play a pivotal role: they contribute $\gamma$ or $\tilde{\gamma}$-factors to $\det(\mca{M}_\mfr{B}(w_\alpha, i(\chi)))$, by Proposition \ref{P:nor} and \ref{P:spe}.

To generalize the consideration, let $W' \subset W$ be a subgroup of the Weyl group. Consider the set
$$(\msc{X}_{Q,n})^{W'},$$
the size of which is denoted by $b_{W',n}$. Clearly $b_{\alpha}= b_{W_\alpha, n}$. In this section, we study the set $\msc{X}_{Q,n}^W$ and some applications.  For the first application, we show that there is a natural exceptional set
$$Y_n^{\rm exc} \subset f_\msc{X}^{-1}( \msc{X}_{Q,n}^W )$$
 such that every $y \in Y_n^{\rm exc}$ serves as the basis point for the Whittaker function, with respect to which one has a natural generalization in the covering setting of the classical Casselman-Shalika formula, see Theorem \ref{T:WV}. The second application is basically a direct consequence of Proposition \ref{P:nor} and \ref{P:spe}, and gives a very restrictive form of the theory of $\gamma$-factors for parabolic induction from full unramified principal series, see Theorem \ref{T:naiveLS}.

We will study some properties of the number $b_{W,n}$ in the next section, including the periodicity of $b_{W_n}$ (as $n$ varies) for almost-simple groups.

In this section, we continue assume that $\psi$ is normalized and the genuine character $\chi$ of $Z(\wt{T})$ is unramified.

\subsection{Exceptional points} It is more convenient to work with some $y\in Y$ which satisfies not only that $\hat{y}\in \msc{X}_{Q,n}^W$ but also the following stronger condition.

\begin{dfn} \label{D:exc}
A point $y\in \bigcap_{\alpha\in \Delta} (Y_\alpha^{\rm nor} \sqcup Y_\alpha^{\rm spe})$ is called exceptional (for $\mbf{G}, Q, n$) if for every $\alpha \in \Delta$ it satisfies
$$ \angb{y_\rho}{\alpha} =
\begin{cases}
-n_\alpha & \text{ if  } y\in Y_\alpha^{\rm nor}; \\
- n_\alpha/2 & \text{ if  } y\in Y_\alpha^{\rm spe}.
\end{cases}$$
\end{dfn}
Denote by $Y^\exc_n \subset Y$ the set of exceptional points of $Y$.  Clearly,
$$Y_n^{\rm exc} \subseteq f_\msc{X}^{-1}(\msc{X}_{Q,n}^W),$$
where $f_\msc{X}: Y\onto  \msc{X}_{Q,n}^W$
is the natural quotient. However, $Y_n^{\rm exc}$ may not be equal to  $ f^{-1}(\msc{X}_{Q,n}^W)$ in general.

\begin{lm} \label{L:exc}
Let $\wt{G}$ be an $n$-fold cover of  a semisimple group $G$. An exceptional point, if it exists, is unique; that is, $\val{ Y^\exc_n }\le 1 $. If $\wt{G}$ is not of metaplectic type, then
$$Y^\exc_n=\set{\rho- \rho_{Q,n}} \text{ if } \rho- \rho_{Q,n} \in Y.$$
On the other hand, if $\wt{G}=\wt{\Sp}_{2r}^{(n)}$ is of metaplectic type, then
$$Y^\exc_n=\set{\rho-\rho_{Q,n} + \frac{n_{\alpha_r} }{2} \omega_{\alpha_r}  } \subset Y,$$
where $\omega_{\alpha_r}$ is the fundamental coweight for $\alpha_r \in \Delta$ (the unique long simple root).
\end{lm}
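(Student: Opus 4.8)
The statement has three assertions: uniqueness of the exceptional point, the identification in the non-metaplectic case, and the identification in the metaplectic case. The common thread is that the defining conditions in Definition \ref{D:exc} pin down $\langle y_\rho, \alpha\rangle$ for \emph{every} $\alpha \in \Delta$, and since $G$ is semisimple the simple roots span $X \otimes \Q$, so the value $\langle y_\rho, \alpha\rangle$ for all $\alpha \in \Delta$ determines $y_\rho \in Y \otimes \Q$ uniquely, hence $y = y_\rho + \rho$ is unique if it exists. This is the short argument for the first claim, and it also tells us the \emph{only} candidate for $Y_n^{\rm exc}$: the unique $y \in Y \otimes \Q$ whose pairing with each $\alpha \in \Delta$ takes the prescribed value; the content of the remaining two claims is (a) to identify this candidate explicitly and (b) to determine exactly when it lies in the lattice $Y$.

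\textbf{Non-metaplectic case.} Here Proposition \ref{P:dich} tells us $Y_\alpha^{\rm spe} = \emptyset$ for all $\alpha$, so every $\alpha \in \Delta$ falls in the first branch of Definition \ref{D:exc}: the condition is $\langle y_\rho, \alpha\rangle = -n_\alpha$ for all $\alpha \in \Delta$. I would now compute $\langle \rho - \rho_{Q,n}, \alpha \rangle$. Recall $\rho = \frac12 \sum_{\beta^\vee > 0}\beta^\vee$ and $\rho_{Q,n} = \frac12 \sum_{\beta^\vee>0}\beta^\vee_{Q,n} = \frac12 \sum_{\beta^\vee > 0} n_\beta \beta^\vee$. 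Using the standard fact $\langle \rho, \alpha\rangle = 1$ for simple $\alpha$ (since $\rho$ is the half-sum of positive coroots, it pairs to $1$ with each simple root — the coroot analogue of $\langle \rho^\vee, \alpha^\vee\rangle$, valid because $\w_\alpha$ permutes the positive coroots other than $\alpha^\vee$), and the analogous fact $\langle \rho_{Q,n}, \alpha_{Q,n}\rangle = 1$ in the dual root datum $(Y_{Q,n}, \Phi_{Q,n}^\vee, \dots)$, I get $\langle \rho_{Q,n}, \alpha \rangle = \langle \rho_{Q,n}, n_\alpha \cdot \alpha_{Q,n}\rangle = n_\alpha$. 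Hence $\langle \rho - \rho_{Q,n}, \alpha\rangle = 1 - n_\alpha$, which is \emph{not} quite $-n_\alpha$; I should double-check the normalization $y_\rho = y - \rho$, giving $\langle (\rho-\rho_{Q,n})_\rho, \alpha\rangle = \langle \rho - \rho_{Q,n} - \rho, \alpha\rangle = \langle -\rho_{Q,n},\alpha\rangle = -n_\alpha$. So indeed $y = \rho - \rho_{Q,n}$ satisfies the condition. By uniqueness of the candidate, $Y_n^{\rm exc} = \{\rho - \rho_{Q,n}\}$ whenever this element lies in $Y$, and is empty otherwise; combining with the containment $Y_n^{\rm exc} \subseteq f_\msc{X}^{-1}(\msc{X}_{Q,n}^W)$ already recorded, this gives the stated formula $Y_n^{\rm exc} = \{\rho - \rho_{Q,n}\} \cap Y$.

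\textbf{Metaplectic case.} Now $\wt{G} = \wt{\Sp}_{2r}^{(n)}$ with $n_{\alpha_r} \equiv 2 \bmod 4$, where $\alpha_r$ is the unique long simple root (equivalently $\alpha_r^\vee$ the unique short simple coroot), and by Proposition \ref{P:dich}(iii) we have $Y_{\alpha}^{\rm nor} = \emptyset$ precisely for $\alpha = \alpha_r$ while $Y_\alpha^{\rm spe} = \emptyset$ for all other $\alpha$. So in Definition \ref{D:exc} the condition at $\alpha_r$ becomes $\langle y_\rho, \alpha_r\rangle = -n_{\alpha_r}/2$ and at every other simple $\alpha$ it is $\langle y_\rho, \alpha\rangle = -n_\alpha$. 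Comparing with the vector $\rho - \rho_{Q,n}$ from the previous paragraph, which has $\langle (\rho - \rho_{Q,n})_\rho, \alpha\rangle = -n_\alpha$ for \emph{all} $\alpha$, I need to add a correction $v$ with $\langle v, \alpha\rangle = 0$ for $\alpha \neq \alpha_r$ and $\langle v, \alpha_r\rangle = n_{\alpha_r}/2$; this is exactly $v = \frac{n_{\alpha_r}}{2}\omega_{\alpha_r}$ where $\omega_{\alpha_r}$ is the fundamental coweight dual to $\alpha_r$. Hence the candidate is $y = \rho - \rho_{Q,n} + \frac{n_{\alpha_r}}{2}\omega_{\alpha_r}$, and by uniqueness $Y_n^{\rm exc}$ equals $\{y\}$ if $y \in Y$, empty otherwise. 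The statement as written asserts this set is nonempty and equals $\{y\}$; here I would invoke that for $\mbf{Sp}_{2r}$ the cocharacter lattice $Y = Y^{\rm sc}$ is spanned by the coroots, and a direct computation (e.g. in coordinates $Y = \Z^r$ with $\alpha_i^\vee = e_i - e_{i+1}$, $\alpha_r^\vee = 2e_r$, using that $n_{\alpha_r}$ is even) shows $\rho - \rho_{Q,n} + \frac{n_{\alpha_r}}{2}\omega_{\alpha_r}$ has integer coordinates, so it does lie in $Y$. I expect the main obstacle to be this last integrality verification: one must track the parities coming from $n_{\alpha_r} \equiv 2 \bmod 4$ and from $\rho_{Q,n}$ carefully, and confirm that the half-integer contributions of $\rho$, $\rho_{Q,n}$, and $\frac{n_{\alpha_r}}{2}\omega_{\alpha_r}$ cancel in each coordinate; the pairing bookkeeping in the first two paragraphs is routine by contrast.
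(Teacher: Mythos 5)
Your strategy is the same as the paper's: uniqueness follows because the pairing $\langle-,-\rangle$ between $Y\otimes\Q$ and $X\otimes\Q$ is perfect and $\Delta$ is a basis of the latter (this is where semisimplicity is used), so the prescribed values of $\langle y_\rho,\alpha\rangle$ over $\alpha\in\Delta$ pin down $y$; one then identifies the candidate by computing those pairings and verifies lattice membership in the metaplectic case. Your pairing calculations are correct, and in the metaplectic case your bookkeeping --- $\langle y_\rho,\alpha\rangle=-n_\alpha$ for $\alpha\neq\alpha_r$ and $=-n_{\alpha_r}/2$ at $\alpha_r$, exactly what Definition~\ref{D:exc} demands given that by Proposition~\ref{P:dich} one has $Y_\alpha^{\rm spe}=\emptyset$ for $\alpha\neq\alpha_r$ and $Y_{\alpha_r}^{\rm nor}=\emptyset$ --- is in fact more precise than the formula printed in the paper's own proof.

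The genuine gap is the step you yourself flag as the ``main obstacle'': you do not actually check that $\rho-\rho_{Q,n}+\tfrac{n_{\alpha_r}}{2}\omega_{\alpha_r}\in Y$, and the coordinate frame you propose for the check is internally inconsistent. You write $Y=\Z^r$ with $\alpha_i^\vee=e_i-e_{i+1}$ and $\alpha_r^\vee=2e_r$; but $\alpha_r^\vee=2e_r$ is the type-$B_r$ convention (the $\Spin_{2r+1}$ coroots), whose coroot lattice is the index-two sublattice $\{y\in\Z^r:\sum y_i\in 2\Z\}$, not $\Z^r$. For $\Sp_{2r}$ (type $C_r$) the simple coroots are $\alpha_i^\vee=e_i-e_{i+1}$ for $i<r$ and $\alpha_r^\vee=e_r$, and then indeed $Y=Y^{\rm sc}=\Z^r$. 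This is not cosmetic, because the membership check is exactly where the hypothesis $n_{\alpha_r}\equiv 2\pmod 4$ enters. With the correct coordinates one has $\omega_{\alpha_r}=\tfrac12(1,\dots,1)$, $\rho\equiv\omega_{\alpha_r}\pmod Y$, and $\rho_{Q,n}=\sum_{\alpha\in\Delta}n_\alpha\omega_\alpha\equiv n_{\alpha_r}\omega_{\alpha_r}\pmod Y$ since $\omega_{\alpha_i}\in\Z^r$ for $i<r$; hence the candidate is congruent to $\bigl(1-\tfrac{n_{\alpha_r}}{2}\bigr)\omega_{\alpha_r}$ modulo $Y$, which lies in $Y$ if and only if $1-\tfrac{n_{\alpha_r}}{2}$ is even, i.e.\ if and only if $\tfrac{n_{\alpha_r}}{2}$ is odd, i.e.\ if and only if $n_{\alpha_r}\equiv 2\pmod 4$. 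That is the paper's short mod-$Y$ argument; without it, or an equivalent parity check, the assertion $Y_n^{\rm exc}\subset Y$ in the metaplectic case is not established.
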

\begin{proof}
If $G$ is semisimple, then the uniqueness of exceptional point is clear, since the pairing $\angb{-}{-}$ between $Y\otimes \Q$ and $X\otimes \Q$ is perfect, and $\Delta$ is a basis for the latter vector space.
Now we check easily that if $\wt{G}$ is not of metaplectic type, then
$$\angb{(\rho-\rho_{Q,n})_\rho}{\alpha}= -n_\alpha \text{ for all } \alpha\in \Delta,$$
and if $\wt{G}$ is of metaplectic type, then
$$\angb{(\rho-\rho_{Q,n} + \frac{n_{\alpha_r}}{2} \omega_{\alpha_r})_\rho}{\alpha}= - \frac{n_\alpha}{2} \text{ for all } \alpha\in \Delta.$$
Now, it suffices to show that in the second case, the element belongs to $Y$. However, in this case, we have
$$\rho-\rho_{Q,n} + \frac{n_{\alpha_r}}{2} \omega_{\alpha_2} \equiv \omega_{\alpha_r} -  n_{\alpha_r} \omega_{\alpha_r} + \frac{n_{\alpha_r}}{2} \omega_{\alpha_r} \equiv (n_{\alpha_r}/2 + 1) \omega_{\alpha_r}  \equiv 0 \mod Y,$$
where the last equivalence follows from the fact that $n_{\alpha_r} \equiv 2 \mod 4$. This completes the proof.
\end{proof}
We will have a more extensive study of $Y_n^\exc$ in \S \ref{S:eg}. However, the above lemma immediately entails concrete examples of $Y_n^\exc$. For instance, if $n$ is odd, then for $\wt{\SL}_2^{(n)}$ in Example \ref{eg-SL} the unique exceptional point is $(1- n)\alpha^\vee/2$.

 \subsection{Exceptional subspaces of $\Wh_\psi(I(\chi))$}  \label{SS:el}
We show an immediate consequence of the existence of an exceptional point, the results of which will be used to prove Theorem \ref{T:naiveLS} later.

Let $y \in Y_n^\exc$. Recall the Whittaker functional
$$\lambda_{\s_y}^\chi \in \Wh_\psi (I(\chi))$$
given in \S \ref{SS:para}. Denote by
 $$E(y; \chi):=\C \cdot \lambda_{\s_y}^\chi \subset \Wh(I(\chi))$$
 the one dimensional space which we call the exceptional subspace associated to $y$.

Recall the map $T(w, \chi; r_w^{\rm un})^*: \Wh (I({}^\w \chi) ) \to \Wh (I(\chi)) $ induced from $T(w, \chi; r_w^{\rm un})$. Write
 $$\w=\w_l \w_{l-1} ... \w_2 \w_1$$
 into a minimal decomposition with $\w_i=\w_{\alpha_i}$ for some $\alpha_i\in \Delta$.

 \begin{prop} \label{P:prod-ga}
 Fix $y\in Y_n^{\rm exe}$. Then the restriction $T(w, \chi; r_w^{\rm un})^*: E(y, {}^\w \chi) \to E(y, \chi)$
 is well-defined, i.e., the image of $E(y, {}^\w \chi)$ via $T(w, \chi; r_w^{\rm un})^*$ lies in $E(y, \chi)$. Moreover,
 $$T(w, \chi; r_w^{\rm un})^*(\lambda_{\s_y}^{ {}^\w \chi  }) = \lambda_{\s_y} \cdot \prod_{i=1}^l \gamma^\natural(w_i, i({}^{\w_{i-1} ... \w_1} \chi))^{-1},$$
 where
 $$\gamma^\natural(w_\alpha, i(\chi))
 =
 \begin{cases}
 \gamma(w_\alpha, i(\chi)) & \text{ if  } y\in Y_\alpha^{\rm nor};\\
\tilde{\gamma }(w_\alpha, i(\chi)) & \text{ if }  y\in Y_\alpha^{\rm spe}.
 \end{cases}
 $$
 \end{prop}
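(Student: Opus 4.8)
The plan is to reduce the general assertion to the case of a simple reflection via the cocycle relation, and in that base case to read off the answer from the explicit scattering-matrix computations of Propositions \ref{P:nor} and \ref{P:spe}. The structural observation that makes this work is that the condition ``$y \in Y_n^{\rm exc}$'' depends only on the lattice datum $(\mbf{G}, Q, n)$ — it involves the pairings $\angb{y_\rho}{\alpha}$ and the sets $Y_\alpha^{\rm nor}, Y_\alpha^{\rm spe}$, but not $\chi$ — so it is insensitive to twisting $\chi$ by elements of $W$. Hence, once the behaviour of $T(w_\alpha, \chi'; r_{w_\alpha}^{\rm un})^*$ on the line $\C\cdot \lambda_{\s_y}^{\chi'}$ is understood for an arbitrary unramified genuine $\chi'$, we may iterate.

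First I would treat $w = w_\alpha$ with $\alpha\in\Delta$. Choosing an ordered set $\mfr{R}$ of representatives of $\msc{X}_{Q,n}$ containing $\s_y$ (so that $\mfr{B}_\chi$, $\mfr{B}_{{}^{\w_\alpha}\chi}$ are as in \S\ref{SS:Sca}), expand $T(w_\alpha, \chi; r_{w_\alpha}^{\rm un})^*(\lambda_{\s_y}^{{}^{\w_\alpha}\chi}) = \sum_{\s_{y'}\in\mfr{R}} \tau(w_\alpha, \chi, \s_y, \s_{y'})\,\lambda_{\s_{y'}}^\chi$. Since $y\in Y_n^{\rm exc}\subseteq f_\msc{X}^{-1}(\msc{X}_{Q,n}^W)$, one has $\w_\alpha[\hat y]=\hat y$ in $\msc{X}_{Q,n}$, so by Theorem \ref{T:Sca} both $\tau^1$ and $\tau^2$ vanish unless $y'\equiv y \bmod Y_{Q,n}$, i.e. unless $y'=y$; the sum collapses to $\tau(w_\alpha,\chi,\s_y,\s_y)\cdot\lambda_{\s_y}^\chi = \bigl(\tau^1(w_\alpha,\chi,\s_y,\s_y) + \tau^2(w_\alpha,\chi,\s_y,\s_y)\bigr)\cdot\lambda_{\s_y}^\chi$. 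This is exactly the quantity computed in Proposition \ref{P:nor} when $y\in Y_\alpha^{\rm nor}$ and in Proposition \ref{P:spe} when $y\in Y_\alpha^{\rm spe}$; by Corollary \ref{C:dich} these two cases are mutually exclusive and, $y$ being exceptional, one of them always holds. In the normal case the defining equality $\angb{y_\rho}{\alpha}=-n_\alpha$ forces the exponent $\tfrac{\angb{y_\rho}{\alpha}}{n_\alpha}+1$ in Proposition \ref{P:nor} to vanish, so $\tau(w_\alpha,\chi,\s_y,\s_y)=\gamma(w_\alpha, i(\chi))^{-1}$; in the special case $\angb{y_\rho}{\alpha}=-n_\alpha/2=-m_\alpha$ forces the exponent $\tfrac{\angb{y_\rho}{\alpha}+m_\alpha}{n_\alpha}$ in Proposition \ref{P:spe} to vanish, so $\tau(w_\alpha,\chi,\s_y,\s_y)=\tilde{\gamma}(w_\alpha, i(\chi))^{-1}$. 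Either way $T(w_\alpha, \chi; r_{w_\alpha}^{\rm un})^*(\lambda_{\s_y}^{{}^{\w_\alpha}\chi}) = \gamma^\natural(w_\alpha, i(\chi))^{-1}\cdot\lambda_{\s_y}^\chi$, establishing the base case together with the asserted well-definedness.

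For general $w=\w_l\cdots\w_1$ in a minimal decomposition, set $\w^{(j)}:=\w_j\cdots\w_1$. Iterating Lemma \ref{L:Tcr} gives $T(w,\chi;r^{\rm un}_{w,\chi}) = T(w_l, {}^{\w^{(l-1)}}\chi; r^{\rm un})\circ\cdots\circ T(w_1,\chi;r^{\rm un})$, and dualizing reverses the order: $T(w,\chi;r^{\rm un})^* = T(w_1,\chi;r^{\rm un})^*\circ\cdots\circ T(w_l,{}^{\w^{(l-1)}}\chi;r^{\rm un})^*$. Applying the base case to each factor — with $\chi$ replaced by ${}^{\w^{(j-1)}}\chi$, which is legitimate since exceptionality of $y$ and the branch selected by $\gamma^\natural(w_j,-)$ are unchanged under Weyl twists — shows that $T(w_j,{}^{\w^{(j-1)}}\chi;r^{\rm un})^*$ sends $\lambda_{\s_y}^{{}^{\w^{(j)}}\chi}$ to $\gamma^\natural(w_j, i({}^{\w^{(j-1)}}\chi))^{-1}\cdot\lambda_{\s_y}^{{}^{\w^{(j-1)}}\chi}$. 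Composing these $l$ identities yields at once that $T(w,\chi;r^{\rm un})^*$ restricts to a map $E(y,{}^\w\chi)\to E(y,\chi)$ and the product formula, since $\w^{(i-1)}=\w_{i-1}\cdots\w_1$. The argument is essentially bookkeeping once Theorem \ref{T:Sca} and Propositions \ref{P:nor}, \ref{P:spe} are available; the only point needing care — and the conceptual heart of the statement — is the observation that the ``exceptional'' defining equalities of Definition \ref{D:exc} are precisely what make the monomial prefactors $\chi(\wt{h}_\alpha(\varpi^{n_\alpha}))^{(\cdots)}$ occurring in those propositions equal to $1$, leaving a clean product of (metaplectic) gamma factors.
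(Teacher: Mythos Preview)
Your proof is correct and follows essentially the same approach as the paper: reduce to the simple-reflection case via the cocycle relation (Lemma \ref{L:Tcr}/\eqref{SLCM2}), then invoke Propositions \ref{P:nor} and \ref{P:spe} together with the defining equalities of an exceptional point. You have in fact written out more detail than the paper does---in particular the explicit verification that the exponents $\tfrac{\angb{y_\rho}{\alpha}}{n_\alpha}+1$ and $\tfrac{\angb{y_\rho}{\alpha}+m_\alpha}{n_\alpha}$ vanish, and the observation that exceptionality is independent of the Weyl twist on $\chi$---but the underlying argument is the same.
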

 \begin{proof}
 In view of the cocycle relation in  \eqref{SLCM2}, it suffices to show the rank one $\w=\w_\alpha$ case. However, the statement for $\w=\w_\alpha$ then follows from Propositions \ref{P:nor} and \ref{P:spe}, coupled with the defining property of an exceptional point $y$. More precisely, since $y$ is exceptional, if it is $\alpha$-normal, then Proposition \ref{P:nor} implies that
$$T(w_\alpha, \chi; r_{w_\alpha}^{\rm un})^*(\lambda_{\s_y}^{ {}^{\w_\alpha} \chi  }) =\gamma(w_\alpha, i(\chi)) \cdot \lambda_{\s_y}.$$
The case when $y$ is $\alpha$-special is similar, by invoking Proposition \ref{P:spe} instead.
\end{proof}

As a simple example, we see that in the context and notations of Example \ref{SL2-3} for $\wt{\SL}_2^{(3)}$, the point $ -\alpha^\vee \in Y$ is exceptional. The $(-\alpha^\vee, -\alpha^\vee)$-entry of the diagonal-block matrix $S_\mfr{R}(w_\alpha, i(\chi); r_{w_\alpha}^{\rm un})$, which represents $T(w_\alpha, \chi; r_{w_\alpha}^{\rm un})^*$, is exactly $\gamma(w_\alpha, i(\chi))$.

\subsection{The Casselman-Shalika formula}
In this subsection, we assume that all exceptional points for a Brylinski-Deligne cover $\wt{G}$, if exist, are $\alpha$-normal for all $\alpha\in \Delta$; that is,
$$Y_n^{\rm exc} \subseteq \bigcap_{\alpha\in \Delta} Y_\alpha^{\rm nor}.$$
For cover $\wt{G}$ of a semisimple group $G$, this assumption is equivalent to the assumption that $\wt{G}$ is not of metaplectic type. For computational simplicity, we also assume that $\bbmu_{2n} \subset F^\times$. Moreover, we denote
$$\chi_\alpha:= \uchi_\alpha(\varpi)=\chi(\wt{h}_\alpha(\varpi^{n_\alpha})) \in \C,$$
this notation is used already in Example \ref{SL2-3}.

\begin{lm} \label{L:CS}
Let $z\in Y^\exc_n$ be an exceptional point. Then $\tau(\w, \chi, \s_y, \s_z)=0$ unless $y\in z+ Y_{Q,n}$. Moreover,
$$\tau(\w, {}^{\w^{-1}}\chi, \s_z, \s_z)=(-1)^{l(\w)} \gk(\w^{-1}, i(\chi)) \cdot \prod_{\alpha \in \Phi_{\w^{-1}}} \chi_\alpha.$$
\end{lm}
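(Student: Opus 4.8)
\textbf{Proof plan for Lemma \ref{L:CS}.}
The plan is to exploit the cocycle relation \eqref{SLCM2} together with the defining property of an exceptional point. For the first assertion, I would argue inductively on $l(\w)$. Recall that for a simple reflection $\w_\alpha$, Theorem \ref{T:Sca} tells us $\tau(w_\alpha, \chi', \s_{y_1}, \s_y) = 0$ unless $y_1 \equiv y$ or $y_1 \equiv \w_\alpha[y] \bmod Y_{Q,n}$. When $y = z$ is exceptional, it lies in $Y_\alpha^{\rm nor}$ for every $\alpha$ (by the standing assumption that $\wt{G}$ is not of metaplectic type), so $z - \w_\alpha[z] \in Y_{Q,n}^{sc} \subset Y_{Q,n}$; hence both conditions collapse to $y_1 \equiv z \bmod Y_{Q,n}$. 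Thus for $\w = \w_\alpha$ the only nonzero entry in the column indexed by $\s_z$ is the diagonal one $\tau(w_\alpha, \chi', \s_z, \s_z)$ (up to the $\wt{A}$-twisting in \eqref{SLCM1}). Feeding this into the cocycle relation $\tau(w_{l}\cdots w_1, \chi, \s_y, \s_z) = \sum_{y''} \tau(w_l\cdots w_2, {}^{w_1}\chi, \s_y, \s_{y''}) \cdot \tau(w_1, \chi, \s_{y''}, \s_z)$, the inner sum has a single surviving term $y'' = z$, and by induction the outer factor vanishes unless $y \equiv z \bmod Y_{Q,n}$. This gives the support statement.

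For the explicit value of $\tau(\w, {}^{\w^{-1}}\chi, \s_z, \s_z)$, I would again induct on $l(\w)$ using the cocycle relation, which by the previous paragraph reduces the diagonal entry to a product of rank-one diagonal entries:
$$\tau(\w, {}^{\w^{-1}}\chi, \s_z, \s_z) = \prod_{i=1}^{l} \tau(w_i, {}^{\w_i^{-1}\w^{-1}\cdots}\chi, \s_z, \s_z),$$
where one must be careful to track which twist ${}^{(\cdot)}\chi$ appears at each stage in a minimal decomposition $\w = \w_l\cdots \w_1$ (equivalently $\w^{-1} = \w_1^{-1}\cdots\w_l^{-1}$). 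Each rank-one factor is computed by Proposition \ref{P:nor}: for an $\alpha$-normal exceptional point $z$ one has $\angb{z_\rho}{\alpha} = -n_\alpha$, so $\frac{\angb{z_\rho}{\alpha}}{n_\alpha} + 1 = 0$, and Proposition \ref{P:nor} gives $\tau(w_\alpha, \chi', \s_z, \s_z) = \chi'_\alpha{}^{0} \cdot \gamma(w_\alpha, i(\chi'))^{-1} = \gamma(w_\alpha, i(\chi'))^{-1}$. Now I would use the identity relating $\gamma(w_\alpha, i(\chi'))^{-1} = \frac{1 - q^{-1}\chi'_\alpha{}^{-1}}{1 - \chi'_\alpha}$ (from \eqref{g-inv}) to the Gindikin--Karpelevich factor: each such factor equals $(-1)\cdot \chi'_\alpha \cdot \frac{1 - q^{-1}\chi'_\alpha}{1 - \chi'_\alpha}$, and $\frac{1-q^{-1}\chi'_\alpha}{1-\chi'_\alpha}$ is precisely the $\alpha$-contribution to $\gk$. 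Collecting the signs gives $(-1)^{l(\w)}$, collecting the $\frac{1-q^{-1}(\cdot)}{1-(\cdot)}$ pieces over the roots $\alpha \in \Phi_{\w^{-1}}$ gives $\gk(\w^{-1}, i(\chi))$, and collecting the $\chi'_\alpha$ factors gives $\prod_{\alpha\in\Phi_{\w^{-1}}}\chi_\alpha$ (after checking that, under the successive Weyl twists in the telescoping product, the characters $\chi'_\alpha$ appearing are exactly the $\chi_\beta$ for $\beta$ ranging over $\Phi_{\w^{-1}}$ — this is the standard bookkeeping behind the cocycle/Gindikin--Karpelevich computation, cf. \cite{Cas1, Mc2}).

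The main obstacle I anticipate is the bookkeeping of the Weyl-group twists in the telescoping product: one must verify that as $i$ runs through a minimal decomposition, the simple root $\alpha_i$ paired against the twisted character ${}^{(\cdot)}\chi$ yields exactly the character $\chi_\beta = \chi(\wt{h}_\beta(\varpi^{n_\beta}))$ for $\beta$ the corresponding element of $\Phi_{\w^{-1}}$, and that the product over $i$ thus reindexes cleanly over $\Phi_{\w^{-1}}$ without sign or exponent discrepancies. This is essentially the same combinatorial lemma underlying the classical Gindikin--Karpelevich formula, so I would either cite it from \cite{Cas1, Mc2, Mc0} or reprove the inductive step $\Phi_{\w_l\cdots\w_1}^{-1} = \{\alpha_l\} \sqcup \w_l(\Phi_{\w_{l-1}\cdots\w_1}^{-1})$ directly. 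Everything else is a routine assembly of Proposition \ref{P:nor}, the cocycle relation \eqref{SLCM2}, and the factorization of $\gk$.
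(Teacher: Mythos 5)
Your overall plan — iterate the cocycle relation \eqref{SLCM2}, observe that exceptionality collapses both surviving conditions in Theorem \ref{T:Sca} to the diagonal one, and then compute the rank-one diagonal entry via Proposition \ref{P:nor} and reassemble — is the right approach, and the first assertion (the support statement) is argued correctly. This mirrors what the paper does via Proposition \ref{P:prod-ga}, which packages exactly the telescoping you describe.

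However, in the computation of the explicit value there is a concrete algebraic error that is not just sloppy bookkeeping but an outright false identity. You claim that
$$\gamma(w_\alpha, i(\chi'))^{-1}=\frac{1-q^{-1}(\chi')_\alpha^{-1}}{1-(\chi')_\alpha}\ \text{equals}\ (-1)\cdot(\chi')_\alpha\cdot\frac{1-q^{-1}(\chi')_\alpha}{1-(\chi')_\alpha}.$$
Clearing denominators this asserts $1-q^{-1}(\chi')_\alpha^{-1}=-(\chi')_\alpha+q^{-1}(\chi')_\alpha^2$, which is false for a generic value of $(\chi')_\alpha$. The identity that actually holds, and that drives the computation, has the inverse in the \emph{denominator}:
$$\frac{1-q^{-1}\chi_\beta}{1-\chi_\beta^{-1}}=-\chi_\beta\cdot\frac{1-q^{-1}\chi_\beta}{1-\chi_\beta}.$$
The inverse shows up there because of the reindexing $\w(\Phi_\w)=-\Phi_{\w^{-1}}$ used in the paper's proof: writing $\w=\w_l\cdots\w_1$ and $\mathbf{v}_i^{-1}=\w_l\cdots\w_i$, one has $\mathbf{v}_i^{-1}(\alpha_i)=-\beta_i$ with $\{\beta_i\}=\Phi_{\w^{-1}}$, so the character $({}^{\mathbf{v}_i}\chi)_{\alpha_i}$ that feeds into the $i$-th rank-one factor is $\chi_{\beta_i}^{-1}$, \emph{not} $\chi_{\beta_i}$. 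This sign flip is precisely what turns the numerator/denominator positions of the exponent $^{-1}$ around and makes the conversion to Gindikin--Karpelevich work. Your account says "the characters $\chi'_\alpha$ appearing are exactly the $\chi_\beta$" and defers this to bookkeeping, but the bookkeeping in fact gives $\chi_\beta^{-1}$, and without that inverse your stated identity is the wrong one and the proof does not close. The paper sidesteps the danger by first writing the product uniformly over $\Phi_\w$ in terms of $({}^{\w^{-1}}\chi)_\alpha$ (via Proposition \ref{P:prod-ga}), then substituting $({}^{\w^{-1}}\chi)_\alpha=\chi_\beta^{-1}$ for $\beta=-\w(\alpha)\in\Phi_{\w^{-1}}$, and only then performing the correct algebraic manipulation.
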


\begin{proof}
By the defining property of $z$ and Theorem \ref{T:Sca},
$$\tau(\w_\alpha, \chi, \s_z, \s_z)=\gamma(\w_\alpha, i(\chi))^{-1}= \frac{1-q^{-1} \chi_\alpha^{-1} }{ 1-\chi_\alpha }$$
for every $\alpha\in \Delta$. It then follows from Proposition \ref{P:prod-ga} that
$$\tau(\w, {}^{\w^{-1}}\chi, \s_z, \s_z)=\prod_{\alpha\in \Phi_\w} \frac{ 1-q^{-1}  ({}^{\w^{-1}}\chi)^{-1}_\alpha}{ 1-({}^{\w^{-1}}\chi)_\alpha },$$
where $\Phi_\w=\set{\alpha\in \Phi: \alpha>0, \w(\alpha)<0}$.
It is easy to see
$$\w(\Phi_\w) = -\Phi_{\w^{-1}}$$
and therefore
\begin{align*}
& \tau(\w, {}^{\w^{-1}}\chi, \s_z, \s_z) \\
=& \prod_{\alpha\in \Phi_{\w^{-1}}} \frac{ 1-q^{-1} \chi_\alpha}{ 1-(\chi^{-1})_\alpha } \\
=& \prod_{\alpha\in \Phi_{\w^{-1}}} (-\chi_\alpha ) \frac{ 1-q^{-1} \chi_\alpha}{ 1-\chi_\alpha } \\
=& (-1)^{l(\w)} \gk(\w^{-1}, i(\chi)) \cdot \prod_{\alpha \in \Phi_{\w^{-1}}} \chi_\alpha,
\end{align*}
as desired.
\end{proof}

An element $\wt{t} \in \wt{T}$ is called dominant if
$$\wt{t} \cdot (U\cap K) \cdot \wt{t}^{-1} \subset K.$$
Let
$$Y^+=\set{y\in Y: \angb{y}{\alpha}\ge 0 \text{ for all } \alpha\in \Delta}.$$
Then an element $\s_y\in \wt{T}$ is dominant if and only if $y\in Y^+$. We also write
$$Y_{Q,n}^+:=Y^+ \cap Y_{Q,n}.$$

Let $I(\chi)$ be an unramified principal series  of $\wt{G}$ and $\gamma \in \wt{T}$. Let $\mca{W}_\gamma$ be the unramified scalar-valued Whittaker function associated to $f^0$ and $\gamma$. Then $\mca{W}_\gamma(\wt{t})=0$ unless $\wt{t} \in \wt{T}$ is dominant. Moreover, for dominant $\wt{t}$, one has (see \cite{KP, Pat, Ga5})
\begin{equation} \label{P:Pat}
\mca{W}_\gamma(\wt{t})=\delta_B^{1/2}( \wt{t} ) \cdot \sum_{\w \in W} \gk(\w_G \w^{-1}, i(\chi))  \cdot \tau(\w, {}^{\w^{-1}}\chi, \gamma, w_G \cdot \wt{t} \cdot w_G^{-1}),
\end{equation}
where $\delta_B$ is the modular character of $B$. Before we proceed, we pause to remark that various forms for the Whittaker value $\mca{W}_\gamma(\wt{t})$  have been obtained in works such as \cite{BBCFH, BBF1, BBFH1, BBBF, BBB, Mc2}, to mention a few. However, for our purpose in this paper, we rely on \eqref{P:Pat}.

\begin{prop} \label{P:valW}
Let $z\in Y_n^\exc$, and let $\mca{W}_{\s_z} \in \Wh(I(\chi))$ be the unramified Whittaker function associated to $\lambda_{\s_z}$. Then  one has
$$\mca{W}_{\s_z}(\s_{y+(-z)})= \delta_B^{1/2}(\s_{y-z}) \cdot \gk(\w_G, i(\chi)) \cdot (-1)^{l(\w_G)} \cdot \sum_{\w\in W}   (-1)^{l(\w)} \chi(\s_{\w(y+\rho_{Q,n}) + \rho_{Q,n}} )
$$
for every $y\in Y_{Q,n}^+$
\end{prop}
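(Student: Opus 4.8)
## Proof Proposal for Proposition \ref{P:valW}

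The plan is to evaluate the general Whittaker formula \eqref{P:Pat} at the element $\wt{t} = \s_{y+(-z)}$ where $z \in Y_n^{\rm exc}$ and $y \in Y_{Q,n}^+$, and to show that almost all the terms in the sum over $W$ collapse by the support properties of the scattering matrix entries, leaving a single geometric-series-free expression. First I would note that $\s_{y-z}$ is dominant because $y \in Y_{Q,n}^+ \subseteq Y^+$ and $z \in Y_n^{\rm exc}$ satisfies $\angb{(-z)_\rho}{\alpha} = n_\alpha \geq 1$ for all $\alpha$ (the exceptional condition with $z \in Y_\alpha^{\rm nor}$), which forces $\angb{-z}{\alpha} = n_\alpha - \angb{\rho}{\alpha} + \angb{\rho_{Q,n}}{\alpha}$-type estimates to be nonnegative; more directly, $-z$ is dominant since $\s_{-z}$ is the standard dominant exceptional element, so $y + (-z)$ is a sum of two dominant elements (using $\bbmu_{2n}\subset F^\times$ so that $\s_{y}\cdot\s_{-z}=\s_{y-z}$ by Lemma \ref{L:W-act}) and hence dominant. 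So \eqref{P:Pat} applies and gives
$$
\mca{W}_{\s_z}(\s_{y-z}) = \delta_B^{1/2}(\s_{y-z}) \cdot \sum_{\w\in W} \gk(\w_G \w^{-1}, i(\chi)) \cdot \tau(\w, {}^{\w^{-1}}\chi, \s_z, w_G \s_{y-z} w_G^{-1}).
$$

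Next I would identify $w_G \s_{y-z} w_G^{-1}$: by Lemma \ref{L:W-act} (using $\bbmu_{2n}\subset F^\times$) this equals $\s_{\w_G(y-z)}$. Now apply Lemma \ref{L:CS}: the entry $\tau(\w, {}^{\w^{-1}}\chi, \s_z, \s_{\w_G(y-z)})$ vanishes unless $\w_G(y-z) \equiv z \bmod Y_{Q,n}$, i.e.\ unless $\w_G(y) \equiv \w_G(z) + z \bmod Y_{Q,n}$. Since $z$ is exceptional, $\w[z] = z$ in $\msc{X}_{Q,n}$, hence $\w_G(z_\rho) + \rho \equiv z$, giving $\w_G(z) + z \equiv \rho + \rho - \w_G(\rho) \pmod{Y_{Q,n}}$ after bookkeeping with $z_\rho = z - \rho$; but this is a fixed coset, so the vanishing condition $\w_G(y) \equiv (\text{fixed coset}) \bmod Y_{Q,n}$ is a single congruence on $y$. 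Here I would use that $y \in Y_{Q,n}^+ \subseteq Y_{Q,n}$, so $\w_G(y) \equiv 0 \bmod Y_{Q,n}$ (since $Y_{Q,n}$ is Weyl-stable), and check that the fixed coset is indeed $0$ — this follows because $\w_G(z) + z - 2\rho + \w_G(\rho) \in Y_{Q,n}^{sc}$ by a direct computation using $z \in \bigcap_\alpha Y_\alpha^{\rm nor}$ and the exceptional normalization $\angb{z_\rho}{\alpha} = -n_\alpha$. The upshot is that the factor $\tau(\w, {}^{\w^{-1}}\chi, \s_z, \s_{\w_G(y-z)})$ is nonzero for every $\w$ precisely because of the exceptional and normality hypotheses, so I cannot discard terms by support alone — instead I re-expand each surviving entry.

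For each $\w$, I would write $\s_{\w_G(y-z)} = \s_{\w_G \w^{-1} \cdot \w(y-z)}$ and use the $\wt{A}$-equivariance \eqref{SLCM1} together with the cocycle relation \eqref{SLCM2} and Lemma \ref{L:CS} to factor $\tau(\w, {}^{\w^{-1}}\chi, \s_z, \s_{\w_G(y-z)})$ as a character value times $\tau(\w, {}^{\w^{-1}}\chi, \s_z, \s_z)$. Concretely, writing $\w_G(y-z) = z + v_\w$ with $v_\w \in Y_{Q,n}$, equivariance gives $\tau(\w, {}^{\w^{-1}}\chi, \s_z, \s_z \cdot \s_{v_\w}) = \tau(\w, {}^{\w^{-1}}\chi, \s_z, \s_z) \cdot \chi(\s_{v_\w})$. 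Then I substitute the value from Lemma \ref{L:CS}, $\tau(\w, {}^{\w^{-1}}\chi, \s_z, \s_z) = (-1)^{l(\w)} \gk(\w^{-1}, i(\chi)) \prod_{\alpha\in\Phi_{\w^{-1}}}\chi_\alpha$, and combine with $\gk(\w_G\w^{-1}, i(\chi))$. Using the multiplicativity $\gk(\w_G\w^{-1}, i(\chi)) = \gk(\w_G, i(\chi))/\gk(\w^{-1}, i(\chi))$ (valid since $l(\w_G) = l(\w_G\w^{-1}) + l(\w^{-1})$, a standard Gindikin–Karpelevich cocycle identity, which I would invoke as known from \cite{Mc2, Ga1}), the $\gk(\w^{-1},i(\chi))$ factors cancel. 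What remains is
$$
\mca{W}_{\s_z}(\s_{y-z}) = \delta_B^{1/2}(\s_{y-z}) \cdot \gk(\w_G, i(\chi)) \cdot (-1)^{l(\w_G)} \cdot \sum_{\w\in W} (-1)^{l(\w)+l(\w_G)+l(\w)} \Big(\prod_{\alpha\in\Phi_{\w^{-1}}}\chi_\alpha\Big) \chi(\s_{v_\w}),
$$
and $(-1)^{l(\w)+l(\w_G)+l(\w)} = (-1)^{l(\w_G)}$; the sign recombination with the overall $(-1)^{l(\w_G)}$ I carry along. The last task is to rewrite $\big(\prod_{\alpha\in\Phi_{\w^{-1}}}\chi_\alpha\big)\chi(\s_{v_\w})$ as $(-1)^{l(\w)}\chi(\s_{\w(y+\rho_{Q,n})+\rho_{Q,n}})$. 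This is the computational heart: $\prod_{\alpha\in\Phi_{\w^{-1}}}\chi_\alpha = \chi(\s_{\rho_{Q,n} - \w^{-1}(\rho_{Q,n})})$-type identity (since $\sum_{\alpha\in\Phi_{\w^{-1}}}n_\alpha\alpha^\vee$ relates to $\rho_{Q,n}$), and $v_\w = \w_G(y-z) - z$ expands via the exceptional normalization of $z$ (which pins down $z_\rho$ in terms of $\rho_{Q,n}$) into exactly the claimed combination after using Weyl-invariance of $\chi$ on $Z(\wt{T})$ and $\bbmu_{2n}\subset F^\times$.

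\medskip

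The main obstacle I anticipate is the final bookkeeping step: correctly tracking the cocharacters $v_\w = \w_G(y-z)-z$ modulo $Y_{Q,n}$, the product $\prod_{\alpha\in\Phi_{\w^{-1}}}\chi_\alpha = \chi(\wt{h}_\alpha(\varpi^{\cdots}))$ expressed as $\chi$ evaluated on an explicit lattice element, and the sign $(-1)^{l(\w)}$, all expressed uniformly in terms of $\w(y+\rho_{Q,n})+\rho_{Q,n}$. The identity $\sum_{\alpha\in\Phi_{\w^{-1}}}n_\alpha\alpha^\vee$ is \emph{not} simply $2(\rho_{Q,n}-\w^{-1}\rho_{Q,n})$ unless the $n_\alpha$ are constant, so I expect to need the characterization of $\rho_{Q,n}$ as the element with $\angb{\rho_{Q,n}}{\alpha_{Q,n}} = 1$ and to argue coset-by-coset in $\msc{X}_{Q,n}$, leaning on the exceptional defining property $\angb{z_\rho}{\alpha} = -n_\alpha$ to convert everything back into $\chi$-values on $\s_{\w(y+\rho_{Q,n})+\rho_{Q,n}}$. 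Throughout, Weyl-invariance of $\chi|_{Z(\wt{T})}$ and the hypothesis $\bbmu_{2n}\subset F^\times$ (so that $\s\colon Y\to\wt{T}$ is a homomorphism commuting with $W$, by Lemma \ref{L:W-act}) are what make the manipulation of these lattice exponents legitimate.
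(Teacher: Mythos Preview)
Your overall strategy matches the paper's: apply \eqref{P:Pat}, use $\bbmu_{2n}\subset F^\times$ to turn conjugation into the Weyl action on $Y$, peel off the $Y_{Q,n}$-part via \eqref{SLCM1}, insert Lemma \ref{L:CS}, and collapse the Gindikin--Karpelevich factors. However, three concrete points in your execution either go wrong or make the computation harder than it is.

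First, the equivariance step is misapplied. In \eqref{SLCM1} the character appearing is the one attached to the $\tau$, here ${}^{\w^{-1}}\chi$. So
\[
\tau(\w,{}^{\w^{-1}}\chi,\s_z,\s_z\cdot\s_{v_\w})=\tau(\w,{}^{\w^{-1}}\chi,\s_z,\s_z)\cdot({}^{\w^{-1}}\chi)(\s_{v_\w})=\tau(\w,{}^{\w^{-1}}\chi,\s_z,\s_z)\cdot\chi(\s_{\w(v_\w)}),
\]
not $\chi(\s_{v_\w})$ as you write. This twist by $\w$ is exactly what later produces $\chi(\s_{\w(y)})$.

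Second, you miss the clean identity $\w_G(-z)=z$, which the paper records as an easy consequence of exceptionality (for $z$ with $z_\rho=-\rho_{Q,n}$ one has $\w_G(z_\rho)=\rho_{Q,n}$, hence $\w_G(z)=-z$). This makes your $v_\w=\w_G(y-z)-z=\w_G(y)$, independent of $\w$ and manifestly in $Y_{Q,n}$. Your detour through support conditions in $\msc{X}_{Q,n}$ is then unnecessary.

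Third, after inserting Lemma \ref{L:CS} and cancelling $\gk(\w^{-1},i(\chi))$ against $\gk(\w_G\w^{-1},i(\chi))$ to get $\gk(\w_G,i(\chi))$, the paper performs the reindexing $\w\mapsto\w\w_G$. This converts $(-1)^{l(\w)}$ into $(-1)^{l(\w_G)}(-1)^{l(\w)}$, sends $\chi(\s_{\w\w_G(y)})$ to $\chi(\s_{\w(y)})$, and turns $\Phi_{\w^{-1}}$ into $\Phi_{\w_G\w^{-1}}=\{\alpha>0:\w^{-1}(\alpha)>0\}$. Then the standard identity $\sum_{\alpha\in\Phi_{\w_G\w^{-1}}}\alpha_{Q,n}^\vee=\w(\rho_{Q,n})+\rho_{Q,n}$ immediately gives $\prod_{\alpha\in\Phi_{\w_G\w^{-1}}}\chi_\alpha=\chi(\s_{\w(\rho_{Q,n})+\rho_{Q,n}})$, and combining with $\chi(\s_{\w(y)})$ yields $\chi(\s_{\w(y+\rho_{Q,n})+\rho_{Q,n}})$. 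Your worry that ``$\sum_{\alpha\in\Phi_{\w^{-1}}}n_\alpha\alpha^\vee$ is not simply $2(\rho_{Q,n}-\w^{-1}\rho_{Q,n})$'' is misplaced: one has $\sum_{\alpha\in\Phi_{\w^{-1}}}\alpha_{Q,n}^\vee=\rho_{Q,n}-\w(\rho_{Q,n})$ directly, for any $n_\alpha$, since $\{\alpha_{Q,n}^\vee\}$ is a root system with half-sum $\rho_{Q,n}$. Your sign expression $(-1)^{l(\w)+l(\w_G)+l(\w)}$ is incorrect; the correct tracking is the one just described.
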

\begin{proof}
Note that $y+(-z)$ is dominant. Also,  $w \s_y  w^{-1}= \s_{\w(y)}$ by our assumption $\bbmu_{2n} \subset F^\times$. Moreover, it is easy to see that $z=\w_G(-z)$ for any $z\in Y_n^\exc$. Now it follows from \eqref{P:Pat} that
$$\begin{aligned}
& \mca{W}_{\s_z}(\s_{y+(-z)}) \\
= & \delta_B^{1/2}(\s_{y-z}) \cdot \sum_{\w\in W} \gk(\w_G \w^{-1}, i(\chi)) \cdot \tau(\w, {}^{\w^{-1}} \chi, \s_z, w_G \cdot \s_{y-z} \cdot w_G^{-1}) \\
= & \delta_B^{1/2}(\s_{y-z}) \cdot \sum_{\w\in W} \gk(\w_G \w^{-1}, i(\chi)) \cdot \tau(\w, {}^{ \w^{-1}} \chi, \s_z, \s_{\w_G(-z)} \cdot \s_{\w_G(y)}) \\
= & \delta_B^{1/2}(\s_{y-z}) \cdot \sum_{\w\in W} \gk(\w_G \w^{-1}, i(\chi)) \cdot \tau(\w, {}^{ \w^{-1}} \chi, \s_z, \s_z ) \cdot \chi(\s_{\w \w_G(y)} ) \\
= &  \delta_B^{1/2}(\s_{y-z}) \cdot \sum_{\w\in W}  \gk(\w_G \w^{-1}, i(\chi)) \gk(\w^{-1}, i(\chi))  \cdot  (-1)^{l(\w)} \chi(\s_{\w \w_G(y)} ) \cdot \prod_{\alpha \in \Phi_{\w^{-1}}} \chi_\alpha,
\end{aligned}$$
where the last equality follows from Lemma \ref{L:CS}. We further simplify to get
$$\begin{aligned}
& \mca{W}_{\s_z}(\s_{y+(-z)}) \\
=& \delta_B^{1/2}(\s_{y-z}) \cdot \gk(\w_G, i(\chi)) \cdot \sum_{\w\in W}   (-1)^{l(\w)} \chi(\s_{\w \w_G(y)} ) \cdot \prod_{\alpha \in \Phi_{\w^{-1}}} \chi_\alpha \\
=& \delta_B^{1/2}(\s_{y-z}) \cdot \gk(\w_G, i(\chi)) \cdot \sum_{\w\in W}   (-1)^{l(\w \w_G)} \chi(\s_{\w(y)} ) \cdot \prod_{\alpha \in \Phi_{\w_G \w^{-1}}} \chi_\alpha  \\
=& \delta_B^{1/2}(\s_{y-z}) \cdot \gk(\w_G, i(\chi)) \cdot (-1)^{l(\w_G)} \cdot \sum_{\w\in W}   (-1)^{l(\w)} \chi(\s_{\w(y)} ) \cdot \prod_{\alpha \in \Phi_{\w_G \w^{-1}}} \chi_\alpha.
\end{aligned} $$
Note that $\Phi_{\w_G \w^{-1}}= \set{\alpha > 0: \w^{-1}(\alpha) > 0}$. Moreover,
$$\w(\rho_{Q,n}) + \rho_{Q,n}= \sum_{\alpha\in \Phi_{\w_G \w^{-1}}} \alpha^\vee_{Q,n}.$$
Thus,
$$\prod_{\alpha \in \Phi_{\w_G \w^{-1}}} \chi_\alpha=\chi(\s_{\w(\rho_{Q,n}) + \rho_{Q,n}}),$$
which gives the desired result by a simplification.
\end{proof}

The following result is then immediate from the above. It gives a natural analogue of the Casselman-Shalika formula \cite{CS} in the covering setting.

\begin{thm} \label{T:WV}
Let $\wt{G}$ be a Brylinski-Deligne cover whose exceptional points are $\alpha$-normal for all $\alpha\in \Delta$.
Let $z\in Y_n^{\rm exc}$ be an exceptional point. Assume $\bbmu_{2n} \subset F^\times$.  Then
$$\mca{W}_{\s_z}(\s_{-z}) = \delta_B^{1/2}(\s_{-z}) \cdot \prod_{\alpha>0} (1-q^{-1} \chi_\alpha).$$
\end{thm}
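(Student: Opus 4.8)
The plan is to specialize the formula in Proposition \ref{P:valW} to the element $y = 0 \in Y_{Q,n}^+$. Since $0$ is trivially dominant and lies in $Y_{Q,n}$, all hypotheses of Proposition \ref{P:valW} are met, and we obtain
\[
\mca{W}_{\s_z}(\s_{-z}) = \delta_B^{1/2}(\s_{-z}) \cdot \gk(\w_G, i(\chi)) \cdot (-1)^{l(\w_G)} \cdot \sum_{\w\in W} (-1)^{l(\w)} \chi(\s_{\w(\rho_{Q,n}) + \rho_{Q,n}}).
\]
The first step is therefore to recognize that the remaining sum over $W$ is a Weyl-denominator-type expression. Indeed, $\chi(\s_{\w(\rho_{Q,n}) + \rho_{Q,n}}) = \prod_{\alpha \in \Phi_{\w_G \w^{-1}}} \chi_\alpha$, and as $\w$ ranges over $W$ the set $\Phi_{\w_G\w^{-1}} = \{\alpha > 0 : \w^{-1}(\alpha) > 0\}$ ranges over all subsets of $\Phi^+$ arising this way; reindexing $\w \mapsto \w \w_G$ turns this into the complementary subsets, so one gets the standard identity
\[
\sum_{\w\in W} (-1)^{l(\w)} \prod_{\alpha \in \Phi^+,\, \w(\alpha) < 0} \chi_\alpha = \prod_{\alpha > 0}(1 - \chi_\alpha),
\]
which is just the image under the character $\chi$ of the Weyl denominator formula (applied to the dual group $\wt{G}_{Q,n}^\vee$ whose positive coroots are the $\alpha_{Q,n}^\vee$, so that $\chi_\alpha$ plays the role of $e^{\alpha_{Q,n}^\vee}$). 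Combining with $(-1)^{l(\w_G)}$ and matching signs handles the sign bookkeeping.

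The second step is to combine this with the explicit value of the Gindikin–Karpelevich factor
\[
\gk(\w_G, i(\chi)) = \prod_{\alpha \in \Phi_{\w_G}} \frac{1 - q^{-1}\chi_\alpha}{1 - \chi_\alpha} = \prod_{\alpha > 0} \frac{1 - q^{-1}\chi_\alpha}{1 - \chi_\alpha},
\]
since $\Phi_{\w_G} = \Phi^+$. Multiplying $\gk(\w_G, i(\chi))$ by $\prod_{\alpha>0}(1 - \chi_\alpha)$ — up to the sign $(-1)^{l(\w_G)} = (-1)^{\val{\Phi^+}}$ absorbed into the reindexing above — telescopes the denominators against the Weyl denominator and leaves exactly $\prod_{\alpha > 0}(1 - q^{-1}\chi_\alpha)$. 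I would carry out the sign check carefully: the $(-1)^{l(\w_G)}$ in Proposition \ref{P:valW} and the $(-1)^{l(\w)}$ inside the sum together with the substitution $\w \mapsto \w\w_G$ should produce precisely the $(-1)^{\val{\Phi_\w}}$ needed to convert $\prod_{\w(\alpha)<0}\chi_\alpha$ into $\prod(1-\chi_\alpha)$ after expanding the product; this is the one place where a careless computation could flip a sign.

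I do not anticipate a serious obstacle here: the theorem is essentially the $y=0$ case of the already-proved Proposition \ref{P:valW}, and the only real content is the elementary manipulation identifying the alternating sum with the Weyl denominator and telescoping it against $\gk(\w_G, i(\chi))$. The mildly delicate point — and the one I would present most carefully — is the interplay of the length signs $(-1)^{l(\w_G)}$, $(-1)^{l(\w)}$ under the reindexing $\w \mapsto \w\w_G$, using $l(\w\w_G) = l(\w_G) - l(\w)$ and $\Phi_{\w\w_G}$ versus $\Phi_{\w_G\w^{-1}}$; once those are aligned, the identity
\[
\sum_{S \subseteq \Phi^+}(-1)^{\val{S}}\prod_{\alpha \in \Phi^+ \setminus S}\chi_\alpha \cdot \prod_{\alpha \in \Phi^+}\frac{1 - q^{-1}\chi_\alpha}{1-\chi_\alpha} = \prod_{\alpha>0}(1 - q^{-1}\chi_\alpha)
\]
follows by distributing. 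This completes the proof.
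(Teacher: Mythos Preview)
Your proposal is correct and follows essentially the same route as the paper. The only cosmetic difference is that the paper quotes an intermediate line from the \emph{proof} of Proposition \ref{P:valW} (where the product $\prod_{\alpha\in\Phi_\w}\chi_\alpha$ already appears after the reindexing $\w\mapsto \w_G\w^{-1}$), whereas you start from the \emph{statement} of Proposition \ref{P:valW} and then unwind $\chi(\s_{\w(\rho_{Q,n})+\rho_{Q,n}})=\prod_{\alpha\in\Phi_{\w_G\w^{-1}}}\chi_\alpha$ before reindexing; both then invoke the Weyl denominator identity $\prod_{\alpha>0}(1-\chi_\alpha)=\sum_\w(-1)^{l(\w)}\prod_{\alpha\in\Phi_\w}\chi_\alpha$ and telescope against $\gk(\w_G,i(\chi))$. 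Your sign bookkeeping via $l(\w_G\w^{-1})=l(\w_G)-l(\w)$ is exactly what is needed; just drop the final displayed identity summing over all subsets $S\subseteq\Phi^+$, since the Weyl denominator formula sums over $W$, not over arbitrary subsets, and you have already stated the correct version two lines earlier.
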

\begin{proof}
The result follows from the expansion of Weyl denominator, i.e.,
$$\prod_{\alpha >0} (1- \chi_\alpha)= \sum_{\w \in W} (-1)^{l(\w)} \prod_{\alpha\in \Phi_\w} \chi_\alpha.$$
Indeed, it follows from the proof of the Proposition \ref{P:valW} that
$$\begin{aligned}
\mca{W}_{\s_z}(\s_{-z}) & =  \delta_B^{1/2}(\s_{-z}) \cdot \gk(\w_G, i(\chi)) \cdot \sum_{\w\in W}   (-1)^{l(\w)}  \prod_{\alpha \in \Phi_{\w} } \chi_\alpha \\
& = \delta_B^{1/2}(\s_{-z}) \cdot \prod_{\alpha>0} (1-q^{-1} \chi_\alpha).
\end{aligned} $$
This completes the proof.
\end{proof}

It follows from Lemma \ref{L:exc} that there are many covering groups $\wt{G}$ where Theorem \ref{T:WV} is applicable. We also note that for Kazhdan-Patterson coverings of $\GL_r$, Theorem \ref{T:WV} recovers \cite[Proposition 3.4 (2)]{Suz2}.

\begin{rmk} \label{R:glob}
Theorem \ref{T:WV} implies that the $L$-functions appear as the Whittaker value only for a specific Whittaker function $\mca{W}_{\s_z}$  evaluated at a specific dominant torus element $\s_{-z}$, both of which are associated to the exceptional point $z$ of $\wt{G}$. For linear algebraic groups, one can take $z=0$ and thus Theorem \ref{T:WV} is nothing but the Casselman-Shalika formula for split $G$. However, for global problems pertaining to Whittaker functions of a global covering group, one must study the local Whittaker function evaluated at the identity $1_{\wt{G}}$. Hence, Theorem \ref{T:WV} does not yield any global assertions.
\end{rmk}

\begin{rmk} \label{R:whit}
One could consider a general
$$z\in f^{-1}_\msc{X}( \msc{X}_{Q,n}^W  ) \cap (-(Y^+)).$$
However, for such $z$ it is not clear that $\mca{W}_{\s_z}(\s_{-z})$ is related to $L$-functions. Indeed, for $z$ with $\hat{z} \in \msc{X}_{Q,n}^W - f_\msc{X}(Y_n^{\rm exe})$, one does not necessarily have Lemma \ref{L:CS}. This shows the difference between $f_\msc{X}(Y_n^{\rm exc})$ and $\msc{X}_{Q,n}^W$ in a potential generalization of the Casselman-Shalika formula to covering groups.
\end{rmk}

By the local Langlands correspondence for covering torus (see \cite{We6, GG}), $I(\chi)$ gives rise to a splitting ${}^L\bbrho:= {}^L\bbrho_\chi $ of the $L$-group ${}^L\wt{G}$ over $W_F$. Recall that (see \S \ref{SS:L-g}) by choosing a distinguished genuine character $\chi_\psi$, we have an isomorphism
$${}^L\wt{G} \simeq_{\chi_\psi} \wt{G}^\vee \times W_F.$$
As $\psi$ is normalized, the genuine character $\chi_\psi$ is unramified. We  use $\bbrho^\vee_\psi$ to denote the composition
$$\begin{tikzcd}
\bbrho_\psi^\vee: W_F \ar[r, "{ {}^L\bbrho  }"]  & {}^L\wt{G} \ar[r, two heads] & \wt{G}^\vee,
\end{tikzcd}$$
where the second map of projection depends on the choice of a distinguished genuine character $\chi_\psi$. The element $\bbrho_\psi^\vee(\varpi) \in \wt{T}^\vee$ is the Satake parameter for $I(\chi)$ relative to $\chi_\psi$.

In the remaining of this subsection, we assume that $G$ is semisimple. Together with the assumption (at the beginning of this subsection) that $\wt{G}$ is not of metaplectic type, one has
$$Y_n^{\rm exc} = \set{\rho - \rho_{Q,n}}  \cap Y$$
by Lemma \ref{L:exc}.

For any $y\in Y_{Q,n}^+$, let
$$\pi_y: \wt{G}^\vee  \to \GL(V_y)$$
be the irreducible representation of $\wt{G}^\vee$ with highest weight $y$.

\begin{prop} \label{P:WL}
For $y\in Y_{Q,n}^+$, we have
$$\chi_\psi(\s_y)  \cdot \Tr\left(\bbrho_\psi^\vee(\varpi); \pi_y \right)= \frac{ \sum_{\w\in W} (-1)^{l(\w)} \cdot \chi (\s_{\w(y+\rho_{Q,n}) + \rho_{Q,n}}) }{ \sum_{\w\in W} (-1)^{l(\w)} \cdot \chi(\s_{\w(\rho_{Q,n}) + \rho_{Q,n}}) },$$
which in particular is independent of the choice of $\chi_\psi$.
\end{prop}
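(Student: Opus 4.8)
The plan is to combine the Weyl character formula for $\wt{G}^\vee$ with the explicit recipe for the Satake parameter coming from the local Langlands correspondence for the covering torus, and then match denominators. First I would recall the Weyl character formula: for $y\in Y_{Q,n}^+$, viewed as a dominant weight for $\wt{G}^\vee$ (whose character lattice is $Y_{Q,n}$, Weyl group $W$, and half-sum of positive roots $\rho_{Q,n}$ — note $\wt{G}^\vee$ and $\mbf{G}$ share the same Weyl group $W$ since $\alpha^\vee_{Q,n}=n_\alpha\alpha^\vee$), one has
$$
\Tr(g;\pi_y)=\frac{\sum_{\w\in W}(-1)^{l(\w)}\,e^{\w(y+\rho_{Q,n})}(g)}{\sum_{\w\in W}(-1)^{l(\w)}\,e^{\w(\rho_{Q,n})}(g)}
$$
for $g$ a regular semisimple element of $\wt{T}^\vee$, where $e^\mu$ denotes the character of $\wt{T}^\vee$ attached to $\mu\in Y_{Q,n}$. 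Applying this to $g=\bbrho_\psi^\vee(\varpi)\in\wt{T}^\vee$ reduces the problem to the identity
$$
e^\mu(\bbrho_\psi^\vee(\varpi))=\chi_\psi(\s_\mu)^{-1}\cdot\chi(\s_\mu)\qquad\text{for }\mu\in Y_{Q,n}.
$$

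The middle step is therefore to establish this last identity, which is essentially the definition of the Satake parameter relative to $\chi_\psi$. Concretely, the character $\chi$ of $Z(\wt{T})$ restricted along $i_{Q,n}\colon Y_{Q,n}\otimes F^\times\to T$ corresponds (via unramified local Langlands for the torus, i.e.\ the identification of unramified genuine characters of $Z(\wt{T})$ with $\wt{T}^\vee$-valued unramified parameters) to the element $\bbrho^\vee(\varpi)\in\wt{T}^\vee$, and dividing out by the distinguished genuine character $\chi_\psi$ implements the isomorphism ${}^L\wt{G}\simeq_{\chi_\psi}\wt{G}^\vee\times W_F$; evaluating the $\mu$-th character of $\wt{T}^\vee$ on the image picks out precisely $(\chi/\chi_\psi)(\s_\mu)$. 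I would cite \cite{We6, GG} for this and phrase it as: for every $\mu\in Y_{Q,n}$, the coweight $\mu$ defines a character $e^\mu$ of $\wt{G}^\vee$, and $e^\mu(\bbrho_\psi^\vee(\varpi))=(\chi\cdot\chi_\psi^{-1})(\s_\mu)$, using that $\s\colon Y\to\wt{T}$ restricted to $Y_{Q,n}$ lands in $Z(\wt{T})$ and is a homomorphism on $Y_{Q,n}$ (which holds unconditionally on $Y_{Q,n}$ since $B_Q$ takes values in $n\Z$ there, so the Hilbert-symbol cocycle in relation (C) of \S\ref{S:top-c} is trivial).

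Substituting into the Weyl character formula and using the Weyl-invariance of $\chi_\psi$ (so that $\chi_\psi(\s_{\w(\mu)})=\chi_\psi(\s_\mu)$ for all $\w$, which lets $\chi_\psi(\s_y)^{-1}$ factor out of both numerator and denominator, noting $\chi_\psi(\s_{\rho_{Q,n}})^{-1}$ cancels between top and bottom as well) yields
$$
\Tr(\bbrho_\psi^\vee(\varpi);\pi_y)=\chi_\psi(\s_y)^{-1}\cdot\frac{\sum_{\w\in W}(-1)^{l(\w)}\chi(\s_{\w(y+\rho_{Q,n})+\rho_{Q,n}})}{\sum_{\w\in W}(-1)^{l(\w)}\chi(\s_{\w(\rho_{Q,n})+\rho_{Q,n}})},
$$
which rearranges to the claimed formula. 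The independence of $\chi_\psi$ is then automatic: the right-hand side manifestly does not involve $\chi_\psi$, so neither does $\chi_\psi(\s_y)\cdot\Tr(\bbrho_\psi^\vee(\varpi);\pi_y)$.

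The main obstacle I anticipate is not any computation but making the bridge in the middle step fully rigorous: precisely, that $\bbrho_\psi^\vee(\varpi)$ is a regular (or at least $\pi_y$-generic) element of $\wt{T}^\vee$ so that the Weyl character formula applies verbatim, and that the normalization of the Langlands parameter in \cite{We6, GG} is exactly the one for which $e^\mu(\bbrho_\psi^\vee(\varpi))=(\chi/\chi_\psi)(\s_\mu)$ on the nose (rather than up to an unramified twist or a sign). One should also be careful that $\rho_{Q,n}$ need not lie in $Y_{Q,n}$, only in $Y_{Q,n}\otimes\Q$, so the symbols $\s_{\w(\rho_{Q,n})+\rho_{Q,n}}$ must be interpreted as elements of $\wt{T}$ via $\s$ on $Y\otimes\Q$ landing appropriately — but since $\w(\rho_{Q,n})+\rho_{Q,n}=\sum_{\alpha\in\Phi_{\w_G\w^{-1}}}\alpha^\vee_{Q,n}\in Y_{Q,n}^{sc}$ (as already used in the proof of Proposition \ref{P:valW}), all the relevant arguments do lie in $Y_{Q,n}$, so this is only a matter of bookkeeping. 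If the character-formula argument meets technical friction, an alternative is to deduce the statement directly from Proposition \ref{P:valW} together with the (covering) Casselman-Shalika expansion and the definition of the Satake isomorphism, but the representation-theoretic route above is cleaner.
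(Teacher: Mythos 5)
Your proposal is correct and takes essentially the same route as the paper: apply the Weyl character formula for $\pi_y$ (rewritten with both numerator and denominator shifted by $e^{\rho_{Q,n}}$ so all exponents lie in $Y_{Q,n}$), substitute the Satake parameter via the identity $\bbrho_\psi^\vee(\varpi)(\mu)=(\chi/\chi_\psi)(\s_\mu)$ for $\mu\in Y_{Q,n}$ coming from the torus LLC, and extract $\chi_\psi^{-1}(\s_y)$ using the Weyl-invariance of $\chi_\psi$. One small inaccuracy in your parenthetical aside: $B_Q$ being $n\Z$-valued on $Y_{Q,n}$ controls only $D(y_1,y_2)+D(y_2,y_1)$, not $D(y_1,y_2)$ alone, so it does not by itself trivialize the cocycle in relation (C); what actually makes $\mu\mapsto(\chi/\chi_\psi)(\s_\mu)$ a well-defined homomorphism is that $\chi\cdot\chi_\psi^{-1}$ is nongenuine (hence trivial on the $\bbmu_n$-valued cocycle), as the paper observes by saying it descends to a character of $Z(\wt{T})/\bbmu_n$ — and in any case the standing assumption $\bbmu_{2n}\subset F^\times$ in that subsection makes $\varepsilon=1$ and the issue disappears.
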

\begin{proof}
Recall that the Weyl character formula (see \cite[Page 154]{Bou-Lie3}) for $(\pi_y, V_y)$ reads
$${\sf ch}(\pi_y)=\frac{ \sum_{\w\in W} (-1)^{l(\w)} e^{\w(y+\rho_{Q,n})} }{ \sum_{\w\in W} (-1)^{l(\w)} e^{\w(\rho_{Q,n})} }= \frac{ \sum_{\w\in W} (-1)^{l(\w)} e^{\w(y+\rho_{Q,n}) + \rho_{Q,n}} }{ \sum_{\w\in W} (-1)^{l(\w)} e^{\w(\rho_{Q,n}) + \rho_{Q,n}} }.$$
We have $\wt{T}^\vee=\Hom(Y_{Q,n}, \C^\times)$. By the local Langlands correspondence, the Satake parameter $\bbrho_\psi^\vee(\varpi) \in \wt{T}^\vee$ relative to $\chi_\psi$ is given by
$$\bbrho_\psi^\vee(\varpi)(y)=\left(\frac{\chi}{\chi_\psi}\right)(\s_y), \quad y\in Y_{Q,n},$$
where $\chi\cdot \chi_\psi^{-1}$ is a linear character of $Z(\wt{T})/\bbmu_n$, i.e., the image of $Z(\wt{T})$ in $T$ with respect to the natural projection $\wt{T} \onto T$. Since $\chi_\psi$ is Weyl-invariant, it follows that for every $\w\in W$,
$$\left(\chi\cdot \chi_\psi^{-1}\right)(\s_{\w(y+\rho_{Q,n}) + \rho_{Q,n}})= \chi_\psi^{-1}(\s_y) \cdot \chi(\s_{\w(y+\rho_{Q,n}) + \rho_{Q,n}}).$$
Therefore,
$$\begin{aligned}
& \Tr(\bbrho_\psi^\vee(\varpi); \pi_y)\\
= & \ \frac{ \sum_{\w\in W} (-1)^{l(\w)} \cdot (\chi \chi_\psi^{-1}) (\s_{\w(y+\rho_{Q,n}) + \rho_{Q,n}}) }{ \sum_{\w\in W} (-1)^{l(\w)} \cdot (\chi \chi_\psi^{-1})(\s_{\w(\rho_{Q,n}) + \rho_{Q,n}}) } \\
= &\ \chi_\psi^{-1}(\s_y) \cdot \frac{ \sum_{\w\in W} (-1)^{l(\w)} \cdot \chi (\s_{\w(y+\rho_{Q,n}) + \rho_{Q,n}}) }{ \sum_{\w\in W} (-1)^{l(\w)} \cdot \chi(\s_{\w(\rho_{Q,n}) + \rho_{Q,n}}) }.
\end{aligned} $$
This completes the proof.
\end{proof}

We denote:
\begin{equation} \label{E:Tr}
\Tr\left({}^L\bbrho(\varpi); \pi_y \right):=\chi_\psi(\s_y)  \cdot  \Tr\left(\bbrho_\psi^\vee(\varpi); \pi_y \right).
\end{equation}

\begin{thm} \label{T:CS}
Assume $\bbmu_{2n} \subseteq F^\times$ and also that $z:=\rho- \rho_{Q,n}$ lies in $Y$. Let $\mca{W}: \wt{G} \to i(\chi)$ be the unramified $i(\chi)$-valued Whittaker function associated to $I(\chi)$. Then for every $y\in Y_{Q,n}^+$, we have
\begin{equation} \label{E:CS}
\mca{W}(\s_{y+(-z)}) =
\delta_B^{1/2}(\s_{y}) \cdot \Tr\left({}^L\bbrho(\varpi); \pi_y \right) \cdot \mca{W}(\s_{-z}).
\end{equation}
\end{thm}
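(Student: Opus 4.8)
The statement is a clean consequence of the two previous results, Proposition \ref{P:valW} and Proposition \ref{P:WL}, together with the definition \eqref{E:Tr} and the case $y = 0$ of Theorem \ref{T:WV}. The plan is to simply divide. First I would recall from Proposition \ref{P:valW} that, under the running assumption $\bbmu_{2n} \subset F^\times$ and that $\wt{G}$ is not of metaplectic type (so that $z = \rho - \rho_{Q,n}$ is the unique exceptional point when it lies in $Y$, by Lemma \ref{L:exc}, and $\rho_{Q,n}$ plays the role of the ``half sum of positive coroots'' for $\mbf{H}$), one has
$$
\mca{W}_{\s_z}(\s_{y + (-z)}) = \delta_B^{1/2}(\s_{y-z}) \cdot \gk(\w_G, i(\chi)) \cdot (-1)^{l(\w_G)} \cdot \sum_{\w \in W} (-1)^{l(\w)} \chi(\s_{\w(y + \rho_{Q,n}) + \rho_{Q,n}})
$$
for every $y \in Y_{Q,n}^+$. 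Here I am using that $\mca{W}_{\s_z} = l_{\cc_{\s_z}} \circ \mca{W}$ is the scalar-valued Whittaker function attached to the component $\lambda_{\s_z}$ of the $i(\chi)$-valued $\mca{W}$; I would make this identification explicit so that passing between $\mca{W}$ and $\mca{W}_{\s_z}$ is unambiguous.

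Next, specializing the displayed formula to $y = 0$ (note $0 \in Y_{Q,n}^+$) and comparing with Theorem \ref{T:WV}, which gives $\mca{W}_{\s_z}(\s_{-z}) = \delta_B^{1/2}(\s_{-z}) \cdot \prod_{\alpha > 0}(1 - q^{-1}\chi_\alpha)$, I obtain
$$
\mca{W}_{\s_z}(\s_{-z}) = \delta_B^{1/2}(\s_{-z}) \cdot \gk(\w_G, i(\chi)) \cdot (-1)^{l(\w_G)} \cdot \sum_{\w \in W} (-1)^{l(\w)} \chi(\s_{\w(\rho_{Q,n}) + \rho_{Q,n}}),
$$
i.e. the Weyl-denominator sum $\sum_\w (-1)^{l(\w)} \chi(\s_{\w(\rho_{Q,n}) + \rho_{Q,n}})$ is a nonzero quantity (it equals $\prod_{\alpha > 0}(1 - q^{-1}\chi_\alpha)$ up to the harmless nonzero factor $\gk(\w_G, i(\chi))(-1)^{l(\w_G)}$, at least generically in $\chi$). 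Then I would form the ratio $\mca{W}_{\s_z}(\s_{y+(-z)}) / \mca{W}_{\s_z}(\s_{-z})$: the factors $\gk(\w_G, i(\chi))$ and $(-1)^{l(\w_G)}$ cancel, and what remains is $\delta_B^{1/2}(\s_{y-z})/\delta_B^{1/2}(\s_{-z}) = \delta_B^{1/2}(\s_y)$ times the ratio of the two Weyl-character sums appearing in Proposition \ref{P:WL}. By that proposition this ratio of sums is exactly $\chi_\psi(\s_y) \cdot \Tr(\bbrho_\psi^\vee(\varpi); \pi_y) = \Tr({}^L\bbrho(\varpi); \pi_y)$ by the definition \eqref{E:Tr}. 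Rearranging gives \eqref{E:CS}.

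The only genuine subtleties to attend to, rather than a main obstacle, are bookkeeping ones: (i) the multiplicativity $\delta_B^{1/2}(\s_{y-z}) = \delta_B^{1/2}(\s_y) \delta_B^{1/2}(\s_{-z})$, which uses that $\s: Y \to \wt{T}$ is a homomorphism when $\bbmu_{2n} \subset F^\times$ (Lemma \ref{L:W-act}) and that $\delta_B$ is a genuine-independent character of $T$; (ii) ensuring $y \in Y_{Q,n}^+$ so that $\s_y \in Z(\wt{T})$ and $\chi(\s_y)$, $\chi_\psi(\s_y)$ make sense and so that Proposition \ref{P:WL} applies; and (iii) dividing by $\mca{W}_{\s_z}(\s_{-z})$, which a priori could be zero — but this is only an issue on a proper closed subvariety of the unramified $\chi$'s, and by meromorphic continuation (or by simply noting both sides of \eqref{E:CS} are regular functions of the Satake parameter and agree off this subvariety) the identity holds for all unramified $\chi$. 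I would phrase the final step as an equality of meromorphic (indeed rational, after clearing the Weyl denominator) functions in the unramified parameter, so that the vanishing locus of $\mca{W}_{\s_z}(\s_{-z})$ causes no trouble. No step here is hard; the content of the theorem is entirely carried by Propositions \ref{P:valW} and \ref{P:WL}, which in turn rest on the scattering-matrix computations of \S\ref{S:PG} and the explicit Whittaker expansion \eqref{P:Pat}.
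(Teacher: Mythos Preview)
Your core computation --- applying Proposition~\ref{P:valW} at a general $y$ and at $y=0$, then taking the ratio and invoking Proposition~\ref{P:WL} --- is correct and is exactly what the paper does for the component $\mca{W}_{\s_z}$. The handling of $\delta_B^{1/2}$ and the meromorphic/rational-function remark about division are fine.

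However, there is a genuine gap in the passage from the scalar identity for $\mca{W}_{\s_z}$ back to the $i(\chi)$-valued identity \eqref{E:CS}. The statement concerns the vector $\mca{W}(\s_{y+(-z)}) \in i(\chi)$, so to prove it you must check the equality after pairing with \emph{every} functional $l_{\cc_{\s_{z'}}}$ as $z'$ ranges over a set of representatives of $\msc{X}_{Q,n}$, not just $z' = z$. Your parenthetical about ``making the identification explicit'' does not address this: there is something to verify for the remaining components. The paper handles this by observing (from the expansion \eqref{P:Pat} together with Lemma~\ref{L:CS}) that $\mca{W}_{\s_{z'}}(\s_{y+(-z)}) = 0$ whenever $z' \notin z + Y_{Q,n}$, so both sides of \eqref{E:CS} vanish on those components; and for $z' \in z + Y_{Q,n}$ one has $\lambda_{\s_{z'}} = \chi(\s_{z'-z})^{-1}\lambda_{\s_z}$, reducing immediately to the case $z' = z$ you treated. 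You should insert this short argument before forming the ratio; without it the proof is incomplete.
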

\begin{proof}
It suffices to show that the equality holds for $\mca{W}_{z'}$ for every $z'\in Y$. First, it follows from \eqref{P:Pat} that
$$\mca{W}_{z'}(\s_{y+(-z)})=0$$
unless $z'\in z+Y_{Q,n}$. Thus, the two sides of \eqref{E:CS} are both equal to zero if $z'\notin z+Y_{Q,n}$. Thus, it suffices to consider $\mca{W}_z$, for which the result follows from Proposition \ref{P:valW} and Proposition \ref{P:WL}.
\end{proof}

If $n=1$, then Theorem \ref{T:CS} recovers the classical reinterpretation of Casselman-Shalika formula using the theory of highest weight vectors, see \cite[Proposition 1]{Tam1}. Indeed,  we can take $z=0$ in this case, as mentioned in Remark \ref{R:glob}.

\begin{rmk}
It would be desirable to have a more intrinsic definition of $\Tr\left({}^L\bbrho(\varpi); \pi_y \right)$ instead of the seemingly ad hoc \eqref{E:Tr} above, provided that one could work with a proper analogous highest weight $y$ module for the ${}^L\wt{G}$ instead of just $\wt{G}^\vee$. However, we see that in general there is no canonical extension of $\pi_y$ to be a representation of ${}^L\wt{G}$.
More precisely, recall that by definition
$${}^L\wt{G}=(\wt{G}^\vee \times E)/ \{(z, z^{-1}): z\in Z(\wt{G}^\vee)\}  ,$$
where $Z(\wt{G}^\vee) \to E \to W_F$ is a certain extension. Any extension of $\pi_y$ takes the form $\pi_y\boxtimes \pi' $ where $\pi'$ is a representation of $E$ such that $\pi_y$ and $\pi'$ agree on $Z(\wt{G}^\vee)$. A distinguished character $\chi_\psi$ provides a splitting of $E$ over $W_F$ and thus an isomorphism $E\simeq_{\chi_\psi} Z(\wt{G}^\vee) \times W_F$ (which further gives rise to an isomorphism ${}^L\wt{G} \simeq_{\chi_\psi} \wt{G}^\vee \times W_F$); it gives a representation $\pi_\psi$ of $E$ relative to $\chi_\psi$ by considering the composition
$$E \to Z(\wt{G}^\vee) \to GL(V_y)$$
where the second map is $\pi_y$. It is easy to see that
$$\Tr({}^L\rho(\varpi); \pi_y\boxtimes \pi_\psi)= \Tr(\rho_\psi^\vee(\varpi); \pi_y),$$
which is dependent on $\chi_\psi$. On the other hand, the value of $\mca{W}$ is a priori independent of any chosen distinguished character $\chi_\psi$, although it does depend on $\chi$ and $\psi$. Thus, the appearance of $\chi_\psi(\s_y)$ in \eqref{E:Tr} seems to be both expected (as to cancel the dependence of $\Tr(\rho_\psi^\vee(\varpi); \pi_y)$ on $\chi_\psi$), but also at the same time intriguing, as we do not have a natural interpretation of this value.
\end{rmk}

\subsection{Relation with other formulation} \label{SS:other-wk}
 The above Casselman-Shalika formula could be deduced from other formulations \cite{CO, Mc2, PP1} of the Whittaker function, which are essentially the same as ours. The twisted Weyl action $\w[-]$ in our paper is incarnated by the Chinta-Gunnells action $\w \star e^y$ of $W$ on $\C[Y]$ (see \cite{CG, PP1}), and thus the set $Y_n^\exc$ should play equally important role in their formulation.  Let $z \in Y_n^\exc$ be an exceptional point. The key observation is that, using notation of \cite{PP1}, one has
$$\w \star e^{-z} = e^{-z} \in \C[Y]$$
for all $\w\in W$, which follows from a straightforward computation using the formula in \cite[(4.7)]{PP1}. Now for every $y\in Y_{Q,n}^+$, by \cite[Lemma 4.3]{PP1}, we have
\begin{equation} \label{E:star}
\w \star (e^{y+ (-z)})=e^{\w(y)} \cdot e^{ -z }.
\end{equation}
Let $\mca{W}(\pi^{y})$ be the $\C[Y]$-valued universal Whittaker function given in \cite{PP1}. Then it follows from \eqref{E:star} and \cite[Corollary 5.6]{PP1} that an analogue of Theorem \ref{T:CS} holds for the universal Whittaker function $\mca{W}(\pi^y)$.

\subsection{Induction from general parabolic subgroup}

In this subsection, we carry out some preliminary study on induction for a general parabolic subgroup. We only consider the case when the inducing representation of the Levi subgroup is a full unramified principal series.

Recalling the notation in \S \ref{SS:lcm}, let $\wt{P}=\wt{M} N$ (resp. $\wt{P}'= \wt{M}' N'$) be the parabolic subgroup associated with the subset $\theta \subset \Delta$ (resp. $\theta'\subset \Delta$). We have the set
$$W^{\theta,\theta'} =\set{\w\in W:  \w(\theta)= \theta'} \subset W.$$
We say that $\wt{P}$ and $\wt{P}'$ are associated if $W^{\theta, \theta'} \ne \emptyset$. Note that for every $\theta$, $W^{\theta, \theta} \subset W$ is a subgroup. Call $\wt{P}$ self-associated if $W^{\theta, \theta} \ne \set{\text{id}}$.
If $P$ is associated to $P'$ by $\w\in W$, then $\wt{M}' = w \wt{M} w^{-1}$ and  one could consider the intertwining operator
$$T(w, \sigma): I_{\wt{P}}^{\wt{G}}(\sigma)\to  I_{\wt{P}'}^{\wt{G}} ({}^w\sigma)$$
 of parabolically induced representations from $P$ and $P'$. If $\sigma= \Ind_{\wt{B}_M}^{\wt{M}} i(\chi)$ is a full principal series representation, then by induction in stages, we have the following commutative diagram
 \begin{equation} \label{CD:is}
 \begin{tikzcd}
 I(i(\chi))  \ar[r, "{T(w, i(\chi))}"]   & I({}^w i(\chi))  \ar[r, "{r_w}"]  &  I( i({}^\w \chi)) \\
I_{\wt{P}}^{\wt{G}}(\sigma)  \ar[u, equal]   \ar[r, "{T(w, \sigma)}"]    &  I_{\wt{P}'}^{\wt{G}} ({}^w\sigma)   \ar[u, "{j_\sigma}"] .
 \end{tikzcd}
 \end{equation}
 Here $j_\sigma$ is the isomorphism induced from the isomorphism
 $${}^w \big(\Ind_{\wt{B}_M}^{\wt{M}} i(\chi)  \big) \to \Ind_{\wt{B}_{M'}}^{\wt{M}'} {}^w i(\chi)$$
 given by $f \mapsto \tilde{f}(g) =  f(w^{-1} g w)$ for all $g\in \wt{M}'$.

Furthermore, in view of the factorization of the $T(w,\sigma)$ into rank-one intertwining map (cf. \cite[Theorem 2.1.1]{Sha2}), which corresponds to the factorization of $T(w, i(\chi))$, one needs to consider the group
$$W(\theta, \theta'):=\left\langle \w_\alpha:  \alpha \in \Delta(\w) \text{ for some } \w\in W^{\theta, \theta'}  \right\rangle.$$
In the covering groups setting, we would like to consider the set
$$(\msc{X}_{Q,n})^{W(\theta, \theta')};$$
every element $y$ in the set will give rise to an exceptional subspace
$$E(y, {}^w\sigma):=(r_w \circ j_\sigma)^*(E(y, {}^w\sigma)) \subset \Wh_\psi(I({}^w \sigma)),$$
where $E(y, {}^w\sigma) \subseteq \Wh(I(i({}^\w \chi)))$ is the exceptional space discussed in \S \ref{SS:el}. The space $E(y, {}^w\sigma)$ has a distinguished basis vector given by
$$\lambda(\s_y, {}^w \sigma): = (r_w \circ j_\sigma)^*(\lambda_{\s_y}^{ {}^\w \chi  }),$$
where $\lambda_{\s_y}^{ {}^\w \chi  } \in \Wh(I({}^\w \chi))$ is as in \S \ref{SS:el}.

A priori, $W(\theta, \theta') \subset W$ might be just a proper subgroup and thus there might be more exceptional subspaces for a general parabolic group than the Borel subgroup. However, we show  that  space $(\msc{X}_{Q,n})^{W(\theta, \theta')}$ is always either $\msc{X}_{Q,n}$ or $\msc{X}_{Q,n}^W$. For simplicity, we assume that $\wt{P}$ is self-associated.

\begin{lm} \label{P:genP}
Assume $\wt{P}$ is self-associated. For every $\theta'\subset \Delta$, we have either $W(\theta, \theta')= \set{ {\rm id} }$ or $W(\theta, \theta')= W$.
\end{lm}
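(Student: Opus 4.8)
The statement is about the subgroup $W(\theta,\theta') = \langle \w_\alpha : \alpha \in \Delta(\w) \text{ for some } \w \in W^{\theta,\theta'}\rangle$ of $W$. The hypothesis that $\wt P$ is self-associated means $W^{\theta,\theta} \ne \{\mathrm{id}\}$; combined with the case distinction on $\theta'$, the claim is a purely combinatorial fact about Weyl groups and parabolic subsets, so the proof should not involve any covering-group machinery at all. The plan is to reduce to the case $\theta' = \theta$ first (where self-associatedness gives a nontrivial $\w$), show $W(\theta,\theta) = W$ there, and then deduce the general case.

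First I would record the basic observation that $W^{\theta,\theta'}$ is either empty or a coset of the subgroup $W^{\theta,\theta}$ (acting on the left, say): if $\w_0 \in W^{\theta,\theta'}$ is fixed and $\w \in W^{\theta,\theta'}$, then $\w_0^{-1}\w(\theta) = \theta$. Hence $W(\theta,\theta') = \{\mathrm{id}\}$ exactly when $W^{\theta,\theta'}$ is empty or a singleton, and otherwise $W(\theta,\theta')$ is generated by the simple reflections appearing in minimal expressions of the (at least two) elements of $W^{\theta,\theta'}$. The real content is therefore: if $W^{\theta,\theta'}$ contains a non-identity element whose reduced word uses simple reflections $S' \subset \Delta$, then the parabolic subgroup $W(S')$ generated by those reflections is all of $W$.

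The key step is to analyze a single nontrivial $\w \in W^{\theta,\theta'}$ with $\Delta(\w) = S'$. Since $\w(\theta) = \theta'$ and both $\theta, \theta' \subseteq \Delta$, the element $\w$ permutes roots in a way constrained by $S'$: one has $\w \in W(S')$ (as $\w$ is a product of the $\w_\alpha$, $\alpha \in S'$), so $\theta' = \w(\theta) \subseteq$ the root subsystem spanned by $\theta \cup S'$. I would then argue that $S'$ together with $\theta$ must exhaust $\Delta$ up to the span: more precisely, any simple root $\beta \notin \theta \cup S'$ is fixed by $\w$ and orthogonal to everything $\w$ moves, so $\beta$ cannot be "connected" to $\theta$ via $\w$; iterating, one shows that on the connected components of the Dynkin diagram not meeting $S'$, $\w$ acts trivially and $\theta$ is stable componentwise. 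Combined with self-associatedness (which forces a genuine nontrivial element in $W^{\theta,\theta}$, already constraining $\theta$ and $S'$ for $\theta' = \theta$), the only way to have $\#W^{\theta,\theta'} \ge 2$ is for $S'$ to generate all of $W$ — i.e. the reflections forced to appear already generate the whole group. The cleanest route is probably: show $W(\theta,\theta')$ is a parabolic subgroup $W(S')$ that is normalized by every $\w \in W^{\theta,\theta'}$ and contains $\theta' \setminus \theta$; then use the known structure (as in Bourbaki, Ch. IV, or Howlett's work on normalizers of parabolic subgroups) of $W^{\theta,\theta'}$ as $N_W(W_\theta)$-type cosets to conclude that once it is nontrivial, the generated reflection subgroup is everything.

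The main obstacle I anticipate is making the combinatorial step rigorous without a long case analysis: ruling out the possibility that $W^{\theta,\theta'}$ has two or more elements but $W(\theta,\theta')$ is a proper parabolic. I would handle this by the normalizer description: if $W(\theta,\theta') = W(S') \subsetneq W$, pick a simple root $\beta \in \Delta \setminus S'$ with $\beta$ adjacent to $S'$ or to $\theta$; since every $\w \in W^{\theta,\theta'} \subseteq W(S')$ fixes the fundamental coweight $\omega_\beta$ up to $W(S')$-translation but must send $\theta$ to $\theta' \subseteq S' \cup \theta$, a length/positivity argument shows $\w$ is uniquely determined, contradicting $\#W^{\theta,\theta'} \ge 2$. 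This forces $W(S') = W$. The other direction — if $W^{\theta,\theta'}$ is empty or a singleton then $W(\theta,\theta') = \{\mathrm{id}\}$ — is immediate from the definition since $\Delta(\mathrm{id}) = \emptyset$. Assembling these gives the dichotomy.
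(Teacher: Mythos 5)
Your overall structure --- establish $W(\theta,\theta) = W$ first, then deduce the general case from the torsor structure of $W^{\theta,\theta'}$ over $W^{\theta,\theta}$ --- matches the paper's, and the torsor observation together with $\Delta_{\w_0\w}\subseteq\Delta_{\w_0}\cup\Delta_\w$ is exactly the paper's second paragraph. But your treatment of the key step $W(\theta,\theta) = W$ has a real gap. The paper's argument there is essentially a one-liner: $\w_l := \w_G\w_M$ lies in $W^{\theta,\theta}$ (this is precisely where the self-associated hypothesis is spent), and by the cited Bourbaki results every simple reflection appears in a reduced expression for $\w_l$, i.e.\ $\Delta_{\w_l} = \Delta$. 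You never single out any particular element of $W^{\theta,\theta}$ and never invoke this fact about $\Delta_{\w_l}$; instead you claim that as soon as $W^{\theta,\theta'}$ has at least two elements its support must generate all of $W$, via an unspecified ``normalizer''/``length--positivity'' contradiction.

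That claim is false, so your proposed step cannot be made rigorous. Take $G=\SL_5$ and $\theta=\{\alpha_1\}$. Any $\sigma\in S_5$ fixing $1$ and $2$ fixes $\alpha_1=e_1-e_2$, so $W^{\theta,\theta}=\langle\w_{\alpha_3},\w_{\alpha_4}\rangle\cong S_3$ is nontrivial; yet each element of this standard parabolic subgroup has a reduced expression using only $\w_{\alpha_3},\w_{\alpha_4}$, so $W(\theta,\theta)=\langle\w_{\alpha_3},\w_{\alpha_4}\rangle\subsetneq W$. Nontriviality of $W^{\theta,\theta}$ --- which is all your argument exploits --- is therefore not enough. (In this example $\w_l(\alpha_1)=\alpha_4\ne\alpha_1$, so $\w_l\notin W^{\theta,\theta}$; it is the stronger condition $\w_l\in W^{\theta,\theta}$ that the paper's proof actually leans on.) To close the gap you need to bring in the specific element $\w_G\w_M$ and the Bourbaki fact that its support is all of $\Delta$; no generic ``nontrivial implies full'' argument will work.
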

\begin{proof}
If $\theta = \Delta$, then $W(\theta, \theta')=\set{\text{id}}$ for every $\theta' \subset \Delta$.   It suffices to prove the case where $\theta \ne \Delta$, and thus we make this assumption. Let $\w_G$ and $\w_M$ be the longest element in the Weyl group for $(\mbf{G}, \mbf{T})$ and $(\mbf{M}, \mbf{T})$ respectively. Denote
$$\w_l= \w_G \w_M.$$
Since $P$ is self-associated, $\w_l \in W^{\theta, \theta}$ and it follows from \cite[Page 126 Proposition 2]{Bou} and \cite[Page 151 Ex. 2(b)]{Bou} that every $\w_\alpha, \alpha\in \Delta$ appears in $\w_l$. Thus in this case, $W(\theta, \theta)=W$.

For general $\theta'\subset \Delta$, if $W^{\theta, \theta'}= \emptyset$, then $W(\theta, \theta')= \set{ {\rm id} }$. Now suppose $W^{\theta,\theta'} \ne \emptyset$, then by picking $\w_0 \in W^{\theta, \theta'}$, the map of sets
$$W^{\theta, \theta} \to W^{\theta, \theta'} \text{ given by } \w \mapsto \w_0 \cdot \w,$$
endows $W^{\theta, \theta'}$ with the structure as a $W^{\theta, \theta}$-torsor. For every $\w\in W^{\theta, \theta}$, one has
$$\Delta(\w)= \Delta(\w_0^{-1} \cdot \w_0 \w) \subseteq \Delta(\w_0^{-1}) \cup \Delta(\w_0 \w) = \Delta(\w_0) \cup \Delta(\w_0 \w),$$
and therefore
$$W(\theta, \theta) \subseteq W(\theta, \theta').$$
Thus, $W(\theta, \theta')= W$ in this case. This completes the proof.
\end{proof}

In any case, we consider a general $\wt{P}= \wt{M} N$. Let $\sigma=I_{\wt{B}_M}^{\wt{M}}(\chi)$ be a genuine principal series on $\wt{M}$. Let $\w \in W^{\theta, \theta'}$ and
$$T(w,\sigma)^*:  \Wh_\psi( I_{\wt{P}'}^{\wt{G}} ({}^w \sigma) ) \to \Wh_\psi( I_{\wt{P}}^{\wt{G}} ( \sigma) ) $$
be the map induced from $T(w, \sigma)$. Suppose $Y_n^{\rm exc}$ is nonempty and we choose $y\in Y_n^{\rm exc}$.  As discussed above, we have one-dimensional exceptional subspaces
$$E(y; \sigma)=\C \cdot \lambda(\s_y, \sigma)  \subset \Wh_\psi(I_{\wt{P}}^{ \wt{G} } (\sigma))$$
and
$$E(y; {}^w \sigma)=\C \cdot \lambda(\s_y, {}^w\sigma)  \subset \Wh_\psi(I_{\wt{P}'}^{ \wt{G} } ({}^w\sigma)  ).$$

The restriction 
$$T(w,\sigma)^*: E(y, {}^w \sigma) \to E(y, \sigma)$$
 is well-defined and is determined by a scalar with respect to the two basis $\lambda(\s_y, \sigma)$ and $\lambda(\s_y, {}^w\sigma)$, which we will interpret as a product of gamma-factors. For simplicity, we consider the case where $\wt{P}$ is a maximal parabolic subgroup. In this case there exists a unique $\w_0 \in W$ such that $\theta':=\w_0(\theta) \subset \Delta$ and $\w_0(\Delta\backslash \theta) \subset \Phi^-$.

In order to give an interpretation of the result relating to $L$-functions, we consider the adjoint action
$$Ad_{\wt{\mfr{n}}^\vee}: {}^L\wt{M} \to GL(\wt{\mfr{n}}^\vee),$$
where $\wt{\mfr{n}}^\vee$ is the Lie algebra of $\wt{N}^\vee$ such that $\wt{M}^\vee \wt{N}^\vee$ is a parabolic subgroup of $\wt{G}^\vee$. It factors through $Ad^\C_{\wt{\mfr{n}}^\vee}$:
$$\begin{tikzcd}
 {}^L\wt{M}  \rar["{Ad_{\wt{\mfr{n}}^\vee}}"] \dar    & GL(\wt{\mfr{n}}^\vee) \\
 \wt{M}^\vee/Z(\wt{G}^\vee) \ar[ru, "{Ad^\C_{\wt{\mfr{n}}^\vee}}"'],
\end{tikzcd}$$
as $Z(\wt{G}^\vee)$ acts trivially on $\wt{\mfr{n}}^\vee$.

Therefore, irreducible subspaces of $\wt{\mfr{n}}^\vee$ for $Ad_{\wt{\mfr{n}}^\vee}$ are in correspondence with irreducible subspaces with respect to $Ad^\C_{\wt{\mfr{n}}^\vee}$, which are familiar (see \cite{Lds} or \cite[\S 4]{Sha88}). More precisely, we consider the decomposition of $Ad_{\wt{\mfr{n}}^\vee}$ into irreducible representations
$Ad_{\wt{\mfr{n}}^\vee}=\bigoplus_{i=1}^m Ad_i$, where $(Ad_i, V_i) \subseteq \wt{\mfr{n}}^\vee$ is an irreducible space. Here, as observed by Langlands, $V_i$ is given by
\begin{equation} \label{E:Vi}
V_i=\bigoplus_{\langle \omega_P/n_\beta, \alpha_{Q,n}^\vee \rangle=i } \C \cdot E_{\alpha_{Q,n}^\vee},
\end{equation}
where $E_{\alpha_{Q,n}^\vee} \in \wt{\mfr{n}}^\vee$ is the pinning associated to the root $\alpha_{Q,n}^\vee$ of $\wt{G}^\vee$. Moreover the following equality holds:
\begin{equation} \label{Phi-w}
\Phi_{\w_0}=\bigsqcup_{i=1}^m \set{ \alpha\in \Phi^+: \angb{\omega_P/n_\beta}{\alpha^\vee_{Q,n}}=i }.
\end{equation}

Recall that  $L(s, \sigma, Ad_i)$ is by definition equal to the local Artin $L$-function $L(s, Ad_i \circ {}^L\bbrho_\chi)$ associated with $Ad_i \circ {}^L\bbrho_\chi$, i.e.,
$$L(s, \sigma, Ad_i):=\text{det}\big(1-q^{-s} Ad_i \circ {}^L\bbrho_\chi (\text{Frob})|_{V_i^I} \big)^{-1},$$
where we also write ${}^L\bbrho_\chi$ for the composition $\W_F \to {}^L\wt{T} \to {}^L\wt{M}$. For unramified $\sigma$ (i.e. equivalently, unramified $\chi$), the inertia group $I$ acts trivially on $V_i$. It follows
$$L(s, \sigma, Ad_i)=\text{det}\big(1-q^{-s} Ad_i \circ {}^L\bbrho_\chi(\varpi)|_{V_i}\big)^{-1}.$$
We also define
$$\gamma(s, \sigma, Ad_i):= \frac{ L(1-s, \sigma, Ad_i^\vee) }{ L(s, \sigma, Ad_i)  },$$
where $Ad_i^\vee$ is the contragredient representation of $Ad_i$.

\begin{thm} \label{T:naiveLS}
Retain notations as above. Assume that $Y_n^{\rm exc}\ne \emptyset$, and $Y_\alpha^{\rm spe} = \emptyset$ for all $\alpha\in \Delta$. Fix any $y\in Y_n^{\rm exc}$. Then,
 $$T(w_0,\sigma)^*( \lambda(\s_y, {}^{w_0} \sigma) ) = \lambda(\s_y, \sigma) \cdot \prod_{i=1}^m \gamma(0, \sigma, Ad_i)^{-1}.$$
 \end{thm}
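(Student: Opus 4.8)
The plan is to reduce the statement to the rank-one computation of Proposition~\ref{P:prod-ga} by factoring the intertwining operator $T(w_0,\sigma)$ into rank-one pieces, and then to match the resulting product of one-dimensional gamma-factors $\gamma^\natural(w_\alpha, i({}^{\w}\chi))$ with the product $\prod_{i=1}^m \gamma(0,\sigma,Ad_i)^{-1}$ by a book-keeping argument using the decomposition \eqref{Phi-w} of $\Phi_{\w_0}$. Since by hypothesis $Y_\alpha^{\rm spe}=\emptyset$ for all $\alpha$, every $y\in Y_n^{\rm exc}$ is $\alpha$-normal for all $\alpha$, so that $\gamma^\natural(w_\alpha,-) = \gamma(w_\alpha,-)$ throughout; this removes the metaplectic-gamma subtlety from the start.

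\textbf{Step 1: Reduce to the Borel via induction in stages.} Using the commutative diagram \eqref{CD:is} (with $\w_0$ in place of $w$), the map $T(w_0,\sigma)^*$ on Whittaker spaces is identified, via the isomorphisms $j_\sigma$ and $r_{w_0}$, with the restriction to the exceptional subspace $E(y, {}^{\w_0}\chi)\subset \Wh_\psi(I(i({}^{\w_0}\chi)))$ of the Borel-level operator $T(w_0,\chi; r_{w_0}^{\rm un})^*$. The distinguished vectors $\lambda(\s_y,\sigma)$ and $\lambda(\s_y,{}^{w_0}\sigma)$ are by definition the transports of $\lambda_{\s_y}^\chi$ and $\lambda_{\s_y}^{{}^{\w_0}\chi}$, so it suffices to evaluate $T(w_0,\chi; r_{w_0}^{\rm un})^*$ on $\lambda_{\s_y}^{{}^{\w_0}\chi}$. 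Here one must be slightly careful: the operator $T(w_0,\chi;r_{w_0}^{\rm un})$ built from the natural representative $w_0\in K$ for $\w_0$ agrees with the composite of rank-one operators by the cocycle relation of Lemma~\ref{L:Tcr}, provided we write $\w_0 = \w_{\alpha_l}\cdots\w_{\alpha_1}$ in a minimal decomposition (so that lengths add); this is legitimate for the maximal-parabolic $\w_0$ since $\w_0$ is the minimal-length element with $\w_0(\theta)=\theta'$ and $\w_0(\Delta\setminus\theta)\subset\Phi^-$.

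\textbf{Step 2: Apply Proposition~\ref{P:prod-ga}.} Since $y\in Y_n^{\rm exc}$ and $Y_\alpha^{\rm spe}=\emptyset$, Proposition~\ref{P:prod-ga} gives directly
\[
T(w_0,\chi;r_{w_0}^{\rm un})^*(\lambda_{\s_y}^{{}^{\w_0}\chi}) = \lambda_{\s_y}^\chi \cdot \prod_{k=1}^l \gamma\big(w_{\alpha_k}, i({}^{\w_{\alpha_{k-1}}\cdots\w_{\alpha_1}}\chi)\big)^{-1}.
\]
Transporting back through $j_\sigma$ and $r_{w_0}$ yields $T(w_0,\sigma)^*(\lambda(\s_y,{}^{w_0}\sigma)) = \lambda(\s_y,\sigma)\cdot \prod_{k=1}^l \gamma(w_{\alpha_k}, i({}^{\w_{\alpha_{k-1}}\cdots\w_{\alpha_1}}\chi))^{-1}$.

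\textbf{Step 3: Match with $\prod_i \gamma(0,\sigma,Ad_i)^{-1}$ — the main obstacle.} This is the crux. By \eqref{g-inv}, each factor $\gamma(w_{\alpha_k}, i(\chi'))^{-1}$ is a Tate gamma factor attached to the unramified character $a\mapsto \chi'(\wt h_{\alpha_k}(a^{n_{\alpha_k}}))$, and the whole product telescopes into a product of Tate gamma factors indexed by the positive roots in $\Phi_{\w_0}$ — this is the covering analogue of Shahidi's crucial lemma relating the product of rank-one local coefficients to $\prod_i \gamma(s,\sigma,Ad_i)$ in the linear case (cf.\ \cite[\S 4]{Sha88}, \cite{Lds}). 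Concretely, I expect to show that for each $\beta\in\Phi_{\w_0}$ the corresponding factor is $\gamma(0,\uchi_\beta^{(\chi)})^{-1}$ where $\uchi_\beta^{(\chi)}$ is built from $\chi$ evaluated on $\wt h_\beta(\varpi^{n_\beta})$ via the ``$n_\beta$-modified'' coroot, and then group these $\beta$ according to the value $i=\langle \omega_P/n_\beta,\beta^\vee_{Q,n}\rangle$ using \eqref{Phi-w}. Within the group with fixed $i$, the product of the $\gamma(0,\uchi_\beta^{(\chi)})^{-1}$'s should reassemble, via \eqref{E:Vi} and the definition of $L(s,\sigma,Ad_i)$ through the Langlands parameter ${}^L\bbrho_\chi$, into exactly $\gamma(0,\sigma,Ad_i)^{-1} = L(0,\sigma,Ad_i)/L(1,\sigma,Ad_i^\vee)$. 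The delicate points here are (a) tracking the twists ${}^{\w_{\alpha_{k-1}}\cdots\w_{\alpha_1}}\chi$ through the successive reflections so that each contributes the correct root $\beta\in\Phi_{\w_0}$ rather than a Weyl-translate, which is handled exactly as in the linear-group computation once one observes that $\chi(\wt h_\beta(\varpi^{n_\beta}))$ depends only on $\beta$ and not on the order of reflections; and (b) verifying that the factor $n_\beta$ appearing in the covering setting is precisely the one built into $\alpha_{Q,n}^\vee = n_\alpha\alpha^\vee$ and hence into the dual-group root data, so that the identification with the Artin $L$-factor $L(s, Ad_i\circ{}^L\bbrho_\chi)$ — defined via $V_i\subset\wt{\mfr n}^\vee$ spanned by the $E_{\alpha_{Q,n}^\vee}$ — goes through verbatim. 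Since $\chi$ (equivalently $\sigma$) is unramified, all inertia invariants $V_i^I$ equal $V_i$ and all $\varepsilon$-factors are trivial, so the matching is purely a matter of identifying the $L$-factors, i.e.\ comparing Euler products term by term; I expect no genuine arithmetic difficulty beyond the combinatorial bookkeeping, which is why Step 3 — not Steps 1 or 2 — is where the real work lies.
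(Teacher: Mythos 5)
Your proposal follows essentially the same route as the paper: reduce to the Borel level and apply Proposition~\ref{P:prod-ga}, then rewrite the resulting product over a reduced decomposition as a product over $\Phi_{\w_0}$ and match with $\prod_i\gamma(0,\sigma,Ad_i)^{-1}$ via \eqref{E:Vi} and \eqref{Phi-w}. The one piece you flag as a ``delicate point'' but leave as an expectation — that $\chi(\wt{h}_\beta(\varpi^{n_\beta}))$ is precisely the eigenvalue of $Ad\circ{}^L\bbrho_\chi(\varpi)$ on the root line $E_{\beta_{Q,n}^\vee}$ of $\wt{\mfr{n}}^\vee$ — is exactly what the paper invokes from \cite[Theorem 7.8]{Ga1}, so your bookkeeping is correct and that reference closes the gap you anticipated.
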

\begin{proof}
Let $\w_0=\w_l... \w_2 \w_1$ be a minimal decomposition of $\w_0$, where $\w_i$ is associated with $\alpha_i\in \Delta$. By our assumption that $Y_\alpha^{\rm spe}=\emptyset$ for all $\alpha\in \Delta$ and Proposition \ref{P:prod-ga},
$$\begin{aligned}
& T(w_0,\sigma)^*( \lambda(\s_y, {}^{w_0} \sigma) ) \\
= & \lambda(\s_y, \sigma) \cdot \prod_{i=1}^l \gamma(\w_i, i({}^{\w_{i-1} ... \w_1} \chi))^{-1} \\
=& \lambda(\s_y, \sigma) \cdot \prod_{\alpha \in \Phi_{\w_0}} \gamma(0, \uchi_\alpha )^{-1} ,
\end{aligned}
$$
where $$\gamma(0, \uchi_\alpha)^{-1} = \frac{ 1-q^{-1}  \chi( \wt{h}_\alpha(\varpi^{n_\alpha})  )^{-1} }{1- \chi( \wt{h}_\alpha(\varpi^{n_\alpha})  ) }.$$
It follows from \cite[Theorem 7.8]{Ga1} that
$${}^L\bbrho_\chi \circ Ad(\varpi) (E_{\alpha_{Q,n}^\vee}) = \chi(\wt{h}_\alpha(\varpi^{n_\alpha})) \cdot E_{\alpha_{Q,n}^\vee}.$$
Thus the result follows in view of \eqref{E:Vi} and \eqref{Phi-w}.
\end{proof}

\begin{rmk} \label{R:gamma}
As noted in Remark \ref{R:whit}, one could consider more generally an element $y \in f^{-1}_\msc{X}( \msc{X}_{Q,n}^W  )$. In this case, an analogous formula as in Theorem \ref{T:naiveLS} holds:
$$T(w_0,\sigma)^*( \lambda(\s_y, {}^{w_0} \sigma) ) = \lambda(\s_y, \sigma) \cdot J(\sigma, y, \w_0) \cdot  \prod_{i=1}^m \gamma(0, \sigma, Ad_i)^{-1},$$
where however $J(\sigma, y, \w_0) \in \C^\times$ is not necessarily equal to 1. Such complication arises from the exponents of $\chi(\wt{h}_\alpha(\varpi^{n_\alpha}))$ in Proposition \ref{P:nor}.
\end{rmk}

\section{The fundamental group $\pi_1(\wt{G}_{Q,n}^\vee)$ and periodicity of $b_{W,n}$} \label{S:Poinc}

In this section, we consider  an $n$-fold cover $\wt{G}$ of a general linear reductive group $G$, unless specified otherwise.  In view of the discussion from the preceding section, we want to study the number $$b_{W,n}:=\val{\msc{X}_{Q,n}^W}.$$
We will give upper bounds for $b_{W,n}$, and prove the periodicity of $b_{W,n}$ as a function of $n$, if $G$ is semisimple.

\subsection{Bounds and periodicity of $b_{W,n}$}
Recall the map
$$\phi_{W,n}:  (\msc{X}_{Q,n}^{sc})^{W} \to (\msc{X}_{Q,n})^{W}$$
induced from the natural quotient
$$\phi: \msc{X}_{Q,n}^{sc} \onto \msc{X}_{Q,n}.$$
Denote by $Y_\rho$ the affine lattice
$$Y_\rho:= Y - \rho \subset  Y\otimes \Q.$$
If $(\msc{X}_{Q,n})^W=\emptyset$, then necessarily $(\msc{X}_{Q,n}^{sc})^W=\emptyset$.

\begin{lm} \label{L:fob-t}
If $(\msc{X}_{Q,n})^W \neq \emptyset$ and $(\msc{X}_{Q,n}^{sc})^W \neq \emptyset$, then $\phi_{W,n}$ is surjective with fiber being a torsor over $Z(\wt{G}^\vee_{Q,n})$.
\end{lm}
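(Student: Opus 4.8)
The plan is to deduce the statement for the full Weyl group from the rank-one results of Proposition~\ref{P:dich} and Corollary~\ref{C:dich}, exploiting that the twisted action $\w[-]$ differs from the ordinary action $\w(-)$ only by a translation and that both $Y_{Q,n}$ and $Y_{Q,n}^{sc}$ are stable under $\w(-)$ (see \S\ref{SS:Worb}).

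First I would record a reduction: for a $\w(-)$-stable sublattice $L\subseteq Y$, a coset $y+L$ is fixed by all of $W$ under the twisted action if and only if it is fixed by every simple reflection $\w_\alpha$. Only the inductive step needs checking, and it follows from $\w_\alpha[y+\ell]=\w_\alpha[y]+\w_\alpha(\ell)$ for $\ell\in L$: writing $\w[y]=y+\ell$ with $\ell\in L$, one gets $(\w_\alpha\w)[y]-y=(\w_\alpha[y]-y)+\w_\alpha(\ell)\in L$. The same computation with $\rho$ set to $0$ gives the analogous criterion for the ordinary action: $y+L$ is fixed by $W$ under $\w(-)$ iff $\angb{y}{\alpha}\alpha^\vee\in L$ for every simple $\alpha$. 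I would also note, from $\w[y+u]-(y+u)=(\w[y]-y)+(\w(u)-u)$, that the set of twisted-$W$-fixed cosets of $\msc{X}_{Q,n}$ (resp.\ of $\msc{X}_{Q,n}^{sc}$), when nonempty, is a torsor under the group of ordinary-$W$-fixed cosets, and that a difference of two twisted-$W$-fixed cosets is ordinary-$W$-fixed.

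Next I would use the hypothesis $(\msc{X}_{Q,n}^{sc})^{W}\ne\emptyset$ to rule out $\alpha$-special simple roots. Since $(\msc{X}_{Q,n}^{sc})^{W}\subseteq(\msc{X}_{Q,n}^{sc})^{W_\alpha}$ for each simple $\alpha$, the right-hand side is nonempty, so by Proposition~\ref{P:dich}(iii) no simple root is $\alpha$-special. Unwinding the definition (or via a short $2$-adic valuation computation comparing $\gcd(n,Q(\alpha^\vee))$ and $\gcd(n,2Q(\alpha^\vee))$) then gives $Y_{Q,n}\cap\Z\alpha^\vee=Y_{Q,n}^{sc}\cap\Z\alpha^\vee=\Z\,n_\alpha\alpha^\vee$ for every simple $\alpha$. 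From this I extract: (a) for $y\in Y$, $\angb{y}{\alpha}\alpha^\vee\in Y_{Q,n}$ iff $\angb{y}{\alpha}\alpha^\vee\in Y_{Q,n}^{sc}$, whence by the criterion above $\phi\colon\msc{X}_{Q,n}^{sc}\onto\msc{X}_{Q,n}$ carries the ordinary-$W$-fixed locus of $\msc{X}_{Q,n}^{sc}$ onto that of $\msc{X}_{Q,n}$; and (b) applying (a) to $v\in Y_{Q,n}$, the kernel $Y_{Q,n}/Y_{Q,n}^{sc}$ of $\phi$ consists entirely of ordinary-$W$-fixed cosets.

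Finally I would assemble the pieces. For surjectivity of $\phi_{W,n}$: given $\bar y\in(\msc{X}_{Q,n})^{W}$ and any $\bar z\in(\msc{X}_{Q,n}^{sc})^{W}$ (nonempty by hypothesis), the difference $\bar y-\phi(\bar z)$ is ordinary-$W$-fixed in $\msc{X}_{Q,n}$; lift it through $\phi$ via (a) to an ordinary-$W$-fixed $\bar u$ in $\msc{X}_{Q,n}^{sc}$; then $\bar z+\bar u\in(\msc{X}_{Q,n}^{sc})^{W}$ and $\phi(\bar z+\bar u)=\bar y$. For the fiber: fixing one preimage $\bar v_0$ of $\bar y$, every preimage differs from $\bar v_0$ by an element of $\ker\phi=Y_{Q,n}/Y_{Q,n}^{sc}$, and conversely by (b) adding any such element to $\bar v_0$ keeps us in the twisted-$W$-fixed locus; hence $\phi_{W,n}^{-1}(\bar y)=\bar v_0+Y_{Q,n}/Y_{Q,n}^{sc}$ is a torsor under $\ker\phi=Y_{Q,n}/Y_{Q,n}^{sc}$, i.e.\ over $Z(\wt{G}_{Q,n}^{\vee})=\Hom(Y_{Q,n}/Y_{Q,n}^{sc},\C^{\times})$ in the sense of Corollary~\ref{C:dich}. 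I expect the main friction to be twofold: extracting "no simple root is $\alpha$-special" cleanly from the global hypothesis, and keeping the affine (twisted) versus linear (ordinary) fixed-point sets straight without translation errors; once the special roots are excluded the remaining lattice arithmetic is routine.
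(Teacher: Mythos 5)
Your overall architecture is sound and genuinely different from the paper's. The paper cites the rank-one Corollary~\ref{C:dich}(iii) directly (which gives surjectivity of $\phi_\alpha$), and combines this with the observation that translation by $\ker\phi$ preserves twisted $W$-fixedness --- exactly your claim~(b) --- to upgrade ``some preimage is $W_\alpha$-fixed'' to ``$y+Y_{Q,n}^{sc}$ is $W_\alpha$-fixed''. You instead reduce everything to the lattice-arithmetic statement (a). Both routes are valid, and your fiber argument and surjectivity argument are correct once (a) is in hand. A small remark on (b): it holds for a simpler reason than an appeal to (a) --- for $v\in Y_{Q,n}$ one has $\angb{v}{\alpha_{Q,n}}\in\Z$, so $\angb{v}{\alpha}\alpha^\vee = \angb{v}{\alpha_{Q,n}}\alpha_{Q,n}^\vee\in Y_{Q,n}^{sc}$ automatically; this is just the fact that $W$, as the Weyl group of $\wt{G}_{Q,n}^\vee$, acts trivially on $Y_{Q,n}/Y_{Q,n}^{sc}$.

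There is, however, a genuine gap in your derivation of (a). You assert that ``no simple root is $\alpha$-special'' implies $Y_{Q,n}\cap\Z\alpha^\vee = Y_{Q,n}^{sc}\cap\Z\alpha^\vee=\Z n_\alpha\alpha^\vee$. That implication is false. Take $\wt{G}=\wt{\Sp}_{2r}^{(4)}$ with $Q(\alpha^\vee)=1$ for the short simple coroot $\alpha^\vee$. Then $n_\alpha = 4\equiv 0\ (\mathrm{mod}\ 4)$, so by Proposition~\ref{P:dich} there is no $\alpha$-special point; yet since $\angb{z}{\alpha}$ is always even here, $k\alpha^\vee\in Y_{Q,4}$ iff $4\mid 2k$, giving $Y_{Q,4}\cap\Z\alpha^\vee = 2\Z\alpha^\vee$, while $Y_{Q,4}^{sc}\cap\Z\alpha^\vee = 4\Z\alpha^\vee$. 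So the two lattices differ. (In this example the hypotheses of the lemma fail --- $(\msc{X}_{Q,4}^{sc})^{W_\alpha}$ is already empty, since $\angb{y_\rho}{\alpha}$ is always odd --- so the lemma is vacuously true; the problem is with the stated logical step, not the conclusion.)

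The repair is already contained in your step 2 and does not need the detour through Proposition~\ref{P:dich}(iii). Claim~(a) follows directly from nonemptiness of $(\msc{X}_{Q,n}^{sc})^{W_\alpha}$: if $d_\alpha:=\gcd_{z\in Y}\angb{z}{\alpha}=1$, (a) is automatic; if $d_\alpha=2$, then $\angb{y_\rho}{\alpha}=\angb{y}{\alpha}-\angb{\rho}{\alpha}=\angb{y}{\alpha}-1$ is odd for every $y\in Y$, and nonemptiness of $(\msc{X}_{Q,n}^{sc})^{W_\alpha}$ supplies some $y$ with $n_\alpha\mid\angb{y_\rho}{\alpha}$, forcing $n_\alpha$ to be odd, whence $Y_{Q,n}\cap\Z\alpha^\vee=\Z n_\alpha\alpha^\vee$. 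Substituting this in place of the step via ``no $\alpha$-special'' makes the proof complete.
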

\begin{proof}
Recall $Z(\wt{G}_{Q,n}^\vee) = \Hom(Y_{Q,n}/Y_{Q,n}^{sc}, \C^\times)$, which is (non-canonically) isomorphic to ${\rm Ker}(\phi) = Y_{Q,n}/Y_{Q,n}^{sc}$. Also, for every $z\in {\rm Ker}(\phi)$ and $y\in \msc{X}_{Q,n}^{sc}$, one has
$$\w[y+ z] = \w[y] + \w(z) = \w[y] + z.$$
In view of this, it suffices to prove the surjectivity.

Now for every $\alpha\in \Delta$, one has the following commutative diagram of sets
$$\begin{tikzcd}
( \msc{X}_{Q,n}^{sc} )^W  \ar[r, "{\phi_{W,n}}"]    \ar[d, hook]   &       (\msc{X}_{Q,n})^W   \ar[d, hook] \\
( \msc{X}_{Q,n}^{sc} )^{W_\alpha}  \ar[r, "{\phi_\alpha}"]     &       (\msc{X}_{Q,n})^{W_\alpha}.
\end{tikzcd}$$
The assumption implies that the domain and codomain of $\phi_\alpha$ are both nonempty. Thus, by Corollary \ref{C:dich}, $\phi_\alpha$ is surjective. That is, for any  $y+ Y_{Q,n} \in  (\msc{X}_{Q,n})^{W_\alpha}$, one has  $y+ Y_{Q,n}^{sc} \in (\msc{X}_{Q,n}^{sc})^{W_\alpha}$ as well.

Since the above holds for all $\alpha\in \Delta$, it follows that if $y+ Y_{Q,n} \in  (\msc{X}_{Q,n})^W$, one has  $y+ Y_{Q,n}^{sc} \in (\msc{X}_{Q,n}^{sc})^W$. This proves the desired surjectivity.
\end{proof}

Let $L\subset Y\otimes \Q$ be a lattice and $S \subset Y\otimes \Q$ a subset. We denote by
$$S/L \subset (Y\otimes \Q)/L$$
the set of left cosets of $S$ in $(Y\otimes \Q)/L$. Denote by
$$P(Y_{Q,n}^{sc}) \subset Y_{Q,n}^{sc} \otimes_\Z \Q$$
 the dual of the lattice $X_{Q,n}^{sc}$ with respect to the pairing $\angb{-}{-}$ on $(X\otimes \Q) \times (Y\otimes \Q)$.
Thus, $P(Y_{Q,n}^{sc})$ is the weight lattice associated to the root lattice $Y_{Q,n}^{sc}$, as $\angb{-}{-}$ gives the natural $\Z$-valued pairing between $X_{Q,n}^{sc}$ and $Y_{Q,n}^{sc}$.

\begin{prop} \label{P:period}
Let $\mbf{G}$ be a semisimple group.  Assume $(\msc{X}_{Q,n})^W \neq \emptyset$ (see an elaboration on this condition in \S \ref{S:eg}).
\begin{enumerate}
\item[(i)] If $\phi_{W,n}$ is surjective, then there is a bijection between $(\msc{X}_{Q,n}^{sc})^{W}$ and
$(Y_\rho \cap P(Y_{Q,n}^{sc}))/Y_{Q,n}^{sc}$. In this case,
$$b_{W,n}= \val{ \frac{ Y_\rho \cap P(Y_{Q,n}^{sc}) }{ Y_{Q,n}  }   }.$$
\item[(ii)] If  $\mbf{G}$ is almost-simple and ${\rm domain}(\phi_{W,n}) = \emptyset$, then $\wt{G} = \wt{\Sp}_{2r}^{(n)}$ and is of metaplectic type; in this case,
$$b_{W, n}=1.$$
\end{enumerate}
\end{prop}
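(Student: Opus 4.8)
Proof proposal for Proposition \ref{P:period}.

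\textbf{Overview of the strategy.} The proof will split along the dichotomy in Corollary \ref{C:dich} applied at every simple root, which is already the engine behind Lemma \ref{L:fob-t}. For part (i), the plan is to give an explicit description of $(\msc{X}_{Q,n}^{sc})^W$ as a subset of $Y/Y_{Q,n}^{sc}$, identify its natural lift inside the affine lattice $Y_\rho = Y - \rho$, and then show that the fixed-point condition $\w[y] = y$ for all $\w \in W$ is equivalent to $y_\rho := y - \rho$ lying in the weight lattice $P(Y_{Q,n}^{sc})$. Then part (i) follows by combining this with the surjectivity of $\phi_{W,n}$ (Lemma \ref{L:fob-t}), whose fibers are torsors over $Z(\wt{G}_{Q,n}^\vee) \cong Y_{Q,n}/Y_{Q,n}^{sc}$; counting gives $b_{W,n} = |(Y_\rho \cap P(Y_{Q,n}^{sc}))/Y_{Q,n}^{sc}| \cdot |Y_{Q,n}/Y_{Q,n}^{sc}|^{-1} \cdot |Y_{Q,n}/Y_{Q,n}^{sc}|$, which after simplification is $|(Y_\rho \cap P(Y_{Q,n}^{sc}))/Y_{Q,n}|$ as claimed.

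\textbf{Key steps for (i).} First I would unwind the twisted action: $\w[y] = y$ for all $\w \in W$ is the same as $\w(y_\rho) = y_\rho$ for all $\w$, i.e. $y_\rho$ is $W$-fixed in $Y\otimes\Q$, hence $y_\rho = 0$ in $Y\otimes\Q$ for semisimple $\mbf{G}$ --- but this is the condition modulo $Y_{Q,n}^{sc}$, so the correct statement is that $y_\rho \equiv 0$ modulo the $W$-action descended to $Y/Y_{Q,n}^{sc}$. Concretely, $y + Y_{Q,n}^{sc} \in (\msc{X}_{Q,n}^{sc})^W$ iff $\w_\alpha(y_\rho) - y_\rho = \angb{y_\rho}{\alpha}\alpha^\vee \in Y_{Q,n}^{sc}$ for every $\alpha \in \Delta$, i.e. iff $n_\alpha \mid \angb{y_\rho}{\alpha}$ for all $\alpha$ (using $Y_{Q,n}^{sc} = \langle \alpha_{Q,n}^\vee\rangle = \langle n_\alpha \alpha^\vee\rangle$, with an adjustment in the non-simply-laced case along the lines of Proposition \ref{P:DG}). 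The second step is to recognize that $\angb{y_\rho}{\alpha} \in n_\alpha\Z$ for all $\alpha\in\Delta$ says exactly that $y_\rho$ pairs integrally with $X_{Q,n}^{sc}$, i.e. $y_\rho \in P(Y_{Q,n}^{sc})$; combined with $y\in Y$ this gives $y_\rho \in Y_\rho \cap P(Y_{Q,n}^{sc})$. This sets up a bijection $(\msc{X}_{Q,n}^{sc})^W \leftrightarrow (Y_\rho \cap P(Y_{Q,n}^{sc}))/Y_{Q,n}^{sc}$. Finally I would feed this into Lemma \ref{L:fob-t}: since $\phi_{W,n}$ is surjective with fibers torsors over $\mathrm{Ker}(\phi) = Y_{Q,n}/Y_{Q,n}^{sc}$, we have $b_{W,n} = |(\msc{X}_{Q,n}^{sc})^W| / |Y_{Q,n}/Y_{Q,n}^{sc}|$, and the quotient of counting measures turns $(Y_\rho\cap P(Y_{Q,n}^{sc}))/Y_{Q,n}^{sc}$ into $(Y_\rho\cap P(Y_{Q,n}^{sc}))/Y_{Q,n}$, giving the displayed formula.

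\textbf{Key steps for (ii) and the main obstacle.} For part (ii), ${\rm domain}(\phi_{W,n}) = (\msc{X}_{Q,n}^{sc})^W = \emptyset$ together with $(\msc{X}_{Q,n})^W \neq \emptyset$ is precisely case (ii) of Corollary \ref{C:dich} occurring at some $\alpha$; by Proposition \ref{P:dich} this forces $\mbf{G}$ of type $C_r$ with $\alpha^\vee$ the unique short simple coroot and $n_\alpha \equiv 2 \bmod 4$, i.e. $\wt{G} = \wt{\Sp}_{2r}^{(n)}$ of metaplectic type. Then I would invoke Lemma \ref{L:exc}: for $\wt{\Sp}_{2r}^{(n)}$ of metaplectic type the exceptional point $\rho - \rho_{Q,n} + \tfrac{n_{\alpha_r}}{2}\omega_{\alpha_r}$ lies in $Y$ and is the unique exceptional point, so $Y_n^{\rm exc}$ is a singleton; and one checks its image in $\msc{X}_{Q,n}$ accounts for the entire fixed-point set $(\msc{X}_{Q,n})^W$ because the $\alpha$-special condition pins down $\angb{y_\rho}{\alpha}$ modulo $n_\alpha$ to a single residue class at every simple root (normal at the long roots, special at $\alpha_r$), forcing $y_\rho$ into a single coset of $Y_{Q,n}$. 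The main obstacle I anticipate is the bookkeeping in the non-simply-laced case for part (i) --- verifying that ``$n_\alpha \mid \angb{y_\rho}{\alpha}$ for all $\alpha$'' is correctly equivalent to membership in $P(Y_{Q,n}^{sc})$ requires care, since $Y_{Q,n}^{sc}$ is generated by $\{n_\alpha \alpha^\vee\}$ but the $n_\alpha$ are not all equal when $l \mid n_\beta$ for a short coroot $\beta$; one must use the structure of $Y_\flat^{sc}$ from Proposition \ref{P:DG} and track whether the relevant $n$ lies in $\N_{\mbf{G},Q}$. I would handle this by first treating $n\in\N_{\mbf{G},Q}$ (uniform $n_\alpha$, clean) and then the complementary case separately, each time reducing to the identification of the dual of $X_{Q,n}^{sc}$.
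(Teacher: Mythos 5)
Your part (i) is essentially the paper's argument. You shift by $\rho$ to trade the twisted action $\w[\cdot]$ for the ordinary one on $Y_\rho = Y - \rho$ and then characterize the $W$-fixed set modulo $Y_{Q,n}^{sc}$; the only cosmetic difference is that you unwind the condition directly to $n_\alpha \mid \angb{y_\rho}{\alpha}$ for all $\alpha\in\Delta$, while the paper appeals to the Bourbaki description of special points for the affine Weyl group $W\ltimes Y_{Q,n}^{sc}$. Both are equivalent (your condition says precisely that the affine stabilizer of $y_\rho$ surjects onto $W$), and the worry you flag about a separate ``non-simply-laced adjustment'' is unnecessary: for any $\alpha\in\Delta$, $k\alpha^\vee\in Y_{Q,n}^{sc}=\langle n_\beta\beta^\vee:\beta\in\Delta\rangle$ forces $n_\alpha\mid k$ simply because $\{\beta^\vee\}$ is linearly independent, and $W$-invariance of $Y_{Q,n}^{sc}$ then closes the fixed-point condition under composition, so the simple reflections alone suffice. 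You also have a stray extra factor of $\val{Y_{Q,n}/Y_{Q,n}^{sc}}$ in the counting sketch at the start, but your ``key steps'' paragraph gets the count right. The final identification with Lemma \ref{L:fob-t} matches the paper.

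Part (ii) has a genuine gap in the uniqueness step. The identification of $\wt{G}$ as $\wt{\Sp}_{2r}^{(n)}$ of metaplectic type via Proposition \ref{P:dich} is fine (your appeal to ``case (ii) of Corollary \ref{C:dich}'' is slightly off as stated since $W$-fixed is an intersection over all $\alpha$, but the intended argument --- pick a $W$-fixed coset, observe it fails at some $\alpha$, conclude that $y$ is $\alpha$-special --- is correct). The problem is the claim that $\val{Y_n^{\rm exc}}=1$ and ``the $\alpha$-special condition pins down $\angb{y_\rho}{\alpha}$ modulo $n_\alpha$'' together force $b_{W,n}=1$. Two issues. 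First, $Y_n^{\rm exc}$ is defined by requiring specific \emph{values} $\angb{y_\rho}{\alpha}=-n_\alpha$ or $-n_\alpha/2$, whereas membership in $f_\msc{X}^{-1}(\msc{X}_{Q,n}^W)$ only constrains residue classes, and in general $Y_n^{\rm exc}\subsetneq f_\msc{X}^{-1}(\msc{X}_{Q,n}^W)$; uniqueness of the exceptional point does not give uniqueness of the fixed coset. Second, the $W$-fixed condition in $\msc{X}_{Q,n}$ does not pin $\angb{y_\rho}{\alpha_r}$ modulo $n_{\alpha_r}=2m$: since $Y_{Q,n}=mY^{sc}$, the condition $\angb{y_\rho}{\alpha_r}\alpha_r^\vee\in Y_{Q,n}$ only forces $m\mid\angb{y_\rho}{\alpha_r}$, so both residues $0$ and $m$ modulo $2m$ a priori survive, and even after knowing all the pairings modulo the $n_\alpha$ you would still need to show these constraints determine a unique coset of $Y_{Q,n}$. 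What is actually needed --- and what the paper supplies --- is the explicit computation $Y_{Q,n}=mY^{sc}$, the resulting bijection $(\msc{X}_{Q,n})^W\leftrightarrow((m^{-1}Y^{sc}_\rho)\cap P(Y^{sc}))/Y^{sc}$, the observation that $\rho$ lies in this set, and the uniqueness argument: if $y_1,y_2$ both lie there, then $m(y_1-y_2)\in Y^{sc}$ and $2(y_1-y_2)\in Y^{sc}$ (from $P(Y^{sc})/Y^{sc}\simeq\Z/2\Z$), whence $y_1-y_2\in Y^{sc}$ since $\gcd(m,2)=1$. Without something in this spirit your proof of $b_{W,n}=1$ does not go through.
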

\begin{proof}
First, we show (i). The map $y\mapsto y-\rho$ gives a bijection between two cosets in $(Y\otimes \Q)/ Y_{Q,n}^{sc}$:
$$ \msc{X}_{Q,n}^{sc} \leftrightarrow  Y_\rho /Y_{Q,n}^{sc}.$$
This bijection is equivariant with respect to the $\w[\cdot]$-action on the left and the usual $\w(\cdot)$-action on the right.

Now
\begin{equation} \label{key-arg}
\begin{aligned}
      & \w(y_\rho) \equiv y_\rho \mod Y_{Q,n}^{sc} \text{ for all } \w \in W \\
\iff & y_\rho \in Y\otimes \Q  \text{ is a special point for the affine Weyl group } W_\text{aff} = W \ltimes Y_{Q,n}^{sc} \\
\iff & y_\rho \in P(Y_{Q,n}^{sc}) \text{ by \cite[Page 187, Proposition 3]{Bou} } .
\end{aligned}
\end{equation}
Therefore, we have a bijection
$$ (\msc{X}_{Q,n}^{sc})^W \leftrightarrow  \frac{ Y_\rho \cap P(Y_{Q,n}^{sc}) }{Y_{Q,n}^{sc}}.$$
By Lemma \ref{L:fob-t}, it gives a bijection
$$(\msc{X}_{Q,n})^W \leftrightarrow  \frac{ Y_\rho \cap P(Y_{Q,n}^{sc}) }{Y_{Q,n}}.$$
This proves the first statement.

Now we show (ii). By assumption, we pick $y$ such that $y+Y_{Q,n} \in (\msc{X}_{Q,n})^W$.
Since $(\msc{X}_{Q,n}^{sc})^W=\emptyset$, there exists $\alpha\in \Delta$ such that  $y+ Y_{Q,n}^{sc} \notin (\msc{X}_{Q,n}^{sc})^W$. Thus the first statement in (ii) follows from Proposition \ref{P:dich}, we have $\wt{G}= \wt{\Sp}_{2r}^{(n)}$ and it is of metaplectic-type. In particular, $n_\alpha =2m$ with $m$ odd for the unique short simple coroot $\alpha$ of $\Sp_{2r}$.

In this case, a simple computation gives that
$$Y_{Q,n}= m\cdot Y^{sc} ,$$
which, by the above argument for \eqref{key-arg} (relying on the fact that $mY^{sc}$ is a coroot lattice), gives bijections
$$(\msc{X}_{Q,n})^W \leftrightarrow \frac{Y^{sc}_\rho \cap P( m Y^{sc})  }{ m Y^{sc} } \leftrightarrow   \frac{ (m^{-1} \cdot Y^{sc}_\rho) \cap P(Y^{sc})  }{ Y^{sc} } .$$
We observe that
$$\rho \in (m^{-1} \cdot Y^{sc}_\rho) \cap P(Y^{sc});$$
that is, the set $(\msc{X}_{Q,n})^W$ is nonempty and $b_{W,n}\ge 1$. Moreover, we will show that if for $i=1, 2$ we have
$$y_i \in (m^{-1} \cdot Y^{sc}_\rho) \cap P(Y^{sc}),$$
then $y_1 - y_2 \in Y^{sc}$. For this, we have $y_i \in P(Y^{sc})$ and $m y_i + \rho \in Y^{sc}$; it gives that
$$\frac{m-1}{2} \cdot 2(y_1 - y_2) + (y_1 -y_2) \in Y^{sc}$$
and therefore $y_1 - y_2 \in Y^{sc}$. This shows that $b_{W,n}=1$ and the proof is completed.
\end{proof}

Assume that $\mbf{G}$ is semisimple. Let $\pi_1(\wt{G}_{Q,n}^\vee)$ be the fundamental group of the algebraic dual group $\wm{G}_{Q,n}^\vee$ (see \cite[Definition 18.7]{Mil-B}). We have
$$\pi_1(\wt{G}_{Q,n}^\vee) \simeq P(Y_{Q,n}^{sc})/Y_{Q,n}.$$

\begin{thm} \label{T:pi1}
Let $\mbf{G}$  be a semisimple group. Then the following holds.
\begin{enumerate}
\item[(i)] For every semisimple $\mbf{G}$, one has $b_{W,n} \le \val{\pi_1(\wt{G}_{Q,n}^\vee)}$.
\item[(ii)] If $\mbf{G}$ is simply-connected, then $b_{W,n} \le 1$ for all $n$.
\item[(iii)] If $\rho\in Y$,  then for every $n$ such that $X \subset X_{Q,n}^{sc}$ we have $b_{W,n}= \val{ \pi_1(\wt{G}_{Q,n}^\vee) }$. In particular, if $\mbf{G}= \mbf{G}_{ad}$, then we always have $b_{W,n}= \val{\pi_1(\wt{G}_{Q,n}^\vee) }$ for every $n$.
\end{enumerate}
\end{thm}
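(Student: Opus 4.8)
\emph{Proof plan.} The plan is to derive all three parts from Proposition~\ref{P:period}, organizing the argument by whether $(\msc{X}_{Q,n})^W$ is empty and, when it is not, whether the map $\phi_{W,n}$ of Lemma~\ref{L:fob-t} is surjective. Throughout I will use two elementary facts. (a) $Y_{Q,n}\subseteq P(Y_{Q,n}^{sc})$: for $v\in Y_{Q,n}$ one has $B_Q(v,\alpha^\vee)=\angb{v}{\alpha}Q(\alpha^\vee)\in n\Z$, which forces $n_\alpha\mid\angb{v}{\alpha}$, i.e.\ $\angb{v}{\alpha_{Q,n}}\in\Z$ for all $\alpha$. (b) The duality $X\subseteq X_{Q,n}^{sc}\iff P(Y_{Q,n}^{sc})\subseteq Y$, obtained by taking $\angb{-}{-}$-duals (using $X^\vee=Y$, $(X_{Q,n}^{sc})^\vee=P(Y_{Q,n}^{sc})$, and reflexivity of lattices, all in the full-rank space $Y\otimes\Q$ since $\mbf{G}$ is semisimple).

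For (i): if $(\msc{X}_{Q,n})^W=\emptyset$ then $b_{W,n}=0$ and there is nothing to prove, so assume it is nonempty. By Lemma~\ref{L:fob-t}, either $\phi_{W,n}$ is surjective or its domain $(\msc{X}_{Q,n}^{sc})^W$ is empty. In the surjective case Proposition~\ref{P:period}(i) gives $b_{W,n}=\val{(Y_\rho\cap P(Y_{Q,n}^{sc}))/Y_{Q,n}}$; since $Y_\rho\cap P(Y_{Q,n}^{sc})$ is here a nonempty coset of the lattice $Y\cap P(Y_{Q,n}^{sc})$, and $Y_{Q,n}\subseteq Y\cap P(Y_{Q,n}^{sc})\subseteq P(Y_{Q,n}^{sc})$ by (a), this cardinality equals $[Y\cap P(Y_{Q,n}^{sc}):Y_{Q,n}]$, which divides $[P(Y_{Q,n}^{sc}):Y_{Q,n}]=\val{\pi_1(\wt{G}_{Q,n}^\vee)}$. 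When instead $(\msc{X}_{Q,n}^{sc})^W=\emptyset$, I would split $(\msc{X}_{Q,n},W,\rho,Y_{Q,n},Y_{Q,n}^{sc})$ over the almost-simple factors of $\mbf{G}$ (a Weyl-invariant $Q$ has no cross terms between distinct factors) and apply Proposition~\ref{P:period}(ii): the factors with empty fixed set are metaplectic $\wt{\Sp}_{2r}^{(n)}$ with $b_{W,n}=1$, the others fall under the surjective case, and multiplicativity of both $b_{W,n}$ and $\val{\pi_1}$ over factors finishes.

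For (ii): when $\mbf{G}$ is simply-connected, $Y=Y^{sc}$, so the lattice $Y\cap P(Y_{Q,n}^{sc})$ above equals $Y^{sc}\cap P(Y_{Q,n}^{sc})$, and I claim this equals $Y_{Q,n}$. Indeed $\supseteq$ is (a); conversely, if $v\in Y^{sc}$ satisfies $n_\alpha\mid\angb{v}{\alpha}$ for all simple $\alpha$, then writing $\angb{v}{\beta}=n_\beta t$ one gets $B_Q(v,\beta^\vee)=\angb{v}{\beta}Q(\beta^\vee)=n_\beta Q(\beta^\vee)\,t\in n\Z$ for every simple $\beta$, so $v\in Y^{sc}\cap nY^*=Y_{Q,n}$. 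Hence in the surjective case $b_{W,n}=\val{(Y_\rho\cap P(Y_{Q,n}^{sc}))/Y_{Q,n}}\in\{0,1\}$; the non-surjective case reduces as before to metaplectic $\wt{\Sp}_{2r}^{(n)}$ with $b_{W,n}=1$; and the empty case gives $0$. So $b_{W,n}\le1$.

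For (iii): if $\rho\in Y$ then $\w[\rho]=\rho$ shows $\hat\rho$ is $W$-fixed in both $\msc{X}_{Q,n}^{sc}$ and $\msc{X}_{Q,n}$, so both fixed sets are nonempty, $\phi_{W,n}$ is surjective by Lemma~\ref{L:fob-t}, and $Y_\rho=Y$; Proposition~\ref{P:period}(i) then gives $b_{W,n}=\val{(Y\cap P(Y_{Q,n}^{sc}))/Y_{Q,n}}$. Now the hypothesis $X\subseteq X_{Q,n}^{sc}$ is, by (b), equivalent to $P(Y_{Q,n}^{sc})\subseteq Y$, whence $Y\cap P(Y_{Q,n}^{sc})=P(Y_{Q,n}^{sc})$ and $b_{W,n}=[P(Y_{Q,n}^{sc}):Y_{Q,n}]=\val{\pi_1(\wt{G}_{Q,n}^\vee)}$. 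For $\mbf{G}=\mbf{G}_{ad}$ one has $\rho\in Y=P(Y^{sc})$ and $X=X^{sc}\subseteq X_{Q,n}^{sc}$ (as $\alpha=n_\alpha\alpha_{Q,n}$ with $n_\alpha\in\N$), so both hypotheses hold for every $n$, giving the final assertion. The one genuinely delicate point is the case $(\msc{X}_{Q,n})^W\ne\emptyset$ with $(\msc{X}_{Q,n}^{sc})^W=\emptyset$ entering parts (i) and (ii): one must justify the factorwise reduction even when $\mbf{G}$ is not literally a product of almost-simple groups and then lean on Proposition~\ref{P:period}(ii) (equivalently Proposition~\ref{P:dich}), all remaining steps being routine divisibility bookkeeping with the $n_\alpha$.
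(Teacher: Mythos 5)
Your proposal is essentially correct and follows the same plan as the paper's proof: reduce everything to Proposition~\ref{P:period}, case on emptiness of $(\msc{X}_{Q,n})^W$ and surjectivity of $\phi_{W,n}$, and in the surjective case compare $(Y_\rho\cap P(Y_{Q,n}^{sc}))/Y_{Q,n}$ against $P(Y_{Q,n}^{sc})/Y_{Q,n}$. Your divisibility refinement in (i), your direct verification that $Y^{sc}\cap P(Y_{Q,n}^{sc})=Y_{Q,n}$ in (ii), and your two ``elementary facts'' (a) and (b) are all correct and are exactly what the paper uses (stated slightly differently).

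The one place to tighten is the ``domain empty'' subcase, which you flagged as the delicate point. Your proposed factorwise reduction is both unnecessary and not quite sound as stated: $b_{W,n}$ need not be multiplicative over the almost-simple factors of a general semisimple $\mbf{G}$, because $Y$ (unlike $Y^{sc}$) does not decompose as a direct sum over the factors -- e.g.\ for $\mbf{SO}_4\simeq(\mbf{SL}_2\times\mbf{SL}_2)/\{\pm1\}$. But you do not need this at all: you already have the right tool in hand. If $(\msc{X}_{Q,n})^W\ne\emptyset$ and $(\msc{X}_{Q,n}^{sc})^W=\emptyset$, then for some $\alpha\in\Delta$ the point you start with is $\alpha$-special, so $Y_\alpha^{\rm spe}\ne\emptyset$; Proposition~\ref{P:dich}(i) then says the entire root system of $\mbf{G}$ is of type $C_r$, which is irreducible, so $\mbf{G}$ is automatically almost-simple and Proposition~\ref{P:period}(ii) applies directly with $b_{W,n}=1\le\val{\pi_1(\wt{G}_{Q,n}^\vee)}$. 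No decomposition over factors is needed. This is exactly how the paper handles it (somewhat implicitly, as ``by Proposition~\ref{P:period}''). Everything else in your argument, including (ii) and (iii), is correct and matches the paper.
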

\begin{proof}
For (i), if the domain of $\phi_{W,n}$ is empty, then by Proposition \ref{P:period}, $b_{W,n}=1$.
Now assume $\phi_{W,n}$ is surjective, again by Proposition \ref{P:period},
$$b_{W,n}\le \val{ \frac{P(Y_{Q,n}^{sc}) }{Y_{Q,n}} } = \val{\pi_1(\wt{G}_{Q,n}^\vee)}.$$

For (ii), assume $\mbf{G}$ is simply-connected. It suffices to show that if
$$y_i \in Y_\rho \cap P(Y_{Q,n}^{sc}), i=1, 2,$$
then $y_1-y_2 \in Y_{Q,n}$.
Since $y_1 - y_2 \in Y\cap P(Y_{Q,n}^{sc})$, for every $\alpha^\vee$ we have
$$B(y_1- y_2, \alpha^\vee) = Q(\alpha^\vee) \cdot \angb{y_1 - y_2}{\alpha}= Q(\alpha^\vee) n_\alpha \cdot \angb{y_1 - y_2}{n_\alpha^{-1}\alpha} \in n\Z,$$
where the last step follows from the fact that $y_1-y_2\in P(Y_{Q,n}^{sc})$. Since the coroots span $Y=Y^{sc}$, we see $y_1- y_2 \in Y_{Q,n}$. This completes the proof for (ii).

We show the last assertion (iii). Assume $\rho \in Y$, then $(\wm{G}, n)$ is not of metaplectic-type for all $n$. Therefore, $\phi_{W,n}$ is surjective for every $n$. Moreover, if $X \subset X_{Q,n}^{sc}$, then $P(Y_{Q,n}^{sc}) \subset Y$. By the first part of Proposition \ref{P:period}, one has
$$b_{W,n}=\val{ \frac{Y\cap P(Y_{Q,n}^{sc})}{ Y_{Q,n}} }= \val{ P(Y_{Q,n}^{sc})/Y_{Q,n} }= \val{ \pi_1(\wt{G}_{Q,n}^\vee) }.$$
If $\mbf{G}$ is adjoint, then $X=X^{sc} \subset X_{Q,n}^{sc}$ for every $n$. This completes the proof of (iii).
\end{proof}

\begin{thm} \label{T:bP}
Let $\mbf{G}$ be almost-simple and $Q$ a Weyl invariant quadratic form on $Y$. Then there exists a natural number $c:=c(\mbf{G}, Q)$ depending only on $\mbf{G}$ and $Q$ such that
$$b_{W,n} = b_{W, n+c}$$
for every $n\in \N$.
\end{thm}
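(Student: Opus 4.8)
The plan is to combine the case analysis already carried out in Propositions \ref{P:dich} and \ref{P:period} with the periodicity Lemma \ref{L:key}. As in the proof of Proposition \ref{P:DG}, I would first dispose of the case $Q=0$: then $Y_{Q,n}=Y$, $\msc{X}_{Q,n}=0$ and $b_{W,n}=1$ for every $n$, so $c=1$ works. Hence assume $Q\ne 0$, and fix a long simple coroot $\alpha^\vee$, so $Q(\alpha^\vee)\ne 0$ and $n_\alpha=n/\gcd(n,Q(\alpha^\vee))$.

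For $Q\ne 0$ I would organize $b_{W,n}=\val{\msc{X}_{Q,n}^W}$ according to the dichotomy ``$\wt{G}^{(n)}$ is of metaplectic type'' versus not. If $\wt{G}^{(n)}$ is of metaplectic type, then by Proposition \ref{P:dich} necessarily $\mbf{G}=\mbf{Sp}_{2r}$ and $n_\beta\equiv 2 \bmod 4$ for the unique short simple coroot $\beta^\vee$, and the proof of Proposition \ref{P:period}(ii) shows $\msc{X}_{Q,n}^W\ne\emptyset$ with $b_{W,n}=1$; the defining condition $n_\beta\equiv 2\bmod 4$ is periodic in $n$ with period dividing $4Q(\beta^\vee)$. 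If $\wt{G}^{(n)}$ is not of metaplectic type, then (using that the domain of $\phi_{W,n}$ is nonempty whenever the codomain is, by Proposition \ref{P:period}(ii), together with Lemma \ref{L:fob-t}) one has $\msc{X}_{Q,n}^W\ne\emptyset \iff (\msc{X}_{Q,n}^{sc})^W\ne\emptyset$, and by the computation \eqref{key-arg} the latter holds iff the set $S(n):=P(Y^{sc})\cap n_\alpha^{-1}(Y-\rho)$ is nonempty (with the evident replacement of $Y^{sc},P(Y^{sc})$ by $Y_\flat^{sc}$ and its weight lattice in the mixed-coroot-length subcase, whose occurrence is itself periodic as shown in the proof of Proposition \ref{P:DG}). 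By Lemma \ref{L:key} applied with $(L_1,L_2,E,v)=(Y,P(Y^{sc}),Q(\alpha^\vee),\rho)$, the set $S(n)$ is periodic in $n$; hence the whole trichotomy ``$b_{W,n}=1$ (metaplectic) / $b_{W,n}=0$ / otherwise'' is a periodic function of $n$.

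It remains to prove periodicity of $b_{W,n}$ in the remaining case, where $\wt{G}^{(n)}$ is not of metaplectic type and $\msc{X}_{Q,n}^W\ne\emptyset$, so that $\phi_{W,n}$ is surjective and Proposition \ref{P:period}(i) gives $b_{W,n}=\val{(Y_\rho\cap P(Y_{Q,n}^{sc}))/Y_{Q,n}}$. Here I would use the substitution $y-\rho=n_\alpha z$ to rewrite, exactly as in the proof of Proposition \ref{P:DG}, both $Y_\rho\cap P(Y_{Q,n}^{sc})=n_\alpha\cdot S(n)$ and $Y_{Q,n}=n_\alpha\cdot\Lambda_n$ with $\Lambda_n:=n_\alpha^{-1}Y_{Q,n}=(n_\alpha^{-1}Y)\cap(nn_\alpha^{-1}M)$, where $M=\set{x: B_Q(x,Y)\subseteq\Z}$; thus $b_{W,n}=\val{S(n)/\Lambda_n}$. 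Since $Y_{Q,n}^{sc}\subseteq Y_{Q,n}\subseteq P(Y_{Q,n}^{sc})=n_\alpha P(Y^{sc})$ one checks $\Lambda_n\subseteq P(Y^{sc})$ and that $S(n)$ is a union of $\Lambda_n$-cosets (stability under $+\Lambda_n$ follows because $n_\alpha\Lambda_n=Y_{Q,n}\subseteq Y$ and $\Lambda_n\subseteq P(Y^{sc})$), so $\val{S(n)/\Lambda_n}$ is well defined and finite. Now $S(n)$ is periodic by the first assertion of Lemma \ref{L:key} and $\Lambda_n$ is periodic by the second assertion (with $v=0$, $L_2=M$), with a common period $c'=c'(\mbf{G},Q)$ that may be taken divisible by $Q(\alpha^\vee)$; for $n$ and $n+c'$ both in this case we then get $S(n)=S(n+c')$ and $\Lambda_n=\Lambda_{n+c'}$, whence $b_{W,n}=b_{W,n+c'}$. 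Taking $c$ to be the least common multiple of $c'$, of $4Q(\beta^\vee)$, and of the periods governing the trichotomy and the uniform-versus-mixed subcase — all depending only on $\mbf{G}$ and $Q$ — completes the proof.

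The main obstacle I anticipate is not conceptual but bookkeeping: one must carefully match each of the three expressions for $b_{W,n}$ to the precise shape of the lattice data appearing in Lemma \ref{L:key}, keep track of the bounded but non-constant factor $\gcd(n,Q(\alpha^\vee))$, and confirm that the uniform versus mixed-coroot-length dichotomy and the metaplectic-type condition are compatible with all the periodicities invoked. Once Propositions \ref{P:dich}, \ref{P:period} and Lemma \ref{L:key} are granted, no further serious difficulty should arise.
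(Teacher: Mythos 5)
Your proposal is correct and follows essentially the same approach as the paper's proof: both reduce to the quotient expression from Proposition~\ref{P:period}(i), apply Lemma~\ref{L:key} (via the substitution $y\mapsto n_\alpha^{-1}(y-\rho)$ exactly as in the proof of Proposition~\ref{P:DG}) to obtain periodicity of numerator and denominator, and handle the $\Sp_{2r}$ degeneracy via the $4Q$-periodic condition $n_{\alpha_r}\equiv 2\bmod 4$. The only cosmetic difference is that you organize the case split by ``metaplectic type versus not'' while the paper splits by ``$\mbf{G}=\mbf{Sp}_{2r}$ versus not,'' but these coincide once Proposition~\ref{P:dich} is invoked, so the two arguments are in substance identical.
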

\begin{proof} If $Q=0$, then $b_{W,n}=1$ for all $n$. Clearly the statement holds, by taking $c=1$. Now we assume that $Q\ne 0$, i.e., $Q(\alpha^\vee) \ne 0$ for every coroot $\alpha^\vee$. We consider two cases  of $\mbf{G}$, depending on whether it is equal to $\mbf{Sp}_{2r}$ or not.

The first case, $\mbf{G} \ne \mbf{Sp}_{2r}$, and we fix the quadratic form $Q$. In this case, $\phi_{W,n}$ is surjective for every $n$. From the proof of Proposition \ref{P:period}, we see that
$$b_{W,n}= \val{ \frac{ Y_\rho \cap P(Y_{Q,n}^{sc}) }{ Y_{Q,n}^{sc}  }   } \cdot \val{Y_{Q,n}/Y_{Q,n}^{sc}}^{-1}.$$
We note that $\val{ Z(\wt{G}_{Q,n}^\vee) }= \val{Y_{Q,n}/Y_{Q,n}^{sc}}$, which by Proposition \ref{P:DG}  is periodic with respect to some $c_1:=c_1(\mbf{G},Q)$ . On the other hand, as in the proof of Proposition \ref{P:DG}, we see that
$$Y_{Q,n}^{sc}=
\begin{cases}
n_\alpha \cdot Y^{sc}  & \text{ if } n\in \N_{\mbf{G}, Q}, \text{ and } \\
n_\alpha \cdot Y^{sc}_\flat & \text{ if } n\in \N-\N_{\mbf{G}, Q};
\end{cases}$$
where $\alpha^\vee$ is any long simple coroot. (If $\mbf{G}$ is simply-laced, we have $\N_{\mbf{G}, Q}= \N$.)  We have
$$S(n):=\frac{ Y_\rho \cap P(Y_{Q,n}^{sc}) }{ Y_{Q,n}^{sc}  }=
\begin{cases}
\frac{ (n_\alpha^{-1} \cdot Y_\rho) \cap P(Y^{sc})  }{Y^{sc}} & \text{ if } n \in \N_{\mbf{G}, Q}, \\
\frac{ (n_\alpha^{-1} \cdot Y_\rho) \cap P(Y^{sc}_\flat)  }{Y^{sc}_\flat} & \text{ if } n \in \N-\N_{\mbf{G}, Q}.
\end{cases}$$
The same argument in Proposition \ref{P:DG} coupled with Lemma \ref{L:key}  shows that there is $c_2:=c_2(\mbf{G}, \Q)$ such that $S(n+ c_2)= S(n)$ for all $n$. Taking $c={\rm lcm}(c_1, c_2)$ gives that $b_{W, n}= b_{W, n+c}$ for all $n$.

Now, we treat the case $\mbf{G}=\mbf{Sp}_{2r}, r\ge 1$ with  a fixed quadratic form $Q$ on $Y=Y^{sc}$. Let $\alpha_r^\vee$ be the unique short simple coroot; define
$$\N_Q= \set{n: 2| n_{\alpha_r} \text{ but } 4 \nmid n_{\alpha_r} } \subset \N.$$
We see that ${\rm domain}(\phi_{W,n})= \emptyset$ if and only if $n\in \N_Q$. By Proposition, $b_{W,n}=1$ for all $n\in \N_Q$.

Define $c':=4 \cdot Q(\alpha_r^\vee)$. We claim that $n+ c' \in \N_Q$ for all $n \in \N$. For this, we compute
$$(n + c')_{\alpha_r}=\frac{ n+ c'  }{ \text{gcd}(n+ c', Q(\alpha_r^\vee))} = \frac{ n+ 4Q(\alpha_r^\vee)  }{ \text{gcd}(n, Q(\alpha_r^\vee))} = n_{\alpha_r} + 4 \cdot \frac{Q(\alpha_r^\vee)}{ \text{gcd}(n, Q(\alpha_r^\vee)) } .$$
Since $Q(\alpha_r^\vee) \cdot \text{gcd}(n, Q(\alpha_r^\vee))^{-1}$ is odd, it follows that $(n+c')_{\alpha_r} \in \N_Q$. Therefore, $b_{W,n}= b_{W,n+c'}$ for all $n\in \N_Q$.

If $n\in \N- \N_Q$, then $\phi_{W,n}$ is surjective. The same argument as in the $\mbf{G} \ne \mbf{Sp}_{2r}$ case above shows that there exists $c'':=c''(\mbf{Sp}_{2r}, Q)$ such that $n + c'' \in \N- \N_Q$ and $b_{W, n}= b_{W, n + c''}$ for all $n \in \N-\N_Q$.

We take $c=\text{lcm}\set{c', c''}$; this completes the proof for $\mbf{G}= \mbf{Sp}_{2r}$.
\end{proof}

In view of (iii) of Corollary \ref{T:pi1}, we see that Proposition \ref{P:DG} and Theorem \ref{T:bP} are compatible, when $\wt{G}$ is a cover of an adjoint group $G$.

\subsection{Poincar\'e series for $b_{W,n}$} Let $\mbf{G}$ be a split connected reductive group and $(\wm{G}, n)$ a cover. The Poincar\'e series attached to $b_{W,n}$ is by definition
$$\mca{P}_{W}(T):= \sum_{n\ge 1} b_{W, n} T^n \in \Z[\![T]\!].$$
Analogously, define
$$\mca{P}_{\rm exc}(T):=\sum_{n=1} \val{ f_\msc{X}(Y_n^{\rm exc}) } \cdot T^n.$$
Note that if $G$ is a semisimple group, then
$$\val{ Y_{n}^{\rm exc} } = \val{ f_\msc{X}(Y_n^{\rm exc}) }.$$

\begin{cor} \label{C:rat}
The following two statements are equivalent:
\begin{enumerate}
\item[$\bullet$] There exists a natural number $c \ge 1$ (depending on $\mbf{G}, Q$ only) such that
$$b_{W, n} = b_{W, n+c}$$ for all $n\ge 1$.
\item[$\bullet$] There exists $k\in \N$ and $f(T) \in \Z[T]$ such that
 $$\mca{P}_{W}(T)=\frac{f(T)}{1-T^k};$$
 in particular, $\mca{P}_{W}(T)$ lies in $\Q(T)$ in this case.
\end{enumerate}
Moreover, if $\mbf{G}$ is an almost-simple group, then any of the above two statements (and thus both) holds for $\mca{P}_{W}(T)$; the analogous assertion also holds for $\mca{P}_{\rm exc}(T)$.
\end{cor}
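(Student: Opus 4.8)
The plan is to separate Corollary \ref{C:rat} into the formal equivalence of the two bullet points, which holds for an arbitrary reductive $\mbf{G}$, and the ``Moreover'' clause, which is an application of the periodicity results already established.

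For the equivalence, the implication from periodicity to the rational shape is a one-line manipulation: if $b_{W,n+c}=b_{W,n}$ for all $n\ge 1$, then, grouping the monomials of $\mca{P}_W(T)=\sum_{n\ge 1}b_{W,n}T^n$ by the residue of the exponent modulo $c$ and summing the geometric progressions,
$$\mca{P}_W(T)=\sum_{j=1}^{c}b_{W,j}\,\frac{T^{j}}{1-T^{c}}=\frac{f(T)}{1-T^{c}},\qquad f(T):=\sum_{j=1}^{c}b_{W,j}T^{j}\in\Z[T],$$
so one takes $k=c$, and the numerator produced this way has degree $\le k$ and vanishes at $T=0$. Conversely, from $\mca{P}_W(T)=f(T)/(1-T^{k})$ with $f\in\Z[T]$, I would clear denominators in $(1-T^{k})\mca{P}_W(T)=f(T)$ and compare coefficients of $T^{n}$ for $n>\max(k,\deg f)$, which gives $b_{W,n}=b_{W,n-k}$; reading (bullet 2) in the normalized form $\deg f\le k$ — precisely the form output by the forward implication — this holds for all $n>k$, that is $b_{W,n+k}=b_{W,n}$ for all $n\ge 1$. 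Only the forward implication will be needed below.

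For almost-simple $\mbf{G}$ and the series $\mca{P}_W(T)$, the conclusion is then immediate: Theorem \ref{T:bP} supplies a $c=c(\mbf{G},Q)$ with $b_{W,n}=b_{W,n+c}$ for all $n$, which is (bullet 1), hence (bullet 2), and in particular $\mca{P}_W(T)\in\Q(T)$, follow. For $\mca{P}_{\rm exc}(T)$ I would instead prove that $n\mapsto\val{f_\msc{X}(Y_n^{\rm exc})}$ is periodic and reapply the equivalence. Since almost-simple groups are semisimple, $\val{f_\msc{X}(Y_n^{\rm exc})}=\val{Y_n^{\rm exc}}$, and Lemma \ref{L:exc} shows this number lies in $\{0,1\}$: it equals $1$ whenever $\wt{G}=\wt{\Sp}_{2r}^{(n)}$ is of metaplectic type, that is whenever $n_{\alpha_r}\equiv 2\ (\text{mod }4)$ for the short simple coroot, and otherwise it equals $1$ exactly when $\rho-\rho_{Q,n}\in Y$. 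Both of these are congruence conditions on $n$. The first is periodic by the computation already carried out in the proof of Theorem \ref{T:bP} (with $c'=4Q(\alpha_r^\vee)$). For the second, in the manner of the proof of Proposition \ref{P:DG} and Lemma \ref{L:key}, one chooses $c$ sufficiently divisible — a suitable power of $\text{lcm}_\alpha Q(\alpha^\vee)$ times the exponent of $P(Y^{sc})/Y^{sc}$ — so that each difference $(n+c)_\alpha-n_\alpha$ is an even multiple of that exponent, whence
$$\rho_{Q,n+c}-\rho_{Q,n}=\tfrac12\sum_{\alpha^\vee>0}\bigl((n+c)_\alpha-n_\alpha\bigr)\alpha^\vee\in Y^{sc}\subseteq Y$$
for all $n$, so that $\rho-\rho_{Q,n+c}\in Y\iff\rho-\rho_{Q,n}\in Y$. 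Passing to a common period of the two conditions makes $\val{Y_n^{\rm exc}}$ periodic, and the equivalence then gives the rational shape of $\mca{P}_{\rm exc}(T)$.

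The main obstacle is the periodicity of the exceptional count $\val{Y_n^{\rm exc}}$: although $\rho_{Q,n}$ grows linearly in $n$, I expect its image in $(Y\otimes\Q)/Y$ to stabilize along arithmetic progressions, and making this precise is the same stabilization-modulo-a-fixed-integer mechanism behind Lemma \ref{L:key} and Proposition \ref{P:DG}, complicated only by the need to track the two coroot lengths so that the metaplectic-type dichotomy for $\mbf{Sp}_{2r}$ is handled correctly. By contrast the generating-function equivalence is routine book-keeping, the only mild care being that ``periodic for all $n\ge 1$'' corresponds to a numerator of degree at most $k$, which is automatic in the direction actually used.
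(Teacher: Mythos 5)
Your proof is correct and follows essentially the same path as the paper's: the formal equivalence is verified by grouping terms and clearing the denominator, the $\mca{P}_W$ case is an immediate application of Theorem~\ref{T:bP}, and the $\mca{P}_{\rm exc}$ case reduces via Lemma~\ref{L:exc} to showing that the two membership conditions are congruence conditions in $n$. Two small comments. First, you handle the metaplectic case a touch more cleanly than the paper: rather than showing the difference $(\rho_{Q,n} - \tfrac{n_\alpha}{2}\omega_\alpha) - (\rho_{Q,n+c''} - \tfrac{(n+c'')_\alpha}{2}\omega_\alpha)$ lies in $Y$ as the paper does, you use Lemma~\ref{L:exc} directly to observe that the exceptional point is automatically in $Y$ whenever $\wt{G}$ is of metaplectic type, so the count is identically $1$ on that progression and only the periodicity of the metaplectic-type condition (already in the proof of Theorem~\ref{T:bP}) is needed. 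Second, the appeal to the exponent of $P(Y^{sc})/Y^{sc}$ is superfluous: once each $(n+c)_\alpha - n_\alpha$ is even, the displayed half-sum lies in $Y^{sc}$ because every positive coroot already lies in $Y^{sc}$; no weight-lattice data is required. Your observation that the literal backward implication requires the normalization $\deg f \le k$ is a fair reading of what the paper leaves implicit, and you correctly note that only the forward direction is used.
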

\begin{proof}
The equivalence of the two statements is straightforward. The assertion on $\mca{P}_W(T)$ for almost-simple $\mbf{G}$ follows from Theorem \ref{T:bP}. The assertion on $\mca{P}_{\rm exc}(T)$ follows from Lemma \ref{L:exc}. More precisely, it suffices to show that  there exists $c=c(\mbf{G}, Q)$ such that for all $n$, one has
\begin{equation} \label{P-y}
\val{Y_n^{\rm exc}}  = \val{Y_{n+c}^{\rm exc}}
\end{equation}
for every $n$. However, it is easy to see that there exists $c'=c'(\mbf{G},Q)$ such that
$$\rho_{Q,n} - \rho_{Q, n + c'} \in Y$$
for every $n$. If $\mbf{G}= \mbf{Sp}_{2r}$, there also exists $c''=c''(\mbf{G}, Q)$ such that
$$(\rho_{Q,n} - \frac{ n_{\alpha} }{2} \omega_\alpha) - (\rho_{Q,n+c''} - \frac{ (n+c'')_{\alpha} }{2} \omega_\alpha) \in Y,$$
where $\alpha \in \Delta$ is the unique simple long root. In any case, in view of Lemma \ref{L:exc}, we see that there exists $c=c(\mbf{G}, Q)$ such that \eqref{P-y} holds and thus $\val{ Y_n^{\rm exc} }$ is periodic. This completes the proof.
\end{proof}


\section{Two Poincar\'e series} \label{S:eg}

In this section, we work out explicitly the $b_{W,n}$ and $Y_n^{\rm exc}$ for several family of covering groups. First note that
$$\val{ f_\msc{X}(Y_n^{\rm exc} ) } \le b_{W,n},$$
since $Y_n^{\rm exc} \subset f^{-1}_\msc{X}( \msc{X}_{Q,n}^W  )$. The set $\val{Y_n^{\rm exc}}$ measure the size of the exceptional points, which provide natural basis points for the Casselman-Shalika formula in view of Theorem \ref{T:WV}. Indeed, if $\wt{G}^{(n)}$ is such that $b_{W,n}=0$ (or just $Y_n^{\rm exc} = \emptyset$), we are not aware of a natural analogue of Theorems \ref{T:WV} and \ref{T:CS}. For instance, for the $4$-fold cover $\wt{\SL}_2$, one has $b_{W,4}=0$.

We will compute explicitly the two Poincar\'e series $\mca{P}_W(T)$ and $\mca{P}_{\rm exc}(T)$. For all the examples considered in this section, one has
$$\mca{P}_{\rm exc}^{\rm}(T), \mca{P}_W(T) \in \Q(T),$$
including certain covers of $\GL_r$. We conjecture that this is the case for covers of a general reductive group $G$.

\subsection{Almost-simple simply-connected groups} In this subsection, let $\mbf{G}$ be an almost-simple simply-connected group. We assume that
$$Q(\alpha^\vee)=1$$
 for every short coroot $\alpha^\vee$. This gives rise to a $\mbf{K}_2$-extension $\wm{G}$.

Let $\wt{G}^{(n)}$ be the $n$-fold cover arising from $(\wm{G}, n)$. Then the necessary and sufficient conditions for $b_{W,n}>0$ and $Y_n^{\rm exe} \ne \emptyset$, which is equivalent to $b_{W,n}=1$ and $\val{Y^\exc_n}=1$ respectively by Corollary \ref{T:pi1},  are tabulated as follows. For $\mbf{G}$ a classical group (resp. exceptional group), we list the conditions in Table 1 (resp. Table 2).
In the tables $r\equiv a$ represents $r \equiv a \mod 4$. Also, for every $n$ we denote by $J(n)$ the highest exponent of $2$ dividing $n$.

\begin{table}[!htbp]  \label{T1}
\caption{For classical groups }
\vskip 5pt

\begin{tabular}{|c|c|c|c|c|}
\hline
 & $\wt{A}_r^{(n)}, r\ge 2$  &  $\wt{C}_r^{(n)}, r\ge 1$ & $\wt{B}_r^{(n)}, r\ge 3$  & $\wt{D}_r^{(n)}, r\ge 3$ \\
\hline
$b_{W,n}=1$ & all $n$ if $r$ is even; &  $4\nmid n$ &  all $n$ if $r\equiv 0 $; & all $n$ if $r\equiv 0 $;  \\
 & $2^{J(n)}| \frac{(r+1)}{2}$ if $r$ is odd & & $4\nmid n$ if $r\equiv 1$; &  all $n$ if $r\equiv 1 $; \\
&&& odd $n$ if $r\equiv 2$; & odd $n$ if $r\equiv 2 $; \\
&&& all $n$ if $r\equiv 3$ & $4\nmid n$ if $r\equiv 3 $ \\
\hline
$\val{Y^\exc_n}=1 $ & all $n$ if $r$ is even; & $4\nmid n$ & all $n$ if $n\equiv 0$; & all $n$ if $r\equiv 0 $;   \\
 & $n$ is odd if $r$ is odd & & $4\nmid n$ if $r\equiv 1$ ;  & all $n$ if $r\equiv 1 $ ;   \\
&&& odd $n$ if $r\equiv 2$; & odd $n$ if $r\equiv 2 $ ;   \\
&&& odd $n$ or $4|n$ if $r\equiv 3$ & odd $n$ if $r\equiv 3 $   \\
\hline
\end{tabular}
\end{table}

\begin{table}[!htbp]  \label{T2}
\caption{For exceptional groups}
\vskip 5pt
\begin{tabular}{|c|c|c|c|c|c|}
\hline
 & $\wt{E}_6^{(n)}$  &  $\wt{E}_7^{(n)}$ & $\wt{E}_8^{(n)}$  & $\wt{F}_4^{(n)}$ &  $\wt{G}_2^{(n)}$ \\
\hline
$b_{W,n}=1$ & all $n$ & odd $n$  & all $n$ & all $n$  & all $n$ \\
 &  & & & & \\
\hline
$ \val{Y^\exc_n}=1 $ & all $n$  & odd $n$  & all $n$ & all $n$ & all $n$ \\
 & & & & & \\\hline
\end{tabular}
\end{table}

To facilitate the computation, we first show two useful results.

\begin{lm} \label{L:e1}
Let $\mbf{G}$ be a semisimple simply-connected group. If $\rho\in Y$ and the coroot system $\Phi_{Q,n}^\vee=\set{\alpha_{Q,n}^\vee}$ is the same type as $\Phi^\vee$, then automatically $\rho_{Q,n} \in Y_{Q,n}^{sc}$ and therefore $Y_n^{sc}=\set{\rho- \rho_{Q,n}}$; in particular, $\val{Y_n^{sc}}= b_{W,n}=1$  in this case.
\end{lm}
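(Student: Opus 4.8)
The plan is to transfer the property that the half-sum of positive roots lies in the root lattice from the coroot system $\Phi^\vee$ to $\Phi_{Q,n}^\vee$, and then to exhibit the resulting point as the unique exceptional point of Lemma \ref{L:exc}. First I would record the elementary fact that, for a reduced root system $\Psi$, whether the half-sum $\rho_\Psi$ of positive roots of $\Psi$ lies in the root lattice of $\Psi$ depends only on the isomorphism type of $\Psi$: the image of $\rho_\Psi$ in (weight lattice)/(root lattice) is fixed by the Weyl group, hence is independent of the chosen positive system, and is carried along by any isomorphism of root systems; concretely, expressing $\rho_\Psi$ in terms of simple roots amounts to summing the rows of the inverse Cartan matrix, so the condition is that these row sums be integral, which depends on the type alone.

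Since $\mbf{G}$ is simply-connected, $Y=Y^{sc}$ is precisely the root lattice of $\Phi^\vee$, while by construction $Y_{Q,n}^{sc}$ is the root lattice of $\Phi_{Q,n}^\vee$ and $\rho_{Q,n}$ is its half-sum of positive coroots. Thus the hypothesis $\rho\in Y$ says the invariant holds for $\Phi^\vee$; since $\Phi_{Q,n}^\vee$ has the same type, the invariant holds for $\Phi_{Q,n}^\vee$ as well, i.e. $\rho_{Q,n}\in Y_{Q,n}^{sc}$. In particular $\rho-\rho_{Q,n}\in Y$, because $\rho\in Y$ and $Y_{Q,n}^{sc}\subseteq Y$.

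It then remains to check that $z:=\rho-\rho_{Q,n}$ lies in $Y_n^\exc$, where $z_\rho=z-\rho=-\rho_{Q,n}$. For every $\alpha\in\Delta$, the reflection $\w_\alpha$ coincides with the simple reflection of the root system $\Phi_{Q,n}^\vee$ attached to $\alpha_{Q,n}^\vee$ (the scalars $n_\alpha$ cancel in the reflection formula), so the standard relation ``$s_i(\rho)=\rho-\alpha_i$'' applied to $\Phi_{Q,n}^\vee$ gives $\w_\alpha(\rho_{Q,n})=\rho_{Q,n}-\alpha_{Q,n}^\vee$. Comparing with $\w_\alpha(\rho_{Q,n})=\rho_{Q,n}-\angb{\rho_{Q,n}}{\alpha}\alpha^\vee$ yields $\angb{\rho_{Q,n}}{\alpha}=n_\alpha$, whence $\angb{z_\rho}{\alpha}=-n_\alpha$, and moreover $\w_\alpha[z]-z=\w_\alpha(z_\rho)-z_\rho=\alpha_{Q,n}^\vee\in Y_{Q,n}^{sc}$. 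The last fact shows $z$ is $\alpha$-normal for every $\alpha$, and together with $\angb{z_\rho}{\alpha}=-n_\alpha$ this is exactly the condition in Definition \ref{D:exc}; hence $z\in Y_n^\exc$. By the uniqueness statement of Lemma \ref{L:exc} (valid since $\mbf{G}$ is semisimple), $Y_n^\exc=\set{\rho-\rho_{Q,n}}$, so $\val{Y_n^\exc}=1$. Finally $b_{W,n}=\val{\msc{X}_{Q,n}^W}\ge 1$, since $\w[\rho]=\w(\rho-\rho)+\rho=\rho$ for all $\w\in W$ and $\rho\in Y$, so $f_\msc{X}(\rho)\in\msc{X}_{Q,n}^W$; and $b_{W,n}\le 1$ by Theorem \ref{T:pi1}(ii) as $\mbf{G}$ is simply-connected. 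Therefore $b_{W,n}=1$.

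The step I expect to require the most care is the transfer argument above: making precise that ``the half-sum of positive roots lies in the root lattice'' is genuinely an invariant of the type, and that the ``same type'' hypothesis indeed transports $\rho$ to $\rho_{Q,n}$ and the root lattice of $\Phi^\vee$ to $Y_{Q,n}^{sc}$. Once this is granted, the remaining verifications are short and routine computations with simple reflections.
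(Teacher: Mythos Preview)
Your proof is correct and follows the same approach as the paper. The paper's argument is terse---it just asserts that under the hypotheses $\rho-\rho_{Q,n}\in Y$ and then invokes Lemma~\ref{L:exc}---whereas you spell out why $\rho_{Q,n}\in Y_{Q,n}^{sc}$ via the type-invariance of ``$\rho$ lies in the root lattice,'' re-verify the exceptional-point condition of Definition~\ref{D:exc} directly (which is exactly what Lemma~\ref{L:exc} does in its proof, noting that $\rho\in Y$ rules out the metaplectic case), and make the $b_{W,n}=1$ step explicit via Theorem~\ref{T:pi1}(ii).
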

\begin{proof}
Under the assumption one has $\rho - \rho_{Q,n} \in Y$. Thus the statement is a direct consequence of Lemma \ref{L:exc}.
\end{proof}

\begin{lm} \label{L:e2}
Let $P(Y^{sc}) \subset Y^{sc}\otimes \Q$ be the weight lattice associated to the coroot lattice $Y^{sc}$. If $n\equiv 1 \mod [P(Y^{sc}): Y^{sc}]$, then
$$\val{ \frac{ (n^{-1}Y^{sc}_\rho) \cap P(Y^{sc}) }{ Y^{sc} }  } \le 1.$$
\end{lm}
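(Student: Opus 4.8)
\textbf{Plan for the proof of Lemma \ref{L:e2}.}
The goal is to bound the number of cosets in $\big((n^{-1}Y^{sc}_\rho)\cap P(Y^{sc})\big)/Y^{sc}$ under the hypothesis $n\equiv 1 \pmod{[P(Y^{sc}):Y^{sc}]}$. The plan is to reduce everything to an injectivity statement about the difference of two such points. First I would unwind the definitions: an element of $n^{-1}Y^{sc}_\rho$ is a vector $y\in Y^{sc}\otimes\Q$ with $ny+\rho\in Y^{sc}$ (recall $Y^{sc}_\rho = Y^{sc}-\rho$ and $\rho$ is half the sum of positive coroots). So the set in question consists of those $y\in P(Y^{sc})$ with $ny\in Y^{sc}-\rho$; note this set is automatically nonempty since $\rho\in P(Y^{sc})$ always (as $\rho$ is the sum of the fundamental coweights of the coroot lattice) and $n\rho + \rho = (n+1)\rho$, which lies in $Y^{sc}$ precisely when $(n+1)\rho\in Y^{sc}$ — but for the upper bound I only need the injectivity argument, so nonemptiness is not even required.

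The key step is then: if $y_1,y_2\in P(Y^{sc})$ both satisfy $ny_i+\rho\in Y^{sc}$, show $y_1-y_2\in Y^{sc}$. Subtracting, $z:=y_1-y_2\in P(Y^{sc})$ and $nz\in Y^{sc}$. Now I would use the hypothesis on $n$: write $m:=[P(Y^{sc}):Y^{sc}]=\val{\pi_1}$, the index of the coroot lattice in the coweight lattice, so $mz\in Y^{sc}$ for any $z\in P(Y^{sc})$. Since $n\equiv 1\pmod m$, write $n = 1 + km$ for some $k\in\Z$. Then
\[
z = nz - km z = nz - k(mz)\in Y^{sc},
\]
because both $nz$ and $mz$ lie in $Y^{sc}$. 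Hence $z\in Y^{sc}$, i.e. $y_1\equiv y_2\bmod Y^{sc}$. This shows the set $\big((n^{-1}Y^{sc}_\rho)\cap P(Y^{sc})\big)/Y^{sc}$ has at most one element, giving the claimed bound $\le 1$.

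I do not anticipate a serious obstacle here — the argument is a one-line manipulation in the finite abelian group $P(Y^{sc})/Y^{sc}\simeq \pi_1(\wt G^\vee)$ once the hypothesis is translated into the divisibility statement $n\equiv 1$ modulo the exponent (indeed modulo the order) of that group. The only point requiring a little care is the reduction from "elements of $(n^{-1}Y^{sc}_\rho)\cap P(Y^{sc})$ modulo $Y^{sc}$" to the difference $z$: one must check that the condition $ny_i+\rho\in Y^{sc}$ is compatible with shifting $y_i$ by an element of $Y^{sc}$, which is immediate since $n\cdot Y^{sc}\subseteq Y^{sc}$, so the coset class of $y_i$ in $P(Y^{sc})/Y^{sc}$ is well-adapted to the constraint. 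This lemma will then feed into the tabulated computations of $b_{W,n}$ and $\val{Y_n^{\rm exc}}$ for the simply-connected almost-simple groups, where it handles precisely the cases $n$ coprime to (or $\equiv 1$ modulo) $\val{\pi_1}$.
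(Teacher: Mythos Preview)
Your proposal is correct and follows essentially the same approach as the paper's proof: take two elements $y_1,y_2$ in the set, observe that $z=y_1-y_2\in P(Y^{sc})$ satisfies $nz\in Y^{sc}$, and use the congruence $n\equiv 1\pmod{[P(Y^{sc}):Y^{sc}]}$ to conclude $z\in Y^{sc}$. The paper phrases this last step slightly more abstractly (multiplication by $n$ is the identity on $P(Y^{sc})/Y^{sc}$, so $z\equiv nz\equiv 0$), whereas you write $n=1+km$ explicitly, but the content is identical.
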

\begin{proof}
Let $y_i \in (n^{-1} Y_\rho) \cap P(Y^{sc})$ for $i=1, 2$. Then $n(y_1-y_2) \in Y^{sc} \cap P(Y^{sc})$, and thus in $P(Y^{sc})$,
$$(y_1 - y_2) \equiv n(y_1 - y_2) \mod Y^{sc},$$
which follows from the assumption on $n$.
\end{proof}
\subsubsection{Type $A_r, r\ge 2$}

Consider the Dynkin diagram for the simple coroots of $A_r$:

$$\qquad
\begin{picture}(4.7,0.2)(0,0)
\put(1,0){\circle{0.08}}
\put(1.5,0){\circle{0.08}}
\put(2,0){\circle{0.08}}
\put(2.5,0){\circle{0.08}}
\put(3,0){\circle{0.08}}
\put(1.04,0){\line(1,0){0.42}}
\multiput(1.55,0)(0.05,0){9}{\circle{0.02}}
\put(2.04,0){\line(1,0){0.42}}
\put(2.54,0){\line(1,0){0.42}}
\put(1,0.1){\footnotesize $\alpha_{1}^\vee$}
\put(1.5,0.1){\footnotesize $\alpha_{2}^\vee$}
\put(2,0.1){\footnotesize $\alpha_{r-2}^\vee$}
\put(2.5,0.1){\footnotesize $\alpha_{r-1}^\vee$}
\put(3,0.1){\footnotesize $\alpha_r^\vee$}
\end{picture}
$$
\vskip 10pt

By Lemma \ref{L:exc}, $Y^\exc_n \ne \emptyset$ if and only if $\rho -\rho_{Q,n} =(1-n)\rho$ lies in $Y$. We discuss the two cases depending on the parity of $r$.

First, if $r\ge 2$ is even, then $\rho\in Y$. Thus $\rho- \rho_{Q,n} \in Y$ and it is the only exceptional point in $Y^\exc_n$. In this case
$$\val{Y_n^\exc}= 1 = b_{W,n}.$$

Now we assume that $r\ge 3$ is odd. Then $\rho \notin Y$, thus $Y_n^{sc} \ne \emptyset $ if and only if $n$ is odd.
To consider $b_{W,n}$, we follow notations in \cite[\S 4]{Ga2} and have
$$Y_{Q,n}=\set{\sum_{i=1}^{r+1} k_i e_i:  \sum_i k_i=0 \text{ and } k_1 \equiv k_2 \equiv ... \equiv k_{r+1} \mod n  },$$
where $\set{e_i}$ is the standard coordinate used as in Bourbaki \cite{Bou} and in particular $\alpha_i^\vee=e_i - e_{i+1}$.
Let $y=\sum_{i=1}^{r+1} k_i e_i \in Y$ with $\sum_{i} k_i=0$. Then
$$y+Y_{Q,n} \in (\msc{X}_{Q,n})^W$$
 if and only if
$$\angb{y_\rho}{\alpha_i} \in n\Z \text{ for all } 1\le i\le r;$$
that is,
\begin{equation} \label{E:A}
k_{i+1} + 1 - k_i \in n\Z \text{ for all } 1\le i \le r.
\end{equation}
This gives that
$$\sum_{i=1}^r i\cdot (k_{i+1} + 1-k_i) = \frac{r+1}{2}(2k_{r+1} + r) \in n\Z.$$
Since $r$ is odd, this implies that
$$2^{J(n)}| \left(\frac{r+1}{2} \right),$$
where $2^{J(n)}$ denotes the 2-exponent in a natural number $n$.

Conversely, if $2^{J(n)}| \frac{r+1}{2}$, we take
$k_{r+1} \in \Z$ be to any number such that
$$(r+1)k_{r+1} + r(r+1)/2 = m\cdot n \text{ for some } m.$$
Also take
$$k_i= k_{i+1} + 1 \text{ for all } 2\le i \le r-1,$$
and $k_1= k_2 + 1 - mn$. Clearly this element $\sum_i k_i e_i$ lies in $Y$ and thus $b_{W,n} > 0$ in this case.

The above discussion for $b_{W,n}$ and $Y_n^{\rm exc}$ can be summarized as follows:

\begin{enumerate}
\item[$\bullet$] If $r$ is even, then
$$\mca{P}_W(T)= \mca{P}_{\rm exc}(T) = \frac{T}{1-T}.$$
\item[$\bullet$] For $r$ is odd, let $J(r)$ be such that $2^{J(r)}$ is the 2-exponent appearing in $(r+1)/2$.
Then we have
$$\mca{P}_W(T)= \sum_{i=0}^{J(r)} \frac{ T^{2^i} }{ 1- (T^{2^i})^2   }$$
and
$$\mca{P}_{\rm exc}(T) = \frac{T}{1- T^2}.$$
\end{enumerate}

\subsubsection{Type $C_r, r\ge 1$}
Consider the Dynkin diagram for the simple coroots for $C_r$:

$$ \qquad
\begin{picture}(4.7,0.2)(0,0)
\put(1,0){\circle{0.08}}
\put(1.5,0){\circle{0.08}}
\put(2,0){\circle{0.08}}
\put(2.5,0){\circle{0.08}}
\put(3,0){\circle{0.08}}
\put(1.04,0){\line(1,0){0.42}}
\multiput(1.55,0)(0.05,0){9}{\circle*{0.02}}
\put(2.04,0){\line(1,0){0.42}}
\put(2.54,0.015){\line(1,0){0.42}}
\put(2.54,-0.015){\line(1,0){0.42}}
\put(2.74,-0.04){$>$}
\put(1,0.1){\footnotesize $\alpha_1^\vee$}
\put(1.5,0.1){\footnotesize $\alpha_2^\vee$}
\put(2,0.1){\footnotesize $\alpha_{r-2}^\vee$}
\put(2.5,0.1){\footnotesize $\alpha_{r-1}^\vee$}
\put(3,0.1){\footnotesize $\alpha_r^\vee$}
\end{picture}
$$
\vskip 10pt

Note
$$\rho=\sum_{i=1}^r \frac{i(2r-i)}{2} \alpha_i^\vee.$$
We consider this case in three steps.
\begin{enumerate}
\item[$\bullet$] First, if $n$ is odd, then $n_{\alpha}=n$ for all $\alpha^\vee$ and
$$\rho-\rho_{Q,n}= (1-n)\cdot \rho \in Y.$$
Thus, $Y^\exc_n= \set{ \rho-\rho_{Q,n} }$ and
$$\val{Y^\exc_n} = 1 = b_{W,n}.$$
\item[$\bullet$] Second, $n= 2 m$ with $m$ odd. In this case, it follows from Lemma \ref{L:exc} that $Y^\exc_n= \set{ \rho-\rho_{Q,n} + m \omega_{\alpha_r} }$.
\item[$\bullet$] Third, $4|n$. In this case, we show that $b_{W,n}=0$, which also implies that $Y^\exc_n = \emptyset$. For this purpose, we claim
$$Y_\rho \cap P(Y_{Q,n}^{sc}) = \emptyset.$$
However, in this case $P(Y_{Q,n}^{sc})= Y_{Q,n} \subset Y$. On the other hand, $\rho \notin Y$. Therefore the claim follows.
\end{enumerate}

The above analysis  shows that the Poincar\'e series $\mca{P}_W(T), \mca{P}_{\rm exc}(T)$ for $\Sp_{2r}$ and the fixed $Q$ are
$$\mca{P}_W(T)= \mca{P}_{\rm exc}(T) = \frac{T^3 + T^2 + T}{1- T^4}.$$

\subsubsection{Type $B_r, r\ge 3$}
Consider the Dynkin diagram for the simple coroots for the group $\Spin_{2r+1}$ of type $B_r$:

$$ \qquad
\begin{picture}(4.7,0.2)(0,0)
\put(1,0){\circle{0.08}}
\put(1.5,0){\circle{0.08}}
\put(2,0){\circle{0.08}}
\put(2.5,0){\circle{0.08}}
\put(3,0){\circle{0.08}}
\put(1.04,0){\line(1,0){0.42}}
\multiput(1.55,0)(0.05,0){9}{\circle*{0.02}}
\put(2.04,0){\line(1,0){0.42}}
\put(2.54,0.015){\line(1,0){0.42}}
\put(2.54,-0.015){\line(1,0){0.42}}
\put(2.74,-0.04){$<$}
\put(1,0.1){\footnotesize $\alpha_1^\vee$}
\put(1.5,0.1){\footnotesize $\alpha_2^\vee$}
\put(2,0.1){\footnotesize $\alpha_{r-2}^\vee$}
\put(2.5,0.1){\footnotesize $\alpha_{r-1}^\vee$}
\put(3,0.1){\footnotesize $\alpha_r^\vee$}
\end{picture}
$$
\vskip 10pt

Again, following notation in \cite[\S 6]{Ga2}, we may identify
$$Y=\set{\sum_{i=1}^r k_i e_i \in \bigoplus_i \Z e_i:  2|(\sum_i k_i)  }.$$
In this case $\alpha_i^\vee=e_i - e_{i+1}$ for every $1\le i\le r-1$ and also $\alpha_r^\vee= 2e_r$.

We have $y\in (\msc{X}_{Q,n})^W$ if and only if
$$ y_\rho \in Y_\rho \cap P(Y_{Q,n}^{sc});$$
that is,
$$\angb{y_\rho}{\alpha_i} \in n_\alpha \Z \text{ for all } 1\le i\le r,$$
or more explicitly,
\begin{equation} \label{E:B}
k_i- k_{i+1} -1 \in n_{\alpha_i} \Z \text{ for every } 1\le i\le r-1, \text{ and } k_r -1 \in n_{\alpha_r}\Z.
\end{equation}
We discuss the two cases according to the parity of $n$.

First, if $n$ is odd, then $n_{\alpha_i}=n$ for every $i$. In this case, we see that one can find $\set{k_i}$ such that $2| (\sum_i k_i)$ satisfying \eqref{E:B}. For example, we can take
$$k_i= r+1-i \text{ for every } 1\le i\le r-1, \text{ and } k_r=\frac{r(r+3)}{2}.$$
Also, in this case $Y_{Q,n}^{sc}= nY^{sc}$ and thus
$$\rho - \rho_{Q,n}= (1-n) \rho \in Y.$$
That is, $Y^\exc_n= \set{\rho-\rho_{Q,n}}$. Therefore we have
$$b_{W,n}=\val{Y^\exc_n  }=1.$$

Second, if $n=2m$ is even, then we note that in this case
$$n_{\alpha_i}=n \text{ for } 1\le i\le r-1, \text{ and } n_{\alpha_r}=m.$$
That is, the root system $\Phi_{Q,n}^\vee$ is of type $B_r$. We have
$$\rho -\rho_{Q,n} \equiv \frac{ r(rn + r +1)  }{ 4} \alpha_r^\vee  \mod Y.$$
A straightforward checking shows that $\rho - \rho_{Q,n} \in Y^{sc}$ if and only if we are in one of the following three cases:
\begin{enumerate}
\item[(I)] $r\equiv 0 \text{ mod } 4$;
\item[(II)] $r\equiv 1 \text{ mod } 4, \text{ and } m \text{ is odd}$;
\item[(III)] $r\equiv 3 \text{ mod } 4, \text{ and } m \text{ is even}$.
\end{enumerate}
At the same time, the solvability of \eqref{E:B} under the constraint $2|(\sum_ i k_i)$ is equivalent to the existence of
$t\in \Z$, where $k_r= 1 + m\cdot t$, such that
\begin{equation} \label{E:B1}
2 \text{ divides }  \left(\frac{r(r+1)}{2} + rm \cdot t \right).
\end{equation}
We discuss this in the following four cases:
\begin{enumerate}
\item[$\bullet$] $r\equiv 0$ mod 4. In this case, $Y_n^\exc=\set{\rho- \rho_{Q,n}}$ and thus
$$\val{Y_n^\exc} = b_{W,n}=1.$$
\item[$\bullet$] $r\equiv 1$ mod 4. In this case, \eqref{E:B1} holds for some $t$ if and only if $m$ is odd.
Therefore,
$$ b_{W,n}=\val{Y_n^\exc}=
\begin{cases}
0 & \text{ if  $m$ is even};\\
1 & \text{ if  $m$ is odd}.
\end{cases}
$$
\item[$\bullet$] $r\equiv 2$ mod 4. In this case, \eqref{E:B1} is always unsolvable. Thus
$$b_{W,n}=\val{Y_n^\exc}= 0.$$
\item[$\bullet$] $r\equiv 3$ mod 4. In this case, \eqref{E:B1} is always solvable and hence
$$b_{W,n}=1.$$
However, note that in this case $\val{Y_n^\exc}=1$ if and only if $m$ is even.
\end{enumerate}

In summary, we have
$$ \mca{P}_W(T)= \mca{P}_{\rm exc}(T)
\begin{cases}
\frac{T}{1-T} & \text{ if $r\equiv 0$ mod 4};\\
\frac{T^3 + T^2 + T}{1-T^4} & \text{ if $r\equiv 1$ mod 4};\\
\frac{T}{1-T^2} & \text{ if $r\equiv 2$ mod 4}.
\end{cases}$$
Moreover, if $r\equiv 3 \mod 4$, then
$$\mca{P}_W(T) = \frac{T}{1-T}, \quad \mca{P}_{\rm exc}(T) = \frac{ T+ T^3 + T^4 }{ 1-T^4 }.$$

\subsubsection{Type $D_r, r\ge 4$}
Consider the Dynkin diagram for simple coroots of $D_r$:

$$
\begin{picture}(4.7,0.4)(0,0)
\put(1,0){\circle{0.08}}
\put(1.5,0){\circle{0.08}}
\put(2,0){\circle{0.08}}
\put(2.5,0){\circle{0.08}}
\put(3,0){\circle{0.08}}
\put(3.5, 0.25){\circle*{0.08}}
\put(3.5, -0.25){\circle*{0.08}}
\put(1.04,0){\line(1,0){0.42}}
\put(1.54,0){\line(1,0){0.42}}
\multiput(2.05,0)(0.05,0){9}{\circle*{0.02}}
\put(2.54,0){\line(1,0){0.42}}
\put(3.00,0){\line(2,1){0.46}}
\put(3.00,0){\line(2,-1){0.46}}
\put(1,0.1){\footnotesize $\alpha_1^\vee$}
\put(1.5,0.1){\footnotesize $\alpha_2^\vee$}
\put(2,0.1){\footnotesize $\alpha_3^\vee$}
\put(2.5,0.1){\footnotesize $\alpha_{r-3}^\vee$}
\put(2.9,0.15){\footnotesize $\alpha_{r-2}^\vee$}
\put(3.5,0.35){\footnotesize $\alpha_{r-1}^\vee$}
\put(3.5,-0.4){\footnotesize $\alpha_r^\vee$}
\end{picture}
$$

\vskip 30pt

We use the Bourbaki notation in \cite[Page 220]{Bou}. In particular,
$$Y=\set{(y_1, y_2, ..., y_r) \in \bigoplus_{i=1}^r \Z e_i: 2| (\sum_{i=1}^r y_i)}.$$
Moreover,
\begin{equation*}
Y_{Q,n}=
\left\{
\begin{array}{cc}
(y_1, y_2, ..., y_r) \in \bigoplus_{i=1}^r \Z e_i: \\
\bullet \ 2| (\sum_{i=1}^r y_i), \\
\bullet \ y_1 \equiv y_2 \equiv ... \equiv y_r \text{ mod } n,\\
\bullet \ n|(2y_i) \text{ for all } i.
\end{array}\right\} ,
\quad
Y_{Q,n}^{\sct}=
\left\{
\begin{array}{cc}
(y_1, y_2, ..., y_r) \in \bigoplus_{i=1}^r \Z e_i: \\
\bullet \ 2n| (\sum_{i=1}^r y_i), \\
\bullet \ n|y_i \text{ for all } i.
\end{array}\right\}
\end{equation*}
Since $D_r$ is simply-laced, we have $n_{\alpha}=n$ for all $\alpha^\vee$. To consider the set $Y^\exc_n$ first, we note that $\rho-\rho_{Q,n}= (1-n)\rho$. By \cite[Page 221 (VII)]{Bou},
$$\rho-\rho_{Q,n} \equiv (1-n)\frac{r(r-1)}{4} (\alpha_{r-1}^\vee + \alpha_r^\vee) \mod Y^{sc}.$$
Thus it follows easily
$$ \val{Y^\exc_n}=
\begin{cases}
1 & \text{ for all $n$, if  } r \equiv 0 ,1 \mod 4;\\
1 & \text{ for odd $n$, if  } r \equiv 2 ,3 \mod 4;\\
0 & \text{ for even $n$, if  } r \equiv 2, 3 \mod 4.
\end{cases} $$

Now we consider $b_{W,n}$. Let $\sum y_i e_i \in Y$ be such that
$$\angb{y_\rho}{\alpha_i} \in n\Z.$$
We get
\begin{equation} \label{E:D}
k_i - k_{i+1} -1 \in n\Z \text{ for } 1\le i \le r-1, \text{ and } k_{r-1} + k_r -1 \in n\Z.
\end{equation}
It is equivalent to
\begin{equation} \label{E:D1}
k_i - (  k_r + (r-i) ) \in n\Z \text{ for } 1\le i\le r-1, \text{ and }  2k_r \in n\Z.
\end{equation}
Again, the existence of $y \in Y_\rho \cap P(Y_{Q,n}^{sc})$ is equivalent to the solvability of \eqref{E:D1}; it is further equivalent to saying that there exist $t_i$ such that
$$\sum_{i=1}^r k_i= r \cdot k_r + \frac{r(r-1)}{2} + n\cdot \sum_{i=1}^{r-1} t_i$$
is even, and $2k_r= n t_r$.

We discuss the situation according to the parity of $n$. If $n$ is odd, then we have
$$b_{W,n}=\val{Y_n^\exc}=1.$$
For $n=2m$ even, assume $k_r=  m t_r$. We need to look at the solvability for $t_i$'s  under the condition that
\begin{equation} \label{D:solv}
2 | \left( rm\cdot t_r + \frac{r(r-1)}{2} \right).
\end{equation}
We will divide our discussion on $b_{W,n}$ into three cases:
\begin{enumerate}
\item[$\bullet$] $r\equiv 0$ or 1 mod 4. In this case,
$b_{W,n}=\val{Y_n^\exc}=1$.
\item[$\bullet$] $r\equiv 2$ mod 4. In this case, there is no $t_r$ such that \eqref{D:solv} holds. Thus, $b_{W,n}=\val{Y_n^\exc}=0$.
\item[$\bullet$] $r\equiv 3$ mod 4. If $m$ is even, then \eqref{D:solv} is not solvable, and we have $b_{W,n}=0$. On the other hand, if $m$ is odd, then we can check that $b_{W,n}=1$; note in this case $\val{Y_n^\exc}=0$.
\end{enumerate}

We can compute the Poincar\'e series easily to obtain:
$$ \mca{P}_W(T)= \mca{P}_{\rm exc}(T) =
\begin{cases}
\frac{T}{1-T} & \text{ if $r\equiv 0$ or 1 mod 4};\\
\frac{T}{1-T^2} & \text{ if $r\equiv 2$ mod 4}.
\end{cases}$$
Moreover, if $r \equiv 3 \mod 4$, then
$$ \mca{P}_W(T)= \frac{T^3 + T^2 + T}{1-T^4}, \quad \mca{P}_{\rm exc}(T)= \frac{T}{1-T^2}.$$

\subsubsection{Type $E_6$}
In this case $\rho\in Y$; since $E_6$ is simply-laced, we have $\rho_{Q,n} \in Y_{Q,n}^{sc}$. Therefore $Y^\exc_n = \set{\rho-\rho_{Q,n}}$ for all $n$. Thus $b_{W,n}=\val{Y_n^{sc}}=1$ for all $n$. It follows that the Poincar\'e series are
$$\mca{P}_W(T)= \mca{P}_{\rm exc}(T) = \frac{T}{1-T}.$$

\subsubsection{Type $E_7$}
Note $[P(Y^{sc}): Y^{sc}]=2$ for $E_7$. Since $E_7$ is simply-laced, one has $Y_{Q,n}=n \cdot Y^{sc}$. Therefore, by Proposition \ref{P:period} there are bijections
$$\msc{X}_{Q,n}^W \leftrightarrow  \frac{Y_\rho \cap P(Y_{Q,n}^{sc})}{Y_{Q,n}^{sc}}  \leftrightarrow   \frac{ (n^{-1}Y_\rho) \cap P(Y)  }{ Y} .$$
There are two cases:
\begin{enumerate}
\item[$\bullet$] If $n$ is odd, then $\rho-\rho_{Q,n}=(1-n)\cdot \rho \in Y^{sc}$. Thus
$$\val{ Y^\exc_n }= b_{W,n} =1.$$
\item[$\bullet$] If $n$ is even, then we see that $(n^{-1}Y_\rho) \cap P(Y)=\emptyset$ and thus
$$\val{ Y^\exc_n }= b_{W,n} =0$$
in this case.
\end{enumerate}
It follows that the two Poincar\'e series for $E_7$ are
$$\mca{P}_W(T)=\mca{P}_{\rm exc}(T)= \frac{T}{1-T^2}.$$

\subsubsection{Type $E_8, F_4$ and $G_2$}
Let $\mbf{G}$ be one of these three exceptional groups. It is both simply-connected and adjoint. Therefore, $\rho\in Y$ and $\rho_{Q,n} \in Y_{Q,n}^{sc}$. We have $Y^\exc_n=\set{\rho- \rho_{Q,n}}$ for every $n$ and therefore
$$b_{W,n}= \val{Y^\text{sc}_n } = 1$$
for all $n$. The Poincar\'e series are thus
$$\mca{P}_W(T)=\mca{P}_{\rm exc}(T)= \frac{T}{1-T}.$$

\subsection{Adjoint groups} It follows from Proposition \ref{P:period} that if $\mbf{G}$ is an adjoint group, then $b_{W,n}= \val{\pi_1(\wt{G}_{Q,n}^\vee)}$ for every $n$. We will not treat general adjoint groups, but consider the example of $\mbf{SO}_{2r+1}$.

Let $Y$ be the cocharacter lattice of $\mbf{SO}_{2r+1}$ spanned by
$$\set{e_1, e_2, ..., e_r}$$
and the coroot lattice $Y^{\rm sc}$ by the set of simple coroots
$$\set{\alpha_i^\vee:=e_i - e_{i+1} \text{ for } 1\le i\le r-1} \cup \set{\alpha_r^\vee:=2e_r}.$$
Let $Q: Y\to \Z$ be the unique Weyl-invariant quadratic form on $Y$ such that
$$Q(e_r)=-1.$$
Pick a bisector $D$ associated to $Q$. Consider $(D, \eta)$ with $\eta_n$ being the trivial map. It gives rise to the $\mbf{K}_2$ extension $\wm{SO}_{2r+1}$. In fact, let $\wm{SL}_{2r+1}$ be the extension of $\mbf{SL}_{2r+1}$ determined by $Q(\beta^\vee)=-1$ for any coroot $\beta$ of $\mbf{SL}_{2r+1}$. Then $\wm{SO}_{2r+1}$ is just its pull-back via the embedding $\mbf{SO}_{2r+1} \into \mbf{SL}_{2r+1}$.

In any case, for the dual group of $\wt{G}= \wt{\SO}_{2r+1}^{(n)}$, we have
$$ \wt{G}_{Q,n}^\vee=
\begin{cases}
\Sp_{2r} & \text{ if $4\nmid n$}; \\
\SO_{2r+1} & \text{ if $4 | n$}.
\end{cases}
$$
For $\mbf{SO}_{2r+1}$ and the above $Q$, it follows that the Poincar\'e series for $b_{W,n}$ is
$$\mca{P}_W(T)=\frac{2T^4 + T^3 + T^2 +T}{ 1-T^4 }.$$
One can check easily that $\rho_{Q,n} \in Y$ for all $n$, and thus $\val{Y_n^\exc}=1$ for all $n$.
Therefore,
$$\mca{P}_{\rm exc}(T)= \frac{T}{1-T}.$$

\subsection{Covers of $\GL_r$}
As in Example \ref{eg-GL}, we use the notation in \cite[\S 2.1]{Ga3}. Below we only illustrate the situation by considering two special families.

First, we consider  the Kazhdan-Patterson $\mbf{K}_2$-extension $\wm{GL}_2$ with
$$Q(e_i)=\p=0, i=1, 2;   \quad B_Q(e_1, e_2)=\bq=1.$$
In this case, $Y_{Q,n}=nY$ for every $n\in \N$, and thus $\msc{X}_{Q,n} \simeq (\Z/n\Z)^2$. We already obtained from Example \ref{eg-GL} that $b_{W,n} = n$. Hence,
$$\mca{P}_W(T) =\frac{T}{(1-T)^2}.$$
Now we consider the set $Y_n^\exc$. By definition, $y = y_1 e_1 + y_2 e_2 \in Y_n^\exc$ if and only if
$$y_1 - y_2 = -n + 1.$$
In particular, there are infinitely many exceptional points. However, we see that
$$\val{ f_\msc{X}( Y_n^{\rm exc} ) } = n$$
and hence
$$\mca{P}_{\rm exc}(T)= \mca{P}_W(T) = \frac{T}{(1-T)^2}.$$

Second, we consider the Savin $\mbf{K}_2$-extension $\wm{GL}_r$ with
$$Q(e_i)=\p=-1;   \quad B_Q(e_i, e_j)=\bq=0, i\ne j.$$
We have $Q(\alpha^\vee) = -2$ for every coroot $\alpha^\vee$. For $n\in \N$, denote $n_o:= n/\text{gcd}(2, n)$, which equals to $n_\alpha$ for every $\alpha$. Then $Y_{Q,n} = n_o Y$ and thus
$$\msc{X}_{Q,n} \simeq (\Z/n_o\Z)^r.$$
For every $1\le i\le r-1$, the coset $\sum_j k_j e_j +Y_{Q,n}$ lies in  $(\msc{X}_{Q,n})^{W_{\alpha_i}}$ if and only if
$$k_i- (k_{i+1} + 1) \in n_o\Z.$$
It follows that
$$b_{\alpha_i, n} = n_o^{r-1}, \text{ and } a_{\alpha_i, n} = \frac{n_o^r- n_o^{r-1}}{2}.$$
Moreover, one has $b_{W,n} = n_o$ and therefore the Poincar\'e series for the Savin-cover $\wt{\GL}_r^{(n)}$ is
$$\mca{P}_W(T)= \frac{T + T^2 + T^3}{(1-T^2)^2}.$$
To consider $Y_n^\exc$, note that $y =\sum_i y_i e_i \in Y_n^\exc$ if and only if
$$y_i - y_{i+1} = -n_o + 1 \text{ for all } 1\le i \le r-1.$$
Again, there are infinitely many exceptional points. On the other hand, since
$$\val{ f_\msc{X}( Y_n^{\rm exc} ) } = n_o= b_{W,n}$$
for all $n$, we have
$$\mca{P}_{\rm exc}(T)= \mca{P}_W(T) =  \frac{T + T^2 + T^3}{(1-T^2)^2}.$$

We note that for general Brylinski-Deligne cover $\wt{\GL}_r^{(n)}$ parametrized by $\p, \bq \in \Z$,  the number $b_{W,n}$ varies sensitively. We leave the computation of the general $\mca{P}_W(T)$ and  $\mca{P}_{\rm exc}(T)$ to the interested reader.

\subsection{Rationality of the Poincar\'e series} \label{SS:rat}
In view of Corollary \ref{C:rat} and the above examples, we have
\begin{conj} \label{C:red}
Let $\wt{G}$ be an $n$-fold cover of a connected reductive group $G$. Then both $\mca{P}_W(T)$ and $\mca{P}_{\rm exc}(T)$ are rational functions in $T$.
\end{conj}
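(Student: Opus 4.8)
The plan is to reduce Conjecture \ref{C:red} to an eventual--periodicity statement and then upgrade the almost--simple case to full generality. By Corollary \ref{C:rat}, $\mca{P}_W(T)\in \Q(T)$ is equivalent to the existence of a period $c=c(\mbf{G},Q)\ge 1$ with $b_{W,n}=b_{W,n+c}$ for all $n$, and likewise $\mca{P}_{\rm exc}(T)\in\Q(T)$ is equivalent to the eventual periodicity of $n\mapsto \val{f_\msc{X}(Y_n^{\rm exc})}$; Theorem \ref{T:bP} and Corollary \ref{C:rat} already settle both when $\mbf{G}$ is almost--simple. So the task is to manufacture such a period for an arbitrary connected reductive $\mbf{G}$. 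The two engines will be Lemma \ref{L:key} (periodicity of lattice intersections of the shape $(n_E^{-1}(L_1-v))\cap L_2$) together with the finite case--split over ``residue classes of $n$'' that already appears in the proofs of Proposition \ref{P:DG} and Theorem \ref{T:bP}.

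First I would treat semisimple $\mbf{G}$. Fix a central isogeny $\prod_i \mbf{G}_i\to\mbf{G}$ with each $\mbf{G}_i$ almost--simple, so that $Y\otimes\Q=\bigoplus_i Y_i\otimes\Q$ with $\bigoplus_i Y_i^{sc}\subseteq Y\subseteq\bigoplus_i P(Y_i^{sc})$, all of finite index. A first lemma to verify is that every Weyl--invariant bilinear form $B_Q$ is block--diagonal for this decomposition: for $v$ in factor $i$ and $\w_\alpha$ a simple reflection of $\mbf{G}_i$ one has $\w_\alpha(v)-v=-\angb{v}{\alpha}\alpha^\vee$, whence Weyl--invariance forces $B_Q(\alpha^\vee,v')=0$ for every coroot $\alpha^\vee$ of $\mbf{G}_i$ and every $v'$ in a factor $\ne i$; as the coroots span $Y_i\otimes\Q$, the claim follows. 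Consequently $Y_{Q,n}^{sc}$ and the twisted action $\w[-]$ split over the factors, and the dichotomy of Proposition \ref{P:period}---whether $\phi_{W,n}$ is surjective or has empty domain---is governed factor by factor by Proposition \ref{P:dich}, each $\mbf{Sp}_{2r}$--factor contributing independently exactly as in Proposition \ref{P:period}(ii). In either branch $b_{W,n}$ becomes a product over $i$ of quantities each of which, by Proposition \ref{P:period}, is a coset count of lattices built from $Y_i$, $Y_i^{sc}$ and the $B_{Q_i}$--dual, divided by $\val{Z(\wt{G}_{i,Q_i,n}^\vee)}$; applying Lemma \ref{L:key} exactly as in Theorem \ref{T:bP} (after splitting $\N$ into the finitely many classes $\N_{\mbf{G}_i,Q_i}$) and invoking Proposition \ref{P:DG} makes each factor eventually periodic, so their product is too, and one takes $c$ to be the lcm of the individual periods. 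The same bookkeeping handles $Y_n^{\rm exc}$ via Lemma \ref{L:exc}: it is $\rho-\rho_{Q,n}$ adjusted, for each metaplectic $\mbf{Sp}$--factor, by a half--multiple of a fundamental coweight, and membership in $Y$ is a congruence condition on $n$ whose truth is periodic once $c$ is chosen so that $\rho_{Q,n}-\rho_{Q,n+c}\in Y$ and the corrected element also lies in $Y$, as in the proof of Corollary \ref{C:rat}.

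The genuinely new difficulty, which I expect to be the main obstacle, is the non--semisimple reductive case. When $\mbf{G}$ has a non--trivial central torus $\mbf{Z}^0$, Weyl--invariance no longer forces $B_Q$ to be block--diagonal with respect to the decomposition of $Y\otimes\Q$ into the span of the coroots and the central directions---off--diagonal terms are allowed, exactly as the parameters $\p,\bq$ illustrate for covers of $\GL_r$. Thus $Y_{Q,n}=Y\cap nY^*$ is taken with $Y^*$ the dual lattice for the full, mixed form, $\msc{X}_{Q,n}$ entangles the semisimple and central directions, and $\msc{X}_{Q,n}^W$ is the fixed locus of $W$---which acts only on the semisimple part---inside this entangled quotient. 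I would attack it by noting that $y+Y_{Q,n}$ is $W$--fixed iff $\w[y]-y\in Y_{Q,n}$ for all $\w$, i.e.\ iff the image of $y$ in $Y/Y_{Q,n}^{sc}$ is fixed by the twisted action (which is still block--compatible on $Y^{sc}$) and a further central congruence holds; this should once again present $b_{W,n}$, and similarly $\val{f_\msc{X}(Y_n^{\rm exc})}$, as a sum over a finite set of residue classes of $n$ of a lattice--intersection count of the type $(n_E^{-1}(L_1-v))\cap L_2$ with $L_1,L_2$ built from $Y$, $Y^{sc}$ and the $B_Q$--duals, hence eventually periodic by Lemma \ref{L:key}. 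Making the description of $Y^*$ and the residue--class split uniform in $n$ when $B_Q$ is not block--diagonal, and checking that the nonlinearity in $n_\alpha=n/\gcd(n,Q(\alpha^\vee))$ is absorbed by the exponent $n_E$ in Lemma \ref{L:key}, is the delicate step; the $\GL_r$ computations above, together with the $\mbf{SO}_{2r+1}$ example, are the natural testing ground and indicate that the answer is indeed a single rational function, with $c$ a common multiple of the integers $Q(\alpha^\vee)$ and of the torsion orders of $Y/Y^{sc}$ and of the relevant cokernels.
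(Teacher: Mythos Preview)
This statement is Conjecture \ref{C:red} in the paper; the paper does not prove it but only verifies it in examples (almost-simple groups via Corollary \ref{C:rat}, covers of $\GL_r$, $\SO_{2r+1}$) and explicitly records it as open. So there is no ``paper's proof'' to compare against; your proposal is an attempt at an open problem.

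There is a genuine gap in the reductive case, and it occurs at the very first step. You write that by Corollary \ref{C:rat}, $\mca{P}_W(T)\in\Q(T)$ is equivalent to periodicity of $n\mapsto b_{W,n}$. This misreads the corollary: it asserts the equivalence between periodicity and the specific shape $\mca{P}_W(T)=f(T)/(1-T^k)$, with rationality appearing only as the ``in particular'' clause. Rationality is strictly weaker. The paper itself flags this in the paragraph immediately following Conjecture \ref{C:red}: rationality is equivalent to a linear recurrence on $b_{W,n}$, and ``for general reductive group $G$, the number $b_{W,n}$ may grow as $n$ increases,'' pointing to the Kazhdan--Patterson cover $\wt{\GL}_2$ where $b_{W,n}=n$ and $\mca{P}_W(T)=T/(1-T)^2$ (and likewise $\val{f_\msc{X}(Y_n^{\rm exc})}=n$). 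This rational function is not of the form $f(T)/(1-T^k)$, and the sequence is not periodic. Your program to deduce ``eventual periodicity by Lemma \ref{L:key}'' therefore cannot succeed for reductive $\mbf{G}$: Lemma \ref{L:key} produces equality of sets $S(n+c)=S(n)$, hence equality of finite counts, and is intrinsically a periodicity tool. What would be needed in the reductive case is a higher-order recurrence, and nothing in your outline produces one.

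For semisimple $\mbf{G}$ your strategy of factoring over almost-simple pieces, using the block-diagonality of $B_Q$, and feeding Proposition \ref{P:period} into Lemma \ref{L:key} is plausible and may go through with care (note though that only $Y^{sc}$ splits as a product, not $Y$ itself, so the quotients $(Y_\rho\cap P(Y_{Q,n}^{sc}))/Y_{Q,n}$ do not factor cleanly and you would still need a version of Lemma \ref{L:key} applied to the full lattice $Y$). But this would be a modest extension of what the paper already handles; the substance of the conjecture is precisely the non-semisimple case, where your reduction breaks down at the outset.
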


Note that the Poincar\'e series $\mca{P}_W(T)$ in Conjecture \ref{C:red} is rational if and only if there is a recurrence relation (of order $m$) on $b_{W,n}$, i.e., there exist constants $c_1, c_2, ..., c_m$ such that
$$b_{W,i}= c_1 b_{W,i-1} + c_2 b_{W,i-2} + ... + c_m b_{W,i-m}$$
for every $i\ge N$, where $N$ is some fixed number $\ge m+1$. If $G$ is semisimple, then the periodicity of $b_{W,n}$ gives a special recurrence relation. However, for general reductive group $G$, the number $b_{W,n}$ may grow as $n$ increases: this can be seen from the covers $\wt{\GL}_r$ considered above.
The same consideration also applies to $\mca{P}_{\rm exc}(T)$.

\section{Principal series of $\wt{\GL}_2$ and $\wt{\SL}_2$} \label{S:RES}
Let $G$ be a $p$-adic linear algebraic group with derived group $G_o$. Let $\tau$ be a parabolic induction on $G$. Since the Whittaker functional is an equivariant functional with respect to a unipotent subgroup, it follows that a Shahidi local coefficient associated with  $\tau$ is determined by the restriction of $\tau$ to $G_o$. Thus, due to the uniqueness of Whittaker model, the relation between Shahidi local coefficients associated with $G$ and $G_o$ is clear. In the case of covering groups, this uniqueness fails and one needs to study carefully the restriction from a covering group to its derived group. 

In this section, we investigate the problem of restricting a genuine principal series representation $\tau$ of Kazhdan-Patterson covers $\wt{G}=\wt{\GL}_2$ to $\wt{G}_o=\wt{\SL}_2^{(n)}$.  For the exact definition of these groups see \S \ref{SS:RES1} below. After proving properties about local coefficients matrices associated with $\wt{G}_o$ in \S \ref{S:LCM-Go}, we  use in \S \ref{S:LCM-G} such results to describe the local coefficients matrices associated with $\tau$, by means of those associated with  genuine principal series of $\wt{G}_o$ which occur in the restriction of $\tau$ to $\wt{G}_o$ . Moreover, we compute the invariants of the local coefficients matrix associated with $\tau$ by using the invariants of the local coefficients matrices associated with $\wt{G}_o$. In an ongoing project we intend to conduct a more thorough study of the restriction problem for covering groups, and apply it to the study of local coefficients matrices and related invariants. We intend the detailed example provided here to serve as a first non-trivial example and as a reference.

As we show in Theorem \ref{decopropcor}, unlike the linear case, the restriction of $\tau$ to $\wt{G}_o$ is always a direct sum of principle series representations. Moreover, the restriction for most of the coverings is seldom multiplicity-free. While in the odd $n$ case all the principle series representations appearing in the restriction are isomorphic, in the even $n$ case there are $[F^\times: F^{\times 2}]$ non-isomorphic principle series representations from the restriction. If $n$ is even and $\tau$ is unramified, then one can always find ramified principle series representations of $\wt{G}_o$ in the restriction. See \cite{GPS} and \cite{GPS81} for the $n=2$ case. Similar phenomenon appears in the case of a double cover of $\GSp_{2r}$, see \cite{Szp4-1} and \cite{Szp5}.

We show that a local coefficients matrix associated with a principal series representation of  $\wt{G}$  can be chosen to be a diagonal-block matrix with each block arising from a genuine principal series representation of $\wt{G}_o$, see Theorem \ref{Smain}. This gives a strong connection between the invariants extracted from local coefficients matrix associated with $\wt{G}$ and that associated with $\wt{G}_o$.

One of the outcomes of this study is an explanation for a discrepancy in the form a local coefficients matrix takes between  $\wt{G}_o$ and $\wt{G}$. Our results indicate again that there exists a trichotomy among different degree coverings of $\wt{G}_o$: odd-fold covers, $(4k+2)$-fold covers and $(4k)$-fold covers. This trichotomy is evident in the formulas for  the determinant of unramified  local coefficients matrices, see Theorems \ref{detunramnot4} and \ref{T:SL-c3}. Remarkably, this trichotomy is dissolved if we consider $\wt{G}$, see Theorem \ref{detformgl}. This phenomenon, though observed in Example \ref{eg-SL} and Example \ref{eg-GL} already,  is better explained by nature of the restriction mentioned above. It is particularly striking that in the even fold cover case we have to deal with certain \emph{ramified} local coefficients matrices for  $\wt{G}_o$, when restricted from an unramified principal series of $\wt{G}$.

From this section onwards, the approach taken towards the local coefficients matrix and its invariants is different from the one adopted in earlier sections, where we essentially modified the scattering matrices studied in \cite{Mc2}. This is inevitable, as remarked in \S \ref{SS:Sca}, there is no canonical choice of $r_w$ in general and thus a local coefficients matrix is difficult to compute directly for ramified data. Here we use the computation from \cite{Szp6} which relies on ideas involving partial $\zeta$-integrals, partial $\gamma$-factors and partial $\widetilde{\gamma}$-factors, see \cite[\S 2]{Szp6}. One of the advantages of this approach is that it incorporates naturally $\gamma$ and  $\widetilde{\gamma}$-factors in the study of the local coefficients matrices. Another advantage is that in these computations we can treat both ramified and unramified cases, most often without assuming $\gcd(n,p)=1$. In fact, in  \S \ref{S:LCM-Go} we complement the results in  \cite{Szp6} and \cite{GSS1} in some ramified cases.

There might be a small overlap between the results given in this section and the results given in earlier sections. For example, the determinants computed in Theorems \ref{detunramnot4}, \ref{T:SL-c3} and \ref{detformgl} in the unramified case may be deduced from Theorem \ref{T:M1}, and the explicit formula for the trace in the unramified case given in Corollary \ref{exptrun} appears also in Proposition \ref{P:trace}. However, we believe that this repetition is actually important as it demonstrates the advantages of each of the approaches mentioned above.

\vskip 10pt

Let $K$ be a group and $H \subset K$ a subgroup. Let $\pi'$ be a representation of $K$ and $\pi$ a representation of $H$. We say that $\pi$ occurs in $\pi'$, or that $\pi'$ contains $\pi$, if
$$\Hom_H(\pi',\pi) \ne 0.$$
If  $K$ is finite, we denote
$$\widehat{K}:=\Hom(K, \C^\times).$$
For $k,h \in K$ we define
$$h^k=khk^{-1}.$$
If $H$ is a normal subgroup of $K$, then for $k\in K$ we define $\pi^k$ to be the representation of $H$ given by
$$\pi^k(h):=\pi(h^k).$$
Thus, $\pi \mapsto \pi^k$ defines an action of $K/H$ on the set of isomorphism classes of representations of $H$.

\subsection{The Kazhdan-Patterson covers of $\GL_2$} \label{SS:RES1}
For the rest of this paper, we consider the Kazhdan-Patterson covering $\wt{G}=\wt{\GL}_2^{(n)}$ depending on a parameter $c \in \Z$, see \cite{KP}. In fact, the discussion could be carried out for general Brylinski-Deligne covers of $\GL_2$ with proper modification. However, our restriction to the Kazhdan-Patterson covers is only for the purpose of convenience, as later we will use some results in \cite{Szp6} which are stated for such covers only.

Recall from Example \ref{eg-GL} that we have the cocharacter lattice
$Y= \Z e_1 \oplus \Z e_2$ of $\GL_2$. Consider the Weyl-invariant bilinear form $B_Q$ of $Y$ such that
$$B_Q(e_i, e_i)= 2c, \quad B_Q(e_1, e_2)= 2c+ 1.$$
This gives rise to the $n$-fold Kazhdan-Patterson covering $\wt{G}$ of $G=\GL_2$. The bisector
$D: Y\times Y \to \Z$ is chosen to be
$$ D(e_i, e_j) =
\begin{cases}
c  & \text{ if }  i= j; \\
c+ 1 & \text{ if }  i=1, j=2; \\
c   & \text{ if }  i=2, j=1.
\end{cases}
$$
One can check that the cocycle $c_D$ on $T \subset G$ determined by $D$ is given by
$$c_D(e_1(a_1) e_2(a_2),  e_1(b_1) e_2(b_2) ) = (a_1, b_2)_n \cdot (a_1 a_2, b_1 b_2)_n^c,$$
where $e_i(a)\in T$ for $a\in F^\times$. In fact, a cocycle $c$ on the whole group $G$ which entails a description of $\wt{G}$ in terms of the set-theoretic $\bbmu_n \times G$ is given by Kubota as follows:
\begin{equation}\label{rao}
c(g,g')=\bigl(x(gg')x^{-1}(g),x(gg')x^{-1}(g') \det(g)^{-1}\bigr)_n \cdot \bigl(\det(g),\det(g') \bigr)_n^c,
\end{equation}
where
$$x \begin{pmatrix} a & b \\ c & d \end{pmatrix}
=
\begin{cases}
c & \text{ if } c \neq 0; \\
d & \text{ otherwise}.
\end{cases}$$
That is, we have a section $\s: G \to \wt{G}$ such that $\wt{G} = \bbmu_n \times \s(G)$ as sets, and the group law on $\bbmu_n \times \s(G)$ is given by $c$:
$$\s(g) \cdot \s(g') =c(g, g') \cdot \s(g g').$$
It is easy to check that $c$ actually extends the cocycle $c_D$ on $T$ as above.

For every $\wt{g}, \wt{g}' \in \wt{G}$, we have the commutator
$$[\wt{g}, \wt{g}']:= \wt{g} \cdot \wt{g}' \cdot \wt{g}^{-1} \cdot \wt{g}'^{-1} \in \wt{G}.$$
As $\wt{G}$ is a central extension of $G$, the commutator map $[-,-]$ of $\wt{G} \times \wt{G}$ factors through $G\times G$, and thus we write $[g, g']=[\wt{g}, \wt{g}']$ where $g, g' \in G$ is the image of $\wt{g}$ and $\wt{g}'$ respectively. If $gg'=g'g \in G$, then one has
$$[g, g']= c(g, g') \cdot c(g', g)^{-1}.$$

Let $\alpha^\vee = e_1 - e_2$. For a fixed $n$, the derived subgroup of  $\wt{G}$ is $\wt{G}_o=\wt{\SL}_2^{(n)}$, which arises from the quadratic form 
$$Q:  \Z(\alpha^\vee) \to \Z$$
such that $Q(\alpha^\vee)= -1$. In particular, the derived subgroup of $\wt{G}$ is independent of the underlying parameter $c$. This fact also follows from \eqref{rao}. We will write
$$H_o := H\cap G_o$$
for every subgroup $H \subseteq G$.  For every lifting $\wt{g} \in \wt{G}$ of $g\in G$, we define $\det(\wt{g})=\det(g)$. For convenience, we also denote
$$\diag(a, b):=e_1(a) e_2(b) \in G.$$

\begin{lm} \label{justcomp}
The following hold.
\begin{enumerate}
\item[(i)] $\s(a1_G) \cdot \s(g) \cdot \s(a 1_G)^{-1}=\s(g) \cdot \bigl(a,\det(g) \bigr)_n^{4c+1}$.
\item[(ii)] $\s(g) \cdot \s(a 1_G) \cdot \s(g)^{-1}= \s(a1_G) \cdot \bigl(\det(g), a\bigr)_n^{4c+1}$.
\item[(iii)] For $g=\diag(x,y)$ and $g'=\diag(z,t)$, we have
\begin{equation} \label{compeq}
 [g,g']=(x,t)_n (y,z)_n (xy,zt)_n^{2c}.
\end{equation}
\end{enumerate}
\end{lm}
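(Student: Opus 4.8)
\textbf{Proof plan for Lemma \ref{justcomp}.}
The three identities are all elementary consequences of Kubota's cocycle formula \eqref{rao}, so the plan is simply to substitute the relevant matrices into \eqref{rao} and simplify, using the bilinearity and the basic symbol identities \eqref{x with x}, \eqref{FV fact} from \S \ref{S:K2-extn} where needed. I would organize the computation around the fact that for $a \in F^\times$ the scalar matrix $a1_G$ has $x(a 1_G) = x\begin{pmatrix} a & 0 \\ 0 & a \end{pmatrix} = 0$, wait --- actually $x$ of a diagonal matrix with nonzero lower-right entry is the lower-right entry, so $x(a 1_G) = a$; and more generally for any $g$ with $x(g) = c_g$ (the lower-left or, if that vanishes, the lower-right entry), multiplying by a central scalar $a 1_G$ scales $x$ by $a$, i.e. $x(a g) = a\, x(g)$ and $\det(ag) = a^2 \det(g)$. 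This is the one structural observation that makes (i) and (ii) fall out quickly.

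For (i): apply \eqref{rao} with the pair $(a1_G, g)$ and with the pair $(g, a1_G)$, and also with $(g, (a1_G)^{-1})$, to assemble $\s(a1_G)\s(g)\s(a1_G)^{-1}$. Using $x(a1_G \cdot g) = a\,x(g)$ and $x(g \cdot a 1_G) = a\,x(g)$, the first (Hilbert symbol) factor in \eqref{rao} collapses: for $(a1_G, g)$ it becomes $(a\,x(g)\cdot(a\,x(g))^{-1}, \,\ldots)_n = 1$ in its first slot after cancellation, and one is left only with the $\det$-factor $(\det(a1_G), \det(g))_n^c = (a^2, \det(g))_n^c$. Tracking all three contributions and the determinant twist, the total conjugation picks up $(a, \det(g))_n$ to the power $4c + 1$; the exponent $4c+1$ arises as $4c$ from the $(a^2, \det(g))^c$ terms plus the residual $1$ from the Kubota $x$-part. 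Part (ii) is the same computation with the roles reversed, giving $(\det(g), a)_n^{4c+1}$; alternatively (ii) follows from (i) by inverting, since $(a, \det(g))_n = (\det(g), a)_n^{-1}$ and conjugation by $g$ is the inverse operation to conjugation by $g^{-1}$, so I would just remark this rather than recompute.

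For (iii): take $g = \diag(x,y)$, $g' = \diag(z,t)$, both diagonal hence commuting in $G$, so $[g,g'] = c(g,g')\,c(g',g)^{-1}$. Here $x(g) = y$, $x(g') = t$, $x(gg') = yt$, $\det(g) = xy$, $\det(g') = zt$. Plugging into \eqref{rao}: $c(g,g') = \bigl(yt\cdot y^{-1},\, yt\cdot t^{-1}\cdot(xy)^{-1}\bigr)_n\,(xy, zt)_n^c = (t, z(xy)^{-1}\cdot\text{[recheck]})_n\,(xy,zt)_n^c$ --- I will compute the two inner arguments carefully: $x(gg')x(g)^{-1} = (yt)y^{-1} = t$ and $x(gg')x(g')^{-1}\det(g)^{-1} = (yt)t^{-1}(xy)^{-1} = y(xy)^{-1} = x^{-1}$, so $c(g,g') = (t, x^{-1})_n (xy,zt)_n^c = (x,t)_n^{-1}(xy,zt)_n^c$. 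Symmetrically $c(g',g) = (z, y^{-1})_n (zt,xy)_n^c$. Then $[g,g'] = (x,t)_n^{-1}(z,y^{-1})_n^{-1}(xy,zt)_n^c (zt,xy)_n^{-c} = (x,t)_n^{-1}(z,y)_n (xy,zt)_n^{2c}$. Since the square of any Hilbert symbol need not be trivial I keep the form as is, then use $(x,t)_n^{-1} = (x,t)_n$ only if appropriate --- actually the claimed formula has $(x,t)_n(y,z)_n(xy,zt)_n^{2c}$, so I need $(x,t)_n^{-1}(z,y)_n = (x,t)_n(y,z)_n$, i.e. $(x,t)_n^{-2}(z,y)_n(y,z)_n^{-1} = 1$, which is false in general; hence I expect the correct intermediate sign bookkeeping to differ and the honest computation to land exactly on $(x,t)_n(y,z)_n(xy,zt)_n^{2c}$ once I am careful about which of $c(g,g')$, $c(g',g)^{-1}$ contributes which inverse. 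The main obstacle, therefore, is purely the careful sign/exponent bookkeeping in \eqref{rao} --- getting the inner arguments of the leading Hilbert symbol right and correctly combining the $c$-twists into the clean exponents $4c+1$ and $2c$; there is no conceptual difficulty, only the risk of an arithmetic slip, so I would do the substitutions slowly and cross-check (iii) against the known commutator formula \eqref{compeq} in the body of the paper and against the torus cocycle $c_D$ computed above.
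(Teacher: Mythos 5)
Your approach is exactly the one the paper uses (the paper itself gives no details, saying only that the lemma follows from a straightforward computation with \eqref{rao}), so the verdict depends on whether your arithmetic would actually land on the stated formulas. As written it does not, but the error is a single repeated slip, and you yourself noticed that something was off in (iii). The slip is in the passage from $(t, x^{-1})_n$ to a symbol with arguments reordered: you wrote $(t,x^{-1})_n=(x,t)_n^{-1}$, but in fact $(t,x^{-1})_n=(t,x)_n^{-1}=(x,t)_n$, since inverting the second argument gives one inversion and skew-symmetry $(a,b)_n(b,a)_n=1$ gives a second, and the two cancel. With the correct identity, $c(g,g')=(x,t)_n(xy,zt)_n^c$ and $c(g',g)=(z,y)_n(zt,xy)_n^c$, and then $[g,g']=c(g,g')c(g',g)^{-1}=(x,t)_n(y,z)_n(xy,zt)_n^{2c}$ comes out exactly as in \eqref{compeq} with no residual discrepancy.

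A second, smaller issue: in your sketch of (i) you assert that the leading Hilbert symbol in \eqref{rao} collapses to $1$ because $x(a g) \cdot x(a g)^{-1}$ appears in its first slot; but the first slot of \eqref{rao} is $x(gg')\,x(g)^{-1}$, not $x(gg')\,x(gg')^{-1}$. For the pair $(a1_G,g)$ the first slot is $a\,x(g)\cdot a^{-1}=x(g)$, and the full symbol is $(x(g),a^{-1})_n=(x(g),a)_n^{-1}$, which is nontrivial. The exponent $4c+1$ in fact arises after the bilinear expansion of $[a1_G,g]=c(a1_G,g)\,c(g,a1_G)^{-1}$, in which the $(a,x(g))_n^{\pm1}$ contributions from the two $x$-parts cancel, a single $(a,\det g)_n$ survives, and the two determinant twists each contribute $(a,\det g)_n^{2c}$; so the $+1$ does not come simply from ``the Kubota $x$-part'' as you suggest, but from this cancellation. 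The overall structure of your plan is sound; the only thing missing is consistent bookkeeping with $(a,b^{-1})_n=(a,b)_n^{-1}$ and $(b,a)_n=(a,b)_n^{-1}$, applied one at a time.
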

\begin{proof}
It follows from a straighforward computation with \eqref{rao}.
\end{proof}

Note that $\wt{g},\wt{g}' \in \wt{G}$  commute if and only if $[g,g']=1$, and \eqref{compeq} explicates the latter condition for elements in $T$.

\subsection{Centers}
It is important to understand the centers of various groups.
\begin{lm} \label{Dcent}
$\wt{Z(G)}$ is the centralizer of $\wt{G}_o$ inside $\wt{G}$.
\end{lm}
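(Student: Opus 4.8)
The plan is to prove the two inclusions separately. One direction is essentially free: since $\widetilde{Z(G)}$ is the preimage in $\widetilde{G}$ of $Z(G)$, and $Z(G)$ consists of scalar matrices $a1_G$, part (i) of Lemma \ref{justcomp} computes the commutator of $\s(a1_G)$ with an arbitrary $\s(g)$ to be $(a,\det(g))_n^{4c+1}$. For $g \in G_o$ we have $\det(g)=1$, so this commutator is trivial; hence every lift of $a1_G$ centralizes every lift of $G_o$, i.e. $\widetilde{Z(G)}$ is contained in the centralizer of $\widetilde{G}_o$ in $\widetilde{G}$. (Here I use that $\bbmu_n$ is central, so it suffices to check commutators on a set of section representatives.)

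For the reverse inclusion, suppose $\widetilde{g} \in \widetilde{G}$ centralizes $\widetilde{G}_o$, with image $g \in G$. Projecting to $G$, the element $g$ centralizes $G_o = \SL_2(F)$; since the centralizer of $\SL_2(F)$ in $\GL_2(F)$ is exactly the scalar matrices $Z(G)$, we get $g \in Z(G)$, hence $\widetilde{g} \in \widetilde{Z(G)}$. This step is where the argument really lives, but it is a standard fact about linear groups (the centralizer of the derived group of $\GL_2$ is its center); I would simply cite or spell out that $\SL_2(F)$ acts irreducibly on $F^2$ so its commutant is scalar.

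Combining the two inclusions yields the equality, completing the proof. I expect no genuine obstacle here: the only subtlety is the passage between $\widetilde{G}$ and $G$ for the commutator condition, which is immediate because the extension is central, so $[\widetilde{g},\widetilde{g}']$ depends only on the images in $G$ — a point already noted in \S\ref{SS:RES1}. The computation in Lemma \ref{justcomp}(i) supplies precisely the nontrivial half of the easy inclusion, so there is little left to do beyond assembling these observations.
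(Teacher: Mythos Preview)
Your proof is correct and follows essentially the same approach as the paper. For the inclusion $\wt{Z(G)}\subseteq C_{\wt{G}}(\wt{G}_o)$ the paper first gives an alternative argument via the $G$-equivariant splitting over unipotents (since $\wt{G}_o$ is generated by unipotents and $Z(G)$ acts trivially on them), but then notes that it also follows directly from Lemma~\ref{justcomp}(i), exactly as you argue; the reverse inclusion is dismissed as ``clear'' in the paper, and your projection argument to $G$ is precisely the content of that word.
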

\begin{proof}
Clearly, the centralizer of $\wt{G}_o$ inside $\wt{G}$ is contained in $\wt{Z(G)}$. On the other hand, $\wt{G}_o$ is generated by unipotent elements, which split $G$-equivariantly with respect to the $G$-conjugation action; since the action of $Z(G)$ on any unipotent elements is trivial, we see that $\wt{Z(G)}$ centralizes every element in $\wt{G}_o$. This second assertion also follows from (i) of Lemma \ref{justcomp}.
\end{proof}
Denote
$$n_c=\frac{n}{\gcd(n,4c+1)}, \quad
d=
\begin{cases}
n &  \text{ if $n$ is odd}, \\
\frac {n} {2}  &  \text{ otherwise};
\end{cases}
$$
and
$$
d_c=\frac{d}{\gcd(d,4c+1)}=
\begin{cases} n_c &  \text{ if $n$ is odd}, \\
\frac {n_c} {2}  &  \text{ otherwise}.
\end{cases}$$

\begin{lm} \label{centers} The following hold.
\begin{enumerate}
\item[(i)] $Z(\wt{T})$ is the inverse image in  $\wt{G}$ of $\set{\diag(a,ab): a\in F^{\times n_c}, b \in F^{\times n} }$. In particular, $[\wt{T}:Z(\wt{T})]=(nn_c)^2 \cdot \val{n n_c}^{-1}$.
\item[(ii)] $Z({\wt{G}})$ is the inverse image in $\wt{G}$ of $\{a1_G: a\in F^{\times n_c} \}.$
\item[(iii)] $Z\bigl(\wt{Z(G)}\bigr)$  is the inverse image in $\wt{G}$ of $\{a1_G: a\in F^{\times d_c} \}.$
\item[(iv)] $Z(\wt{T}_o)$  is the inverse image in $\wt{G}$ of $\{ \diag(a,a^{-1}):  a \in F^{\times d} \}.$
\item[(v)]  $Z(\wt{T})\cap \wt{T}_o$ is the inverse image in $\wt{G}$ of $\{ \diag(a,a^{-1}): a \in F^{\times n} \}.$
\end{enumerate}
\end{lm}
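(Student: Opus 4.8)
The plan is to compute each of the five centers directly from the explicit cocycle description in \S\ref{SS:RES1}, using the commutator formula \eqref{compeq} together with parts (i) and (ii) of Lemma \ref{justcomp}. Throughout, the key principle is that an element $\wt{t}\in\wt{T}$ (resp. $\wt{g}\in\wt{Z(G)}$, resp. $\wt{t}_o\in\wt{T}_o$) is central in a given group precisely when its image in $T$ (resp. $Z(G)$, resp. $T_o$) commutes with the images of all elements of that group, since all the subgroups in question are central extensions by $\bbmu_n$ and hence the commutator map factors through the linear quotient. So in each case the problem reduces to solving a system of Hilbert-symbol equations over $F^\times$, and then identifying the solution set as $F^{\times m}$ for the appropriate $m$ using \eqref{index} and the non-degeneracy of $(-,-)_n$.

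First I would treat (i) and (iv), which are the ``torus inside torus'' statements and are the most computational. For (i): given $\wt{t}$ with image $\diag(x,y)\in T$, the condition that $\wt{t}$ commute with every $\diag(z,t)$ is, by \eqref{compeq}, that $(x,t)_n(y,z)_n(xy,zt)_n^{2c}=1$ for all $z,t\in F^\times$. Specializing $t=1$ gives $(y,z)_n(xy,z)_n^{2c}=1$ for all $z$, i.e. $(x^{2c}y^{2c+1},z)_n=1$ for all $z$, which by non-degeneracy forces $x^{2c}y^{2c+1}\in F^{\times n}$; symmetrically $z=1$ forces $x^{2c+1}y^{2c}\in F^{\times n}$. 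Multiplying and dividing these two relations (and using $\gcd$-bookkeeping as in Lemma \ref{oldlem}) yields $xy\in F^{\times n}$ and $x\in F^{\times n_c}$, and conversely one checks these suffice; the index count $[\wt{T}:Z(\wt{T})]=(nn_c)^2|nn_c|^{-1}$ then follows from \eqref{index} applied to the two free parameters $a\in F^{\times n_c}$, $b\in F^{\times n}$. Statement (iv) is the same computation restricted to the antidiagonal $\diag(a,a^{-1})$: the commutator of $\diag(a,a^{-1})$ and $\diag(c,c^{-1})$ is $(a,c^{-1})_n(a^{-1},c)_n=(a,c)_n^{-2}=(a^2,c)_n^{-1}$, wait — more carefully $(a,c)_n^{-2}$, so the vanishing condition for all $c$ is $a^2\in F^{\times n}$, i.e. $a\in F^{\times d}$ by the definition of $d=n/\gcd(2,n)$ and a $\gcd$ argument; conversely such $a$ clearly work.

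Next I would handle (ii), (iii), (v). For (ii): by Lemma \ref{Dcent}, $Z(\wt G)$ is the subgroup of $\wt{Z(G)}$ commuting with all of $\wt G$, and since $\wt{Z(G)}$ already commutes with $\wt G_o$, it remains to impose commutativity with $\wt T$, i.e. (by Lemma \ref{justcomp}(i)) $(a,\det(g))_n^{4c+1}=1$ for all $g\in G$, hence $(a,x)_n^{4c+1}=1$ for all $x\in F^\times$ (as $\det$ is surjective), hence $a^{4c+1}\in F^{\times n}$, hence $a\in F^{\times n_c}$ by Lemma \ref{oldlem} with the factorization $n=\gcd(n,4c+1)\cdot n_c$; conversely these $a$ are central. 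Statement (iii) is the analogous computation with $G$ replaced by $Z(G)$ itself: we need $(a,b)_n^{4c+1}=1$ only for $b\in Z(G)$, i.e. $b=z1_G$; but the commutator of $a1_G$ and $b1_G$ in $\wt G$ via \eqref{rao} is $(a,b)_n^{c}\cdot(a,b)_n^{c}=(a,b)_n^{2c}$ — I would double-check the exponent here by a direct expansion of \eqref{rao}, getting the condition $a^{2c}\in F^{\times n}$, equivalently $a\in F^{\times d_c}$; this is the step where I expect to have to be careful with the precise power of the Hilbert symbol coming from \eqref{rao} versus $c_D$, since that is exactly what distinguishes $d_c$ from $n_c$. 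Finally (v): $Z(\wt T)\cap\wt T_o$ consists of elements with image $\diag(a,a^{-1})$ lying in $Z(\wt T)$; by (i) this forces $a\cdot a^{-1}=1\in F^{\times n}$ (automatic) and $a\in F^{\times n_c}$, but also — re-examining (i) — the condition $xy\in F^{\times n}$ with $x=a$, $y=a^{-1}$ is automatic while $x\in F^{\times n_c}$ is what remains; one then checks directly against \eqref{compeq} that $a\in F^{\times n}$ is exactly the right condition for $\diag(a,a^{-1})$ to lie in $Z(\wt T)$ (the extra strengthening from $n_c$ to $n$ comes because $\diag(a,a^{-1})$ must commute with the full $\wt T$, not just $\wt T_o$). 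I expect the main obstacle to be exactly this kind of bookkeeping: keeping straight which subgroup we are centralizing in, and tracking the precise exponents $4c+1$ versus $2c$ in the various instances of the Hilbert symbol, since sign/parity errors there would produce $n_c$ where $d_c$ belongs or vice versa. Everything else is routine application of \eqref{index}, Lemma \ref{oldlem}, and non-degeneracy of $(-,-)_n$.
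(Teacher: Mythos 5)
The overall strategy (reduce to solving Hilbert-symbol equations via \eqref{compeq} and Lemma \ref{justcomp}) is the same as the paper's, and parts (i), (ii), (iv), (v) are essentially sound. (The paper streamlines (i) and (v) by working with the generating set $T_o\cup\diag(1,F^\times)$ rather than with arbitrary $\diag(z,t)$, and in your account of (i) the phrase ``$xy\in F^{\times n}$'' should read $y/x\in F^{\times n}$: dividing the two relations gives $y/x\in F^{\times n}$, and substituting that back into $x^{2c}y^{2c+1}\in F^{\times n}$ gives $x^{4c+1}\in F^{\times n}$, i.e.\ $x\in F^{\times n_c}$ by Lemma \ref{oldlem}; but these are minor slips that you self-correct by appeal to \eqref{compeq}.)

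The genuine gap is in (iii). Applying \eqref{compeq} with $g=\diag(a,a)$, $g'=\diag(b,b)$ gives
$$[a1_G,\,b1_G]=(a,b)_n\cdot(a,b)_n\cdot(a^2,b^2)_n^{2c}=(a,b)_n^{8c+2},$$
not $(a,b)_n^{2c}$ as you wrote. The exponent $8c+2 = 2(4c+1)$ is what makes the answer come out to $d_c$: by Lemma \ref{oldlem}, the kernel of $b\mapsto(a,b)_n^{2(4c+1)}$ equals the kernel of $\eta_{a,(n_2)}$ where $n_2 = n/\gcd\bigl(n,2(4c+1)\bigr)$, and a direct check (separating $n$ odd from $n$ even, using that $4c+1$ is odd) shows $n_2 = d_c$. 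Your proposed condition ``$a^{2c}\in F^{\times n}$, equivalently $a\in F^{\times d_c}$'' is incorrect as stated: $a^{2c}\in F^{\times n}$ is equivalent (by Lemma \ref{oldlem}) to $a\in F^{\times n/\gcd(n,2c)}$, which need not equal $F^{\times d_c}$. Concretely, take $n=10$ and $c=1$: then $d=5$, $d_c=5/\gcd(5,5)=1$ so $Z\bigl(\wt{Z(G)}\bigr)$ is the whole inverse image of $Z(G)$ (indeed $(a,b)_{10}^{10}\equiv 1$, so $\wt{Z(G)}$ is already abelian), whereas your condition $a^{2}\in F^{\times 10}$ would cut it down to $F^{\times 5}$. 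So the exponent really must be $8c+2$, and the correct reduction is $n/\gcd(n,8c+2)=d_c$, not the one you proposed. You flagged this step as the one to be careful about, and indeed this is exactly where the $d_c$/$n_c$ distinction lives, but the computation as written does not close.
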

\begin{proof}
For (i), it follows from a straightforward computation that
$$Y_{Q,n} = \Z n_c(e_1 + e_2) \oplus \Z ne_2,$$
which then gives the desired result. We can also argue more explicitly as follows. By \eqref{compeq}, for $g=\diag(a,b), g'=\diag(x,x^{-1})$, we have
$$[g,g']=(ba^{-1},x)_n.$$
Since $Z(\wt{T})$ is contained in the centralizer of $\wt{T}_o$, it follows that any element in $Z(\wt{T})$ must be a lifting of an element of the form $\diag(a,at)$ where $t \in F^{\times n}$. Note that for $g=\diag(a,ay^n)$, $g'=\diag(1,x)$ we have
$$[g,g']=(a,x)_n^{4c+1}.$$
This implies that the centralizer of $\wt{\diag(1,F^\times)}$ inside the centralizer of  $\wt{T}_o$ is
$$\{ \diag(a,ab): a\in F^{\times n_c}, b \in F^{\times n} \}.$$
Since elements of the form $\diag(1,y)$ and $\diag(x,x^{-1})$ generate $T$, the proof of (i) is now complete.

For (ii), we note that since $T$ and $G_o$ generate $G$, one has $Z(\wt{G})=\wt{Z(G)} \cap Z(\wt{T})$. The assertion follows from (i). In fact, (ii) also follows from an explicit checking as in  \cite[Lemma 1]{CO}.

Now, it follows from \eqref{compeq} and Lemma  \ref{oldlem} that
$$[a1_G, b1_G]=1$$
if and only if $(a,b)_{d_c}=1$. This proves (iii). The fourth assertion (iv) is proven in a similar way. In fact, it is also contained in \cite[Lemma 3.1]{Szp6}.

To prove (v), we note that if $g=\diag(a,a^{-1})$, then $\s(g) \in Z(\wt{T})$ if and only if $\s(g) \in Z(\wt{T}_o)$ and also $\s(g)$ commutes with all the elements of the form $\s(\diag(1,x))$. The last condition here is equivalent to that $(a,x)_n=1$ for all $x \in F^\times$. Now (v) follows from (iv).
\end{proof}

Let $\xi$ be a genuine character of $Z(\wt{T})\cap \wt{T}_o$. Let $E_o(\xi)$ be the set of characters of $Z(\wt{T}_o)$ extending $\xi$. One has
$$\val{E_o(\xi)}=[Z(\wt{T}_o): Z(\wt{T})\cap \wt{T}_o] < \infty.$$
Using (iv) and (v) in Lemma \ref{centers}, we deduce that
$$ \val{E_o(\xi)}=
\begin{cases} 1 &  \text{ if $n$ is odd}; \\
 [F^\times: F^{\times 2}] &  \text{ otherwise}.
 \end{cases}$$

\begin{lm} \label{evenslcent}
Assume that $n$ is even.
\begin{enumerate}
\item[(i)] For $t_o=\s(\alpha^\vee(z))  \in Z(\wt{T}_o)$ and $t \in \wt{T}$, we set $\mu_t(t_o)=(z,\det(t))_n \in \bbmu_n$. Then
$$t t_o t^{-1}=\mu_t(t_o)) \cdot t_o$$
and the map $t \mapsto \mu_t$ gives a well-defined isomorphism from $\wt{T}/ \wt{Z(G)}\wt{T}_o$ to the Pontryagin dual of $Z(\wt{T}_o)/Z(\wt{T})\cap \wt{T}_o$.
\item[(ii)] Let $\xi$ be a genuine character of $Z(\wt{T})\cap \wt{T}_o$. Then $E_o(\xi)$ is a $\wt{T}/ \wt{Z(G)}\wt{T}_o$-torsor.
\end{enumerate}
\end{lm}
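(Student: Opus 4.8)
The plan is to prove the two claims about the group $\wt{T}/\wt{Z(G)}\wt{T}_o$ by an explicit computation with the commutator formula \eqref{compeq} from Lemma \ref{justcomp}, followed by a counting argument. First I would verify the formula for $\mu_t(t_o)$. Writing $t$ as a lifting of $\diag(x,y)$ and $t_o$ as the lifting $\s(\alpha^\vee(z)) = \s(\diag(z,z^{-1}))$, an application of \eqref{compeq} gives
$$[t, t_o] = (x, z^{-1})_n (y, z)_n (xy, 1)_n^{2c} = (yx^{-1}, z)_n = (z, \det(t))_n^{-1}\cdot (z, y^2)_n,$$
and since $n$ is even one has to be slightly careful: I will instead directly compute $[\diag(x,y), \diag(z,z^{-1})] = (x,z^{-1})_n (y,z)_n = (yx^{-1},z)_n$, and then observe that on $Z(\wt{T}_o)$ (where $z \in F^{\times d}$ with $d = n/2$) this equals $(\det(t), z)_n^{-1} = (z,\det(t))_n$ after using $(x,z)_n(y,z)_n(x,z)_n^{-1}(y,z)_n^{-1} = 1$ type manipulations combined with $(xy,z)_n = (\det(t),z)_n$; the point is that $z \in F^{\times d}$ and $(F^{\times d}, F^{\times d})_n$-relations force $(yx^{-1},z)_n = (xy,z)_n$ up to a square, which vanishes since raising to the power $d$ kills the relevant discrepancy. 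I would carry this out cleanly using $(a,z)_n(b,z)_n = (ab,z)_n$ and the fact that for $z\in F^{\times d}$, $(w,z)_n$ depends only on $w$ modulo $F^{\times 2}$ when restricted appropriately. This shows $t t_o t^{-1} = \mu_t(t_o)\cdot t_o$ with $\mu_t(t_o) = (z,\det(t))_n$, and that $\mu_t$ is a character of $Z(\wt{T}_o)$ (additivity in $t_o$ is immediate from bilinearity of the Hilbert symbol).

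Next I would show $t\mapsto \mu_t$ is well-defined and injective on $\wt{T}/\wt{Z(G)}\wt{T}_o$. Well-definedness: $\mu_t$ is trivial on $Z(\wt{T})\cap \wt{T}_o$ because that subgroup is the preimage of $\{\diag(a,a^{-1}): a\in F^{\times n}\}$ by Lemma \ref{centers}(v), and $(a, \det(t))_n = 1$ for $a \in F^{\times n}$; so $\mu_t$ descends to $Z(\wt{T}_o)/Z(\wt{T})\cap\wt{T}_o$. That $\mu_t = 1$ whenever $t \in \wt{Z(G)}\wt{T}_o$: for $t \in \wt{T}_o$, $\det(t) = 1$; for $t \in \wt{Z(G)}$, $\det(t) \in F^{\times n_c}\cdot F^{\times 2}$ roughly, and one checks $(z, \det(t))_n = 1$ for $z \in F^{\times d}$ using Lemma \ref{oldlem}. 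For injectivity and surjectivity I would count: by Lemma \ref{centers} and \eqref{index}, both $\wt{T}/\wt{Z(G)}\wt{T}_o$ and the Pontryagin dual of $Z(\wt{T}_o)/Z(\wt{T})\cap\wt{T}_o \cong F^{\times d}/F^{\times n} = F^{\times d}/F^{\times 2d}$, which has order $[F^\times:F^{\times 2}]$, have the same finite cardinality; since the Hilbert pairing is non-degenerate the induced map is an isomorphism. Concretely, $[\wt{T}:\wt{Z(G)}\wt{T}_o] = [T : Z(G)\cdot T_o]$ up to the $\bbmu_n$-factors cancelling, and $T/Z(G)T_o \cong F^\times/F^{\times 2}$ via $\diag(x,y)\mapsto y$ modulo squares; this matches $[F^\times:F^{\times 2}]$.

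For part (ii), that $E_o(\xi)$ is a torsor over $\wt{T}/\wt{Z(G)}\wt{T}_o$: the group $\wt{T}/\wt{Z(G)}\wt{T}_o$ acts on the set $E_o(\xi)$ of extensions of $\xi$ to $Z(\wt{T}_o)$ by $(\chi_o)^t(t_o) := \chi_o(t t_o t^{-1}) = \mu_t(t_o)\chi_o(t_o)$; this is well-defined (conjugation by $\wt{Z(G)}\wt{T}_o$ fixes each $\chi_o$ since $\wt{Z(G)}$ centralizes $\wt{T}_o$ by Lemma \ref{Dcent} and $\wt{T}_o$ is abelian modulo... — here I should note $Z(\wt{T}_o)$ is central in $\wt{T}_o$ so $\wt{T}_o$ acts trivially), it preserves $\xi$ since $\mu_t$ is trivial on $Z(\wt{T})\cap\wt{T}_o$, it is free because $\mu_t = \mu_{t'}$ forces $t \in t'\wt{Z(G)}\wt{T}_o$ by the injectivity just proved, and it is transitive because $|E_o(\xi)| = [Z(\wt{T}_o):Z(\wt{T})\cap\wt{T}_o] = [F^\times:F^{\times 2}] = |\wt{T}/\wt{Z(G)}\wt{T}_o|$ by part (i).

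\textbf{Main obstacle.} The delicate point will be the Hilbert-symbol bookkeeping in identifying $[t,t_o]$ with $(z,\det(t))_n$ exactly (not just up to a square) when $n$ is even: one must use that $t_o$ lies in $Z(\wt{T}_o)$, i.e. $z \in F^{\times d}$ with $d = n/2$, so that $(w, z)_n$ becomes insensitive to the ambiguity between $\det(t) = xy$ and $yx^{-1}$. I expect this to be the only step requiring genuine care; the torsor structure and the counting then follow formally from Lemma \ref{centers} and the non-degeneracy of the Hilbert symbol.
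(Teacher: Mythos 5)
Your proposal is correct and follows essentially the same route as the paper's own proof: you compute the commutator $[t,t_o]=(yx^{-1},z)_n$ from the cocycle formula \eqref{compeq}, observe that for $z\in F^{\times d}$ the pairing $(z,\cdot)_n$ factors through $F^\times/F^{\times 2}$ so that this equals $(z,\det(t))_n$, and then conclude by comparing $\wt{T}/\wt{Z(G)}\wt{T}_o\cong F^\times/F^{\times 2}$ (via $\det$) with the Pontryagin dual of $Z(\wt{T}_o)/Z(\wt{T})\cap\wt{T}_o$ using the content of Lemma \ref{dualcenter}. The paper reaches the same conclusion by invoking Lemma \ref{dualcenter} a bit more directly rather than by a cardinality count, but this is a cosmetic difference.
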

\begin{proof}
Since $n$ is even, it follows from Lemma \ref{centers} that
$$Z(\wt{T}_o)/Z(\wt{T})\cap \wt{T}_o \cong F^{\times d}/F^{\times n}.$$
Now Lemma \ref {dualcenter} gives that $F^\times/F^{\times 2}$ is isomorphic to the Pontryagin dual of $Z(\wt{T}_o)/Z(\wt{T})\cap \wt{T}_o$. The isomorphism is given by $a \mapsto \xi_a$, where $\xi_a$ is the character of $Z(\wt{T}_o)/Z(\wt{T})\cap \wt{T}_o$ given by
$$\s(\alpha^\vee(z)) \cdot \zeta \mapsto (z,a)_n.$$
To finish the proof of the first assertion, just note that for $t_o=\s(\alpha^\vee(z))  \in Z(\wt{T}_o)$ and $t=\s(\diag(x,y)) \in \wt{T}$, we have
$$ t t_o t^{-1}=(z,xy^{-1})_n \cdot \s(\alpha^\vee(z))$$
and that $xy \equiv xy^{-1}  \ \text{mod } F^{\times 2}$.

For (ii), we see from the above argument that $\wt{T}/ \wt{Z(G)} \wt{T}_o$ acts freely on the set of genuine characters of $Z(\wt{T}_o)$. The assertion now follows from the equality
$$[\wt{T}: \wt{Z(G)} \cdot \wt{T}_o]= \val{E_o(\xi)}.$$
\end{proof}

\subsection{Maximal abelian subgroups}
We fix a  $d_c$-Lagrangian subgroup $J \subseteq  F^\times$, the existence of which follows from Lemma \ref{lang decomp}, as $\bbmu_{2d_c} \subset F^\times$. We also fix
$$ M=\{j1_G:  j \in J \} \subset G.$$

\begin{lm} \label{mlem}
The group $\wt{M} \subseteq \wt{Z(G)}$ is a maximal abelian subgroup of $\wt{Z(G)}.$ Also,
$$\wt{M}/Z(\wt{G}) \cong J/F^{\times n_c}.$$
In particular,
\begin{equation} \label{Dindex}
[\wt{M}: Z(\wt{G})]=n_c \val{n_c}^{-1/2} \cdot
\begin{cases}
1 &  \text{ if $n$ is odd}; \\
2 \val{2}^{-1/2} &  \text{otherwise}
\end{cases}
\end{equation}
Moreover, every maximal abelian subgroup $\wt{M}'$ of $\wt{Z(G)}$ is the inverse image in $\wt{G}$ of $\{ j1_G: j \in J' \}$ where $J'$ is a $d_c$-Lagrangian subgroup of $F^\times$.
\end{lm}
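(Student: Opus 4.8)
The statement is about Lemma \ref{mlem}, concerning the maximal abelian subgroup $\wt{M} \subseteq \wt{Z(G)}$, where $M = \{j1_G : j\in J\}$ with $J$ a fixed $d_c$-Lagrangian subgroup of $F^\times$. My plan is to first establish the commutator structure of $\wt{Z(G)}$ explicitly, using part (iii) of Lemma \ref{justcomp}, which gives $[g, g'] = (a,b)_n^{4c+1}$ for $g = a1_G$, $g' = b1_G$ (specialize \eqref{compeq} with $x=y=a$, $z=t=b$, so $[g,g'] = (a,b)_n^2 (a^2,b^2)_n^{2c} = (a,b)_n^{4c+1}$ using $(a,b)_n(a,b)_n = (a^2,b)_n$ and bilinearity). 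By Lemma \ref{oldlem} applied to $m=n$ and the integer $4c+1$, writing $n = \gcd(n, 4c+1)\cdot n_c$, one has $\mathrm{Ker}(\eta_{a,(n)}^{4c+1}) = \mathrm{Ker}(\eta_{a,(n_c)})$; this translates the vanishing of the commutator $[g,g']$ into the statement $(a,b)_{n_c} = 1$. Hence the commutator pairing on $\wt{Z(G)}/Z(\wt{G})$, which by Lemma \ref{centers}(ii) is identified with $F^\times/F^{\times n_c}$, is (the $4c+1$-power of, equivalently up to the automorphism from Lemma \ref{Jaut}) the $n_c$-th Hilbert symbol — a non-degenerate pairing.

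Next I would deduce that $\wt M$ is maximal abelian in $\wt{Z(G)}$: since $J \supseteq F^{\times n_c}$ is $d_c$-Lagrangian and $d_c \mid n_c$, Lemma \ref{extlag} (with $m' = ?$; more precisely, I need to check that a $d_c$-Lagrangian subgroup contains/is compatible with an $n_c$-Lagrangian structure — here one should use that $\gcd(4c+1, n_c) = 1$, so by Lemma \ref{Jaut} the map $x\mapsto x^{4c+1}$ permutes $F^\times/J$ and $J/F^{\times n_c}$, and the image of $J$ under this kind of bookkeeping gives an $n_c$-Lagrangian subgroup with respect to $(-,-)_{n_c}$). Concretely: the image of $J$ in $F^\times/F^{\times n_c}$ is an $n_c$-Lagrangian subgroup of $F^\times/F^{\times n_c}$ for the pairing induced by $[-,-]$ on $\wt{Z(G)}/Z(\wt G)$, so $\wt M$ is its own centralizer in $\wt{Z(G)}$, i.e. maximal abelian. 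For the index computation, $\wt M/Z(\wt G) \cong J/F^{\times n_c}$ by Lemma \ref{centers}(ii), and then $[J : F^{\times n_c}]$ is computed via the tower $F^{\times n_c} \subseteq F^{\times d_c}$ (note $J \supseteq F^{\times d_c}$ as $J$ is $d_c$-Lagrangian) together with $[J:F^{\times d_c}] = d_c\val{d_c}^{-1/2}$ from the discussion following the definition of $m$-Lagrangian subgroup, and $[F^{\times d_c}: F^{\times n_c}] = [F^\times:F^{\times n_c}]/[F^\times:F^{\times d_c}]$ evaluated via \eqref{index}. When $n$ is odd, $d_c = n_c$ and one gets $n_c\val{n_c}^{-1/2}$; when $n$ is even, $n_c = 2d_c$ and $[F^\times:F^{\times n_c}]/[F^\times:F^{\times d_c}] = (2d_c)^2\val{2d_c}^{-1}/(d_c^2\val{d_c}^{-1}) = 4\val{2}^{-1}$, so $[J:F^{\times n_c}] = d_c\val{d_c}^{-1/2}\cdot 2\val{2}^{-1/2} = n_c\val{n_c}^{-1/2}\cdot\val{2}^{1/2}/... $ — I would carefully reconcile this with the claimed formula \eqref{Dindex}; the arithmetic should collapse to exactly $n_c\val{n_c}^{-1/2}$ in the odd case and $n_c\val{n_c}^{-1/2}\cdot 2\val{2}^{-1/2}$ in the even case after substituting $n_c = 2d_c$ consistently.

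For the final assertion — that every maximal abelian subgroup $\wt M'$ of $\wt{Z(G)}$ arises from a $d_c$-Lagrangian subgroup $J'$ — I would argue as follows. Any maximal abelian $\wt M' \supseteq Z(\wt G)$ (since $Z(\wt G)$ is central in $\wt{Z(G)}$), so $\wt M'$ is the preimage of some subgroup $\{j1_G : j\in J'\}$ with $J' \supseteq F^{\times n_c}$. The condition that $\wt M'$ is abelian means $[-,-]$ vanishes on it, i.e. the image $\bar{J'}$ of $J'$ in $F^\times/F^{\times n_c}$ satisfies $\bar{J'} \subseteq (\bar{J'})^\perp$ for the $(-,-)_{n_c}$-type pairing (using the identification above); maximality forces equality $\bar{J'} = (\bar{J'})^\perp$, i.e. $\bar{J'}$ is $n_c$-Lagrangian. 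Then I must descend from "$n_c$-Lagrangian" to "$d_c$-Lagrangian": since $n_c = 2d_c$ (even case) or $n_c = d_c$ (odd case), I would use Lemma \ref{extlag} — but in the direction that every $n_c$-Lagrangian subgroup, being self-perpendicular for the finer $(-,-)_{d_c}$ in the even case, actually has the right index and pulls back to a $d_c$-Lagrangian subgroup of $F^\times$; here one needs $\bbmu_{2d_c}\subseteq F^\times$ which holds. \textbf{The main obstacle} I anticipate is precisely this last bookkeeping step: making the translation between the commutator pairing $(-,-)_n^{4c+1}$ on $\wt{Z(G)}$, the $(-,-)_{n_c}$-pairing after applying Lemma \ref{oldlem}, and the $(-,-)_{d_c}$-structure that defines a $d_c$-Lagrangian subgroup, fully rigorous — in particular tracking the twist by $x\mapsto x^{4c+1}$ (Lemma \ref{Jaut}) and confirming that it carries Lagrangian subgroups to Lagrangian subgroups, and that the even/odd dichotomy in $d_c$ versus $n_c$ is handled uniformly. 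The index computation \eqref{Dindex}, while routine, is where sign/valuation factors of $\val{2}^{\pm 1/2}$ are most easily mishandled, so I would double-check it against the known $n=2$ cases in \cite{GPS, GPS81}.
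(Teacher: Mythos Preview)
Your overall strategy is right, but there is an arithmetic slip at the very first step that generates all of the ``main obstacle'' you anticipate. For $g=a1_G=\diag(a,a)$ and $g'=b1_G=\diag(b,b)$, specializing \eqref{compeq} with $x=y=a$, $z=t=b$ gives
\[
[g,g']=(a,b)_n(a,b)_n(a^2,b^2)_n^{2c}=(a,b)_n^{2}\cdot(a,b)_n^{8c}=(a,b)_n^{8c+2},
\]
not $(a,b)_n^{4c+1}$. (Equivalently, Lemma~\ref{justcomp}(i) gives $[a1_G,b1_G]=(a,\det(b1_G))_n^{4c+1}=(a,b^2)_n^{4c+1}$.) Now apply Lemma~\ref{oldlem} with exponent $8c+2=2(4c+1)$: since $4c+1$ is odd, $\gcd(n,8c+2)=\gcd(n,4c+1)$ when $n$ is odd and $\gcd(n,8c+2)=2\gcd(d,4c+1)$ when $n=2d$ is even; in both cases $n/\gcd(n,8c+2)=d_c$. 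Hence $[a1_G,b1_G]=1\iff(a,b)_{d_c}=1$, which is exactly the paper's one-line reduction.

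With this correction, the obstacle you flag evaporates: the commutator pairing on $\wt{Z(G)}$ is governed directly by $(-,-)_{d_c}$, so a subgroup $\wt{M}'=\widetilde{\{j1_G:j\in J'\}}$ is abelian iff $J'$ is isotropic for $(-,-)_{d_c}$, and maximal abelian iff $J'$ is $d_c$-Lagrangian. There is no need to pass through an $n_c$-Lagrangian structure or to invoke Lemma~\ref{extlag} or the twist $x\mapsto x^{4c+1}$ from Lemma~\ref{Jaut}. Your treatment of $\wt{M}/Z(\wt{G})\cong J/F^{\times n_c}$ via Lemma~\ref{centers}(ii) is fine, and your index computation is correct in outline; the paper simply quotes \eqref{Mindex} with $m=d_c$ and $l=n_c/d_c\in\{1,2\}$ to obtain \eqref{Dindex} directly.
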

\begin{proof}
From  \eqref{compeq} combined with Lemma  \ref{oldlem} we deduce that for $g=a1_G$, $g'=b1_G$, the equality
$$[g,g']=1$$
holds if and only if $(a,b)_{d_c}=1$.
The first and fourth assertions follow from this.  To prove the second assertion, note that
$$\epsilon\cdot \s(a1_G) \mapsto a{F^\times}^{n_c}$$
 is  surjection from $\wt{M}$ to $J/F^{\times n_c}$. By the second assertion in Lemma \ref{centers}, the kernel of this surjection is $Z(\wt{G})$.
The third assertion follows from \eqref{Mindex}.
\end{proof}

 Let $\omega$ be a genuine character of $Z(\wt{G})$. Let
 $$\msc{E}(\wt{M}, \omega)=\set{\xi \in \Hom(\wt{M}, \C^\times):  \xi|_{Z(\wt{G})} = \omega}$$
 be the finite set of extensions of $\omega$ to $\wt{M}$. For $g \in G$,  let $\lambda_g$ be the linear character of $Z(G)$ given by
$$\lambda_g(a1_G) =[g, a1_G].$$
 Observe that by the second assertion  in Lemma \ref{justcomp},
 $$\lambda_g((a1_G))= (\det(g),a)_n^{4c+1}.$$
 We may view $\lambda_g$ as a character of $\wt{Z(G)}$ (or any subgroup $\wt{H}$ of it) such that $\bbmu_n$ acts trivially.

\begin{lm} \label{detstab}
Let $\chi$ be a genuine character of $\wt{M}$. For $g\in \wt{G}$ we have $\chi^g=\chi$ if and only if $\det(g) \in J^{n/d}$.
\end{lm}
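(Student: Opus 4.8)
The plan is to compute $\chi^g$ explicitly on a generating set of $\wt{M}$ and see when the resulting character coincides with $\chi$. Since $\wt{M}$ is the inverse image in $\wt{G}$ of $\set{j 1_G: j \in J}$, the conjugation action of $g \in \wt{G}$ on $\wt{M}$ factors through the commutator pairing: for $m = \s(j 1_G) \in \wt{M}$ (up to a factor in $\bbmu_n$, which is central and hence fixed), one has $g m g^{-1} = [g, j 1_G] \cdot m = \lambda_g(j 1_G) \cdot m$. By the formula for $\lambda_g$ recorded just before the statement, $\lambda_g(j 1_G) = (\det(g), j)_n^{4c+1}$. Therefore $\chi^g(m) = \chi(m) \cdot (\det(g), j)_n^{4c+1}$ for every $m = \s(j 1_G)$, and $\chi^g = \chi$ if and only if $(\det(g), j)_n^{4c+1} = 1$ for all $j \in J$.

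The remaining step is a purely arithmetic one about Hilbert symbols and the Lagrangian subgroup $J$. First I would use Lemma \ref{oldlem} (with $m = n$, $c' = 4c+1$) to rewrite the condition: writing $n = n_1 n_2$ with $n_1 = \gcd(n, 4c+1)$, one has $\mathrm{Ker}(\eta_{x,(n)}^{4c+1}) = \mathrm{Ker}(\eta_{x,(n_2)})$, and note that $n_2 = n/\gcd(n,4c+1) = n_c$. Hence $(\det(g), j)_n^{4c+1} = 1$ for all $j \in J$ is equivalent to $(\det(g), j)_{n_c} = 1$ for all $j \in J$; that is, $\det(g)$ lies in the annihilator $J^{\perp}_{(n_c)}$ of $J$ with respect to the $n_c$-th Hilbert symbol. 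Now $J$ is a $d_c$-Lagrangian subgroup of $F^\times$, and since $d_c \mid n_c$ (indeed $n_c = d_c$ when $n$ is odd and $n_c = 2 d_c$ when $n$ is even, so $d_c \mid n_c$ always), Lemma \ref{nicesat2} applies with $m = d_c$ and $l = n_c/d_c$, giving $J^{\perp}_{(n_c)} = J^{n_c/d_c}$. Finally, $n_c/d_c = n/d$ (this follows directly from $n_c = n/\gcd(n,4c+1)$, $d_c = d/\gcd(d,4c+1)$, and the explicit formulas for $d$ and $d_c$: when $n$ is odd both ratios equal $1$, and when $n$ is even the ratio is $2$ in each case). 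Therefore $\chi^g = \chi$ if and only if $\det(g) \in J^{n/d}$, as claimed.

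I do not anticipate a serious obstacle here; the only mildly delicate point is keeping the bookkeeping of the three integers $n$, $d$, $n_c$, $d_c$ straight and verifying $n_c/d_c = n/d$ and $d_c \mid n_c$ in both the odd and even cases, which is immediate from the definitions given in Lemma \ref{centers} and the surrounding text. One should also take a moment to note that the factor in $\bbmu_n$ appearing when passing between $\s(j 1_G)$ and a general element of $\wt{M}$ is genuinely irrelevant: $\chi$ and $\chi^g$ agree on $\bbmu_n$ (both genuine), so the equality $\chi^g = \chi$ is tested on coset representatives $\s(j 1_G)$, $j \in J$. This completes the argument.
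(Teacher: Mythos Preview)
Your proof is correct and follows essentially the same approach as the paper's: the paper writes $\chi^g=\chi\cdot\lambda_g|_M$, invokes Lemma~\ref{oldlem} to reduce to $\det(g)\in J^{\perp}_{(n_c)}$, and then appeals to Lemma~\ref{nicesat2}. You have simply spelled out the application of Lemma~\ref{nicesat2} (with $m=d_c$, $l=n_c/d_c$) and the verification that $n_c/d_c=n/d$, which the paper leaves implicit.
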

\begin{proof}
Note that $\chi^g=\chi \cdot \lambda_g|_M$. It follows from Lemma \ref{oldlem} that $\lambda_g|_M$ is trivial if and only if $\det(g) \in J^{\perp}_{(n_c)}.$ The assertion follows from Lemma \ref{nicesat2}.
\end{proof}

\begin{lm} \label{crucia}
Let $\omega$ be a genuine character of $Z(\wt{G})$ .
\begin{enumerate}
\item[(i)]  If $n$ is even, then
$$g \mapsto \lambda_g$$
 is a well-defined isomorphism from $\wt{T}/ \wt{M} \wt{T}_o$ to the Pontryagin dual of $\wt{M}/Z(\wt{G})$. Moreover, in this case, $\wt{T}/ \wt{M}\wt{T}_o$ acts freely and transitively on $\msc{E}(\wt{M}, \omega)$ by $\xi \mapsto \xi^x$ for $x\in \wt{T}/ \wt{M}\wt{T}_o$.
\item[(ii)] If $n$ is odd, then $g \mapsto \lambda_g$ is a well defined isomorphism from $\wt{Z(G)}/\wt{M}$ to the Pontrayagin dual of ${\wt{M}/Z(\wt{G})}$. In this case, $\wt{Z(G)}/ \wt{M}$ acts freely and transitively on $\msc{E}(\wt{M}, \omega)$  by $\xi \mapsto \xi^x$ for $x\in \wt{Z(G)}/ \wt{M}$.
\end{enumerate}
\end{lm}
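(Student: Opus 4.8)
\textbf{Proof plan for Lemma \ref{crucia}.}

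The plan is to treat (i) and (ii) simultaneously as far as possible, since both assertions amount to the same computation packaged with the appropriate finite abelian groups. The starting point in both cases is the formula $\lambda_g(a1_G) = (\det(g), a)_n^{4c+1}$ from Lemma \ref{justcomp}(ii), which shows that $\lambda_g$ depends only on $\det(g) \in F^\times$, and in fact only on $\det(g)$ modulo the subgroup on which $a \mapsto (\det(g),a)_n^{4c+1}$ is trivial restricted to $M$. First I would fix a genuine character $\omega$ of $Z(\wt{G})$ and recall that $\msc{E}(\wt{M}, \omega)$ is a torsor over $\widehat{\wt{M}/Z(\wt{G})}$ by the standard theory of characters of abelian groups (every two extensions differ by a character trivial on $Z(\wt{G})$, i.e.\ a character of $\wt{M}/Z(\wt{G})$). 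By Lemma \ref{mlem}, $\wt{M}/Z(\wt{G}) \cong J/F^{\times n_c}$, so $\widehat{\wt{M}/Z(\wt{G})} \cong \widehat{J/F^{\times n_c}}$, and by Lemma \ref{nicesat1} (applied with $m = n_c$ and the $n_c$-Lagrangian $J$, noting $F^{\times n_c} \subseteq J = J^\perp_{(n_c)}$) this dual is isomorphic to $F^\times / J$. Thus the free and transitive action statements will follow once I identify the map $g \mapsto \lambda_g|_{\wt{M}}$ with a surjection onto $\widehat{\wt{M}/Z(\wt{G})}$ having the claimed kernel.

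For the even case (i): I would show $g \mapsto \lambda_g$ is well-defined on $\wt{T}/\wt{M}\wt{T}_o$ and is an isomorphism onto $\widehat{\wt{M}/Z(\wt{G})}$. Well-definedness: $\lambda_g$ is trivial on $\wt{M}$ iff $(\det(g), j)_n^{4c+1} = 1$ for all $j\in J$; by Lemma \ref{oldlem} with $m=n$, $c\mapsto 4c+1$, this says $\det(g) \in J^{\perp}_{(n_c)}$, and by Lemma \ref{nicesat2} (with $m = n_c$, $l = n/n_c$, so $ml = n$) we get $J^{\perp}_{(n_c)} = J^{n/n_c}$. I then need $\det(\wt{M}\wt{T}_o) = J^{n/n_c}$ inside $F^\times$: since $\det(\wt{T}_o) = F^{\times 2}$ and $\det(\wt{M}) = J$, and $n$ is even so $n/n_c$ is... here I must be careful — the relevant exponent. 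Actually the cleanest route: $\lambda_g$ depends on $\det(g)$, $\det$ is trivial on $\wt{T}_o$ modulo squares and on $\wt{M}$ gives $J$; combining with Lemma \ref{detstab} (which says $\chi^g = \chi$ iff $\det(g) \in J^{n/d}$, and $d = n/2$ here so $J^{n/d} = J^2$) the kernel on $\wt{T}$ is exactly $\wt{M}\wt{T}_o \cdot (\ker \det)$, but $\ker(\det|_{\wt T})$ is already $\wt T_o$ up to center. So the map factors through $\wt{T}/\wt{M}\wt{T}_o$ and is injective there. Surjectivity then follows by counting: $[\wt{T} : \wt{M}\wt{T}_o] = [\wt T : \wt{Z(G)}\wt T_o]\cdot[\wt{Z(G)}\wt T_o : \wt M \wt T_o]$, and one checks this equals $\val{\widehat{\wt M/Z(\wt G)}} = [\wt M : Z(\wt G)] = [J : F^{\times n_c}]$ using \eqref{Dindex} and $[\wt T : \wt{Z(G)}\wt T_o] = [F^\times : F^{\times 2}]$ type index computations already available from Lemma \ref{centers} and Lemma \ref{evenslcent}. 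The free-transitive action on $\msc{E}(\wt M,\omega)$ is then immediate from the torsor structure plus the isomorphism just established, since $\xi^g = \xi\cdot\lambda_g|_{\wt M}$.

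For the odd case (ii): the argument is the same but with $\wt{Z(G)}$ in place of $\wt{T}$. Here $\wt{T}_o \subseteq \wt{Z(G)}\cdot(\text{stuff})$... no — the point is that when $n$ is odd, $\wt T_o$ contributes nothing new to $\lambda_g$ since $\det(\wt T_o)\subseteq F^{\times 2}$ and for odd $n$, $F^{\times 2}\supseteq$ enough, whereas $\wt{Z(G)}/\wt M$ already surjects. Concretely: for $g\in\wt{Z(G)}$, $\lambda_g$ is trivial on $\wt M$ iff $\det(g)\in J^{\perp}_{(n_c)} = J^{n/n_c} = J$ (since $n$ odd forces, via Lemma \ref{nicesat2} and the Lagrangian property, $J^{n/n_c} = J$ — here $d_c = n_c$ and $n/d = 1$, consistent with Lemma \ref{detstab}), and $\det(\wt M) = J$, $\det(\wt{Z(G)}) = F^\times$, giving $\ker = \wt M$ and $[\wt{Z(G)}:\wt M] = [F^\times : J] = [J : F^{\times n_c}] = \val{\widehat{\wt M/Z(\wt G)}}$ by Lemma \ref{nicesat1}, so the map is an isomorphism. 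Again the torsor action of $\wt{Z(G)}/\wt M$ on $\msc E(\wt M,\omega)$ follows formally.

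\textbf{Main obstacle.} I expect the genuinely delicate part to be keeping the exponents straight: the various indices $n_c, d_c, d, n/d, n/n_c$ and the Lagrangian duality from Lemmas \ref{nicesat1}, \ref{nicesat2}, \ref{detstab} must be threaded consistently so that the kernel of $g\mapsto\lambda_g$ really is $\wt M\wt T_o$ (even case) resp.\ $\wt M$ (odd case) and the surjectivity count matches $[\wt M : Z(\wt G)]$ via \eqref{Dindex}. The conceptual content is light — it is all "characters of finite abelian groups plus Hilbert-symbol duality" — but the bookkeeping of which power of $J$ appears where, and verifying that $\wt T_o$ is redundant precisely when $n$ is odd, is where an error would most easily creep in.
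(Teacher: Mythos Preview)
Your approach is essentially the paper's: factor $g\mapsto\lambda_g$ through the determinant (or its analogue), identify $\widehat{\wt M/Z(\wt G)}\cong\widehat{J/F^{\times n_c}}$ via Lemma~\ref{mlem}, and use Hilbert-symbol duality (Lemmas~\ref{oldlem}, \ref{nicesat1}, \ref{nicesat2}) plus a cardinality count for surjectivity and the torsor statement. The paper packages this as a composition $\phi_1\circ\phi_2$ (even case) or $\phi_3\circ\phi_4$ (odd case), but the content is the same.

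There is one slip in your odd-case treatment, and it is precisely the bookkeeping hazard you flagged: for $g\in\wt{Z(G)}$ covering $a1_G$ one has $\det(g)=a^2$, so $\det(\wt{Z(G)})=F^{\times 2}$ and $\det(\wt M)=J^2$, not $F^\times$ and $J$ as you wrote. The paper avoids this by using the map $\phi_4:\zeta\cdot\s(a1_G)\mapsto aJ$ (i.e.\ the coordinate $a$ itself, not $\det$) and correspondingly $\phi_3(x)=\eta_{x^2,(n)}^{4c+1}$. Your route through $\det$ can still be made to work since $n$ is odd and squaring is a bijection on $F^\times/J$ (Lemma~\ref{Jaut} with $l=2$), but you must invoke that explicitly.
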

\begin{proof}
Assume first that $n$ is even. In this case,  Lemma \ref{oldlem} and Lemma \ref{nicesat2} imply that
$$x \mapsto \eta_{x,(n)}^{4c+1}$$
gives rise to an isomorphism
$$\phi_1: F^\times/J^{2} \to \Hom( J/F^{\times n_c}, \C^\times).$$
Consider the surjection
$$\phi_2: \wt{T} \to F^\times/J^{2}$$
 given by
$$\phi_2(g)=\det(g) \cdot J^2.$$
We show that the kernel of $\phi_2$ is $\wt{M}\wt{T}_o$. Indeed, $g \in \wt{T}$ lies in the kernel of $\phi_2$ if and only if its projection to $T$ is $\diag(a,a^{-1}j^2)$ where $a\in F^\times$ and $j \in J$. We have
$$\diag(a,a^{-1}j^2)=\diag(aj^{-1},a^{-1}j) \cdot \diag(j,j).$$
Thus,
$$g \mapsto (\phi_1 \circ \phi_2)(g)=\lambda_g$$
 is an isomorphism from $\wt{T}/ \wt{M}\wt{T}_o$ to the Pontryagin dual of  $\wt{M}/Z(\wt{G})$. From the above it follows that $\wt{T}/ \wt{M}\wt{T}_o$ acts freely on the set of genuine characters of $\wt{M}$. The proof of the second part of (i) is completed once we note the equality
 $$[\wt{T}: \wt{M}\wt{T}_o]= \val{\msc{E}(\wt{M}, \omega)}.$$

If $n$ is odd, then $J$ is an $n_c$-Lagrangain subgroup and thus Lemma \ref{oldlem} and Lemma \ref{nicesat2} give
an isomorphism
$$\phi_3: F^\times/J \to \Hom(J/F^{\times n_c}, \C^\times)$$
 by
$$\phi_3(x):= \eta_{x^2,(n)}^{4c+1}.$$
Also, consider the surjection
$$\phi_4: \wt{Z(G)} \to F^\times/J$$
 given by
$$\phi_4(\zeta\cdot \s(a1_G)) = a\cdot J.$$
One has ${\rm Ker}(\phi_4)=\wt{M}$. Thus, we have shown that $g \mapsto (\phi_3 \circ \phi_4)(g)=\lambda_g$ is an isomorphism from $\wt{Z(G)}/\wt{M}$ to the Pontryagin dual of $\wt{M}/Z(\wt{G})$. This proves the first part of (ii), whereas the second part follows from the same argument as we used for (i).
\end{proof}

\begin{rmk} \label{tricky}
The covering torus $\wt{T}$ acts on the set of genuine characters of $\wt{M}$ in the case $n$ is odd as well. However, an element in $\wt{T}-\wt{Z(G)}\wt{T}_o$ acts the same as some element inside $\wt{Z(G)}\wt{T}_o$. More precisely, let $x \in F^\times$ be a non-square element. Then
$$g=\s(\diag(1,x)) \notin \wt{Z(G)}\wt{T}_o.$$
However, it follows from Lemma \ref{Jaut} that there exists $a \in F^\times$ such that $xJ=a^2J$. This implies that for $t=\s(a1_G)$, we have $\chi^t=\chi^g$  for every genuine character $\chi$ of $\wt{M}$. The crucial point here is that for $\s(g'), \s(g'') \in \wt{T}$ and a character $\chi$ of $\wt{M}$,  we have $\chi^{g'}=\chi^{g''}$ if and only if $\det(g') J=\det(g'') J$.
\end{rmk}

We now fix a $d$-Lagrangian subgroup $K\subset F^\times$. In view of Lemma \ref{extlag}, we assume that $K \subseteq J$; furthermore, we assume that there exists a Lagrangian decomposition
$$(K^\dag,\widehat{K^\dag})$$
of $F^\times/F^{\times d}$ where $K^\dag$ is the projection of $K$ to $F^\times/F^{\times d}$. We set
$$A=\begin{cases}
\set{\diag(a,at): a\in J, t \in K } & \text{ if $n$ is odd}; \\
 \set{ \diag(a,a^{-1}) \cdot \diag(b,b): a\in K, b \in J }  & \text{ otherwise}.
\end{cases}$$

\begin{lm} \label{Tfacts}
The following hold.
\begin{enumerate}
\item[(i)] $\wt{A} \subset \wt{T}$ is a maximal abelian subgroup of $\wt{T}$. In particular  $[\wt{T}:\wt{A}]=n_c n\cdot  \val{n_c n}^{-1/2}$.
\item[(ii)] $\wt{A} \cap \wt{Z(G)}=\wt{M}$.
\item[(iii)] $A_o= \set{\diag(a,a^{-1}): a \in K }$ where $A_o := A\cap G_o$.
\item[(iv)] $\wt{A}_o \subset \wt{T}_o$ is a maximal abelian subgroup of $\wt{T}_o$; every maximal abelian subgroup of  $\wt{T}_o$ is the inverse image of $\{\diag(a,a^{-1}): a \in K' \}$ where $K' \subseteq F^\times$ is a $d$-Lagrangian subgroup.
\end{enumerate}
\end{lm}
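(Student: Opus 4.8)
The plan is to verify each of the four statements by a direct computation with the commutator formula \eqref{compeq} of Lemma \ref{justcomp}, together with the Lagrangian-subgroup bookkeeping already developed in \S \ref{S:top-c} (Lemmas \ref{nicesat2}, \ref{extlag}, \ref{lang decomp}) and in \S \ref{SS:RES1} (Lemmas \ref{centers}, \ref{mlem}). First I would settle (iii), since it is purely a matter of unwinding the definition of $A$: when $n$ is even, $A_o = A\cap G_o$ consists of those $\diag(a,a^{-1})\cdot \diag(b,b)$ with $a\in K$, $b\in J$ that have determinant $1$, forcing $b^2 = 1$ in $F^\times$ up to the relevant power, and since $K$ is a $d$-Lagrangian subgroup with $\bbmu_{2d}\subset F^\times$ one checks $b\in K$ after absorbing, leaving exactly $\set{\diag(a,a^{-1}): a\in K}$; the odd $n$ case is similar and slightly easier. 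Next I would prove (ii): $\wt A\cap \wt{Z(G)}$ is the preimage of those elements of $A$ that are scalar matrices, and comparing the two displayed descriptions of $A$ with $M = \set{j1_G: j\in J}$ shows this is precisely $\wt M$ (using that $K\subseteq J$).

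For (i), the key point is that $\wt A$ is abelian: I would apply \eqref{compeq} to two generators of $A$ and check that the resulting Hilbert symbols vanish, using that $J$ is $d_c$-Lagrangian, $K$ is $d$-Lagrangian, and the compatibility $(x,y)_n^{n/d} = (x,y)_d$ from \eqref{FV fact}; maximality then follows by a counting argument, i.e. by showing $[\wt T:\wt A]^2 = [\wt T:Z(\wt T)]$, which by Lemma \ref{centers}(i) equals $(nn_c)^2\val{nn_c}^{-1}$, so $[\wt T:\wt A] = n_c n\cdot\val{n_cn}^{-1/2}$; here I would compute $[\wt A: Z(\wt T)]$ directly from the descriptions of $A$ and of the preimage of $Z(\wt T)$ in Lemma \ref{centers}(i), using \eqref{Mindex}-type index computations and the fact that $(J,\text{-})$ and $(K,\text{-})$ are Lagrangian. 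Statement (iv) is then the $\wt{\SL}_2$-analogue: that $\wt A_o$ is a maximal abelian subgroup of $\wt T_o$ follows from (iii) together with the non-degeneracy of the commutator pairing on $\wt T_o/Z(\wt T_o)$ (so that $[\wt T_o:\wt A_o]^2 = [\wt T_o:Z(\wt T_o)] = d^2\val{d}^{-1}$ by Lemma \ref{centers}(iv)), and the classification of all such subgroups is exactly the statement that every maximal abelian subgroup of the Heisenberg group $\wt T_o$ arises from a maximal isotropic subgroup of the symplectic $F_2$-space $F^\times/F^{\times d}$, which is the content of the Lagrangian-decomposition formalism — one reads off from \eqref{compeq} that an element $\diag(a,a^{-1})$ commutes with $\diag(b,b^{-1})$ iff $(a,b)_d = 1$, so the image of the maximal abelian subgroup in $F^\times/F^{\times d}$ is its own $(-,-)_d$-orthogonal complement, i.e. is $d$-Lagrangian.

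The main obstacle I anticipate is purely arithmetic rather than conceptual: carefully tracking the exponents $n_c, d, d_c$ and the various $\val{m}^{-1/2}$ factors through the index computations, especially in the even $n$ case where the decomposition $A = \set{\diag(a,a^{-1})\diag(b,b)}$ mixes a $d$-Lagrangian subgroup $K$ inside $\wt T_o$ with a $d_c$-Lagrangian subgroup $J$ inside $\wt{Z(G)}$, and these interact through the parameter $c$ via the exponent $4c+1$ appearing in Lemma \ref{justcomp}(i)-(ii). One has to be careful that the product $A$ is well-defined (i.e. that $\diag(a,a^{-1})\diag(b,b) = \diag(a',a'^{-1})\diag(b',b')$ forces the expected relations among $a,a',b,b'$ modulo the relevant powers), and that no overcounting occurs, so that the claimed index $[\wt T:\wt A] = n_cn\cdot\val{n_cn}^{-1/2}$ comes out exactly. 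Everything else is a routine but bookkeeping-heavy application of the identities \eqref{index}, \eqref{FV fact}, \eqref{Mindex}, \eqref{Dindex} and Lemmas \ref{nicesat1}, \ref{nicesat2}, \ref{extlag}.
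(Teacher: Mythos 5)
Your strategy is sound for (ii)--(iv) and essentially agrees with the paper's; the main divergence is in (i), where the paper does \emph{not} run a counting argument. After establishing that $\wt A$ is abelian, the paper proves maximality directly by showing the centralizer of $\wt A$ in $\wt T$ equals $\wt A$: it fixes a $g\in\wt T$ commuting with all of $\wt A$, pairs $g$ against specific families of elements of $A$ (the elements $\diag(z,z^{-1})$ with $z\in K$ and $\diag(d,d)$ with $d\in J$ in the odd case; $\diag(a,a^{-1})$ with $a\in K$ and $\diag(x,x)$ with $x\in J$ in the even case), and uses Lemmas \ref{nicesat2} and \ref{oldlem} to pin $g$ inside $A$. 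The index formula $[\wt T:\wt A]=nn_c\val{nn_c}^{-1/2}$ then falls out for free from the general fact \eqref{psWd} that a maximal abelian subgroup of the Heisenberg group $\wt T$ has index $\sqrt{[\wt T:Z(\wt T)]}$, combined with Lemma \ref{centers}(i). Your counting approach is a legitimate alternative, but to make it rigorous you would need (a) to observe explicitly that $Z(\wt T)\subseteq\wt A$ (true, but you don't verify it), and (b) to compute $[\wt A:Z(\wt T)]$ from scratch via the descriptions of $A$ and $Z(\wt T)$ — which is essentially equivalent bookkeeping to the direct centralizer check, and arguably more delicate in the even case where $A$ is the product set $\set{\diag(a,a^{-1})\diag(b,b)}$. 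The paper's route gives abelianness and maximality together and avoids committing to an explicit index computation. Two smaller quibbles: for (iii) you have the difficulty backwards — the even case is the immediate one (determinant $1$ forces $b^2=1$ exactly, so $b=\pm1\in K$, no ``up to a power'' needed), while the odd case is the one that requires a nontrivial step (the paper invokes Lemma \ref{Jaut} to pass from $a^2\in K$ to $a\in K$); and for (ii) the inclusion $K\subseteq J$ plays no role — the scalar condition already forces the $K$-coordinate to be trivial (odd case) or $\pm1$ (even case), and one only needs $-1\in J$.
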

\begin{proof}
Assume first that $n$ is odd. In this case, the assertion (ii) is clear. We prove (iii). Suppose that $\diag(a,at) \in A$ with $a^2t=1$. Then $a^2 \in K$, and Lemma \ref{Jaut} implies that $a\in K$. Conversely, if $a \in K$, then we have $t:=a^{-2} \in K$. Hence,
$$\diag(a,a^{-1})=\diag(a,a t) \in A_o.$$

The statement (iv) follows from (iii) and \cite[Lemma 3.1]{Szp6}. We now prove (i). It follows from \eqref{compeq} that for $g=\diag(x,xb), g'=\diag(z,zd)$, we have
$$[g,g'] =(x,z)_n^{8c+2}(x,d)_n^{4c+1}(b,z)_n^{4c+1}(b,d)_n^{2c}.$$
This implies that $\wt{A}$ is abelian. Suppose that $g=\diag(x,xb)$ is such that $[g, a]=1$ for all $a\in A$, we need to show that $g \in A$. However, we have already shown that for every $z\in K$,
$$g'=\diag(z,z^{-1}) \in A.$$
Using  \eqref{compeq} again, we obtain
$$[g,g'] =(b,z)_n.$$
This enforces $b \in K$. For every $d \in J$ and $g''=\diag(d,d) \in T$, we have
$$[g,g'']=(x,d)^{8c+2}_n(b,d)_n^{4c+1}=(x,d)^{8c+2}_n,$$
which implies that $x \in J$. Therefore, it follows that if $g=\diag(x,xb)$ satisfies $[g, a]=1$ for every  $a\in A$, then $g \in A$. This completes the proof when $n$ is odd.

Now, assume $n$ is even. The assertions (ii) and (iii) are clear. As in the odd case, (iv) follows from \cite[Lemma 3.1]{Szp6}. We are thus left to prove (i). From (ii) and (iii) along with Lemma \ref{Dcent}, it follows that $\wt{A}$ is abelian. Let $g'=\diag(c,d)$. We show that if $\s(g')$ commutes with every $\s(g)\in \wt{A}$,  then $g' \in A$. Indeed, by considering $g=(a,a^{-1}) \in A$, we deduce from \eqref{compeq}  that
$$(a,dc^{-1})_n=1 \text{ for all } a\in K.$$
Lemma \ref{nicesat2} implies $dc^{-1} \in K^2$. Similarly, by considering $g=\diag(x,x) \in T$, we deduce that $(x,dc)_n^{4c+1}=1$ for all $x\in J$. It then follows from Lemmas \ref{nicesat2} and \ref{oldlem} that $dc \in J^2$. Therefore, there exist $x \in K, y \in J$ such that
$$dc^{-1}=x^2, dc=y^2;$$
or equivalently,
 $$c^2=(xy)^2, d^2=(yx^{-1})^2.$$
 Since $-1 \in F^{\times d}$ and $F^{\times d} \subseteq J\cap K$, we may change $x$ to $-x$ and $y$ to $-y$ if necessary, and conclude that $c=xy$ and $d=yx^{-1}$. Thus,
 $$g'=\diag(xy,yx^{-1})=\diag(x,x^{-1}) \cdot \diag(y,y) \in A.$$
This completes the proof when $n$ is even.
\end{proof}

\begin{lm} \label{detj} One has the equality
$$\{ g \in G: \det(g) \in J^{n/d} \}=A G_o.$$
\end{lm}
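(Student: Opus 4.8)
The point of the proof will be to translate the condition on $g$ into a condition on the determinant subgroup $\det(A)\subseteq F^\times$. First I would observe that $AG_o=\{g\in G:\det(g)\in\det(A)\}$: since $G_o=\SL_2(F)=\ker(\det)$, the set $AG_o$ is a union of cosets of $G_o$, so $\det(AG_o)=\det(A)$ ($\det$ being a homomorphism trivial on $G_o$), and conversely if $\det(g)=\det(a)$ for some $a\in A$ then $a^{-1}g\in G_o$, whence $g\in AG_o$. Thus the statement is equivalent to the identity $\det(A)=J^{n/d}$, which I would then verify according to the parity of $n$.

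If $n$ is even, then $n/d=2$, and by the definition of $A$ its elements are $\diag(ab,a^{-1}b)$ with $a\in K$, $b\in J$; hence $\det(A)=\{b^2:b\in J\}=J^{2}=J^{n/d}$, and there is nothing more to do. If $n$ is odd, then $n/d=1$, $d_c=n_c$, so $J$ is an $n_c$-Lagrangian subgroup and $K$ an $n$-Lagrangian subgroup with $K\subseteq J$; now the elements of $A$ are $\diag(a,at)$ with $a\in J$, $t\in K$, so $\det(A)=\{a^2:a\in J\}\cdot K=J^2K$, a subgroup of $F^\times$ since $F^\times$ is abelian. As $J^2\subseteq J$ and $K\subseteq J$, the inclusion $J^2K\subseteq J$ is immediate, so everything comes down to proving the reverse inclusion $J\subseteq J^2K$.

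For $J\subseteq J^2K$ I plan to reduce modulo $F^{\times n_c}$: since $n_c$ is odd, $\gcd(2,n_c)=1$, so by Lemma \ref{Jaut} squaring is an automorphism of $J/F^{\times n_c}$, which gives $J=J^2\cdot F^{\times n_c}$. It then suffices to absorb $F^{\times n_c}$ into $J^2K$. Writing $m=\gcd(n,4c+1)=n/n_c$ (odd, being a divisor of $4c+1$), for any $x\in F^\times$ one has $x^{n_c}=\bigl(x^{n_c(m+1)/2}\bigr)^2\cdot x^{-n}$, where the first factor lies in $J^2$ because $x^{n_c(m+1)/2}\in F^{\times n_c}\subseteq J$ (as $J$ is $n_c$-Lagrangian) and $x^{-n}\in F^{\times n}\subseteq K$; hence $F^{\times n_c}\subseteq J^2K$, and then $J=J^2\cdot F^{\times n_c}\subseteq J^2\cdot J^2K=J^2K$, which finishes $\det(A)=J=J^{n/d}$. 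The only genuinely delicate point is this last step in the odd case: reducing the whole statement to $\det(A)=J^{n/d}$ and handling the even case are formal, whereas here one must combine Lemma \ref{Jaut} (to reduce mod $F^{\times n_c}$) with the oddness of $n/n_c$ (to re-absorb $F^{\times n_c}$), the interplay of the two nested Lagrangian subgroups $K\subseteq J$ being essential.
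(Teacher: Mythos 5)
Your argument is correct, and for the odd case it is a genuinely different route from the paper's. Both begin by reducing to $T$ (you phrase this as $AG_o = \{g : \det g \in \det A\}$, using that $G_o = \ker\det$ is normal; the paper reduces to $\{t\in T : \det t\in J^{n/d}\}=AT_o$). For $n$ even the two proofs coincide in substance, since $\det A = J^2$ is immediate from the definition of $A$. For $n$ odd the divergence is real: the paper exhibits an explicit factorization $\diag(a,b)=\diag(x^{-2c},x^{-2c}x^{4c+1})\cdot\diag(ax^{2c},a^{-1}x^{-2c})$ and then needs the containment $J^{4c+1}\subseteq K$, which it establishes via the $d$-th Hilbert symbol and the nesting of the $d$- and $d_c$-Lagrangian subgroups. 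You instead reformulate the goal as the purely multiplicative identity $J^2K=J$ and prove $J\subseteq J^2K$ by first invoking Lemma \ref{Jaut} (squaring is an automorphism of $J/F^{\times n_c}$, so $J=J^2\cdot F^{\times n_c}$) and then absorbing $F^{\times n_c}$ into $J^2K$ via $x^{n_c}=\bigl(x^{n_c(m+1)/2}\bigr)^2x^{-n}$ with $m=n/n_c$ odd. This avoids the Hilbert-symbol computation entirely, at the cost of the slightly ad hoc power manipulation. It is worth noting that the paper's containment $J^{4c+1}\subseteq K$ would in fact give $J\subseteq J^2K$ in one line (write $j=(j^{2c+1})^2\,(j^{4c+1})^{-1}$), so the two key lemmas are closely related; but your derivation is self-contained and elementary, whereas the paper's rests on the reciprocity properties of the Hilbert symbol.
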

\begin{proof} It is sufficient to prove
\begin{equation} \label{forDuse}
\{ t \in T \mid \det(t) \in J^{n/d} \}=A T_o.
\end{equation}
As the inclusion $\supseteq$ is clear, it suffices to  prove the converse inclusion.

Assume first that $n$ is even. If $\diag(a,b) \in A$ is such that $ab =x^2 \in J^2$, then
$$\diag(a,b)=\diag(x,x) \cdot \diag(x^{-1}a,xa^{-1}) \in A T_o.$$
Suppose now $n$ is odd. If $\diag(a,b) \in T$ is such that $ab =x \in J$, then we obtain
$$\diag(a,b)= \diag\bigl(x^{-2c},x^{-2c}(x^{4c+1})\bigr) \cdot \diag(ax^{2c},a^{-1}x^{-2c}).$$
The proof is completed once we show that $J^{4c+1} \subseteq K$. Indeed,
since $K \subseteq J= J^{\perp}_{(d_c)}$, it follows that for all $j \in J, k\in K$, we have
$$(j^{4c+1},k)_d=(j,k)_d^{4c+1}=1.$$
Thus, $J^{4c+1} \subseteq  K^{\perp}_{(d)}=K$, as desired.
\end{proof}

\subsection{Principal series representations}
The group $\wt{G}$ (resp. $\wt{G}_o$) splits uniquely over the unipotent radical $U$ of the Borel subgroup $B=TU$ (resp. $B_o=T_o U$). In fact,  it follows from the cocycle formula \eqref{rao} that the splitting is simply given by $u \mapsto \s(u)$.

Recall from \S \ref{SS:gps} that every genuine irreducible representation of $\wt{T}$ (resp. $\wt{T}_o$) is constructed as follows.  Let $\chi$ be a character of  $\wt{A}$ (or $\wt{A}_o$). We define ${\rm Ind}_ {\wt{A}}^{\wt{T}} \chi$ (resp. ${\rm Ind}_ {\wt{A}_o}^{\wt{T}_o} \chi_o$ ) to be the space of complex functions on $\wt{T}$ (resp. on $\wt{T}_o$) such that
$$f(ah)=\chi(a) \cdot f(h)$$
for all $a \in\wt{A}, \, h \in \wt{T}$ (resp. $a \in\wt{A}_o, h \in \wt{T}_o)$.  We denote by $i(\chi)$  (resp. by $i_o(\chi)$) the  above representation of $\wt{T}$ (of $\wt{T}_o$) acting on ${\rm Ind}_ {\wt{A}}^{\wt{T}} \chi$ (resp. ${\rm Ind}_ {\wt{A}_o}^{\wt{T}_o} \chi$) by right translations.

The isomorphism class of  $\sigma \in \Irr(\wt{T})$ (resp. of $\sigma_o \in \Irr(\wt{T}_o)$) is determined by its central character $\chi_\sigma$ (resp. $\chi_{\sigma_o}$). Moreover, a realization of $\sigma$  (resp. $\sigma_o$) is given by $i(\chi')$ (resp. $i_o(\chi'_o)$) where $\chi'$ is a character of  $\wt{A}$ (resp.  $\wt{A}_o$) which extends $\chi_\sigma$ (resp. $\chi_{\sigma_o}$). In particular, one has
$$\dim \sigma= [\wt{T}: \wt{A}]=(nn_c) \val{n n_c}^{-1/2}, \quad \dim \sigma_o= [\wt{T}_o:\wt{A}_o]=d \val{d}^{-\half}.$$

For every $\sigma \in \Irr(\wt{T})$, we have the principal series representation 
$$I(\sigma):={\rm Ind}_{\wt{B}}^{\wt{G}} \sigma$$
of $\wt{G}$. Similarly, one has 
$$I(\sigma_o):= {\rm Ind}_{\wt{T}_o}^{\wt{G}_o} \sigma_o$$ 
for every $\sigma_o \in \Irr(\wt{T}_o)$.  We note that $I(\sigma)$ could be realized in another equivalent formulation. First, for fixed $\sigma$, let
$$\chi_\sigma': \wt{A} \to \C^\times$$
be an extension of $\chi_\sigma: Z(\wt{T}) \to \C^\times$. Consider the induced representation
$$I(\chi_\sigma'):={\rm Ind}_{\wt{A} U}^{\wt{G}} (\chi_\sigma' \otimes \mbm{1}_U),$$
which consists of smooth functions
$$f:\wt{G} \rightarrow \C$$
such that
$$f(tug)=\delta_B(t)^{1/2}  \cdot \chi_\sigma'(t) f(g)$$
for all $t \in  \wt{A}, u\in U, \, g\in \wt{G}$. The action of $\wt{G}$ on $I(\chi_\sigma')$ is by right translations. Using induction by stages, we have
$$I(\sigma) \simeq I(\chi'_\sigma).$$
Similarly, let $\chi_{\sigma_o}'$ be  an extension of $\chi_{\sigma_o}$ to $\wt{A}_o$. We define in an analogous way  $I(\chi_{\sigma_o}')$, which gives $I(\sigma_o) \simeq I(\chi_{\sigma_o}')$.

\subsection{Restriction of $I(\sigma)$ to $\wt{G}_o$}
Let $\chi$  be a genuine character of $\wt{A}$. For a genuine character $\xi$ of $\wt{M}$, denote by
$$I(\chi)_\xi \subseteq I(\chi)$$
the subspace of $I(\chi)$ on which  $\wt{M}$ acts by $\xi$. Since $\wt{Z(G)}$ is the centralizer of $\wt{G}_o$ inside $\wt{G}$, it follows that $I(\chi)_\xi$ is a $\wt{G}_o$-space. If
$$\xi \notin \msc{E}(\wt{M},\chi|_{Z(\wt{G})}),$$
then $I(\chi)_\xi$ is trivial. Thus, we have a decomposition of $I(\chi)$ over the finite abelian group $\wt{M}/Z(\wt{G})$ as follows:
$$I(\chi)= \bigoplus_{\xi \in  \msc{E}(\wt{M},\chi|_{Z(\wt{G})})} I(\chi)_\xi.$$

\begin{lm} \label{deco}
Write $\sigma= i(\chi)$.
If $n$ is even, then
\begin{equation} \label{decoeven}
I(\chi)=\bigoplus_{t \in \wt{T}/ \wt{M}\wt{T}_o}  I(\chi)_{\chi^t|_{\wt{M}}}= \bigoplus_{t \in \wt{T}/ \wt{M}\wt{T}_o}  \sigma(t) I(\chi)_{\chi|_{\wt{M}}}.
\end{equation}
If $n$ is odd, then
\begin{equation} \label{decoodd}
I(\chi)=\bigoplus_{t \in \wt{Z(G)}/ \wt{M}}  I(\chi)_{\chi^t|_{\wt{M}}}= \bigoplus_{t \in \wt{Z(G)}/ \wt{M}}  \sigma(t) I(\chi)_{\chi|_{\wt{M}}}.
\end{equation}
\end{lm}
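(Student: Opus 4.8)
\textbf{Plan for the proof of Lemma \ref{deco}.} The idea is to reduce both formulas to the decomposition $I(\chi) = \bigoplus_\xi I(\chi)_\xi$ over $\xi \in \msc{E}(\wt{M}, \chi|_{Z(\wt{G})})$ established just before the statement, together with the transitivity of the relevant group action on $\msc{E}(\wt{M}, \chi|_{Z(\wt{G})})$ from Lemma \ref{crucia} and the equality $\sigma(t) I(\chi)_{\chi|_{\wt{M}}} = I(\chi)_{\chi^t|_{\wt{M}}}$. First I would record the elementary ``twisting'' fact: for $t \in \wt{T}$ and a genuine character $\xi$ of $\wt{M}$, the operator $\sigma(t)$ (right translation by $t$, since $\wt{M} \subset \wt{T}$ is central up to the explicit commutator) carries the isotypic subspace $I(\chi)_\xi$ onto $I(\chi)_{\xi^{t^{-1}}}$ (or $\xi^t$, after the routine bookkeeping with which convention one fixes); this is because for $m \in \wt{M}$ and $f \in I(\chi)_\xi$ one computes $(\sigma(m)\sigma(t) f)(g) = f(gtm) = f(g \cdot (t m t^{-1}) \cdot t) = \xi^{t}(m)\,(\sigma(t)f)(g)$ using that $\wt{M}$ normalizes itself in $\wt{T}$ and $tmt^{-1} \in \wt{M}$. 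In particular $\sigma(t)$ is injective, so $\dim I(\chi)_\xi$ is constant along the $\wt{T}$-orbit (and along the $\wt{Z(G)}$-orbit) of $\xi$ in $\msc{E}(\wt{M},\chi|_{Z(\wt{G})})$.

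Next, for $n$ even, Lemma \ref{crucia}(i) says that $\wt{T}/\wt{M}\wt{T}_o$ acts freely and transitively on $\msc{E}(\wt{M},\omega)$ with $\omega = \chi|_{Z(\wt{G})}$; fixing the base point $\chi|_{\wt{M}} \in \msc{E}(\wt{M},\omega)$, a set of coset representatives $\set{t}$ of $\wt{T}/\wt{M}\wt{T}_o$ thus biject with $\msc{E}(\wt{M},\omega)$ via $t \mapsto \chi^t|_{\wt{M}}$. Substituting into $I(\chi) = \bigoplus_{\xi \in \msc{E}(\wt{M},\omega)} I(\chi)_\xi$ gives the first equality in \eqref{decoeven}, and the second equality follows from $\sigma(t) I(\chi)_{\chi|_{\wt{M}}} = I(\chi)_{\chi^t|_{\wt{M}}}$, which is the twisting fact above. (Here one should note that replacing the coset representative $t$ by $t m t_o$ with $m \in \wt{M}, t_o \in \wt{T}_o$ changes $\chi^t|_{\wt{M}}$ only through $\chi^{m}|_{\wt M} = \chi|_{\wt M}$ since $\wt M$ is abelian and $\chi^{t_o}|_{\wt{M}} = \chi|_{\wt{M}}$ since $\wt{T}_o$ centralizes $\wt{Z(G)} \supseteq \wt{M}$ by Lemma \ref{Dcent}; hence the summands are independent of the choices.) For $n$ odd, the argument is verbatim the same, using instead Lemma \ref{crucia}(ii), by which $\wt{Z(G)}/\wt{M}$ acts freely and transitively on $\msc{E}(\wt{M},\omega)$; taking coset representatives $\set{t}$ of $\wt{Z(G)}/\wt{M}$ and arguing as before yields \eqref{decoodd}, with the well-definedness of the summands following since $\wt M$ is abelian (so inner twists by $\wt M$ are trivial on $\wt M$).

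The only genuine subtlety, which I would address explicitly, is the compatibility of the two conventions in \eqref{decoeven} and \eqref{decoodd}, i.e.\ that ``$t$ ranges over cosets'' is unambiguous: one must check that the assignment $t \mapsto I(\chi)_{\chi^t|_{\wt{M}}}$ really factors through $\wt{T}/\wt{M}\wt{T}_o$ (resp.\ $\wt{Z(G)}/\wt{M}$), which is exactly the invariance computation in the parenthetical remark above, and that the resulting map to $\msc{E}(\wt{M},\omega)$ is the bijection furnished by Lemma \ref{crucia}. Everything else is a direct unwinding of definitions; I do not expect any analytic or combinatorial obstacle here, and the ``hard part'' is merely to keep the direction of the twist ($\xi \mapsto \xi^t$ versus $\xi \mapsto \xi^{t^{-1}}$) consistent with the convention $\pi^k(h) = \pi(h^k)$, $h^k = khk^{-1}$ fixed at the start of the section.
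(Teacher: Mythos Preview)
Your proposal is correct and follows essentially the same route as the paper: both use the decomposition $I(\chi)=\bigoplus_{\xi\in\msc{E}(\wt{M},\omega)}I(\chi)_\xi$ together with Lemma \ref{crucia} to reparametrize the sum by cosets, and then verify the twisting identity $\sigma(t^{\pm 1})I(\chi)_{\chi|_{\wt{M}}}=I(\chi)_{\chi^t|_{\wt{M}}}$ by the same right-translation computation. Your explicit check that the summands depend only on the coset of $t$ (via Lemma \ref{Dcent} and abelianness of $\wt{M}$) is a worthwhile addition that the paper leaves implicit; note only that in your displayed computation $(\sigma(m)\sigma(t)f)(g)$ equals $f(gmt)$ rather than $f(gtm)$, which flips the twist to $\xi^{t^{-1}}$---exactly the bookkeeping issue you already flagged, and harmless since the index set is a group.
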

\begin{proof}
Assume first $n$ is even.  We observe that
$$\chi|_{\wt{M}} \in \msc{E}(\wt{M}, \chi|_{Z(\wt{G})}).$$
The first equality in \eqref{decoeven} follows from Lemma \ref{crucia} . To prove the second equality in \eqref{decoeven}, we note that if
$f \in  I(\chi)_{\chi|_{\wt{M}}}$, then for $m \in \wt{M}, t \in \wt{T}$ one has
$$I(\chi)(m) \circ I(\chi)(t^{-1})f=I(\chi)(t^{-1}) \circ \sigma(m^t)f=\chi(m^t) \cdot I(\chi)(t^{-1})f.$$
Thus, $f \mapsto \sigma(t^{-1})f$ is a linear isomorphism from $I(\chi)_{\chi|_{\wt{M}}}$ to $I(\chi)_{\chi^t|_{\wt{M}}}$. The proof for the odd case follows along the same line, and we omit the details.
\end{proof}

For $t \in \wt{T}$, let
$$I(\chi)_t \subseteq I(\chi)$$ be the subspace of functions in  $I(\chi)$ supported on the set $$\{g\in \wt{G} \mid \det(g) \in  \det(t)\cdot J^{n/d} \},$$
which is just equal to $t\wt{A}\wt{G}_o$ by Lemma \ref{detj}.

\begin{lm} \label{eigencoset}
For $t \in \wt{T}$, we have $I(\chi)_{\chi^t|_{\wt{M}}}=I(\chi)_t$.
\end{lm}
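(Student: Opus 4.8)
The plan is to realize $I(\chi)_t$ as the part of $I(\chi)$ supported on a single coset of the subgroup $\wt{A}\wt{G}_o$, to show directly that $\wt{M}$ acts on that part by the scalar character $\chi^t|_{\wt{M}}$, and then to obtain the reverse inclusion by an ``orthogonality of supports'' argument, using that distinct cosets give distinct characters of $\wt{M}$.

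First I would record the support decomposition. By Lemma~\ref{detj} the map $\det\colon \wt{G}\to F^\times/J^{n/d}$ is a homomorphism with kernel $\wt{A}\wt{G}_o$, and $[F^\times:J^{n/d}]<\infty$, so $\wt{G}$ is a \emph{finite} disjoint union of open-closed cosets $t\wt{A}\wt{G}_o$. Multiplying functions in $I(\chi)$ by the corresponding indicator functions (which are smooth, as $J^{n/d}$ is open) yields a finite direct sum $I(\chi)=\bigoplus I(\chi)_t$ over these cosets, each summand depending only on the coset $\det(t)J^{n/d}$. Since $M\subseteq A$ — immediate from the definition of $A$ in both the odd and the even case — one has $\wt{M}\subseteq \wt{A}$; and as $\wt{M}\subseteq\wt{Z(G)}$ centralizes $\wt{G}_o$, right translation by $m\in\wt{M}$ sends $t\wt{A}\wt{G}_o$ to $t\wt{A}m^{-1}\wt{G}_o=t\wt{A}\wt{G}_o$, so each $I(\chi)_t$ is $\wt{M}$-stable.

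Next I would run the short commutator computation. For $f\in I(\chi)_t$ and $g=tbh$ with $b\in\wt{A}$, $h\in\wt{G}_o$, move $m\in\wt{M}$ to the left: $hm=mh$ since $m\in\wt{Z(G)}$; $bm=mb$ since $\wt{A}$ is abelian; and $tm=[t,m]\,mt$ with $[t,m]\in\bbmu_n$ (as $m$ projects to a central element of $G$). Hence $gm=[t,m]\,mg$, and since $m\in\wt{A}$ projects to a central torus element on which $\delta_B$ is trivial, the defining transformation law of $I(\chi)$ gives $f(gm)=[t,m]\chi(m)f(g)=\chi(tmt^{-1})f(g)=\chi^t(m)f(g)$. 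So $\wt{M}$ acts on $I(\chi)_t$ by $\chi^t|_{\wt{M}}$; in particular $I(\chi)_t\subseteq I(\chi)_{\chi^t|_{\wt{M}}}$.

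For the reverse inclusion, observe that $t\wt{A}\wt{G}_o\mapsto \chi^t|_{\wt{M}}$ is injective: by Lemma~\ref{detstab}, $\chi^{t_1}|_{\wt{M}}=\chi^{t_2}|_{\wt{M}}$ iff $\det(t_1t_2^{-1})\in J^{n/d}$. Now take $f\in I(\chi)_{\chi^t|_{\wt{M}}}$ and decompose $f=\sum_{t'}f_{t'}$ along the support decomposition; each $f_{t'}\in I(\chi)_{t'}$ is an eigenvector for $\wt{M}$ with character $\chi^{t'}|_{\wt{M}}$, and comparing $\sum_{t'}\chi^{t'}(m)f_{t'}=I(\chi)(m)f=\chi^t(m)f=\sum_{t'}\chi^t(m)f_{t'}$ term by term (directness of the support decomposition) forces $f_{t'}=0$ whenever $\chi^{t'}|_{\wt{M}}\neq\chi^t|_{\wt{M}}$, i.e.\ whenever $t'\wt{A}\wt{G}_o\neq t\wt{A}\wt{G}_o$. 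Thus $f=f_t\in I(\chi)_t$, proving $I(\chi)_{\chi^t|_{\wt{M}}}\subseteq I(\chi)_t$. The only step that needs care is the commutator computation: one must verify that the factor $[b,m]$ that appears a priori really drops out — which it does, precisely because $\wt{A}$ is abelian — since otherwise $\wt{M}$ would fail to act by a scalar on $I(\chi)_t$. Everything else is bookkeeping.
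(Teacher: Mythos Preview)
Your proof is correct and follows essentially the same approach as the paper. The commutator computation showing $I(\chi)_t\subseteq I(\chi)_{\chi^t|_{\wt{M}}}$ is identical to the paper's, and your reverse inclusion via the support decomposition plus injectivity of $t\wt{A}\wt{G}_o\mapsto \chi^t|_{\wt{M}}$ (Lemma~\ref{detstab}) is just a repackaging of the paper's pointwise argument, which observes directly that $f(gm)=\chi^g(m)f(g)$ forces $f(g)=0$ whenever $\chi^g|_{\wt{M}}\neq\chi^t|_{\wt{M}}$.
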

\begin{proof}
If $f \in  I(\chi)_{\chi^g|_{\wt{M}}}$, then for  all $m \in \wt{M}$, $g \in \wt{G}$ we have
$$(\sigma(m)f)(g)=\chi^t(m)f(g).$$
On the other hand,
$$(\sigma(m)f)(g)=f(gm)=f(m^gg)=\chi^{g}(m) f(t).$$
Thus, if $\chi^{t}|_{\wt{M}} \neq \chi^{g}|_{\wt{M}} $, then $f(t)=0$. It follows from Lemma \ref{detstab} that
$$I(\chi)_{\chi^t|_{\wt{M}}} \subseteq I(\chi)_t.$$

We now prove the converse inclusion. Suppose that $f \in I(\chi)$ is supported on $t\wt{T}\wt{G}_o.$
Since
$$t\wt{A}\wt{G}_o\wt{M}=t\wt{A}\wt{G}_o,$$
it suffices to show that if $g \in t\wt{A}\wt{G}_o$, then for all $m \in \wt{M}$ we have $f(gm)=\chi^t(m)\cdot f(g)$. Indeed, from Lemma \ref{Dcent} it follows that for $m \in \wt{M}$ and $g \in t\wt{A}\wt{G}_o$, we have $gm=m^t g$. This completes the proof.
\end{proof}

\begin{lm} \label{relem}
Let $\chi$ be a genuine character of $\wt{A}$ and $\chi_o$  be its restriction to $\wt{A}_o$. For $t \in \wt{T}$, we have
$$I(\chi)_t \simeq I_o((\chi_o)^t)$$
as representations of $\wt{G}_o$.
\end{lm}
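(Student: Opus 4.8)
The plan is to exhibit an explicit $\wt{G}_o$-equivariant isomorphism between $I(\chi)_t$ and $I_o((\chi_o)^t)$. The space $I(\chi)_t$ consists of functions in $I(\chi) = I(\chi_\sigma')$ (realized as functions on $\wt{G}$) supported on $t\wt{A}\wt{G}_o$, and by Lemma \ref{detj} this support condition is precisely $\{g : \det(g) \in \det(t)\cdot J^{n/d}\}$. Since $\wt{A}_o \subset \wt{A}$ and $\wt{A}_o = \wt{A} \cap \wt{G}_o$, the restriction $\chi_o = \chi|_{\wt{A}_o}$ is a genuine character of $\wt{A}_o$, and conjugation gives the character $(\chi_o)^t(a) := \chi_o(t a t^{-1}) = \chi(tat^{-1})$ of $\wt{A}_o$ (note $tat^{-1} \in \wt{A}_o$ as $\wt{A}_o$ is normal in $\wt{T}_o$ and $t$ normalizes $\wt{T}_o$ up to the center). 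So $I_o((\chi_o)^t)$ is realized as functions on $\wt{G}_o$ transforming on the left by $(\chi_o)^t$ against $\wt{A}_o U$ with the appropriate modulus character $\delta_{B_o}^{1/2}$.

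First I would define the map $\Psi: I(\chi)_t \to I_o((\chi_o)^t)$ by $\Psi(f)(g_o) := f(t g_o)$ for $g_o \in \wt{G}_o$ (more precisely, one fixes a lift $\wt{t}$ of $t$ and sets $\Psi(f)(g_o) = f(\wt{t}\, g_o)$; a different choice of lift only rescales by an element of $\bbmu_n$ acting genuinely, so one absorbs this into the choice consistently). The key steps are then: (1) check $\Psi(f)$ genuinely lies in $I_o((\chi_o)^t)$, i.e. verify the left-transformation law $\Psi(f)(a_o u g_o) = \delta_{B_o}^{1/2}(a_o)\,(\chi_o)^t(a_o)\,\Psi(f)(g_o)$ for $a_o \in \wt{A}_o$, $u\in U$; this is a direct computation using $f(a u (t g_o)) = f(t\, (t^{-1}at)\, (t^{-1}ut)\, g_o)$, the transformation law for $f$ in $I(\chi_\sigma')$, and the compatibility of the modulus characters $\delta_B|_{T_o} = \delta_{B_o}$ (since $U$ is the same unipotent radical for both groups, their modulus characters on $T_o$ agree). (2) check $\Psi$ is $\wt{G}_o$-equivariant: for $h \in \wt{G}_o$, $(\Psi(I(\chi)(h)f))(g_o) = f(t g_o h) = \Psi(f)(g_o h) = (I_o((\chi_o)^t)(h)\Psi(f))(g_o)$, which is immediate. (3) check $\Psi$ is a bijection: injectivity follows because $f \in I(\chi)_t$ is determined by its values on $t\wt{A}\wt{G}_o$, and by the transformation law it is determined by its values on $t\wt{G}_o$ (the $\wt{A}$-part is recovered from the $\wt{A}_o$-part together with $\det$, using that $\wt{A} = \wt{A}_o \cdot \wt{M}$ up to center and Lemma \ref{Tfacts}(ii),(iii)); surjectivity follows by reversing the construction, defining $f$ on $t\wt{A}\wt{G}_o$ from a given function on $\wt{G}_o$ via the transformation law and extending by zero off this set (well-definedness uses exactly Lemma \ref{detj} identifying $\{\det \in \det(t)J^{n/d}\}$ with $t\wt{A}\wt{G}_o$, so the support set is a union of cosets and the zero-extension is $\wt{G}_o$-stable).

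The main obstacle I anticipate is step (3), specifically verifying that the zero-extension off $t\wt{A}\wt{G}_o$ is consistent, i.e. that $t\wt{A}\wt{G}_o$ is genuinely a single left coset of the right-invariant set $\wt{A}\wt{G}_o$ under the determinant, and that a function defined on $t\wt{G}_o$ by the $\wt{A}_o$-transformation law extends uniquely and consistently to all of $t\wt{A}\wt{G}_o$ by the $\wt{A}$-transformation law — this requires matching up the discrepancy between $\wt{A}$ and $\wt{A}_o$, which is controlled by $\wt{M}$ and the Lagrangian data $J, K$ set up in Lemma \ref{Tfacts}. One needs to know that $\wt{A}\cap \wt{G}_o = \wt{A}_o$ (Lemma \ref{Tfacts}(iii)) and that $\det(\wt{A}) = J \cdot J = $ (in the even case) or $J$ (odd case) generates the correct coset structure modulo $J^{n/d}$, so that the decomposition $\wt{A}\wt{G}_o = \bigsqcup (\text{cosets})$ is compatible with $\det$. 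Once the coset/support bookkeeping is pinned down via Lemma \ref{detj} and the maximal-abelian structure in Lemma \ref{Tfacts}, the transformation-law verifications in (1) and (2) are routine. I would also remark that this lemma, combined with Lemma \ref{eigencoset} and Lemma \ref{deco}, immediately yields the decomposition of $I(\sigma)|_{\wt{G}_o}$ into principal series asserted in Theorem \ref{decopropcor}, so the present lemma is the technical heart of that theorem.
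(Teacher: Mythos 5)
Your proposal is correct and matches the paper's proof: both define the forward map by $f\mapsto f(t\,\cdot)$ restricted to $\wt{G}_o$ and exhibit the inverse by the transformation-law-driven zero-extension off $t\wt{A}\wt{G}_o$, with well-definedness hinging on $\wt{A}\cap\wt{G}_o=\wt{A}_o$ (the paper calls these $R_{\chi,t}$ and $Y_{\chi,t}$). One small remark: the well-definedness check in step (3) is actually simpler than your discussion of $\wt{M}$ and the Lagrangian data $J,K$ suggests — once $tag_o=ta'g_o'$ forces $aa'^{-1}=g_o'g_o^{-1}\in\wt{A}\cap\wt{G}_o=\wt{A}_o$, the agreement of the two proposed values follows directly from $(\chi_o)^t$ being the restriction of $(\chi)^t$, with no further bookkeeping needed.
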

\begin{proof}
Define
$$R_{\chi,t}: I(\chi)_t \rightarrow  I_o((\chi_o)^t)$$
by
$$R_{\chi, t}(f)(g_o)=f(tg_o).$$
Clearly, $R_{\chi,t} \in \Hom_{\wt{G}_o} \bigl(I(\chi)_t , I_o((\chi_o)^t)\bigr).$
We also define
$$Y_{\chi,t}: I_o((\chi_o)^t ) \rightarrow I(\chi)_t$$
by
$$\bigl((Y_{\chi,t})f\bigr)(g)=
\begin{cases}
(\chi)^t(a)f(g_o) & \text{ if } g=tag_o, a\in \wt{A},\ g_o \in \wt{G}_o; \\
0 &  \text{ if }  g \notin t \wt{A}\wt{G}_o.
\end{cases}
$$
We check that $Y_{\chi,t}$ is well-defined. Indeed, suppose that $ag_o=a'g_o'$ where $a, a' \in \wt{A}, g_o, g_o' \in \wt{G}_o$. Denote  $a_o=aa'^{-1}=g_o'g_o^{-1}$ and observe that $a_o  \in \wt{A} \cap \wt{G}_o=\wt{A}_o$.  For $f \in I_o((\chi_o)^t) $ we have
$$(\chi)^t(a')f(g_o')=(\chi)^t(a'a_o^{-1})f(a_og_o)=(\chi)^t(aa_o^{-1})(\chi_o)^t(a_o)f(g_o)=(\chi)^t(a)f(g_o).$$
Lastly, by a straightforward computation, one shows that
$$Y_{\chi,t} \in \Hom_{\wt{G}_o} \bigl( I_o((\chi_o)^t), I(\chi)_t)$$
and $Y^{-1}_{\chi,t}=R_{\chi,t}$. This completes the proof.
\end{proof}

\begin{prop} \label{decoprop}
Let $\chi$ be a genuine character of $\wt{A}$ and $\chi_o$ its restriction to $\wt{A}_o$.
If $n$ is odd, then
\begin{equation} \label{sldecoodd}
I(\chi)|_{\wt{G}_o} \simeq \bigoplus_{t \in \wt{Z(G)}/ \wt{M}}  I_o\bigl((\chi_o)^t\bigr) = n_c \val{n_c}^{-1/2} \cdot I_o(\chi_o).
\end{equation}
If $n$ is even, then
\begin{equation} \label{slddecoeven}
I(\chi)|_{\wt{G}_o} \simeq \bigoplus_{t \in \wt{T}/ \wt{M}\wt{T}_o}   I_o\bigl((\chi_o)^t\bigr) =   \bigoplus_{t \in \wt{T}/ {\wt{Z(G)}}\wt{T}_o}  d_c \val{d_c}^{-1/2} \cdot  I_o\bigl((\chi_o)^t\bigr)
\end{equation}
\end{prop}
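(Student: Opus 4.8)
The plan is to assemble Proposition~\ref{decoprop} from the lemmas already proved, so the argument is essentially bookkeeping on top of Lemma~\ref{deco}, Lemma~\ref{eigencoset} and Lemma~\ref{relem}. First I would recall that $I(\chi)=\bigoplus I(\chi)_\xi$ decomposes over $\msc{E}(\wt{M},\chi|_{Z(\wt{G})})$, and that by Lemma~\ref{deco} the indexing set of nonzero summands is realised by a transversal for $\wt{Z(G)}/\wt{M}$ (when $n$ is odd) or for $\wt{T}/\wt{M}\wt{T}_o$ (when $n$ is even); here one invokes Lemma~\ref{crucia} to know the action on $\msc{E}(\wt{M},\omega)$ is free and transitive, so these transversals index the summands bijectively. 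Then Lemma~\ref{eigencoset} identifies $I(\chi)_{\chi^t|_{\wt{M}}}=I(\chi)_t$, and Lemma~\ref{relem} gives $I(\chi)_t\simeq I_o((\chi_o)^t)$ as $\wt{G}_o$-representations. Stringing these together yields
$$I(\chi)|_{\wt{G}_o}\simeq\bigoplus_{t}I_o\bigl((\chi_o)^t\bigr),$$
with $t$ running over $\wt{Z(G)}/\wt{M}$ (odd $n$) or $\wt{T}/\wt{M}\wt{T}_o$ (even $n$).

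Next I would collapse the multiplicities. In the odd case, by Remark~\ref{tricky} (or directly Lemma~\ref{detstab} with $\det$ of a central element being a square) one has $(\chi_o)^t=\chi_o$ for every $t\in\wt{Z(G)}$: the character $\lambda_t|_M$ depends only on $\det(t)\bmod J$, and $\det(a1_G)=a^2\in J$ since $F^{\times d}\subseteq K\subseteq J$ and $a^2J$ ranges over squares. Hence every summand is $I_o(\chi_o)$ and the number of them is $[\wt{Z(G)}:\wt{M}]=[\wt{M}:Z(\wt{G})]=n_c|n_c|^{-1/2}$ by Lemma~\ref{mlem} (using the odd-$n$ branch of \eqref{Dindex}), giving \eqref{sldecoodd}. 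In the even case the characters $(\chi_o)^t$ genuinely vary, but only through the image of $t$ in $\wt{T}/\wt{Z(G)}\wt{T}_o$: by Lemma~\ref{detstab}, $(\chi_o)^t$ is unchanged when $t$ is multiplied by an element whose determinant lies in $J^{n/d}=J^2$, i.e.\ by $\wt{M}\wt{T}_o$ modulo $\wt{Z(G)}$, so two cosets in $\wt{T}/\wt{M}\wt{T}_o$ with the same image in $\wt{T}/\wt{Z(G)}\wt{T}_o$ give isomorphic $I_o((\chi_o)^t)$. The fibre of $\wt{T}/\wt{M}\wt{T}_o\to\wt{T}/\wt{Z(G)}\wt{T}_o$ has size $[\wt{Z(G)}\wt{T}_o:\wt{M}\wt{T}_o]=[\wt{Z(G)}:\wt{M}(\wt{Z(G)}\cap\wt{T}_o)]$; using Lemma~\ref{Tfacts}(ii) ($\wt{A}\cap\wt{Z(G)}=\wt{M}$) and the centre computations of Lemma~\ref{centers} this count equals $d_c|d_c|^{-1/2}$, which is exactly the even-$n$ branch one wants in \eqref{slddecoeven}.

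The main obstacle I anticipate is not any single lemma but getting the index arithmetic to land on the clean constants $n_c|n_c|^{-1/2}$ and $d_c|d_c|^{-1/2}$: one must carefully juggle $[\wt{T}:\wt{A}]$, $[\wt{M}:Z(\wt{G})]$, $[\wt{T}:\wt{M}\wt{T}_o]=|\msc{E}(\wt{M},\omega)|$, and $[\wt{T}_o:\wt{A}_o]=d|d|^{-1/2}$, checking that the Hilbert-symbol factors $|2|^{-1/2}$ in \eqref{Dindex} combine correctly with $|d|$ versus $|n_c|$. Concretely, in the even case I would verify $\dim I(\chi)/\dim I(\chi)_t$ equals the number of summands times $\dim I_o((\chi_o)^t)/\dim I(\chi)_t$, i.e.\ that $[\wt{T}:\wt{A}]=[\wt{T}:\wt{M}\wt{T}_o]\cdot[\wt{T}_o:\wt{A}_o]\cdot|\wt{M}/Z(\wt{G})|^{?}$ with the right exponents, and similarly in the odd case $[\wt{T}:\wt{A}]=[\wt{Z(G)}:\wt{M}]\cdot[\wt{T}_o:\wt{A}_o]$ — this dimension count is both the sanity check and the place where a slip is most likely. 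Once the counts match, the proposition follows, and the refinement that for $n\equiv2\ (\mathrm{mod}\ 4)$ one may replace $\eta_{x,(n)}$ by $\eta_{x,(2)}$ (stated in the theorem quoted just before the excerpt ends) will be deduced separately using Lemma~\ref{oldlem} to rewrite the relevant Hilbert symbols.
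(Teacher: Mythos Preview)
Your overall architecture is exactly the paper's: combine Lemma~\ref{deco}, Lemma~\ref{eigencoset} and Lemma~\ref{relem} to obtain the first direct sum, then collapse multiplicities. The one place you make life harder than necessary is the collapse step. You reach for Lemma~\ref{detstab} and Remark~\ref{tricky}, but those concern genuine characters of $\wt{M}$, not of $\wt{A}_o$, so they do not literally apply to $(\chi_o)^t$; you would have to redo an analogous Hilbert-symbol computation for $\wt{A}_o$. The paper bypasses this entirely by invoking Lemma~\ref{Dcent}: since $\wt{Z(G)}$ is the centralizer of $\wt{G}_o$ in $\wt{G}$, conjugation by any $g\in\wt{Z(G)}$ fixes $\chi_o$ pointwise, so $I_o((\chi_o)^{tg})=I_o((\chi_o)^t)$. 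This single observation handles both parities at once and makes the determinant bookkeeping you worry about unnecessary.

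For the index in the even case, the paper simply writes
\[
\bigoplus_{t\in\wt{T}/\wt{M}\wt{T}_o} I_o((\chi_o)^t)\;\simeq\;\bigoplus_{t\in\wt{T}/\wt{Z(G)}\wt{T}_o}\ \bigoplus_{g\in\wt{Z(G)}\wt{T}_o/\wt{M}\wt{T}_o} I_o((\chi_o)^{tg})
\]
and reads off $[\wt{Z(G)}\wt{T}_o:\wt{M}\wt{T}_o]=d_c|d_c|^{-1/2}$ from Lemma~\ref{evenslcent}; no dimension-matching sanity check is needed. In the odd case $[\wt{Z(G)}:\wt{M}]=n_c|n_c|^{-1/2}$ is immediate from Lemma~\ref{mlem}, as you say. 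So your proposal is correct in outline, but drop the appeal to Lemma~\ref{detstab} and use Lemma~\ref{Dcent} instead; the anticipated index juggling then evaporates.
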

\begin{proof}
We prove first when $n$ is odd. The isomorphism in \eqref{sldecoodd} follows  from \eqref{decoodd} combined with Lemmas \ref{eigencoset} and \ref{relem}. To prove the equality in  \eqref{sldecoodd}, we note that Lemma \ref{Dcent} implies that
$$I_o(\chi_o)=  I_o\bigl((\chi_o)^t\bigr)$$
 for all $t\in \wt{Z(G)}$.

Assume now that $n$ is even. The isomorphism in \eqref{slddecoeven} follows from a similar argument we used in the odd case. We have
$$\bigoplus_{t \in \wt{T}/ \wt{M}\wt{T}_o}  I_o\bigl((\chi_o)^t\bigr) \simeq \bigoplus_{t \in  \wt{T}/\wt{Z(G)} \wt{T}_o}\bigoplus_{g \in \wt{Z(G)} \wt{T}_o/\wt{M}\wt{T}_o}    I_o\bigl((\chi_o)^{tg}\bigr).$$
To prove the  equality in \eqref{slddecoeven}, observe that from Lemma \ref{Dcent} it follows that $I_o\bigl((\chi_o)^{tg}\bigr)=I_o\bigl((\chi_o)^t\bigr)$ for every $g\in \wt{Z(G)}$, and that by  Lemma \ref{evenslcent} one has
$$[\wt{Z(G)} \wt{T}_o: \wt{M}\wt{T}_o]=d_c \val{d_c}^{-1/2}.$$
This completes the proof.
\end{proof}

\begin{thm} \label{decopropcor}
Let $\sigma \in \Irr(\wt{T})$.
\begin{enumerate}
  \item[(i)] Suppose that $n$ is odd. Let $\sigma_o \in \Irr(\wt{T}_o)$ be the genuine smooth irreducible  representation  of $\wt{T}_o$ determined by the relation $\chi_{\sigma_o}=\chi_\sigma|_{Z(\wt{T}_o)}.$ Then
  $$I(\sigma)\mid_{\wt{G}_o} \simeq  n_c \val{n_c}^{-1/2} \cdot I(\sigma_o).$$
  \item[(ii)] Suppose that $n$ is even. Then,
      $$I(\sigma)|_{\wt{G}_o} \simeq \bigoplus_{ \substack{ \sigma_o \in  \Irr(\wt{T}_o) \\ \chi_{\sigma_o} \in E_o(\chi_{\sigma}|_{Z(\wt{T})\cap \wt{T}_o} )  }}  d_c \val{d_c}^{-1/2}  \cdot I_o(\sigma_o).$$
  \item[(iii)] Suppose that $n$ is even. Fix a $ \sigma_o \in  \Irr(\wt{T}_o)$ such that $\chi_{\sigma_o}$ agrees with  $\chi_{\sigma}$ on $Z(\wt{T})\cap \wt{T}_o$. Then we have
      $$I(\sigma)|_{\wt{G}_o} \simeq \bigoplus_{x\in F^\times/ F^{\times 2}}  d_c \val{d_c}^{-1/2}\cdot  I_o(\eta_{x,(n)} \otimes\sigma_o),$$
      where $\eta_{x,(n)}$ is viewed as a (non-genuine) character of $\wt{T}_o$ given by
      $$\s(\alpha^\vee(a)) \cdot \zeta \mapsto \eta_{x,(n)}(a) \text{ for all } \zeta \in \bbmu_n.$$
      Furthermore, if $n \equiv 2 \ (\text{mod } 4)$, then
      $$I(\sigma)|_{\wt{G}_o} \simeq  \bigoplus_{x\in F^\times/F^{\times 2}} d_c \val{d_c}^{-1/2} \cdot I_o(\eta_{x,(2)} \otimes\sigma_o).$$
\end{enumerate}
\end{thm}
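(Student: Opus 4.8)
The plan is to deduce the Theorem from Proposition \ref{decoprop}, which already gives the decomposition of $I(\chi)|_{\wt{G}_o}$ for an arbitrary genuine character $\chi$ of $\wt{A}$, by translating the statements into the language of central characters via the Stone--von Neumann dictionary $\sigma \leftrightarrow \chi_\sigma$ and $\sigma_o \leftrightarrow \chi_{\sigma_o}$. First I would fix a realization: choose an extension $\chi'_\sigma$ of $\chi_\sigma$ to $\wt{A}$ so that $I(\sigma)\simeq I(\chi'_\sigma)$, and set $\chi_o := \chi'_\sigma|_{\wt{A}_o}$. For part (i), $n$ odd, Proposition \ref{decoprop} gives $I(\chi'_\sigma)|_{\wt{G}_o}\simeq n_c|n_c|^{-1/2}\cdot I_o(\chi_o)$; it then remains to check that $i_o(\chi_o)$ has central character $\chi_\sigma|_{Z(\wt{T}_o)}$, which is immediate since $Z(\wt{T}_o)\subseteq \wt{A}_o$ (Lemma \ref{Tfacts}(iv), together with Lemma \ref{centers}(iv)), and thus $\chi_{\sigma_o}=\chi_o|_{Z(\wt{T}_o)}=\chi_\sigma|_{Z(\wt{T}_o)}$. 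So $I_o(\chi_o)$ is a realization of the $\sigma_o$ described in (i), giving $I(\sigma)|_{\wt{G}_o}\simeq n_c|n_c|^{-1/2}\cdot I(\sigma_o)$.

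For part (ii), $n$ even, Proposition \ref{decoprop} gives $I(\chi'_\sigma)|_{\wt{G}_o}\simeq \bigoplus_{t\in \wt{T}/\wt{Z(G)}\wt{T}_o} d_c|d_c|^{-1/2}\cdot I_o((\chi_o)^t)$. Here I would use the torsor structure from Lemma \ref{evenslcent}(ii): the group $\wt{T}/\wt{Z(G)}\wt{T}_o$ acts freely and transitively on $E_o(\xi)$ where $\xi=\chi_\sigma|_{Z(\wt{T})\cap\wt{T}_o}$, via $\chi_{\sigma_o}\mapsto \chi_{\sigma_o}^t$. One checks that the central character of $i_o((\chi_o)^t)$ is $(\chi_o|_{Z(\wt{T}_o)})^t$, and as $t$ runs over coset representatives these exhaust $E_o(\xi)$ without repetition; hence the sum over $t$ becomes a sum over $\sigma_o\in\Irr(\wt{T}_o)$ with $\chi_{\sigma_o}\in E_o(\xi)$, which is exactly (ii). The main point to verify carefully is that $\chi_\sigma$ and $\chi_o$ agree on $Z(\wt{T})\cap\wt{T}_o$ — this holds because $Z(\wt{T})\cap\wt{T}_o\subseteq \wt{A}_o\cap Z(\wt{T})$ and $\chi_o$ is by definition the restriction of an extension of $\chi_\sigma$; combined with Lemma \ref{centers}(v) this pins down the relevant index $[Z(\wt{T}_o):Z(\wt{T})\cap\wt{T}_o]=[F^\times:F^{\times 2}]$ and confirms the multiplicity count.

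For part (iii) I would fix one $\sigma_o$ as allowed and then identify the torsor $E_o(\xi)$ explicitly: by Lemma \ref{dualcenter} (or directly Lemma \ref{evenslcent}(i)) the extensions of $\xi$ to $Z(\wt{T}_o)$ are exactly $\{\chi_{\sigma_o}\cdot \eta_{x,(n)}|_{Z(\wt{T}_o)} : x\in F^\times/F^{\times 2}\}$, where $\eta_{x,(n)}$ is the non-genuine character $\s(\alpha^\vee(a))\zeta\mapsto \eta_{x,(n)}(a)$; one must check that the bijection $x\mapsto \eta_{x,(n)}|_{Z(\wt{T}_o)}$ on $F^\times/F^{\times 2}$ is well-defined and injective, which reduces to computing $\eta_{x,(n)}(a)=(x,a)_n$ on $a\in F^{\times d}$ and invoking Lemma \ref{nicesat2} with $(m,l)=(2,d/2)$ or the perfectness of the Hilbert pairing on $F^{\times d}/F^{\times n}\cong F^\times/F^{\times 2}$. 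Substituting $\eta_{x,(n)}\otimes\sigma_o$ (whose central character is $\chi_{\sigma_o}\cdot \eta_{x,(n)}|_{Z(\wt{T}_o)}$) for the generic $I_o(\sigma_o)$ summand in (ii) yields the first displayed formula in (iii). Finally, when $n\equiv 2\ (\mathrm{mod}\ 4)$, I would note $d=n/2$ is odd, so $F^{\times d}\supseteq F^{\times n/2}$ and hence $(x,a)_n=(x,a)_n^{?}$ simplifies; more precisely $\eta_{x,(n)}$ and $\eta_{x,(2)}$ restrict to the same character of $Z(\wt{T}_o)$ because on $a\in F^{\times d}$ we have $(x,a)_n = (x,a^{1/d \cdot d})_n$ and $(x,\cdot)_n$ factors through $F^\times/F^{\times 2}$ in the relevant sense — I would spell this out using \eqref{FV fact} and the fact that $d$ is odd — giving the second displayed formula. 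The main obstacle I anticipate is purely bookkeeping: matching the ``abstract'' torsor $\wt{T}/\wt{Z(G)}\wt{T}_o$ in Proposition \ref{decoprop} with the concrete parametrization by $F^\times/F^{\times 2}$ via Hilbert symbols, and keeping the various lattices $K\subseteq J$ and the Lagrangian conditions consistent throughout; none of the individual steps is deep, but the compatibility of all the identifications (especially the $n\equiv 2\ (\mathrm{mod}\ 4)$ refinement) needs care.
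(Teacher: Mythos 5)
Your proposal is correct and takes essentially the same route as the paper: part (i) is a direct read-off from Proposition \ref{decoprop} after matching central characters, (ii) combines Proposition \ref{decoprop} with the torsor structure of Lemma \ref{evenslcent}(ii), and (iii) makes the torsor explicit via the Hilbert-symbol pairing of Lemma \ref{evenslcent}(i), with the $n\equiv 2\pmod 4$ refinement coming from the fact that $\eta_{x,(n)}|_{Z(\wt{T}_o)}$ is quadratic and $d$ is odd (so $\eta_{x,(n)}^d=\eta_{x,(n/d)}=\eta_{x,(2)}$ via \eqref{FV fact}). One minor flag: the fallback citation of Lemma \ref{nicesat2} with $(m,l)=(2,d/2)$ in (iii) does not type-check when $d$ is odd; the correct reference (which you also give) is Lemma \ref{evenslcent}(i) or Lemma \ref{dualcenter}(i) with $(m,l)=(d,2)$.
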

\begin{proof}
The first assertion (i) follows immediately from Proposition \ref{decoprop}. To prove  (ii), one uses Proposition \ref{decoprop} and Lemma \ref{evenslcent}. For (iii), we note that as a set of representatives of  $Z(\wt{T})\cap \wt{T}_o$, one may pick a set of elements in $\wt{T}$ such that the determinants of the elements inside vary over  $F^\times/F^{\times 2}$. In addition, by Lemma \ref{evenslcent},
$$\chi^t_{\sigma_o}=\eta_{\det(t),(n)}|_{Z(\wt{T}_o)} \otimes \chi_{\sigma_o}.$$
For the last statement in (iii), note that $\eta_{x,(n)}|_{Z(\wt{T}_o)}$ is a quadratic character. Thus, if $d$ is odd, then
$$\eta_{x,(n)}|_{Z(\wt{T}_o)}=\eta_{x,(n)}^d |_{Z(\wt{T}_o)}=\eta_{x,(2)} |_{Z(\wt{T}_o)}.$$
This completes the proof.
\end{proof}

The results above for $I(\sigma)$ have a parallel for restricting the representation $i(\chi)$ to $\wt{T}_o$.

\begin{prop} \label{decoproph}
Let $\chi$ be a genuine character of $\wt{A}$ and let  $\chi$  be the restriction of $\chi_o$ to $\wt{A}_o$. Let $\sigma \in \Irr(\wt{T})$.
\begin{enumerate}
  \item[(i)] If $n$ is odd, then
$$i(\chi)|_{\wt{T}_o} =\bigoplus_{t \in \wt{Z(G)}/ \wt{M}}  i_o\bigl((\chi_o)^t\bigr) = n_c \val{n_c}^{-1/2} \cdot i_o(\chi_o).$$
\item[(ii)] If $n$ is even, then
$$i(\chi)|_{\wt{T}_o}=\bigoplus_{t \in \wt{T}/ \wt{M}\wt{T}_o}   i_o\bigl((\chi_o)^t\bigr) =   \bigoplus_{t \in \wt{T}/ {\wt{Z(G)}}\wt{T}_o}  d_c \val{d_c}^{-1/2} \cdot i_o\bigl((\chi_o)^t\bigr).$$
\item[(iii)] Suppose that $n$ is odd. Let $\sigma_o$ be the genuine smooth irreducible  representation  of $\wt{T}_o$ such that $\chi_{\sigma_o}=\chi_\sigma|_{Z(\wt{T}_o)}.$ Then,
$$\sigma|_{\wt{T}_o} \simeq  n_c \val{n_c}^{-1/2} \cdot \sigma_o.$$
\item[(iv)] If $n$ is even, then
      $$\sigma|_{\wt{T}_o} \simeq  \bigoplus_{ \substack{ \sigma_o \in  \Irr(\wt{T}_o) \\ \chi_{\sigma_o} \in E_o(\chi_{\sigma}|_{Z(\wt{T})\cap \wt{T}_o})  }}  d_c \val{d_c}^{-1/2} \cdot \sigma_o.$$
  \item[(v)] Suppose that $n$ is even. Fix a $ \sigma_o \in  \Irr(\wt{T}_o)$ such that $\chi_{\sigma_o}$ agrees with  $\chi_{\sigma}$ on $Z(\wt{T})\cap \wt{T}_o$. Then,
      $$\sigma|_{\wt{T}_o} \simeq \bigoplus_{x\in F^\times/F^{\times 2}}  d_c \val{d_c}^{-1/2} \cdot \eta_{x,(n)} \otimes\sigma_o.$$
      If $n \equiv 2 \ (\text{mod } 4)$, then
 $$\sigma|_{\wt{T}_o} \simeq \bigoplus_{x\in F^\times/F^{\times 2}}  d_c \val{d_c}^{-1/2} \cdot \eta_{x,(2)} \otimes\sigma_o.$$
\end{enumerate}

\end{prop}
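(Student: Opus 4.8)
\textbf{Proof proposal for Proposition \ref{decoproph}.}

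The plan is to establish the five assertions in direct parallel with the treatment of $I(\sigma)$ in Lemma \ref{deco} through Theorem \ref{decopropcor}, replacing every occurrence of $\wt{G}$ by $\wt{T}$, $\wt{G}_o$ by $\wt{T}_o$, and the principal series by the corresponding Heisenberg-type representations $i(\chi)$, $i_o(\chi_o)$. The key structural facts are already in hand: $\wt{Z(G)}$ is the centralizer of $\wt{T}_o$ inside $\wt{T}$ (this is contained in Lemma \ref{Dcent}, since $\wt{Z(G)}$ centralizes all of $\wt{G}_o \supset \wt{T}_o$, and conversely an element of $\wt{T}$ centralizing $\wt{T}_o$ must lie in $\wt{Z(G)}$ by the commutator formula \eqref{compeq}); the decomposition of genuine characters of $\wt{M}$ and the torsor structure of $\msc{E}(\wt{M},\omega)$ under $\wt{T}/\wt{M}\wt{T}_o$ (even $n$) or $\wt{Z(G)}/\wt{M}$ (odd $n$) from Lemma \ref{crucia}; and the index computations $[\wt{Z(G)}:\wt{M}] = n_c\val{n_c}^{-1/2}$ (odd $n$, from Lemma \ref{mlem}) and $[\wt{Z(G)}\wt{T}_o : \wt{M}\wt{T}_o] = d_c\val{d_c}^{-1/2}$ (even $n$, from Lemma \ref{evenslcent}).

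First I would prove (i) and (ii). Decompose $i(\chi) = \bigoplus_\xi i(\chi)_\xi$ over genuine characters $\xi$ of $\wt{M}/Z(\wt{G})$, where $i(\chi)_\xi$ is the $\xi$-eigenspace; since $\wt{Z(G)} \supseteq \wt{M}$ is the centralizer of $\wt{T}_o$, each $i(\chi)_\xi$ is a $\wt{T}_o$-subrepresentation, and only the $\xi \in \msc{E}(\wt{M},\chi|_{Z(\wt{G})})$ contribute. By Lemma \ref{crucia}, the relevant $\xi$'s are exactly the $\chi^t|_{\wt{M}}$ for $t$ ranging over $\wt{Z(G)}/\wt{M}$ (odd $n$) or $\wt{T}/\wt{M}\wt{T}_o$ (even $n$), and $i(\chi)(t)$ gives a $\wt{T}_o$-isomorphism $i(\chi)_{\chi|_{\wt{M}}} \simeq i(\chi)_{\chi^t|_{\wt{M}}}$ exactly as in the proof of Lemma \ref{deco}. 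Then the analogue of Lemma \ref{eigencoset} identifies $i(\chi)_{\chi^t|_{\wt{M}}}$ with the subspace $i(\chi)_t$ of functions supported on $t\wt{A}\wt{T}_o$ — here one uses the torus version of Lemma \ref{detj}, namely \eqref{forDuse}, which gives $\{t' \in T : \det(t') \in J^{n/d}\} = AT_o$ — and the analogue of Lemma \ref{relem} (same $R_{\chi,t}$, $Y_{\chi,t}$ formulas, now between $\wt{T}$- and $\wt{T}_o$-modules, with $\wt{A}\cap \wt{T}_o = \wt{A}_o$ from Lemma \ref{Tfacts}(iii)–(iv)) gives $i(\chi)_t \simeq i_o((\chi_o)^t)$. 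Collapsing the sum: for odd $n$, $i_o((\chi_o)^t) = i_o(\chi_o)$ for all $t \in \wt{Z(G)}$ because $\wt{Z(G)}$ centralizes $\wt{T}_o$; for even $n$, group the $\wt{T}/\wt{M}\wt{T}_o$-sum over cosets of $\wt{Z(G)}\wt{T}_o/\wt{M}\wt{T}_o$ and use Lemma \ref{evenslcent} again, noting $(\chi_o)^{tg} = (\chi_o)^t$ for $g \in \wt{Z(G)}$. This yields (i) and (ii).

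For (iii)–(v) I would pass from $i(\chi)$ to $\sigma = i(\chi)$ with $\chi$ any extension of $\chi_\sigma$: the isomorphism class of $\sigma$ depends only on $\chi_\sigma$, and $(\chi_o)^t$ depends only on the restriction of $\chi^t$ to $Z(\wt{T}_o)$, which by Lemma \ref{evenslcent}(i) equals $\eta_{\det(t),(n)}|_{Z(\wt{T}_o)} \otimes \chi_{\sigma_o}$; so (iii) and (iv) follow by rewriting the sums in (i), (ii) in terms of central characters exactly as in the proof of Theorem \ref{decopropcor}(i),(ii). For (v), choose representatives of $Z(\wt{T})\cap\wt{T}_o$ inside $\wt{T}$ whose determinants exhaust $F^\times/F^{\times 2}$ (possible since $[\wt{T}:\wt{Z(G)}\wt{T}_o]$ matches via Lemma \ref{crucia}), which converts the $E_o$-indexed sum into the $F^\times/F^{\times 2}$-indexed sum with twists $\eta_{x,(n)}$; and when $n \equiv 2 \pmod 4$, $d$ is odd so $\eta_{x,(n)}|_{Z(\wt{T}_o)} = \eta_{x,(n)}^d|_{Z(\wt{T}_o)} = \eta_{x,(2)}|_{Z(\wt{T}_o)}$ by \eqref{FV fact}. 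I expect the main obstacle to be purely bookkeeping: carefully checking that the torus-level analogues of Lemmas \ref{eigencoset} and \ref{relem} go through verbatim (in particular that the support condition $\det(g) \in \det(t)J^{n/d}$ cuts out precisely $t\wt{A}\wt{T}_o$ inside $\wt{T}$, which is \eqref{forDuse}), and tracking the half-power-of-Hilbert-symbol normalizing factors $\val{n_c}^{-1/2}$, $\val{d_c}^{-1/2}$ consistently through the collapse of the direct sums — there is no genuinely new idea beyond what Proposition \ref{decoprop} and Theorem \ref{decopropcor} already contain.
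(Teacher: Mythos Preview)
Your proposal is correct and matches the paper's approach: the paper states Proposition~\ref{decoproph} without an explicit proof, and the surrounding remarks indicate that the argument parallels Lemmas~\ref{deco}--\ref{relem}, Proposition~\ref{decoprop}, and Theorem~\ref{decopropcor} with $\wt{G}, \wt{G}_o$ replaced by $\wt{T}, \wt{T}_o$, exactly as you outline. One small correction: $\wt{Z(G)}$ is not the full centralizer of $\wt{T}_o$ in $\wt{T}$ (by \eqref{compeq} the latter consists of all liftings of $\diag(a,b)$ with $ba^{-1}\in F^{\times n}$), but your argument only ever uses that $\wt{Z(G)}$ \emph{centralizes} $\wt{T}_o$, which is true and suffices throughout.
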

\begin{rmk} The main difference between Proposition \ref{decoprop} and Theorem \ref{decopropcor} on one side and  Proposition \ref{decoproph} on the other side is that in the latter we write an irreducible $\sigma \in \Irr(\wt{T})$ as a direct sum of irreducible representations of $\wt{T}_o$. However, the principal series representation in Proposition \ref{decoprop} and Theorem \ref{decopropcor} may be reducible. In fact, if $n$ is odd, then it is possible that $I(\sigma)$ is irreducible, while $I(\sigma_o)$ becomes reducible.To see more of this, one may compare Corollary \ref{irrres} below with \cite[Proposition 5.5]{Szp6}. This phenomenon does not occur if $n$ is even.
\end{rmk}

\begin{rmk}
The statements  (i) and (ii) in Theorem \ref{decopropcor} and (iii) and (iv) in Proposition \ref{decoproph} are proven as Propositions 4.4.1, 4.4.2, 4.48 and 4.49 in \cite{Kar} using Mackey theory under the assumption that $\gcd(p,n)=1$ for the untwisted Kazhdan-Patterson covering $\wt{\GL}_2$ (i.e., when $c=0$). In our paper, we have used results such as in Lemmas \ref{deco}, \ref{eigencoset} and  \ref{relem} to prove the general case. In fact, these lemmas will play a crucial role in the computation of local coefficients matrices of $\wt{G}$ in the next section.
\end{rmk}

\subsection{The unramified case}
In this subsection we assume that
$$\gcd(p,n)=1,$$
and recall part of \S \ref{SS:unrep}. Under this assumption $\wt{G}$ splits over $K=\GL_2(O_F)$, and we fix such a splitting $s_K$.
For $\sigma \in \Irr(\wt{T})$, the representation $I(\sigma)$ is called $s_K$-unramified if it contains a  non-zero $s_K(K)$-fixed vector, in which case one has
$$\dim I(\sigma)^{s_K(K)} = 1.$$
We view $K_o= K\cap G_o$ as a subgroup of $\wt{G}_o$ via the splitting, and the notion of unramified $I(\sigma_o), \sigma\in \Irr(G_o)$ is defined in a similar way.

We note that $s_K$ is not unique, and thus the notion of $s_K$-unramified genuine principal series representation of $\wt{G}$ depends on the splitting. However, the restriction of $s_K$ to $K_o$ is unique; therefore, the notion of unramified genuine principal series representation of $\wt{G}_o$ is defined canonically. In fact, one can check that for $ t \in T_o \cap K_o$, we have
$$s_K(t)= \s(t).$$
Since  $\gcd(p,n)=1$, the restriction of the $n$-th power Hilbert symbol to $O_F^\times \times O_F^\times$ is trivial. Consequently, \eqref{rao} implies that the section $\s$ gives a splitting of $\wt{G}$ over $T \cap K$.

We also note that $I(\sigma_o)$ is unramified if and only if the restriction of $\chi_{\sigma_o}$ to $Z(\wt{T}_o) \cap K_o$ is trivial; equivalently, if and only if
$$\chi_{\sigma_o}( \s (\alpha^\vee(a)) \cdot \zeta)=\zeta$$
for all $a \in O_F^{\times d}$.

\begin{prop} \label{firstunramprop}
Assume that $\gcd(n,p)=1$. Let $\sigma \in \Irr(\wt{T})$. Then, the restriction of $\sigma$ to $\wt{T}_o$ contains an unramified $\sigma_o \in  \Irr(\wt{T}_o)$ (and thus $I(\sigma_o)$ is unramified) if and only if the restriction of $\chi_{\sigma}$ to $Z(\wt{T}) \cap G_o$ is trivial; equivalently, if and only if
$$\chi_{\sigma}(\s(\alpha^\vee(a) ) \cdot \zeta) =\zeta$$
for all $a \in O_F^{\times n}.$
\end{prop}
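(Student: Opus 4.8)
The plan is to reduce the statement to the characterization of unramified $\sigma_o \in \Irr(\wt{T}_o)$ recalled just before the proposition, namely that $I(\sigma_o)$ is unramified exactly when $\chi_{\sigma_o}$ is trivial on $Z(\wt{T}_o) \cap K_o$, i.e. $\chi_{\sigma_o}(\s(\alpha^\vee(a))\cdot \zeta)=\zeta$ for all $a\in O_F^{\times d}$. I would combine this with Proposition \ref{decoproph} (iii) and (iv), which describes $\sigma|_{\wt{T}_o}$ as a sum of $\sigma_o$'s whose central characters $\chi_{\sigma_o}$ run over the extensions $E_o(\chi_\sigma|_{Z(\wt{T})\cap \wt{T}_o})$ (a single one when $n$ is odd, and when $n$ is even all characters of $Z(\wt{T}_o)$ restricting correctly to $Z(\wt{T})\cap \wt{T}_o$). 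So the existence of an unramified constituent $\sigma_o$ in $\sigma|_{\wt{T}_o}$ is equivalent to the existence of an extension of $\chi_\sigma|_{Z(\wt{T})\cap \wt{T}_o}$ to $Z(\wt{T}_o)$ that is trivial on $Z(\wt{T}_o)\cap K_o$.

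First I would unwind what $Z(\wt{T})\cap \wt{T}_o$, $Z(\wt{T}_o)$ and $Z(\wt{T}_o)\cap K_o$ are explicitly, using Lemma \ref{centers}. By Lemma \ref{centers}(v), $Z(\wt{T})\cap \wt{T}_o$ is the preimage of $\{\diag(a,a^{-1}): a\in F^{\times n}\}$, and by Lemma \ref{centers}(iv), $Z(\wt{T}_o)$ is the preimage of $\{\diag(a,a^{-1}): a\in F^{\times d}\}$. Under the identifications via $\s(\alpha^\vee(\cdot))\cdot \zeta$, a genuine character of $Z(\wt{T}_o)$ is determined by a character of $F^{\times d}$, and $Z(\wt{T}_o)\cap K_o$ corresponds to $O_F^{\times d}$ since $\gcd(n,p)=1$ forces $\s$ to split $T\cap K$ (as recalled just above the proposition). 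Thus "unramified extension of $\chi_\sigma|_{Z(\wt{T})\cap \wt{T}_o}$ to $Z(\wt{T}_o)$ exists" becomes: the character of $F^{\times n}$ corresponding to $\chi_\sigma|_{Z(\wt{T})\cap\wt{T}_o}$ admits an extension to $F^{\times d}$ which is trivial on $O_F^{\times d}$. Since $F^\times = O_F^\times \times \varpi^{\Z}$ and $F^{\times n}=O_F^{\times n}\times \varpi^{n\Z}$, $F^{\times d}=O_F^{\times d}\times \varpi^{d\Z}$, such an extension exists if and only if $\chi_\sigma|_{Z(\wt{T})\cap\wt{T}_o}$ is already trivial on the "unit part" $O_F^{\times n}$; the value on $\varpi^{n\Z}$ can always be interpolated. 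This is exactly the claimed condition $\chi_\sigma(\s(\alpha^\vee(a))\cdot\zeta)=\zeta$ for all $a\in O_F^{\times n}$, and also equals the condition that $\chi_\sigma$ is trivial on $Z(\wt{T})\cap G_o$ intersected with the maximal compact — I would note $Z(\wt{T})\cap G_o$ and $Z(\wt{T})\cap\wt{T}_o$ have the same image in $T$, so "restriction of $\chi_\sigma$ to $Z(\wt{T})\cap G_o$ is trivial on units" is the same statement.

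For the forward direction I would argue: if some unramified $\sigma_o$ occurs in $\sigma|_{\wt{T}_o}$, then its central character $\chi_{\sigma_o}$ is trivial on $O_F^{\times d}$ (by the recalled criterion) and restricts to $\chi_\sigma$ on $Z(\wt{T})\cap\wt{T}_o$, hence $\chi_\sigma$ is trivial on $O_F^{\times n}\subseteq O_F^{\times d}$. For the converse, if $\chi_\sigma$ is trivial on $O_F^{\times n}$, I construct the extension: decompose $F^{\times d}$ as above, set the extension to agree with $\chi_\sigma$ on $F^{\times n}$, to be trivial on $O_F^{\times d}$ (consistent, since $\chi_\sigma$ is trivial on $O_F^{\times n}$), and to take any $d/n$-th root value on $\varpi^{d}$ of $\chi_\sigma(\varpi^n)$; this defines a genuine character $\chi_{\sigma_o}$ of $Z(\wt{T}_o)$ restricting correctly, so by Proposition \ref{decoproph}(iii)/(iv) the corresponding $\sigma_o$ occurs in $\sigma|_{\wt{T}_o}$ and is unramified. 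The main obstacle I anticipate is purely bookkeeping: making sure the "root-extraction" step on the $\varpi$-part is compatible with genuineness (the $\bbmu_n$-action) and with the different roles of $d$ versus $n$ when $n$ is even, and correctly translating between the $\s(\alpha^\vee(\cdot))$-coordinates on $\wt{T}_o$ and the $\diag(\cdot,\cdot)$-coordinates used in Lemma \ref{centers}; there is no conceptual difficulty once those identifications are pinned down.
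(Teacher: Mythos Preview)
Your proposal is correct and follows essentially the same approach as the paper: both reduce to the criterion that $\sigma_o$ is unramified iff $\chi_{\sigma_o}$ is trivial on $Z(\wt{T}_o)\cap K_o$, and both identify the obstruction with triviality of $\chi_\sigma$ on $O_F^{\times n}$ via the inclusion $Z(\wt{T})\cap K_o \subseteq Z(\wt{T}_o)\cap K_o$. The only difference is cosmetic: for the converse in the even case, the paper picks an arbitrary constituent $\sigma_o$, observes $[O_F^{\times d}:O_F^{\times n}]=2$ so the obstruction is a sign, and shows that twisting by $\eta_{\varpi,(n)}^{-1}$ flips it; you instead construct the required extension of $\chi_\sigma|_{F^{\times n}}$ to $F^{\times d}$ abstractly using $F^{\times d}=O_F^{\times d}\times \varpi^{d\Z}$. (Minor slip: you need an $n/d$-th root of $\chi_\sigma(\varpi^n)$ on $\varpi^d$, not a $d/n$-th root.)
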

\begin{proof}
From Lemma  \ref{centers} we have
$$Z(\wt{T}) \cap K_o \subseteq Z(\wt{T}_o) \cap K_o$$
and thus it follows from the preceding discussion that if $\sigma$ contains an unramified $\sigma_o \in \Irr(\wt{T}_o)$, then the restriction of $\chi_{\sigma}$ to $Z(\wt{T}) \cap K_o$ is trivial.

We prove the converse. If $n$ is odd, then Lemma \ref{centers} gives
$$Z(\wt{T}) \cap K_o = Z(\wt{T}_o) \cap K_o.$$
Hence, the assertion follows in this case. Suppose now that $n$ is even and assume that
$$\chi_{\sigma_o}(\s(\alpha^\vee(a)) \cdot \zeta)=\zeta \text{  for all } a \in O_F^{\times n}.$$
Pick $\sigma_o \in \Irr(\wt{T}_o)$ which occurs in $\sigma$. If $\chi_{\sigma_o}$ is trivial on $Z(\wt{T}_o) \cap K_o$, we are done. Otherwise, we prove that $\eta^{-1}_{\varpi,(n)} \otimes \sigma_o \in \Irr(\wt{T}_o)$ is unramified; equivalently,  $I_o(\eta^{-1}_{\varpi,(n)} \otimes \sigma_o)$ is unramified.
Indeed, we have in this case
$$[Z(\wt{T}) \cap K_o :Z(\wt{T}_o) \cap K_o]= [O_F^{\times d}: O_F^{\times n}].$$
Since by Lemma \ref{dualcenter} we have
$$[F^{\times d}: F^{\times n}]=[F^\times: F^{\times 2}]=4,$$
it follows that
$$[O_F^{\times d}: O_F^{\times n}]=2.$$
Thus, for $a \in O_F^{\times d}$, we have
$$\chi_{\sigma_o}(\s(\alpha^\vee(a))\cdot \zeta )=\zeta \cdot
 \begin{cases} 1  &  \text{if } a \in O_F^{\times n}; \\
 -1  &   \text{otherwise}.
 \end{cases}$$
Lastly, we note that if $a \in O_F^{\times d}$, then
$$(a,\varpi)_n=
\begin{cases}
1  &  \text{if } a \in O_F^{\times n}; \\
-1  &   \text{otherwise}.
\end{cases}$$
This shows that the restriction of $\chi_{\eta^{-1}_{\varpi,(n)} \otimes \sigma_o}$ to $Z(\wt{T}_o) \cap K_o$ is trivial, and thus the proof is completed.
\end{proof}

We remark that if  $I(\sigma)$ is unramified, then $\sigma$ satisfies the assumption of Proposition \ref{firstunramprop}. However, the converse is not true.

\begin{prop}  \label{secondunramprop}
Suppose that $n$ is even. Let  $\sigma \in \Irr(\wt{T})$, and let $\sigma_o \in \Irr(\wt{T}_o)$ be an unramified constituent in  $\sigma|_{\wt{T}_o}$. Let $u \in O_F^\times$ be a non-square element.
\begin{enumerate}
\item[(i)] One has
  $$\sigma|_{\wt{T}_o} = \set{ \sigma_o, \  \eta_{u,(n)} \otimes \sigma_o, \  \eta_{\varpi^{-1},(n)} \otimes \sigma_o, \  \eta_{u\varpi^{-1},(n)} \otimes \sigma_o } \subseteq \Irr(\wt{T}_o).$$
Moreover, if  $n \equiv 2 \ (\text{mod } 4)$, then
$$\eta_{x,(n)} \otimes \sigma_o \cong  \eta_{x,(2)} \otimes \sigma_o.$$
\item[(ii)] The principal series $I(\eta_{u,(n)} \otimes \sigma_o)$ is  unramifed. On the other hand, $I(\eta_{\varpi^{-1},(n)} \otimes \sigma_o)$ and $I(\eta_{u\varpi^{-1},(n)} \otimes \sigma_o)$ are both ramified and contain one-dimensional  $K_o^{\s(e_2(\varpi))}$-fixed subspace.
\end{enumerate}
\end{prop}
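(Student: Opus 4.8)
\textbf{Proof plan for Proposition \ref{secondunramprop}.} The plan is to reduce everything to computations with Hilbert symbols on the torus $\wt{T}_o$, using the structure of the centers already established in Lemma \ref{centers} together with the explicit set of representatives for $F^\times/F^{\times 2}$ from Lemma \ref{pn1}(ii). We are given $\gcd(p,n)=1$, $n$ even, and an unramified constituent $\sigma_o \hookrightarrow \sigma|_{\wt{T}_o}$.

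First I would establish (i). By Proposition \ref{decoproph}(v) we already know that $\sigma|_{\wt{T}_o} \simeq \bigoplus_{x\in F^\times/F^{\times 2}} d_c |d_c|^{-1/2} \cdot \eta_{x,(n)}\otimes \sigma_o$, so the set of isomorphism classes of constituents is exactly $\{\eta_{x,(n)}\otimes\sigma_o : x\in F^\times/F^{\times 2}\}$. Now I plug in the representatives $\{1, u, \varpi^{-1}, u\varpi^{-1}\}$ supplied by Lemma \ref{pn1}(ii); this is legitimate since $p$ is odd (as $\gcd(p,n)=1$ and $n$ is even), so Lemma \ref{pn1}(ii) applies. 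This immediately gives the displayed four-element description. For the last sentence of (i), when $n\equiv 2\ (\mathrm{mod}\ 4)$ we have $d$ odd, hence for any $x$ the quadratic character $\eta_{x,(n)}|_{Z(\wt{T}_o)}$ equals $\eta_{x,(n)}^d|_{Z(\wt{T}_o)} = \eta_{x,(2)}|_{Z(\wt{T}_o)}$ by \eqref{FV fact}; since the isomorphism class of $\eta_{x,(n)}\otimes\sigma_o$ is determined by its central character (as $\eta_{x,(n)}\otimes\sigma_o \in \Irr(\wt{T}_o)$, and irreducible genuine representations of $\wt{T}_o$ are classified by their central character), we conclude $\eta_{x,(n)}\otimes\sigma_o \cong \eta_{x,(2)}\otimes\sigma_o$. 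This mirrors the argument already used in the proof of Theorem \ref{decopropcor}(iii).

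Next I would prove (ii). The key input is Proposition \ref{firstunramprop} and the characterization of unramified $\sigma_o \in \Irr(\wt{T}_o)$ recalled just before it: $I_o(\tau_o)$ is unramified iff $\chi_{\tau_o}$ is trivial on $Z(\wt{T}_o)\cap K_o$, i.e. iff $\chi_{\tau_o}(\s(\alpha^\vee(a))\cdot\zeta)=\zeta$ for all $a\in O_F^{\times d}$. Since $\sigma_o$ itself is unramified, $\chi_{\sigma_o}$ is trivial on $\s(\alpha^\vee(O_F^{\times d}))$. For the twist by $\eta_{x,(n)}$ we compute $\chi_{\eta_{x,(n)}\otimes\sigma_o}(\s(\alpha^\vee(a))\cdot\zeta) = (x,a)_n\cdot\zeta$ for $a\in O_F^{\times d}$. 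Here I use that, because $p$ is odd, the $n$-th Hilbert symbol is trivial on $O_F^\times\times O_F^\times$ (Lemma \ref{pn1}(i)), so $(u,a)_n=1$ for all $a\in O_F^\times$; hence $\eta_{u,(n)}\otimes\sigma_o$ is again unramified. For $x=\varpi^{-1}$: by Lemma \ref{pn1}(ii), $\mathfrak{f}(\eta_{\varpi,(n)})=1$, so $\eta_{\varpi^{-1},(n)}$ is a genuinely ramified character of $F^\times$ of conductor $1$; I would then check directly from the explicit formula for $\chi_{\eta_{\varpi^{-1},(n)}\otimes\sigma_o}$ that this central character is \emph{non}trivial on $Z(\wt{T}_o)\cap K_o$ (it is nontrivial already on $\s(\alpha^\vee(O_F^\times))$), so $I_o(\eta_{\varpi^{-1},(n)}\otimes\sigma_o)$ is ramified; the same for $u\varpi^{-1}$ since $(u\varpi^{-1},a)_n = (\varpi^{-1},a)_n$ for $a\in O_F^\times$ again by Lemma \ref{pn1}(i). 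For the last claim about the $K_o^{\s(e_2(\varpi))}$-fixed line, I would observe that conjugating by $\s(e_2(\varpi))$ twists the character $\chi_{\sigma_o}$ on $Z(\wt{T}_o)\cap K_o$ by the Hilbert-symbol pairing with $\varpi$ — precisely the recipe from Lemma \ref{evenslcent}(i) — which exactly kills the ramification of conductor $1$ coming from $\eta_{\varpi^{-1},(n)}$; then a standard Iwahori/vertex-fixed-vector count for rank-one principal series (the analogue for $\wt{\SL}_2$ of the Casselman–Borel computation, compare the discussion around \S \ref{SS:unrep} and \cite[\S 3]{Szp6}) shows the space of $\s(e_2(\varpi))$-conjugated-$K_o$-fixed vectors is one-dimensional.

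The main obstacle I anticipate is the very last point: producing the one-dimensional $K_o^{\s(e_2(\varpi))}$-fixed subspace requires a genuine (if routine) computation with the Iwasawa/Bruhat decomposition of $\wt{\SL}_2$ relative to the twisted maximal compact $K_o^{\s(e_2(\varpi))}$, verifying that although $I_o(\eta_{\varpi^{-1},(n)}\otimes\sigma_o)$ has no $K_o$-fixed vector it does acquire one after conjugating $K_o$ by $\s(e_2(\varpi))$. Everything before that is essentially bookkeeping with Hilbert symbols and the center computations of Lemma \ref{centers}, but this final vertex-fixed-vector count is where the real content lies, and I would handle it by reducing, via Lemma \ref{evenslcent}(i), to the statement that the twisted character becomes unramified on the relevant compact subgroup, so that the standard one-dimensionality of the unramified line applies.
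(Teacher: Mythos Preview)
Your proposal is correct and follows essentially the same route as the paper for part (i) and for the unramified/ramified dichotomy in part (ii). The one place where you diverge is the final claim about the one-dimensional $K_o^{\s(e_2(\varpi))}$-fixed subspace: you propose to verify this by an Iwahori/vertex-fixed-vector count after checking that conjugation by $\s(e_2(\varpi))$ kills the conductor-$1$ ramification. The paper dispatches this more directly by simply recording the representation-theoretic isomorphism
\[
I_o(\eta_{a\varpi^{-1},(n)}\otimes\sigma_o)\ \simeq\ \bigl(I_o(\eta_{a,(n)}\otimes\sigma_o)\bigr)^{\s(e_2(\varpi))},
\]
which follows from $I_o(\tau_o)^g\simeq I_o(\tau_o^g)$ together with Lemma \ref{evenslcent}(i). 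This isomorphism immediately transports the one-dimensional $K_o$-fixed line in the unramified representation $I_o(\eta_{a,(n)}\otimes\sigma_o)$ (for $a\in\{1,u\}$) to a one-dimensional $K_o^{\s(e_2(\varpi))}$-fixed line in $I_o(\eta_{a\varpi^{-1},(n)}\otimes\sigma_o)$, with no separate Bruhat--Tits computation required. Your observation that conjugation ``kills the ramification'' is exactly the content of this isomorphism, so you are one step away from the clean argument; the Iwahori count you anticipate as the ``main obstacle'' is in fact unnecessary.
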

\begin{proof} The first assertion (i) follows from Lemma \ref{pn1} and Corollary \ref{decopropcor}. We prove (ii). Since $\eta_{u,(n)}$ is an unramified character, it follows that the restriction of
$\chi_{\eta_{u,(n)} \otimes \sigma_o}$ to $Z(\wt{T}_o) \cap K_o$ is trivial. This shows that $\eta_{u,(n)} \otimes \sigma_o$ is unramified. Using a similar argument as the proof of Proposition \ref{firstunramprop},  we deduce that $I(\eta_{\varpi^{-1},(n)} \otimes \sigma_o)$ and $I(\eta_{u\varpi^{-1},(n)} \otimes \sigma_o)$ are ramified. We finally note that
$$I(\eta_{a\varpi^{-1},(n)} \otimes \sigma_o) \simeq I(\eta_{a,(n)} \otimes \sigma_o)^{ \s(e_2(\varpi))}$$
for $a\in F^\times$. This completes the proof.
\end{proof}

\subsection{Lower bound on $\dim \Wh_\psi(\pi)$}
In Proposition \ref{decoprop} and Theorem \ref{decopropcor}, we have proven that if $\wt{Z(G)}$ is not abelian,  then the restriction of a genuine principal series of $\wt{G}$ to $\wt{G}_o$ is never multiplicity-free. We generalize this result.

\begin{prop} \label{no1}
Let $\pi$ be a genuine smooth representation of $\wt{G}$ and let $\pi_o$ be a genuine smooth representation of $\wt{G}_o$.
\begin{enumerate}
\item  If ${\rm Hom}_{\wt{G}_o}(\pi, \pi_o)$ is not trivial, then
$$\dim {\rm Hom}_{\wt{G}_o}(\pi, \pi_o) \geq \sqrt{[\wt{Z(G)}: Z(\wt{Z(G)})]}=d_c \cdot \val{d_c}^{-1/2}.$$
\item If ${\rm Hom}_{\wt{G}_o}(\pi_o, \pi)$ is not trivial, then
$$\dim {\rm Hom}_{\wt{G}_o}(\pi_o, \pi) \geq \sqrt{[\wt{Z(G)}: Z(\wt{Z(G)})]}=d_c \cdot \val{d_c}^{-1/2}.$$
\end{enumerate}
\end{prop}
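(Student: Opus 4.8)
The plan is to exploit the action of $\wt{Z(G)}$ (or $\wt{T}/\wt{M}\wt{T}_o$, in the even case) on the relevant $\mathrm{Hom}$-spaces, using the same mechanism that produced the decompositions in Proposition \ref{decoprop} and Lemma \ref{deco}. The key observation is that $\wt{Z(G)}$ centralizes $\wt{G}_o$ by Lemma \ref{Dcent}, so conjugation by an element $g\in\wt{Z(G)}$ sends any $\wt{G}_o$-equivariant map to another one, and the $\mathrm{Hom}$-space carries a representation of the group $\wt{Z(G)}/Z(\wt{Z(G)})$ (for part (1)) — more precisely, of whatever quotient acts nontrivially. Since $Z(\wt{Z(G)})$ acts trivially on any genuine $\wt{G}_o$-representation occurring in $\pi$ (it acts by the fixed central character and commutes past everything), we get a genuine representation of the Heisenberg-type finite group $\wt{Z(G)}/Z(\wt{Z(G)})$, whose irreducible genuine representations all have dimension $\sqrt{[\wt{Z(G)}:Z(\wt{Z(G)})]}$ by Stone–von Neumann (as used in \S\ref{SS:gps}). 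By Lemma \ref{centers}(iii) and Lemma \ref{crucia} (or directly from \eqref{Dindex}), this index equals $d_c^2\val{d_c}^{-1}$, so its square root is $d_c\val{d_c}^{-1/2}$. A nonzero $\mathrm{Hom}$-space is therefore a nonzero genuine representation of this group and hence has dimension at least $d_c\val{d_c}^{-1/2}$, which is exactly the claimed bound.

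First I would set up part (1) carefully. Let $\Phi\in\mathrm{Hom}_{\wt{G}_o}(\pi,\pi_o)$ be nonzero. For $g\in\wt{Z(G)}$ define $\Phi^g(v):=\Phi(\pi(g)^{-1}v)$ (or $\pi(g)v$, whichever sign is cleaner); since $g$ commutes with all of $\wt{G}_o$, $\Phi^g$ is again $\wt{G}_o$-equivariant, and $g\mapsto\Phi^g$ is an action of $\wt{Z(G)}$ on $\mathrm{Hom}_{\wt{G}_o}(\pi,\pi_o)$. The subgroup $\bbmu_n$ acts by a fixed scalar (the difference of the genuine actions on $\pi$ and $\pi_o$, which is trivial since both are genuine — so $\bbmu_n$ acts trivially), and $Z(\wt{Z(G)})$ acts trivially because it is central in $\wt{Z(G)}$ and acts on $\pi_o$ through its image in $Z(\wt{G}_o)$, which one checks lies in the kernel. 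So we obtain a genuine (for the residual central character) representation of $\wt{Z(G)}/Z(\wt{Z(G)})$ on this $\mathrm{Hom}$-space. Since the commutator pairing on $\wt{Z(G)}/Z(\wt{Z(G)})$ is nondegenerate — this is essentially the content of Lemma \ref{crucia} together with \eqref{Dindex} — every such genuine representation is a multiple of the unique irreducible one of dimension $\sqrt{[\wt{Z(G)}:Z(\wt{Z(G)})]}$. A nonzero such representation thus has dimension at least $d_c\val{d_c}^{-1/2}$.

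For part (2) I would run the mirror-image argument: given nonzero $\Psi\in\mathrm{Hom}_{\wt{G}_o}(\pi_o,\pi)$, set $\Psi^g:=\pi(g)\circ\Psi$ (valid since $g$ centralizes $\wt{G}_o$, so $\pi(g)\circ\Psi$ remains $\wt{G}_o$-equivariant — here one uses that $\Psi$ intertwines the $\wt{G}_o$-actions and $\pi(g)$ commutes with $\pi|_{\wt{G}_o}$), obtaining again an action of $\wt{Z(G)}/Z(\wt{Z(G)})$, and conclude identically. The main obstacle I anticipate is the bookkeeping around which central subgroup acts trivially and verifying that the residual action is genuine with respect to a character making the commutator pairing nondegenerate — i.e., pinning down precisely that the induced representation of $\wt{Z(G)}$ factors through $\wt{Z(G)}/Z(\wt{Z(G)})$ and is "of Stone–von Neumann type" there, rather than through some larger quotient on which the pairing degenerates. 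This is where Lemma \ref{centers}(iii), Lemma \ref{crucia}, and the index computation \eqref{Dindex} do the real work, and I would state the needed nondegeneracy as a short lemma (or cite \S\ref{SS:gps}) before invoking Stone–von Neumann. Once that is in place the dimension bound is immediate, and the equality $\sqrt{[\wt{Z(G)}:Z(\wt{Z(G)})]}=d_c\val{d_c}^{-1/2}$ follows from \eqref{Dindex}.
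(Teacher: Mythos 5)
Your overall strategy --- exploiting the action of $\wt{Z(G)}$ on the Hom-space via Lemma \ref{Dcent}, recognizing $\wt{Z(G)}$ as a Heisenberg-type group, and invoking Stone--von Neumann with the index computation from Lemma \ref{mlem} --- is exactly what the paper does. But the central-character bookkeeping that you yourself flag as the main obstacle contains a concrete error. You assert that $\bbmu_n$ acts trivially on $\mathrm{Hom}_{\wt{G}_o}(\pi,\pi_o)$ and that the action therefore descends to a ``genuine representation of $\wt{Z(G)}/Z(\wt{Z(G)})$.'' Both statements are wrong. With the action $\Phi \mapsto \Phi\circ\pi(g)^{\pm 1}$ a central element $\zeta\in\bbmu_n$ sends $\Phi$ to $\zeta^{\pm 1}\Phi$, so $\bbmu_n$ acts by the fixed embedding (or its inverse), not trivially --- and indeed it must, since this is precisely what allows Stone--von Neumann to give a nontrivial lower bound. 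There is also no notion of a genuine representation of $\wt{Z(G)}/Z(\wt{Z(G)})$: since $\bbmu_n\subseteq Z(\wt{Z(G)})$, the image of $\bbmu_n$ in that quotient is trivial, so ``genuine'' has no meaning there. The further justification that $Z(\wt{Z(G)})$ ``acts on $\pi_o$ through its image in $Z(\wt{G}_o)$'' is incoherent, because $\wt{Z(G)}\cap\wt{G}_o$ is merely the preimage of $\{\pm I\}$; elements of $\wt{Z(G)}$ act only on $\pi$, not on $\pi_o$, and there is no ``difference of the two genuine actions'' to speak of.

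The fix is to not pass to the quotient at all: $\mathrm{Hom}_{\wt{G}_o}(\pi,\pi_o)$ is a genuine smooth representation of the Heisenberg group $\wt{Z(G)}$ \emph{itself}, and Stone--von Neumann directly gives that every nonzero genuine smooth representation of $\wt{Z(G)}$ has dimension at least $\sqrt{[\wt{Z(G)}:Z(\wt{Z(G)})]}=d_c\val{d_c}^{-1/2}$. This is the paper's argument. With that correction your proof coincides with the paper's, and your mirror-image treatment of part (2) via $\Psi\mapsto\pi(g)\circ\Psi$ is also what the paper intends.
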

\begin{proof} We prove only the first assertion as the second assertion is proven similarly. Fix $T \in \Hom_{\wt{G}_o}(\pi, \pi_o)$ and $g \in \wt{Z(G)}$. It follows from Lemma \ref{Dcent} that
$$T \circ \pi(g) \in  \Hom_{\wt{G}_o}(\pi, \pi_o).$$
This implies that  $\wt{Z(G)}$ acts on the non-zero space $\Hom_{\wt{G}_o}(\pi_o, \pi)$, giving rise to a smooth and genuine representation of $\wt{Z(G)}$. Since  $\wt{Z(G)}$ is a Heisenberg-type group, it follows from the Stone-von Neumann Theorem that any genuine smooth representation of $\wt{Z(G)}$ is of dimension at least $d_c \val{d_c}^{-1/2}$.
\end{proof}

The restriction problem of some special irreducible genuine principal series representations from the double cover $\wt{\GSp}_{2r}^{(2)}$ to $\wt{\Sp}_{2r}^{(2)}$ is investigated in \cite{Szp4-1}, when $r$ is odd. In addition, some other examples are given in \cite{PatPr1} for the high multiplicity which occurs in the restriction from a double cover of $G$ to the double cover of $G_o$. However, in these examples, the inverse image of the center of the underlying linear group is always abelian; hence, they are of a different nature compared to Proposition \ref{no1}.

\begin{prop} \label{whifail}
Let $\pi$ be a generic genuine smooth representation of $\wt{G}$. Then
$$\dim \Wh_\psi(\pi) \ge d_c \val{d_c}^{-1/2}.$$
\end{prop}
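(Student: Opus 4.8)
The statement to be proved is that for a generic genuine smooth representation $\pi$ of $\wt{G} = \wt{\GL}_2^{(n)}$, one has $\dim \Wh_\psi(\pi) \ge d_c \val{d_c}^{-1/2}$. The natural strategy is to compare the Whittaker spaces of $\wt{G}$ and of $\wt{G}_o = \wt{\SL}_2^{(n)}$, exploiting the fact that $\wt{G}$ and $\wt{G}_o$ share the same maximal unipotent $U$ and the same nondegenerate character $\psi$ on it. The key structural input, already established in this section, is that $\wt{Z(G)}$ is the centralizer of $\wt{G}_o$ inside $\wt{G}$ (Lemma \ref{Dcent}), together with the Stone--von Neumann description of genuine representations of the Heisenberg-type group $\wt{Z(G)}$, which underlies Proposition \ref{no1}.

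\textbf{Main steps.} First I would observe that, since $U \subset \wt{G}_o$, the space $\Wh_\psi(\pi)$ is by definition the space of $\psi$-equivariant functionals on $V_\pi$ viewed only as a $U$-module, hence equivalently as a $\wt{G}_o$-module: $\Wh_\psi(\pi) = \Wh_\psi(\pi|_{\wt{G}_o})$. Since $\pi$ is generic, this space is nonzero; fix a nonzero $\lambda \in \Wh_\psi(\pi)$. Next, I would let $\wt{Z(G)}$ act on $\Wh_\psi(\pi)$ via $g \cdot \lambda := \lambda \circ \pi(g^{-1})$ for $g \in \wt{Z(G)}$. Because $\wt{Z(G)}$ centralizes $U$ (it even centralizes all of $\wt{G}_o$ by Lemma \ref{Dcent}, and in particular normalizes $U$ acting trivially on it by conjugation, as $Z(G)$ fixes unipotent elements), the functional $g\cdot\lambda$ remains $\psi$-equivariant: for $u \in U$, $(g\cdot\lambda)(\pi(u)v) = \lambda(\pi(g^{-1})\pi(u)v) = \lambda(\pi(u)\pi(g^{-1})v) = \psi(u)(g\cdot\lambda)(v)$. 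Thus $\Wh_\psi(\pi)$ is a $\wt{Z(G)}$-submodule. Moreover $\bbmu_n$ acts through the fixed embedding into $\C^\times$ (it acts that way on $V_\pi$, hence inversely on functionals — one should be slightly careful whether one lands in the genuine or anti-genuine category, but either way the Stone--von Neumann theorem applies), so this is a genuine (or anti-genuine) smooth representation of $\wt{Z(G)}$ on a nonzero space. By the Stone--von Neumann theorem, every such representation has dimension at least $\sqrt{[\wt{Z(G)} : Z(\wt{Z(G)})]}$. Finally, I would invoke the index computation already carried out in the proof of Proposition \ref{no1}, namely $\sqrt{[\wt{Z(G)}: Z(\wt{Z(G)})]} = d_c \val{d_c}^{-1/2}$, which follows from Lemma \ref{centers}(iii) together with the formula \eqref{index}. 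This yields the claimed bound.

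\textbf{Expected obstacle.} I anticipate no deep obstacle here — the argument is essentially parallel to the proof of Proposition \ref{no1}, with $\Hom_{\wt{G}_o}(\pi,\pi_o)$ replaced by $\Wh_\psi(\pi)$, which can itself be realized as a $\Hom$-space over $U$ (or over $\wt{G}_o$ after inducing $\psi$). The one point that requires genuine care is the verification that the $\wt{Z(G)}$-action genuinely preserves $\psi$-equivariance; this rests precisely on Lemma \ref{Dcent}, or more concretely on Lemma \ref{justcomp}(i), ensuring that conjugation by $\wt{Z(G)}$ acts trivially on the splitting of $U$. A secondary subtlety is the genuine-versus-anti-genuine bookkeeping for the contragredient action on functionals, but since the Heisenberg-type group $\wt{Z(G)}$ has the same minimal dimension for genuine and anti-genuine representations, this does not affect the bound. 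One could also streamline the whole argument by simply citing Proposition \ref{no1}: take $\pi_o$ to be any irreducible generic constituent of $\pi|_{\wt{G}_o}$ — which exists since genericity of $\pi$ forces $\Wh_\psi(\pi|_{\wt{G}_o}) \ne 0$, and using the fact that $\dim \Wh_\psi(\pi_o) \le 1$ would not suffice, so the direct Stone--von Neumann argument on $\Wh_\psi(\pi)$ itself is the cleaner route.
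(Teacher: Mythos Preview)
Your proposal is correct and follows essentially the same approach as the paper: let $\wt{Z(G)}$ act on $\Wh_\psi(\pi)=\Hom_U(\pi,\C_\psi)$ by $\xi\mapsto \xi\circ\pi(g)$, observe this is well-defined because $\wt{Z(G)}$ centralizes $U\subset\wt{G}_o$ (Lemma~\ref{Dcent}), and then invoke Stone--von Neumann for the Heisenberg-type group $\wt{Z(G)}$ exactly as in Proposition~\ref{no1}. Your added care about the genuine/anti-genuine bookkeeping and the explicit verification of $\psi$-equivariance are correct refinements but do not alter the argument.
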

\begin{proof} By definition, the space of  Whittaker functionals on $\pi$ is $\Hom_U(\pi, \C_\psi)$. Fix $\xi \in \Hom_U(\pi, \C_\psi)$. Since $U \subseteq \wt{G}_o$, it follows that for $g\in \wt{Z(G)}$, $\xi \circ \pi(g) \in \Hom_U(\pi, \C_\psi)$. In other words, $\wt{Z(G)}$ acts genuinely on $\Hom_U(\pi, \C_\psi)$. Thus, exactly as in Proposition \ref{no1}, we deduce that if $\Hom_U(\pi, \C_\psi)$ is not trivial then its dimension is at least $\sqrt{[\wt{Z(G)}: Z(\wt{Z(G)})]}=d_c \cdot \val{d_c}^{-1/2}.$

\end{proof}

For a generic representation of a quasi-split linear reductive group, Whittaker model is unique (see \cite{Shal, GK, BZ1}). Thus, Proposition \ref{whifail} implies a complete contrast for $\wt{G}$, when $\wt{Z(G)}$ is not abelian.

\section{Local coefficients matrix for $\wt{\SL}_2$} \label{S:LCM-Go}

In this section, we first review some relevant results from \cite{GoSz}, \cite{Szp6} and \cite{GSS1}. In fact we complement these results wherever necessary, notably regarding the trace and the case where $4|n$. The goal is to determine the invariants $\Tr(\mca{M}(w, \sigma_o, s, \psi))$ and $\det(\mca{M}(w, \sigma_o, s, \psi))$ from an explicit local coefficients matrix $\mca{M}(w, \sigma_o, s, \psi)$ for $\wt{\SL}_2$, even for ramified data.

For  $\sigma_o \in \Irr(\wt{T}_o)$, we denote by $\chi'_{\sigma_o}$ a fixed extension of   $\chi_{\sigma_o}$ to $\wt{A}_o$. As a consequence of the Stone-von Neumann Theorem,  we may assume without loss of generality that
$$\sigma_o=i(\chi'_{\sigma_o}).$$

\subsection{A convenient model} \label{commsl}
For  $s \in \C$, we denote
$$\sigma_{o,s}=\delta_{B_o}^{s/2} \otimes \sigma.$$
We view the representation space of $I(\sigma_{o,s})$  as the space of functions
$$f: \wt{T}_o \times \wt{G}_o \rightarrow \C,$$
which are smooth from the right in the right argument and satisfy
$$f(at_o,tug)=\delta_{B_o}^{\frac {s+1}{2}}(t)  \cdot \chi'_{\sigma_o}(a) \cdot f(t_ot,g)$$
for all $t,t_o \in \wt{T}_o, a \in \wt{A}_o,  u\in U$ and  $g\in \wt{G}_o$.
Similarly, the representation space of $I\bigl((\sigma_{o,s})^w\bigr)$ consists of functions
$$f: \wt{T}_o \times \wt{G}_o \rightarrow \C,$$
which are smooth from the right in the right argument satisfying
$$f(at_o,tug)=\delta_{B_o}^{\frac {-s+1}{2}}(t) \cdot \chi'_{\sigma_o}(a) \cdot f(t_ot^w,g)$$
for all $t,t_o \in \wt{T}_o,  a \in \wt{A}_o,  u\in U$ and $g\in \wt{G}_o.$ The group $\wt{G}_o$ acts on both spaces by right translations on the right argument. Denote
$$\chi'_{\sigma_{o,s}}=\delta_{B_o}^{s/2}|_{\wt{A}_o} \otimes \chi'_{\sigma_o}.$$

Recall that using induction by stages, we have
$$I(\sigma_{o,s}) \simeq I(\chi'_{\sigma_{o,s}}).$$
Local coefficients matrices for $\wt{G}_o$  are computed in \cite{Szp6}  when $n \not \equiv 0 \ (\text{mod }4)$, by realizing the principal series $I(\sigma_{o,s})$ as $I(\chi'_{\sigma_{o,s}})$. In \S \ref{0mod4mat} below, we shall deal with the case $n \equiv 0 \ (\text{mod }4)$. In \S \ref{mainproof}, we shall use a similar realization for $\wt{G}=\wt{\GL}_2$.

Let $T(w, \sigma_{o,s}): I(\sigma_{o,s}) \to I((\sigma_{o,s})^w)$ be the standard intertwining operator which is given by
\begin{equation} \label{Inter-R}
T(w, \sigma_{o,s})(f_s)(t,g)=\int_{F} f_s(t,\wt{w} u(x) g) \ d_\psi x,
\end{equation}
where $u(x)=e_\alpha(x) \in U$.

The following Lemma is proven as \cite[Lemma 4.6]{Szp6} for the cases where  $n \not \equiv 0 \ (\text{mod }4)$; however, the same proof works for all $n$.

\begin{lm} \label{comdo}
The following diagram of $\wt{G}_o$-maps commutes:
$$\begin{tikzcd}
I \bigl(\chi'_{\sigma_{o,s}}\bigr) \ar[d, "{ T\bigl(w, \chi'_{\sigma_{o,s}}\bigr) }"']  &  I \bigl(\sigma_{o,s}\bigr)   \ar[l, "{M_{o,s}}"']  \ar[d, "{ T(w, \sigma_{o,s})}"]  \\
I \bigl((\chi'_{\sigma_{o,s}})^w\bigr)  \ar[r, "{N_{o,s}}"]  & I \bigl((\sigma_{o,s})^w\bigr) ,
\end{tikzcd} $$
where $M_{o,s}$ and $N_{o,s}$ are the $\wt{G}_o$ isomorphisms defined by
$$\bigl(M_{o,s}(f)\bigr)(g) = f\bigl(1,g\bigr), \quad \bigl(N_{o,s}(h)\bigr)(t,g) =\delta_{B_o}(t)^{\frac{1-s}{2}} h(t^wg),$$
with inverse given by
$$\bigl((M_{o,s})^{-1}(h)\bigr)(t,g)=\delta_{B_o}(t)^{\frac{-1-s}{2}} h(tg), \quad \bigl((N_{o,s})^{-1}(f)\bigr)(g) = f\bigl(1,g\bigr),$$
and where for $h_s=M_{o,s}(f_s) \in I\bigl(\chi'_{o,s}\bigr)$ and $g \in \wt{G}_o$,  $ T(w, \chi_{\sigma_{o,s}}')(h_s)(g)$ is the meromorphic continuation of
$$ \int_F h_s\bigl(wn(x)g \big) \, d_\psi x.$$
This integral converges absolutely wherever $T(w, \sigma_{o,s})$  converges absolutely.

\end{lm}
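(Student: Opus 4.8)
The plan is to verify Lemma \ref{comdo} by the same argument used for \cite[Lemma 4.6]{Szp6}, namely a direct check that the four maps $M_{o,s}$, $N_{o,s}$, $T(w,\sigma_{o,s})$ and $T(w,\chi'_{\sigma_{o,s}})$ fit into the square, with no new input needed for the case $n\equiv 0\ (\mathrm{mod}\ 4)$. First I would establish that $M_{o,s}\colon I(\sigma_{o,s})\to I(\chi'_{\sigma_{o,s}})$ and $N_{o,s}\colon I((\chi'_{\sigma_{o,s}})^w)\to I((\sigma_{o,s})^w)$ are well-defined $\wt{G}_o$-equivariant isomorphisms: well-definedness amounts to checking that $g\mapsto f(1,g)$ transforms on the left by $\wt{A}_oU$ according to the modulus character times $\chi'_{\sigma_{o,s}}$, which is immediate from the transformation law defining $I(\sigma_{o,s})$ evaluated at $t_o=1$; equivariance under right translation is built in; and the stated inverse formulas are verified by substitution (one checks $(M_{o,s})^{-1}(h)$ indeed lies in $I(\sigma_{o,s})$ using that $h\in I(\chi'_{\sigma_{o,s}})$ transforms by $\delta_{B_o}^{(s+1)/2}\chi'_{\sigma_{o,s}}$ on the left, and the two compositions give the identity). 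The factor $\delta_{B_o}(t)^{(1-s)/2}$ in $N_{o,s}$ and $\delta_{B_o}(t)^{(-1-s)/2}$ in $(M_{o,s})^{-1}$ are exactly what is forced by matching the exponents $\frac{\pm s+1}{2}$ appearing in the two models.

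Next I would verify commutativity of the square on a function $f_s\in I(\sigma_{o,s})$ by pushing it around both ways and comparing. Going down-then-right: apply $T(w,\sigma_{o,s})$ using the integral formula \eqref{Inter-R}, then apply $N_{o,s}$; going right-then-down: apply $M_{o,s}$ to get $h_s(g)=f_s(1,g)$, then apply $T(w,\chi'_{\sigma_{o,s}})(h_s)(g)=\int_F h_s(wn(x)g)\,d_\psi x$. The key identity to extract is that $T(w,\sigma_{o,s})(f_s)(1,g)=\int_F f_s(1,\wt w\,u(x)g)\,d_\psi x = \int_F h_s(wn(x)g)\,d_\psi x$, so that evaluating the $N_{o,s}$-image at $t=1$ reproduces $T(w,\chi'_{\sigma_{o,s}})(h_s)(g)$; the general $t$-dependence on both sides is then dictated by the $\delta_{B_o}$-factors, which I would check agree. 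One also records that the representative $\wt w$ used in \eqref{Inter-R} and the representative $w$ used in the scalar-valued integral are compatible (both coming from the splitting over unipotents as in \S\ref{S:top-c}), so no stray cocycle term appears. Finally, the absolute convergence assertion is clear since the two integrands differ only by a unimodular pullback of $f_s$, so one integral converges absolutely iff the other does, and the meromorphic continuation statement is then automatic.

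The only mild subtlety — and what I expect to be the main point to get right rather than a genuine obstacle — is bookkeeping of the modulus characters and the variable $s$ across the two models, since $I(\sigma_{o,s})$ carries the twist inside the inducing data while $I(\chi'_{\sigma_{o,s}})$ carries it through the induction formula; an off-by-one in the exponent of $\delta_{B_o}$ would break commutativity, so I would be careful to track that the transformation law $f(at_o,tug)=\delta_{B_o}^{(s+1)/2}(t)\chi'_{\sigma_o}(a)f(t_ot,g)$ matches $\chi'_{\sigma_{o,s}}=\delta_{B_o}^{s/2}|_{\wt A_o}\otimes\chi'_{\sigma_o}$ together with the normalized induction's own $\delta_{B_o}^{1/2}$. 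Since \cite[Lemma 4.6]{Szp6} already carried this out verbatim for $n\not\equiv 0\ (\mathrm{mod}\ 4)$ and the computation never used that congruence condition, invoking the same proof suffices; I would simply remark that the argument is insensitive to the residue of $n$ modulo $4$.
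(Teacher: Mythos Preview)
Your proposal is correct and matches the paper's approach exactly: the paper states that the lemma is proven as \cite[Lemma 4.6]{Szp6} for $n\not\equiv 0\ (\mathrm{mod}\ 4)$ and that the same proof works for all $n$, without reproducing the argument. Your outline of that direct verification, together with the observation that nothing in it depends on the residue of $n$ modulo $4$, is precisely what is being invoked.
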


Recall that given two representations $\pi$ and $\varsigma$ of $\wt{G}_o$ and   $T \in \text{Hom}_{\wt{G}_o} \bigl(\pi, \varsigma \bigr)$, one obtains by duality
$$T^*:\text{Wh}_\psi (\varsigma)\rightarrow \text{Wh}_\psi (\pi), \text{ where } T^*(l)=l \circ T.$$
Fix $\sigma_o \in \Irr(\wt{T}_o)$ and an ordered basis $\mfr{R}$ of $\sigma_o^\vee$. Let
$$\mfr{B}_{\sigma_s}(\mfr{R}) \subset \Wh_\psi(I(\sigma_s))$$
and
$$\mfr{B}_{(\sigma_s)^w}(\mfr{R}) := \mca{C}( \mfr{B}_{\sigma_s}(\mfr{R}) ) \subset \Wh_\psi(I( (\sigma_s)^w  ))$$
be the two ordered bases.

\begin{prop} \label{ForRuse}
The following three matrices are equal:
\begin{enumerate}
\item[(i)] the local coefficients matrix representing $ \mca{T}\bigl(w, \sigma_{o,s}\bigr)^*$ with respect to the ordered basis $\mfr{B}_{\sigma_s}(\mfr{R})$;
\item[(ii)] the matrix representing $T\bigl(w, \sigma_{o,s}\bigr)^*$ with respect to the two ordered bases $\mfr{B}_{(\sigma_s)^w}(\mfr{R}) $ and $\mfr{B}_{\sigma_s}(\mfr{R})$;
\item[(iii)] the matrix representing  $T\bigl(w, \chi'_{\sigma_{o,s}}\bigr)^*$ with respect to the two ordered bases $N_{o,s}^* (\mfr{B}_{(\sigma_s)^w}(\mfr{R}) )$ and $({M_{o,s}^{-1}})^* (\mfr{B}_{\sigma_s}(\mfr{R}))$.
\end{enumerate}
\end{prop}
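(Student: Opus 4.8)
The assertion is essentially a bookkeeping statement: it says that the three natural ways of writing down ``the local coefficients matrix attached to $(w,\sigma_{o,s})$'' all produce the same matrix. The plan is to prove the two equalities (i)$=$(ii) and (ii)$=$(iii) separately, each by chasing the relevant commutative diagram and using the definitions of the bases already set up in the excerpt. The whole argument is diagram-chasing, so the ``hard part'' is only in being careful about which isomorphism is being dualized and in which direction.

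\emph{Step 1: (i)$=$(ii).} This is the abstract statement in Remark~\ref{R:LCM} applied to the present situation. Recall that $\mca{T}(w,\sigma_{o,s})^* = T(w,\sigma_{o,s})^* \circ \mca{C}$, where $\mca{C}$ is the canonical isomorphism $\Wh_\psi(I(\sigma_{o,s})) \to \Wh_\psi(I((\sigma_{o,s})^w))$ of \S\ref{SS:lcm}. By construction the ordered basis $\mfr{B}_{(\sigma_s)^w}(\mfr{R})$ is precisely $\mca{C}(\mfr{B}_{\sigma_s}(\mfr{R}))$; hence for a column vector $v$ expressed in the basis $\mfr{B}_{\sigma_s}(\mfr{R})$, its image $\mca{C}(v)$ has the \emph{same} coordinate vector in the basis $\mfr{B}_{(\sigma_s)^w}(\mfr{R})$, i.e.\ $\mca{C}$ is represented by the identity matrix with respect to these two bases. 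Therefore the matrix of $\mca{T}(w,\sigma_{o,s})^* = T(w,\sigma_{o,s})^*\circ \mca{C}$ in the basis $\mfr{B}_{\sigma_s}(\mfr{R})$ equals the matrix of $T(w,\sigma_{o,s})^*$ with respect to the pair of bases $\bigl(\mfr{B}_{(\sigma_s)^w}(\mfr{R}),\ \mfr{B}_{\sigma_s}(\mfr{R})\bigr)$. This is exactly (i)$=$(ii).

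\emph{Step 2: (ii)$=$(iii).} Here I would use Lemma~\ref{comdo}, which gives the commuting square
$$
N_{o,s}\circ T(w,\sigma_{o,s}) = T(w,\chi'_{\sigma_{o,s}})\circ M_{o,s},
$$
with $M_{o,s}: I(\sigma_{o,s})\xrightarrow{\sim} I(\chi'_{\sigma_{o,s}})$ and $N_{o,s}: I((\chi'_{\sigma_{o,s}})^w)\xrightarrow{\sim} I((\sigma_{o,s})^w)$. Dualizing (and using $(A\circ B)^* = B^*\circ A^*$), one gets
$$
T(w,\sigma_{o,s})^* \circ N_{o,s}^* = M_{o,s}^* \circ T(w,\chi'_{\sigma_{o,s}})^*,
$$
equivalently $T(w,\sigma_{o,s})^* = M_{o,s}^* \circ T(w,\chi'_{\sigma_{o,s}})^* \circ (N_{o,s}^*)^{-1}$, and note $(N_{o,s}^*)^{-1} = ((N_{o,s})^{-1})^*$ is an isomorphism $\Wh_\psi(I((\sigma_{o,s})^w)) \to \Wh_\psi(I((\chi'_{\sigma_{o,s}})^w))$. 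Now one simply reads off matrices: the matrix of $T(w,\sigma_{o,s})^*$ with respect to $\bigl(\mfr{B}_{(\sigma_s)^w}(\mfr{R}),\ \mfr{B}_{\sigma_s}(\mfr{R})\bigr)$ coincides, under the change-of-basis isomorphisms $M_{o,s}^*$ and $N_{o,s}^*$, with the matrix of $T(w,\chi'_{\sigma_{o,s}})^*$ with respect to the transported bases. Because $M_{o,s}^{*}$ carries the basis $({M_{o,s}^{-1}})^*(\mfr{B}_{\sigma_s}(\mfr{R}))$ back to $\mfr{B}_{\sigma_s}(\mfr{R})$, and $(N_{o,s}^*)^{-1}$ carries $\mfr{B}_{(\sigma_s)^w}(\mfr{R})$ to $N_{o,s}^*(\mfr{B}_{(\sigma_s)^w}(\mfr{R}))$, the transported bases are exactly $N_{o,s}^*(\mfr{B}_{(\sigma_s)^w}(\mfr{R}))$ on the source side and $({M_{o,s}^{-1}})^*(\mfr{B}_{\sigma_s}(\mfr{R}))$ on the target side. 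This gives (ii)$=$(iii) and completes the proof.

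\emph{Expected obstacle.} There is no genuine mathematical difficulty; the only point requiring care — and the one I would write out most explicitly — is the direction-of-arrows bookkeeping in Step 2: $M_{o,s}$ and $N_{o,s}$ point in ``opposite'' directions (one goes $\sigma\to\chi'$, the other $\chi'^w\to\sigma^w$), so when dualizing one must track carefully which dual map is a genuine change-of-basis for the source space of $T(w,\chi'_{\sigma_{o,s}})^*$ and which for the target, and in particular that it is $(N_{o,s})^{-1}$, not $N_{o,s}$, whose dual transports $\mfr{B}_{(\sigma_s)^w}(\mfr{R})$ to $N_{o,s}^*(\mfr{B}_{(\sigma_s)^w}(\mfr{R}))$. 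Once the dualizations in Lemma~\ref{comdo} are written down correctly, matching the bases is immediate from the definitions in Proposition~\ref{ForRuse}(iii).
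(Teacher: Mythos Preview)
Your proposal is correct and follows exactly the paper's approach: (i)$=$(ii) is Remark~\ref{R:LCM}, and (ii)$=$(iii) is the dualization of Lemma~\ref{comdo}. One small slip to fix in Step~2: the commuting square reads $T(w,\sigma_{o,s}) = N_{o,s}\circ T(w,\chi'_{\sigma_{o,s}})\circ M_{o,s}$ (your displayed equation does not type-check), so dualizing gives $T(w,\sigma_{o,s})^* = M_{o,s}^*\circ T(w,\chi'_{\sigma_{o,s}})^*\circ N_{o,s}^*$ with $N_{o,s}^*$ (not its inverse) transporting $\mfr{B}_{(\sigma_s)^w}(\mfr{R})$ to $N_{o,s}^*(\mfr{B}_{(\sigma_s)^w}(\mfr{R}))$; once this is corrected your basis-matching goes through verbatim.
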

\begin{proof}
The identity between (i) and (ii) is just Remark \ref{R:LCM}, and that between (ii) and (iii) follows from Lemma \ref{comdo}.
\end{proof}

\begin{rmk} \label{2str}
Our strategy adopted in this section and the next for computing the (entries of the) local coefficients matrix for $\wt{\SL}_2$ and $\wt{\GL}_2$ is different from that in \S \ref{S:PG}, where we concentrate on unramified representations of a general $\wt{G}$. Indeed, in view of the diagram in \eqref{Strat}, we computed in the unramified setting  explicitly the two matrices $\mca{S}_\mfr{R}(w, i(\chi); r_w^{\rm un})$ and $\mca{C}(\mfr{B}_{{}^\w \chi}, \mfr{B}_\chi; r_w^{\rm un})$ to obtain the local coefficients matrix.

However, in this section and the next, we do not start with an explicit isomorphism $r_w$. Instead, by using the above convenient model, we have essentially exhibited an isomorphism $N_{o}:  I(i({}^\w \chi)) \to I( {}^w i(\chi) )$ directly in the notation of \eqref{Strat}. That is, $N_o^*$ takes the place of the $(r_w^*)^{-1}$ in diagram \eqref{Strat}. Fix a basis $\mfr{B}$ for $\Wh_\psi(I(i(\chi)))$, which then gives rise to a basis $N_o^* \circ \mca{C}(\mfr{B})$ of $\Wh_\psi(I(i({}^\w \chi)))$. We will essentially compute the matrix representing  $$T(w, i(\chi))^* \circ (N_{o}^*)^{-1}: \Wh_\psi(I( i({}^\w \chi) ))  \to   \Wh_\psi(I( i(\chi) ))  $$
with respect to the bases $N_o^* \circ \mca{C}(\mfr{B})$ and $\mfr{B}$ on the two sides. This matrix is just the local coefficients matrix $\mca{M}_\mfr{B}(w, i(\chi))$ with respect to $\mfr{B}$, as asserted in Proposition \ref{ForRuse}.
\end{rmk}

\subsection{Parametrization of genuine characters} \label{parchar}
\subsubsection{$n$ is odd}
Assume that $n$ is odd. In this case the restriction of the cocycle to $A_o\times A_o$ is trivial. In particular the sets of genuine characters of both $Z(\wt{T}_o)$ and $\wt{A}_o$ are canonically  parameterized by the sets of characters of the underlying linear groups (i.e., the image of $Z(\wt{T}_o)$ and $\wt{A}_o$ in $T_o$ respectively). More precisely, $\chi'_{\sigma_o}$ takes the form
$$\s(\alpha^\vee(a)) \cdot \zeta) \mapsto \zeta \cdot \chi(a)$$
where $\chi: F^\times \to \C^\times$ is a character of $F^\times$.
Clearly, $\chi$ is not uniquely determined by $\sigma_o$,  since $\chi'_{\sigma_o}$ is determined only by the restriction of $\chi$ to $K$. Moreover, the isomorphism class of $\sigma_o$ is determined by $\chi_{\sigma_o}$, which depends only on the restriction of $\chi$ to $F^{\times d}$.

\subsubsection{$n \equiv 2 \ (\text{mod }4)$}

Assume now that  $n \equiv 2 \ (\text{mod }4)$. In this case the restriction of the cocycle to $Z(\wt{T}_o) \times Z(\wt{T}_o)$ is not trivial. Thus, although the sets of genuine characters of both $Z(\wt{T}_o)$ and $\wt{A}_o$ are  parameterized by the sets of characters of the underlying linear groups, such parametrization is not canonical. Fix a non-trivial character $\psi'=\psi_b$ of $F$ and a character of $\chi$ of $F^\times$.
Define
$$\chi_{\psi'}: F^\times \rightarrow \C$$
 by
 $$\chi_{\psi'}(a)=\chi(a) \cdot \omega_{\psi'}(a)^{-1}.$$
Note that
\begin{equation} \label{cparpsi}
\chi_{\psi'}(a)=\chi(a) \cdot \eta_{b,(2)}(a) \cdot \omega_\psi(a)^{-1}.
\end{equation}
Every $\chi'_{\sigma_o}$ has the form
$$\s(\alpha^\vee(a)) \cdot \zeta  \mapsto \zeta  \cdot \chi_{\psi'}(a)$$
for suitable choices of $\chi$ and $\psi'$.  Furthermore, since $d$ is odd in this case,  $\chi_{\sigma_o}$ is not independent of $\psi'$. We note that for a fixed $\psi'$,  $\chi_{\sigma_o}$ is determined only on the restriction of $\chi$ to $F^{\times d}$. However, the restriction of $\chi_{\sigma_o}$ to $Z(\wt{T}) \cap \wt{A}_o$ is independent of $\psi'$.

Henceforth, in the $n \equiv 0 \ (\text{mod } 4)$ case we will always choose $\psi'=\psi$ to parameterize the set of genuine characters of $\wt{A}_o$ and $Z(\wt{T}_o)$. Here $\psi$ is the additive character of $F$ used in defining $\Wh_\psi(\pi)$.

\subsubsection{$n \equiv 0 \ (\text{mod } 4)$} \label{par4}
In this case, since $n$ divides $d^2$,  it follows that
$$(x,y)_n=1 \text{ for all } x,y \in F^{\times d}.$$
Consequently, the set of  genuine characters of $Z(\wt{T}_o)$ are canonically parameterized by $\widehat{F^{\times d}}$. Thus, on the one hand, the situation is similar to the odd case, i.e., 
\begin{equation} \label{cenchar4}
\chi_\sigma (\s(\alpha^\vee(a)) \cdot \zeta)=\zeta\cdot \chi(a)
\end{equation}
where again $\chi$ is a character of $F^\times$ defined up to twisting by elements of $\widehat{F^\times/F^{\times d} }$.

On the other hand, the cocycle on $A_o\times A_o$ is not trivial. Therefore, the set of genuine characters of a maximal abelian subgroup of $\wt{T}_o$ is not canonically parameterized by characters of the underlying subgroup of $T_o$. In fact, we do not know how to parameterize this set in the case where $\gcd(n,p)\ne 1$. Since at this moment  our methods for the computation of the local coefficients matrix rely on this parametrization we shall need to assume that $\gcd(n,p)=1$ wherever we compute the local coefficient matrices and related invariants in  the case $n \equiv 0 \ (\text{mod }4)$.

In contrast to the case $n \not \equiv 0 \ (\text{mod }4)$,  we now explicitly choose $K$ by
$$K=K_d.$$
Under the assumption that $\gcd(n,p)=1$, the group $K \subset F^\times$ is a $d$-Lagrangian subgroup (see Lemma \ref{pn1}). Thus, the associated group $\wt{A}_o$ is again a maximal abelian subgroup of $\wt{T}_o$.

Fix a non-trivial character $\psi'$ of $F$. For  a character $\chi$ of $F$, we define
$$\chi_{\psi'}: K \rightarrow \C^\times$$
 by
$$ \chi_{\psi'}(x)=
\chi(x)\omega_{\psi'}^{-1}(x)
\cdot
\begin{cases}
1  & \text{ if } x \in K_n; \\
\omega_{\psi'}(\varpi)^{-1} \cdot \eta^d_{\varpi,(n)}(x)  &  \text{ if } x  \not \in K_n.
\end{cases}. $$
It is shown in \cite[\S 3.2]{GoSz} that $\chi'_{\sigma_o}$ has the form
$$\s(\alpha^\vee(a)) \cdot \zeta \mapsto \zeta \cdot \chi_{\psi'}(a)$$
for a suitable choice of $\chi$. We note that since $4|n$, it follows that $K \subseteq O_F^\times  F^{\times 2}$ and  $-1 \in F^{\times 2}$. Thus, by Lemma \ref{for0mod4}, we may pick $\psi'$ such that
\begin{equation} \label{par0mod4}
\chi_{\psi'} \bigr( \s(\alpha^\vee(a)) \cdot \zeta \bigl)=\zeta  \chi(x) \cdot
\begin{cases}
1  & \text{ if } x \in K_n; \\
\eta^d_{\varpi,(n)}(x)  &  \text{ if } x  \not \in K_n.
\end{cases}
\end{equation}
We emphasize that contrary to the case $n \equiv 2 \ (\text{mod }4)$ case, the character $\psi'$ is fixed independently of $\psi$.

\subsection{Linear character associated with an element of $\Irr(\wt{T}_o)$.} \label{linchasec}
In light of the discussion in Section \ref{parchar} we define a linear character $\chi$ associated with  $\sigma_o \in \Irr(\wt{T}_o)$ to be a character of $F^\times$ satisfying the following properties:
\begin{enumerate}
\item[$\bullet$] if $n \not \equiv 2 \, (\text{mod }4)$, then  $\chi_{\sigma_o}\bigl(\s(\alpha^\vee(a))\bigr)=\chi(a)$;
\item[$\bullet$] if $n \equiv 2 \, (\text{mod }4)$, then  $\chi_{\sigma_o}\bigl(\s(\alpha^\vee(a))\bigr)=\chi_\psi(a)$.
\end{enumerate}
Thus, in the case $n \equiv 2 \, (\text{mod }4)$, the character $\chi$ depends also on $\psi$. In all cases of $n$,  $\chi$ is determined only up to twisting by elements of $\widehat{ F^\times/F^{\times d} }$. If $\gcd(n,p)=1$ and $\sigma_o$ is unramified, then we may assume that $\chi$ is unramified. Furthermore, by Proposition \ref{secondunramprop}, if  $n  \equiv 2 \, (\text{mod }4)$ and
$$\sigma_o=\eta_{\varpi,(2)} \otimes \sigma_{oo}$$
 where $\sigma_{oo}$ is unramified, then we may assume that $\chi$ is ramified and that $\chi^2$ is unramified.

We will use $\chi$ to describe some invariants related to $\sigma_o$.

\subsection{An explicit local coefficients matrix}
Fix a linear character $\chi$ associated with $\sigma_o$. For $k \in F^\times$, we set
$${\rm nor}_{\chi,\psi}(k)=
\begin{cases} \chi(k)^{-1}  &  \text{ if } n \not \equiv 2 \, (\text{mod }4), \\
 \chi_\psi(k)^{-1}  &  \text{ if } n \equiv 2 \, (\text{mod }4),
 \end{cases} $$
and define $\xi_{\chi,\psi,k} \in i(\chi'_{\sigma_o})^\vee$ by
$$\xi_{\chi,\psi,k}(f)={\rm nor}_{\chi,\psi}(k) \cdot f\bigl(\s(\alpha^\vee(k))).$$
In the cases where $n \not \equiv 0 \, (\text{mod }4)$, it is proven in \cite[\S 4.2]{Szp6} that
$$\mfr{R}_{o,\chi,\psi}=\{\xi_{\chi,\psi,k}:  k \in \widehat{K^\dag}\}$$
is a well defined ordered basis for $i(\chi_{\sigma_o})^\vee$. The same argument applies to the  case $n \equiv 0 \ (\text{mod }4)$ as well.

Let
$$\mca{M}(-,-,\chi,s,\psi):\widehat{K^\dag} \times \widehat{K^\dag} \rightarrow \C(q^{-s})$$
be the local coefficient matrix associated with $\sigma_o$  representing $ T\bigl(w, \sigma_{o,s}\bigr)^*$ with respect to $\mfr{B}_{(\sigma_{o,s})^w}(\mfr{R}_{o,\chi,\psi})$ and $\mfr{B}_{\sigma_{o,s}}(\mfr{R}_{o,\chi,\psi})$, see Proposition \ref{ForRuse}.

\subsubsection{The $n \not \equiv 0 \ (\text{mod }4)$ cases}
For $\widehat{k} \in \widehat{K^\dag}$, we define

\begin{equation} \label{defpargam}
\gamma_{K}(s,\chi,\psi,\widehat{k})=[F^\times: F^{\times n}]^{-1/2} \cdot \sum_{a \in K^\dag} \gamma(s,\chi\eta_{a,(n)},\psi) \cdot \eta_{a,(n)}(\widehat{k}^{-1})
\end{equation}
and
$$\tilde{\gamma}_{K}(s,\chi,\psi,\widehat{k})=[F^\times: F^{\times d}]^{-1/2} \cdot \sum_{a \in K^\dag} \tilde{\gamma}(s,\chi\eta_{a,(d)},\psi) \cdot \eta_{a,(d)}(\widehat{k}^{-1}).$$

\begin{prop}[{\cite[Theorem 4.12]{Szp6}}] \label{matnot4} For $a, b \in \widehat{K^\dag}$, one has
$$\mca{M}(a,b,\chi,s,\psi)=
\begin{cases}
\gamma_{K}(1-s,\chi^{-1}\eta_{ab,(n)},\psi,ab^{-1})  & \text{if  $n$ is odd};\\
\tilde{\gamma}_{K}(1-s,\chi^{-1}\eta^{(d+1)/{2}}_{ab,(d)},\psi,ab^{-1})  &   \text{if } n \equiv 2 \ (\text{mod }4).
\end{cases}$$
\end{prop}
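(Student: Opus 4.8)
The statement to be proved is Proposition \ref{matnot4}, which is cited from \cite[Theorem 4.12]{Szp6}; the plan is therefore to reconstruct that computation in the convenient model of \S \ref{commsl}. By Proposition \ref{ForRuse}, the entry $\mca{M}(a,b,\chi,s,\psi)$ is the coefficient of the functional $\xi_{\chi,\psi,\widehat{b}}$ in the expansion of $T(w,\chi'_{\sigma_{o,s}})^*(\xi_{\chi,\psi,\widehat{a}}^{w})$ with respect to the basis $\mfr{R}_{o,\chi,\psi}$. So the first step is to write down explicitly, for a fixed representative $\s(\alpha^\vee(k))$ with $k \in \widehat{K^\dag}$, the value of the Jacquet-type integral obtained by composing $T(w,\chi'_{\sigma_{o,s}})$ with the $\psi$-Whittaker functional; concretely, this means computing
$$
\lim_{r\to\infty}\int_{\mfr{p}^{-r}} \bigl(\text{normalized section at }\s(\alpha^\vee(k))\bigr)\bigl(\wt{w}\,u(x)\bigr)\,\psi(x)^{-1}\,d_\psi x
$$
after inserting the intertwining integral \eqref{Inter-R}. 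First I would reduce to the torus using the Iwasawa-type decomposition of $\wt{w}u(x)$ in $\wt{\SL}_2^{(n)}$: for $\val{x}$ large one has $u(x)$-translates landing in $B_o$, and the cocycle computations of \S \ref{commsl} (together with relations (A)--(D) of \S \ref{S:top-c}) let one track the covering-group factor precisely. This turns the integral into a sum over $F^\times/F^{\times d}$ (or $/F^{\times n}$) of one-dimensional Tate-type or metaplectic-Tate-type principal value integrals of the form \eqref{tateintegral} or \eqref{sweetintegral}.

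The second step is to recognize those one-dimensional integrals as the gamma factors $\gamma(1-s,\chi^{-1}\eta,\psi)$ (odd $n$) or metaplectic gamma factors $\tilde\gamma(1-s,\chi^{-1}\eta,\psi)$ ($n\equiv 2\bmod 4$), exactly as in \S\S\ref{wp}--\ref{SS:meta-ga}; the parametrization of genuine characters in \S \ref{parchar} is what forces the Weil index $\omega_\psi$ to appear precisely in the $n\equiv 2\bmod 4$ case (via $\chi_\psi$), which is why that case produces $\tilde\gamma$ rather than $\gamma$. The third step is to carry out the Fourier transform on the finite group: the passage from the ``character-indexed'' sum $\sum_{\eta} \gamma(\cdots\eta\cdots) \,\eta(\cdots)$ to a matrix indexed by $\widehat{K^\dag}\times\widehat{K^\dag}$ uses the Lagrangian decomposition $(K^\dag,\widehat{K^\dag})$ of $F^\times/F^{\times d}$ and the duality \eqref{dualhilbert}; this is exactly the content of the definitions \eqref{defpargam} of $\gamma_K$ and $\tilde\gamma_K$. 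The dependence of the entry on $ab$ (the ``diagonal shift'' $\eta_{ab,(n)}$ or $\eta_{ab,(d)}^{(d+1)/2}$) and on $ab^{-1}$ (the Fourier variable) comes out of keeping careful track of how $\xi_{\chi,\psi,\widehat a}$ and $\xi_{\chi,\psi,\widehat b}$ transform under $\wt{A}_o$, together with property \eqref{SLCM1}-type equivariance; the factor $(d+1)/2$ in the $n\equiv 2$ case reflects that $\omega_\psi(\cdot)^2=(-1,\cdot)_2$ and the square-root $\eta^{(d+1)/2}_{ab,(d)}$ is the ``half'' needed to reconcile $\chi$ with $\chi_\psi$ on the index-$2$ ambiguity.

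I expect the main obstacle to be the bookkeeping of the covering cocycle in the Iwasawa decomposition of $\wt{w}u(x)$: one must pin down $\s(\alpha^\vee(\cdot))$-valued corrections and Hilbert-symbol factors $(\cdot,\cdot)_n$ at each step, and a sign or a stray $\varepsilon=(-1,\varpi)_n$ there would propagate into a wrong twist of $\chi$ or a wrong power of $\eta_{ab}$. A secondary subtlety is the convergence/meromorphic-continuation argument: the intertwining integral and the Whittaker integral are only conditionally convergent, and one must justify interchanging the limit $r\to\infty$ with the finite sum over $F^\times/F^{\times d}$ — here the stabilization property of the sequences in \eqref{tateintegral} and \eqref{sweetintegral} (noted after each) does the job, so this is routine but needs to be stated. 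Since the full argument is already written out in \cite[\S 4]{Szp6} for $n\not\equiv 0\pmod 4$, the cleanest exposition is to recall its structure as above, invoke Lemma \ref{comdo} and Proposition \ref{ForRuse} to place it in the present model, and refer to \cite{Szp6} for the detailed cocycle computation, indicating explicitly which of our equations \eqref{defpargam}, \eqref{dualhilbert}, \eqref{tateintegral}, \eqref{sweetintegral} and \S \ref{parchar} play the role of the corresponding steps there.
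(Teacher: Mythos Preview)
The paper does not give its own proof of this proposition: it is stated as a citation of \cite[Theorem 4.12]{Szp6} and no argument is supplied in the text. Your plan is a reasonable reconstruction of that external proof, and it matches the shape of the paper's proof of the parallel Proposition \ref{P:M-3C} (the $n\equiv 0\pmod 4$ case), which proceeds by writing the matrix via Whittaker functionals as in \cite[Proposition 4.9]{Szp6}, evaluating on a suitable test function to obtain an explicit principal-value integral (your equation \eqref{0mod4rep} analogue), and then identifying that integral with the partial $\gamma_K$ or $\tilde\gamma_K$ via the integral representations in \cite[Theorems 2.12, 2.14]{Szp6}. One small point: the paper's route (and that of \cite{Szp6}) does not actually pass through an Iwasawa decomposition of $\wt{w}u(x)$ with cocycle bookkeeping as you describe in your first step; rather it inserts a compactly supported test function so that the double integral collapses directly to an integral over a single coset $K\varpi^{j-i}$ (compare \eqref{0mod4rep}), which avoids most of the cocycle tracking you anticipate as the main obstacle.
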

In \cite[\S 4.4]{Szp6},  explicit formulas for the entries of $\mca{M}(a,b,\chi,s,\psi)$ are given in the case where $\gcd(n,p)=1$ and $K=O_F^\times \cdot F^{\times d}$.

\subsubsection{The $n  \equiv 0 \, (\text{mod }4)$ case. } \label{0mod4mat}
Assume that $n  \equiv 0 \ (\text{mod }4)$ and $\gcd(n,p)=1$. Since the appearance of the $\tilde{\gamma}$-factors rather than ${\gamma}$-factors in Proposition \ref{matnot4} above in the $n\equiv 2 \, (\text{mod }4)$ case is explained by the non-canonical parametrization of  the set of genuine characters of $Z(\wt{T}_o)$,  it is reasonable to expect that in the case  $n\equiv 0 \ (\text{mod }4)$, one could describe the local coefficients  matrices and the related invariants using ${\gamma}$ factors. The scattering matrix is computed in \cite{GoSz} under the assumption that $\gcd(n,p)=1$. However, that matrix actually involves the metaplectic $\tilde{\gamma}$-factor. In fact, one can use both $\gamma$ and $\tilde{\gamma}$ to describe the local coefficients  matrices and related invariants in the case $n  \equiv 0 \ (\text{mod }4)$. We first use $\gamma$-factors below, and then in \S \ref{4remark} we will explain how to relate the same objects to the $\tilde{\gamma}$-factors.

We now give an analogue for Proposition \ref{matnot4} and the results in \cite[\S 4.4]{Szp6}, namely, explicit formulas for the entries of $\mca{M} (\cdot,\cdot, \chi, s, \psi)$.

We note here that while  $K_n$ is an $n$-Lagrangian subgroup of $F^\times$, it is not true in general that a Lagrangian decomposition of $F^\times/F^{\times n}$ exists. Nevertheless, as explained in \cite[Remark 2.13]{Szp6}, $\gamma_{K_n}(s,\chi,\psi,\cdot)$ and $\tilde{{\gamma}}_{K_n}(s, \chi, \psi, \cdot)$ are still defined  as functions on $F^\times/K_n \simeq \widehat{K_n^\dag}$. More precisely,
$$\gamma_{K_n}(s,\chi,\psi,\varpi^j)=[F^\times: F^{\times n}]^{-1/2} \cdot \sum_{k \in K_n^\dag} \gamma(s,\chi\eta_{k,(n)},\psi) \cdot \eta_{k,(n)}(\varpi^{-j}),$$
and we have
$$\gamma_{{K_n}}(s,\chi,\psi,\varpi^j)={\gamma}_{{K_n}}(s,\chi,\psi,\varpi^{n+j}).$$
When computing local coefficients matrices for the case $n  \equiv 0 \ (\text{mod }4)$,  we write
$$\mca{M}(i,j,\chi,s,\psi):= \mca{M}(\varpi^i,\varpi^j,\chi,s,\psi).$$

\begin{prop} \label{P:M-3C}
 One has
\begin{equation} \label{tau0mod4}
\mca{M}(i,j,\chi,s,\psi)= {\gamma}_{{K_n}}(1-s,\chi^{-1}\eta^{(i+j)}_{\varpi,(n)},\psi, \varpi^{i-j})
+{\gamma}_{{K_n}}(1-s,\chi^{-1}\eta^{d+(i+j)}_{\varpi,(n)},\psi, \varpi^{d+i-j}).
\end{equation}
\end{prop}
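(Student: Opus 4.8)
The plan is to follow exactly the strategy laid out in \S\ref{commsl}, specifically using Proposition \ref{ForRuse}: the local coefficients matrix $\mca{M}(i,j,\chi,s,\psi)$ representing $T(w,\sigma_{o,s})^*$ equals the matrix representing $T(w,\chi'_{\sigma_{o,s}})^*$ with respect to the basis $\mfr{R}_{o,\chi,\psi}=\{\xi_{\chi,\psi,\varpi^j}\}$ transported through $N_{o,s}$ and $M_{o,s}^{-1}$. So first I would unwind the definition: I need to compute, for each basis vector $\xi_{\chi,\psi,\varpi^j}$ of $i((\chi'_{\sigma_{o,s}})^w)^\vee$, how $T(w,\chi'_{\sigma_{o,s}})^*$ of the associated Whittaker functional decomposes in the basis dual to $\{\xi_{\chi,\psi,\varpi^i}\}$. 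This is the same computation carried out in \cite[\S 4]{Szp6} for $n\not\equiv 0\pmod 4$; the key technical inputs are the Jacquet integral formula and the partial zeta-integral decomposition of the intertwining operator evaluated against $\psi$-Whittaker functionals (see \cite[\S 2]{Szp6}), which expresses the relevant integral over $F$ as a sum indexed by cosets of $K_n$ in $F^\times$, each term being a Tate-type integral producing a partial gamma factor.

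The crucial structural point for the $n\equiv 0\pmod 4$ case is the parametrization of genuine characters of $\wt{A}_o$ recalled in \S\ref{par4}: with $K=K_d$ and the chosen $\psi'$, the extension $\chi'_{\sigma_o}$ is given by \eqref{par0mod4}, i.e. it picks up the extra factor $\eta^d_{\varpi,(n)}$ on $K\setminus K_n$. Because $K_d/K_n$ has order $2$ (this uses $\gcd(n,p)=1$ and $[F^{\times d}:F^{\times n}]=[F^\times:F^{\times 2}]=4$ together with $[O_F^{\times d}:O_F^{\times n}]=2$ as in the proof of Proposition \ref{secondunramprop}), summing the Jacquet-integral contributions over $K_d$ naturally splits into two sub-sums over $K_n$-cosets: one where the $\eta^d_{\varpi,(n)}$ twist is trivial and one where it contributes an extra $\eta^d_{\varpi,(n)}$, shifting the character $\chi^{-1}\eta^{i+j}_{\varpi,(n)}$ by $\eta^d_{\varpi,(n)}$ and shifting the ``evaluation point'' $\varpi^{i-j}$ by $\varpi^d$. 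This is precisely the two-term structure in \eqref{tau0mod4}. Concretely I would: (i) write the matrix entry as a principal-value integral $\lim_{r\to\infty}\int_{\mfr{p}^{-r}}(\cdots)$ following Lemma \ref{comdo}; (ii) insert the explicit form \eqref{par0mod4} of $\chi'_{\sigma_o}$ and the normalization ${\rm nor}_{\chi,\psi}$; (iii) break the integration domain according to the valuation class modulo the $K_n$-cosets; (iv) recognize each resulting sub-sum, via \eqref{dualhilbert} and the definition \eqref{defpargam} (in the $K_n$-variant recalled just before the proposition), as a partial gamma factor $\gamma_{K_n}(1-s,\chi^{-1}\eta^{i+j}_{\varpi,(n)},\psi,\varpi^{i-j})$ respectively $\gamma_{K_n}(1-s,\chi^{-1}\eta^{d+i+j}_{\varpi,(n)},\psi,\varpi^{d+i-j})$, using the Tate integral formula \eqref{tateintegral}; (v) add the two.

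I expect the main obstacle to be bookkeeping the cocycle and the Weil-index normalization correctly: in the $n\equiv 0\pmod 4$ case the cocycle on $A_o\times A_o$ is nontrivial, so the transition from the abstract functional $\xi_{\chi,\psi,\varpi^k}$ to a concrete integral requires carefully tracking the factors $(\varpi,\varpi)_n$, the Weil indices $\omega_{\psi'}(\varpi)$ appearing in \eqref{par0mod4}, and how they interact with the $\bf{g}_\psi$-type Gauss sums hidden inside the partial gamma factors. The fact that we are allowed to choose $\psi'$ so that the Weil-index contributions collapse to the clean form \eqref{par0mod4} (via Lemma \ref{for0mod4}, which applies since $4\mid n$ forces $-1\in F^{\times 2}$ and $\mfr{f}(\psi')$ even) is exactly what makes the final answer involve only ordinary $\gamma$-factors and not $\tilde\gamma$-factors; verifying that this choice is consistent with the computation of \cite{GoSz} (where the scattering matrix is expressed via $\tilde\gamma$) — i.e. that the two descriptions are related by the identity in Lemma \ref{sumlemma} / Theorem \ref{sweetthm} — is the reconciliation step I would present in \S\ref{4remark}. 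Everything else is a direct transcription of the $n\not\equiv 0\pmod 4$ argument of \cite[Theorem 4.12]{Szp6} with $K_n$ in place of $K^\dag$ and the two-term splitting forced by $[K_d:K_n]=2$.
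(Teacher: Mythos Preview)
Your proposal is correct and follows essentially the same route as the paper: express the matrix entry as a principal-value integral via the model of \S\ref{commsl} and \cite[Proposition 4.9, Theorem 4.12]{Szp6}, split the domain $K_d\varpi^{j-i}=K_n\varpi^{j-i}\sqcup K_n\varpi^{j-i+d}$ using $[K_d:K_n]=2$, plug in the explicit form \eqref{par0mod4} of $\chi'_{\sigma_o}$ on each piece, and identify the two resulting integrals with $\gamma_{K_n}$ via the integral representation in \cite[Theorem 2.12]{Szp6}. The only point you slightly over-elaborate is the Weil-index bookkeeping: with the choice of $\psi'$ already fixed so that \eqref{par0mod4} holds, no residual $\omega_{\psi'}$ factors survive in the computation, and the reconciliation with the $\tilde\gamma$-description is indeed deferred to \S\ref{4remark} exactly as you anticipate.
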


\begin{proof}
Arguing as in \cite[Proposition 4.9]{Szp6}, one shows that $\mca{M}(\cdot,\cdot,\chi,s,\psi)$ is given by the relation
$$T\bigl(w, \chi'_{\sigma_{o,s}}\bigr)^* \bigl(\lambda_{\s(\alpha^\vee(\varpi^{-i})),\chi^{-1},\psi,-s}\bigr)=\sum_{j=0}^{d-1} \mca{M}(i,j,\chi ,s,\psi)  \cdot \lambda_{\s(\alpha^\vee(\varpi^{-i})),\chi,\psi,s}.$$
Here, for $j \in \Z $ the Whittaker functional
$$\lambda_{\s(\alpha^\vee(\varpi^{-j})),\chi,\psi,s} \in \Wh_\psi \bigl(I\bigl(\chi'_{\sigma_{o,s}}\bigr) \bigr)$$
is the analytic continuation of
$$f_s \mapsto \delta^{\frac {-s-1}{2}}(\varpi^{j})\chi^{-1}(\varpi^{j})\int_{\mfr{p}^{-r}} f_s\bigl(  \s(\alpha^\vee(\varpi^{-i})) \wt{w}u(x)\bigr)\psi^{-1}(x) \ d_{\psi}x.$$
We have
$$\lambda_{\s(\alpha^\vee(\varpi^{j})),\chi,\psi,s}=\lambda_{\s(\alpha^\vee(\varpi^{d+j})),\chi,\psi,s}.$$
 Furthermore, similar to the proof of \cite[Theorem 4.12]{Szp6}, one shows using a suitable test function that $\mca{M}(i,j,\chi,s,\psi)$ is the meromorphic continuation of
\begin{equation} \label{0mod4rep}
q^{s(i-j)}\chi^{j-i}(\varpi) \cdot
\lim_{r \rightarrow \infty} \int_{K_d \varpi^{j-i}\cap \mfr{p}^{-r}} \chi'_s(z\omega^{i-j}) \eta^{-(i+j)}_{\varpi,(n)}(z)_n \psi(z) \, d^\times_\psi z.
\end{equation}
The limit above exists for ${\rm Re}(s) \gg 0$. Note that
$$K_d \varpi^{j-i}=K_n \varpi^{j-i} \sqcup K_n \varpi^{j-i+d}.$$
Equation \eqref{par0mod4} along with the fact that
$$\eta^d_{\varpi,(n)}(\varpi)=1$$
  imply that for $z \in K_d \varpi^{j-i}$ we have
$$\chi'(z\varpi^{i-j})=\chi(z)\chi^{i-j}(\varpi) \cdot
\begin{cases} 1  & \text{ if } z \in K_n=K_n \varpi^{j-i} ; \\
 \eta^d_{\varpi,(n)}(z)  &  \text{ if }  z   \in K_n \varpi^{j-i+d}.
 \end{cases} $$
Thus, we can write \eqref{0mod4rep} as
$$\lim_{r \rightarrow \infty} \int_{K_n \varpi^{j-i}\cap \mfr{p}^{-r}} \chi_s(z)   \eta^{-(i+j)}_{\varpi,(n)}(z) \psi(z) \, d^\times_\psi z +
\lim_{r \rightarrow \infty} \int_{K_n \varpi^{d+j-i}\cap \mfr{p}^{-r}} \chi_s(z)   \eta^{-d-(i+j)}_{\varpi,(n)}(z) \psi(z) \, d^\times_\psi z.$$
Comparing the integrals above with the integral representation of $\gamma_{K_n}$ given in \cite[Theorem 2.12]{Szp6}, the proposition follows.
\end{proof}

\begin{lm} \label{odd sha comp}
Suppose that  $\gcd(p,n)=1$. Let $\psi$ be a normalized character of $F$. If $\chi$ is ramified, then
\begin{equation} \label{ram gama}
\gamma_{K_n}(1-s,\chi^{-1},\psi, \varpi^{t})=
\begin{cases}
\varepsilon(1-s,\chi^{-1},\psi)  &  \text{ if }  t\equiv \mfr{f}(\chi) \ (\text{mod } n) ; \\
0  & \text{ otherwise}.
\end{cases}
\end{equation}
If $\chi$ is unramified, then
\begin{equation} \label{ram gama un}
\gamma_{K_n}(1-s,\chi^{-1},\psi,\varpi^{-t} )=
\bigl(q^{-s}\chi(\varpi)\bigr)^t \cdot
\begin{cases}
(1-q^{-1}) L(ns,\chi^n)  & \text{ if } 0 \leq t \leq n-2 ; \\
\gamma(1-ns,\chi^{-n},\psi)& \text{ if } t=n-1.
\end{cases}
\end{equation}
\end{lm}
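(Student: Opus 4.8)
The plan is to unfold the finite-sum definition of $\gamma_{K_n}$ and reduce both assertions to an elementary identity for a sum over the group $\bbmu_n$ of $n$-th roots of unity. First I would record the bookkeeping about $K_n^\dag$. Since $\gcd(p,n)=1$ and $\bbmu_n\subset F^\times$, one has $[F^\times:F^{\times n}]=n^2$ by \eqref{index}, while the valuation gives $F^\times/K_n\simeq\Z/n\Z$, so $\val{K_n^\dag}=n$. For $k\in K_n^\dag$ the character $\eta_{k,(n)}$ is unramified (as $(-,-)_n$ is trivial on $O_F^\times\times O_F^\times$), and since $\varpi\notin O_F^\times F^{\times n}$ the non-degeneracy of the Hilbert symbol makes $k\mapsto\eta_{k,(n)}(\varpi)$ injective, hence a bijection $K_n^\dag\xrightarrow{\sim}\bbmu_n$. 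Writing $\xi_\zeta$ for the unramified character of $F^\times$ with $\xi_\zeta(\varpi)=\zeta$, the definition of $\gamma_{K_n}$ recalled in \S\ref{0mod4mat} (following \cite[Remark 2.13]{Szp6}) becomes, for any $j\in\Z$,
$$\gamma_{K_n}(1-s,\chi^{-1},\psi,\varpi^{j})=\frac1n\sum_{\zeta\in\bbmu_n}\gamma(1-s,\chi^{-1}\xi_\zeta,\psi)\,\zeta^{-j}.$$

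For the ramified case (take $j=t$) I would use that $\chi^{-1}\xi_\zeta$ is ramified of conductor $\mfr{f}(\chi)$, so its $L$-factors are trivial and $\gamma(1-s,\chi^{-1}\xi_\zeta,\psi)=\varepsilon(1-s,\chi^{-1}\xi_\zeta,\psi)$; by \eqref{epsilon twist} together with $\mfr{f}(\psi)=0$ this equals $\zeta^{\mfr{f}(\chi)}\,\varepsilon(1-s,\chi^{-1},\psi)$. Substituting and invoking the orthogonality relation $\frac1n\sum_{\zeta\in\bbmu_n}\zeta^{m}=\mathbf 1_{n\mid m}$ immediately yields \eqref{ram gama}.

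For the unramified case (take $j=-t$, with $0\le t\le n-1$) I would work in the half-plane $\mathrm{Re}(s)\gg0$, where $\gamma(1-s,\chi^{-1}\xi_\zeta,\psi)=\frac{1-q^{-1}b^{-1}\zeta}{1-b\zeta^{-1}}$ with $b:=q^{-s}\chi(\varpi)$; expanding the denominator geometrically and reindexing gives
$$\gamma(1-s,\chi^{-1}\xi_\zeta,\psi)=(1-q^{-1})\sum_{m\ge0}b^{m}\zeta^{-m}-q^{-1}b^{-1}\zeta .$$
Inserting this into the sum over $\bbmu_n$ with weight $\zeta^{t}$ and applying the same orthogonality relation collapses the double sum: the first term contributes $(1-q^{-1})b^{t}/(1-b^{n})$ and the second contributes $-q^{-1}b^{-1}\mathbf 1_{n\mid t+1}$. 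When $0\le t\le n-2$ the second term drops, and recognizing $b^{t}=(q^{-s}\chi(\varpi))^{t}$ together with $(1-b^{n})^{-1}=L(ns,\chi^{n})$ gives the first line of \eqref{ram gama un}; when $t=n-1$ a short simplification of $(1-q^{-1})b^{n-1}/(1-b^{n})-q^{-1}b^{-1}$ produces $(q^{-s}\chi(\varpi))^{n-1}\cdot\frac{1-q^{ns-1}\chi^{-n}(\varpi)}{1-q^{-ns}\chi^{n}(\varpi)}=(q^{-s}\chi(\varpi))^{n-1}\gamma(1-ns,\chi^{-n},\psi)$. Since both sides of each identity are rational in $q^{-s}$, the equalities established for $\mathrm{Re}(s)\gg0$ extend to all $s$ by meromorphic continuation.

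I expect no deep obstacle here: once the finite-sum definition of $\gamma_{K_n}$ is in hand the content is entirely elementary. The only points requiring genuine care are the two sign conventions hidden in $\eta_{k,(n)}(\varpi^{\mp j})$ — which is why the statement is phrased with $\varpi^{t}$ in the ramified half and $\varpi^{-t}$ in the unramified half — and verifying that the geometric-series rearrangement is legitimate in a region where all the series involved converge absolutely before passing to meromorphic continuation.
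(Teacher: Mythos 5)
Your proof is correct. The paper's own ``proof'' of this Lemma is only a citation: it says the odd-$n$ case is \cite[Proposition 4.16]{Szp6} and that the same argument works for all $n$ after replacing $d$ by $n$ in the indices. You have instead supplied a self-contained computation, which is the more useful thing to have in hand. The structure of your argument is almost certainly the one used in the cited source: the partial factor $\gamma_{K_n}$ is a finite Fourier sum over the unramified twists in $K_n^\dag\cong\bbmu_n$ (the identification $k\mapsto\eta_{k,(n)}(\varpi)$ uses exactly the tameness hypothesis $\gcd(p,n)=1$ via the triviality of $(-,-)_n$ on $O_F^\times\times O_F^\times$), and the claimed identities then drop out of the orthogonality relation for characters of $\Z/n\Z$ once the ramified $\gamma$-factor is simplified to an $\varepsilon$-factor via \eqref{epsilon twist}, respectively once the unramified $\gamma$-factor is expanded as a geometric series in $b=q^{-s}\chi(\varpi)$. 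Both the reindexing of the double sum and the final algebra at $t=n-1$ check out; the absolute convergence for $\mathrm{Re}(s)\gg 0$ and the appeal to meromorphic continuation at the end are exactly the right way to legitimize the rearrangement. The one thing worth flagging is a presentational convention: the ramified part of the statement uses $\varpi^t$ and the unramified part $\varpi^{-t}$, which you correctly track by taking $j=t$ in the first case and $j=-t$ in the second so that the exponent weighting $\zeta$ is $\zeta^{-t}$, respectively $\zeta^{t}$; a careless reader could easily get a sign wrong here, and you did not.
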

\begin{proof} For odd $n$, this was proven as \cite[Proposition 4.16]{Szp6}. The proof works for the even cases as well, once one replaces all the indices denoted by $d$ to $n$.
\end{proof}

We now give explicit formulas for the local coefficients matrix for all possible elements of $\Irr(\wt{T}_o)$. For this purpose, note that since $4|n$ we have $-1\in F^{\times 2}$ and thus $\omega_\psi(\varpi) = \pm 1$ by Lemma \ref{for0mod4}. We write
$$d_\omega:= d\cdot \omega_\psi(\varpi) \in \set{d, -d}$$
and define
\begin{equation} \label{betadef}
 \pmb{\beta}(\sigma_o,s,\psi)=  \frac{L((\chi\eta_{ u ,(n)})^{d_\omega},d_\omega s) L(\chi^{-d_\omega},-d_\omega s)}{L((\chi\eta_{ u ,(n)})^{-d_\omega},\half-d_\omega s) L(\chi^{d_\omega},d_\omega s+\half )} .
 \end{equation}

\begin{prop} \label{explicit4} Assume that $\psi$ is normalized. If $\chi$ is unramified, then

\begin{equation} \label{unramgam}
\begin{aligned}
& \mca{M}(i,j,\chi,s,\psi) \\
= &
\begin{cases} (1-q^{-1}) L(ns,\chi^n)  & \text{ if }  (i,j)=(0,0) ; \\
 \bigl(\chi(\varpi)q^{-s}\bigr)^{n-2i}(1-q^{-1}) L(ns,\chi^n) & \text{ if } j=d-i, \, 1 \leq i \leq d-1; \\
 \varepsilon(1-s,\chi^{-1}\eta^{-1}_{\varpi,(n)},\psi) & \text{ if } (i,j)=(0,d-1) ; \\
 \varepsilon(1-s,\chi^{-1}\eta^{2i-1}_{\varpi,(n)},\psi)  & \text{ if } j=i-1, \, 1 \leq i \leq d-1;  \\
 0 &  \text{ otherwise.}
 \end{cases}
\end{aligned}
\end{equation}
Also,
\begin{equation} \label{otherunramgam}
\begin{aligned}
& \mca{M}(i,j,\chi\eta^{-1}_{\varpi,(n)} ,s,\psi) \\
= & \begin{cases}
\bigl(q^{-s}\chi(\varpi)\bigr)^{n-1}\gamma(1-ns,\chi^{-n},\psi)  & \text{ if } (i,j)=(0,d-1) ; \\
 -\gamma_\psi(\varpi)\bigl(\chi^{-1}q^s\bigr) \cdot  \pmb{\beta}(\sigma_o,s,\psi)  & \text{ if } (i,j)=(\frac d 2, \frac d 2 -1) ; \\
 \bigl( q^{-s}\chi(\varpi)\bigr)^{n-2i-1} (1-q^{-1}) L(ns,\chi^n) & \text{ if } j=d-1-i, \, 1 \leq i \leq d-1, \, i \neq \frac d 2; \\ \varepsilon(1-s,\chi^{-1}\eta^{2i}_{\varpi,(n)},\psi)  & \text{ if } j=i-1, \, 1 \leq i \leq d-1 \, i \neq \frac d 2 ;  \\
 0 &  \text{ otherwise}.
\end{cases}
\end{aligned}
\end{equation}
If $\chi^n$ is ramified, then
$$\mca{M}(i,j,\chi,s,\psi)=\chi^{i-j}(\varpi) \cdot
\begin{cases}
\varepsilon(1-s,\chi^{-1}\eta^{(i+j)}_{\varpi,(n)},\psi)  & \text{ if } i-j\equiv \mfr{f}(\chi) \, (\text{mod }n) ;\\
\varepsilon(1-s,\chi^{-1}\eta^{d+(i+j)}_{\varpi,(n)},\psi)  & \text{ if } i-j+d\equiv \mfr{f}(\chi) \, (\text{mod }n); \\ 
0 &  \text{ otherwise.}
\end{cases}$$
\end{prop}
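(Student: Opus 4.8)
The plan is to reduce everything to Proposition \ref{P:M-3C}, which already writes $\mca{M}(i,j,\chi,s,\psi)$ as a sum of two $\gamma_{K_n}$-values, and then to evaluate each of those values by Lemma \ref{odd sha comp}. The preliminary step is to record the arithmetic of the twisting characters $\eta_{\varpi,(n)}^k$ that occur: since $\bbmu_n\subset F^\times$ and $\gcd(n,p)=1$ force $n\mid q-1$, the character $\eta_{\varpi,(n)}$ has exact order $n$ on $O_F^\times$, so for unramified $\chi$ the twist $\chi^{-1}\eta_{\varpi,(n)}^{k}$ is unramified precisely when $n\mid k$ and otherwise has conductor $1$; moreover $\eta_{\varpi,(n)}^{d}=\eta_{\varpi,(2)}$ and $\eta_{\varpi,(n)}^{n}=\mathbbm{1}$ by \eqref{FV fact}, and $\eta_{\varpi,(n)}(\varpi)=(-1,\varpi)_n$ by \eqref{x with x}. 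Together with the periodicity $\gamma_{K_n}(1-s,\cdot,\psi,\varpi^{m})=\gamma_{K_n}(1-s,\cdot,\psi,\varpi^{m+n})$, which lets us reduce the last argument to $0\le m\le n-1$, these facts determine, for each pair $(i,j)$ with $0\le i,j\le d-1$ and for each of the three families of $\sigma_o$, which of the two terms from Proposition \ref{P:M-3C} falls under the ramified clause \eqref{ram gama} of Lemma \ref{odd sha comp} and which under the unramified clause \eqref{ram gama un}.

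\textbf{The two ``generic'' families.} First I would treat $\chi$ unramified. Here the first term $\gamma_{K_n}(1-s,\chi^{-1}\eta_{\varpi,(n)}^{i+j},\psi,\varpi^{i-j})$ is unramified exactly when $i+j=0$, and is otherwise a conductor-$1$ value, which by \eqref{ram gama} vanishes unless $i-j$ meets the conductor of $\chi\eta_{\varpi,(n)}^{-(i+j)}$ modulo $n$; using $\eta_{\varpi,(n)}^{d}=\eta_{\varpi,(2)}$ this congruence singles out exactly the positions $j=d-i$. Similarly the second term $\gamma_{K_n}(1-s,\chi^{-1}\eta_{\varpi,(n)}^{d+i+j},\psi,\varpi^{d+i-j})$ is unramified exactly when $i+j=d$ (so $\eta_{\varpi,(n)}^{d+i+j}=\mathbbm{1}$) and otherwise vanishes off the positions $j=i-1$. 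Substituting the explicit values from \eqref{ram gama un} (reading the exponent of $q^{-s}\chi(\varpi)$ off the reduced last argument, and picking up $\gamma(1-ns,\chi^{-n},\psi)$ in the $m=n-1$ case) and from \eqref{ram gama} then yields \eqref{unramgam}. The family with $\chi^{n}$ ramified is handled identically: now neither term is ever unramified (if $\chi^{-1}\eta_{\varpi,(n)}^{k}$ were unramified then $\chi^{n}$ would be), both terms are conductor-$1$ $\gamma_{K_n}$-values, and \eqref{ram gama} kills all entries off the bands $i-j\equiv \mfr{f}(\chi)\ (\text{mod }n)$ and $i-j+d\equiv \mfr{f}(\chi)\ (\text{mod }n)$ — the relevant point being that $\mfr{f}(\chi\eta_{\varpi,(n)}^{k})=\mfr{f}(\chi)$ for every $k$, since the product cannot have conductor $0$ (else $\chi^{n}$ would be unramified) nor conductor exceeding $\mfr{f}(\chi)$. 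The prefactor $\chi^{i-j}(\varpi)$ is tracked through the integral representation of $\gamma_{K_n}$ underlying Lemma \ref{odd sha comp} (cf. \cite[Theorem 2.12]{Szp6}); this small compatibility computation is the only extra ingredient needed for these two families.

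\textbf{The remaining family and the main obstacle.} For $\mca{M}(i,j,\chi\eta_{\varpi,(n)}^{-1},s,\psi)$ with $\chi$ unramified, the bookkeeping above shows that the unique position at which \emph{both} terms of Proposition \ref{P:M-3C} survive is $(i,j)=(d/2,\,d/2-1)$: there the first term is the conductor-$1$ value $\varepsilon(1-s,\chi^{-1}\eta_{\varpi,(2)},\psi)$ and the second is the unramified value $(q^{-s}\chi(\varpi))^{d-1}(1-q^{-1})L(ns,\chi^{n})$, while at every other position exactly one term survives and gives directly the entries of \eqref{otherunramgam} (the $\gamma(1-ns,\chi^{-n},\psi)$ at $(0,d-1)$ again from the $m=n-1$ case of \eqref{ram gama un}). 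The hard part is the identity at $(d/2,d/2-1)$, namely
$$\varepsilon(1-s,\chi^{-1}\eta_{\varpi,(2)},\psi)+(q^{-s}\chi(\varpi))^{d-1}(1-q^{-1})L(ns,\chi^{n})=-\gamma_\psi(\varpi)\,\chi^{-1}(\varpi)q^{s}\,\pmb{\beta}(\sigma_o,s,\psi),$$
with $\pmb{\beta}$ as in \eqref{betadef}. My plan for this is to recognize the left-hand side, after clearing $L$-factors, as a normalization of a metaplectic $\widetilde\gamma$-factor of $\chi_\psi$-type, and then to rewrite that $\widetilde\gamma$-factor through Sweet's formula (Theorem \ref{sweetthm}) as a quotient of Tate $\gamma$-factors of $\chi^{\pm d_\omega}$ and $(\chi\eta_{u,(n)})^{\pm d_\omega}$; since $\chi$ is unramified these $\gamma$-factors are ratios of exactly the $L$-functions appearing in $\pmb{\beta}$, and the Weil-index factor $\gamma_\psi(\varpi)$, the sign, and the value $\eta_{\varpi,(2)}(\varpi)=(-1,\varpi)_2$ are produced from \eqref{epsilon weil}, \eqref{weilinverse}, \eqref{weil}, and \eqref{x with x} (the role of $d_\omega=d\,\omega_\psi(\varpi)$ being to absorb the sign $\omega_\psi(\varpi)=\pm1$ into the spectral parameter). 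I expect this reconciliation — matching the $\gamma$-factor normalization used here against the $\widetilde\gamma$-factor normalization of the same matrix entry implicit in \cite{GoSz} — to be the main obstacle; the rest is the systematic but routine case analysis described above.
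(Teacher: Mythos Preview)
Your overall strategy --- feed Proposition~\ref{P:M-3C} into Lemma~\ref{odd sha comp} and sort the $(i,j)$ according to whether each of the two summands is unramified or has conductor~$1$ --- is exactly the paper's. Two remarks.

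First, your bookkeeping for the unramified-$\chi$ family is swapped. The first summand $\gamma_{K_n}(1-s,\chi^{-1}\eta_{\varpi,(n)}^{i+j},\psi,\varpi^{i-j})$, when ramified, survives iff $i-j\equiv 1\pmod n$, hence $j=i-1$; the second summand, when ramified, survives only at $(i,j)=(0,d-1)$. Likewise the second summand is unramified iff $i+j=d$, giving the $j=d-i$ entries. You have these attributions reversed. This is harmless for the final answer but would trip you up when you actually write it out.

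Second, and more substantively: for the exceptional entry at $(i,j)=(\tfrac d2,\tfrac d2-1)$ you are making the problem far harder than it is. Your plan --- recognize the sum as a metaplectic $\tilde\gamma$-factor and then invoke Sweet's formula --- is speculative (the left side is not obviously a $\tilde\gamma$-value) and in any case unnecessary. The point you are missing is that $\eta_{\varpi,(n)}^{d}=\eta_{\varpi,(2)}$ is the quadratic character of conductor~$1$, so by \eqref{epsilon old twist}, \eqref{epsilon twist} and \eqref{epsilon weil} one has directly
\[
\varepsilon(1-s,\chi^{-1}\eta_{\varpi,(2)},\psi)=q^{s-\tfrac12}\chi^{-1}(\varpi)\,\omega_\psi(\varpi)^{-1}.
\]
With this in hand, both sides of the claimed identity are explicit rational functions of $q^{-s}\chi(\varpi)$ (recall that $\pmb\beta$ in \eqref{betadef} is a ratio of four abelian $L$-factors and $\omega_\psi(\varpi)=\pm1$), and the equality is a one-line algebraic verification. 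No $\tilde\gamma$-factors, no Theorem~\ref{sweetthm}, no matching of normalizations with \cite{GoSz} is needed. This is how the paper handles it, and it removes what you flagged as the main obstacle.
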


\begin{proof}
We first prove \eqref{unramgam}. Since $\chi$ is unramified, it follows from \cite[Lemma 4.15]{Szp6} that
$$\mfr{f}(\chi^{-1}\eta^{t}_{\varpi,(n)})=1$$
 unless $t \equiv 0 \ (\text{mod }n)$. Thus, by Lemma \ref{odd sha comp} we have
$${\gamma}_{_{K_n}}(1-s,\chi^{-1}\eta^{(i+j)}_{\varpi,(n)},\psi, \varpi^{i-j})=
\begin{cases}
(1-q^{-1}) L(ns,\chi^n)  & \text{ if }  (i,j)=(0,0) ; \\
\varepsilon(1-s,\chi^{-1}\eta^{2i-1}_{\varpi,(n)},\psi)  & \text{ if }  j=i-1, \, 1 \leq i \leq d-1;  \\
0 &  \text{ otherwise}.
\end{cases}$$
Also,
\begin{equation}
\begin{aligned}
&  {\gamma}_{_{K_n}}(1-s,\chi^{-1}\eta^{d+(i+j)}_{\varpi,(n)},\psi, \varpi^{d+i-j}) \\
= & \begin{cases}
\bigl(q^{-s}\chi(\varpi)\bigr)^{n-2i}  (1-q^{-1}) L(ns,\chi^n) &  \text{ if } j=d-i, \, 1 \leq i \leq d-1; \\
\varepsilon(1-s,\chi^{-1}\eta^{-1}_{\varpi,(n)},\psi) & \text{ if } (i,j)=(0,d-1) ;   \\
0 &  \text{ otherwise.}
\end{cases}
\end{aligned}
\end{equation}
Equation \eqref{unramgam} now follows.

We now prove \eqref{otherunramgam}. By \eqref{tau0mod4} we have
\begin{equation} \label{tauotherun}
\mca{M}(i,j,\chi\eta^{-1}_{\varpi,(n)} ,s,\psi)={\gamma}_{_{K_n}}(1-s,\chi^{-1}\eta^{(1+i+j)}_{\varpi,(n)},\psi, \varpi^{i-j})
+{\gamma}_{_{K_n}}(1-s,\chi^{-1}\eta^{d+i+j+1}_{\varpi,(n)},\psi, \varpi^{d+i-j}).
\end{equation}
Proposition \ref{ram gama} gives
$${\gamma}_{_{K_n}}(1-s,\chi^{-1}\eta^{(1+i+j)}_{\varpi,(n)},\psi, \varpi^{i-j})
=
\begin{cases}
\varepsilon(1-s,\chi^{-1}\eta^{2i}_{\varpi,(n)},\psi)  & \text{ if } j=i-1, \, 1 \leq i \leq d-1,  \\
 0 &  \text{ otherwise};
 \end{cases}$$
and
$$\begin{aligned}
& {\gamma}_{_{K_n}}(1-s,\chi^{-1}\eta^{d+i+j+1}_{\varpi,(n)},\psi, \varpi^{d+i-j})\bigr) \\
= &  \bigl(\chi(\varpi)q^{-s}\bigr)^{n-2i-1} \cdot
\begin{cases}
\gamma(1-ns,\chi^{-n},\psi)  & \text{ if } (i,j)=(0,d-1), \\
(1-q^{-1}) L(ns,\chi^n) & \text{ if } j=d-1-i, \, 1 \leq i \leq d-1,\\
0 &  \text{ otherwise.}
\end{cases}
\end{aligned} $$
Note that the two summands in the right hand side of \eqref{tauotherun} do not vanish simultaneously if and only if $(i,j)=(\frac d 2, \frac d 2 -1)$. For $i=\frac d 2$, we have
$$\varepsilon(1-s,\chi^{-1}\eta^{2i}_{\varpi,(n)},\psi)=\varepsilon(1-s,\chi^{-1}\eta_{\varpi,(2)},\psi)=q^{s-\half}\chi^{-1}(\varpi) \omega_\psi^{-1}(\varpi).$$
The second equality above follows from \eqref{epsilon old twist}, \eqref{epsilon twist} and \eqref{epsilon weil}. Hence, so far we have shown that
$$\begin{aligned}
& \mca{M}(i,j,\chi\eta^{-1}_{\varpi,(n)} ,s,\psi) \\
& =
\begin{cases}
\bigl(q^{-s}\chi(\varpi)\bigr)^{n-1}\gamma(1-ns,\chi^{-n},\psi)  & \text{ if } (i,j)=(0,d-1) ; \\
\bigl(\chi(\varpi)q^{-s}\bigr)^{d-1}(1-q^{-1}) L(ns,\chi^n)    & \text{ if } (i,j)=(\frac d 2, \frac d 2 -1) ; \\
\qquad  \quad +q^{s-\half}\chi^{-1}(\varpi)\omega^{-1}_\psi(\pi)  &   \\
\bigl( q^{-s}\chi(\varpi)\bigr)^{n-2i-1} (1-q^{-1}) L(ns,\chi^n) & \text{ if } j=d-1-i, \, 1 \leq i \leq d-1, \, i \neq \frac d 2; \\
\varepsilon(1-s,\chi^{-1}\eta^{2i}_{\varpi,(n)},\psi)  & \text{ if } j=i-1, \, 1 \leq i \leq d-1, \ i \neq \frac d 2 ;  \\
0 &  \text{ otherwise}.
\end{cases}
\end{aligned} $$
It thus remains to show that
$$\bigl(\chi(\varpi)q^{-s}\bigr)^{d-1}(1-q^{-1}) L(ns,\chi^n)  +q^{s-\half}\chi^{-1}(\varpi)\omega^{-1}_\psi(\varpi)=-\omega_\psi(\varpi)\bigl(\chi^{-1}q^s\bigr)  \pmb{\beta}(\sigma_o,s,\psi),$$
which however follows from a straightforward computation.

The ramified case follows immediately from Proposition \ref{P:M-3C} and Equation \eqref{ram gama}.
\end{proof}

\subsection{Invariants from $\mca{M}(w, \sigma_o, s, \psi)$}
Let $\sigma_o \in \Irr(\wt{T}_o)$. Let $\chi$ be a linear character associated with $\sigma_o$. If  $n  \equiv 0 \ (\text{mod }4)$, then we also assume $\gcd(n,p)=1$ unless stated otherwise.

\subsubsection{Trace} \label{trace}
Denote by $\Tr(\mca{M}(w, \sigma_o, s,\psi))$ the trace of a local coefficients matrix $\mca{M}(w, \sigma_o, s,\psi)$ associated with $\sigma_o$ and $\psi$.

\begin{thm} \label{trcodd24}
We have
\begin{equation} \label{trfor4}
\begin{aligned}
&  \Tr(\mca{M}(w, \sigma_o, s,\psi)) \\
= &  (\dim \sigma_o)^{-1} \cdot
 \begin{cases}
 \sum_{\eta \in \widehat{F^\times/ F^{\times d}}} \gamma(1-s,\chi^{-1}\eta,\psi)  & \text{ if }  n \not \equiv 2 \, (\text{mod }4) ; \\
 \sum_{\eta \in \widehat{F^\times/ F^{\times d}}} \tilde{\gamma}(1-s,\chi^{-1}\eta,\psi) & \text{ if }   n \equiv 2 \, (\text{mod }4). \end{cases}
 \end{aligned}
 \end{equation}
For ${\rm Re}(s) \gg 0$, we have
$$\Tr(\mca{M}(w, \sigma_o, s,\psi))= (\dim \sigma_o)  \cdot \lim_{r \rightarrow \infty} \int_{\mfr{p}^{-r} \cap F^{\times d}}    \chi_{\sigma_{o,s}}\bigr( \s(\alpha^\vee(x)) \bigl) \psi(x) \, d_\psi^\times x .$$
\end{thm}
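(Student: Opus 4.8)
The plan is to compute the trace of $\mca{M}(w, \sigma_o, s, \psi)$ directly from the explicit entries established in Proposition \ref{matnot4} (for $n \not\equiv 0 \bmod 4$) and Proposition \ref{P:M-3C} (for $n \equiv 0 \bmod 4$), and then recognize the resulting sum as the averaged sum of (metaplectic) gamma factors over $\widehat{F^\times/F^{\times d}}$. First I would treat the three cases of $n$ separately at the level of the diagonal entries. For $n$ odd, $\mca{M}(a,a,\chi,s,\psi) = \gamma_K(1-s, \chi^{-1}\eta_{a^2,(n)}, \psi, 1)$, so summing over $a \in \widehat{K^\dag}$ and unfolding the definition \eqref{defpargam} of $\gamma_K$ gives $\sum_{a \in \widehat{K^\dag}} \sum_{b \in K^\dag} \gamma(1-s, \chi^{-1}\eta_{a^2 b,(n)},\psi) \cdot [F^\times : F^{\times n}]^{-1/2}$. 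Since $a \mapsto a^2$ is a bijection of $\widehat{K^\dag}$ (here $n$ is odd, cf. Lemma \ref{Jaut}), re-indexing collapses the double sum: each character $\eta \in \widehat{F^\times/F^{\times n}} = \widehat{F^\times/F^{\times d}}$ is hit exactly $|\widehat{K^\dag}| = d\,|d|^{-1/2}$ times, and $[F^\times:F^{\times n}]^{-1/2} = (d\,|d|^{-1/2})^{-1} \cdot$ (a harmless normalization that I would bookkeep carefully using \eqref{index}). Dividing out by $\dim \sigma_o = d\,|d|^{-1/2}$ yields the claimed formula. The $n \equiv 2 \bmod 4$ case is entirely parallel, using the $\tilde\gamma$-version of $\mca{M}$ in Proposition \ref{matnot4} and the definition of $\tilde\gamma_K$, with $d$ odd so that squaring is again bijective on $\widehat{K^\dag}$.

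For $n \equiv 0 \bmod 4$, the diagonal entries are $\mca{M}(i,i,\chi,s,\psi) = \gamma_{K_n}(1-s, \chi^{-1}\eta_{\varpi,(n)}^{2i}, \psi, 1) + \gamma_{K_n}(1-s, \chi^{-1}\eta_{\varpi,(n)}^{d+2i}, \psi, \varpi^d)$ by Proposition \ref{P:M-3C}. Summing over $0 \le i \le d-1$ and unfolding $\gamma_{K_n}$ via its definition, I would again re-index: as $i$ runs over $\Z/d\Z$, the exponents $2i$ and $d+2i$ together run over all residues $0,1,\dots,n-1$ mod $n$ exactly once each (this is where $4|n$, equivalently $d$ even, is used — $2i$ runs over the even residues mod $n$ and $d+2i$ over the odd ones), so the total sum telescopes into $\sum_{k \in K_n^\dag}\sum_{j \bmod n} \gamma(1-s, \chi^{-1}\eta_{\varpi^j k,(n)},\psi)$ with the appropriate normalization, which is $|\widehat{K_n^\dag}| \cdot [F^\times:F^{\times n}]^{-1/2}$ copies of $\sum_{\eta \in \widehat{F^\times/F^{\times n}}}\gamma(1-s,\chi^{-1}\eta,\psi)$. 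Dividing by $\dim \sigma_o$ and noting $\widehat{F^\times/F^{\times n}} = \widehat{F^\times/F^{\times d}}$ in this case (since $n|d^2$, all such characters are already trivial on $F^{\times d}$... more precisely the order of $\eta$ divides $d$ here, which I would justify from $\gcd(n,p)=1$ and the structure of $F^\times/F^{\times n}$) gives the first line of \eqref{trfor4}.

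For the integral representation in the last sentence, I would assume $\mathrm{Re}(s) \gg 0$ so all principal-value integrals in \eqref{tateintegral} converge as honest integrals. Starting from the right-hand side of \eqref{trfor4} in the $n \not\equiv 2 \bmod 4$ case, write each $\gamma(1-s,\chi^{-1}\eta,\psi)$ as $\lim_r \int_{\mfr{p}^{-r}} \chi_s(x)\eta^{-1}(x)\psi(x)\, d_\psi^\times x$ (using $(\chi^{-1}\eta)^{-1} = \chi\eta^{-1}$ and the fact that $\chi|\cdot|^s = \chi_s$), swap the finite sum over $\eta \in \widehat{F^\times/F^{\times d}}$ with the integral, and apply the orthogonality relation \eqref{dualhilbert} (with $m$ replaced by $d$) to get $[F^\times:F^{\times d}] \cdot \lim_r \int_{\mfr{p}^{-r} \cap F^{\times d}} \chi_s(x)\psi(x)\, d_\psi^\times x$. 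Since on $F^{\times d}$ we have $\chi_s(x) = \chi_{\sigma_{o,s}}(\s(\alpha^\vee(x)))$ by the definitions in \S\ref{linchasec} and \S\ref{par4} (for $n$ odd or $4|n$, where the parametrization is by $\chi$ directly; and for $n \equiv 2 \bmod 4$ the same works after the substitution $\chi \mapsto \chi_\psi$ and Lemma \ref{sumlemma}, which converts the $\tilde\gamma$-sum to the $\gamma$-sum, landing in the same integral), and since $\dim \sigma_o = d\,|d|^{-1/2}$ while $[F^\times:F^{\times d}] = d^2|d|^{-1} = (\dim\sigma_o)^2$ by \eqref{index}, the prefactor $[F^\times:F^{\times d}] \cdot (\dim\sigma_o)^{-1} = \dim\sigma_o$ comes out exactly as stated.

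The main obstacle I anticipate is the bookkeeping of normalization constants — the factors $[F^\times:F^{\times m}]^{\pm 1/2}$, $|m|^{\pm 1/2}$, $\dim\sigma_o$, and $|\widehat{K^\dag}|$ must all cancel precisely, and a sign or square-root error anywhere propagates. A secondary subtlety, genuinely case-specific, is verifying in the $n \equiv 2 \bmod 4$ case that the linear character $\chi$ attached to $\sigma_o$ (which depends on $\psi$ through $\chi_\psi$, see \S\ref{linchasec}) interacts correctly with Lemma \ref{sumlemma}: one needs that replacing $\tilde\gamma$ by $\gamma$ in the sum over $\widehat{F^\times/F^{\times d}}$ is valid when the characters range over $\widehat{F^\times/F^{\times 2}}$ as well, which holds since $d$ is odd so $\widehat{F^\times/F^{\times 2}} \subseteq \widehat{F^\times/F^{\times 2d}}$ and Lemma \ref{sumlemma} applies after grouping the $d$-power characters into cosets of the $2$-power characters. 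Once the combinatorial re-indexing is set up, the rest is routine manipulation of Tate and metaplectic gamma factors using the integral representations \eqref{tateintegral} and \eqref{sweetintegral}.
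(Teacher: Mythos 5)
Your proposal is sound for $n$ odd and $n\equiv 2 \pmod 4$, where it essentially reconstructs the argument of \cite[Corollary 4.14]{Szp6} (which the paper simply cites for those cases). The gap is in the $n\equiv 0 \pmod 4$ case, where your re-indexing argument does not work.

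Concretely, you claim that as $i$ runs over $\{0,\dots,d-1\}$, the exponents $2i$ and $d+2i$ together run over all residues mod $n$ exactly once, with $2i$ giving the even residues and $d+2i$ the odd ones. This would require $d$ to be \emph{odd}, but in the $n\equiv 0\pmod 4$ case you have $d=n/2$ \emph{even}, so $d+2i$ is always even. In fact both families run over the $d$ even residues mod $n$, each appearing twice, and the odd residues are never hit; your parenthetical remark is internally inconsistent. The subsequent claim that $\widehat{F^\times/F^{\times n}}=\widehat{F^\times/F^{\times d}}$ is also false: under $\gcd(n,p)=1$ these groups have orders $n^2$ and $d^2=n^2/4$ respectively. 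More seriously, your collapse of the sum ignores the argument $\varpi^d$ appearing in the second $\gamma_{K_n}$-term of Proposition \ref{P:M-3C}. When you unfold $\gamma_{K_n}(1-s,\chi^{-1}\eta^{d+2i}_{\varpi,(n)},\psi,\varpi^d)$ you pick up the nontrivial phase $\eta_{k,(n)}(\varpi^{-d})$, which equals $(-1)^j$ once $k$ is written as $u^j$ (as $\xi^d$ has exact order $2$ when $4\mid n$). This phase obstructs a naive re-indexing, which is precisely the point. The paper handles it by a genuinely different device: since $\Tr(\mca{M})$ depends only on $\chi_{\sigma_o}$, hence only on $\chi|_{F^{\times d}}$, one may replace $\chi$ by $\chi\eta_{\varpi,(n)}^d$ (trivial on $F^{\times d}$) to obtain a second expression in which the phase $(-1)^j$ trades places between the two terms; averaging the two expressions produces the factor $\frac{1}{2}(1+(-1)^j)$, killing the odd-$j$ terms and leaving a clean sum over the characters $\eta_{\varpi,(d)}^i\eta_{u,(d)}^j$ of $\widehat{F^\times/F^{\times d}}$, each appearing exactly twice. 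Your proposal has no mechanism to dispose of the phase and would not close without importing something equivalent to this symmetrization.

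Your treatment of the second (integral) part of the theorem and your remark on Lemma \ref{sumlemma} for $n\equiv 2\pmod 4$ are fine and in line with the paper's proof.
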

\begin{proof}
This proposition is proven for the $n  \not \equiv 0 \ (\text{mod }4)$ case in \cite[Corollary 4.14]{Szp6}. We now prove for $n  \equiv 0 \, (\text{mod }4)$ starting with \eqref{trfor4}.

Let $u$ be generator of $K_n/F^{\times n}$. Note that $\eta_{u,(n)}$ is unramified and
$$\eta_{u,(n)}(\varpi)=\xi,$$
 where $\xi$ is a primitive $n$-th root of 1. By Proposition \ref{tau0mod4},
$$\begin{aligned}
&  \Tr(\mca{M}(w, \sigma_o, s,\psi))   \\
 = & \sum_{i=0}^{d-1}\mca{M}(i,i,\chi,s,\psi) \\
 = & \sum_{i=0}^{d-1}\bigl({\gamma}_{_{K_n}}(1-s,\chi^{-1}\eta^{2i}_{\varpi,(n)},\psi, 1)
+{\gamma}_{_{K_n}}(1-s,\chi^{-1}\eta^{d+2i}_{\varpi,(n)},\psi, \varpi^{d})\bigr).
\end{aligned} $$
By \eqref{defpargam}  we have
\begin{equation} \label{tr1}
\begin{aligned}
& \Tr(\mca{M}(w, \sigma_o, s,\psi)) \\
 = &\  \frac 1 n \sum_{i=0}^{d-1}\sum_{j=0}^{n-1} \bigl({\gamma}(1-s,\chi^{-1}\eta^{2i}_{\varpi,(n)}\eta^{j}_{u,(n)},\psi)+{\gamma}(1-s,\chi^{-1}\eta^{2i+d}_{\varpi,(n)}\eta^{j}_{u,(n)},\psi)\xi^{dj} \bigr).
 \end{aligned}
 \end{equation}
Recall that $\Tr(\mca{M}(w, \sigma_o, s,\psi))$ depends only on $\chi_\sigma$. In particular, since $\eta^{d}_{\varpi,(n)}$ is trivial on $F^{\times d}$, we may replace $\chi$ by $\chi\eta^{d}_{\varpi,(n)}$ and obtain
\begin{equation} \label{tr2}
\begin{aligned}
& \Tr(\mca{M}(w, \sigma_o, s,\psi)) \\
= & \ \frac 1 n\sum_{i=0}^{d-1}\sum_{j=0}^{n-1} \bigl({\gamma}(1-s,\chi^{-1}\eta^{2i+d}_{\varpi,(n)}\eta^{j}_{u,(n)},\psi)+{\gamma}(1-s,\chi^{-1}\eta^{2i}_{\varpi,(n)}\eta^{j}_{u,(n)},\psi)\xi^{dj} \bigr).
\end{aligned}
\end{equation}
Summing \eqref{tr1} and \eqref{tr2} together gives
$$\Tr(\mca{M}(w, \sigma_o, s,\psi)) =\frac {1} {2n} \sum_{i=0}^{d-1}\sum_{j=0}^{n-1} \alpha(i,j)(1+\xi^{dj})$$
where $$\alpha(i,j)={\gamma}(1-s,\chi^{-1}\eta^{2i+d}_{\varpi,(n)}\eta^{j}_{u,(n)},\psi)+{\gamma}(1-s,\chi^{-1}\eta^{2i}_{\varpi,(n)}\eta^{j}_{u,(n)},\psi).$$
Since $\xi^{dj}=(-1)^j$, we have
$$\begin{aligned}
& \Tr(\mca{M}(w, \sigma_o, s,\psi)) \\
=& \ \frac 1 n\sum_{i=0}^{d-1}\sum_{j=0}^{d-1} {\gamma}(1-s,\chi^{-1}\eta^{2i+d}_{\varpi,(n)}\eta^{2j}_{u,(n)},\psi)+{\gamma}(1-s,\chi^{-1}\eta^{2i}_{\varpi,(n)}\eta^{2j}_{u,(n)},\psi) \\
=& \ \frac 1 n\sum_{i=0}^{d-1}\sum_{j=0}^{d-1} {\gamma}(1-s,\chi^{-1}\eta^{d}_{\varpi,(n)}\eta^{i}_{\varpi,(d)}\eta^{j}_{u,(d)},\psi)+{\gamma}(1-s,\chi^{-1}\eta^{i}_{\varpi,(d)}\eta^{j}_{u,(d)},\psi),
\end{aligned} $$
where the last equality follows from \eqref{FV fact}. The proof of the equality in \eqref{trfor4} is completed once we note that in the right hand side of the above equality, each summand of the form ${\gamma}(1-s,\chi^{-1} \eta,\psi), \eta \in \widehat{F^\times/F^{\times d}}$ appears exactly twice.

Finally, the integral formula given in the proposition for the case $n  \equiv 0 \ (\text{mod }4)$ follows from \eqref{trfor4} by the same argument used in the proof of \cite[Corollary 4.14]{Szp6}.
\end{proof}

To reconcile the expression for $\Tr(\mca{M}(w, \sigma_o, s, \psi))$ given in Theorem \ref{trcodd24} with that in Proposition \ref{P:trace}, we have the following result.

\begin{cor} \label{exptrun}
Suppose that $n\ge 3$ and $\gcd(n,p)=1$. Assume that $\sigma_o$ is unramified and $\psi$ is normalized. Denote by $\chi$ the linear character associated with $\sigma_o$. Then
\begin{equation} \label{explicitT}
\Tr(\mca{M}(w, \sigma_o, s, \psi))
=(1-q^{-1}) \cdot
 \begin{cases}
 L(ns,\chi^n)  &   \text{ if } n \not \equiv 0 \  (\text{mod } 4); \\
 L(ds,\chi^d)  &   \text{ if } n \equiv 0\  (\text{mod } 4).
  \end{cases}
 \end{equation}
If $n \equiv 2 \ (\text{mod }4)$,  then
\begin{equation} \label{explicitT2}
\Tr(\mca{M}(w, \eta_{\varpi,(2)} \otimes \sigma_o, s, \psi))
=(1-q^{-1}) (q^{-s}\chi(\varpi))^{d-1} \cdot \varepsilon(\half-s,\eta_{\varpi,(2)} \chi^{-1},\psi)^{-1}\cdot L(ns,\chi^n).
\end{equation}
\end{cor}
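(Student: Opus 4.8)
The plan is to derive both identities from the integral representation of the trace established in Theorem \ref{trcodd24}, namely
$$\Tr(\mca{M}(w, \sigma_o, s,\psi))= (\dim \sigma_o)  \cdot \lim_{r \rightarrow \infty} \int_{\mfr{p}^{-r} \cap F^{\times d}}    \chi_{\sigma_{o,s}}\bigl( \s(\alpha^\vee(x)) \bigr) \psi(x) \, d_\psi^\times x,$$
valid for ${\rm Re}(s)\gg 0$; since both sides of \eqref{explicitT} and \eqref{explicitT2} are rational in $q^{-s}$, it suffices to verify the claimed equalities for ${\rm Re}(s)\gg 0$ and then invoke uniqueness of meromorphic continuation. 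As $\gcd(n,p)=1$, equation \eqref{index} gives $\val{d}=1$, $\dim\sigma_o=d$ and $[O_F^\times:O_F^{\times d}]=d$, hence $d_\psi^\times x(O_F^{\times d})=(1-q^{-1})/d$; moreover $d\ge 2$, and $d\ge 3$ when $n\equiv 2\ (\text{mod }4)$ and $n\ge 3$. Using $\delta_{B_o}(\s(\alpha^\vee(x)))=\val{x}^2$ we have $\chi_{\sigma_{o,s}}(\s(\alpha^\vee(x)))=\val{x}^s\cdot\chi_{\sigma_o}(\s(\alpha^\vee(x)))$ for $x\in F^{\times d}$. The skeleton of every computation below is the same: parametrize $F^{\times d}=O_F^{\times d}\cdot\varpi^{d\Z}$ and write $x=u\varpi^{dk}$ with $u\in O_F^{\times d}$, $k\in\Z$; for $k<0$ one has $dk\le -2$ and, the integrand being locally constant on cosets of $1+\mfr{p}$, the inner integral over $O_F^{\times d}$ against $\psi$ vanishes, whereas for $k\ge 0$ the character $\psi$ is trivial on $\varpi^{dk}O_F^{\times d}\subset O_F$ and one is left with a geometric series in $q^{-ds}\chi(\varpi)^d$.

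For \eqref{explicitT} with $n\not\equiv 2\ (\text{mod }4)$, the parametrization of \S \ref{linchasec} gives $\chi_{\sigma_o}(\s(\alpha^\vee(x)))=\chi(x)=\chi(\varpi)^{dk}$ since $\chi$ is unramified, so the integral equals $\frac{1-q^{-1}}{d}\sum_{k\ge 0}(q^{-ds}\chi(\varpi)^d)^k=\frac{1-q^{-1}}{d}L(ds,\chi^d)$, whence $\Tr=(1-q^{-1})L(ds,\chi^d)$; as $d=n$ for $n$ odd and $d=n/2$ for $4\mid n$, this is \eqref{explicitT}. For \eqref{explicitT} with $n\equiv 2\ (\text{mod }4)$, \S \ref{linchasec} gives instead $\chi_{\sigma_o}(\s(\alpha^\vee(x)))=\chi_\psi(x)=\chi(x)\omega_\psi(x)^{-1}$, and the new ingredient is the behaviour of $\omega_\psi$ on $F^{\times d}$ with $d$ odd. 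Writing $x=u\varpi^{dk}$, using $\omega_\psi(u)=1$ (Lemma \ref{for0mod4}, since $p$ is odd and $\mfr{f}(\psi)=0$) and $\omega_\psi(ab)=\omega_\psi(a)\omega_\psi(b)(a,b)_2$, one obtains $\omega_\psi(u\varpi^{dk})^{-1}=\omega_\psi(\varpi^{dk})^{-1}(u,\varpi)_2^{dk}$, which is $1$ for $k$ even and $\omega_\psi(\varpi)^{-1}(u,\varpi)_2$ for $k$ odd. Hence for $k\ge 0$ odd the inner integral is a constant times $\int_{O_F^{\times d}}(u,\varpi)_2\,d_\psi^\times u=0$ — the symbol $u\mapsto(u,\varpi)_2$ being a nontrivial character of $O_F^{\times d}$ because $d$ and $p$ are odd — so only even $k\ge 0$ survive, and the geometric series is now in $(q^{-ds}\chi(\varpi)^d)^2=q^{-ns}\chi(\varpi)^n$, giving $\Tr=(1-q^{-1})L(ns,\chi^n)$. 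This is precisely where the passage from $L(ds,\chi^d)$ to $L(ns,\chi^n)$ occurs.

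For \eqref{explicitT2}, Proposition \ref{secondunramprop} shows $\eta_{\varpi,(2)}\otimes\sigma_o$ is ramified and, in the notation of \S \ref{linchasec}, has associated (ramified, with unramified square) linear character $\chi_1:=\eta_{\varpi,(2)}\chi$, so $\chi_{\eta_{\varpi,(2)}\otimes\sigma_o}(\s(\alpha^\vee(x)))=\chi_1(x)\omega_\psi(x)^{-1}$ on $F^{\times d}$. Running the same computation with $\eta_{\varpi,(2)}(u\varpi^{dk})=(u,\varpi)_2(-1,\varpi)_2^{dk}$, one finds that now the \emph{even} $k\ge 0$ contributions vanish (again by $\int_{O_F^{\times d}}(u,\varpi)_2\,d_\psi^\times u=0$) while for $k\ge 1$ odd the integrand is independent of $u$ and equals $\omega_\psi(\varpi)\chi(\varpi)^{dk}q^{-dks}$; summing over odd $k\ge 1$ yields $\Tr=(1-q^{-1})\omega_\psi(\varpi)(\chi(\varpi)q^{-s})^d L(ns,\chi^n)$. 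It then remains to identify $\omega_\psi(\varpi)\chi(\varpi)q^{-s}=\varepsilon(\half-s,\eta_{\varpi,(2)}\chi^{-1},\psi)^{-1}$: by \eqref{epsilon twist} (with $\mfr{f}(\eta_{\varpi,(2)})=1$) the unramified twist $\chi^{-1}$ contributes $\chi(\varpi)^{-1}$, by \eqref{epsilon old twist} the shift $-s$ contributes $q^{s}$, and by \eqref{changepsi} together with \eqref{epsilon weil} one has $\varepsilon(\half,\eta_{\varpi,(2)},\psi)=(\varpi,-1)_2\omega_\psi(\varpi)$; combining these with $\omega_\psi(\varpi)^{-1}=(-1,\varpi)_2\omega_\psi(\varpi)$ (which follows from $\omega_\psi(\varpi)^2=(\varpi,\varpi)_2=(-1,\varpi)_2$) gives the identity, and then $(\chi(\varpi)q^{-s})^d=(\chi(\varpi)q^{-s})^{d-1}\cdot\chi(\varpi)q^{-s}$ produces \eqref{explicitT2}.

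The main obstacle is the bookkeeping of Weil indices and quadratic Hilbert symbols in the $n\equiv 2\ (\text{mod }4)$ cases: correctly determining which residue classes of $k$ modulo $2$ contribute — and hence the change of period from $d$ to $n$ — and, for \eqref{explicitT2}, matching the resulting monomial prefactor with the Tate $\varepsilon$-factor $\varepsilon(\half-s,\eta_{\varpi,(2)}\chi^{-1},\psi)^{-1}$ through the normalization identities for $\varepsilon$ and $\omega_\psi$.
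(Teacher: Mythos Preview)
Your proof is correct and takes a genuinely different route from the paper's. The paper starts from the \emph{character-sum} expression in \eqref{trfor4},
\[
\Tr(\mca{M}(w,\sigma_o,s,\psi))=(\dim\sigma_o)^{-1}\sum_{\eta\in\widehat{F^\times/F^{\times d}}}\gamma(1-s,\chi^{-1}\eta,\psi)
\]
(or its $\tilde\gamma$-variant), decomposes $\widehat{F^\times/F^{\times d}}=D\times U$ into ramified and unramified pieces, shows the ramified piece vanishes by orthogonality, and evaluates the unramified piece via the partial-fraction identity of Lemma~\ref{L:aux}; for $n\equiv 2\ (\mathrm{mod}\ 4)$ it then invokes Sweet's formula \eqref{meta gama formula} to pass from $\tilde\gamma$ to $\gamma$. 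You instead start from the \emph{integral} expression in Theorem~\ref{trcodd24} and compute it directly by the decomposition $F^{\times d}=O_F^{\times d}\cdot\varpi^{d\Z}$, reducing everything to a geometric series and, in the $n\equiv 2\ (\mathrm{mod}\ 4)$ cases, an elementary orthogonality of the quadratic symbol $u\mapsto(u,\varpi)_2$ on $O_F^{\times d}$.

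Your approach is more elementary: it bypasses Lemma~\ref{L:aux} and Theorem~\ref{sweetthm} entirely, needing only the basic cocycle identity \eqref{weilweil} and $\omega_\psi(\varpi)^2=(-1,\varpi)_2$ to handle the Weil index, and it makes the mechanism for the period change $d\rightsquigarrow n$ (the vanishing of odd-$k$ terms) completely transparent. The paper's approach, on the other hand, makes explicit the cancellation of the ramified $\gamma$-factors in the character sum, which is precisely the content of the remark preceding Theorem~\ref{trcodd24} reconciling this corollary with Proposition~\ref{P:trace}; that structural point is invisible in your computation. Both arguments also cover \eqref{explicitT2}, for which the paper only sketches the adaptation while you carry out the $\varepsilon$-factor identification in full.
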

\begin{proof}
 We first consider the case where $n \not \equiv 2 \ (\text{mod }4)$. In view of \eqref{trfor4} it is sufficient to show that if $\chi$ is unramified, then
 $$\frac{1}{d} \sum_{\eta \in \widehat{F^\times/F^{\times d}}} \gamma(1-s,\chi^{-1}\eta,\psi)=(1-q^{-1}) \cdot L(ds,\chi^d).$$
Since $\gcd(n,p)=1$, we have
$$\widehat{F^\times/F^{\times d}}=D \times U.$$
Here $D$ is a cyclic group generated by $\eta_{\varpi,(d)}$ and $U$ is a cyclic group of unramified characters generated by $\eta_{u,(d)}$, where $u \in O_F^\times$ such that
$$\xi:=\eta_{u,(d)}(\varpi)$$
is a primitive $d$-th root of unity. Both groups $D$ and $U$ are of order $d$. We have
$$\begin{aligned}
& \frac{1}{d} \sum_{\eta \in \widehat{F^\times/F^{\times d}}} \gamma(1-s,\chi^{-1}\eta,\psi) \\
= & \frac{1}{d} \sum_{i=0}^{d-1} \sum_{j=0}^{d-1} \gamma(1-s,\chi^{-1}\eta_{u,(d)}^i\eta_{\varpi,(d)}^j,\psi) \\
= & \frac{1}{d} \sum_{i=0}^{d-1} \gamma(1-s,\chi^{-1}\eta_{u,(d)}^i,\psi)+\frac{1}{d} \sum_{i=0}^{d-1} \sum_{j=1}^{d-1} \gamma(1-s,\chi^{-1}\eta_{u,(d)}^i\eta_{\varpi,(d)}^j,\psi).
\end{aligned} $$
We first show that
\begin{equation} \label{unramvan}
\sum_{i=0}^{d-1} \sum_{j=1}^{d-1} \gamma(1-s,\chi^{-1}\eta_{u,(d)}^i\eta_{\varpi,(d)}^j,\psi)=0. \end{equation}
First note all the summands in \eqref{unramvan} are $\varepsilon$-factors. Since $\gcd(d,p)=1$, it follows that  $1+\mfr{p} \subseteq F^{\times d}$. This implies that all the ramified elements in $\widehat{F^\times/F^{\times d}}$ have conductor 1. Thus, it follows from \eqref{epsilon twist} that
$$\sum_{i=0}^{d-1} \sum_{j=1}^{d-1} \gamma(1-s,\chi^{-1}\eta_{u,(d)}^i\eta_{\varpi,(d)}^j,\psi)= \varepsilon(1-s,\chi^{-1}\eta_{\varpi,(d)}^j,\psi) \cdot \sum_{i=0}^{d-1} \eta_{u,(d)}^i(\varpi) .$$
The equality \eqref{unramvan} then follows from
$$\sum_{i=0}^{d-1} \eta_{u,(d)}^i(\varpi)=\sum_{i=0}^{d-1} \xi^i=0.$$
The proof of \eqref{explicitT} for the case $n \not \equiv 2 \ (\text{mod }4)$ is completed if we have
$$\frac{1}{d} \sum_{\alpha \in U} \gamma(1-s,\chi^{-1}\alpha,\psi)=(1-q^{-1}) \cdot L(ds,\chi^d),$$
which however follows from the equality
$$\frac{1}{d} \sum_{\alpha \in U} \gamma(1-s,\chi^{-1}\alpha,\psi)=\frac 1 d  \sum_{l=0}^{d-1} \frac{1-q^{s-1}\chi^{-1}(\varpi)\xi^l}  {1-q^{-s}\chi(\varpi)\xi^{-l}}$$
coupled with Lemma \ref{L:aux}. We remark that the above computation is similar to that of $\gamma_{_J}(1-s,\chi^{-1},\psi, k^{0})$ in \cite[Proposition 4.16, Equation (4.6)]{Szp6}.

The proof for the case $n  \equiv 2 \ (\text{mod }4)$ is achieved along the same lines, by replacing the $\gamma$-factors by $\tilde{\gamma}$-factors and using \eqref{meta gama formula}.
\end{proof}

\begin{rmk}
 For $n  \not \equiv 0  \ (\text{mod }4)$, Corollary \ref{exptrun} could be proven also by examining the matrices in Propositions 4.17 and 4.19 of \cite{Szp6}. Similarly, for $n  \equiv 0  \ (\text{mod }4)$, a careful examination of the matrices in Proposition \ref{explicit4} above also gives an alternative proof for
 Corollary \ref{exptrun}.
 \end{rmk}

\begin{prop} \label{rumexptr}
Suppose that $n\ge 3$ and $\gcd(n,p)=1$. Assume that $\sigma_o$ is ramified. If $n \equiv 2 \ (\text{mod }4)$, then we also assume that $\eta_{\varpi,(2)} \otimes \sigma_o$ is ramified. Let $\chi$ be the linear character associated with $\sigma_o$. Then the following hold.
 \begin{enumerate}
 \item[(i)] The conductor $\mfr{f}(\chi)$ of $\chi$ is determined by $\sigma_o$.
 \item[(ii)] If $\mfr{f}(\chi)  \nequiv \mfr{f}(\psi) \ (\text{mod }d)$,  then $\Tr(\mca{M}(w, \sigma_o,s, \psi))=0$.
 \item[(iii)] If $\mfr{f}(\chi)  \equiv \mfr{f}(\psi) \ (\text{mod }d)$, then
 $$\begin{aligned}
 & \Tr(\mca{M}(w, \sigma_o,s, \psi)) \\
 = &
 \begin{cases}
 \sum_{\beta \in \widehat{O^\times/O^{\times ^d}}}   \varepsilon(1-s,\chi^{-1}\beta,\psi)  & \text{ if } n \nequiv 2 \ (\text{mod }4); \\
\sum_{\beta \in \widehat{O^\times/O^{\times ^d}}}    \omega(\psi) \cdot \varepsilon(\half-s,(\chi\beta)^{-1},\psi)^{-1} \cdot \varepsilon(2s,\chi^2\beta,\psi_2)^{-1} & \text{ if } n \equiv 2 \ (\text{mod }4).
 \end{cases}
 \end{aligned} $$
 \item[(iv)] There exists $\psi$ such that $\Tr(\mca{M}(w, \sigma_o,s, \psi))  \neq 0.$
 \end{enumerate}
 \end{prop}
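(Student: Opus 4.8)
\textbf{Proof proposal for Proposition \ref{rumexptr}.}

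The plan is to compute $\Tr(\mca{M}(w, \sigma_o, s, \psi))$ directly from the integral formula established in Theorem \ref{trcodd24}, namely
$$\Tr(\mca{M}(w, \sigma_o, s, \psi)) = (\dim \sigma_o) \cdot \lim_{r\to\infty} \int_{\mfr{p}^{-r} \cap F^{\times d}} \chi_{\sigma_{o,s}}\bigl(\s(\alpha^\vee(x))\bigr) \psi(x)\, d_\psi^\times x,$$
valid for $\mathrm{Re}(s) \gg 0$, together with the explicit expansion of $\Tr$ as an average of $\gamma$- or $\tilde\gamma$-factors over $\widehat{F^\times/F^{\times d}}$ given in \eqref{trfor4}. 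First I would settle (i): the character $\chi$ associated with $\sigma_o$ is only well-defined up to twisting by $\widehat{F^\times/F^{\times d}}$, and since $\gcd(n,p)=1$ every element of $\widehat{F^\times/F^{\times d}}$ either is unramified or has conductor $1$ (because $1+\mfr{p} \subseteq F^{\times d}$); hence the conductor $\mfr{f}(\chi)$ is unchanged under such twists provided $\mfr{f}(\chi) \geq 2$, which is exactly the hypothesis that $\sigma_o$ (and $\eta_{\varpi,(2)}\otimes\sigma_o$ when $n\equiv 2$) is ramified — here I would use the parametrization results in \S \ref{par4} and \S \ref{linchasec} and Proposition \ref{secondunramprop} to rule out the borderline conductor-$1$ situation.

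For (ii) and (iii) I would work from \eqref{trfor4}. In the case $n \not\equiv 2 \pmod 4$, we have $\frac{1}{d}\sum_{\eta} \gamma(1-s,\chi^{-1}\eta,\psi)$; split $\widehat{F^\times/F^{\times d}} = D \times U$ with $D$ generated by $\eta_{\varpi,(d)}$ and $U$ the unramified subgroup generated by $\eta_{u,(d)}$, exactly as in the proof of Corollary \ref{exptrun}. Since $\chi$ is ramified with $\mfr{f}(\chi) \geq 2$ and all ramified twists have conductor $1$, every $\gamma(1-s,\chi^{-1}\eta,\psi)$ is a pure $\varepsilon$-factor; by \eqref{epsilon old twist}, such a factor vanishes unless $\mfr{f}(\chi^{-1}\eta) = \mfr{f}(\psi)$, i.e. (after comparing conductors) unless $\mfr{f}(\chi) \equiv \mfr{f}(\psi) \pmod d$ — wait, more precisely one uses that an $\varepsilon$-factor is a nonzero monomial, never zero, so the vanishing in (ii) must come instead from summing over the unramified twists: by \eqref{epsilon twist}, $\sum_{i=0}^{d-1}\varepsilon(1-s,\chi^{-1}\eta_{u,(d)}^i\eta,\psi) = \varepsilon(1-s,\chi^{-1}\eta,\psi)\sum_i \eta_{u,(d)}^i(\varpi)^{\mfr{f}(\chi)-\mfr{f}(\psi)}$, and $\sum_i \xi^{i(\mfr{f}(\chi)-\mfr{f}(\psi))} = 0$ precisely when $d \nmid (\mfr{f}(\chi)-\mfr{f}(\psi))$, where $\xi = \eta_{u,(d)}(\varpi)$ is a primitive $d$-th root of unity. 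This gives (ii). When $d \mid (\mfr{f}(\chi)-\mfr{f}(\psi))$, the geometric sum equals $d$, and after dividing by $d$ one is left with $\sum_{\beta \in \widehat{O_F^\times/O_F^{\times d}}} \varepsilon(1-s,\chi^{-1}\beta,\psi)$ — the surviving terms being exactly those twists $\eta$ whose restriction to $O_F^\times$ is nontrivial (i.e. $\beta$ ranges over the conductor-preserving part), with the unramified twisting having collapsed to the single representative of conductor $\mfr{f}(\chi)$. This yields (iii) in the first case; the $n\equiv 2 \pmod 4$ case is identical after replacing $\gamma$ by $\tilde\gamma$ and applying the Sweet formula \eqref{meta gama formula} to rewrite $\tilde\gamma(1-s,\chi^{-1}\beta,\psi) = \omega(\psi)(\chi\beta)^{-1}(-1) \cdot \gamma(s+\tfrac12,\chi\beta,\psi)/\gamma(2s,\chi^2\beta,\psi_2)$, then converting to $\varepsilon$-factors.

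Finally, for (iv): by (iii) it suffices to exhibit $\psi$ with $\mfr{f}(\psi) \equiv \mfr{f}(\chi) \pmod d$ for which the $\varepsilon$-sum is nonzero. Choosing $\psi$ with $\mfr{f}(\psi) = \mfr{f}(\chi)$ exactly, the term $\beta = \mbm{1}$ contributes $\varepsilon(1-s,\chi^{-1},\psi)$, a nonzero monomial in $q^{-s}$; the remaining terms $\varepsilon(1-s,\chi^{-1}\beta,\psi)$ for $\beta$ ranging over the nontrivial characters of $O_F^\times/O_F^{\times d}$ are also nonzero monomials, but one can argue they do not all cancel by comparing, say, leading behaviour as $q^{-s}\to 0$, or more robustly by a change-of-additive-character argument via \eqref{changepsi}: replacing $\psi$ by $\psi_a$ multiplies the $\beta$-term by $\chi^{-1}\beta(a)|a|^{1/2-s}$, so the whole sum becomes $\sum_\beta \chi^{-1}\beta(a)|a|^{1/2-s}\varepsilon(1-s,\chi^{-1}\beta,\psi)$, and since the characters $\beta \mapsto \beta(a)$ separate points of the finite group as $a$ varies over $O_F^\times$, the sum cannot vanish identically in $a$ — hence for some $\psi_a$ it is nonzero. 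I expect the main obstacle to be the bookkeeping in step (iii): correctly tracking which twists survive the averaging and identifying the surviving index set with $\widehat{O_F^\times/O_F^{\times d}}$, especially reconciling the two different parametrizations (the canonical one for $n \equiv 0 \pmod 4$ in \S \ref{par4} versus the $\psi$-dependent one for $n\equiv 2 \pmod 4$ in \S \ref{parchar}), and making sure the $\tilde\gamma$-to-$\gamma$-to-$\varepsilon$ conversion in the $n\equiv 2$ case produces exactly the stated product $\omega(\psi)\varepsilon(\tfrac12-s,(\chi\beta)^{-1},\psi)^{-1}\varepsilon(2s,\chi^2\beta,\psi_2)^{-1}$ without sign or conductor errors.
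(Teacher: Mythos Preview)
Your approach is essentially the paper's: start from \eqref{trfor4}, split $\widehat{F^\times/F^{\times d}}=D\times U$ with $D=\langle\eta_{\varpi,(d)}\rangle$ and $U$ unramified, reduce every $\gamma$-factor to an $\varepsilon$-factor, and use \eqref{epsilon twist} to pull the $U$-sum out as the geometric series $\sum_i\xi^{i(\mfr{f}(\chi)-\mfr{f}(\psi))}$; for (iv) the paper likewise twists $\psi\mapsto\psi_a$ and uses orthogonality over $a\in O_F^\times/O_F^{\times d}$ to isolate a single nonzero $\varepsilon$-factor.

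Two points to tighten. In (i), the hypothesis ``$\sigma_o$ ramified'' is equivalent to $\chi^d$ ramified, not to $\mfr{f}(\chi)\ge 2$; the conductor-one case is handled because $(\chi\eta)^d=\chi^d$ forces every twist $\chi\eta$ with $\eta\in\widehat{F^\times/F^{\times d}}$ to remain ramified --- this is exactly what makes $\mfr{f}(\chi)$ well-defined and what turns all the $\gamma$-factors into $\varepsilon$-factors (the paper cites \cite[Lemma 4.15]{Szp6} for both facts). In (iii), after the $U$-sum collapses, what survives is the sum over \emph{all} of $D$ (including the trivial element), and the identification of the index set with $\widehat{O_F^\times/O_F^{\times d}}$ is just restriction $\eta_{\varpi,(d)}^j\mapsto\eta_{\varpi,(d)}^j|_{O_F^\times}$, which is an isomorphism since $(\varpi,\cdot)_d$ is a perfect pairing on $O_F^\times/O_F^{\times d}$ when $\gcd(n,p)=1$.
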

\begin{proof}
Since  $\sigma_o$ is ramified we deduce that $\chi^d$ is ramified. The assertion (i) then follows from the fifth item in \cite[Lemma 4.15]{Szp6}.

We now prove the other assertions when $n \not \equiv 2 \ (\text{mod }4)$. By \eqref{trfor4}, proving (ii) amounts to showing that if $\mfr{f}(\chi) \nequiv \mfr{f}(\psi) \ (\text{mod }d)$, then
$$\sum_{\eta \in \widehat{F^\times/F^{\times d}}} \varepsilon(1-s,\chi^{-1}\eta,\psi)=0.$$
Keeping the notation in the proof of Corollary \ref{exptrun} and using the fact that $\chi^d$ is ramified, it follows from \eqref{trfor4} that
$$ \Tr(\mca{M}(w, \sigma_o,s, \psi)) = \frac{1}{d} \sum_{i=0}^{d-1} \sum_{j=0}^{d-1} \varepsilon(1-s,\chi^{-1}\eta_{u,(d)}^i\eta_{\varpi,(d)}^j,\psi).$$
By the fourth assertion in \cite[Lemma 4.15]{Szp6} we have
$$\mfr{f}(\chi)=\mfr{f}(\chi \beta) \text{ for every } \beta \in D.$$
Thus, by  \eqref{epsilon twist} we have
 \begin{equation} \label{for23}
\Tr(\mca{M}(w, \sigma_o,s, \psi)) = \frac{1}{d} \left(\sum_{i=0}^{d-1}  \xi^{i \cdot (\mfr{f}(\chi)- \mfr{f}(\psi))}   \right) \cdot \left( \sum_{\beta \in D} \varepsilon(1-s,\chi^{-1}\beta,\psi) \right).
\end{equation}
Now, (ii) and (iii) follow from the fact that
$$\sum_{i=0}^{d-1}  \xi^{i \cdot (\mfr{f}(\chi)-\mfr{f}(\psi))}
=\begin{cases}
 d  &  \text{ if }  \mfr{f}(\chi)  \equiv  \mfr{f}(\psi) \ (\text{mod }d) \\
  0  &  \text{ otherwise}.
  \end{cases}$$
To prove (iv),  we pick $\psi$ such that $\mfr{f}(\chi)  \equiv  \mfr{f}(\psi) \ (\text{mod }d) $. We also fix $a \in O_F^\times$.
It follows from  \eqref{for23} and \eqref{changepsi} that
$$\Tr(\mca{M}(w, \sigma_o, s, \psi_a))=\sum_{j=1}^{d-1} \varepsilon(1-s,\chi^{-1}\eta_{\varpi,(d)}^j,\psi_a)=\sum_{j=1}^{d-1} \chi^{-1}\eta_{\varpi,(d)}^j(a) \cdot \varepsilon(1-s,\chi^{-1}\eta_{\varpi,(d)}^j,\psi).$$
Thus,
$$\begin{aligned}
\sum_{a \in D} \chi^{-1}(a) \cdot \Tr(\mca{M}(w, \sigma_o, s, \psi_a))
= & \sum_{i=0}^{d-1} \sum_{j=1}^{d-1} \eta_{\varpi,(d)}^j(u^i) \varepsilon(1-s,\chi^{-1}\eta_{\varpi,(d)}^j,\psi) \\
= &  \sum_{j=1}^{d-1}\varepsilon(1-s,\chi^{-1}\eta_{\varpi,(d)}^j,\psi) \cdot \sum_{i=0}^{d-1} \xi^{-ij}.\end{aligned} $$
We have shown
$$\frac{1}{d} \sum_{a \in D} \chi^{-1}(a) \cdot \Tr(\mca{M}(w, \sigma_o, s, \psi_a))  =\varepsilon(1-s,\chi^{-1},\psi).$$
Since $\varepsilon(1-s,\chi^{-1},\psi) \neq 0$ for every $s\in \C$, we must have $\Tr(\mca{M}(w, \sigma_o, s, \psi_a))  \ne 0$ for some $a$.

The proof for the  case $n \equiv 2 \ (\text{mod }4)$ can be completed by  similar arguments replacing $\gamma$ by $\tilde{\gamma}$ and using \eqref{meta gama formula}.
\end{proof}

\subsubsection{Plancherel measure} \label{plansec}
In the case where ${\rm gcd}(n,p)=1$, it is proven in \cite[Theorem 5.1]{GoSz} that for every $n$ we have
\begin{equation} \label{myold result}
\mu(\sigma_o,s)^{-1}= q^{\mfr{f}(\psi)-\mfr{f}(\chi^n)} \frac{L \bigl(ns,\chi^n \bigr)L \bigl(-ns,\chi^{-n}\bigr)}{L \bigl(1-ns,\chi^{-n} \bigr)L \bigl(1+ns,\chi^{n}\bigr)}.
\end{equation}
In fact, if $n  \not \equiv 0 \, (\text{mod }4)$, then in general (i.e. without the constraint $\gcd(n,p)=1$) we have
$$\mu(\sigma_o,s)^{-1}= c(\sigma) \cdot \frac{L \bigl(ns,\chi^n \bigr)L \bigl(-ns,\chi^{-n}\bigr)}{L \bigl(1-ns,\chi^{-n} \bigr)L \bigl(1+ns,\chi^{n}\bigr)}$$
where $c(\sigma) \in \R_{>0}$ is a positive constant given by
$$c(\sigma)=q^{\mfr{f}(\psi_{n/d})} \cdot
\begin{cases}
[F^\times: F^{\times d}]^{-1} \sum_{\eta \in \widehat{F^\times/ F^{\times d}}} q^{-\mfr{f}(\chi^{n/d}\eta^{n/d})} &  \text{ if  $\chi^n$ is ramified},  \\
\val{d}  &  \text{ if  $\chi^n$ is unramified}.
\end{cases}$$
See \cite[Theorem 5.7]{Szp6}. It is discussed in \cite[\S 8.5]{Ga1} that for all $n$ and $p$, $\mu(\sigma_o,s)$ has the same analytic properties as the right hand side of \eqref{myold result}. Since a priori $\mu(\sigma_o,s)$ is a rational function in $q^{-s}$ depending only on $\chi_{\sigma_o}$, one deduces that  in the case $n  \equiv 0 \, (\text{mod }4)$ and  ${\rm gcd}(n,p)>1$, $\mu(\sigma_o,s)$ differs from the right hand side of \eqref{myold result} by a multiplicative factor of the form $aq^{-ks}$ where $a \in \C, \, k \in d\Z$. In fact, it follows from Corollary \ref{plancor} and Remark \ref{plancorr}  that  $k \in n\Z$.

\subsubsection{ $\det(\mca{M}(w, \sigma_o,s, \psi))$ in the unramified case} \label{unramsldet}

\begin{thm} \label{detunramnot4}
Assume $n \not \equiv 0 \ (\text{mod }4)$,  $\gcd(n,p)=1$, the character $\psi$ is normalized, and that $\sigma_o$ occurs in an unramified element of $\Irr(\wt{T})$. Then,
$$ \det(\mca{M}(w, \sigma_o,s, \psi))=\mu(\sigma_o,s)^{\frac{1-d}{2}} \cdot
\begin{cases}
\gamma(1-ds,\chi^{-d},\psi)  &  \text{ if $n$ is odd}; \\
\tilde{\gamma}(1-ds,\chi^{-d},\psi) &   \text{ if } n \equiv 2 \ (\text{mod }4) .
\end{cases}$$
\end{thm}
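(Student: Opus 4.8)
The plan is to deduce the determinant formula directly from the explicit local coefficients matrices already available: for odd $n$ from \cite[Proposition 4.17]{Szp6}, for $n \equiv 2 \ (\text{mod }4)$ from \cite[Proposition 4.19]{Szp6}, and in both cases using that $\sigma_o$ unramified means we may take $\chi$ to be an unramified linear character. Since $\gcd(n,p)=1$ and $\psi$ is normalized, all the ingredients entering these matrices (Gauss sums, $\varepsilon$-factors of ramified twists of an unramified $\chi$, etc.) are explicit, so $\det(\mca{M}(w, \sigma_o, s, \psi))$ becomes a concrete rational function in $q^{-s}$. The strategy is then a cofactor/block computation of that determinant, matching the answer against the product $\mu(\sigma_o, s)^{(1-d)/2} \cdot \gamma(1-ds, \chi^{-d}, \psi)$ (resp. with $\tilde\gamma$). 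I would exploit the fact, visible already in Corollary \ref{C:diag} and used throughout \S\ref{S:PG}, that after reordering the basis the matrix is essentially a direct sum of $2\times 2$ blocks (coming from free $W_\alpha$-orbits on $\msc{X}_{Q,n}$) and possibly one $1\times 1$ block (from the fixed point, which exists exactly when $d$ is odd): there are $(d-1)/2$ two-by-two blocks and one scalar block when $d$ is odd, and $d/2$ two-by-two blocks when $d$ is even but this case does not arise here since $n\not\equiv 0\ (\mathrm{mod}\ 4)$ forces $d$ odd when... — more carefully, the determinant of each $2\times 2$ block is, by the computations of \S\ref{S:PG} (Proposition \ref{P:abs}) or equivalently by \cite{Szp6}, a Plancherel factor $\mu(\sigma_o,s)^{-1}$ up to a monomial, and the scalar block contributes the $\gamma$- or $\tilde\gamma$-factor.

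Concretely, first I would fix the unramified $\chi$ and write down $\mca{M}(a,b,\chi,s,\psi)$ using Proposition \ref{matnot4}, specializing $\gamma_K$ and $\tilde\gamma_K$ via the explicit formulas of \cite[\S 4.4]{Szp6} (valid precisely when $\gcd(n,p)=1$ and $K=O_F^\times F^{\times d}$). Second, I would identify the sparsity pattern: the matrix has nonzero entries only on two "generalized diagonals" corresponding to $b\equiv a$ and $b \equiv \w_\alpha[a] \bmod Y_{Q,n}$, exactly as in Theorem \ref{T:M2}. Third, I would compute the determinant by expanding along these diagonals, which factors the determinant into the announced product of block determinants; the $2\times 2$ block determinants each equal (a monomial times) $\mu(\sigma_o,s)^{-1}$ by the identity $\mu(w_\alpha, i(\chi)) = \gamma(w_\alpha, i(\chi))\gamma(w_\alpha^{-1}, {}^{w_\alpha} i(\chi))$ together with the Gindikin--Karpelevich computation in \S\ref{SS:Pg}, and the surviving scalar entry is $\gamma(1-ds,\chi^{-d},\psi)$ (resp. $\tilde\gamma(1-ds,\chi^{-d},\psi)$), which one reads off from \eqref{ram gama un} (resp. its $\tilde\gamma$-analogue via \eqref{meta gama formula}). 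Finally I would verify that all the monomial factors $q^{ks}\chi(\varpi)^k$ picked up along the way cancel, using that $\psi$ is normalized and $\chi$ unramified so $\varepsilon$-factors of unramified twists are trivial — this bookkeeping is where sign/monomial errors are most likely, so I would cross-check against Example \ref{eg-SL} (which records exactly this trichotomy from \cite[Theorem 3.14]{GSS1}) and against Theorem \ref{T:M1} specialized to $\wt{\SL}_2^{(n)}$.

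The main obstacle I anticipate is not conceptual but organizational: reconciling the normalization conventions between the partial-$\zeta$-integral approach of \cite{Szp6} (which is how $\mca{M}(w,\sigma_o,s,\psi)$ is defined here, via Proposition \ref{ForRuse}) and the scattering-matrix approach of \S\S\ref{S:2mat}--\ref{S:PG}. In particular I must be careful that the $\gamma$ and $\tilde\gamma$ factors appearing in \cite{Szp6} (Tate's and Sweet's, as in \S\ref{SS:meta-ga}) are literally the same objects as $\gamma(1-ds,\chi^{-d},\psi)$ and $\tilde\gamma(1-ds,\chi^{-d},\psi)$ with no hidden discrepancy in the argument $\chi^{-d}$ versus $\chi^{-d}$ of the "short" torus; and that the power $(1-d)/2$ of $\mu(\sigma_o,s)$ matches the count of free $W_\alpha$-orbits, i.e. $a_\alpha = (d-b_\alpha)/2 = (d-1)/2$ with $b_\alpha = 1$ (Example \ref{eg-SL}). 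Once these identifications are nailed down, the computation of the determinant is a finite, essentially triangular, linear-algebra exercise, and the $n\equiv 2\ (\mathrm{mod}\ 4)$ case follows the odd case verbatim with $\gamma$ replaced by $\tilde\gamma$ throughout, using \eqref{meta gama formula} to convert between them where needed.
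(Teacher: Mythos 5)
Your plan works for the unramified $\sigma_o$ case, which is where you concentrate: the odd-$n$ case and the unramified-$\sigma_o$ part of the $n\equiv 2 \pmod 4$ case are, as you say, exactly \cite[Theorem 3.14]{GSS1} (equivalently Example \ref{eg-SL}/Theorem \ref{T:M1}), and your block-diagonal bookkeeping with $a_\alpha = (d-1)/2$ free orbits and one fixed point is the right picture. But there is a genuine gap: you read the hypothesis as if it were \emph{$\sigma_o$ unramified}, whereas the theorem only assumes \emph{$\sigma_o$ occurs in an unramified element of $\Irr(\wt{T})$}. When $n\equiv 2\pmod 4$ these are not the same. By Proposition \ref{secondunramprop}, an unramified $\sigma\in\Irr(\wt{T})$ contains four $\sigma_o$'s upon restriction to $\wt{T}_o$, and two of them — $\eta_{\varpi^{-1},(n)}\otimes\sigma_{oo}$ and $\eta_{u\varpi^{-1},(n)}\otimes\sigma_{oo}$, with $\sigma_{oo}$ unramified — are \emph{ramified}. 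For those, the associated linear character $\chi$ cannot be taken unramified; rather, one takes $\chi$ ramified with $\chi^2$ unramified (cf.\ the remarks in \S\ref{linchasec}). This ramified case is precisely the content the theorem adds beyond \cite{GSS1}, and your proposal never touches it.

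Concretely, the ramified case requires the matrix of \cite[Proposition 4.18]{Szp6} (not 4.19, which is the unramified one you cite), whose entries involve $\varepsilon$-factors of genuinely ramified twists. To make the $2\times 2$ block determinants collapse to $\mu(\sigma_o,s)^{-1}$ in this setting one needs Lemma \ref{gen epsilon} — whose whole point is that it extends the product identity \eqref{epsilonprod} from $\uchi$ unramified to $\uchi^2$ unramified — and then the elementary rational-function identity \eqref{techeq}. So the sentence ``the $n\equiv 2\pmod 4$ case follows the odd case verbatim with $\gamma$ replaced by $\tilde\gamma$'' is not right: the unramified part does, but the ramified part requires a new argument, and that new argument is the theorem's real content. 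To repair the proposal you should (i) split the $n\equiv 2\pmod 4$ case according to whether $\sigma_o$ is unramified (then cite \cite{GSS1}) or ramified, and (ii) in the ramified case run the block computation with $\chi$ ramified, $\chi^2$ unramified, invoking Lemma \ref{gen epsilon} to evaluate the $A_iA_i'$ and $B_iB_i'$ products, exactly as the paper does.
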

\begin{proof}
If $n$ is odd, then it follows from Proposition \ref{firstunramprop} that $\sigma_o$ is unramified. Thus, the result in this case is proven in \cite[Theorem 3.14]{GSS1}.

If $n \equiv 2\ (\text{mod }4)$, then by Lemma \ref{secondunramprop} and the last item in Proposition \ref{decoproph}, either $\sigma_o$ is unramified or
$$\sigma_o=\eta_{\varpi,(2)} \otimes \sigma_{oo},$$
 where $\sigma_{oo}\in \Irr(\wt{T}_o)$ is unramified. Again, the first case for unramified $\sigma_o$ is proven  in \cite[Theorem 3.14]{GSS1}. We prove the second case.

 As noted in \S \ref{linchasec} we may assume that $\chi$ is ramified but $\chi^2$ is unramified.   Let $\mca{M}_L(\cdot,\cdot,\chi, s, \psi)$ be the local coefficients matrix given in  \cite[Proposition 4.18]{Szp6} whose rows and columns are numbered from $0$ to $d-1$. Similar to the proof of \cite[Theorem 3.1.4]{GSS1}, we consider the matrix $M(\chi,s,\psi)$ produced from $\mca{M}_L(\cdot,\cdot,\chi, s, \psi)$ by swapping the $i$-th row with the $(d-1-i)$-th row (with a total of $(d-1)/2$ swaps). We have
$$\det(\mca{M}_L(w, \sigma_o,s, \psi))=(-1)^{(d-1)/2} \det\bigl(M(\chi,s,\psi)\bigr).$$
Here
$$M(\chi,s,\psi)=\begin{pmatrix}
A_1 &  &  &   &   &   & B_1 \\
  & \ddots &   &   &   & \upddots &  \\
  &   & A_{(d-1)/2} &   & B_{(d-1)/2} &   &  \\
  &   &   & C &   &   &  \\
  &   & B'_{(d-1)/2} &   & A'_{(d-1)/2} &   &  \\
  & \upddots &  &   &  & \ddots &  \\
B'_1 &   &   &   &   &   & A'_1
\end{pmatrix},$$
where
$$C=\bigl((q^{-2s}\chi(\varpi^{2})\bigr)^{\frac{d-1}{2}} \cdot \tilde{\gamma}(1-ds,\chi^{-d},\psi)$$
and for $1\leq i\leq \frac{d-1}{2}$ we have
$$ \begin{aligned}
A_i  & = \bigl((q^{-s}\chi(\varpi)\bigr)^{2j+d-3} (1-q^{-1}) \cdot \varepsilon(\half-s,\chi^{-1},\psi)^{-1}L(ns,\chi^n), \\
 A'_i &  = \bigl((q^{-s}\chi(\varpi)\bigr)^{-2j+d-1} (1-q^{-1}) \cdot \varepsilon(\half-s,\chi^{-1},\psi)^{-1}L(ns,\chi^n), \\
B_i  & = \varepsilon(\half-s,\chi^{-1}\eta_i^{-1},\psi)^{-1} \cdot \varepsilon(2s,\chi^2 \eta_i^2, \psi_2)^{-1}, \\
B'_i  & = \varepsilon(\half-s,\chi^{-1}\eta_i,\psi)^{-1}  \cdot \varepsilon(2s,\chi^2 \eta_i^{-2},\psi_2)^{-1}.
\end{aligned} $$
Here $\eta_i$ is a certain character of $F^\times$ which vanishes on $F^{\times d}$  for $1\leq i\leq \frac{d-1}{2}$. 

Thus,
$$ \begin{aligned}
&  \det(\mca{M}(w, \sigma_o,s, \psi))  \\
=\  & (-1)^{(d-1)/2}  \cdot C \prod_{i=1}^{(d-1)/2} (A_{i}A'_{i}-B_{i}B'_{i}) \\
 = \ & \bigl((q^{-2s}\chi(\varpi^{2})\bigr)^{\frac{d-1}{2}} \cdot \tilde{\gamma}(1-ds,\chi^{-d},\psi)   \cdot \prod_{i=1}^{(d-1)/2} (B_{i}B'_{i}-A_{i}A'_{i}) \\
 = \ & \tilde{\gamma}(1-ds,\chi^{-d},\psi) \cdot \prod_{i=1}^{(d-1)/2} \bigl((q^{-s}\chi(\varpi)\bigr)^2 \Bigl(B_{i}B'_{i}-A_{i}A'_{i} \Bigr).
 \end{aligned} $$
 To finish the  proof, it is sufficient to show that for every $1\leq i \leq \frac{d-1}{2}$, we have
\begin{equation} \label{det is plan}
\bigl((q^{-s}\chi(\varpi)\bigr)^2 \Bigl(B_{i}B'_{i}-A_{i}A'_{i} \Bigr)=\mu(\sigma_o,s)^{-1}.
\end{equation}
It follows from Lemma \ref{gen epsilon} that
\begin{equation} \label{A det is plan}
A_{i}A'_{i}= \bigl((q^{-s}\chi(\varpi)\bigr)^{n-2}(1-q^{-1})^2 L(ns,\chi^n)^2
\end{equation}
and that
$$B_{i}B'_{i}= q^{-1}\bigl(q^{s}\chi^{-1}(\varpi)\bigr)^{2}.$$
Therefore, in view of the explicit formula for $\mu(\sigma_o,s)^{-1}$ given in Section \ref{plansec}, the equality \eqref{det is plan} is equivalent to
\begin{equation} \label{techeq}
q^{-1}-\bigl((q^{-s}\chi(\varpi)\bigr)^{n}(1-q^{-1})^2 L(ns,\chi^n)^2=\frac{L \bigl(ns,\chi^n \bigr)L \bigl(-ns,\chi^{-n}\bigr)}{L \bigl(1-ns,\chi^{-n} \bigr)L \bigl(1+ns,\chi^{n}\bigr)},
\end{equation}
which can be proven easily by a direct computation.
\end{proof}

We now wish to give an analogue to Theorem \ref{detunramnot4} for the case $n  \equiv 0 \, (\text{mod }4)$. By Lemma \ref{secondunramprop}, the following result addresses all possible $\sigma_o \in \Irr(\wt{T}_o)$  occurring in an unramified representation of $\Irr(\wt{T})$.

\begin{thm}  \label{T:SL-c3}
Assume that $n \equiv 0 \, (\text{mod }4)$,  $\gcd(n,p)=1$,  $\mfr{f}(\psi)=0$ and $\sigma_o$ is unramified. Then,
\begin{equation} \label{unramdet4}
\det(\mca{M}(w, \sigma_o,s, \psi))=\bigl(\eta_{u,(n)}\chi^{-1}(\varpi)q^s \bigr)^d  \cdot \mu^{-\frac d 2}(\sigma_o,s)\end{equation}
and
\begin{equation} \label{semiunramdet4}
\begin{aligned}
& \det(\mca{M}(w, \eta^{-1}_{\varpi,(n)} \otimes \sigma_o,s, \psi)) \\
= \ & (q^{-s}\chi(\varpi))^{d} \cdot \gamma(1-ns,\chi^{-n},\psi) \mu(\sigma,s)^{1-\frac d 2 } \cdot \omega_\psi(\varpi) \pmb{\beta}(\sigma_o,s,\psi).
\end{aligned}
\end{equation}
\end{thm}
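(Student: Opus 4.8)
The plan is to compute $\det(\mca{M}(w, \sigma_o, s, \psi))$ directly from the explicit entry formulas in Proposition \ref{explicit4}, following the same block-manipulation strategy used in the proof of Theorem \ref{detunramnot4} and in \cite[\S 3]{GSS1}. First I would treat the unramified case \eqref{unramdet4}. Reading off \eqref{unramgam}, the matrix $[\mca{M}(i,j,\chi,s,\psi)]_{0\le i,j\le d-1}$ has nonzero entries only on the ``anti-diagonal'' positions $j=d-i$ together with $(0,0)$, and on the ``sub-diagonal'' positions $j=i-1$ together with $(0,d-1)$. As in \cite{GSS1}, I would reorder the rows so that the matrix becomes (block) anti-triangular; more precisely, after an explicit permutation of rows whose sign I will track, the nonzero entries line up so that the determinant is (up to the sign of the permutation) the product of the entries in one pairing of rows with columns. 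Each such product is a product over $i$ of a $2\times 2$ block determinant $A_i A_i' - B_i B_i'$, plus one leftover diagonal contribution coming from $(0,0)$ and $(0,d-1)$; here $A_i, A_i'$ are the $\varepsilon$-factor-free entries $(\chi(\varpi)q^{-s})^{\bullet}(1-q^{-1})L(ns,\chi^n)$ and $B_i, B_i'$ are the pure $\varepsilon$-factor entries. Using \eqref{epsilonprod} (and Lemma \ref{gen epsilon}) to compute $B_i B_i'$, together with the identity \eqref{techeq} already established in the proof of Theorem \ref{detunramnot4}, each $2\times 2$ block evaluates to $\mu(\sigma_o,s)^{-1}$ up to an explicit monomial in $q^{-s}$; collecting the monomials and the leftover factor $\bigl(\eta_{u,(n)}\chi^{-1}(\varpi)q^s\bigr)^d$, which carries the ``$4\mid n$'' discrepancy, should yield \eqref{unramdet4}.

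For \eqref{semiunramdet4} the strategy is the same but starting from \eqref{otherunramgam}. The entry structure is almost identical to the unramified case, except that the distinguished position $(\tfrac d2, \tfrac d2 - 1)$ now carries the factor $-\omega_\psi(\varpi)\chi^{-1}(\varpi)q^s\,\pmb{\beta}(\sigma_o,s,\psi)$ rather than an $\varepsilon$-factor, and the position $(0,d-1)$ carries $\bigl(q^{-s}\chi(\varpi)\bigr)^{n-1}\gamma(1-ns,\chi^{-n},\psi)$. So after the same row permutation the determinant becomes a product of $2\times 2$ block determinants $B_i B_i' - A_i A_i'$ for $i\ne \tfrac d2$ (each equal to $\mu(\sigma_o,s)^{-1}$ up to a $q^{-s}$-monomial, by \eqref{techeq} and \eqref{A det is plan}), times the $i=\tfrac d2$ ``block'' which is now just the single entry at $(\tfrac d2, \tfrac d2 - 1)$ (the entry at $(\tfrac d2, \tfrac d2)$ vanishing by \eqref{otherunramgam}), times the leftover $(0,d-1)$ contribution $\bigl(q^{-s}\chi(\varpi)\bigr)^{n-1}\gamma(1-ns,\chi^{-n},\psi)$. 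Multiplying these together, tracking the sign of the permutation and all $q^{-s}$-monomials, and recalling that $\mu(\sigma_o,s)$ depends only on $\chi_{\sigma_o}$ (so $\mu(\eta^{-1}_{\varpi,(n)}\otimes\sigma_o, s) = \mu(\sigma_o, s)$), should give \eqref{semiunramdet4}. I expect the factor $\mu(\sigma,s)^{1-d/2}$ (one fewer power of $\mu^{-1}$ than the $d/2$ one might naively expect) to emerge precisely because the $i=\tfrac d2$ block degenerates to a single entry rather than a genuine $2\times 2$ determinant.

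The main obstacle, as usual in these computations, is the bookkeeping: correctly identifying the row permutation that anti-triangularizes the matrix, determining its sign as a function of $d$, and keeping track of the accumulated powers of $q^{-s}\chi(\varpi)$ coming from the off-anti-diagonal nature of the nonzero entries (the exponents $n-2i$, $n-2i-1$, etc. in Proposition \ref{explicit4} are not constant along the relevant diagonals). A secondary subtlety is that in the $4\mid n$ regime $\mu(\sigma_o,s)$ may differ from the ``expected'' formula \eqref{myold result} by a monomial $aq^{-ks}$ with $k\in n\Z$; I would use the determinant formula itself, together with the known analytic properties of $\mu(\sigma_o,s)$, to pin down this monomial — indeed this is the content of Corollary \ref{plancor} and Remark \ref{plancorr} referenced in \S \ref{plansec}, so the determinant computation and the Plancherel normalization will need to be carried out in tandem. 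Finally I would double-check consistency with Theorem \ref{T:M1}: the unramified formula \eqref{unramdet4} should match $\det(\mca{M}_\mfr{B}(w, i(\chi_o)))$ from Example \ref{eg-SL} in the case $n=2d$ with $d$ even, namely $(-1)\cdot\chi(\wt{h}_\alpha(\varpi^{-d}))\cdot\mu(w_\alpha, i(\chi))^{-d/2}$, which provides a useful sanity check on the sign and the monomial.
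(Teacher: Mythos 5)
Your proposal is correct and follows essentially the same path as the paper's proof: extract the block anti-diagonal structure from the entry formulas in Proposition \ref{explicit4} via a row reordering, evaluate the resulting $2\times 2$ minors using Lemma \ref{gen epsilon} together with the identities \eqref{A det is plan} and \eqref{techeq} from the proof of Theorem \ref{detunramnot4}, and collect monomials (for \eqref{semiunramdet4} the paper instead expands by cofactors along rows $0$ and $d/2$ to isolate the two degenerate entries, but this is the same computation you describe). One small slip worth flagging: since $d=n/2$ is even here, the matrix for \eqref{unramdet4} has \emph{no} leftover singleton entry --- the entries at $(0,0)$ and $(0,d-1)$ pair with those at $(d-1,0)$ and $(d-1,d-1)$ to form one of the $d/2$ genuine $2\times 2$ blocks, unlike the odd-$d$ situation of Theorem \ref{detunramnot4}; moreover, under the stated hypothesis $\gcd(n,p)=1$, equation \eqref{myold result} holds exactly, so there is no undetermined $aq^{-ks}$ monomial in $\mu(\sigma_o,s)$ to reconcile during this computation.
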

\begin{proof}
We first prove \eqref{unramdet4}. Let $\mca{M}(i,j,\chi,s,\psi)$ be the local coefficients matrix given in \eqref{unramgam} and let $M(\chi,s,\psi)$ be the matrix obtained from $\mca{M}(i,j,\chi,s,\psi)$ by swapping the $i$-th row with the $(d-i)$-th row  for $i=1,2,\cdots, d-1$ (with a total of $\frac{d-2}{2}$ swaps). We have
 \begin{equation} \label{nastycase}
 M(\chi,s,\psi)=
\begin{pmatrix}
E_0 &   &    &   &   & F_0 \\
  & \ddots &   &   & \upddots &  \\
  &   & E_{\frac {d}{2}-1 }   & F_{\frac {d}{2}-1 } &   &  \\
  &   & F_{\frac {d}{2}+1 }  &  E_{\frac {d}{2}-1 } &   &  \\
  &  \upddots  &  &  & \ddots &  \\
  F_{d-1}  &   &   &   &   & E_{d-1}
\end{pmatrix}
\end{equation}
where
$$\begin{aligned}
 {E_i}  & = \bigl(\chi(\varpi)q^{-s}\bigr)^{-2i} (1-q^{-1}) L(ns,\chi^n),\\
{F_i}   & =   {\varepsilon(1-s,\chi^{-1}\eta_{\varpi,(n)}^{-2i-1},\psi)} .
\end{aligned} $$
Thus,
$$\begin{aligned}
&  \det(\mca{M}(w, \sigma_o,s, \psi)) \\
 = \ & (-1)^{\frac{d-2}{2}}\det M'(\chi,s,\psi)  \\
 = \ &  -\prod_{i=0}^{\frac{d-2}{2}} (F_i F_{d-i}-E_i E_{d-i}) \\
 = \ &  -\prod_{i=0}^{\frac{d-2}{2}}\Bigl(\varepsilon(1-s,\chi^{-1}\eta_{\varpi,(n)}^{-2i-1},\psi)\varepsilon(1-s,\chi^{-1}\eta_{\varpi,(n)}^{2i+1},\psi)  \\
  & \qquad \qquad \qquad \qquad  - \bigl(\chi(\varpi)q^{-s}\bigr)^{n-2}(1-q^{-1})^2 L(ns,\chi^n)^2 \Bigr).
 \end{aligned}$$
Lemma \ref{gen epsilon} now gives
$$\det(\mca{M}(w, \sigma_o,s, \psi))=-\bigl(\chi^{-1}(\varpi)q^s \bigr)^d\prod_{i=0}^{\frac{d-2}{2}} \Bigl(q^{-1}-(\chi(\varpi)q^{-s} \bigr)^n(1-q^{-1})^2 L(ns,\chi^n)^2  \Bigr).$$
The proof of \eqref{unramdet4} is completed by using \eqref{techeq}.

We now prove \eqref{semiunramdet4}. Let $\mca{M}(i,j,\chi\eta^{-1}_{\varpi,(n)} ,s,\psi)$ be the local coefficients matrix given in \eqref{otherunramgam}. We expand $\det(\mca{M}(i,j,\chi\eta^{-1}_{\varpi,(n)} ,s,\psi))$ along the first row and then we expand the determinant of  the minor obtained along the $(d/2)$-th row. This gives
$$\det(\mca{M}(w, \eta^{-1}_{\varpi,(n)} \otimes \sigma_o,s, \psi)) =\omega_\psi(\varpi)( \chi(\varpi) q^{-s})^{n-2}\gamma(1-ns,\chi^{-n},\psi)  \pmb{\beta}(\sigma_o,s,\psi)  \cdot \det M'',$$
where $M''$ is the following $(d-2) \times (d-2)$ matrix:
$$M''=\begin{pmatrix}
G_1 &   &    &   &   & H_1 \\
  & \ddots &   &   & \upddots &  \\
  &   & G_{\frac {d}{2}-1 } & H_{\frac {d}{2}-1 } &   &  \\
  &   & \widehat{H}_{\frac {d}{2}-1 } & \widehat{G}_{\frac {d}{2}-1 } &   &  \\
  & \upddots &  &  & \ddots &  \\
\widehat{H}_{1} &   &   &   &   & \widehat{G}_{1}
\end{pmatrix} $$
with
$$G_i= \varepsilon(1-s,\chi^{-1}\eta^{2i}_{\varpi,(n)},\psi), \quad \widehat{G}_i= \varepsilon(1-s,\chi^{-1}\eta^{-2i}_{\varpi,(n)},\psi),$$
and $$H_i=\bigl(\chi(\varpi)q^{-s}\bigr)^{n-2i-1}  (1-q^{-1}) L(ns,\chi^n), \quad \widehat{ H_i}= \bigl(\chi(\varpi)q^{-s}\bigr)^{2i-1}  (1-q^{-1}) L(ns,\chi^n).$$
By similar computations to those already used in the proof of Theorem  \ref{detunramnot4},  one finds that
$$\det(M'')=\mu^{-(\frac d 2 -1)}(\sigma,s) \cdot (q^{s}\chi^{-1})^{d-2}.$$
This completes the proof.
\end{proof}

As expected, Theorem \ref{detunramnot4} and Theorem \ref{T:SL-c3} agree with Theorem \ref{T:M1} (or Example \ref{eg-SL}).

\begin{rmk} Equation \eqref{unramdet4} was already proven in \cite[\S 3.6]{GSS1}. The proof there uses a modification of  the scattering matrices given in \cite[Lemma 4.3]{GoSz}. Since no details are given regarding this modification and the modified scattering matrix is different from the local coefficient matrix computed in Proposition \ref{explicit4}, we have included an independent computation here.
\end{rmk}

\subsection{A remark about $\tilde{\gamma}$ and the $n \equiv 0 \, (\text{mod }4)$ case} \label{4remark}
In  \S \ref{0mod4mat}, \S \ref{trace} and \S \ref{unramsldet}  we have described the local coefficients matrix and the related invariants for the $n \equiv 0 \, (\text{mod }4)$ case using $\gamma$-factors and partial  $\gamma$-factors. In fact, we may describe these using $\tilde{\gamma}$-factors and partial  $\tilde{\gamma}$-factors as follows. We find this peculiar phenomenon to be another manifestation of the $\wt{G}_o$ trichotomy.

Comparing the integral representations of
$${\gamma}_{_{K_n}}(s,\chi,\psi,\varpi^j)$$ and
$$\tilde{\gamma}_{_{K_n}}(s,\chi,\psi,\varpi^j)$$
 given in \cite[Theorem 2.12, 2.14]{Szp6}, and  taking into account that both $K_n$ and $K_n \varpi^d$ are contained in $F^{\times 2}$, we may write \eqref{tau0mod4} as
$$\begin{aligned}
& \mca{M}(i,j,\chi,s,\psi) \\
= \ & \omega_\psi(\varpi)^{i-j} \cdot \bigl(\tilde{\gamma}_{_{K_n}}(1-s,\chi^{-1}\eta^{(i+j)}_{\varpi,(n)},\psi, \varpi^{i-j})
+\tilde{\gamma}_{_{K_n}}(1-s,\chi^{-1}\eta^{d+(i+j)}_{\varpi,(n)},\psi, \varpi^{d+i-j}) \bigr).
\end{aligned} $$
Moreover, since in the case $n \equiv 0 \ (\text{mod }4)$ one has
$$\omega_\psi(\varpi) \in \{\pm 1\},$$
we may conjugate this matrix $\mca{M}(\cdot, \cdot ,\chi,s,\psi)$ by a diagonal matrix whose $(i,i)$ entry is $(-1)^i$ and remove the term $\omega_\psi(\varpi)^{i-j}$ in  $\mca{M}(i,j,\chi,s,\psi)$. Same goal can be archived by changing  the additive character $\psi'$ fixed in \S \ref{par4} to the character $\psi$.

Since $d$ is even, we may use the third item in Lemma \ref{dualcenter}  and  rewrite \eqref{trfor4} for this case as
$$\Tr(\mca{M}(w, \sigma_o,s, \psi))=\frac{1}{d} \cdot \sum_{a \in F^\times/F^{\times d/2}} \sum_{b \in F^\times/F^{\times 2}} {\gamma}(1-s,\chi^{-1} \eta_{a,(d)}\eta_{b,(2)},\psi).$$
By using Lemma \ref{sumlemma} and then applying the third item in Lemma \ref{dualcenter} again, we deduce that
$$\Tr(\mca{M}(w, \sigma_o,s, \psi))= \sum_{\eta \in \widehat{F^\times/F^{\times d}}} \tilde{\gamma}(1-s,\chi^{-1}\eta,\psi).$$

Lastly, we can rewrite \eqref{semiunramdet4} as
$$\det( \mca{M}(w, \eta^{-1}_{\varpi,(n)} \otimes \sigma_o,s,\psi))=(q^{-s}\chi(\varpi))^{d} \cdot \tilde{\gamma}(1-ns,\chi^{-n},\psi) \mu(\sigma,s)^{1-\frac d 2 } \omega_\psi(\varpi) \cdot  \pmb{\beta}'(\sigma_o,s,\psi),$$
where
$$ \pmb{\beta}'(\sigma_o,s,\psi)=\tilde{\gamma}(1-ns,\chi^{-n},\psi){\gamma}(1-ns,\chi^{-n},\psi)^{-1}  \cdot \pmb{\beta}(\sigma_o,s,\psi)$$
is also a quotient of $L-$functions, similar to $\pmb{\beta}(\sigma_o,s,\psi)$.

\section{Local coefficients matrices for $\wt{\GL}_2$ and invariants via restrictions}  \label{S:LCM-G}
In this section, we compute the invariants associated to a local coefficients matrix of a genuine principal series of $\wt{\GL}_2$. Our method is to relate the local coefficients matrix to that of $\wt{\SL}_2$. In fact, our analysis gives an explanation of the disappearance of the trichotomy in $\det(\mca{M}(w, \sigma_o, s, \psi))$ for $\wt{\SL}_2$, when we consider the analogous $\det(\mca{M}(w, \sigma, s, \psi))$ for $\wt{\GL}_2$.

\subsection{Local coefficients matrix for $\wt{G}$ and $\wt{G}_o$}
\begin{thm} \label{Smain}
Let $\sigma \in \Irr(\wt{T})$. Then there exists a local coefficients matrix $\mca{M}(w, \sigma, \psi)$  associated with $\sigma$ which is a diagonal-block matrix satisfying the following:
\begin{enumerate}
\item[(i)] If $n$ is odd, then the matrix $\mca{M}(w, \sigma, \psi)$ consists of $ n_c \val{n_c}^{-1/2}$ diagonal blocks and each block is a local coefficient matrix associated with the unique $\sigma_o \in \Irr(\wt{T}_o)$ occurring in $\sigma$.
\item[(ii)] If $n$ is even, then the matrix $\mca{M}(w, \sigma, \psi)$ consists of $2n_c \val{2n_c}^{-1/2}$ diagonal blocks and each  $\sigma_o \in \Irr(\wt{T}_o)$ occurring in $\sigma$ contributes exactly $ d_c \val{d_c}^{-1/2}$ identical blocks, each of which is a local coefficients matrix associated with $\sigma_o$.
\end{enumerate}
\end{thm}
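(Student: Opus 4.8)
The plan is to work entirely inside the concrete models of \S\ref{commsl}: I would first record the $\wt{\GL}_2$-analogue of Lemma \ref{comdo}, realizing $I(\sigma)$ (and its unramified twists $I(\sigma_s)$) as $I(\chi_\sigma')$ and the target $I({}^w\sigma)$ as the corresponding model $I({}^w\chi_\sigma')$, and then propagate through the intertwining operator the $\wt{G}_o$-equivariant decomposition
$$I(\chi)|_{\wt{G}_o}=\bigoplus_{t} I(\chi)_t,\qquad I(\chi)_t\simeq I_o\big((\chi_o)^t\big),$$
established in Lemmas \ref{deco}, \ref{eigencoset} and \ref{relem}, the index $t$ running over $\wt{Z(G)}/\wt{M}$ when $n$ is odd and over $\wt{T}/\wt{M}\wt{T}_o$ when $n$ is even. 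The key observation is that the standard intertwining operator $T(w,\sigma)$ of \eqref{T(w)}, given by $f\mapsto \int_{N_w} f(\wt{w}^{-1}ng)\,dn$ with $N_w\subseteq U\subseteq\wt{G}_o$ and $\wt{w}\in\wt{G}_o$, does not change the determinant of its argument: $\det(\wt{w}^{-1}ng)=\det(g)$. Since $I(\chi)_t$ is exactly the subspace of functions supported on the determinant coset $\det(t)J^{n/d}$ (Lemma \ref{detj}), it follows that $T(w,\sigma)$ carries $I(\chi)_t$ into the corresponding subspace of $I({}^w\chi)$ with the same index $t$, so $T(w,\sigma)=\bigoplus_t T_t$ splits as a direct sum of its restrictions.

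Next I would identify each $T_t$, via the isomorphisms $R_{\chi,t}$ of Lemma \ref{relem} on source and target, with the $\wt{\SL}_2$ intertwining operator $T\big(w,(\sigma_o)^t\big)$ of \eqref{Inter-R}. When $n$ is odd and $t\in\wt{Z(G)}$ is a scalar matrix, $t$ commutes with $\wt{G}_o$ and centralizes $U$, so one obtains $R_{({}^w\chi),t}\circ T_t=T(w,\sigma_o)\circ R_{\chi,t}$ directly, after the same $\delta_{B_o}$-bookkeeping that produced $M_{o,s},N_{o,s}$ in Lemma \ref{comdo}. When $n$ is even the only new ingredient is a change of variables $n\mapsto t^{-1}nt$ in the defining integral, which rescales the Haar measure on $U$ by a constant and twists $\psi$ on $U$ by a constant — an adjustment of exactly the sort already absorbed into the model normalizations and undone by a torus conjugation on the target. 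Dualizing and using $\Wh_\psi\big(I(\sigma)\big)=\bigoplus_t\Wh_\psi\big(I(\chi)_t\big)\cong\bigoplus_t\Wh_\psi\big(I_o((\sigma_o)^t)\big)$ — valid because $U\subseteq\wt{G}_o$ — I would conclude that $\mca{T}(w,\sigma)^*$ is block-diagonal, and, choosing the ordered basis of $\Wh_\psi(I(\sigma))$ to be the concatenation of ordered bases of the summands pulled back along the $R_{\chi,t}$, the $t$-block is exactly a local coefficients matrix $\mca{M}\big(w,(\sigma_o)^t,\psi\big)$ in the sense of Definition \ref{D:LCM}. Here one invokes Proposition \ref{ForRuse}, applied on the $\wt{G}$-side and on each $\wt{\SL}_2$-summand, to ensure that the canonical change-of-basis map $\mca{C}$ built into $\mca{T}(w,\sigma)^*$ is itself block-diagonal and restricts on the $t$-block to the corresponding $\mca{C}_o$.

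It then remains to count the blocks. For $n$ odd the index set $\wt{Z(G)}/\wt{M}$ has $n_c\val{n_c}^{-1/2}$ elements by the second equality in \eqref{sldecoodd}, and since $\wt{Z(G)}$ centralizes $\wt{G}_o$ every twist $(\sigma_o)^t$ equals the unique $\sigma_o$ occurring in $\sigma$ (Lemma \ref{Dcent}); with compatible bases the blocks are even literally identical, which gives (i). For $n$ even I would regroup $\wt{T}/\wt{M}\wt{T}_o$ along the fibers of the surjection $\wt{T}/\wt{M}\wt{T}_o\onto\wt{T}/\wt{Z(G)}\wt{T}_o$: each fiber $\wt{Z(G)}\wt{T}_o/\wt{M}\wt{T}_o$ has $d_c\val{d_c}^{-1/2}$ elements by Lemma \ref{evenslcent}, the twists $(\sigma_o)^t$ are constant along a fiber (again Lemma \ref{Dcent}), and the distinct fibers are parametrized by $F^\times/F^{\times 2}$ via the determinant, yielding the distinct $\sigma_o$'s of Theorem \ref{decopropcor}(iii) (equivalently \eqref{slddecoeven}). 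Hence each occurring $\sigma_o$ contributes exactly $d_c\val{d_c}^{-1/2}$ identical blocks, and the total number of blocks is $[F^\times:F^{\times 2}]\cdot d_c\val{d_c}^{-1/2}=2n_c\val{2n_c}^{-1/2}$, which is (ii); these counts are consistent with $\dim\Wh_\psi(I(\sigma))=nn_c\val{nn_c}^{-1/2}$ from \eqref{psWd} and Lemma \ref{centers}, together with $\dim\Wh_\psi(I_o(\sigma_o))=d\val{d}^{-1/2}$.

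The hard part will be the middle step: making rigorous that the canonical isomorphism $\mca{C}$, assembled from the Jacquet integral $J(\sigma)$, Rodier's heredity, and the identification $\iota$ of twisted Jacquet modules, respects the determinant-coset decomposition and restricts on each block to the corresponding $\mca{C}_o$ for $\wt{\SL}_2$. I expect this to be cleanest if one never leaves the concrete model — the decomposition $I(\chi)=\bigoplus_t I(\chi)_t$ together with the determinant-preservation property makes the restriction of the explicit maps $N^*$, $(M^{-1})^*$ of Proposition \ref{ForRuse} to the $t$-block well defined, and a direct computation should show these restrictions coincide with the $\wt{\SL}_2$ counterparts up to the harmless constant and additive-character normalizations noted above. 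A secondary nuisance is the bookkeeping of those normalizations in the even case, where the torus conjugation $t^{-1}(\cdot)t$ intervenes; I expect it to cancel against the analogous normalization built into the identification $I(\chi)_t\simeq I_o((\chi_o)^t)$ in Lemma \ref{relem}.
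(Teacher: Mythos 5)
Your plan follows essentially the same route as the paper's proof in \S\ref{mainproof}: the determinant-preservation observation is precisely Lemma \ref{simpobser}, the identification of each block with an $\wt{\SL}_2$ intertwiner via $R_{\chi,t}$ is precisely the commutative diagram \eqref{CDtem} (where the paper's normalized maps $Y'_{\chi,g}$, $R'_{\chi,g}$ carry the $\delta_B$-factors you anticipate), and the final count via $\wt{Z(G)}/\wt{M}$ (resp.\ $\wt{T}/\wt{M}\wt{T}_o$ regrouped along $\wt{T}/\wt{Z(G)}\wt{T}_o$) matches the paper's use of \eqref{sldecoodd}, \eqref{slddecoeven} and Lemma \ref{evenslcent}. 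The one step you flag as ``the hard part'' — that the change-of-basis $\mca{C}$ built into $\mca{T}(w,\sigma)^*$ is itself block-diagonal and restricts to the $\wt{\SL}_2$ version on each block — is exactly what the paper settles by constructing the explicit basis $\mfr{R}_{\chi,\psi}=\{\xi_{g,k}\}$ and verifying in Proposition \ref{lastone} that the Whittaker functionals $\beta_{\chi'_\sigma,\chi,\psi,s,g,k}$ are supported on $I(\chi'_{\sigma_s})_g$ and transform under $Y'$ and $R'$ into the $\wt{\SL}_2$ functionals $\lambda_{k,\chi^g,\psi,s}$; your proposal gestures at the right cancellations but would need this explicit bookkeeping to close the gap. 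In particular, the torus conjugation you mention does appear, but it is absorbed by the change of variable $ab^{-1}x\mapsto x$ in the commutativity check for \eqref{CDtem} (with $g=\s(\diag(a,b))$) rather than being ``undone on the target'' as a separate step.
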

Theorem \ref{Smain} essentially follows from Proposition \ref{decoprop} and from the simple observation given in Lemma \ref{simpobser} below. We will postpone a full proof for Theorem \ref{Smain} to \S \ref{mainproof}. In \S \ref{glinv} below, we will first give some important consequence following from Theorem \ref{Smain}.

\begin{cor} \label{plancor}
Let $\sigma \in \Irr(\wt{T})$ and let $\sigma_o \in \Irr(\wt{T}_o)$ be occurring in $\sigma$. Then,
$$\mu(\sigma,s)=\mu(\sigma_o,s).$$
\end{cor}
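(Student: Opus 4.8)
The plan is to deduce Corollary \ref{plancor} directly from Theorem \ref{Smain} together with the characterizing property of the Plancherel measure. Recall from \S \ref{SS:Pg} that $\mu(\sigma,s)^{-1} \cdot \mathrm{id}$ is defined by the composition $T(w^{-1}, {}^w\sigma_{s}) \circ T(w, \sigma_{s})$ on $I(\sigma_s)$, and similarly $\mu(\sigma_o,s)^{-1} \cdot \mathrm{id}$ is defined by $T(w^{-1}, {}^w\sigma_{o,s}) \circ T(w, \sigma_{o,s})$ on $I(\sigma_{o,s})$. The key point is that the Plancherel measure can be read off from a local coefficients matrix: since $\mca{M}(w, \sigma, s, \psi)$ represents $\mca{T}(w, \sigma_s)^*$, and $\mca{T}(w^{-1}, {}^w\sigma_s)^* \circ \mca{T}(w, \sigma_s)^* = \mu(\sigma, s)^{-1} \cdot \mathrm{id}$ as an endomorphism of the Whittaker space (this is the dual statement of the defining property of $\mu$, using that the isomorphisms $\mca{C}$ and $J(\sigma)$ are compatible with composition of intertwining operators), the product $\mca{M}(w^{-1}, {}^w\sigma, s, \psi) \cdot \mca{M}(w, \sigma, s, \psi)$ equals $\mu(\sigma, s)^{-1}$ times the identity matrix. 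The analogous statement holds for $\sigma_o$.

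First I would make precise the claim that $\mu(\sigma,s)$ is recovered as the scalar by which $\mca{M}(w^{-1}, {}^w\sigma, s, \psi) \cdot \mca{M}(w, \sigma, s, \psi)$ acts. This is essentially Remark \ref{R:LCM} combined with the cocycle property of the maps $T(w, \sigma)$ and the compatibility of the canonical isomorphisms $J(\sigma)$, $\iota$, $\mca{C}$ with such compositions; since for $w$ the nontrivial Weyl element of $\SL_2$ (resp.\ $\GL_2$) we have $w^{-1} = w$ up to the center, there is no length-additivity subtlety here and the relevant identity $\mca{T}(w, {}^w\sigma_s)^* \circ \mca{T}(w, \sigma_s)^* = \mu(\sigma, s)^{-1}\cdot \mathrm{id}$ follows. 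Next I would invoke Theorem \ref{Smain}: choose the diagonal-block local coefficients matrix $\mca{M}(w, \sigma, \psi)$ whose blocks are (copies of) local coefficients matrices $\mca{M}(w, \sigma_o, \psi)$ for the various $\sigma_o \in \Irr(\wt{T}_o)$ occurring in $\sigma$. Applying the same construction to $w^{-1}$ (i.e.\ $w$), the product $\mca{M}(w^{-1}, {}^w\sigma, \psi) \cdot \mca{M}(w, \sigma, \psi)$ is block-diagonal with blocks $\mca{M}(w^{-1}, {}^w\sigma_o, \psi) \cdot \mca{M}(w, \sigma_o, \psi) = \mu(\sigma_o, s)^{-1} \cdot I$. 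But we also know this product equals $\mu(\sigma, s)^{-1} \cdot I$ on the whole space. Comparing scalars on any single block gives $\mu(\sigma, s) = \mu(\sigma_o, s)$, noting that in case (ii) a given $\sigma_o$ contributes several blocks, all with the same scalar, and by Proposition \ref{decoprop}/Theorem \ref{decopropcor} every $\sigma_o$ occurring in $\sigma$ does appear as some block.

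I expect the main obstacle to be the bookkeeping in the first step: carefully justifying that the endomorphism $\mca{T}(w^{-1}, {}^w\sigma_s)^* \circ \mca{T}(w, \sigma_s)^*$ really equals $\mu(\sigma, s)^{-1} \cdot \mathrm{id}$, i.e.\ that the ``change of model'' isomorphisms $\mca{C}$ (built from $J(\sigma)$, $J({}^w\sigma)$ and the identification $\iota$) cancel correctly when one composes $T(w^{-1}, {}^w\sigma_s)$ after $T(w, \sigma_s)$. This amounts to checking a commutative diagram analogous to \eqref{Strat} but for the two-step composition; the argument parallels Lemma \ref{L:Tcr} in the unramified case and the functoriality of the Jacquet integral with respect to intertwining operators (cf.\ \cite{Sha2}). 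Once that is in place the rest is a short formal comparison. An alternative, more hands-on route — valid when $\gcd(n,p)=1$ and $\sigma$, $\sigma_o$ are unramified — is simply to combine the explicit determinant formulas of Theorems \ref{detunramnot4}, \ref{T:SL-c3} and \ref{T:M1} with the decomposition of $I(\sigma)|_{\wt{G}_o}$, but the uniform argument via Theorem \ref{Smain} is cleaner and covers the ramified cases as well.
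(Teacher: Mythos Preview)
Your proposal is correct and follows essentially the same approach as the paper: use Theorem \ref{Smain} to realize the local coefficients matrix for $\sigma$ as a block-diagonal matrix with blocks coming from the $\sigma_o$'s, and then read off the Plancherel measure from the scalar by which the composite $T(w^{-1},(\sigma_s)^w)\circ T(w,\sigma_s)$ acts. The only minor difference is that the paper works directly with $\bigl(T(w^{-1},(\sigma_s)^w)\circ T(w,\sigma_s)\bigr)^*$ rather than with $\mca{T}^*$, which sidesteps the $\mca{C}$-bookkeeping you flag as the main obstacle; since the composite is already a scalar on $I(\sigma_s)$, its dual is the same scalar on the Whittaker space, and no compatibility of the $\mca{C}$'s is needed.
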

\begin{proof}
Recall that
$$\big(T_o(w^{-1}, (\sigma_s)^w) \circ T_o(w, \sigma_s) \bigr)^*=\mu(\sigma_o,s)^{-1}\cdot {\rm id}.$$
By Theorem \ref{Smain}, a local coefficients matrix associated $\sigma$ can be chosen to be a block-diagonal matrix where each block is a local coefficient matrix associated with a principal series of $\wt{G}_o$ appearing in the restriction. Thus, a matrix representing $\big(T(w^{-1}, (\sigma_s)^w) \circ T(w, \sigma_s) \bigr)^*$ can be chosen to be a diagonal-block matrix with each block being a scalar matrix where the scalar is a Plancherel measure. On the other hand, one has
$$\big(T(w^{-1}, (\sigma_s)^w) \circ T(w, \sigma_s) \bigr)^*=\mu(\sigma,s)^{-1}\cdot {\rm id}.$$
The assertion now follows.
\end{proof}

\begin{rmk} \label{plancorr}
Let $\sigma_o \in \Irr(\wt{T}_o)$. Explicit formulas for $\mu(\sigma_o,s)$ are given in \S \ref{plansec}. These formulas show that $\mu(\sigma_o,s)$ depends only on
$$\chi_{\sigma_o}|_{Z(\wt{T}) \cap \wt{T}_o}.$$
This observation is non-trivial when $n$ is even, since in this case $Z(\wt{T}) \cap \wt{T}_o$ is strictly contained in $Z(\wt{T}_o)$. Theorem \ref{decopropcor} and Corollary \ref{plancor} together explain this phenomenon.
\end{rmk}

Remark \ref{plancorr} and the explicit formulas in \S \ref{plancorr} motivate the following definition.

\begin{dfn}
A character $\chi$ of $F^\times$ is said to be associated with $\sigma \in \Irr(\wt{T})$ if
$$\chi_\sigma( \s(\alpha^\vee(a) \cdot \zeta) \mapsto \zeta\cdot \chi(a);$$
equivalently, if $\chi$ is  associated with an $\sigma_o \in \Irr(\wt{T}_o)$ occurring in $\sigma$.
\end{dfn}

We emphasize that $\chi$ is independent of any choice of additive character $\psi$ of $F$ and that $\chi$ is unique up to twisting by elements of $\widehat{F^\times/F^{\times n}}$.

\begin{cor} \label{irrres}
Let $\sigma \in \Irr(\wt{T})$ be a unitary representation. Then, $I(\sigma)$  is always irreducible. Moreover, $I(\sigma_s)$ is reducible if and only if the trivial character of $F^\times$ is  associated with $\sigma$ and $q^{-1 \pm ns}=1$.
\end{cor}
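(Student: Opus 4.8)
\textbf{Proof proposal for Corollary \ref{irrres}.}

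The plan is to reduce the reducibility question for $I(\sigma_s)$ on $\wt{G}=\wt{\GL}_2$ to the analogous question for the relevant genuine principal series of $\wt{G}_o = \wt{\SL}_2^{(n)}$, and then quote the known reducibility criterion on $\wt{\SL}_2$. First I would use Theorem \ref{decopropcor} to write $I(\sigma_s)|_{\wt{G}_o}$ as a direct sum of principal series $I_o(\sigma_{o,s})$ (possibly twisted by the characters $\eta_{x,(n)}$ when $n$ is even), where each $\sigma_o$ occurring is a genuine irreducible representation of $\wt{T}_o$ with central character determined by $\chi_\sigma$. Since $\bbmu_n \to \wt{G} \to G$ is a central extension and $\wt{G}_o = \wt{G}_\text{der}$, the restriction functor is exact and a $\wt{G}$-subrepresentation of $I(\sigma_s)$ restricts to a $\wt{G}_o$-subrepresentation; conversely, because $\wt{G}/\wt{Z(G)}\wt{G}_o$ is a finite abelian group acting transitively (via the decomposition in Proposition \ref{decoprop}) on the isotypic pieces $I(\chi)_t$, a $\wt{G}_o$-invariant complement can be averaged/transported to a $\wt{G}$-invariant one. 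Thus $I(\sigma_s)$ is irreducible if and only if each summand $I_o(\sigma_{o,s})$ appearing in the restriction is irreducible. (One should be a little careful here: irreducibility of $I(\sigma_s)$ is equivalent to each distinct $I_o(\sigma_{o,s})$ being irreducible \emph{and} these $\wt{G}_o$-representations being pairwise non-isomorphic across the distinct $\sigma_o$'s; but distinct $\sigma_o \in \Irr(\wt{T}_o)$ appearing here have distinct central characters on $Z(\wt{T}_o)$, hence give non-isomorphic principal series, so this subtlety is automatically handled.)

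Next I would invoke the reducibility criterion for genuine principal series of $\wt{\SL}_2^{(n)}$, which is stated earlier in the literature referenced in the excerpt (\cite[Proposition 5.5]{Szp6}, compare the remark following Theorem \ref{decopropcor}): $I_o(\sigma_{o,s})$ is reducible precisely when the inducing data is ``self-dual at the right point'', i.e. when ${}^w\sigma_{o,s} \simeq \sigma_{o,s}$ and the intertwining operator $T_o(w,\sigma_{o,s})$ has a pole or zero — concretely, when the Plancherel measure $\mu(\sigma_o,s)$ is zero or infinite. Using the explicit formula for $\mu(\sigma_o,s)$ recalled in \S\ref{plansec},
$$\mu(\sigma_o,s)^{-1} = c(\sigma)\cdot \frac{L(ns,\chi^n)L(-ns,\chi^{-n})}{L(1-ns,\chi^{-n})L(1+ns,\chi^n)},$$
with $\chi$ a linear character associated with $\sigma$, one reads off that this is zero or infinite exactly when one of the $L$-factors involves the trivial unramified character at the boundary, i.e. when $\chi^n$ is unramified, $\chi^n$ is in fact trivial (so that $L(ns,\chi^n)$ etc. genuinely has its pole at $q^{-ns}=1$), and $q^{-1\pm ns}=1$. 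For the unitary case, $\sigma$ unitary forces $\chi$ unitary, and then the standard argument (the intertwining operator composed with its reverse is a positive scalar, so has neither pole nor zero on the unitary axis, combined with the fact that a non-tempered unitary point cannot make $\mu$ vanish) shows $\mu(\sigma_o,s)$ is finite and nonzero at $s=0$, whence $I(\sigma)=I(\sigma_0)$ is irreducible; this also follows directly from Corollary \ref{plancor}, which gives $\mu(\sigma,s)=\mu(\sigma_o,s)$, reducing everything to the $\wt{\SL}_2$ statement.

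The main obstacle I anticipate is the bookkeeping in the equivalence ``$I(\sigma_s)$ reducible $\iff$ some $I_o(\sigma_{o,s})$ reducible'' — specifically, justifying that a $\wt{G}$-invariant subspace is detected on $\wt{G}_o$ and, in the even-$n$ case, correctly matching up the twisted summands $I_o(\eta_{x,(n)}\otimes\sigma_o)$ with the $\wt{M}$-isotypic decomposition of $I(\sigma)$ from Lemmas \ref{deco}, \ref{eigencoset} and \ref{relem}. Once this dictionary is in place, the rest is a direct substitution into the explicit Plancherel formula and identifying when the relevant $L$-function has a pole, which pins down the condition ``$\chi$ trivial (i.e. the trivial character of $F^\times$ is associated with $\sigma$) and $q^{-1\pm ns}=1$'' as claimed. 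I would also double-check the unitary claim does not require excluding complementary-series phenomena: here $\wt{\SL}_2^{(n)}$ genuine principal series induced from unitary data on the (compact-mod-center) torus $\wt{T}_o$ are always irreducible because the inducing character, being genuine and unitary on a Heisenberg-type group, is ``regular enough'' — this is exactly the content of the $\wt{\SL}_2$ irreducibility results, so no new input is needed.
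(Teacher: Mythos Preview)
There is a genuine gap: your central equivalence ``$I(\sigma_s)$ is irreducible if and only if each summand $I_o(\sigma_{o,s})$ is irreducible'' is false, and this is not a bookkeeping issue. The paper flags exactly this in the remark following Proposition~\ref{decoproph}: for odd $n$ it can happen that $I(\sigma)$ is irreducible while $I_o(\sigma_o)$ is reducible. The mechanism is the familiar one from the linear case $\GL_2/\SL_2$: when $I_o(\sigma_o)=A\oplus B$, elements $g\in\wt{T}$ with $\det(g)\notin F^{\times 2}$ act by outer automorphisms of $\wt{G}_o$ and can interchange $A$ and $B$, so neither isotypic piece is $\wt{G}$-stable and your averaging produces nothing proper. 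In particular, for unitary $\sigma$ you cannot conclude irreducibility of $I(\sigma)$ from irreducibility of all the $I_o$'s, because the latter need not hold --- the $\wt{\SL}_2$ self-duality condition $\sigma_o\simeq{}^w\sigma_o$ (controlled by $Z(\wt{T}_o)$) is strictly weaker than $\sigma\simeq{}^w\sigma$ (controlled by $Z(\wt{T})$), so the R-group on $\wt{\SL}_2$ can be nontrivial while on $\wt{\GL}_2$ it is trivial.

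The paper's argument avoids restriction for the reducibility criterion altogether. It invokes the general Knapp--Stein theory directly on $\wt{G}=\wt{\GL}_2$ (Savin's result in the appendix to \cite{Tan}, or \cite{Luo3}): for unitary $\sigma$, $I(\sigma)$ is reducible iff $\sigma\simeq\sigma^w$ and $\mu(\sigma,s)^{-1}$ is holomorphic at $s=0$; for non-unitary $\sigma_s$, iff $\mu(\sigma,s)^{-1}=0$. The restriction to $\wt{G}_o$ enters only through Corollary~\ref{plancor}, which gives $\mu(\sigma,s)=\mu(\sigma_o,s)$ and hence access to the explicit formula in \S\ref{plansec}. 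One then checks that $\sigma\simeq\sigma^w$ forces the trivial character to be associated with $\sigma$, in which case $\mu(\sigma,s)^{-1}$ has a pole at $s=0$ (so unitary $I(\sigma)$ is always irreducible), and that the zeros of $\mu(\sigma,s)^{-1}$ occur exactly when $q^{-1\pm ns}=1$.
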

\begin{proof}
We note that the above result follows from the general theory of Knapp-Stein R-groups, which is extended to covering groups by Caihua Luo \cite{Luo3}. On the other hand,  a weaker result applicable only for maximal parabolic induction is proven by Savin using a different method, see \cite[Appendix]{Tan}. We elaborate on the latter approach.

Since $\sigma$ is unitary, it follows from Propositions 6.2 and 6.3 in \cite[Appendix]{Tan} that $I(\sigma)$ is reducible if and only if $\sigma \simeq \sigma^w$ and  that $\mu(\sigma,s)^{-1}$  is analytic at $s=0$. By  the Stone-von Neumann Theorem,  $\sigma \cong \sigma^w$ if and only if $\chi_{\sigma}=\chi_{\sigma^w}.$ In particular, $\sigma \cong \sigma^w$ implies that for all $a \in F^{\times n}$, one has
 $$\chi_{\sigma}\big(\s(\alpha^\vee(a) )\big) \cdot \chi_{\sigma^w} \big(\s(\alpha^\vee(a) )^{-1}\big)=1.$$
  This is equivalent to the fact that the trivial character of $F^\times$ is associated with $\sigma$. In this case, Corollary \ref{plancor} together with the formulas in \S \ref{plansec} imply that $\mu(\sigma,s)^{-1}$ has a pole at $s=0$. Thus, $I(\sigma)$ is irreducible.

 If $s \in i\cdot \R$,  then $\sigma_s$ is again unitary. By what we have already proven, $I(\sigma_s)$ is irreducible.    Suppose now $s \notin i\cdot \R$. Then $\sigma_s$ is not unitary. It follows from Proposition 6.2  in \cite[Appendix]{Tan}  that $I(\sigma_s)$ is reducible if and only if $\mu(\sigma,s)^{-1}=0.$  Using Corollary \ref{plancor}  and the formulas in \S \ref{plansec} again, we conclude the proof.
\end{proof}

\subsection{Invariants from $\mca{M}(w, \sigma, s, \psi)$} \label{glinv}

\begin{thm} \label{trgl}
Fix $\sigma \in \Irr(\wt{T})$. If $n  \equiv 0 \, (\text{mod }4)$, then we assume that $\gcd(p,n)=1$. Let $\chi$  be a linear character of $F^\times$ associated with $\sigma$. One has
$$\Tr( \mca{M}(w, \sigma,s,\psi))=\frac{\val{\gcd(n,4c+1)}^{\half}}{\gcd(n,4c+1)} \cdot \sum_{\eta \in \widehat{F^\times/F^{\times n}} } \gamma(1-s,\chi^{-1}\eta,\psi). $$
For ${\rm Re}(s) \gg 0$,  we also have
$$\Tr( \mca{M}(w, \sigma,s,\psi)) =(\dim \sigma)  \cdot \lim_{r \rightarrow \infty} \int_{\mfr{p}^{-r} \cap F^{\times n} } \chi_{\sigma_s}\bigr(  \s(\alpha^\vee(x)) \bigl) \psi(x) \ d_\psi^\times x .$$
\end{thm}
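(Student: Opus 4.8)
The plan is to reduce the computation for $\wt{G}=\wt{\GL}_2^{(n)}$ to the trace formula for $\wt{G}_o=\wt{\SL}_2^{(n)}$ established in Theorem~\ref{trcodd24}. Write $m=\gcd(n,4c+1)$, an odd positive integer, and fix a linear character $\chi$ of $F^\times$ associated with $\sigma$; it is then associated with every $\sigma_o\in\Irr(\wt{T}_o)$ occurring in $\sigma|_{\wt{T}_o}$. The two structural inputs are Theorem~\ref{Smain}, which realizes a local coefficients matrix $\mca{M}(w,\sigma,s,\psi)$ as a block-diagonal matrix whose blocks are local coefficients matrices attached to the $\sigma_o$ appearing in $I(\sigma)|_{\wt{G}_o}$, together with their multiplicities, and the explicit restriction in Theorem~\ref{decopropcor}. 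Since any two local coefficients matrices attached to the same $\sigma_o$ are conjugate, hence have the same trace, Theorem~\ref{Smain} gives
$$\Tr\bigl(\mca{M}(w,\sigma,s,\psi)\bigr)=
\begin{cases}
n_c\val{n_c}^{-1/2}\,\Tr\bigl(\mca{M}(w,\sigma_o,s,\psi)\bigr) & n\text{ odd},\\[3pt]
d_c\val{d_c}^{-1/2}\,\sum_{\sigma_o}\Tr\bigl(\mca{M}(w,\sigma_o,s,\psi)\bigr) & n\text{ even},
\end{cases}$$
where in the even case the sum runs over the distinct $\sigma_o$ occurring in $\sigma|_{\wt{T}_o}$.

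For $n$ odd there is a unique $\sigma_o$, of dimension $d\val{d}^{-1/2}=n\val{n}^{-1/2}$, and Theorem~\ref{trcodd24} identifies $\Tr(\mca{M}(w,\sigma_o,s,\psi))$ with $(\dim\sigma_o)^{-1}\sum_{\eta\in\widehat{F^\times/F^{\times n}}}\gamma(1-s,\chi^{-1}\eta,\psi)$. Since $n_c/n=1/m$ and $\val{n}/\val{n_c}=\val{m}$, the constant $n_c\val{n_c}^{-1/2}\big/\bigl(n\val{n}^{-1/2}\bigr)$ collapses to $\val{m}^{1/2}/m$, which is the asserted one. For $n$ even I would split according to $n\bmod4$, using Theorem~\ref{decopropcor}(iii). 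When $n\equiv0\pmod4$ (so $\gcd(n,p)=1$) the $\sigma_o$'s are $\eta_{x,(n)}\otimes\sigma_o^{(0)}$ for $x\in F^\times/F^{\times2}$, each of dimension $d\val{d}^{-1/2}$ with associated character $\chi\eta_{x,(n)}$ modulo $\widehat{F^\times/F^{\times d}}$; by Lemma~\ref{dualcenter}(iii), with its parameters $m=d$ and $l=2$ so that $ml=n$, the map $(x,\eta)\mapsto\eta_{x,(n)}\cdot\eta$ is a bijection from $F^\times/F^{\times2}\times\widehat{F^\times/F^{\times d}}$ onto $\widehat{F^\times/F^{\times n}}$, so summing the trace formula of Theorem~\ref{trcodd24} over $x$ yields $\frac{1}{d\val{d}^{-1/2}}\sum_{\mu\in\widehat{F^\times/F^{\times n}}}\gamma(1-s,\chi^{-1}\mu,\psi)$. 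When $n\equiv2\pmod4$, $d$ is odd and the $\sigma_o$'s are $\eta_{x,(2)}\otimes\sigma_o^{(0)}$, and Theorem~\ref{trcodd24} now delivers $\tilde\gamma$-factors; summing over $x\in F^\times/F^{\times2}$ for each fixed $\eta$ and invoking Lemma~\ref{sumlemma} converts $\sum_x\tilde\gamma(1-s,\chi^{-1}\eta^{-1}\eta_{x,(2)},\psi)$ into $\sum_x\gamma(1-s,\chi^{-1}\eta^{-1}\eta_{x,(2)},\psi)$, and since $\gcd(2,d)=1$ the product map $\widehat{F^\times/F^{\times2}}\times\widehat{F^\times/F^{\times d}}\to\widehat{F^\times/F^{\times n}}$ is a bijection, giving the same expression. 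In both even cases $d_c=d/m$ (because $\gcd(d,4c+1)=\gcd(n,4c+1)=m$), so $d_c\val{d_c}^{-1/2}\big/\bigl(d\val{d}^{-1/2}\bigr)=\val{m}^{1/2}/m$ as well, which finishes the trace identity. This conversion of $\tilde\gamma$ into $\gamma$ via Lemma~\ref{sumlemma} in the $n\equiv2\pmod4$ case is exactly the mechanism dissolving the $\wt{\SL}_2$ trichotomy for $\wt{\GL}_2$, and it is the conceptual heart of the argument.

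For the integral formula I would expand each $\gamma(1-s,\chi^{-1}\mu,\psi)$ through Tate's integral representation \eqref{tateintegral} and sum over $\mu\in\widehat{F^\times/F^{\times n}}$; by the orthogonality relation \eqref{dualhilbert} the sum $\sum_\mu\mu^{-1}(x)$ picks out $F^{\times n}$ with weight $[F^\times:F^{\times n}]$, so that for ${\rm Re}(s)\gg0$
$$\sum_{\mu\in\widehat{F^\times/F^{\times n}}}\gamma(1-s,\chi^{-1}\mu,\psi)=[F^\times:F^{\times n}]\cdot\lim_{r\to\infty}\int_{\mfr{p}^{-r}\cap F^{\times n}}\chi(x)\val{x}^s\psi(x)\,d_\psi^\times x.$$
On $F^{\times n}$ one has $\omega_\psi\equiv1$ (since $F^{\times n}\subseteq F^{\times2}$ whenever $n$ is even), so $\chi(x)\val{x}^s$ agrees there with $\chi_{\sigma_s}(\s(\alpha^\vee(x)))$ in every case; combining this with $\val{m}^{1/2}m^{-1}[F^\times:F^{\times n}]=\dim\sigma$, which is immediate from $[F^\times:F^{\times n}]=n^2\val{n}^{-1}$ and $\dim\sigma=nn_c\val{nn_c}^{-1/2}$, produces the stated integral expression. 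The only genuinely nonformal ingredients are Lemma~\ref{sumlemma} and Theorem~\ref{Smain}; the remaining work is Hilbert-symbol bookkeeping, which I expect to be routine but somewhat delicate in tracking the multiplicity constants down to $\val{m}^{1/2}/m$ uniformly across the three residue classes of $n$ modulo $4$.
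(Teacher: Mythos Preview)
Your proposal is correct and follows essentially the same route as the paper: reduce to $\wt{\SL}_2$ via Theorem~\ref{Smain} and Theorem~\ref{decopropcor}, apply Theorem~\ref{trcodd24} blockwise, and in the $n\equiv 2\pmod 4$ case convert $\tilde\gamma$ to $\gamma$ via Lemma~\ref{sumlemma} before reassembling the sum over $\widehat{F^\times/F^{\times n}}$ using Lemma~\ref{dualcenter}(iii). The only cosmetic differences are that you invoke the CRT splitting $\widehat{F^\times/F^{\times n}}\simeq\widehat{F^\times/F^{\times 2}}\times\widehat{F^\times/F^{\times d}}$ directly when $\gcd(2,d)=1$ rather than Lemma~\ref{dualcenter}(iii), and you spell out the integral-formula argument that the paper defers to \cite[Corollary~4.14]{Szp6}.
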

\begin{proof}
We only prove the first assertion, as the second assertion follows from the first one by the same argument used in the proof of \cite[Corollary 4.14]{Szp6}.  Let $\sigma_o \in \Irr( \wt{T}_o)$ be occurring in $\sigma$.

Suppose first that $n$ is odd. In this case, it follows from Theorem \ref{Smain} that
$$\Tr( \mca{M}(w, \sigma,s,\psi))  =n_c \val{n_c}^{-1/2} \cdot \Tr( \mca{M}(w, \sigma_o,s,\psi)) .$$
The result now follows from \eqref{trfor4}.

Assume now $n\equiv 2 \ (\text{mod }4)$. By Theorem \ref{Smain} and Corollary \ref{decopropcor}, we have
$$\Tr( \mca{M}(w, \sigma,s,\psi)) =d_c \val{d_c}^{-1/2} \cdot \sum_{a \in F^\times/F^{\times 2}} \Tr(\mca{M}(w, \eta_{a,(2)} \otimes \sigma_o,s,\psi)).$$
We now deduce from \eqref{trfor4}  that
$$\begin{aligned}
& \Tr( \mca{M}(w, \sigma,s,\psi)) \\
=\ & d_c\val{d_c}^{-1/2} \cdot d^{-1} \val{d}^{\half} \cdot \sum_{\eta' \in \widehat{F^\times/F^{\times 2}}} \sum_{\eta \in \widehat{F^\times/F^{\times d}}} \tilde{\gamma}(1-s,\chi^{-1} \eta' \eta,\psi) \\
=\ & d_c\val{d_c}^{-1/2} \cdot d^{-1} \val{d}^{\half} \cdot  \sum_{\eta \in \widehat{F^\times/F^{\times d}}} \sum_{\eta' \in \widehat{F^\times/F^{\times 2}}} \gamma(1-s,\chi^{-1} \eta' \eta,\psi)  \text{ by Lemma \ref{sumlemma}}.
\end{aligned} $$
Since $4c+1$ is odd, we have
$$\frac{d}{d_c}=\gcd(n,4c+1).$$
The result for the case $n\equiv 2 \ (\text{mod }4)$ now follows from the third item in Lemma \ref{dualcenter}.

Lastly, we consider $n\equiv 0 \ (\text{mod }4)$.  In this case, Theorem \ref{Smain} and Theorem \ref{decopropcor} imply
$$\begin{aligned}
& \Tr( \mca{M}(w, \sigma,s,\psi))  \\
=\ & d_c\val{d_c}^{-1/2} \cdot \sum_{a \in F^\times/F^{\times 2}} \Tr(\mca{M}(w, \eta_{a,(n)} \otimes \sigma_o,s,\psi)) \\
=\ & \frac{d_c}{d} \cdot \sum_{a \in F^\times/F^{\times 2}} \sum_{b \in F^\times/F^{\times d}} {\gamma}(1-s,\chi^{-1} \eta_{a,(n)} \eta_{b,(d)},\psi) \text{ by \eqref{trfor4}}.
\end{aligned} $$
Using the third assertion in Lemma \ref{dualcenter}, the proof is completed.
\end{proof}

\begin{cor} \label{exptrungl}
Suppose $n>1$ and $\gcd(n,p)=1$. Let $\chi$ be a linear character associated with $\sigma$.  If $\sigma$ is unramified and $\psi$ is normalized,  then
$$\Tr(\mca{M}(w, \sigma, s, \psi))=n_c \cdot (1-q^{-1}) \cdot  L(ns,\chi^n).$$
\end{cor}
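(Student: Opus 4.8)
The plan is to deduce this from the trace formula for $\wt{\GL}_2$ in Theorem \ref{trgl}, together with an evaluation of the resulting averaged sum of Tate $\gamma$-factors which is the $n$-analogue of the computation already performed in the proof of Corollary \ref{exptrun}.

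First I would fix the normalization: since $\sigma$ is unramified and $\gcd(n,p)=1$ (so that the section $\s$ splits $\wt{T}$ over $T\cap K$), the character $\chi_\sigma$ is trivial on $Z(\wt{T})\cap K$, and hence the associated linear character $\chi$ of $F^\times$ may be chosen unramified, exactly as in \S\ref{linchasec}. With this choice, Theorem \ref{trgl} gives
$$\Tr(\mca{M}(w,\sigma,s,\psi))=\frac{\val{\gcd(n,4c+1)}^{\half}}{\gcd(n,4c+1)}\cdot\sum_{\eta\in\widehat{F^\times/F^{\times n}}}\gamma(1-s,\chi^{-1}\eta,\psi).$$
Because $\gcd(n,4c+1)$ divides $n$ and $\gcd(n,p)=1$, the integer $\gcd(n,4c+1)$ is prime to $p$, so $\val{\gcd(n,4c+1)}=1$ and the prefactor simplifies to $1/\gcd(n,4c+1)$.

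Next I would evaluate $\sum_{\eta\in\widehat{F^\times/F^{\times n}}}\gamma(1-s,\chi^{-1}\eta,\psi)$ for unramified $\chi$ and normalized $\psi$, following verbatim the argument in the proof of Corollary \ref{exptrun} with $n$ in place of $d$. Using $\gcd(n,p)=1$, write $\widehat{F^\times/F^{\times n}}=D\times U$, where $D$ is cyclic of order $n$ generated by $\eta_{\varpi,(n)}$ and $U$ is the cyclic group of order $n$ of unramified characters generated by an $\eta_{u,(n)}$ with $\eta_{u,(n)}(\varpi)$ a primitive $n$-th root of unity. Since $1+\mfr{p}\subseteq F^{\times n}$, every character $\chi^{-1}\eta_{u,(n)}^i\eta_{\varpi,(n)}^j$ with $j\ge 1$ is ramified of conductor $1$, so by \eqref{epsilon twist} its $\gamma$-factor differs from $\gamma(1-s,\chi^{-1}\eta_{\varpi,(n)}^j,\psi)$ only by the factor $\eta_{u,(n)}^i(\varpi)$; summing over $i$ annihilates these contributions because $\sum_{i=0}^{n-1}\eta_{u,(n)}^i(\varpi)=0$. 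The surviving $j=0$ terms contribute $\sum_{i=0}^{n-1}\gamma(1-s,\chi^{-1}\eta_{u,(n)}^i,\psi)$, which by Lemma \ref{L:aux} equals $n(1-q^{-1})L(ns,\chi^n)$. Hence
$$\sum_{\eta\in\widehat{F^\times/F^{\times n}}}\gamma(1-s,\chi^{-1}\eta,\psi)=n(1-q^{-1})L(ns,\chi^n).$$

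Combining the two steps yields
$$\Tr(\mca{M}(w,\sigma,s,\psi))=\frac{n}{\gcd(n,4c+1)}\cdot(1-q^{-1})\cdot L(ns,\chi^n)=n_c\cdot(1-q^{-1})\cdot L(ns,\chi^n),$$
since $n_c=n/\gcd(n,4c+1)$ by definition. Note that, unlike the $\wt{\SL}_2$ case in Corollary \ref{exptrun}, no trichotomy in $n$ appears here: the sum runs over $\widehat{F^\times/F^{\times n}}$ rather than $\widehat{F^\times/F^{\times d}}$, so one always obtains the $n$-th power $\chi^n$. There is no genuine obstacle; the only points requiring attention are the legitimacy of the unramified choice of $\chi$ and the elementary bookkeeping with $n_c$, $d_c$ and $\gcd(n,4c+1)$ under $\gcd(n,p)=1$, both of which are immediate from the material already established.
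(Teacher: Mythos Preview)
Your proposal is correct and follows exactly the approach indicated in the paper: the paper simply states that using Theorem \ref{trgl}, the proof of Corollary \ref{exptrungl} goes word for word as that of Corollary \ref{exptrun}, and you have carried this out in detail, including the simplification of the prefactor via $\gcd(n,p)=1$ and the replacement of $d$ by $n$ throughout the averaged $\gamma$-factor computation.
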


\begin{cor} \label{rumexptrgl}
Suppose $n>1$ and $\gcd(n,p)=1$. Denote by $\chi$ the linear character associated with $\sigma_o$. If we assume that $\sigma$ is ramified, then the following hold:
 \begin{enumerate}
 \item[(i)] The conductor $\mfr{f}(\chi)$ is determined by $\sigma_o$.
 \item[(ii)] If $\mfr{f}(\chi)  \nequiv \mfr{f}(\psi) \ (\text{mod }d)$, then $\Tr(\mca{M}(w, \sigma, s, \psi))=0$.
  \item[(iii)] If $\mfr{f}(\chi)  \equiv \mfr{f}(\psi) \ (\text{mod }d)$, then
 $$\Tr(\mca{M}(w, \sigma, s, \psi))=    n_c  \cdot \sum_{\beta \in \widehat{O^\times /O^{\times n}} }  \varepsilon(1-s,\chi^{-1}\beta,\psi).$$
 \item[(iv)] There exists $\psi$ such that $\Tr(\mca{M}(w, \sigma, s, \psi)) \ne 0.$
 \end{enumerate}
 \end{cor}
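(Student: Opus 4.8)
The plan is to deduce this corollary from Theorem~\ref{trgl} — or, equivalently, from Theorem~\ref{Smain} together with Corollary~\ref{decopropcor} and Proposition~\ref{rumexptr} — arguing essentially as in the proof of Proposition~\ref{rumexptr}, the $\wt{\SL}_2$ analogue. First I would record the structural facts. Since $\sigma$ is ramified, the linear character $\chi$ associated with $\sigma$ has ramified $n$-th power: by Proposition~\ref{firstunramprop}, $\sigma$ having no unramified $\wt{\SL}_2$-constituent is equivalent to $\chi|_{O^{\times n}}$ being nontrivial, i.e.\ to $\chi^n$ being ramified. Combined with the fifth item of \cite[Lemma~4.15]{Szp6} this gives (i), that $\mfr{f}(\chi)$ is intrinsic to $\sigma$; moreover $\mfr{f}(\chi)\ge 1$, and since $\chi$ is ambiguous only up to twisting by a character of conductor $\le 1$ (an element of $\widehat{F^\times/F^{\times n}}$), the fourth item of \cite[Lemma~4.15]{Szp6} shows $\mfr{f}(\chi^{-1}\eta)=\mfr{f}(\chi)$ for every $\eta\in\widehat{F^\times/F^{\times n}}$. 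Finally, because $\gcd(n,p)=1$ forces $1+\mfr{p}\subseteq F^{\times n}$, every ramified character of order dividing $n$ has conductor exactly $1$, and $\chi^{-1}\eta$ is itself ramified for every $\eta$ (otherwise $\chi^n$ would be unramified), so each $\gamma(1-s,\chi^{-1}\eta,\psi)$ appearing in Theorem~\ref{trgl} reduces to $\varepsilon(1-s,\chi^{-1}\eta,\psi)$.

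For the main computation I would pass through the restriction. By Theorem~\ref{Smain} and Corollary~\ref{decopropcor}, $\Tr(\mca{M}(w,\sigma,s,\psi))$ equals $n_c\val{n_c}^{-1/2}\,\Tr(\mca{M}(w,\sigma_o,s,\psi))$ when $n$ is odd, and $d_c\val{d_c}^{-1/2}\sum_{x\in F^\times/F^{\times 2}}\Tr(\mca{M}(w,\eta_{x,(n)}\otimes\sigma_o,s,\psi))$ (with $\eta_{x,(2)}$ in place of $\eta_{x,(n)}$ when $n\equiv 2\pmod 4$) when $n$ is even; here $\val{n_c}=\val{d_c}=1$. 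Each constituent appearing is again ramified — its associated linear character differs from $\chi$ by a character of order dividing $n$, so still has ramified $n$-th power — and has the same conductor $\mfr{f}(\chi)$; one checks that when $n\equiv 2\pmod 4$ the auxiliary hypothesis of Proposition~\ref{rumexptr} (ramifiedness of $\eta_{\varpi,(2)}$ times the constituent) also holds. Applying Proposition~\ref{rumexptr}(ii),(iii) to each constituent: if $\mfr{f}(\chi)\nequiv\mfr{f}(\psi)\pmod d$ every term vanishes, proving (ii); otherwise $\Tr(\mca{M}(w,\sigma,s,\psi))$ becomes a constant times a sum of factors $\varepsilon(1-s,\chi^{-1}\eta_{x,(n)}^{-1}\beta,\psi)$ (resp.\ with $\eta_{x,(2)}$) with $\beta$ running over $\widehat{O^\times/O^{\times d}}$. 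For (iii) I would then reassemble: the restrictions $\eta_{x,(n)}|_{O^\times}$, as $x$ runs over $F^\times/F^{\times 2}$, together with the cosets $\eta_{x,(n)}|_{O^\times}\cdot\widehat{O^\times/O^{\times d}}$, exhaust $\widehat{O^\times/O^{\times n}}$, and combining this with the unramified phase factors produced by \eqref{epsilon twist}, the index identities of Lemma~\ref{dualcenter}, and the relations $n_c=2d_c$ (even $n$), $n_c=d_c$ (odd $n$), collapses the double sum to $n_c\sum_{\beta\in\widehat{O^\times/O^{\times n}}}\varepsilon(1-s,\chi^{-1}\beta,\psi)$.

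Finally, for (iv) I would imitate the closing argument in the proof of Proposition~\ref{rumexptr}: replacing $\psi$ by $\psi_a$ for $a$ ranging over an appropriate finite set of units and using \eqref{changepsi}, a suitably weighted average $\sum_a\chi^{-1}(a)\,\Tr(\mca{M}(w,\sigma,s,\psi_a))$ collapses by orthogonality of characters to a nonzero multiple of the single factor $\varepsilon(1-s,\chi^{-1},\psi)$, which never vanishes; hence $\Tr(\mca{M}(w,\sigma,s,\psi_a))\neq 0$ for at least one $a$. The step I expect to be the main obstacle is the reassembly in the second paragraph: keeping careful track of the unramified phase factors $\eta_{x,(n)}(\varpi)^{\pm(\mfr{f}(\chi)-\mfr{f}(\psi))}$ and of how the (four, for even $n$) small character sums over $\widehat{O^\times/O^{\times d}}$ fit together into a single sum over $\widehat{O^\times/O^{\times n}}$ — a computation entirely in terms of Hilbert-symbol identities (\eqref{FV fact}, \eqref{dualhilbert}) and $\varepsilon$-factor twisting already collected in \S\ref{S:LCM-Go}, but whose signs and indices must be handled with care, particularly in distinguishing the $n\equiv 2\pmod 4$ and $n\equiv 0\pmod 4$ subcases.
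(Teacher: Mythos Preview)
Your proposal is correct, but the paper takes the shorter of the two routes you mention. The paper simply invokes the closed formula of Theorem~\ref{trgl},
\[
\Tr(\mca{M}(w,\sigma,s,\psi))=\tfrac{1}{\gcd(n,4c+1)}\sum_{\eta\in\widehat{F^\times/F^{\times n}}}\gamma(1-s,\chi^{-1}\eta,\psi),
\]
and then reruns the argument of Proposition~\ref{rumexptr} verbatim with $n$ in place of $d$: split $\widehat{F^\times/F^{\times n}}=D\times U$ into its ramified and unramified cyclic factors (each of order $n$), apply \eqref{epsilon twist} and the conductor stability from \cite[Lemma~4.15]{Szp6}, and evaluate the resulting character sum $\sum_{i}\xi^{i(\mfr{f}(\chi)-\mfr{f}(\psi))}$. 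This yields (ii)--(iv) in a single stroke, and the ``reassembly'' you flag as the main obstacle never arises.

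Your second route---descending via Theorem~\ref{Smain} to the $\wt{\SL}_2$ blocks, applying Proposition~\ref{rumexptr} to each, and then reassembling---also works, but it effectively re-derives Theorem~\ref{trgl} inside the argument. One point to be careful about: for $n\equiv 2\ (\text{mod }4)$, the block-level output of Proposition~\ref{rumexptr}(iii) is a sum of \emph{products} of $\varepsilon$-factors (coming from the $\tilde\gamma$ side), not the single $\varepsilon$-factors you wrote; the passage back to ordinary $\gamma/\varepsilon$-factors then needs Lemma~\ref{sumlemma}, exactly as in the proof of Theorem~\ref{trgl}. So the direct route is both shorter and avoids this subcase analysis.
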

Using the first assertion in Theorem \ref{trgl}, the proof of Corollaries \ref{exptrungl} and \ref{rumexptrgl} goes word for word  as that of Corollaries \ref{exptrun} and \ref{rumexptr}.

\begin{thm} \label{detformgl}
Assume that $\gcd(p,n)=1$. Let $\sigma \in \Irr(\wt{T})$ be an unramified representation.  Let $\chi$ be a linear character associated with $\sigma$. If $\mfr{f}(\psi)=0$, then
$$\det(\mca{M}(w, \sigma, s, \psi))=\tau(n) \cdot \mu(\sigma, s)^{\frac{(1-n)n_c}{2}}  \cdot \gamma(1-ns, \chi^{-n}, \psi)^{n_c},$$
where $$
\tau(n)=
\begin{cases} 1 &  \text{if } -1 \in F^{\times n/d}; \\
-1  &   \text{otherwise}
\end{cases}
=
\begin{cases}
(-1,\varpi)_2 &  \text{if } n \equiv 2 \ (\text{mod }4); \\
1  &   \text{otherwise}.
\end{cases}$$
\end{thm}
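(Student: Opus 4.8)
The plan is to deduce the formula from the block-diagonal structure established in Theorem~\ref{Smain} together with the explicit $\wt{\SL}_2$ determinants of \S\ref{S:LCM-Go}. First I would fix an unramified constituent $\sigma_o$ of $\sigma|_{\wt{T}_o}$ (which exists by Proposition~\ref{firstunramprop}) and let $\chi$ be the unramified linear character associated with $\sigma_o$; since $\gcd(n,p)=1$ we have $\val{n_c}=\val{d_c}=1$, and by Corollary~\ref{plancor} every Plancherel measure occurring below coincides with $\mu(\sigma,s)$. By Theorem~\ref{Smain}, $\det(\mca{M}(w,\sigma,s,\psi))$ is the product of the determinants of the $\wt{\SL}_2$ blocks raised to their multiplicities. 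When $n$ is odd there is a single block, $\mca{M}(w,\sigma_o,s,\psi)$ with $\sigma_o$ unramified, occurring with multiplicity $n_c$, so plugging the $n$-odd case of Theorem~\ref{detunramnot4} into $\det(\mca{M}(w,\sigma,s,\psi))=\det(\mca{M}(w,\sigma_o,s,\psi))^{n_c}$ and using $d=n$, $\mu(\sigma_o,s)=\mu(\sigma,s)$ gives the claim with $\tau(n)=1$ immediately.

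The substantive case is $n$ even, where by Theorem~\ref{decopropcor}(iii) the four constituents $\sigma_o^{(x)}=\eta_{x,(n)}\otimes\sigma_o$ (or $\eta_{x,(2)}\otimes\sigma_o$ when $n\equiv 2\ (\text{mod }4)$), with $x$ running over the representatives $\{1,u,\varpi^{-1},u\varpi^{-1}\}$ of $F^\times/F^{\times2}$, each occur with multiplicity $d_c$, so $\det(\mca{M}(w,\sigma,s,\psi))=\prod_x\det(\mca{M}(w,\sigma_o^{(x)},s,\psi))^{d_c}$. The bookkeeping I would set up is that the linear character associated with $\sigma_o^{(x)}$ is $\chi\eta_{x,(2)}$, that $\eta_{u,(n)}(\varpi)^{d}=\eta_{u,(2)}(\varpi)=-1$, and that $\omega_\psi(\varpi)^2=(-1,\varpi)_2=1$ (as $-1\in F^{\times2}$ here). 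For $n\equiv 2\ (\text{mod }4)$ all four $\sigma_o^{(x)}$ occur in the unramified $\sigma$, so Theorem~\ref{detunramnot4} gives each block determinant as $\mu(\sigma,s)^{(1-d)/2}\tilde\gamma(1-ds,\chi^{-d}\eta_{x,(2)},\psi)$ (using $d$ odd to write $(\chi\eta_{x,(2)})^{-d}=\chi^{-d}\eta_{x,(2)}$); I would then apply Corollary~\ref{verybicelem} with $\uchi=\chi^{d}$ and $s$ replaced by $ds$, together with $2d=n$ and the explicit formula for $\mu(\sigma_o,s)^{-1}$ from \S\ref{plansec}, to collapse $\prod_x\tilde\gamma$ into $(\varpi,-1)_2\,\gamma(1-ns,\chi^{-n},\psi)^2\,\mu(\sigma,s)^{-1}$, and finally raise to the power $d_c$ and combine exponents ($4d_c(1-d)/2-d_c=d_c(1-n)$, $2d_c=n_c$), noting that $d_c$ is odd so $(\varpi,-1)_2^{d_c}=(\varpi,-1)_2=\tau(n)$.

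The remaining case $n\equiv 0\ (\text{mod }4)$ is where I expect the main difficulty. Here two of the $\sigma_o^{(x)}$ are unramified and two are ramified (Lemma~\ref{secondunramprop}), so the two different formulas of Theorem~\ref{T:SL-c3} must be combined: pairing the two unramified block determinants produces, after using $\eta_{u,(n)}(\varpi)^{d}=-1$, the factor $-\chi^{-2d}(\varpi)q^{2ds}\mu(\sigma,s)^{-d}$, while pairing the two ramified ones produces $-q^{-2ds}\chi^{2d}(\varpi)\gamma(1-ns,\chi^{-n},\psi)^2\mu(\sigma,s)^{2-d}\,\pmb\beta(\sigma_o,s,\psi)\pmb\beta(\eta_{u,(n)}\otimes\sigma_o,s,\psi)$; multiplying, all powers of $\chi(\varpi)$ and $q^{s}$ cancel. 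The crux is then the $L$-function identity $\pmb\beta(\sigma_o,s,\psi)\,\pmb\beta(\eta_{u,(n)}\otimes\sigma_o,s,\psi)=\mu(\sigma,s)^{-1}$, which I would prove by expanding the definition \eqref{betadef}, using $\eta_{u,(n)}^{d_\omega}=\eta_{u,(2)}$ and $\eta_{u,(d)}^{d_\omega}=\mbm{1}$ to identify the four $\pmb\beta$-factors, then the elementary identity $L(\uchi\eta_{u,(2)},t)L(\uchi,t)=L(\uchi^2,2t)$ for unramified $\uchi$ to pair the $L$-functions, and finally matching the result against $\mu(\sigma_o,s)^{-1}$ from \S\ref{plansec}, the sign $\omega_\psi(\varpi)$ hidden in $d_\omega=d\,\omega_\psi(\varpi)$ washing out by symmetry in $\pm 2d_\omega s$. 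Assembling with multiplicity $d_c$ and $1-2d=1-n$, $2d_c=n_c$ then yields the stated formula with $\tau(n)=1$. The main conceptual content—and the reason the $\wt{\SL}_2$ trichotomy dissolves for $\wt{\GL}_2$—is precisely the collapsing identities of Corollary~\ref{verybicelem} (for $n\equiv 2$) and of the $\pmb\beta$-product (for $n\equiv 0$), so the hard part will really be organizing these identifications cleanly rather than any single delicate estimate.
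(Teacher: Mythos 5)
Your proposal follows essentially the same route as the paper's proof: reducing via Theorem~\ref{Smain} and Theorem~\ref{decopropcor} to products of $\wt{\SL}_2$ block determinants, invoking Theorem~\ref{detunramnot4} (for $n$ odd and $n\equiv 2\pmod 4$, collapsed by Corollary~\ref{verybicelem}) and Theorem~\ref{T:SL-c3} (for $4\mid n$), and finishing the $4\mid n$ case with the identity $\pmb\beta(\sigma_o,s,\psi)\,\pmb\beta(\eta_{u,(n)}\otimes\sigma_o,s,\psi)=\mu(\sigma,s)^{-1}$. The bookkeeping of twists, signs, and exponents you lay out matches the paper's, so there is nothing substantive to add.
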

\begin{proof}
Throughout the proof we repeatedly use Corollary \ref{plancor} which gives
$$\mu(\sigma, s)= \mu(\sigma_o, s)$$
for every $\sigma_o \in \Irr(\wt{T}_o)$ occurring in $\sigma$. By Proposition \ref{firstunramprop}, there exists an unramified $\sigma_o \in \Irr(\wt{T}_o)$ which occurs in $\sigma$.

Suppose first that $n$ is odd. It then follows from Theorem \ref{Smain} that
$$\det(\mca{M}(w, \sigma, s, \psi))=\det(\mca{M}(w, \sigma_o, s, \psi))^{n_c}.$$
The result for odd $n$ then follows from Theorem \ref{detunramnot4}.

Assume now $n$ is even. In view of Proposition \ref{secondunramprop} we may assume $\sigma_o$ is unramifed, and that $\chi$ is an unramified linear character associated with $\sigma_o$. If $n\equiv 2 \ (\text{mod }4)$, then Theorem \ref{Smain} and Theorem \ref{decopropcor} imply that
$$\begin{aligned}
& \det(\mca{M}(w, \sigma, s, \psi)) \\
=\ &  \prod_{a \in F^\times /F^{\times 2}} \det(\mca{M}(w, \eta_{a,(2)} \otimes \sigma_o,s,\psi)) ^{d_c} \\
=\ & {\mu_n(\sigma,s)}^{2d_c(1-d)}  \cdot \prod_{a \in F^\times /F^{\times 2}} \tilde{\gamma}(1-ds,\eta_{a,(2)}\chi^{-d},\psi)^{d_c }  \text{ by  Proposition \ref{detunramnot4}}.
\end{aligned} $$
By comparing Corollary \ref{verybicelem} with the explicit formula for the Plancherel measure given in \S \ref{plansec}, we deduce that
$$\prod_{a \in F^\times /F^{\times 2}} \tilde{\gamma}(1-ds,\eta_{a,(2)}\chi^{-d},\psi)=(\varpi,-1)_2 \cdot \gamma(1-ns,\chi^{-n},\psi)^2 \mu_n(\sigma,s)^{-1}.$$
Note that
$$(\varpi,-1)_2^{d_c}=(\varpi,-1)_2$$
 as $d_c$ is odd. Hence the result follows in the case $n\equiv 2 \ (\text{mod }4)$.

Last, suppose that $n\equiv 0 \, (\text{mod }4)$. By Theorem \ref{Smain} and Corollary \ref{decopropcor},  we have
$$\det(\mca{M}(\sigma, s, \psi))=\prod_{a \in F^\times /F^{\times 2}} \det(\mca{M}(\eta_{a,(n)} \otimes \sigma_o,s,\psi)) ^{d_c}.$$
It is thus sufficient to show that
\begin{equation} \label{tem-eqn1}
\begin{aligned}
& \gamma(1-ns,\chi^{-n},\psi)^2 \mu(\sigma,s)^{1-n} \\
=\ & \det(\mca{M}(\sigma_o,s,\psi)) \cdot  \det(\mca{M}(\eta_{u,(n)} \otimes \sigma_o,s,\psi)) \\
   &  \qquad \qquad \cdot \det(\mca{M}(\eta_{\varpi,(n)} \otimes \sigma_o,s,\psi)) \cdot  \det(\mca{M}(\eta_{u\varpi,(n)} \otimes \sigma_o,s,\psi)),
\end{aligned}
 \end{equation}
where $u$ is such that $\eta_{u,(n)}$ is unramified and
$$(u,\varpi)_n^d=-1.$$

For this purpose, we note that $\det(\mca{M}(\sigma_o,s,\psi))$ is given in \eqref{unramdet4}; moreover, since $\eta_{u,(n)} \otimes \sigma_o$ is unramified, $\det(\mca{M}(\eta_{u,(n)} \otimes \sigma_o,s,\psi))$ is also given by \eqref{unramdet4} by replacing $\chi$ by $\chi \eta_{u,(n)}$. Note further that
 $\det(\mca{M}(\eta_{\varpi,(n)} \otimes \sigma_o,s,\psi))$ is given in \eqref{semiunramdet4}, and by the same argument just used,  $\det(\mca{M}(\eta_{u\varpi,(n)} \otimes \sigma_o,s,\psi))$ is also given by \eqref{semiunramdet4} with $\chi$ replaced by $\chi \eta_{u,(n)}$.  Thus, the right hand side of \eqref{tem-eqn1} equals
$$\mu(\sigma,s)^{2-n} \cdot \gamma(1-ns,\chi^{-n},\psi)^2 \cdot \pmb{\beta}(\sigma_o,s,\psi) \pmb{\beta}(\eta_{(u),n}\chi,s,\psi).$$
Finally, a straightforward computation shows that
$$\pmb{\beta}(\sigma_o,s,\psi)  \cdot \pmb{\beta}(\eta_{(u),n}\chi,s,\psi)=\mu(\sigma,s)^{-1}.$$
This completes the proof.
\end{proof}

It is clear that the proof of Theorem \ref{detformgl} holds under the weaker assumption that $\sigma$ contains an unramified element of $\Irr(\wt{T}_o)$. Aslo, it is clear that Theorem \ref{detformgl} agrees with Theorem \ref{T:M1} (see also Example \ref{eg-GL}).

\subsection{Proof of Theorem \ref{Smain}} \label{mainproof}
Given with  $\sigma \in \Irr(\wt{T})$, we realize it as $i(\chi'_\sigma)$ where $\chi'_\sigma$ is an extension of $\chi_\sigma$ to $\widetilde{T}$. Let $\sigma_o \in \Irr(\wt{T}_o)$ be such that
$$\chi_{\sigma_o} = \chi'_\sigma|_{Z(\wt{T}_o)}.$$
Note that $\sigma_o$ occurs in $\sigma$. We shall realize $\sigma_o$ as $i(\chi'_{\sigma_o})$ where
$$\chi'_{\sigma_o}= \chi'_\sigma|_{\wt{A}_o}.$$
We fix a linear character $\chi$ associated with $\sigma_o$. For $g \in \wt{G}$,  we define
$$\chi^g=\chi \cdot \eta_{\det g, (n)}^{4c+1}.$$
Lemma \ref{justcomp} implies that $\chi^g$ is a linear character associated with  $(\sigma_o)^g$.

For $\wt{G}$, we define
$$\sigma_s, \quad  \chi'_{\sigma_{s}}, \quad T\bigl(w, \chi'_{\sigma_{s}}\bigr), \quad M_{s}, \text{ and } N_{s}$$
exactly in the same way as in \S \ref{commsl} for $\wt{G}_o$. For instance, $T\bigl(w, \chi'_{\sigma_{s}}\bigr)$ is given as follows: for
$$h_s=M_{s}(f_s) \in I(\chi'_{s})$$
and $g \in \wt{G}$, we have $ T\bigl(w, \chi'_{s}\bigr)(h_s)(g)$ is the meromorphic continuation of
\begin{equation} \label{iomgl}
\int_F h_s\bigl(\wt{w}u(x)g \big) \ d_\psi x.
\end{equation}
This integral converges absolutely wherever $T(w, \sigma_{s})$  converges absolutely. By the same argument used for  Lemma \ref{comdo} (see also \cite[Lemma 4.6]{Szp6}), one obtains a commutative diagram of $\wt{G}$-maps:
 $$\begin{tikzcd}
I(\chi'_{\sigma_{s}})  \ar[d, "{ T(w, \chi'_{\sigma_{s}})}"']   &    I\bigl(\sigma_{s}\bigr)   \ar[l, "{ {M_{s}} }"']   \ar[d, "{ T(w, \sigma_{s})}"] \\
I\bigl((\chi'_{\sigma_{s}})^w\bigr)  \ar[r, "{N_{s}}"]   &  I\bigl((\sigma_{s})^w\bigr) .
\end{tikzcd} $$

\begin{lm}\label{simpobser}
Let $\chi'_{\sigma_{s}}$ be an extension of $\chi_{\sigma_{s}}$ to $\wt{A}$. Then for  $t \in \wt{T}$,  we have
$$T(w, \chi'_{\sigma_{s}}) \bigl(I_o(\chi'_{\sigma_{s}})_t \bigr) \subseteq I_o\bigl((\chi'_{\sigma_{s}})^w\bigr)_t.$$
\end{lm}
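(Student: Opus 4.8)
The plan is to exploit the fact that the defining integral for $T(w,\chi'_{\sigma_s})$ involves only left translations by elements of $\wt{G}_o$, so that it preserves the ``determinant coset'' of a function. First I would recall that, by definition (in analogy with $I(\chi)_t$), the space $I(\chi'_{\sigma_s})_t$ is the subspace of functions in $I(\chi'_{\sigma_s})$ supported on $t\wt{A}\wt{G}_o$, which by Lemma~\ref{detj} equals $\{g\in\wt{G}:\det(g)\in\det(t)\cdot J^{n/d}\}$; the analogous description holds for $I((\chi'_{\sigma_s})^w)_t$. Thus it suffices to show that $T(w,\chi'_{\sigma_s})$ carries a function supported on this determinant coset to a function supported on the same coset.

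Next I would carry out the elementary determinant computation. For $h_s=M_s(f_s)\in I(\chi'_{\sigma_s})$ and $g\in\wt{G}$, the value $T(w,\chi'_{\sigma_s})(h_s)(g)$ is the meromorphic continuation of the integral in \eqref{iomgl}, namely $\int_F h_s(\wt{w}u(x)g)\,d_\psi x$, where $u(x)=e_\alpha(x)\in U$ and $\wt{w}$ is the fixed lift of $w=w_\alpha$. Since $\det(\wt{w})=\det(w_\alpha)=1$ and $\det(u(x))=1$, one has $\det(\wt{w}u(x)g)=\det(g)$ for every $x\in F$. Hence if $h_s\in I(\chi'_{\sigma_s})_t$, then $h_s(\wt{w}u(x)g)=0$ for all $x$ whenever $\det(g)\notin\det(t)\cdot J^{n/d}$; therefore, for $\mathrm{Re}(s)\gg 0$ where the integral converges absolutely, $T(w,\chi'_{\sigma_s})(h_s)(g)=0$ for every $g$ lying outside the determinant coset of $t$.

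Finally I would pass from $\mathrm{Re}(s)\gg 0$ to all $s$: for each fixed $g$ outside the coset of $t$, the function $s\mapsto T(w,\chi'_{\sigma_s})(h_s)(g)$ is the meromorphic continuation of the identically-zero function of $s$, hence is identically zero; so the support statement persists after meromorphic continuation, giving $T(w,\chi'_{\sigma_s})(h_s)\in I((\chi'_{\sigma_s})^w)_t$. I do not expect any genuine obstacle here, as the argument is a direct unwinding of the definitions together with Lemma~\ref{detj}; the only point requiring even mild care is the passage through meromorphic continuation, which as indicated is automatic from vanishing on a nonempty open set.
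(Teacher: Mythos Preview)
Your proposal is correct and follows essentially the same argument as the paper: both observe that $\wt{w}$ and $u(x)$ lie in $\wt{G}_o$, hence $\det(\wt{w}u(x)g)=\det(g)$, so the integrand in \eqref{iomgl} vanishes whenever $g$ is outside the given determinant coset. The paper's proof is just a terser version of yours; your extra remarks on Lemma~\ref{detj} and on meromorphic continuation are sound but not strictly needed.
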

\begin{proof}
Since $w \in \wt{G}_o$ and $U \subseteq \wt{G}_o$, it follows that for all $u(x) \in U$, $g \in \wt{G}$ one has
$$\det(g)=\det(\wt{w} u(x)g).$$
Thus, it follows from \eqref{iomgl} that if $h_s \in I\bigl(\chi'_{\sigma_{s}}\bigr)_t$, then
$$T\bigl(w, \chi'_{\sigma_{s}}\bigr)(h_s) \in  I\bigl(\chi'_{\sigma_{s}}\bigr)_t.$$
This gives the desired inclusion.
\end{proof}

Consider the diagram
\begin{equation} \label{CDtem}
\begin{tikzcd}
 I\bigl({\chi_{\sigma_s}'}\bigr)_{g}   \ar[d, "{ T(w, {\chi_{\sigma_s}'})  }"']   &  &  I_o\bigl((\chi_{\sigma_{o,s}}')^g\bigr)   \ar[ll, "{  Y'_{{\chi_{\sigma_s}'},g}  }"']    \ar[d, "{ T(w, (\chi_{\sigma_{o,s}}')^g)  }"] \\
I\bigl(({\chi_{\sigma_s}'})^w\bigr)_{g}   \ar[rr, "{ R'_{({\chi_{\sigma_s}'})^w,g^{(w^{-1})}}   }"']   &  & I_o\bigl((\chi_{\sigma_{o,s}}')^{g^{(w^{-1})}}\bigr)
\end{tikzcd}
\end{equation}
where the maps are given as follows. First,
$$Y'_{{\chi_{\sigma_s}'},g}: I_o\bigl((\chi_{\sigma_{o,s}}')^g\bigr) \rightarrow I\bigl({\chi_{\sigma_s}'}\bigr)_{g}$$
is defined by
$$\bigl((Y'_{{\chi_{\sigma_s}'},g})f\bigr)(h)= \delta_B^{\frac{s+1}{2}}(g) \cdot
 \begin{cases}
 {\chi_{\sigma_s}'}^{g}(t)f(h_o) & \text{ if }  h=gth_o, \, t\in \wt{A},\ h_o \in \wt{G}_o; \\
 0 & \text{ if }  h \notin g \wt{A}\wt{G}_o.
 \end{cases}$$
It is a normalization of the $\wt{G}_o$-isomorphism $Y_{\chi, t}$ given in Lemma \ref{relem}. Second,
$$R'_{({\chi_{\sigma_s}'})^w,g^{(w^{-1})}}: I\bigl(({\chi_{\sigma_s}'})^w\bigr)_{g}\rightarrow I_o\bigl((\chi_{\sigma_{o,s}}')^{g^{(w^{-1})}}\bigr)$$
is given by
$$\bigl(R'_{({\chi_{\sigma_s}'})^w,g^{(w^{-1})}}(f)\bigr)(h_o)=\delta_B^{\frac{-s+1}{2}}(g)f(g^w h_o),$$
which is a normalization of the $\wt{G}_o$-isomorphism $R_{\chi, t}$ in Lemma \ref{relem}. Since
$$({\chi_\psi}^g)^w=({({\chi_\psi})^w})^{g^{(w^{-1})}},$$
Lemma \ref{simpobser} shows that the vertical map in \eqref{CDtem} is well-defined.

\begin{lm}
The diagram \eqref{CDtem} commutes.
\end{lm}
\begin{proof}
Fix $f_s \in  I_o\bigl((\chi_{\sigma_{o,s}}')^g\bigr) $, $g_o \in \wt{G}_o$. We compute
$$\begin{aligned}
&  \Bigl(R'_{({\chi_{\sigma_s}'})^w,g^{(w^{-1})}} \circ   T(w, {\chi_{\sigma_s}'}) \circ Y'_{{\chi_{\sigma_s}'},g} (f_s)\Bigr)(g_o) \\
=\ & \delta^{\frac{s+1}{2}}(g)\Bigl(  A_w\bigl({\chi_{\sigma_s}'}\bigr) \circ Y'_{{\chi_{\sigma_s}'},g} (f_s)\Bigr)(g^{(w^{-1})}g_o) \\
=\ & \delta^{\frac{s+1}{2}} \int_F \bigl( Y'_{{\chi_{\sigma_s}'},g} (f_s)\bigr)\bigl(\wt{w}n(x)g^{(w^{-1})}g_o \bigr) \, d_\psi x.
\end{aligned} $$
Observe that
$$\wt{w} u(x)g^{(w^{-1})}=g \bigl( \wt{w} {(n(x))}^{w^{-1}g^{-1}w} g_o\bigr).$$
Denoting
$$g=\s(\diag(a,b)),$$
we have from the definition of $Y'_{{\chi_{\sigma_s}'},g}$ that
$$\Bigl(R'_{({\chi_{\sigma_s}'})^w,g^{(w^{-1})}} \circ T\bigl(w, {\chi_{\sigma_s}'}\bigr) \circ Y'_{{\chi_{\sigma_s}'},g} (f_s)\Bigr)(g_o)=\val{a b^{-1}} \cdot \int_F  f\bigl(\wt{w}u(a b^{-1} x)g_o \bigr) \, d_\psi x.$$
The result follows from a change of variable $a b^{-1} x \mapsto x$.
\end{proof}

For $k \in \widehat{K}$ and $g \in \wt{T}$ we define
$$\xi_{g,k} \in i(\eta)^\vee$$
 by
$$\xi_{g,k}(f)={\rm nor}_{\chi,\psi}(k) \cdot f\bigl(g \s(\alpha^\vee(k))\bigr).$$
The same as in the $\wt{G}_o$ case,
$$(g,k)\mapsto \xi_{g,k}$$ gives rise to a well-defined map on
$$\widehat{K^\dag} \times \wt{T}.$$
 If $n$ is odd (resp. even), then we fix a set $S$ of representatives of $\wt{Z(G)}/ \wt{M}$  (resp. $\wt{T}/ \wt{M}\wt{T}_o$). Consider
$$\mfr{R}_{\chi,\psi}=\{\xi_{g,k} \mid g \in S, \, k \in \widehat{K^\dag} \}.$$

\begin{lm}
The set $\mfr{R}_{\chi,\psi}$  is a basis for $i(\chi'_{\sigma_{s}})^\vee.$
\end{lm}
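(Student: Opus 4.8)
The plan is to deduce this from its $\wt{G}_o$-analogue --- that $\mfr{R}_{o,\chi,\psi}=\{\xi_{\chi,\psi,k}:k\in\widehat{K^\dag}\}$ is a basis of $i(\chi'_{\sigma_o})^\vee$ (\cite[\S 4.2]{Szp6}) --- together with the fibre decomposition of $i(\chi'_{\sigma_s})$ over the group $\wt{M}/Z(\wt{G})$. Throughout, for $n\equiv 0\ (\mathrm{mod}\ 4)$ we invoke the standing assumption $\gcd(p,n)=1$, so that the maximal abelian subgroups and their genuine characters are parametrized as in \S\ref{par4}.

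First I would record the fibre decomposition. Exactly as in Lemma \ref{deco} and Proposition \ref{decoproph}, but applied to $i(\chi'_{\sigma_s})\mid_{\wt{T}_o}$ in place of $I(\chi)\mid_{\wt{G}_o}$, one obtains
$$i(\chi'_{\sigma_s})=\bigoplus_{g\in S} i(\chi'_{\sigma_s})_g,$$
where $i(\chi'_{\sigma_s})_g$ is the subspace of functions supported on $g\wt{A}\wt{G}_o=\{h\in\wt{T}:\det(h)\in\det(g)J^{n/d}\}$ (Lemma \ref{detj}); the cosets occurring for distinct $g\in S$ are disjoint, since $g\mapsto\det(g)J^{n/d}$ is injective on $S$ by Lemma \ref{crucia} (together with Lemma \ref{Jaut} in the odd case, where the relevant map is the square of the isomorphism of Lemma \ref{crucia}(ii)). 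The normalized map $Y'_{\chi'_{\sigma_s},g}$ of \S\ref{mainproof}, a rescaling of the isomorphism of Lemma \ref{relem}, identifies $i(\chi'_{\sigma_s})_g$ with the twisted induced representation $i_o\bigl((\chi'_{\sigma_{o,s}})^g\bigr)$ of $\wt{T}_o$. Dualizing, $i(\chi'_{\sigma_s})^\vee=\bigoplus_{g\in S}\bigl(i(\chi'_{\sigma_s})_g\bigr)^\vee$.

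Next I would observe that $\mfr{R}_{\chi,\psi}$ respects this decomposition. For $g\in S$ and $k\in\widehat{K^\dag}$ one has $\det\bigl(g\,\s(\alpha^\vee(k))\bigr)=\det(g)$, so $g\,\s(\alpha^\vee(k))$ lies in the coset attached to $g$; hence $\xi_{g,k}$ vanishes on $i(\chi'_{\sigma_s})_{g'}$ for every $g'\neq g$, i.e. $\xi_{g,k}\in\bigl(i(\chi'_{\sigma_s})_g\bigr)^\vee$. Thus it suffices to show that for each fixed $g\in S$ the family $\{\xi_{g,k}:k\in\widehat{K^\dag}\}$ is a basis of $\bigl(i(\chi'_{\sigma_s})_g\bigr)^\vee$, and, transporting along $(Y'_{\chi'_{\sigma_s},g})^*$, this reduces to the $\wt{G}_o$-statement of \cite[\S 4.2]{Szp6} for the character $(\chi'_{\sigma_{o,s}})^g$, \emph{provided} one checks that $(Y'_{\chi'_{\sigma_s},g})^*(\xi_{g,k})$ equals a nonzero scalar multiple of the corresponding $\wt{G}_o$-basis vector $\xi_{\chi^g,\psi,k}$. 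This is where I expect essentially all the work to lie: it amounts to comparing the normalizing factors $\mathrm{nor}_{\chi,\psi}(k)$ and $\mathrm{nor}_{\chi^g,\psi}(k)$ and accounting for the $\delta_B$-normalization built into $Y'$. Since $\chi^g=\chi\cdot\eta^{4c+1}_{\det g,(n)}$, these factors differ only by the nowhere-vanishing character $\eta^{4c+1}_{\det g,(n)}\mid_{K^\dag}$, so the two functionals differ by a nowhere-zero rescaling, harmless for the basis property; the attendant well-definedness of $\xi_{g,k}$ as a function of $k\in\widehat{K^\dag}$ (independence of the chosen lift of $k$) follows by the same Hilbert-symbol computation as in the $\wt{G}_o$ case, using that conjugation by any $g\in\wt{T}$ preserves $\wt{A}_o$ setwise --- a consequence of \eqref{compeq} --- together with $F^{\times d}\subseteq K$, $\wt{A}_o\subseteq\wt{A}$ and $\bbmu_n\subseteq\wt{A}_o$.

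Finally, as a consistency check one verifies $\val{\mfr{R}_{\chi,\psi}}=\val{S}\cdot\val{\widehat{K^\dag}}=\dim i(\chi'_{\sigma_s})^\vee=[\wt{T}:\wt{A}]$: Lemmas \ref{mlem} and \ref{crucia} give $\val{S}=n_c\val{n_c}^{-1/2}$ when $n$ is odd and $\val{S}=n_c\val{n_c}^{-1/2}\cdot 2\val{2}^{-1/2}$ when $n$ is even, while $\widehat{K^\dag}$ is one half of a Lagrangian decomposition of $F^\times/F^{\times d}$, so $\val{\widehat{K^\dag}}=d\val{d}^{-1/2}$; multiplying out (with $d=n$, resp. $d=n/2$) and using multiplicativity of $\val{\cdot}$ yields $n_c n\val{n_c n}^{-1/2}=[\wt{T}:\wt{A}]$ by Lemma \ref{Tfacts}(i). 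This also affords an alternative route that bypasses the normalization bookkeeping entirely: for each pair $(g_0,k_0)$ exhibit a test function supported on the single left $\wt{A}$-coset $\wt{A}\,g_0\,\s(\alpha^\vee(k_0))$ --- the coset count above shows no other $g\,\s(\alpha^\vee(k))$ meets it --- and evaluate the $\xi_{g,k}$ on it to read off linear independence directly.
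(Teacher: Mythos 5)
Your proposal is correct, but it takes a genuinely different route from the paper's proof. The paper argues directly on the torus: it shows that $\{g\,\s(\alpha^\vee(k)):g\in S,\ k\in\widehat{K^\dag}\}$ is a set of representatives of $\wt{T}/\wt{A}$, by first computing $\val{S}\cdot\val{\widehat{K^\dag}}=[\wt{T}:\wt{A}]$ (exactly your sanity-check count) and then establishing injectivity --- if $gg'^{-1}\s(\alpha^\vee(kk'^{-1}))\in\wt{A}$ then $g=g'$ by \eqref{forDuse} and $k=k'$ since $\wt{A}_o=\wt{T}_o\cap\wt{A}$ --- after which the basis property is immediate from the standard parametrization of $i(\chi)^\vee$ as in \S\ref{SS:para}. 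You instead decompose $i(\chi'_{\sigma_s})$ along $S$ using the restriction machinery of Lemma \ref{relem} and Proposition \ref{decoproph}, observe that each $\xi_{g,k}$ lands in the dual of one summand, and reduce to the $\wt{G}_o$ statement from \cite[\S 4.2]{Szp6}. Both work; the paper's is more elementary and self-contained, while yours is more structural (foreshadowing Proposition \ref{lastone}) at the cost of tracking the normalizations through $Y'$ --- though as you note yourself, those contribute only nowhere-vanishing scalars and are irrelevant to the basis property, so that bookkeeping is not really needed. Indeed your ``alternative route'' at the end, which sidesteps the normalization entirely by the coset count and a test-function evaluation, is essentially the paper's own argument. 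One small point of care: Lemma \ref{relem} and $Y'_{\chi'_{\sigma_s},g}$ are stated at the level of $I(\cdot)$, so for your reduction you should either restrict to the torus version implicit in Proposition \ref{decoproph} or note that the same formula defines the torus-level isomorphism; and the well-definedness of $\xi_{g,k}$ over $\widehat{K^\dag}$ is the same observation the paper makes just before the lemma.
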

\begin{proof}
We need to show that
$$\{(d,\s(\alpha^\vee(k)) ):  g\in S, k \in \widehat{K^\dag}\}$$
is a set of representatives of $\wt{T}/\wt{A}$. By Lemma \ref{crucia} and \eqref{Dindex} we have
$$\val{S}= n_c \val{n_c}^{-1/2} \cdot
\begin{cases}
1&  \text{ if $n$ is odd}; \\
2\val{2}^{-1/2} &  \text{ otherwise}.
\end{cases}$$
Also, by \cite[Remark 2.6]{Szp6} (see also \cite[\S 4.2]{Szp6})) one has
$$\widehat{K^\dag} \simeq \wt{T}_o/\wt{A}_o,$$
and thus in particular
$$\widehat{K^\dag}=d_c \val{d_c}^{-\half}.$$
It then follows from the first item in Lemma \ref{Tfacts} that
$$\val{S \times \widehat{K^\dag} }=[\wt{T}: \wt{A}].$$
Thus we only need to show that for any $g,g' \in S$ and $k,k' \in \widehat{K^\dag}$, if
$$gg'^{-1} \cdot \s(\alpha^\vee(kk'^{-1})) \in \wt{A},$$
then $g=g'$ and $k=k'$. However, the equality $g=g'$ follows from \eqref{forDuse}. Moreover, as $\wt{A}_o=\wt{T}_o \cap T$, we deduce that $k=k'$. This completes the proof.
\end{proof}

Let
$$\mca{M} \bigl((\cdot,\cdot),(\cdot,\cdot),{\chi_{\sigma}'},\chi,s,\psi \bigr):(S \times \widehat{K^\dag} ) \times (S \times \widehat{K^\dag} ) \rightarrow \C(q^{-s})$$
be the local coefficients matrix associated with $\sigma$, representing $ T\bigl(w, \sigma_{s}\bigr)^*$ with respect to $\mfr{B}_{(\sigma_s)^w,\psi}(\mfr{R}_{\chi,\psi})$ and $\mfr{B}_{\sigma_s,\psi}(\mfr{R}_{\chi,\psi})$. Similar to Proposition \ref{ForRuse}, it is the same matrix representing  $T\bigl(w, \chi'_{\sigma_{s}}\bigr)^*$ with respect to the two basis
$$N_{s}^\psi \circ \mfr{B}_{(\sigma_s)^w,\psi}(\mfr{R}_{\chi,\psi})$$
 and
 $$({M_{s}^{-1}})^\psi \circ \mfr{B}_{\sigma_s,\psi}( \mfr{R}_{\chi,\psi}).$$

Theorem \ref{Smain} is a direct consequence of Proposition \ref{decoprop} and the Proposition \ref{lastone} given below. Though the proof is technical, the focus lies in the transformations of Whittaker functionals arising from the two commutative diagrams above given in this subsection.

For every $g \in S$, we define
$$\mca{M}_g \bigl((\cdot,\cdot),{\chi_{\sigma}'},\chi,s,\psi \bigr): \widehat{K^\dag}\times \widehat{K^\dag} \rightarrow \C(q^{-s})$$
by
$$\mca{M}_g \bigl(a,b),{\chi_{\sigma}'},\chi,s,\psi \bigr)=\mca{M} \bigl((g,a),(g,b),{\chi_{\sigma}'},\chi,s,\psi \bigr).$$

\begin{prop} \label{lastone}
\begin{enumerate}
\item[(i)] Suppose that $g,h \in S$ and that $g \neq h$. Then for all $k_1,k_2 \in \widehat{K^\dag}$, one has
$$\mca{M} \bigl((g,k_1),(h,k_2),{\chi_{\sigma}'},\chi,s,\psi \bigr)=0.$$
\item[(ii)] We have $$\mca{M}_g \bigl(a,b),{\chi_{\sigma}'},\chi,s,\psi \bigr)=\mca{M}(a,b,\chi^g,s,\psi).$$
In particular, $\mca{M}_g \bigl(a,b),{\chi_{\sigma}'},\chi,s,\psi \bigr)$ is a local coefficients matrix associated with $\sigma_o ^g$.
\end{enumerate}
\end{prop}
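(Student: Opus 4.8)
\emph{Strategy.} The plan is to reduce both assertions to the corresponding computations for $\wt{\SL}_2$ carried out in \S\ref{S:LCM-Go}, using the decomposition of $I(\chi'_{\sigma_s})$ into $\wt{G}_o$-pieces together with the commutative diagram \eqref{CDtem}. Recall from Proposition \ref{decoprop} and Lemmas \ref{eigencoset}, \ref{relem} that
\[
I(\chi'_{\sigma_s}) = \bigoplus_{g\in S} I(\chi'_{\sigma_s})_g,\qquad I(\chi'_{\sigma_s})_g \simeq_{\wt{G}_o} I_o\bigl((\chi'_{\sigma_{o,s}})^g\bigr),
\]
where $I(\chi'_{\sigma_s})_g$ is the subspace of functions supported on the coset $g\wt{A}\wt{G}_o$, and the same holds with $\chi'_{\sigma_s}$ replaced by $(\chi'_{\sigma_s})^w$. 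By Lemma \ref{simpobser}, $T(w,\chi'_{\sigma_s})$ carries the $g$-piece into the $g$-piece, and by \eqref{CDtem} its restriction to the $g$-piece becomes, after conjugating by the $\wt{G}_o$-isomorphisms $Y'$ and $R'$ there (together with $M_{o,s},N_{o,s}$), the operator $T\bigl(w,(\chi'_{\sigma_{o,s}})^g\bigr)$ for $\wt{G}_o$. The structural fact behind all of this is that $U$ and the Weyl representative $\wt{w}$ both lie in $\wt{G}_o$, so the Jacquet integral defining the Whittaker functionals never leaves a single $\wt{G}_o$-coset.

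\emph{Part (i).} First I would check that the Whittaker functional attached to $\xi_{g,k}$ is supported on the $g$-piece: unwinding its definition, its value on $f\in I(\chi'_{\sigma_s})$ is the meromorphic continuation of an integral of $f$ over $g\,\s(\alpha^\vee(k))\,\wt{w}\,U\subset g\wt{G}_o$, so it annihilates $I(\chi'_{\sigma_s})_h$ for every $h\ne g$ in $S$, since the cosets $g\wt{A}\wt{G}_o$ are disjoint by Lemma \ref{detj}; the same holds for the functionals on the $w$-side. Now feeding an arbitrary $f\in I(\chi'_{\sigma_s})_h$ with $h\ne g$ into the defining relation $T(w,\chi'_{\sigma_s})^*\bigl(\lambda^w_{(g,k_1)}\bigr)=\sum_{(h',k)}\mca{M}\bigl((g,k_1),(h',k),\cdot\bigr)\lambda_{(h',k)}$: the left-hand side vanishes because $T(w,\chi'_{\sigma_s})$ preserves pieces and $\lambda^w_{(g,k_1)}$ is supported on the $g$-piece, while on the right only the terms with $h'=h$ survive, giving $\sum_k \mca{M}\bigl((g,k_1),(h,k),\cdot\bigr)\lambda_{(h,k)}(f)=0$; since $\{\lambda_{(h,k)}\}_k$ is a basis of the Whittaker space of $I(\chi'_{\sigma_s})_h$, all these coefficients vanish, which is (i).

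\emph{Part (ii).} Here I would fix $g\in S$, dualize \eqref{CDtem}, and read off that the $g$-block of $\mca{M}\bigl((\cdot,\cdot),(\cdot,\cdot),\chi'_\sigma,\chi,s,\psi\bigr)$ is the matrix of $T\bigl(w,(\chi'_{\sigma_{o,s}})^g\bigr)^*$ with respect to the images under $Y'$, $R'$, $M_{o,s}$, $N_{o,s}$ of the family $\{\xi_{g,k}\}_k$. Two identifications then finish the argument. First, $(\chi'_{\sigma_{o,s}})^g$ is an extension to $\wt{A}_o$ of the central character of $(\sigma_{o,s})^g$ whose associated linear character is exactly $\chi^g=\chi\cdot\eta_{\det g,(n)}^{4c+1}$; this follows from Lemma \ref{justcomp}(i),(ii) (or Lemma \ref{evenslcent} when $n$ is even), which computes $g\,t_o\,g^{-1}$ for $t_o\in\wt{A}_o$ and shows it differs from $t_o$ only by the root of unity $\eta_{\det g,(n)}^{4c+1}(\cdot)$. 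Second, $Y'$ sends the basis $\mfr{R}_{o,\chi^g,\psi}=\{\xi_{\chi^g,\psi,k}\}_k$ for $\sigma_o^g$ (built with the same Lagrangian $K$) to the restriction of $\{\xi_{g,k}\}_k$ to the $g$-piece, and $R'$ does the corresponding thing on the $w$-side; here one must match ${\rm nor}_{\chi,\psi}$ and ${\rm nor}_{\chi^g,\psi}$ on $K^\dag$ — they agree because $\eta_{\det g,(n)}^{4c+1}$ is trivial on $K$, i.e. $(\det g,k)_n=1$ for $k\in K$ — and verify that the half-modular factors $\delta_B^{\pm(s\pm1)/2}(g)$ supplied by $Y'$, $R'$ cancel against those built into $M_{o,s}$, $N_{o,s}$. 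Granting these, Proposition \ref{ForRuse} together with Proposition \ref{matnot4} (resp. Proposition \ref{P:M-3C} when $4\mid n$) identifies the resulting block with $\mca{M}(a,b,\chi^g,s,\psi)$, the local coefficients matrix associated with $\sigma_o^g$.

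\emph{Expected obstacle.} The hard part will be the normalization bookkeeping in the second half of (ii): unfolding the two Jacquet integrals in parallel, reconciling the powers of $\delta_B$ versus $\delta_{B_o}$ produced along the chain $M_s,Y',R',N_s$ against those along $M_{o,s},N_{o,s}$, and pinning down that the conjugation twist $\chi\rightsquigarrow\chi^g$ is absorbed entirely by the change of base point $\s(\alpha^\vee(k))\mapsto g\,\s(\alpha^\vee(k))$ and of normalizing scalar, with no residual constant. The Hilbert-symbol identity $(\det g,k)_n=1$ for $k$ in the Lagrangian subgroup $K$ is the fact making this cancellation exact, and it is the only place where the Kazhdan--Patterson parameter $c$ enters, always through $4c+1$.
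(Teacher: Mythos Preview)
Your strategy mirrors the paper's: part (i) via the support argument plus Lemma \ref{simpobser}, and part (ii) via the commutative diagram \eqref{CDtem} together with an explicit identification of the transported functionals with the $\lambda_{b,\chi^g,\psi,s}$ on the $\wt{G}_o$ side. The paper carries this out by first unwinding the Jacquet integrals to obtain explicit functionals $\beta_{\chi'_\sigma,\chi,\psi,s,g,b}$ on $I(\chi'_{\sigma_s})$ and then checking directly that $\beta_{\chi'_\sigma,\chi,\psi,s,g,b}\circ Y'_{\chi'_{\sigma_s},g}=\lambda_{b,\chi^g,\psi,s}$ (and the analogous identity on the $w$-side); this is exactly the ``normalization bookkeeping'' you anticipate.

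There is, however, a genuine slip in your proposed resolution of that bookkeeping. You write that one must match ${\rm nor}_{\chi,\psi}$ and ${\rm nor}_{\chi^g,\psi}$ ``on $K^\dag$'' and that these agree because ``$(\det g,k)_n=1$ for $k\in K$''. But the indices $a,b$ parametrizing the basis $\mfr{R}_{o,\chi,\psi}$ range over $\widehat{K^\dag}$, the \emph{complementary} Lagrangian to $K^\dag$ in $F^\times/F^{\times d}$, not over $K$ or $K^\dag$; elements of $\widehat{K^\dag}$ are representatives of $\wt{T}_o/\wt{A}_o$ and generically lie outside $K$. So the Hilbert-symbol identity you invoke is about the wrong set, and in any case there is no obvious reason $(\det g,k)_n^{4c+1}=1$ for $k\in K$ either, since $K$ is only a $d$-Lagrangian and $\det g$ need not lie in it. The correct way through is not a vanishing of a Hilbert symbol but a direct comparison of the two Jacquet integrals: once you write out $\beta_{\chi'_\sigma,\chi,\psi,s,g,b}\circ Y'_{\chi'_{\sigma_s},g}$ explicitly (using the decomposition $gb\wt{w}u(x)=g\cdot 1\cdot (b\wt{w}u(x))$ with $t=1\in\wt{A}$), the $\delta_B^{\pm(s+1)/2}(g)$ factors cancel and you are left with exactly the integral defining $\lambda_{b,\chi^g,\psi,s}$; the twist by $\eta_{\det g,(n)}^{4c+1}$ is already absorbed into the representation $(\chi'_{\sigma_{o,s}})^g$ on which $f_s$ lives, not into the normalizing scalar. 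The paper records this as ``a straightforward computation'', but it is worth doing carefully rather than appealing to an identity that does not hold.
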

 \begin{proof}
 In order to shorten the notations in this proof, we shall use $k$ to denote either an element of $\widehat{K^\dag}$ or $\s(\alpha^\vee(k)) \in \wt{T}_o$, and no confusion will arise from the context. We have (see \cite[\S 4]{Szp6})
 $$\mfr{B}_{\sigma_s,\psi}(\mfr{R}_{\chi,\psi}) =\set{  J_{\sigma, s}(\xi_{g, k}): \  \xi_{g, k} \in \mfr{R}_{\chi,\psi}  },$$
where
$$J_{\sigma,s}: \sigma^\vee \rightarrow \Wh_\psi \bigl(I(\sigma_s))\bigr),$$
is the isomorphism such that $J_{\sigma,s}(\xi_{g,k})$ is given by the meromorphic continuation of
$$f_s \mapsto {\rm nor}_{\chi,\psi}(k)\int_F f_s \bigl(gk, \wt{w}u(x) \bigr) \psi^{-1}(x) \ dx.$$
Thus,
$$J_{\sigma,s}(\xi_{g,k}) \circ M_s^{-1} (h_s)$$
 is the meromorphic continuation of
$$h_s \mapsto {\rm nor}_{\chi,\psi}(k) \cdot \delta_B^{\frac{-s-1}{2}}(gk)\int_F h_s(g k  \wt{w}u(x)) \psi^{-1}(x) \, dx.$$
Denote
$$\beta_{{\chi_{\sigma}'},\chi,\psi,s,g,k}:=J_{\sigma,s}(\xi_{g,k}) \circ M_s^{-1}.$$
Similarly, we have the isomorphism
$$J_{\sigma^w,-s}: \sigma^\vee \rightarrow \Wh_\psi \bigl(I\bigl((\sigma_s)^w\bigr)\bigr).$$
A similar consideration shows that $J_{\sigma^w,-s}(\xi_{g,k})$ is the meromorphic continuation of
$$f_{-s} \mapsto {\rm nor}_{\chi,\psi}(k)\int_F f_{-s}(gk, \wt{w}u(x)) \psi^{-1}(x) \ dx,$$
and that  $J_{\sigma,s}(\xi_{g,k}) \circ N_s (h_s)$ is the meromorphic continuation of
$$h_{-s} \mapsto {\rm nor}(\chi^{-1},\psi)(k^{-1}) \cdot \delta_B^{\frac{s-1}{2}}(g^wk^{-1})\int_F h_{-s}(g^wk^{-1} \wt{w}u(x)) \psi^{-1}(x) \, dx.$$
This implies that
$$J_{\sigma,s}(\xi_{g,k}) \circ N_s=\beta_{\chi_{\sigma}'^w,\chi^{-1},\psi,-s,g^w,k^{-1}}.$$
Thus, $\mca{M}\bigl((\cdot,\cdot),(\cdot,\cdot),{\chi_{\sigma}'},\chi,s,\psi \bigr)$ is determined by the relation
$$ T\bigl(w, {\chi_{\sigma_s}'}\bigr)^*\bigl(\beta_{{\chi_{\sigma}'}^w,\chi^{-1},\psi,s,g^w,a^{-1}}\bigr)=
\sum_{h \in S}  \sum_{b \in \widehat{K^\dag}} \mca{M} \bigl((g,a),(h,b),{\chi_{\sigma}'},\chi,s,\psi \bigr) \cdot \beta_{{\chi_{\sigma}'},\chi,\psi,s,h,b}.$$
Observe that $\beta_{{\chi_{\sigma}'}^w,\chi^{-1},\psi,s,g^w,a^{-1}}$ is supported on $I(\chi_{\sigma_s}')_g$. Therefore, Lemma \ref{simpobser} implies (i). In fact, it also implies that $\mca{M}_g \bigl(a,b),{\chi_{\sigma}'},\chi,s,\psi \bigr)$ is given by the relation
$$ T\bigl(w, {\chi_{\sigma_s}'}\bigr)^*\bigl(\beta_{\chi_{\sigma}'^w,\chi^{-1},\psi,s,g^w,a^{-1}}\bigr)=
\sum_{b \in \widehat{K^\dag}} \mca{M}_g \bigl(a,b),{\chi_{\sigma}'},\chi,s,\psi \bigr)  \cdot \beta_{{\chi_{\sigma}'},\chi,\psi,s,g,b}.$$

By \cite[Proposition 4.9]{Szp6},  the local coefficients matrix
$$\mca{M}(\cdot,\cdot,\chi^g,s,\psi):\widehat{K^\dag} \times \widehat{K^\dag} \rightarrow \C(q^{-s})$$
associated with $(\sigma_o)^g$ is determined by the relation
$$ T\bigl(w, (\chi_{\sigma_{o,s}}')^g\bigr)^*\bigl(\lambda_{a^{-1},\chi^{-1},\psi,-s}\bigr)=\sum_{b \in K^\dag} \mca{M}(a,b,\chi_{},s,\psi) \cdot \lambda_{b,\chi,\psi,s}, $$
where for $k \in \widehat{K^\dag}$,  the Whittaker functional $\lambda_{k,\chi,\psi,s} \in \Wh_\psi \bigl(I\bigl(\sigma_{s}\bigr)  \bigr)$ is the analytic continuation of
$$h_s \mapsto {\rm nor}_{\chi,\psi}(k) \cdot \int_{\mfr{p}^{-r}} h_s\bigl(k \wt{w} u(x)\bigr)\psi^{-1}(x) \, d_{\psi}x.$$
(See also Proposition \ref{tau0mod4} for the  case $n \equiv 0 \, (\text{mod }4)$.) A straightforward computation shows that
$$\beta_{{\chi_{\sigma}'},\chi,\psi,s,g,b} \circ Y'_{{\chi_{\sigma_s}'},g}=\lambda_{b,\chi^g,\psi,s}$$ 
and thus
$$\bigl(\beta_{\chi_{\sigma}'^w,\chi^{-1},\psi,s,g^w,a^{-1}}\bigr) \circ {R'}^{-1}_{({\chi_{\sigma_s}'})^w,g^{(w^{-1})}}=\lambda_{a^{-1},{\chi^g}^{-1},\psi,-s}.$$
This completes the proof.
\end{proof}

\begin{rmk}
For the intertwining operator in \eqref{Inter-R} (and everything that follows), we have used $\wt{w}=\wt{w}_\alpha$ instead of $\wt{w}^{-1}=\wt{w}_\alpha^{-1}$ (compared to \S \ref{SS:para}) in the integral form, as this is better-aligned with the work in \cite{Szp6}. However, we briefly justify below that such discrepancy is harmless in all our consideration.

The essential point is the effect of $\wt{h}_\alpha(-1) := \wt{w}_\alpha(-1) \cdot \wt{w}_\alpha(-1) = \wt{w}_\alpha^{-2}$ (in the notation of \eqref{E:h}) on the operator
$$\mca{T}(w, \chi)^*:   \Wh_\psi(I(\chi)) \to \Wh_\psi(I(\chi))$$
as in \eqref{End-T}. More precisely, it is easy to see $\wt{h}_\alpha(-1) \in \wt{Z(G)}$ and $\wt{h}_\alpha(-1) \in Z(\wt{G}_o) = \wt{Z(G_o)}$; thus $\wt{h}_\alpha(-1)$ acts naturally on $\Wh_\psi(I(\chi))$. Moreover, as $\wt{h}_\alpha(-1)$ is fixed by $W$, by tracing through the definition of $\mca{T}(w, \chi)^*$, we see that it is $\wt{h}_\alpha(-1)$-equivariant, i.e.,
$$\mca{T}(w, \chi)^* ( \wt{h}_\alpha(-1) \cdot v ) = \wt{h}_\alpha(-1) \cdot ( \mca{T}(w, \chi)^*(v) ) \text{ for every } v\in \Wh_\psi(I(\chi)).$$
Therefore, the two local coefficients matrices arising from using $w$ or $w^{-1}$ are conjugate and thus have the same invariants.

Alternatively, we see that for $\wt{G}_o$ the change of Weyl representative from $w$ to $w^{-1}$ amounts to multiplying both $J_{(\sigma_{o,s})^w,\psi}$ and $J_{\sigma_{o,s},\psi}$ by $\chi_{\sigma_o}( \wt{h}_\alpha(-1) )$. Thus, the functional equation defining the local coefficients matrices is unchanged.  Moreover, Theorem \ref{Smain} shows that a local coefficients matrix for $\wt{G}$ can be realized as a block matrix, where each block is a local coefficient matrix for $\wt{G}_o$. This also justifies that the choice of representatives $w$ or $w^{-1}$ is immaterial to our final result.
\end{rmk}

\vskip 30pt

\begin{bibdiv}
\begin{biblist}[\resetbiblist{9999999}]*{labels={alphabetic}}

\bib{ASh1}{article}{
  author={Asgari, Mahdi},
  author={Shahidi, Freydoon},
  title={Generic transfer for general spin groups},
  journal={Duke Math. J.},
  volume={132},
  date={2006},
  number={1},
  pages={137--190},
  issn={0012-7094},
  review={\MR {2219256}},
  doi={10.1215/S0012-7094-06-13214-3},
}

\bib{ASh2}{article}{
  author={Asgari, Mahdi},
  author={Shahidi, Freydoon},
  title={Generic transfer from $\rm GSp(4)$ to $\rm GL(4)$},
  journal={Compos. Math.},
  volume={142},
  date={2006},
  number={3},
  pages={541--550},
  issn={0010-437X},
  review={\MR {2231191}},
  doi={10.1112/S0010437X06001904},
}

\bib{Ban1}{article}{
  author={Banks, William D.},
  title={Heredity of Whittaker models on the metaplectic group},
  journal={Pacific J. Math.},
  volume={185},
  date={1998},
  number={1},
  pages={89--96},
  issn={0030-8730},
  review={\MR {1653196}},
}

\bib{BLS}{article}{
  author={Banks, William D.},
  author={Levy, Jason},
  author={Sepanski, Mark R.},
  title={Block-compatible metaplectic cocycles},
  journal={J. Reine Angew. Math.},
  volume={507},
  date={1999},
  pages={131--163},
  issn={0075-4102},
}

\bib{BZ1}{article}{
  author={Bernstein, I. N.},
  author={Zelevinsky, A. V.},
  title={Representations of the group $GL(n,F),$ where $F$ is a local non-Archimedean field},
  language={Russian},
  journal={Uspehi Mat. Nauk},
  volume={31},
  date={1976},
  number={3(189)},
  pages={5--70},
  issn={0042-1316},
  review={\MR {0425030}},
}

\bib{Blo}{article}{
  author={Blondel, Corinne},
  title={Uniqueness of Whittaker model for some supercuspidal representations of the metaplectic group},
  journal={Compositio Math.},
  volume={83},
  date={1992},
  number={1},
  pages={1--18},
  issn={0010-437X},
  review={\MR {1168120}},
}

\bib{Bou}{book}{
  author={Bourbaki, Nicolas},
  title={Lie groups and Lie algebras. Chapters 4--6},
  series={Elements of Mathematics (Berlin)},
  note={Translated from the 1968 French original by Andrew Pressley},
  publisher={Springer-Verlag, Berlin},
  date={2002},
  pages={xii+300},
  isbn={3-540-42650-7},
  review={\MR {1890629}},
  doi={10.1007/978-3-540-89394-3},
}

\bib{Bou-Lie3}{book}{
  author={Bourbaki, Nicolas},
  title={Lie groups and Lie algebras. Chapters 7--9},
  series={Elements of Mathematics (Berlin)},
  note={Translated from the 1975 and 1982 French originals by Andrew Pressley},
  publisher={Springer-Verlag, Berlin},
  date={2005},
  pages={xii+434},
  isbn={3-540-43405-4},
  review={\MR {2109105}},
}

\bib{BBB}{article}{
  author={Brubaker, Ben},
  author={Buciumas, Valentin},
  author={Bump, Daniel},
  title={A Yang-Baxter equation for metaplectic ice},
  journal={Comm. Number Theory and Physics},
  volume={13},
  date={2019},
  number={1},
  pages={101--148},
  doi={10.4310/CNTP.2019.v13.n1.a4},
}

\bib{BBBF}{article}{
  author={Brubaker, Ben},
  author={Buciumas, Valentin},
  author={Bump, Daniel},
  author={Friedberg, Solomon},
  title={Hecke modules from metaplectic ice},
  journal={Selecta Math. (N.S.)},
  volume={24},
  date={2018},
  number={3},
  pages={2523--2570},
  issn={1022-1824},
  review={\MR {3816510}},
  doi={10.1007/s00029-017-0372-0},
}

\bib{BBCFH}{article}{
  author={Brubaker, Benjamin},
  author={Bump, Daniel},
  author={Chinta, Gautam},
  author={Friedberg, Solomon},
  author={Hoffstein, Jeffrey},
  title={Weyl group multiple Dirichlet series. I},
  conference={ title={Multiple Dirichlet series, automorphic forms, and analytic number theory}, },
  book={ series={Proc. Sympos. Pure Math.}, volume={75}, publisher={Amer. Math. Soc., Providence, RI}, },
  date={2006},
  pages={91--114},
  review={\MR {2279932}},
  doi={10.1090/pspum/075/2279932},
}

\bib{BBF1}{article}{
  author={Brubaker, Ben},
  author={Bump, Daniel},
  author={Friedberg, Solomon},
  title={Weyl group multiple Dirichlet series. II. The stable case},
  journal={Invent. Math.},
  volume={165},
  date={2006},
  number={2},
  pages={325--355},
  issn={0020-9910},
  review={\MR {2231959}},
  doi={10.1007/s00222-005-0496-2},
}

\bib{BBFH1}{article}{
  author={Brubaker, Benjamin},
  author={Bump, Daniel},
  author={Friedberg, Solomon},
  author={Hoffstein, Jeffrey},
  title={Weyl group multiple Dirichlet series. III. Eisenstein series and twisted unstable $A_r$},
  journal={Ann. of Math. (2)},
  volume={166},
  date={2007},
  number={1},
  pages={293--316},
  issn={0003-486X},
  review={\MR {2342698}},
  doi={10.4007/annals.2007.166.293},
}

\bib{BD}{article}{
  author={Brylinski, Jean-Luc},
  author={Deligne, Pierre},
  title={Central extensions of reductive groups by $\bold K_2$},
  journal={Publ. Math. Inst. Hautes \'Etudes Sci.},
  number={94},
  date={2001},
  pages={5--85},
  issn={0073-8301},
  review={\MR {1896177}},
  doi={10.1007/s10240-001-8192-2},
}

\bib{Bud}{article}{
  author={Budden, Mark},
  title={Local coefficient matrices of metaplectic groups},
  journal={J. Lie Theory},
  volume={16},
  date={2006},
  number={2},
  pages={239--249},
  issn={0949-5932},
  review={\MR {2197591}},
}

\bib{BG}{article}{
  author={Budden, Mark},
  author={Goehle, Geoff},
  title={Local coefficient matrices and the metaplectic correspondence},
  journal={J. Lie Theory},
  volume={27},
  date={2017},
  number={3},
  pages={657--670},
  issn={0949-5932},
  review={\MR {3592285}},
}

\bib{Bum}{book}{
  author={Bump, Daniel},
  title={Automorphic forms and representations},
  series={Cambridge Studies in Advanced Mathematics},
  volume={55},
  publisher={Cambridge University Press, Cambridge},
  date={1997},
  pages={xiv+574},
  isbn={0-521-55098-X},
  review={\MR {1431508}},
  doi={10.1017/CBO9780511609572},
}

\bib{CFGK1}{article}{
  author={Cai, Yuanqing},
  author={Friedberg, Solomon},
  author={Ginzburg, David},
  author={Kaplan, Eyal},
  title={Doubling constructions and tensor product L-functions: the linear case},
  journal={Invent. Math.},
  volume={217},
  date={2019},
  number={3},
  pages={985--1068},
  issn={0020-9910},
  doi={10.1007/s00222-019-00883-4},
}

\bib{CFK1}{article}{
  author={Cai, Yuanqing},
  author={Friedberg, Solomon},
  author={Kaplan, Eyal},
  title={Doubling constructions: local and global theory, with an application to global functoriality for non-generic cuspidal representations},
  status={preprint, available at https://arxiv.org/abs/1802.02637},
}

\bib{Cas1}{article}{
  author={Casselman, W.},
  title={The unramified principal series of ${\germ p}$-adic groups. I. The spherical function},
  journal={Compositio Math.},
  volume={40},
  date={1980},
  number={3},
  pages={387--406},
  issn={0010-437X},
  review={\MR {571057}},
}

\bib{CS}{article}{
  author={Casselman, W.},
  author={Shalika, J.},
  title={The unramified principal series of $p$-adic groups. II. The Whittaker function},
  journal={Compositio Math.},
  volume={41},
  date={1980},
  number={2},
  pages={207--231},
  issn={0010-437X},
  review={\MR {581582}},
}

\bib{CG}{article}{
  author={Chinta, Gautam},
  author={Gunnells, Paul E.},
  title={Constructing Weyl group multiple Dirichlet series},
  journal={J. Amer. Math. Soc.},
  volume={23},
  date={2010},
  number={1},
  pages={189--215},
  issn={0894-0347},
  review={\MR {2552251}},
  doi={10.1090/S0894-0347-09-00641-9},
}

\bib{CO}{article}{
  author={Chinta, Gautam},
  author={Offen, Omer},
  title={A metaplectic Casselman-Shalika formula for ${\rm GL}_r$},
  journal={Amer. J. Math.},
  volume={135},
  date={2013},
  number={2},
  pages={403--441},
  issn={0002-9327},
  review={\MR {3038716}},
  doi={10.1353/ajm.2013.0013},
}

\bib{CKPSS1}{article}{
  author={Cogdell, J. W.},
  author={Kim, H. H.},
  author={Piatetski-Shapiro, I. I.},
  author={Shahidi, F.},
  title={On lifting from classical groups to ${\rm GL}_N$},
  journal={Publ. Math. Inst. Hautes \'{E}tudes Sci.},
  number={93},
  date={2001},
  pages={5--30},
  issn={0073-8301},
  review={\MR {1863734}},
  doi={10.1007/s10240-001-8187-z},
}

\bib{CKPSS2}{article}{
  author={Cogdell, J. W.},
  author={Kim, H. H.},
  author={Piatetski-Shapiro, I. I.},
  author={Shahidi, F.},
  title={Functoriality for the classical groups},
  journal={Publ. Math. Inst. Hautes \'{E}tudes Sci.},
  number={99},
  date={2004},
  pages={163--233},
  issn={0073-8301},
  review={\MR {2075885}},
  doi={10.1007/s10240-004-0020-z},
}

\bib{Del1}{article}{
  author={Deligne, P.},
  title={Sommes de Gauss cubiques et rev\^etements de ${\rm SL}(2)$\ [d'apr\`es S. J. Patterson]},
  language={French},
  conference={ title={S\'eminaire Bourbaki (1978/79)}, },
  book={ series={Lecture Notes in Math.}, volume={770}, publisher={Springer, Berlin}, },
  date={1980},
  pages={Exp. No. 539, pp. 244--277},
  review={\MR {572428}},
}

\bib{GG}{article}{
  author={Gan, Wee Teck},
  author={Gao, Fan},
  title={The Langlands-Weissman program for Brylinski-Deligne extensions},
  language={English, with English and French summaries},
  note={L-groups and the Langlands program for covering groups},
  journal={Ast\'erisque},
  date={2018},
  number={398},
  pages={187--275},
  issn={0303-1179},
  isbn={978-2-85629-845-9},
  review={\MR {3802419}},
}

\bib{Ga2}{article}{
  author={Gao, Fan},
  title={Distinguished theta representations for certain covering groups},
  journal={Pacific J. Math.},
  volume={290},
  date={2017},
  number={2},
  pages={333--379},
  doi={10.2140/pjm.2017.290.333},
}

\bib{Ga1}{article}{
  author={Gao, Fan},
  title={The Langlands-Shahidi L-functions for Brylinski-Deligne extensions},
  journal={Amer. J. Math.},
  volume={140},
  date={2018},
  number={1},
  pages={83--137},
  issn={0002-9327},
  doi={10.1353/ajm.2018.0001},
}

\bib{Ga3}{article}{
  author={Gao, Fan},
  title={Generalized Bump-Hoffstein conjecture for coverings of the general linear groups},
  journal={J. Algebra},
  volume={499},
  date={2018},
  pages={183--228},
  doi={10.1016/j.jalgebra.2017.12.002},
}

\bib{Ga5}{article}{
  author={Gao, Fan},
  title={Hecke $L$-functions and Fourier coefficients of covering Eisenstein series},
  status={preprint, available at https://sites.google.com/site/fangaonus/research},
}

\bib{GSS1}{article}{
  author={Gao, Fan},
  author={Shahidi, Freydoon},
  author={Szpruch, Dani},
  title={On the local coefficients matrix for coverings of $\rm SL_2$},
  conference={ title={Geometry, algebra, number theory, and their information technology applications}, },
  book={ series={Springer Proc. Math. Stat.}, volume={251}, publisher={Springer, Cham}, },
  date={2018},
  pages={207--244},
  review={\MR {3880389}},
}

\bib{GW}{article}{
  author={Gao, Fan},
  author={Weissman, Martin H.},
  title={Whittaker models for depth zero representations of covering groups},
  journal={Int. Math. Res. Not. IMRN},
  date={2019},
  number={11},
  pages={3580--3620},
  issn={1073-7928},
  review={\MR {3961710}},
  doi={10.1093/imrn/rnx235},
}

\bib{GK}{article}{
  author={Gelfand, I. M.},
  author={Kazhdan, D. A.},
  title={Representations of the group ${\rm GL}(n,K)$ where $K$ is a local field},
  conference={ title={Lie groups and their representations}, address={Proc. Summer School, Bolyai J\'anos Math. Soc., Budapest}, date={1971}, },
  book={ publisher={Halsted, New York}, },
  date={1975},
  pages={95--118},
  review={\MR {0404534}},
}

\bib{GPS}{article}{
  author={Gelbart, Stephen},
  author={Piatetski-Shapiro, I. I.},
  title={Distinguished representations and modular forms of half-integral weight},
  journal={Invent. Math.},
  volume={59},
  date={1980},
  number={2},
  pages={145--188},
  issn={0020-9910},
  review={\MR {577359}},
  doi={10.1007/BF01390042},
}

\bib{GPS81}{incollection}{
  author={Gelbart, S. and Piatetski-Shapiro, I.},
  title={On Shimura's correspondence for modular forms of half-integral weight},
  booktitle={Automorphic forms, representation theory and arithmetic (Bombay, 1979)},
  series={Tata Inst. Fund. Res. Studies in Math.},
  volume={10},
  pages={1--39},
  publisher={Tata Inst. Fundamental Res., Bombay},
  year={1981},
  mrclass={10D40 (10D12 22E55)},
  mrnumber={633657},
  mrreviewer={Freydoon Shahidi},
}

\bib{GoSz}{article}{
  author={Goldberg, David},
  author={Szpruch, Dani},
  title={Plancherel measures for coverings of $p$-adic $\text {SL}_2(F)$},
  journal={Int. J. Number Theory},
  volume={12},
  date={2016},
  number={7},
  pages={1907--1936},
  issn={1793-0421},
  review={\MR {3544420}},
  doi={10.1142/S1793042116501189},
}

\bib{Han18}{article}{
  author={Hanzer, Marcela},
  title={On the cuspidal support of a generic representation},
  journal={J. Lie Theory},
  volume={28},
  date={2018},
  number={1},
  pages={71--78},
  issn={0949-5932},
  review={\MR {3683372}},
}

\bib{Kah1}{article}{
  author={Kahn, Bruno},
  title={Le groupe des classes modulo $2$, d'apr\`es Conner et Perlis},
  language={French},
  conference={ title={Seminar on number theory, 1984--1985}, address={Talence}, date={1984/1985}, },
  book={ publisher={Univ. Bordeaux I, Talence}, },
  date={1985},
  pages={Exp. No. 26, 29},
  review={\MR {848382}},
  doi={10.1038/314026a0},
}

\bib{Kah2}{article}{
  author={Kahn, Bruno},
  title={Sommes de Gauss attach\'{e}es aux caract\`eres quadratiques: une conjecture de Pierre Conner},
  language={French},
  journal={Comment. Math. Helv.},
  volume={62},
  date={1987},
  number={4},
  pages={532--541},
  issn={0010-2571},
  review={\MR {920055}},
  doi={10.1007/BF02564460},
}

\bib{Kap002}{article}{
  author={Kaplan, Eyal},
  title={Representations distinguished by pairs of exceptional representations and a conjecture of Savin},
  journal={Int. Math. Res. Not. IMRN},
  date={2016},
  number={2},
  pages={604--643},
  issn={1073-7928},
  review={\MR {3493427}},
  doi={10.1093/imrn/rnv150},
}

\bib{Kap003}{article}{
  author={Kaplan, Eyal},
  title={Theta distinguished representations, inflation and the symmetric square $L$-function},
  journal={Math. Z.},
  volume={283},
  date={2016},
  number={3-4},
  pages={909--936},
  issn={0025-5874},
  review={\MR {3519988}},
  doi={10.1007/s00209-016-1627-8},
}

\bib{Kap004}{article}{
  author={Kaplan, Eyal},
  title={The double cover of odd general spin groups, small representations, and applications},
  journal={J. Inst. Math. Jussieu},
  volume={16},
  date={2017},
  number={3},
  pages={609--671},
  issn={1474-7480},
  review={\MR {3646283}},
  doi={10.1017/S1474748015000250},
}

\bib{Kap005}{article}{
  author={Kaplan, Eyal},
  title={The characterization of theta-distinguished representations of ${\rm GL}(n)$},
  journal={Israel J. Math.},
  volume={222},
  date={2017},
  number={2},
  pages={551--598},
  issn={0021-2172},
  review={\MR {3722261}},
  doi={10.1007/s11856-017-1600-1},
}

\bib{Kap01}{article}{
  author={Kaplan, Eyal},
  title={Doubling constructions and tensor product L-functions: coverings of the symplectic group},
  status={preprint, available at https://arxiv.org/abs/1902.00880},
}

\bib{Kar}{article}{
  author={Karimianpour, Camelia},
  title={The Stone-von Neumann Construction in Branching Rules and Minimal Degree Problems},
  status={PhD Thesis (2016), University of Ottawa, available at http://mysite.science.uottawa.ca/hsalmasi/report/thesis-camelia.pdf},
}

\bib{KP}{article}{
  author={Kazhdan, D. A.},
  author={Patterson, S. J.},
  title={Metaplectic forms},
  journal={Inst. Hautes \'Etudes Sci. Publ. Math.},
  number={59},
  date={1984},
  pages={35--142},
  issn={0073-8301},
  review={\MR {743816}},
}

\bib{Kim4}{article}{
  author={Kim, Henry H.},
  title={Functoriality for the exterior square of ${\rm GL}_4$ and the symmetric fourth of ${\rm GL}_2$},
  note={With appendix 1 by Dinakar Ramakrishnan and appendix 2 by Kim and Peter Sarnak},
  journal={J. Amer. Math. Soc.},
  volume={16},
  date={2003},
  number={1},
  pages={139--183},
  issn={0894-0347},
  review={\MR {1937203}},
  doi={10.1090/S0894-0347-02-00410-1},
}

\bib{KiKr}{article}{
  author={Kim, Henry H.},
  author={Krishnamurthy, Muthukrishnan},
  title={Stable base change lift from unitary groups to ${\rm GL}_n$},
  journal={IMRP Int. Math. Res. Pap.},
  date={2005},
  number={1},
  pages={1--52},
  issn={1687-3017},
  review={\MR {2149370}},
}

\bib{KSh2}{article}{
  author={Kim, Henry H.},
  author={Shahidi, Freydoon},
  title={Functorial products for ${\rm GL}_2\times {\rm GL}_3$ and the symmetric cube for ${\rm GL}_2$},
  note={With an appendix by Colin J. Bushnell and Guy Henniart},
  journal={Ann. of Math. (2)},
  volume={155},
  date={2002},
  number={3},
  pages={837--893},
  issn={0003-486X},
  review={\MR {1923967}},
  doi={10.2307/3062134},
}

\bib{KSh3}{article}{
  author={Kim, Henry H.},
  author={Shahidi, Freydoon},
  title={Cuspidality of symmetric powers with applications},
  journal={Duke Math. J.},
  volume={112},
  date={2002},
  number={1},
  pages={177--197},
  issn={0012-7094},
  review={\MR {1890650}},
  doi={10.1215/S0012-9074-02-11215-0},
}

\bib{Kub}{book}{
  author={Kubota, Tomio},
  title={On automorphic functions and the reciprocity law in a number field. },
  series={Lectures in Mathematics, Department of Mathematics, Kyoto University, No. 2},
  publisher={Kinokuniya Book-Store Co., Ltd., Tokyo},
  date={1969},
  pages={iii+65},
  review={\MR {0255490}},
}

\bib{Lang}{book}{
  author={Lang, Serge},
  title={Algebraic number theory},
  series={Graduate Texts in Mathematics},
  volume={110},
  edition={2},
  publisher={Springer-Verlag, New York},
  date={1994},
  pages={xiv+357},
  isbn={0-387-94225-4},
  review={\MR {1282723}},
  doi={10.1007/978-1-4612-0853-2},
}

\bib{Lds}{book}{
  author={Langlands, Robert P.},
  title={On the functional equations satisfied by Eisenstein series},
  series={Lecture Notes in Mathematics, Vol. 544},
  publisher={Springer-Verlag, Berlin-New York},
  date={1976},
  pages={v+337},
  review={\MR {0579181}},
}

\bib{Luo3}{article}{
  author={Luo, Caihua},
  title={Knapp-Stein dimension theorem for covering groups},
  status={preprint},
}

\bib{Mat1}{article}{
  author={Matsumoto, Hideya},
  title={Sur les sous-groupes arithm\'{e}tiques des groupes semi-simples d\'{e}ploy\'{e}s},
  language={French},
  journal={Ann. Sci. \'{E}cole Norm. Sup. (4)},
  volume={2},
  date={1969},
  pages={1--62},
  issn={0012-9593},
  review={\MR {0240214}},
}

\bib{Mc0}{article}{
  author={McNamara, Peter J.},
  title={Metaplectic Whittaker functions and crystal bases},
  journal={Duke Math. J.},
  volume={156},
  date={2011},
  number={1},
  pages={1--31},
  issn={0012-7094},
  review={\MR {2746386}},
  doi={10.1215/00127094-2010-064},
}

\bib{Mc1}{article}{
  author={McNamara, Peter J.},
  title={Principal series representations of metaplectic groups over local fields},
  conference={ title={Multiple Dirichlet series, L-functions and automorphic forms}, },
  book={ series={Progr. Math.}, volume={300}, publisher={Birkh\"auser/Springer, New York}, },
  date={2012},
  pages={299--327},
  review={\MR {2963537}},
  doi={10.1007/978-0-8176-8334-413},
}

\bib{Mc2}{article}{
  author={McNamara, Peter J.},
  title={The metaplectic Casselman-Shalika formula},
  journal={Trans. Amer. Math. Soc.},
  volume={368},
  date={2016},
  number={4},
  pages={2913--2937},
  issn={0002-9947},
  review={\MR {3449262}},
  doi={10.1090/tran/6597},
}

\bib{Mil-B}{book}{
  author={Milne, James S.},
  title={Algebraic groups},
  series={Cambridge Studies in Advanced Mathematics},
  volume={170},
  note={The theory of group schemes of finite type over a field},
  publisher={Cambridge University Press, Cambridge},
  date={2017},
  pages={xvi+644},
  isbn={978-1-107-16748-3},
  review={\MR {3729270}},
  doi={10.1017/9781316711736},
}

\bib{MW1}{article}{
  author={M\oe glin, C.},
  author={Waldspurger, J.-L.},
  title={Mod\`eles de Whittaker d\'eg\'en\'er\'es pour des groupes $p$-adiques},
  language={French},
  journal={Math. Z.},
  volume={196},
  date={1987},
  number={3},
  pages={427--452},
  issn={0025-5874},
  review={\MR {913667}},
}

\bib{Mo1}{article}{
  author={Moore, Calvin C.},
  title={Group extensions of $p$-adic and adelic linear groups},
  journal={Inst. Hautes \'Etudes Sci. Publ. Math.},
  number={35},
  date={1968},
  pages={157--222},
  issn={0073-8301},
  review={\MR {0244258}},
}

\bib{Pate}{article}{
  author={Prakash Patel, Shiv},
  title={A theorem of M\oe glin and Waldspurger for covering groups},
  journal={Pacific J. Math.},
  volume={273},
  date={2015},
  number={1},
  pages={225--239},
  issn={0030-8730},
  review={\MR {3290452}},
}

\bib{PatPr1}{article}{
  author={Prakash Patel, Shiv},
  author={Prasad, Dipendra},
  title={Multiplicity formula for restriction of representations of $\widetilde {{\rm GL}_2}(F)$ to $\widetilde {{\rm SL}_2}(F)$},
  journal={Proc. Amer. Math. Soc.},
  volume={144},
  date={2016},
  number={2},
  pages={903--908},
  issn={0002-9939},
  review={\MR {3430864}},
  doi={10.1090/proc12721},
}

\bib{PP1}{article}{
  author={Patnaik, Manish},
  author={Pusk\'as, Anna},
  title={On Iwahori-Whittaker functions for metaplectic groups},
  journal={Adv. Math.},
  volume={313},
  date={2017},
  pages={875--914},
  issn={0001-8708},
  review={\MR {3649240}},
  doi={10.1016/j.aim.2017.04.005},
}

\bib{Pat}{article}{
  author={Patterson, S. J.},
  title={Metaplectic forms and Gauss sums. I},
  journal={Compositio Math.},
  volume={62},
  date={1987},
  number={3},
  pages={343--366},
  issn={0010-437X},
  review={\MR {901396}},
}

\bib{Rod1}{article}{
  author={Rodier, Fran\c {c}ois},
  title={Whittaker models for admissible representations of reductive $p$-adic split groups},
  conference={ title={Harmonic analysis on homogeneous spaces}, address={Proc. Sympos. Pure Math., Vol. XXVI, Williams Coll., Williamstown, Mass.}, date={1972}, },
  book={ publisher={Amer. Math. Soc., Providence, R.I.}, },
  date={1973},
  pages={425--430},
  review={\MR {0354942}},
}

\bib{Sav1}{article}{
  author={Savin, Gordan},
  title={On unramified representations of covering groups},
  journal={J. Reine Angew. Math.},
  volume={566},
  date={2004},
  pages={111--134},
  issn={0075-4102},
  review={\MR {2039325}},
}

\bib{Sch1}{article}{
  author={Schmidt, Ralf},
  title={Some remarks on local newforms for $\rm GL(2)$},
  journal={J. Ramanujan Math. Soc.},
  volume={17},
  date={2002},
  number={2},
  pages={115--147},
  issn={0970-1249},
  review={\MR {1913897}},
}

\bib{Sha1}{article}{
  author={Shahidi, Freydoon},
  title={Functional equation satisfied by certain $L$-functions},
  journal={Compositio Math.},
  volume={37},
  date={1978},
  number={2},
  pages={171--207},
  issn={0010-437X},
  review={\MR {0498494}},
}

\bib{Sha2}{article}{
  author={Shahidi, Freydoon},
  title={On certain $L$-functions},
  journal={Amer. J. Math.},
  volume={103},
  date={1981},
  number={2},
  pages={297--355},
  issn={0002-9327},
  review={\MR {610479}},
  doi={10.2307/2374219},
}

\bib{Sha85}{article}{
  author={Shahidi, Freydoon},
  title={Local coefficients as Artin factors for real groups},
  journal={Duke Math. J.},
  volume={52},
  date={1985},
  number={4},
  pages={973--1007},
  issn={0012-7094},
  review={\MR {816396}},
  doi={10.1215/S0012-7094-85-05252-4},
}

\bib{Sha88}{article}{
  author={Shahidi, Freydoon},
  title={On the Ramanujan conjecture and finiteness of poles for certain $L$-functions},
  journal={Ann. of Math. (2)},
  volume={127},
  date={1988},
  number={3},
  pages={547--584},
  issn={0003-486X},
  review={\MR {942520}},
  doi={10.2307/2007005},
}

\bib{Sha3}{article}{
  author={Shahidi, Freydoon},
  title={A proof of Langlands' conjecture on Plancherel measures; complementary series for $p$-adic groups},
  journal={Ann. of Math. (2)},
  volume={132},
  date={1990},
  number={2},
  pages={273--330},
  issn={0003-486X},
  review={\MR {1070599}},
  doi={10.2307/1971524},
}

\bib{Sha4}{book}{
  author={Shahidi, Freydoon},
  title={Eisenstein series and automorphic $L$-functions},
  series={American Mathematical Society Colloquium Publications},
  volume={58},
  publisher={American Mathematical Society, Providence, RI},
  date={2010},
  pages={vi+210},
  isbn={978-0-8218-4989-7},
  review={\MR {2683009}},
  doi={10.1090/coll/058},
}

\bib{Shal}{article}{
  author={{Sh}alika, J. A.},
  title={The multiplicity one theorem for ${\rm GL}_{n}$},
  journal={Ann. of Math. (2)},
  volume={100},
  date={1974},
  pages={171--193},
  issn={0003-486X},
  review={\MR {0348047}},
  doi={10.2307/1971071},
}

\bib{Shim}{article}{
  author={Shimura, Goro},
  title={On modular forms of half integral weight},
  journal={Ann. of Math. (2)},
  volume={97},
  date={1973},
  pages={440--481},
  issn={0003-486X},
  review={\MR {0332663}},
  doi={10.2307/1970831},
}

\bib{SprB}{book}{
  author={Springer, T. A.},
  title={Linear algebraic groups},
  series={Modern Birkh\"auser Classics},
  edition={2},
  publisher={Birkh\"auser Boston, Inc., Boston, MA},
  date={2009},
  pages={xvi+334},
  isbn={978-0-8176-4839-8},
  review={\MR {2458469}},
}

\bib{Ste1}{article}{
  author={Steinberg, Robert},
  title={G\'{e}n\'{e}rateurs, relations et rev\^{e}tements de groupes alg\'{e}briques},
  language={French},
  conference={ title={Colloq. Th\'{e}orie des Groupes Alg\'{e}briques}, address={Bruxelles}, date={1962}, },
  book={ publisher={Librairie Universitaire, Louvain; Gauthier-Villars, Paris}, },
  date={1962},
  pages={113--127},
  review={\MR {0153677}},
}

\bib{Ste16}{book}{
  author={Steinberg, Robert},
  title={Lectures on Chevalley groups},
  series={University Lecture Series},
  volume={66},
  publisher={American Mathematical Society, Providence, RI},
  date={2016},
  pages={vi+160},
  isbn={978-1-4704-3105-1},
}

\bib{Suz1}{article}{
  author={Suzuki, Toshiaki},
  title={Rankin-Selberg convolutions of generalized theta series},
  journal={J. Reine Angew. Math.},
  volume={414},
  date={1991},
  pages={149--205},
  issn={0075-4102},
  review={\MR {1092629}},
  doi={10.1515/crll.1991.414.149},
}

\bib{Suz2}{article}{
  author={Suzuki, Toshiaki},
  title={Metaplectic Eisenstein series and the Bump-Hoffstein conjecture},
  journal={Duke Math. J.},
  volume={90},
  date={1997},
  number={3},
  pages={577--630},
  issn={0012-7094},
  review={\MR {1480547}},
  doi={10.1215/S0012-7094-97-09016-5},
}

\bib{Suz3}{article}{
  author={Suzuki, Toshiaki},
  title={Distinguished representations of metaplectic groups},
  journal={Amer. J. Math.},
  volume={120},
  date={1998},
  number={4},
  pages={723--755},
  issn={0002-9327},
  review={\MR {1637947}},
}

\bib{Suz4}{article}{
  author={Suzuki, Toshiaki},
  title={On the theta series of $p$-th order},
  journal={J. Reine Angew. Math.},
  volume={651},
  date={2011},
  pages={1--95},
  issn={0075-4102},
  review={\MR {2774311}},
  doi={10.1515/crelle.2011.009},
}

\bib{Swe}{article}{
  author={Sweet, W. J.},
  title={Functional equations of p-adic zeta integrals and representations of the metaplectic group},
  status={preprint (1995)},
}

\bib{Szp1}{article}{
  author={Szpruch, Dani},
  title={Uniqueness of Whittaker model for the metaplectic group},
  journal={Pacific J. Math.},
  volume={232},
  date={2007},
  number={2},
  pages={453--469},
  issn={0030-8730},
  review={\MR {2366363}},
  doi={10.2140/pjm.2007.232.453},
}

\bib{SzpT}{article}{
  author={Szpruch, Dani},
  date={2010},
  title={The Langlands-Shahidi method for the metaplectic group and applications},
  status={PhD thesis (2010), Tel Aviv University, available at https://arxiv.org/pdf/1004.3516.pdf},
}

\bib{Szp3}{article}{
  author={Szpruch, Dani},
  title={On the existence of a $p$-adic metaplectic Tate-type $\tilde \gamma $-factor},
  journal={Ramanujan J.},
  volume={26},
  date={2011},
  number={1},
  pages={45--53},
  issn={1382-4090},
  review={\MR {2837718}},
  doi={10.1007/s11139-010-9277-7},
}

\bib{Szp4}{article}{
  author={Szpruch, Dani},
  title={Some irreducibility theorems of parabolic induction on the metaplectic group via the Langlands-Shahidi method},
  journal={Israel J. Math.},
  volume={195},
  date={2013},
  number={2},
  pages={897--971},
  issn={0021-2172},
  review={\MR {3096578}},
  doi={10.1007/s11856-012-0140-y},
}

\bib{Szp4-1}{article}{
  author={Szpruch, Dani},
  title={Some results in the theory of genuine representations of the metaplectic double cover of $GSp_{2n}(F)$ over p-adic fields},
  journal={J. Algebra},
  volume={388},
  date={2013},
  pages={160--193},
  issn={0021-8693},
  review={\MR {3061683}},
  doi={10.1016/j.jalgebra.2013.05.001},
}

\bib{Szp5}{article}{
  author={Szpruch, Dani},
  title={Symmetric genuine spherical Whittaker functions on $\overline {GSp_{2n}(F)}$},
  journal={Canad. J. Math.},
  volume={67},
  date={2015},
  number={1},
  pages={214--240},
  issn={0008-414X},
  review={\MR {3292701}},
  doi={10.4153/CJM-2013-033-5},
}

\bib{Szp5-2}{article}{
  author={Szpruch, Dani},
  title={A short proof for the relation between Weil indices and $\epsilon $-factors},
  journal={Comm. Algebra},
  volume={46},
  date={2018},
  number={7},
  pages={2846--2851},
  issn={0092-7872},
  review={\MR {3780827}},
  doi={10.1080/00927872.2017.1399407},
}

\bib{Szp6}{article}{
  author={Szpruch, Dani},
  title={On Shahidi local coefficients matrix},
  journal={Manuscripta Math.},
  volume={159},
  date={2019},
  number={1-2},
  pages={117--159},
  issn={0025-2611},
  review={\MR {3936136}},
  doi={10.1007/s00229-018-1052-x},
}

\bib{Tam1}{article}{
  author={Tamir, Boaz},
  title={On $L$-functions and intertwining operators for unitary groups},
  journal={Israel J. Math.},
  volume={73},
  date={1991},
  number={2},
  pages={161--188},
  issn={0021-2172},
  review={\MR {1135210}},
  doi={10.1007/BF02772947},
}

\bib{Tan}{article}{
  author={Tang, Shiang},
  title={Principal series representations of metaplectic groups},
  status={preprint, available at https://arxiv.org/pdf/1706.05145.pdf},
}

\bib{Tat}{article}{
  author={Tate, John T.},
  title={Fourier analysis in number fields, and Hecke's zeta-functions},
  conference={ title={Algebraic Number Theory (Proc. Instructional Conf., Brighton, 1965)}, },
  book={ publisher={Thompson, Washington, D.C.}, },
  date={1967},
  pages={305--347},
  review={\MR {0217026}},
}

\bib{Tat1}{article}{
  author={Tate, John T.},
  title={Number theoretic background},
  conference={ title={Automorphic forms, representations and $L$-functions}, address={Proc. Sympos. Pure Math., Oregon State Univ., Corvallis, Ore.}, date={1977}, },
  book={ series={Proc. Sympos. Pure Math., XXXIII}, publisher={Amer. Math. Soc., Providence, R.I.}, },
  date={1979},
  pages={3--26},
  review={\MR {546607}},
}

\bib{Wal1}{article}{
  author={Waldspurger, J.-L.},
  title={Sur les coefficients de Fourier des formes modulaires de poids demi-entier},
  language={French},
  journal={J. Math. Pures Appl. (9)},
  volume={60},
  date={1981},
  number={4},
  pages={375--484},
  issn={0021-7824},
  review={\MR {646366}},
}

\bib{Wal2}{article}{
  author={Waldspurger, J.-L.},
  title={La formule de Plancherel pour les groupes $p$-adiques (d'apr\`es Harish-Chandra)},
  language={French, with French summary},
  journal={J. Inst. Math. Jussieu},
  volume={2},
  date={2003},
  number={2},
  pages={235--333},
  issn={1474-7480},
  review={\MR {1989693}},
  doi={10.1017/S1474748003000082},
}

\bib{Weil}{article}{
  author={Weil, Andr\'e},
  title={Sur certains groupes d'op\'erateurs unitaires},
  language={French},
  journal={Acta Math.},
  volume={111},
  date={1964},
  pages={143--211},
  issn={0001-5962},
  review={\MR {0165033}},
  doi={10.1007/BF02391012},
}

\bib{WeilB}{book}{
  author={Weil, Andr\'{e}},
  title={Basic number theory},
  series={Classics in Mathematics},
  note={Reprint of the second (1973) edition},
  publisher={Springer-Verlag, Berlin},
  date={1995},
  pages={xviii+315},
  isbn={3-540-58655-5},
  review={\MR {1344916}},
}

\bib{We1}{article}{
  author={Weissman, Martin H.},
  title={Metaplectic tori over local fields},
  journal={Pacific J. Math.},
  volume={241},
  date={2009},
  number={1},
  pages={169--200},
  issn={0030-8730},
  review={\MR {2485462}},
  doi={10.2140/pjm.2009.241.169},
}

\bib{We2}{article}{
  author={Weissman, Martin H.},
  title={Managing metaplectiphobia: covering $p$-adic groups},
  conference={ title={Harmonic analysis on reductive, $p$-adic groups}, },
  book={ series={Contemp. Math.}, volume={543}, publisher={Amer. Math. Soc., Providence, RI}, },
  date={2011},
  pages={237--277},
  review={\MR {2798431}},
  doi={10.1090/conm/543/10738},
}

\bib{We3}{article}{
  author={Weissman, Martin H.},
  title={Split metaplectic groups and their L-groups},
  journal={J. Reine Angew. Math.},
  volume={696},
  date={2014},
  pages={89--141},
  issn={0075-4102},
  review={\MR {3276164}},
  doi={10.1515/crelle-2012-0111},
}

\bib{We5}{article}{
  author={Weissman, Martin H.},
  title={Covers of tori over local and global fields},
  journal={Amer. J. Math.},
  volume={138},
  date={2016},
  number={6},
  pages={1533--1573},
  issn={0002-9327},
  review={\MR {3595494}},
  doi={10.1353/ajm.2016.0046},
}

\bib{We6}{article}{
  author={Weissman, Martin H.},
  title={L-groups and parameters for covering groups},
  language={English, with English and French summaries},
  note={L-groups and the Langlands program for covering groups},
  journal={Ast\'erisque},
  date={2018},
  number={398},
  pages={33--186},
  issn={0303-1179},
  isbn={978-2-85629-845-9},
  review={\MR {3802418}},
}

\end{biblist}
\end{bibdiv}

\end{document}